\documentclass[12pt]{book}

\usepackage{amssymb}
\usepackage{amsfonts}
\usepackage{amsmath}
\usepackage{amsthm}
\usepackage{graphicx}
\usepackage[boxruled]{algorithm2e}

\def\Rs{{\mathbb{R}^3}}
\def\Rp{{\mathbb{R}^2}}
\def\R{{\mathbb{R}}}
\def\N{{\mathbb{N}}}

\def\bs{{{\bigskip}}}
\def\o{{\omega}} 
\def\O{{\Omega}} 
\def\a{{\alpha}}
\def\b{{\beta}}
\def\g{{\gamma}}
\def\G{{\Gamma}}
\def\d{{\delta}}
\def\D{{\Delta}}
\def\e{{\varepsilon}} 
\def\q{{\theta}}
\def\r{{\rho}}
\def\s{{\sigma}}
\def\S{{\Sigma}} 
\def\t{{\tau}}
\def\L{{\Lambda}}
\def\conv{{\rm conv}}
\def\rconv{{\rm{rconv}}}
\def\ext{{\rm{ext}}}

\newtheorem{thm}{Theorem}[chapter]
\newtheorem{lm}[thm]{Lemma}
\newtheorem{prop}[thm]{Proposition}
\newtheorem{co}[thm]{Corollary}
\newtheorem{rmk}[thm]{Remark}
\newtheorem{ex}[thm]{Example}
\newtheorem{open}[thm]{Open Problem}

\usepackage{xcolor}

\newcommand{\squeezelist}{\setlength{\itemsep}{0pt}}
%

\newenvironment{customthm}[1]
  {\innercustomthm}
  {\endinnercustomthm}
\usepackage{enumitem}
\usepackage{url}
\newcommand{\figlab}[1]{\label{fig:#1}}
\newcommand{\seclab}[1]{\label{sec:#1}}
\newcommand{\chaplab}[1]{\label{chap:#1}}
\newcommand{\lemlab}[1]{\label{lem:#1}}
\newcommand{\thmlab}[1]{\label{thm:#1}}
\newcommand{\exlab}[1]{\label{ex:#1}}
\newcommand{\openlab}[1]{\label{open:#1}}
\newcommand{\rmklab}[1]{\label{rmk:#1}}
\newcommand{\eqnlab}[1]{\label{eqn:#1}}
\newcommand{\figref}[1]{\ref{fig:#1}}
\newcommand{\secref}[1]{\ref{sec:#1}}
\newcommand{\chapref}[1]{\ref{chap:#1}}
\newcommand{\lemref}[1]{\ref{lem:#1}}
\newcommand{\thmref}[1]{\ref{thm:#1}}
\newcommand{\exref}[1]{\ref{ex:#1}}
\newcommand{\openref}[1]{\ref{open:#1}}

\newcommand{\eqnref}[1]{\ref{eqn:#1}}
\newcommand{\hide}[1]{}
\usepackage{bm} 
\usepackage{centernot} 




\title{Reshaping Convex Polyhedra}

\author{Joseph O'Rourke and Costin V\^\i lcu}

\date{\today}

\begin{document}
\maketitle

\frontmatter
\tableofcontents


\chapter{Abstract}
\chaplab{Abstract}
%

Given a convex polyhedral surface $P$, we define a \emph{tailoring} as excising from $P$
a simple polygonal domain that contains one vertex $v$, and
whose boundary can be sutured closed to a new convex polyhedron
via Alexandrov's Gluing Theorem.
In particular, a \emph{digon-tailoring} cuts off from $P$  
a \emph{digon} containing $v$, a subset of $P$ bounded by two equal-length geodesic segments that share endpoints,
and can then zip closed.

In the first part of this monograph, we primarily study properties of the tailoring operation on convex polyhedra.
We show that $P$ can be reshaped to 
any polyhedral convex surface $Q \subset \conv (P)$ 
by a sequence of tailorings.
This investigation uncovered previously unexplored topics, including a notion
of \emph{unfolding of} $Q$ \emph{onto} $P$---cutting 
up $Q$ into pieces pasted non-overlapping onto $P$,
and to continuously folding $P$ onto $Q$.

In the second part of this monograph, we study 
\emph{vertex-merging} processes
 on convex polyhedra (each vertex-merge
being in a sense the reverse of a digon-tailoring),
creating embeddings of $P$ into enlarged surfaces.
We aim to produce 
non-overlapping polyhedral and planar unfoldings,
which led us to develop an apparently new theory of convex sets, and of minimal length enclosing polygons, on convex polyhedra.

All our theorem proofs are constructive, implying polynomial-time algorithms.

\bs
\hrule
\medskip
\subsection*{MSC Classifications}

\paragraph{Primary:}

52A15, 52B10, 52C45, 53C45, 68U05.

\paragraph{Secondary:}

52-02, 52-08, 52A37, 52C99.

\bs
\hrule
\medskip
\subsection*{Keywords and phrases}
\noindent
convex polyhedron\\
Alexandrov's Gluing Theorem\\
polyhedron truncation\\
cut locus\\
geodesics\\
quasigeodesics\\
unfolding polyhedra\\
net for polyhedron; star-unfolding \\
vertex-merging\\
convex sets and convex hulls on convex polyhedra\\
minimal length enclosing polygon on convex polyhedra

\chapter{Preface}
\chaplab{Preface}
The research reported in this monograph emerged from exploring a
simple question: 
Given two convex polyhedra $P$ and $Q$, with $Q$ inside $P$,
can one reshape $P$ to $Q$ by repeatedly ``snipping" off vertices?
We call this snipping-off operation \emph{tailoring}. 
A precise definition
is deferred to the introductory Chapter~\chapref{IntroductionPartI},
but here we contrast it with \emph{vertex truncation},
which slices off a vertex with a plane and replaces it with a new
facet lying in that plane. 
This is, for example, one way to construct the truncated cube:
see Fig.~\figref{TruncatedCube}.
\begin{figure}[htbp]
\centering
\includegraphics[width=0.4\linewidth]{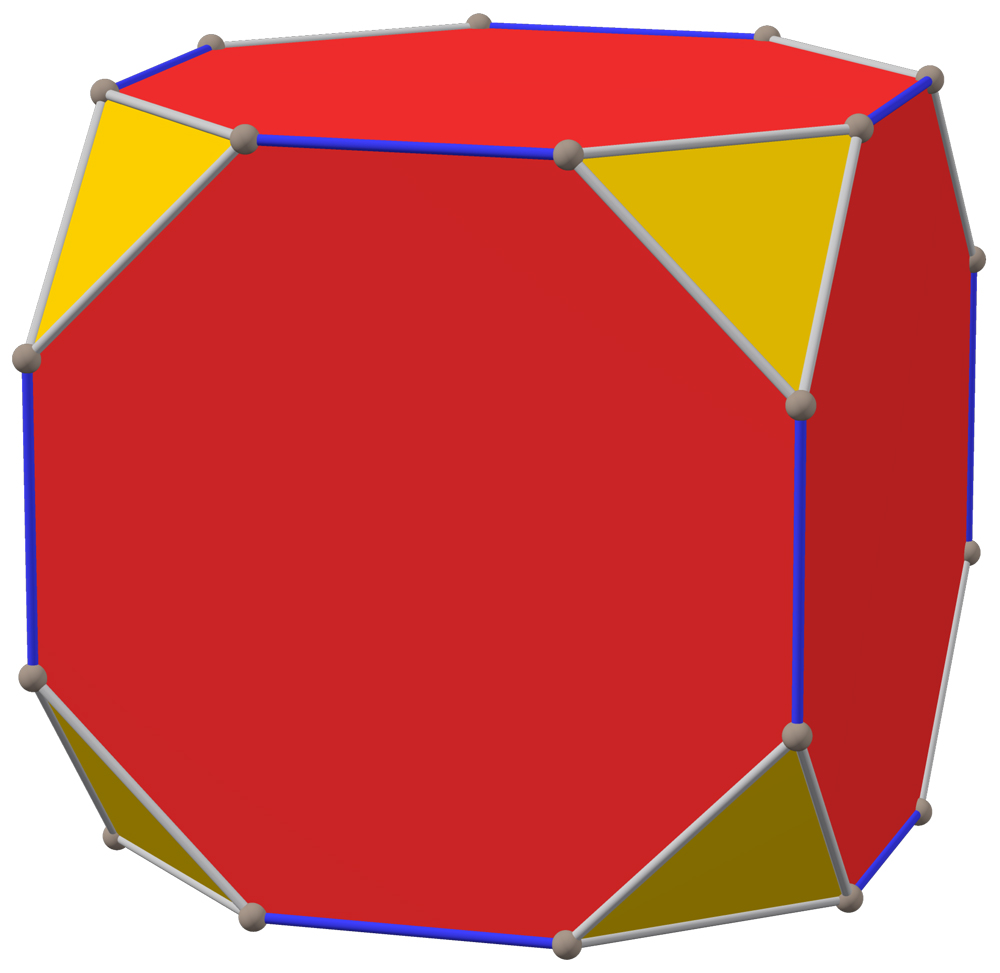}
\caption{Truncated cube. [Image by Tilman Piesk, Wikipedia].}
\figlab{TruncatedCube}
\end{figure}
Tailoring differs from vertex truncation in two ways:
first, it does not slice by a plane but instead uses a digon bound by a pair of
equal-length geodesics (again, definition deferred), and second, rather than filling the hole
with a new facet, the boundary of the hole is ``sutured" closed without the addition
of new surface.

Our first experiment started with a paper cube and tailored its $8$ vertices,
producing the shape shown in
Fig.~\figref{Cube8ExcisedPhoto}.
\begin{figure}[htbp]
\centering
\includegraphics[width=0.5\linewidth]{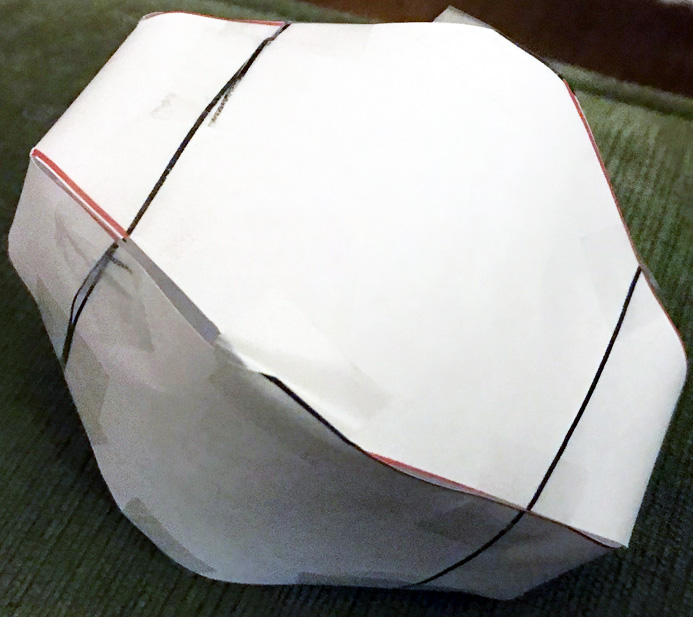}
\caption{Cube after tailoring $8$ vertices.}
\figlab{Cube8ExcisedPhoto}
\end{figure}
Although not evident from this crude model, this shape is actually a convex polyhedron
of $16$ vertices, an implication of Alexandrov's Gluing Theorem.
This led us imagine that continuing the process on the $16$ vertices might allow
reshaping the cube into a roughly spherical polyhedron---``whittling" a cube
to a sphere.
And indeed, one of our main theorems is that any $P$ can be reshaped to
any $Q \subset P$ by a finite sequence of tailorings
(Theorem~\thmref{MainTailoring}).
This holds whether $Q$ has fewer vertices than $P$
or more vertices than $P$:
see Fig.~\figref{CubeDodecaIcosa}.
\begin{figure}[htbp]
\centering
\includegraphics[width=0.8\linewidth]{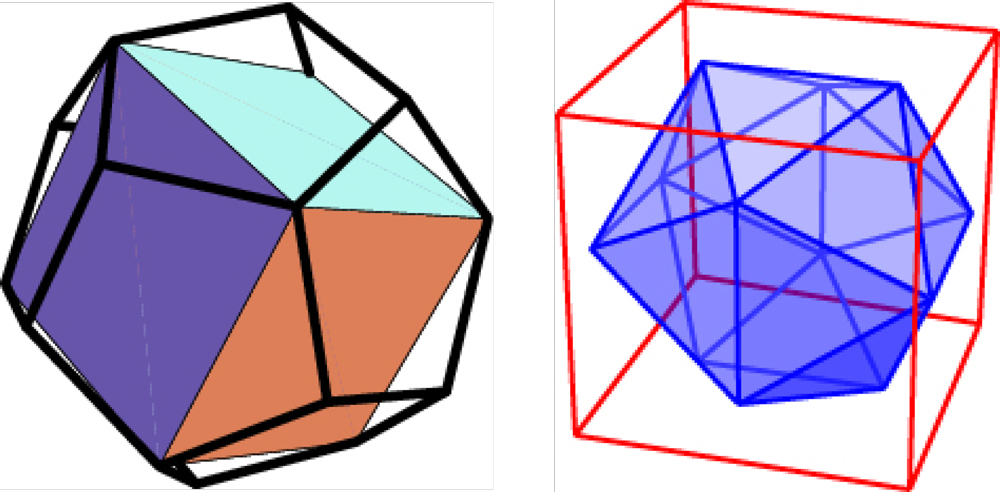}
\caption{Cube inside dodecahedron. Icosahedron inside cube.}
\figlab{CubeDodecaIcosa}
\end{figure}

\bs 

Investigating tailoring in Part~I uncovered previously unexplored topics, including a notion
of ``unfolding" $Q$ onto $P$---cutting up $Q$ into pieces pasted non-overlapping
onto $P$,
and continuously folding $P$ onto $Q$.
This led us to a systematic study of \emph{vertex-merging}, a technique introduced
by Alexandrov that can be viewed as a type of inverse operation to tailoring,
a study we carry out in Part~II.
Here we start with $P$ and gradually enlarge it via vertex-merges until 
$P$ is embedded onto a nearly flat shape $S$:
a doubly-covered triangle, 
an isosceles tetrahedron, a pair of cones, or
a cylinder. 
The first two ``vertex-merge irreducible shapes"
(doubly-covered triangle or 
isosceles tetrahedron) are the result of 
vertex-merging in arbitrary order, and can result in disconnecting 
an $n$-vertex $P$ into $O(n^2)$ pieces
(Corollary~\lemref{Pn2Pieces}).
In contrast we pursue the goal of minimizing the disconnection of $P$,
with the ultimate goal of achieving a planar \emph{net} for $P$---a nonoverlapping
simple polygon.

Toward this end, we partition $P$ into two half-surfaces by a quasigeodesic $Q$,
a simple closed curve on $P$, convex to both sides.\footnote{
In Part~I, $Q$ is the target polyhedron resulting from reshaping $P$.
In Part~II, there is no equivalent fixed ``target" polyhedron, and we use $Q$ 
to denote a quasigeodesic.}
Then we can prove that a particular spiral vertex-merge sequence avoids disconnecting
$P$ in each half.
(Theorems~\thmref{SpiralAlg} and~\thmref{Ztree}.)
This approach requires a notion of convexity 
and convex hull on the surface of
a convex polyhedron, 
apparently novel topics, explored in our longest chapter, Chapter~\chapref{Convexity}.


We can achieve a net for $P$ (Theorem~\thmref{RollingNet})
assuming the truth of a conjecture 
concerning quasigeodesics (Open Problem~\openref{QVert2}).
This is among the many 
avenues for future work uncovered by our investigations.

\bs
\subsection*{Acknowledgements}
We benefitted from discussions with Jin-ichi Itoh and with Anna Lubiw, on some topics of this work.

Costin V\^\i lcu's research was partially supported by 
UEFISCDI, project no. PN-III-P4-ID-PCE-2020-0794.

%
%
%

\mainmatter

\part{Tailoring for Every Body}

\chapter{Introduction to Part~I}
\chaplab{IntroductionPartI}
Let $P$ and $Q$ be convex polyhedra, each the convex hull of finitely many points in $\Rs$.
If $Q \subset P$, it is easy to see that $Q$ can be \emph{sculpted} from $P$ by ``slicing $P$ with planes." 
By this we mean intersecting $P$ with half-spaces each of whose plane boundary contains a face of $Q$.
If $Q \not\subset P$, we can shrink $Q$ until it fits inside.
So a \emph{homothet} of any given $Q$ can be sculpted from any given $P$, where a homothet is a copy possibly scaled, rotated, and translated.
Main results of Part~I 
are similar claims 
but via ``tailorings": a homothet of any given $Q$ can be tailored from any given $P$
(Theorems~\thmref{MainTailoring}, \thmref{CrestTailoring}, \thmref{TailoringFlattening}).\footnote{%
A preliminary version of Part~I 
is~\cite{ov-teb-2020}.}

With some abuse of notation,\footnote{
Justified by Alexandrov's Gluing Theorem, see ahead.}
we will use the same symbol $P$ for a polyhedral hull and its boundary.
We define two types of tailoring.
A \emph{digon-tailoring} cuts off a single vertex of $P$ along a digon, and then sutures the digon closed.
A \emph{digon} is a subset $D(x,y)$ of $P$ bounded by two equal-length geodesic segments that share endpoints $x$ and $y$.
A \emph{geodesic segment} is a shortest geodesic between its endpoints.
A \emph{crest-tailoring} cuts off a single vertex of $P$ but via a more complicated shape we call a ``crest."
Again the hole is sutured closed.
We defer discussion of crests to Chapter~\chapref{Crests}.
Meanwhile, we often shorten ``digon-tailoring" to simply \emph{tailoring}.
\begin{figure}
\centering
 \includegraphics[width=1.0\textwidth]{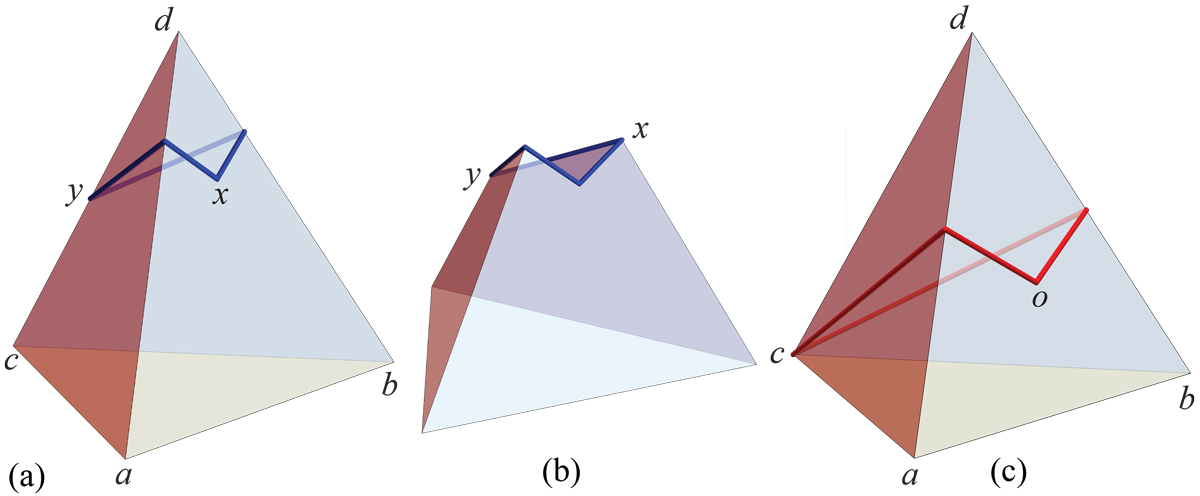}
\caption{
(a)~A digon $D(x,y)$ on the regular tetrahedron $P=abcd$, surrounding vertex $d$.
(b)~Digon excised.
(c)~A digon $D(o,c)$ with one endpoint at vertex $c$, the other the centroid $o$.}
\figlab{Tetra_3D_digons}
\end{figure}
Cutting out a digon means excising the portion of the surface between the geodesics, including the vertex they surround.\footnote{
An informal view (due to Anna Lubiw) is that one could pinch the surface flat in a neighborhood of the vertex, and then snip-off the flattened vertex with scissors.}
Once removed, the digon hole is closed by naturally identifying the two geodesics along their lengths.
This identification is often called ``gluing" in the literature, although we also call it 
``zipping" or ``suturing" or ``sealing." 
Fig.~\figref{Tetra_3D_digons} illustrates digons on a regular tetrahedron $P$.
In (a) of the figure, neither digon endpoint $x$ nor $y$ is a vertex of $P$;
(b) shows $P$ after excision but before sealing. After sealing, both $x$ and $y$ become
vertices.
(See ahead to Fig.~\figref{VertMerge} 
for a depiction of the polyhedron that results after sealing.)
In (c), one endpoint of the digon coincides with a vertex.
Here, after excision
and suturing, $o$ becomes a vertex and $c$ remains a vertex (of sharper curvature).


\section{Alexandrov's Gluing Theorem}
Throughout, we make extensive use of Alexandrov's Gluing Theorem~\cite[p.100]{a-cp-05},
which guarantees that the surface obtained after a tailoring of $P$
corresponds uniquely to a convex polyhedron $P'$.
A precise statement of this theorem, which we will abbreviate to AGT, is as follows.

\begin{customthm}{AGT}
\thmlab{AGT} 
Let $S$ be a topological sphere obtained by gluing planar polygons (i.e., naturally identifying pairs of sides of the same length) such that 
at most $2\pi$ surface angle is glued at any point.
Then $S$, endowed with the intrinsic metric induced by the distance in $\Rp$, is isometric to a convex polyhedron $P\subset\Rs$, 
possibly degenerated to a doubly-covered convex polygon. 
Moreover, $P$ is unique up to rigid motion and reflection in $\Rs$. 
\end{customthm}
\noindent
The case of doubly-covered convex polygon is important and we will encountered
it often.

Because the sides of the digon are geodesics, gluing them together to seal the hole leaves $2\pi$ angle at all but the digon endpoints.
The endpoints lose surface angle with the excision, and so have strictly less than $2\pi$ angle surrounding them.
So AGT applies and yields a new convex polyhedron.

This shows that tailoring is possible and alters the given $P$ to another convex polyhedron.
How to ``aim" the tailoring toward a given target $Q$ is a long story, told in subsequent sections.

Alexandrov's proof of his celebrated theorem is a difficult existence proof and gives little hint of the structure of the polyhedron guaranteed by the theorem. 
And as-yet there is no effective procedure to construct the three-dimensional shape of the polyhedron guaranteed by his theorem.
It has only been established that there is a theoretical pseudopolynomial-time
algorithm~\cite{kane2009pseudopolynomial},
achieved via an approximate numerical solution to the differential equation
established by Bobenko and Izmestiev~\cite{bi-atwdt-06}~\cite{o-cgc49-07}.
But this remains impractical in practice.
Only small or highly symmetric examples can be reconstructed,
for example~\cite{ado-cpfs-03} for the foldings of a square,
and more recently~\cite{arseneva2020complete} for polyhedra built from
regular pentagons.
Figs.~\figref{Pentahedron} and~\figref{VertMerge} (ahead)
were reconstructed through ad hoc methods.

AGT is a fundamental tool in the geometry of convex surfaces and, at a theoretical level, our 
work helps to elucidate its implications.
While AGT has proved useful in several investigations,
our application here to reshaping
has, to our knowledge, not been considered before as the central object of study.


\section{Tailoring Examples}

Before discussing background context, we present several examples.
Throughout we let $xy$ denote the line segment between points $x$ and $y$, $x,y \in \Rs$.
Also we make extensive use of vertex curvature.
The \emph{discrete} (or \emph{singular}) \emph{curvature} $\o(v)$ at a vertex $v \in P$ is the angle deficit: $2\pi$ minus the sum of the face angles incident to $v$.

\begin{ex}
\exlab{tetra-deg}
Let $T=abcd$ be a regular tetrahedron, and let $o$ be the center of the face $abd$;
see Fig.~\figref{Tetra_3D_digons}(c).
Cut out the digon on $T$ between $c$ and $o$ ``encircling" $d$, and zip it closed.

The unfolding $U(T)$ of $T$ with respect to $c$ is a planar regular triangle $c_a c_b c_d$ with center $o$.
Cutting out that digon from $T$ is equivalent to removing from $T$ the isosceles triangle $o c_a c_b$. See Fig.~\figref{TetraTailoring}(a).
We zip it closed by identifying the digon-segments $c_a o$ and $c_b o$, and refolding the remainder of $T$ by re-identifying $c_a b$ and $b c_d$,  and $c_b a$ and $a c_d$.
The result is the doubly-covered kite $K=aob c_d$, shown in Fig.~\figref{TetraTailoring}(b).
\end{ex}

\begin{figure}
\centering
\includegraphics[width=\textwidth]{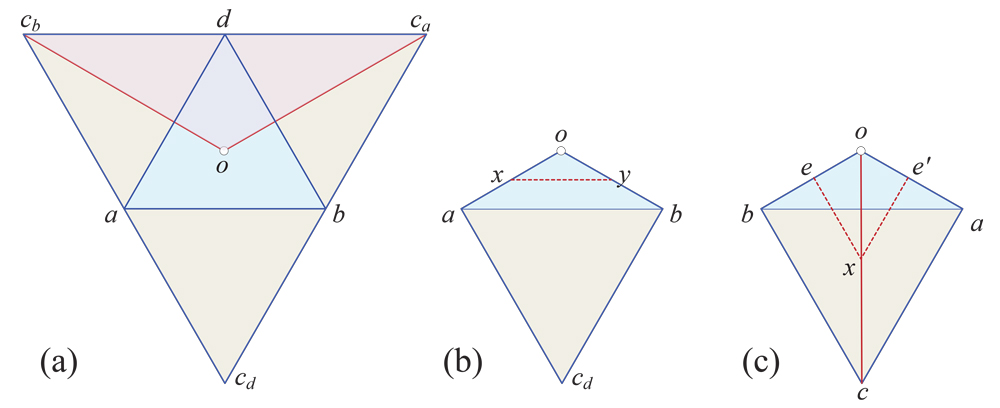}
\caption{Illustrations for Examples~\exref{tetra-deg}--\exref{deg-vol}.
In (c), $e$ and $e'$ are joined when the $D(x,y)$ digon is excised.}
\figlab{TetraTailoring}
\end{figure}

\begin{ex}
\exlab{deg-vol}
We further tailor the doubly-covered kite $K$ obtained in Example~\exref{tetra-deg}.
Excising a digon encircling $o$, between points $x \in oa$ and $y \in ob$, and zipping closed, 
yields a doubly-covered pentagon,  as in Fig.~\figref{TetraTailoring} (b).

If instead we excise
a digon encircling $o$, between corresponding points $x,y \in oc$ on different sides of $K$ (see Fig.~\figref{TetraTailoring} (c)), and seal closed, 
the result is a non-degenerate pentahedron, illustrated in Fig.~\figref{Pentahedron}.
This is among the rare cases where the polyhedron guaranteed by AGT can be
explicitly reconstructed.
\end{ex}

\begin{figure}
\centering
 \includegraphics[width=0.9\textwidth]{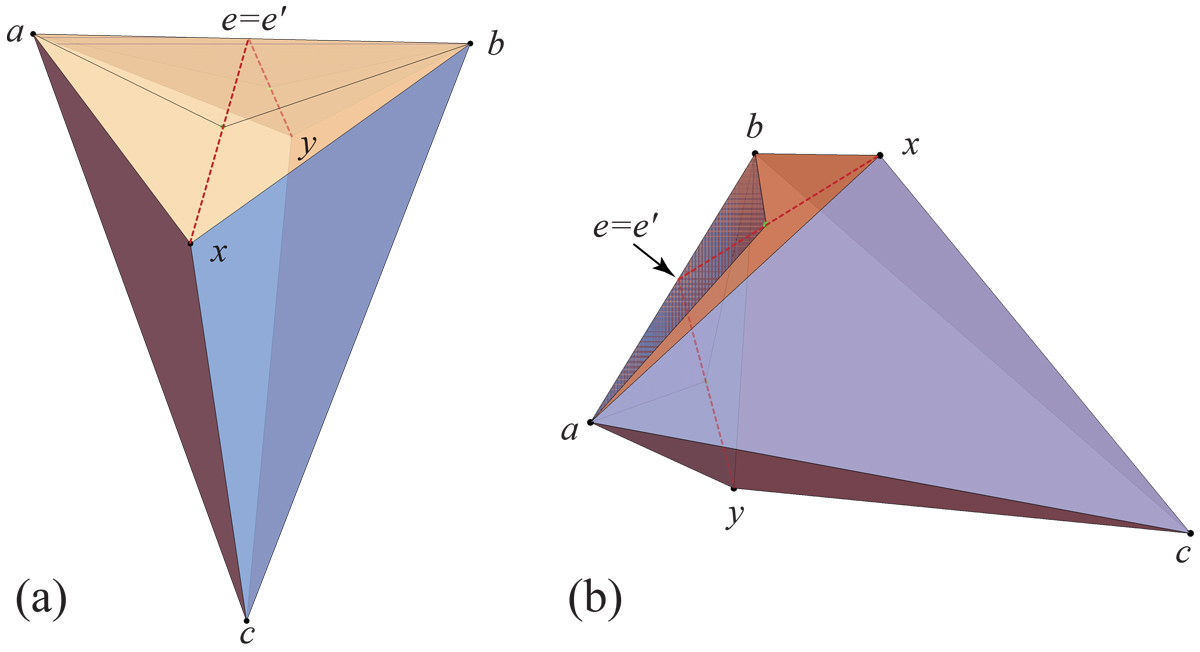}
\caption{The non-degenerate pentahedron obtained in Example~\exref{deg-vol}.
The point marked $e=e'$ is the join of the points $e$ and $e'$ in Fig.~\protect\figref{TetraTailoring}(c),
the midpoint of the edge $ab$.}
\figlab{Pentahedron}
\end{figure}

\begin{ex}
\exlab{iso_tetra}
A digon may well contain several vertices, but for our digon-tailoring we only consider those 
containing at most one vertex.
The limit case of a digon containing no vertex is an edge between two vertices.
(This will play a role in Chapter~\chapref{TailoringFlattening}.)
In this case, gluing back along the cut would produce the original polyhedron, but we can as well seal the cut 
closed from another starting point.
For example, cutting along an edge of an isosceles tetrahedron
(a  
polyhedron composed of four congruent faces)\footnote{
Also called a tetramonohedron, or an isotetrahedron.}
and carefully choosing the gluing 
leads to a doubly-covered rectangle.
See Fig.~\figref{IsoscTetra}.
\end{ex}

By AGT, this limit-case tailoring of an empty-interior digon
only applies between vertices $v_1, v_2$ of curvatures $\o_1,\o_2 \geq \pi$,
because both $v_1$ and $v_2$ will be identified with interior points of
the edge.\footnote{
In~\protect\cite[Sec.~25.3.1]{do-gfalop-07} this structure is called a ``rolling belt."}


\begin{figure}
\centering
 \includegraphics[width=1.0\textwidth]{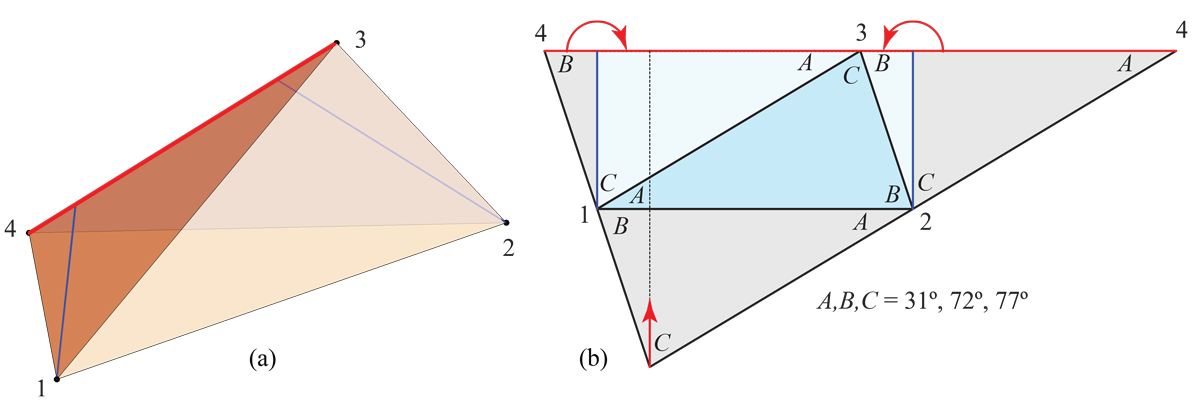}
\caption{(a)~An isosceles tetrahedron: All four faces are congruent.
Cutting along $v_3 v_4$ and regluing the two halves of that
slit differently, creasing at the blue segments,
yields a doubly-covered rectangle, as shown in~(b),
folding as shown.
}
\figlab{IsoscTetra}
\end{figure}

In view of Fig.~\figref{TetraTailoring}(c) and Fig.~\figref{Pentahedron},
it is clear that, even though tailoring is area decreasing, it is not necessarily volume decreasing.


Let a tailoring step remove vertex $v$ inside digon $D=(x,y)$.
In general, neither $x$ nor $y$ is a vertex before tailoring, but they become vertices after removing $D$, thereby increasing the number of vertices of $P$ by $1$.
This is illustrated in Fig.~\figref{Tetra_3D_digons}(ab).
If one of $x$ or $y$ is a vertex and the other not, then the total number of vertices
remains fixed, as in Fig.~\figref{Tetra_3D_digons}(c).
And if both $x$ and $y$ are vertices, then the number of vertices of $P$ is decreased by $1$. 
The challenge answered in our work is to direct tailoring to ``aim" from one polyhedon $P$ to the target $Q$, which may have a quite different number of vertices.


\section{Summary of Part-I Results}

Here we list our
main theorems in Part~I, each with a succinct (and at this stage, quite approximate) summary of their claims.
\begin{itemize}
\squeezelist
\item Theorem~\thmref{MainTailoring}: 
$Q$ may be digon-tailored from $P$, tracking a sculpting of $P$ to $Q$.
\item Theorem~\thmref{CrestTailoring}: 
$P$ may be crest-tailored to $Q$,
again tracking a sculpting.
%
\item Theorem~\thmref{TailoringFlattening}: 
A different proof of a similar
result, that $P$ may be digon-tailored to a homothet of $Q$, but this time without 
relying on sculpting.
\item Theorems~\thmref{SliceAlgorithm} and~\thmref{CrestTailoring} and \thmref{FlatteningAlg}: 
Tailoring algorithms have time-complexity $O(n^4)$.
\item Theorem~\thmref{Enlarge}: Reversing tailoring yields procedures for enlarging $Q$ inside $P$ to match $P$.
As a consequence, $Q$ may be cut up and ``unfolded" isometrically onto $P$.
\end{itemize}

\noindent
Along the way to our central theorems, we obtain results not directly related to AGT:
\begin{itemize}
\item Theorem~\thmref{Rigid}: If two convex polyhedra with the same number of vertices match on all but 
the neighborhoods of one vertex, then they are congruent.
\item Theorem~\thmref{DomePyr}: Every ``g-dome" can be partitioned into a finite sequence of pyramids by planes through its base edges.
\end{itemize}
The above results raise several open problems of various natures, either scattered along the text or presented in the last section of Part~I.

Finally, we sketch the logic behind the first result listed above,
whose statement in Chapter~\chapref{TailoringSculpting} we repeat here:
\begin{customthm}{\protect\thmref{MainTailoring}} 
Let $P$ be a convex polyhedron, and $Q \subset P$ 
a convex polyhedron resulting from repeated slicing of $P$ with planes. Then $Q$ can also be obtained from $P$ by tailoring.
Consequently, for any given convex polyhedra $P$ and $Q$, one can tailor $P$ ``via sculpting'' to obtain any homothetic copy of $Q$ inside $P$.
\end{customthm}
Start with $Q$ inside $P$, and imagine a sequence of slices by planes that sculpt $P$ to $Q$. 
Lemma~\lemref{VertexTruncation} shows how to digon-tailor one such slice, which then establishes the claim that we can tailor $P$ to $Q$.
Theorem~\thmref{MainTailoring} is achieved by first slicing off shapes we call ``g-domes," 
and then showing in Theorem~\thmref{DomePyr} that every g-dome can be reduced to its base by slicing off pyramids, i.e., by vertex truncations.
Lemma~\lemref{VertexTruncation} shows that such vertex truncations can be achieved by tailoring.
And the proof of Lemma~\lemref{VertexTruncation} relies on the rigidity established by Theorem~\thmref{Rigid}.
So the path  of logic is:
\begin{align*}
\textrm{plane slice} \;\to\; \textrm{g-domes} \;\to\; \textrm{pyramids}  \;\to\; & \textrm{digon removals} \;.\\
& \uparrow\\
& \textrm{rigidity}
\end{align*}


\chapter{Preliminaries}
\chaplab{Preliminaries}

In this chapter we 
present basic properties of cut loci on convex polyhedra,
the star-unfolding,
prove a rigidity result,
and describe the technique of vertex-merging.
All of these 
geometric tools will be needed subsequently.
The reader might 
skip this section and return to it as the tools are deployed.


\section{Cut locus properties}

The \emph{cut locus} $C(x)$ \emph{of the point $x$} on a convex polyhedron $P$ is the closure of the set of points to which there is more than one shortest path from $x$.
This concept goes back to Poincar\'e~\cite{p-lgsc-1905}, and
has been studied algorithmically since~\cite{ss-spps-86} (there, the cut locus is called the ``ridge tree'').
The cut locus is one of our main tools throughout this work.
The next lemma establishes notation and lists several known properties.

\begin{lm}[Cut Locus Basics]
\lemlab{CutLocusBasic}
The following hold for the cut locus $C(x)$:
\begin{enumerate}[label={(\roman*)}]
\item $C(x)$ has the structure of a finite $1$-dimensional simplicial complex which is a tree.
Its leaves (endpoints) are vertices of $P$, and all vertices of $P$, excepting $x$ (if it is a vertex) are included in $C(x)$. 
All points interior to $C(x)$ of tree-degree $3$ or more are known as \emph{ramification points} of $C(x)$.\footnote{%
In some literature, these points are called ``branch points" or ``junctions" of $C(x)$.}
All vertices of $P$ interior to $C(x)$ are also considered as ramification points.
\item Each point $y$ in $C(x)$ is joined to $x$ by as many geodesic segments as the number of connected
components of $C(x) \setminus {y}$.
For ramification points in $C(x)$, this is precisely their degree in the tree.
\item The edges of $C(x)$ are geodesic segments on $P$.
\item Assume the geodesic segments $\g$ and $\g'$ (possibly $\g = \g'$) from $x$ to $y \in C(x)$ are bounding a domain $D$ of $P$, 
which intersects no other geodesic segment from $x$ to $y$.
Then there is an arc of $C(x)$ at $y$ which intersects $D$ and it bisects the angle of $D$ at $y$.
\end{enumerate}
\end{lm}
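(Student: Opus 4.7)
The plan is to handle the four claims in the order (iii), (i), (ii), (iv), since part (iii) is the local structural fact from which the others follow most naturally.

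For (iii), I would argue face-by-face. Fix a face $F$ of $P$ and a point $y\in F\cap C(x)$ with two shortest paths $\g,\g'$ from $x$ to $y$. Let $F=F_k,F_{k-1},\dots,F_0$ and $F'_{k'},\dots,F'_0$ be the face sequences traversed by $\g$ and $\g'$. Unfolding these two sequences into the plane of $F$ produces two planar images $x_1,x_2$ of $x$ such that the two geodesic distances equal the Euclidean distances $|x_1 y|=|x_2 y|$. Inside a small neighborhood of $y$ in $F$, the same two unfolding sequences remain valid, so the cut locus locally agrees with the perpendicular bisector of $x_1x_2$, which is a straight line segment in $F$ and therefore a geodesic on $P$. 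Since only finitely many unfolding sequences can arise, $C(x)\cap F$ consists of finitely many segments, each a geodesic; when such a segment crosses an edge of $P$ the unfolding history changes and a new geodesic segment begins. This gives the finite $1$-dimensional simplicial structure.

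For (i), I would combine (iii) with two global arguments. First, every point $p\in P\setminus C(x)$ has a unique shortest path from $x$, so the exponential map at $x$ is injective on the open star-shaped region of the tangent cone mapping onto $P\setminus C(x)$; hence $P\setminus C(x)$ is homeomorphic to an open disk, which forces the connected complex $C(x)$ to be a tree (any simple cycle in $C(x)$ would separate $P$ into two components, contradicting the disk complement). Second, to identify leaves, suppose $y$ is an endpoint of $C(x)$ which is not a vertex of $P$. Then a small neighborhood of $y$ on $P$ is flat; unfolding the two shortest paths reaching $y$ yields two images $x_1,x_2$, and the perpendicular bisector of $x_1x_2$ extends past $y$ inside this flat disk, producing cut-locus points beyond $y$ and contradicting that $y$ is an endpoint. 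The inclusion of every vertex $v\neq x$ in $C(x)$ then follows from positive curvature at $v$: geodesics from $x$ cannot pass through $v$ as smooth shortest paths, so $v$ must lie in the closure of the multi-path set. The main obstacle here is the tree claim; establishing it cleanly requires the disk-complement fact, which in turn needs the injectivity of the exponential map off $C(x)$.

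For (ii), I would use the local picture at $y$. A small punctured disk around $y$ is partitioned by the arcs of $C(x)$ emanating from $y$ into angular sectors; within each sector, the same unfolding history applies, giving one unfolded source image and hence one shortest path from $x$ to $y$ by continuous extension of the unique geodesic in that sector. Conversely, distinct sectors give distinct source images, hence distinct geodesics. The count is therefore equal to the number of components of $C(x)\setminus\{y\}$ intersecting a small neighborhood, which for a ramification point is its degree in the tree.

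For (iv), I would unfold the bigon-like domain $D$ into the plane by developing $P$ along $D$: the two boundary geodesics $\g,\g'$ straighten to equal-length segments from a common image $\tilde x$ of $x$ to a common image $\tilde y$ of $y$, so $\tilde x\tilde y$ is the axis of symmetry of the planar triangle-like region with $|\tilde x\tilde y|$-symmetry. Any $z\in D$ near $y$ that lies in $C(x)$ on an arc entering $D$ must satisfy $|\tilde x z|=|\tilde x' z|$, where $\tilde x'$ is the reflection of $\tilde x$ across the angle bisector at $\tilde y$; this forces $z$ to lie on that bisector. Passing to the limit $z\to y$, the incoming arc of $C(x)$ meets $y$ along the angle bisector of $\g,\g'$ at $y$, as claimed. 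The only subtlety is verifying that $D$ indeed develops injectively into the plane so that the Euclidean picture applies, which follows from the hypothesis that no other geodesic segment from $x$ to $y$ enters $D$.
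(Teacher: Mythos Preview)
The paper's own proof is a two-line appeal to the literature: (i), (ii), (iv) are declared well known, and (iii) is cited as Lemma~2.4 of \cite{aaos-supa-97}. Your proposal goes far beyond that and supplies actual arguments, and the treatments of (i) and (ii) are correct and standard.

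Two points deserve attention. First, in (iii) your local perpendicular-bisector argument shows that each edge of $C(x)$ is a geodesic \emph{arc} (locally straight as it crosses faces). The paper, however, uses ``geodesic segment'' in the strict sense of a globally shortest path between its endpoints; that the cut-locus edges are shortest, not merely straight, is a genuinely stronger statement and is exactly what the cited lemma in \cite{aaos-supa-97} establishes. Your argument does not reach that conclusion.

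Second, and more seriously, your argument for (iv) has a gap. You propose to develop the entire domain $D$ into the plane so that $\g$ and $\g'$ become two straight segments from a common image $\tilde x$. But $D$ may contain vertices of $P$ in its interior: the hypothesis only excludes other geodesic segments from $x$ to $y$, not positive curvature inside $D$. If $D$ carries curvature, it does not develop isometrically into the plane, and the two images of $x$ obtained by unfolding along $\g$ and along $\g'$ will not coincide; your stated justification (``follows from the hypothesis that no other geodesic segment from $x$ to $y$ enters $D$'') is simply false. The repair is to localize: develop only a small neighborhood of $y$, place two source images $x_1,x_2$ at distance $|\g|=|\g'|$ from $y$ in the arrival directions of $\g,\g'$, and observe that the isosceles triangle $x_1 y x_2$ has the perpendicular bisector of $x_1x_2$ coinciding with the angle bisector at $y$. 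Only the local equal-length condition is needed, not flatness of all of $D$.
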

\begin{proof} 
The statements (i)-(ii) and (iv) are well known.
The statement (iii) is Lemma 2.4 in~\cite{aaos-supa-97}.
\end{proof}

The following is Lemma~4 in~\cite{inv-cfcp-12}.

\begin{lm}[Path Cut Locus]
\lemlab{Path}
If $C(x)$ is a path, the polyhedron is a doubly-covered (flat) convex polygon, with $x$ on the rim.
\end{lm}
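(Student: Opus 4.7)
My plan is to exhibit a simple closed curve $\G$ on $P$ that passes through $x$ and separates $P$ into two congruent flat convex planar polygons; this will make $P$ the ``double'' of such a polygon, with $x$ lying on its rim $\G$.

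First I would set up the relevant objects. By \lemref{CutLocusBasic}(i), $C(x)$ has exactly two leaves $a$ and $b$, both vertices of $P$, and every other vertex of $P$ is an interior point of the path. By \lemref{CutLocusBasic}(ii), $a$ and $b$ are each joined to $x$ by a unique geodesic segment, call them $\s_a$ and $\s_b$. I would then set $\G := \s_a \cup C(x) \cup \s_b$ and check that $\G$ is a simple closed curve. The interiors of $\s_a$ and $\s_b$ contain no vertex of $P$ (a shortest path on a convex polyhedron cannot traverse a cone point of positive curvature), so they are disjoint from $C(x)$; and a transverse crossing of $\s_a$ and $\s_b$ at some interior point $w$ can be ruled out by the standard shortcut argument, which would force the two paths to coincide up to $w$ and then diverge at $w$---a smooth point, where geodesic continuation is unique.

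Second, by the Jordan curve theorem applied on the sphere $P$, the curve $\G$ separates $P$ into two closed topological disks $R^+$ and $R^-$. Because every vertex of $P$ lies on $\G$, the interiors of $R^\pm$ contain no cone points and are locally flat, and their boundaries are piecewise geodesic by \lemref{CutLocusBasic}(iii) together with the geodesicity of $\s_a, \s_b$; hence each $R^\pm$ is isometric to a flat planar polygon. I would then compute the interior angles corner by corner. At any interior point $v$ of $C(x)$, the two shortest segments from $x$ to $v$ bound two sectors of the cone at $v$; by \lemref{CutLocusBasic}(iv) each such sector is bisected by the incident arc of $C(x)$, and an elementary angle count then shows that the two cut-locus arcs at $v$ split the cone, of total angle $\q_v := 2\pi - \o(v)$, into two half-cones of angle $\q_v/2$, which are precisely $R^\pm \cap N(v)$. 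Applying (iv) at a leaf $a$ with $\g = \g' = \s_a$ shows the unique cut-locus arc at $a$ bisects the full cone, giving $R^\pm$ each interior angle $\q_a/2$ at $a$, and similarly at $b$. For the angle at $x$, I would apply Gauss--Bonnet to the flat polygonal disk $R^+$ and use the sphere identity $\sum_v \o(v) = 4\pi$ to conclude that $R^+$ and $R^-$ both have interior angle $\q_x/2$ at $x$.

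Since $\q_v/2 = \pi - \o(v)/2 \le \pi$ at every corner, both $R^+$ and $R^-$ are convex; the corresponding angles agree and the corresponding edges have equal length (as $R^+$ and $R^-$ share $\G$), so $R^+ \cong R^-$ as planar convex polygons. Therefore $P = R^+ \cup R^-$, glued along $\G$, is the double of $R^+$---a doubly-covered flat convex polygon with $x$ on the rim. I expect the main obstacle to be the verification that $\G$ is a simple closed curve; once that Jordan-curve step is in hand, the flatness of $R^\pm$ is immediate and the angle computation is a direct application of (iv) combined with a single Gauss--Bonnet identity.
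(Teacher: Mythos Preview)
Your argument is correct. The paper does not give its own proof of this lemma---it simply cites it as Lemma~4 of~\cite{inv-cfcp-12}---so there is no in-paper argument to match against. Your route (assemble the simple closed curve $\G=\s_a\cup C(x)\cup\s_b$, cut $P$ into two flat geodesic polygons, then compare angles via the bisecting property~(iv) plus one Gauss--Bonnet identity) is essentially the standard proof, and is the same content as the Fundamental Triangles lemma (Lemma~\lemref{FundTri}) that the paper states just afterward: when $C(x)$ is a path, those congruent-by-pairs triangles glue up into two congruent convex polygons $R^+\cong R^-$. One small remark: your justification that $\s_a\cap\s_b=\{x\}$ can be stated more cleanly by noting that if the two segments met at an interior point $w$, then either their initial sub-arcs to $w$ are distinct shortest paths (forcing $w\in C(x)$, so neither segment could extend past $w$), or they coincide up to $w$ and bifurcate there (impossible at a non-vertex, and interior points of shortest paths are non-vertices).
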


The following lemma will be invoked in Chapter~\chapref{SealGraph}.
\begin{lm}[Angle $< \pi$]
\lemlab{CutLocusAngles}
Let $x$ be a point on a convex polyhedron $P$ and let $y$ be a ramification point of $C(x)$ of degree $k \ge 2$.
Let $e_1,...,e_k$ be the edges of $C(x)$ incident to $y$, ordered counterclockwise, 
and let $\a_j$ be the angle of the sector between $e_j$ and $e_{j+1}$ at $y$ (with $k+1 \equiv 1$).
Then $\a_j < \pi$, for all $j=1,...,k$. 
\end{lm}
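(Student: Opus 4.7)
The plan is to exploit the tight correspondence between the $k$ cut locus edges at $y$ and the $k$ shortest geodesic segments from $y$ to $x$. By Lemma~\lemref{CutLocusBasic}(ii), the tree-degree $k$ of $y$ in $C(x)$ equals the number of connected components of $C(x)\setminus\{y\}$, which in turn equals the number of shortest geodesics from $x$ to $y$; call these $\g_1,\ldots,\g_k$, arranged counterclockwise about $y$. Let $\q_j$ denote the angular sector at $y$ between $\g_j$ and $\g_{j+1}$ (indices mod $k$). By Lemma~\lemref{CutLocusBasic}(iv), each such sector is entered by exactly one cut locus arc incident to $y$, and that arc bisects the sector's angle at $y$; by relabeling, I may take this arc to be $e_j$. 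Hence the sector between consecutive cut locus edges $e_j$ and $e_{j+1}$, which contains $\g_{j+1}$, has angle
\[
\a_j \;=\; \tfrac{1}{2}\q_j + \tfrac{1}{2}\q_{j+1}.
\]

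With this formula, the inequality $\a_j<\pi$ reduces to controlling the sum $\q_j+\q_{j+1}$. Since the $\q_i$ partition the total surface angle at $y$,
\[
\sum_{i=1}^k \q_i \;=\; 2\pi - \o(y) \;\le\; 2\pi.
\]
I plan to split into two cases. For $k\ge 3$, the pair $\q_j+\q_{j+1}$ omits at least one other $\q_i$, which is strictly positive (two distinct shortest geodesic segments from $y$ cannot share an initial direction at $y$, since initial direction together with the ambient local geometry determines a geodesic uniquely near $y$). Therefore $\q_j+\q_{j+1}<\sum_i \q_i\le 2\pi$, giving $\a_j<\pi$. For $k=2$, a non-vertex interior point of $C(x)$ is by definition not a ramification point unless it has degree $\ge 3$, so $y$ must be a vertex of $P$ with $\o(y)>0$; then
\[
\a_1=\a_2=\tfrac{1}{2}(\q_1+\q_2)=\pi-\tfrac{1}{2}\o(y)<\pi.
\]

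The main obstacle I foresee is justifying $\q_i>0$ for every $i$, i.e., that two distinct shortest geodesics from $y$ to $x$ leave $y$ in distinct directions. A brief argument suffices: if they shared the initial direction, then by uniqueness of geodesic extension at an interior point they would coincide on an initial subarc, and since any subpath of a shortest path is itself a shortest path, the two segments would be identical beyond a common initial portion as well, contradicting distinctness. Once $\q_i>0$ is in hand, the computation above closes the proof.
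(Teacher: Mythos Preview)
Your proof is correct and follows essentially the same route as the paper's: both use Lemma~\lemref{CutLocusBasic}(iv) to interleave the $k$ geodesic segments $\g_i$ with the $k$ cut-locus edges $e_i$, express $\a_j$ as the sum of two adjacent half-sector angles, and then bound this by the total angle at $y$. Your treatment is slightly more careful than the paper's in two spots --- you explicitly argue that each $\q_i>0$ (needed for the strict inequality when $k\ge 3$), and you spell out why a degree-$2$ ramification point must be a vertex of $P$ --- but these are refinements of the same argument rather than a different approach.
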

\begin{proof}
Let $\g_1,...\g_k$ be the geodesic segments from $x$ to $y$, say with $\g_j$ between $e_j$ and $e_{j+1}$ (again with $k+1 \equiv 1$).
See Fig.~\figref{RamifAngles}.
By Lemma~\lemref{CutLocusBasic} (iv), $e_j$ is the bisector of $\angle(\g_j, \g_{j+1})$. 
Put $2 \b_{j+1} = \angle(\g_j, \g_{j+1})$.
Then the total surface angle $\q_y$
incident to $y$ satisfies $\q_y=\sum_{j=1}^k 2 \b_j \leq 2 \pi$ 
if $k \geq 3$, and $\q_y < 2 \pi$ if $k=2$.
Therefore, when $k \geq 3$,  $\sum_{j=1}^k \b_j = \pi$ and so $\a_j = \b_j + \b_{j+1} < \pi$.
And when $k=2$, $\a_1 = \a_2 = \b_1 + \b_2 < \pi$.
\end{proof}

\begin{figure}[htbp]
\centering
\includegraphics[width=0.5\linewidth]{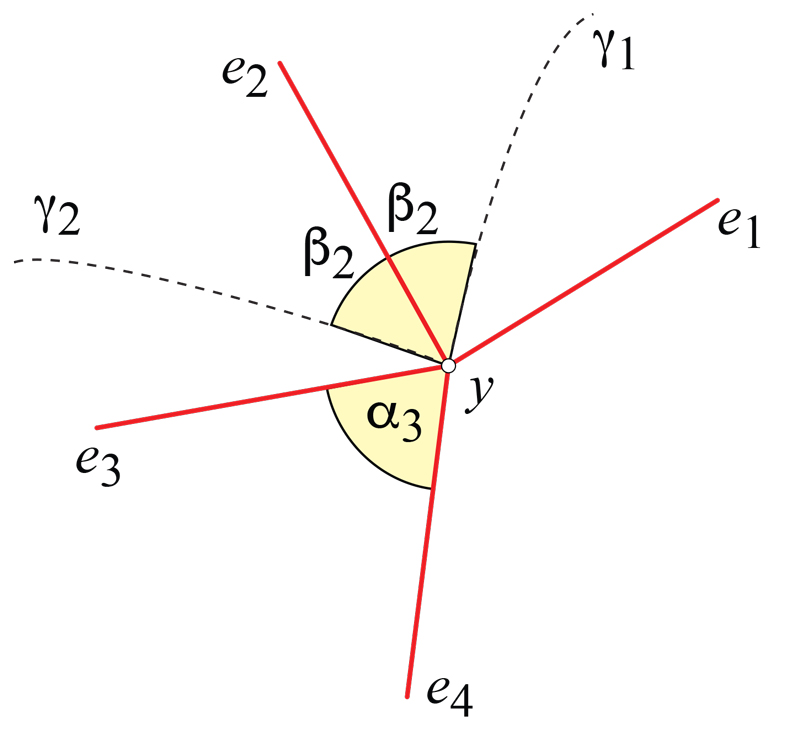}
\caption{Edge $e_2$ of $C(x)$ bisects  $\angle(\g_1, \g_2)=2\b_2$.}
\figlab{RamifAngles}
\end{figure}

\subsection{Star-Unfolding and Cut Locus}

Next we introduce a general method for unfolding any convex polyhedron $P$ to a simple (non-overlapping) polygon in the plane. 
We use this subsequently largely because of its connection to the cut locus.

To form the \emph{star-unfolding} of a $P$ with respect to $x$, one cuts $P$ along the geodesic segments 
(unique if $x$ is ``generic")
from $x$ to every vertex of $P$.
The idea goes back to Alexandrov~\cite{a-cp-05}; the  non-overlapping of the unfolding was established in~\cite{ao-nsu-92}, where the next result was also proved.
See Fig.~\figref{StarUnfCube}.

\begin{lm}[$S_P$ Voronoi Diagram]
\lemlab{*unfVD}
Let $S_P=S_P(x)$ denote the star-unfolding of $P$ with respect to $x \in P$.
Then the image of $C(x)$ in $S_P$ is the restriction to $S_P$ of the Voronoi diagram of the images of $x$.
\end{lm}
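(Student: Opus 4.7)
The plan is to show that, for every point $p$ in the interior of $S_P$ (identified with its preimage on $P$), the shortest geodesics from $p$ to $x$ on $P$ correspond bijectively to the images $x^{(i)}$ realizing $\min_i|p\,x^{(i)}|$. Once that is in hand, $p$ lies on the image of $C(x)$ iff $p$ admits $\ge 2$ shortest geodesics to $x$, iff $\ge 2$ of the images $x^{(i)}$ are simultaneously Euclidean-nearest to $p$, iff $p$ lies on the Voronoi diagram of $\{x^{(1)},\dots,x^{(n)}\}$ restricted to $S_P$, which is the claim. Throughout, let $\g_1,\dots,\g_n$ be the cuts from $x$ to vertices $v_1,\dots,v_n$, let $\Sigma_i$ denote the open angular sector of $P$ at $x$ between $\g_{i-1}$ and $\g_i$, and note that $\mathrm{int}(S_P)$ is flat (all polyhedron vertices lie on $\partial S_P$), so straight segments in $\mathrm{int}(S_P)$ are isometric unfoldings of geodesics on $P$.

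The first step is to show that every shortest geodesic $\s$ from $p$ to $x$ lies entirely in a single sector $\Sigma_{i(\s)}$, hence unfolds to the straight segment $p\,x^{(i(\s))}$ of length $|p\,x^{(i(\s))}| = d_P(p,x)$. Two facts suffice: (a)~$\s$ avoids every vertex of $P$, since positive angle deficit at a vertex permits a strict local shortcut; and (b)~$\s$ cannot transversely cross any cut $\g_j$ at an interior point $q$, for otherwise both $\s|_{[q,x]}$ and $\g_j|_{[q,x]}$ would be shortest from $q$ to $x$, the swap path $p \to_\s q \to_{\g_j} x$ would have the same length as $\s$ but a kink at the smooth point $q$, and smoothing the kink would strictly shorten it, contradicting minimality of $\s$. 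Tangential coincidence is also ruled out: by geodesic uniqueness on the smooth part of $P$, it would force $\s$ to coincide with $\g_j$ all the way to $x$, placing $p$ on the cut $\g_j$ and contradicting $p \in \mathrm{int}(S_P)$.

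The assignment $\s \mapsto i(\s)$ is injective, since two shortest geodesics in the same sector would unfold to the same straight segment and so coincide. For the converse---that every index $j$ with $|p\,x^{(j)}| = d_P(p,x)$ arises from a shortest geodesic, and that no $j$ satisfies $|p\,x^{(j)}| < d_P(p,x)$---two cases arise. If the Euclidean segment $p\,x^{(j)}$ stays in $S_P$, it unfolds to a geodesic from $p$ to $x$ via $\Sigma_j$ of length $|p\,x^{(j)}| \ge d_P(p,x)$, with equality forcing that geodesic to be shortest. If instead the segment exits $S_P$, extend the isometric unfolding of $\Sigma_j$ past the image of $C(x)$ into the full planar wedge $\widetilde\Sigma_j$ of apex $x^{(j)}$ with angle equal to that of $\Sigma_j$ at $x$; in $\widetilde\Sigma_j$, the point $p$ has a preimage at Euclidean distance $|p\,x^{(j)}|$ from $x^{(j)}$, matching the length of the resulting (non-shortest) geodesic from $p$ to $x$ via $\Sigma_j$, so again $|p\,x^{(j)}| \ge d_P(p,x)$.

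Combining these facts, $\min_i |p\,x^{(i)}| = d_P(p,x)$ with minimizers exactly the indices of the shortest geodesics from $p$ to $x$, yielding the desired equality between the image of $C(x)$ and the restriction of the Voronoi diagram of $\{x^{(1)},\dots,x^{(n)}\}$ to $S_P$. The main obstacle is the ``phantom'' case where the Euclidean segment $p\,x^{(j)}$ exits $S_P$: to certify $|p\,x^{(j)}| \ge d_P(p,x)$ rather than some spuriously small Euclidean distance to an unreachable image, one must invoke the non-overlap property of the star-unfolding together with the planar wedge extension past the cut locus.
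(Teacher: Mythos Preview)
The paper does not prove this lemma; it simply cites \cite{ao-nsu-92}, where both the non-overlap of the star unfolding and this Voronoi characterization are established. So there is no ``paper's proof'' to compare against---your proposal is an independent attempt at the result.

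Your forward direction is fine: shortest geodesics avoid vertices and cannot cross cuts, so each one lives in a single sector and unfolds to a straight segment to the corresponding image $x^{(i)}$, giving $|p\,x^{(i)}|=d_P(p,x)$ for the peel $i$ containing $p$. It also follows easily that each edge of $C(x)$ lies on the perpendicular bisector of its two adjacent images.

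The gap is in the phantom case. You need $|p\,x^{(j)}|\ge d_P(p,x)$ for \emph{every} $j$, but your wedge argument does not deliver this when the Euclidean segment $p\,x^{(j)}$ exits $S_P$. You write that in $\widetilde\Sigma_j$ ``the point $p$ has a preimage at Euclidean distance $|p\,x^{(j)}|$ from $x^{(j)}$,'' but this conflates two different planar positions: the location of $p$ in $S_P$ (which fixes $|p\,x^{(j)}|$) and the location of $p$'s preimage under the exponential map of sector $j$ (whose distance from the apex equals the length $L_j$ of the---possibly nonexistent---geodesic from $x$ to $p$ via $\Sigma_j$). These coincide only while the ray from $x^{(j)}$ stays inside $S_P$: your own ``extend past $C(x)$'' reasoning works across cut-locus edges because adjacent peels are glued there isometrically, but it breaks the moment the ray meets $\partial S_P$, i.e., a cut edge $\g_k$. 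At that point the $P$-geodesic continues on the surface, while in $S_P$ the continuation re-enters through the \emph{other} image of $\g_k$, at a different planar location. Worse, if the direction from $x^{(j)}$ to $p$ lies outside the angular extent of $\Sigma_j$, there is no sector-$j$ geodesic to $p$ at all, so the preimage you invoke need not exist.

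What you have actually proved is the local statement (each $C(x)$-edge lies on a bisector of its two neighboring sources), not the global one (each peel is the full Voronoi cell of its source). Bridging that gap is exactly the nontrivial content of \cite{ao-nsu-92}; it genuinely uses the non-overlap structure of $S_P$, and your closing sentence gestures at this without supplying the argument.
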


\begin{figure}
\centering
\includegraphics[width=1.0\textwidth]{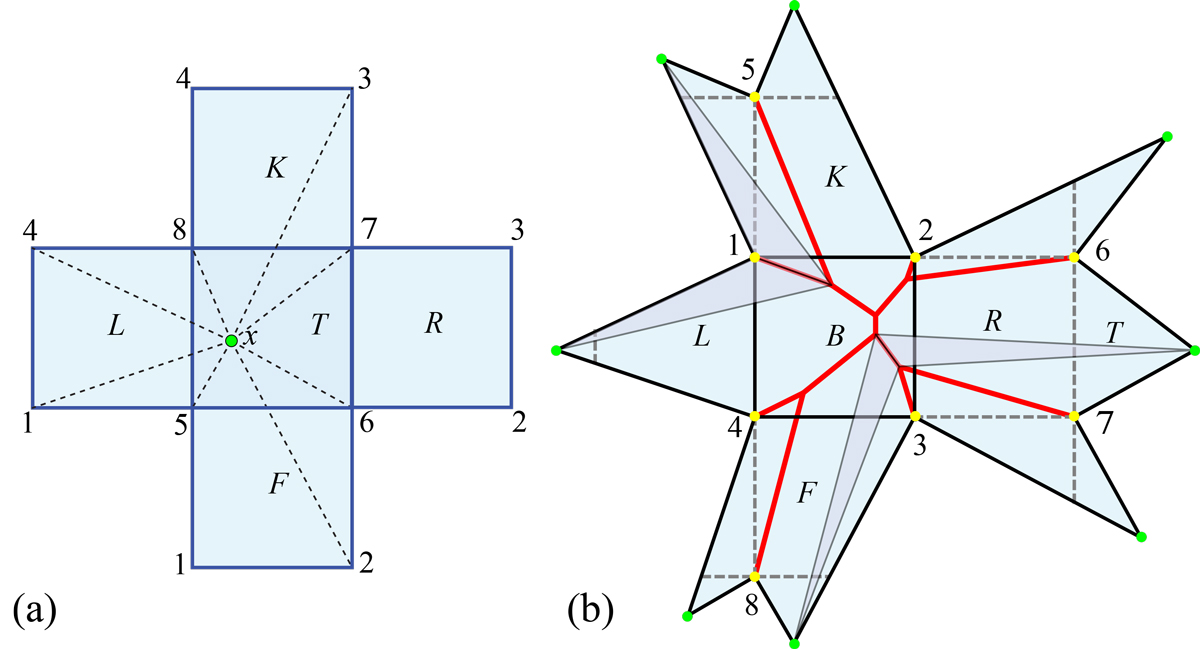}
\caption{(a)~Cut segments to the $8$ vertices of a cube from a point $x$
on the top face. (No cut is interior to the bottom face.)
T, F, R, K, L, B $=$ Top, Front, Right, Back, Left, Bottom.
(b)~The star-unfolding from $x$. The cut locus $C(x)$ (red) is the Voronoi
diagram of the $8$ images of $x$ (green).
Two pairs of fundamental triangles are shaded.}
\figlab{StarUnfCube}
\end{figure}

Notice that several properties of cut loci, presented in the previous section, could easily be derived from Lemma~\lemref{*unfVD}.


\subsection{Fundamental Triangles}

A geodesic triangle on $P$ (i.e., with geodesic segments as sides) is called 
\emph{flat} if its curvature vanishes.

\begin{lm}[Fundamental Triangles~\cite{inv-cfcp-12}]
\lemlab{FundTri} 
For any point $x \in P$,
$P$ can be partitioned into flat triangles whose bases are edges of $C(x)$,
and whose lateral edges are geodesic segments from $x$ to 
the ramification points or leaves of $C(x)$. 
Moreover, those triangles are isometric to
plane triangles, congruent by pairs. 
\end{lm}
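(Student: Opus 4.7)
The plan is to construct the partition by cutting $P$ along a carefully chosen collection of geodesic segments emanating from $x$, and then check that each resulting region is a flat geodesic triangle.

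First, I would gather the ``structural'' points of $C(x)$: every leaf (which is a vertex of $P$ by Lemma~\lemref{CutLocusBasic}(i)) together with every ramification point. For each such point $y$, of tree-degree $k$ in $C(x)$, Lemma~\lemref{CutLocusBasic}(ii) supplies exactly $k$ geodesic segments from $x$ to $y$, one nested inside each of the $k$ angular sectors that the incident edges of $C(x)$ cut out at $y$. I would cut $P$ along the union of all these geodesic segments. By Lemma~\lemref{CutLocusBasic}(iv), each edge $e$ of $C(x)$ with endpoints $y_i,y_j$ lies between a specific geodesic from $x$ to $y_i$ and a specific geodesic from $x$ to $y_j$ on each of its two sides, so the cuts produce one region $T_e^{\pm}$ per (edge, side) pair.

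Second, I would verify that every $T_e^{\pm}$ is a flat geodesic triangle with apex $x$, two lateral edges the chosen geodesics, and base $e$. The boundary description is immediate from the construction. For flatness of the interior, every vertex of $P$ other than $x$ belongs to $C(x)$ by Lemma~\lemref{CutLocusBasic}(i), hence is either an endpoint of $e$ (on $\partial T_e^{\pm}$) or was excised from the interior by cutting to a leaf or ramification point. Moreover no other edge of $C(x)$ can enter the interior of $T_e^{\pm}$: such an edge would either reach $x$ (impossible, since a small metric ball around $x$ misses $C(x)$) or exit through one of the two bounding geodesics, contradicting the bisector property of Lemma~\lemref{CutLocusBasic}(iv) which pins $C(x)$ to the angular bisector of the digon. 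Thus $T_e^{\pm}$ is simply connected, has geodesic boundary, and contains no singular curvature, so it develops isometrically onto a planar triangle with the same edge lengths.

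Third, for the pairing: $T_e^{+}$ and $T_e^{-}$ share the base $e$, and their lateral edges to $y_i$ (respectively $y_j$) are two geodesic segments from $x$ to the same point. Because $y_i,y_j \in C(x)$, these pairs of geodesics realize the same length $d_P(x,y_i)$ and $d_P(x,y_j)$, so the two planar triangles agree in all three side lengths and are congruent.

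The main obstacle I expect is the second step: ruling out that a ``foreign'' piece of $C(x)$ or a stray vertex of $P$ sneaks into the interior of some $T_e^{\pm}$, and that the two bounding geodesics do not cross each other before reaching their endpoints. Both must be handled by combining the tree structure of $C(x)$ with the bisector property (iv); once the interior of $T_e^{\pm}$ is shown to be vertex-free and simply connected, flatness and the isometric development are routine, and the pairwise congruence then falls out of the defining equal-length property of the cut locus.
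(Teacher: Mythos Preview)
The paper does not supply its own proof of this lemma: it is quoted from \cite{inv-cfcp-12} and illustrated via the star-unfolding in Fig.~\figref{StarUnfCube}(b). Your argument is essentially the standard one and is correct in outline; it is exactly what the star-unfolding picture encodes. The one place worth tightening is the passage from ``cut along all geodesic segments from $x$ to structural points'' to ``each resulting region is a triangle over a single edge of $C(x)$.'' You invoke the bisector property (iv), which is the right tool, but the clean way to phrase it is a counting-plus-local argument: the total number of such geodesic segments equals the sum of degrees of structural points, i.e.\ $2|E(C(x))|$, so the cuts create exactly $2|E(C(x))|$ sectors at $x$; and locally at each structural point $y$, property (iv) forces each incident edge of $C(x)$ to lie in the bisector of two consecutive geodesics, so following an edge from $y$ to its other endpoint $y'$ identifies the two geodesics bounding that sector. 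This makes explicit why no sector can contain more than one edge of $C(x)$, which your ``foreign edge'' paragraph addresses only indirectly.
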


\noindent
See Fig.~\figref{StarUnfCube}(b)
(and ahead to Fig.~\figref{CutLocusPath}).


\subsection{Cut Locus Partition}

Another tool we need (in Chapter~\chapref{TailoringSculpting})
is a cut locus partition lemma, a
generalization of lemmas in~\cite{inv-cfcp-12}.
On a polyhedron $P$,
connect a point $x$ to a point $y \in C(x)$ by two geodesic segments
$\g, \g'$.
This partitions $P$ into two ``half-surface" digons $H_1$ and $H_2$.
If we now zip each digon separately closed by joining $\g$ and $\g'$,
AGT leads to two convex polyhedra $P_1$ and $P_2$.
The lemma says that the cut locus on $P$ is the ``join" of the cut loci on $P_i$.
See Fig.~\figref{Partition}.

\begin{lm}[Cut Locus Partition]
\lemlab{Partition}
Under the above circumstances, the cut locus $C(x,P)$ of $x$ on $P$
is the \emph{join} of the cut loci on $P_i$:
$C(x,P) = C(x,P_1) \sqcup_y C(x,P_2)$, where $\sqcup_y$ joins the two cut loci at 
point $y$.
And starting instead from $P_1$ and $P_2$, the natural converse holds as well.
\end{lm}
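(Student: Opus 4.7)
The plan is to reduce the join identity $C(x,P)=C(x,P_1)\sqcup_y C(x,P_2)$ to the metric statement that the intrinsic distance from $x$ on the closed half-surface $\bar H_i$ agrees with the intrinsic distance from $x$ on the zipped polyhedron $P_i$. Since the cut locus is determined intrinsically by this distance function (together with the multiplicities of shortest paths), such a metric match on each half immediately yields the claimed cut-locus decomposition.

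\textbf{Locality of shortest paths.} The first step I would carry out is to show that for every $z$ in the interior of $H_i$, every shortest $P$-path from $x$ to $z$ lies in $\bar H_i$ (after at most a length-preserving rerouting at $y$). Suppose $\sigma$ is a shortest path that enters $\inn H_{3-i}$; then $\sigma$ must cross $\gamma\cup\gamma'$ at some point $p\neq x,y$, say $p\in\gamma$. Both $\sigma|_{x\to p}$ and $\gamma|_{x\to p}$ are then shortest paths of common length $d_P(x,p)$. If they are geometrically distinct, $p$ has two shortest paths from $x$ and so $p\in C(x,P)$; but an interior point of the shortest segment $\gamma$ admits a unique shortest path from $x$ (any competitor, concatenated with $\gamma|_{p\to y}$, would be a non-smooth length-$|\gamma|$ curve and thus fail to be a geodesic, while shortest paths on convex polyhedra are geodesics). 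Hence $\sigma|_{x\to p}=\gamma|_{x\to p}$, and smoothness of $\sigma$ at the regular point $p$ forces $\sigma$ to continue along $\gamma$ rather than cross into $H_{3-i}$. The only remaining way for $\sigma$ to leave $\bar H_i$ is to pass through $y$; in that case $\sigma|_{x\to y}$ is a shortest path from $x$ to $y$ and may be replaced by $\gamma$ (or $\gamma'$), producing an equally short path lying in $\bar H_i$.

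\textbf{Metric and cut locus agreement.} The locality lemma gives $d_P(x,z)=d_{\bar H_i}(x,z)$ for $z\in\bar H_i$. An identical argument applied inside $P_i$, with the role of the boundary $\gamma\cup\gamma'$ now played by the seam $\sigma_{xy}$ (which is itself a shortest path from $x$ to $y$ in $P_i$ of length $|\gamma|$), yields $d_{P_i}(x,z)=d_{\bar H_i}(x,z)$. The resulting correspondence between shortest paths preserves multiplicities, so $z\in C(x,P)\cap\inn H_i$ iff $z\in C(x,P_i)\cap\inn H_i$ under the natural isometric identification. At the shared endpoint $y$, Lemma~\lemref{CutLocusBasic}(iv) guarantees an arc of $C(x,P)$ entering each $H_i$ and bisecting the digon's interior angle there; these match the corresponding arcs of $C(x,P_i)$ at $y$. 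Concatenating the two halves yields $C(x,P)=C(x,P_1)\sqcup_y C(x,P_2)$. The converse is symmetric: starting from $P_1,P_2$ with distinguished geodesic segments meeting at $y$, one slits each $P_i$ along its geodesic to recover $H_i$, glues the two digons along their boundaries to reconstruct $P$, and applies the same analysis.

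\textbf{Main obstacle.} The delicate case in the locality step is when $y$ is a ramification point of $C(x,P)$ of tree-degree greater than two, so that additional shortest paths from $x$ to $y$ exist beyond $\gamma$ and $\gamma'$. Then a shortest path from $x$ to $z\in H_i$ may genuinely traverse $H_{3-i}$ before reentering via $y$, and I would have to confirm that the $\gamma$- or $\gamma'$-reroute at $y$ indeed produces a shortest path in $\bar H_i$ of the same length, and that the multiplicities are correctly preserved at $y$ so that the join of the two subtrees reproduces the correct local tree structure of $C(x,P)$ there. Lemma~\lemref{CutLocusAngles}, controlling the angles between cut-locus edges at ramification points, is the key tool that makes this bookkeeping tractable.
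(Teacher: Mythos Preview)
Your approach is correct and in fact a bit more elementary than the paper's. For the forward direction the paper invokes the fundamental-triangles decomposition (Lemma~\lemref{FundTri}) together with an induction: the flat triangles based on edges of $C(x)$ that lie in $H_i$ reassemble, after zipping, to give exactly $P_i$ with cut locus the truncated tree. You instead argue directly at the level of shortest paths, showing that no geodesic segment from $x$ can cross the interior of $\gamma$ or $\gamma'$ (else the crossing point would be a cut point interior to a shortest segment), and hence the distance functions on $P$, $\bar H_i$, and $P_i$ all agree. This is essentially the same non-branching idea the paper uses for the \emph{converse} direction, applied uniformly to both directions. Each approach has its merits: the fundamental-triangles route makes the combinatorial ``join'' of the two trees visually immediate, while your metric argument avoids appealing to that structural lemma.

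One remark: the ``main obstacle'' you flag is not actually an obstacle. Since $y\in C(x)$, no shortest path from $x$ can have $y$ as an interior point (if it did, the initial subarc to $y$ would be one of the shortest segments to $y$, and no such segment extends minimally past a cut point---concatenating a different shortest segment to $y$ with the continuation would produce an equally short broken geodesic, impossible at a non-vertex point, while vertices are never interior to geodesics). Hence for $z\in\inn H_i$ the rerouting-through-$y$ case never arises, and your locality step is cleaner than you suggest. The only place $y$ requires care is in verifying that the degrees and bisector directions of the two sub-trees match up when joined, which follows from Lemma~\lemref{CutLocusBasic}(ii),(iv) as you indicate.
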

\begin{proof}
Notice first that a straightforward induction and Lemma~\lemref{FundTri} on fundamental triangles
shows that
the cut locus of $x$ on $P_i$ is indeed the truncation of $C(x,P)$.
Therefore, $C(x,P) = C(x,P_1) \sqcup_y C(x,P_2)$.

Assume we start now from $P_1$ and $P_2$, 
having vertices $x_1,y_1 \in P_1$ and $x_2,y_2 \in P_2$ such that
\begin{itemize}
\item $\r_{P_1}(x_1,y_1)= \r_{P_2}(x_2,y_2)$,
where $\r_{P_i}(\;)$ is the geodesic 
distance between the indicated points on $P_i$.
\item $\q_{x_1} + \q_{x_2} \leq 2 \pi$, 
where $\q_x$ is the total surface angle incident to $x$,
and
\item $\q_{y_1} + \q_{y_2} \leq 2 \pi$.
\end{itemize}
Then we can cut open $P_i$ along a geodesic segment $\g_i$ from $x_i$ to $y_i$, $i=1,2$, and join the the two halves by AGT, such that 
$x_1,x_2$ have a common image $x$, and $y_1,y_2$ have a common image $y$.
 
Now, all geodesic segments starting at $x$ into $H_i$
remain in $H_i$, because geodesic segments do not branch.
Therefore, $H_1$ has no influence on $C(x,P_2)$ and
$H_2$ has no influence on $C(x,P_1)$.
\end{proof}
\begin{figure}[htbp]
\centering
\includegraphics[width=0.8\linewidth]{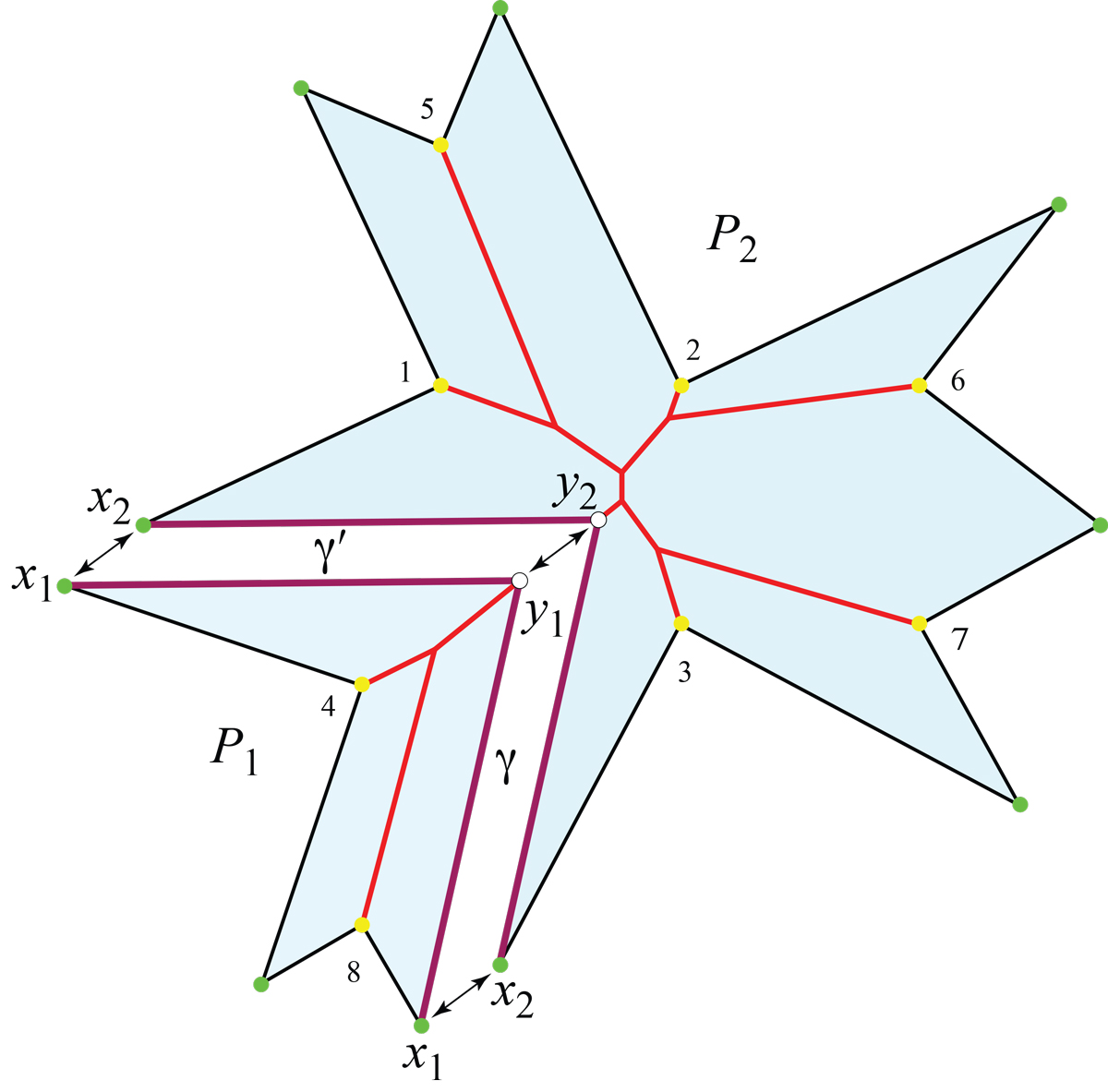}
\caption{Geodesic segments
$\g$ and $\g'$ (purple) connect $x{=}x_1{=}x_2$ to $y{=}y_1{=}y_2$.
$P_1$ folds to a tetrahedron, and $P_2$ to an $8$-vertex polyhedron, with $x$ and $y$ vertices in each.
$P_1$ and $P_2$ are cut open along geodesic segments from $x_i$ to $y_i$ and glued together to form $P$.
Based on the cube unfolding in Fig.~\protect\figref{StarUnfCube}(b).
}
\figlab{Partition}
\end{figure}


\section{Cauchy's Arm Lemma}
In several proofs, we will need an extension of Cauchy's Arm Lemma, which we now
describe.

Let $C = x_1,\ldots,x_n$ be a directed polygonal chain in the plane, with left angles
$\q_i = \angle x_{i-1} x_i x_{i+1}$, where $x_n=x_1$ if $C$ is closed,
and $\q_1$ and $\q_n$ undefined if $C$ is open.
Let $d=|x_1 x_n|$ be the distance between the endpoints, with $d=0$ if $C$ is closed.
If $\q_i \le \pi$ for all $i$, then $C$ is a \emph{convex chain}.
\emph{Cauchy's Arm Lemma} applies to reconfiguration of
$C$ while all the edge lengths remain fixed
(the edges are \emph{bars}).
We use primes to indicate the reconfigured chain.
The lemma
says that if the $\q_i$ angles are ``straightened"
while remaining convex,
in the sense that $\q_i \le \q'_i \le \pi$,
then the distance $d' = |x'_1 x'_n|$ only increases: $d' \ge d$.

For the extension needed later, 
we reformulate in terms of turn angles $\tau_i = \pi-\q_i \ge 0$,
as described in~\cite{o-ecala-01}.
Now the straightening condition says that $0 \le \tau'_i \le \tau_i$.
The extended version of Cauchy's arm lemma says that,
as long as, in the reconfiguration,
$-\tau_i \le \tau'_i \le \tau_i$, then the same conclusion holds: $d' \ge d$.
Thus $C'$ might no longer be a convex chain, but its reflexivities are bounded
by the original turn angles.
One way to interpret $d' \ge d$ is that there is a ``forbidden" disk of radius 
$d=|x_1 x_n|$ centered on $x_1$ that $x'_n$ cannot penetrate.
See Fig.~\figref{CauchyForbidden}.
\begin{figure}[htbp]
\centering
\includegraphics[width=0.5\linewidth]{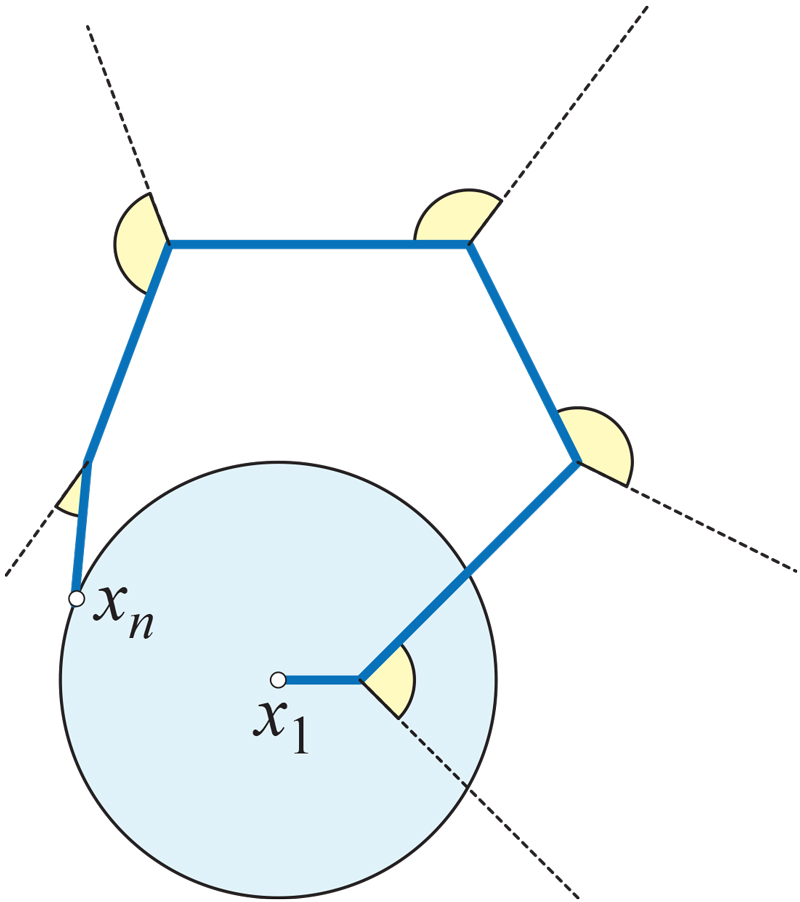}
\caption{Any turns within the indicated $\pm \tau_i$ angle ranges constitute straightening.
Then $x_n$ will not penetrate the $|x_1 x_n|$ disk.}
\figlab{CauchyForbidden}
\end{figure}
We summarize in a theorem:

\begin{thm}{(Cauchy's Arm Lemma.)}
\thmlab{CAL} 
A straightening reconfiguration of a planar convex chain $C$---retaining
edge-lengths fixed and confining new turn angles $\tau'_i$ to lie
in $[-\tau_i, \tau_i]$---only increases the distance between the 
endpoints: $|x'_1 x'_n| \ge |x_1 x_n|$.
\end{thm}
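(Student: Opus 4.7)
The plan is to prove this extended version of Cauchy's Arm Lemma by induction on the number of vertices $n$, using the classical version (the case $\tau'_i \in [0, \tau_i]$) as an established tool.

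The base case $n = 3$ is immediate from the law of cosines: writing $d^2 = \ell_1^2 + \ell_2^2 + 2\ell_1 \ell_2 \cos \tau_2$ and likewise $d'^2 = \ell_1^2 + \ell_2^2 + 2 \ell_1 \ell_2 \cos \tau'_2$, and noting that $\cos$ is even and monotone decreasing on $[0, \pi]$ while $|\tau'_2| \le \tau_2 \le \pi$, we obtain $\cos \tau'_2 \ge \cos \tau_2$ and hence $d' \ge d$.

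For the inductive step, I would fix $x'_1 = x_1$ together with the first bar, and consider the compact family $\mathcal{F}$ of reconfigurations $C'$ subject to the bar-length and angle constraints $|\tau'_i| \le \tau_i$. The squared endpoint distance $f(C') = |x'_1 x'_n|^2$ attains a minimum on $\mathcal{F}$ at some $C^*$, and it suffices to show $f(C^*) \ge d^2$. The structural claim underpinning the argument is that at the minimizer, at least one interior turn is \emph{saturated}, meaning $\tau^*_i = \varepsilon \tau_i$ for some $\varepsilon \in \{+1, -1\}$ (the alternative being the trivial case $C^* = C$). Granting this, the triangle $x^*_{i-1} x^*_i x^*_{i+1}$ is congruent---up to reflection when $\varepsilon = -1$---to $x_{i-1} x_i x_{i+1}$, so that $|x^*_{i-1} x^*_{i+1}| = |x_{i-1} x_{i+1}|$. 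I would then \emph{collapse} this triangle in both $C^*$ and $C$, replacing the two bars meeting at $x^*_i$ (resp.\ $x_i$) with a single diagonal bar, yielding reduced chains $\bar C^*$ and $\bar C$ of $n-1$ vertices with the same endpoint distances. Letting $\alpha \ge 0$ denote the angle at $x_{i-1}$ of the triangle (so the new turn bound at $x^*_{i-1}$ becomes $\tau_{i-1} + \alpha$ and the new turn itself becomes $\tau^*_{i-1} \pm \alpha$ with sign determined by $\varepsilon$), the inequality $|\tau^*_{i-1} \pm \alpha| \le \tau_{i-1} + \alpha$ follows from $|\tau^*_{i-1}| \le \tau_{i-1}$; an analogous check works at $x^*_{i+1}$. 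The inductive hypothesis applied to $\bar C^*, \bar C$ then gives $f(C^*) \ge d^2$.

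Two technical points will need care, and both constitute the main obstacle. First, the saturation claim at the minimizer: I plan to establish this by a first-order variational argument showing that at any interior critical point of $f$ on $\mathcal{F}$ with $|\tau^*_i| < \tau_i$ for every $i$, the partial derivatives $\partial f / \partial \tau^*_i$---each proportional to the perpendicular component of the lever arm from $x^*_i$ to $x^*_n$ against the direction $x^*_i x^*_1$---cannot simultaneously vanish except in a degenerate collinear configuration, which can be reduced separately by deleting a collinear vertex. Second, one must verify that the collapsed chain $\bar C$ still satisfies the hypothesis of the extended lemma, i.e., each new turn bound remains in $[0, \pi]$. If the collapse would violate the convex-chain condition by forcing $\bar\tau_{i-1} > \pi$, one must either collapse at a different saturated vertex (chosen, say, to have small $\tau_i$, since the damage $\alpha + \alpha' = \tau_i$ is then small), or enlarge the inductive statement to a class of chains permitting bounded reflex angles at intermediate vertices. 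Building in this flexibility in the inductive setup---ideally by generalizing the statement to be self-supporting under collapse---is the subtlest part of the argument.
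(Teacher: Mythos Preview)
The paper does not actually prove Theorem~\thmref{CAL}. It is stated as a summary of results from~\cite{o-ecala-01} (see the text immediately preceding the theorem: ``as described in~\cite{o-ecala-01} \ldots We summarize in a theorem''), and is used as a black box throughout. So there is no in-paper proof to compare against.

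As for your proposal itself: the overall scheme---compactness to get a minimizer $C^*$, a first-order argument showing some constraint is active, then collapsing a saturated triangle and inducting on $n$---is a reasonable line of attack, and your verification that $|\tau^*_{i-1}\pm\alpha|\le \tau_{i-1}+\alpha$ is correct. Your first obstacle (saturation at the minimizer) is not really an obstacle: if no constraint binds, the interior critical-point condition forces $x^*_1,\ldots,x^*_n$ collinear, hence all $\tau^*_i=0$ (since $|\tau^*_i|<\tau_i\le\pi$), which is the \emph{maximizer} of $f$, a contradiction unless $C$ is already straight.

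The genuine gap is exactly the one you flag second: after collapse, the new bound $\bar\tau_{i-1}=\tau_{i-1}+\alpha$ can exceed $\pi$, so $\bar C$ may fail to be a convex chain and the inductive hypothesis does not apply as stated. Your two suggested fixes are both nontrivial. Choosing ``a different saturated vertex'' presupposes there is always one where the collapse preserves convexity, which you have not argued; and ``enlarging the inductive statement'' to allow reflex bounds essentially asks you to prove a strictly stronger theorem, whose statement you have not pinned down. The proof in~\cite{o-ecala-01} handles this by a different organization of the induction (working with the chain and its reflections rather than collapsing triangles), precisely to avoid this convexity-breakdown. If you want to push your approach through, you will need to make one of your two fixes precise and self-supporting under the inductive step---as written, the argument is a plausible outline but not yet a proof.
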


The next elementary result assures the angle increase, 
in the frameworks in which we will apply Cauchy's Arm Lemma.

\begin{lm}
\lemlab{Angles}
Consider three rays $r_1,r_2,r_3$ in $\R^3$, emanating from the point $w$, and put 
$\t_i=\angle(r_i, r_{i+1})$, with $3+1 \equiv 1$ mod $3$.
Then $\theta_1 \leq \theta_2 + \theta_3$.
\end{lm}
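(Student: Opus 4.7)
The plan is to recognize this as the triangle inequality for spherical distance. Let $u_i$ be the unit vector along $r_i$ for $i=1,2,3$. Then the three points $u_1,u_2,u_3$ lie on the unit sphere $S^2 \subset \R^3$, and $\tau_i = \angle(r_i, r_{i+1}) \in [0,\pi]$ is exactly the great-circle (intrinsic) distance from $u_i$ to $u_{i+1}$ on $S^2$. Since great-circle distance is a metric on $S^2$, the inequality $\tau_1 \le \tau_2 + \tau_3$ is immediate from the metric triangle inequality. Since this fact is standard, a short direct proof is nonetheless preferable for self-containment.

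For a self-contained argument I would proceed by an unfolding of the ``spherical triangle'' into a single plane. First reduce to the nontrivial case $\tau_1 \ge \max(\tau_2,\tau_3)$ and $\tau_2+\tau_3 < \pi$; otherwise the conclusion is clear since each $\tau_i \in [0,\pi]$. Then rotate $r_1$ about the axis through $w$ along $r_2$ until it lands in the plane spanned by $r_2$ and $r_3$, on the same side of $r_2$ as $r_3$; call the rotated ray $r'_1$. This rotation preserves $\angle(r_2, r'_1) = \tau_1$ because $r_2$ is the axis.

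The main step, and I expect the only one that needs real verification, is that this rotation did not increase the angle to $r_3$, i.e.\ $\angle(r'_1, r_3) \le \angle(r_1, r_3) = \tau_2$. I would check this by a direct dot-product computation: as $r_1$ is rotated about $u_2$ through angle $\phi$, its unit vector traces a circle at constant latitude on $S^2$ about $u_2$, and $u_1(\phi)\cdot u_3$ takes the form $A + B\cos(\phi-\phi_0)$ for constants $A,B$ depending on the latitude and on $u_3$. This expression is maximized (so the angle to $r_3$ is minimized) precisely when $u_1(\phi)$ enters the half-plane through the $u_2$-axis containing $u_3$, which is exactly the position $r'_1$.

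Having established $\angle(r'_1,r_3) \le \tau_2$, the proof concludes in the single plane containing $r_2, r_3, r'_1$. Since $r_3$ and $r'_1$ both lie on the same side of $r_2$, and $\angle(r_2, r'_1) = \tau_1 \ge \tau_3 = \angle(r_2, r_3)$, the ray $r_3$ lies in the angular sector between $r_2$ and $r'_1$. Hence
\[
\tau_1 \;=\; \angle(r_2, r'_1) \;=\; \angle(r_2, r_3) + \angle(r_3, r'_1) \;\le\; \tau_3 + \tau_2,
\]
completing the proof.
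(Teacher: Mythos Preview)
Your proposal is correct and matches the paper's approach exactly: the paper's entire proof is to intersect the rays with a unit sphere centered at $w$ and invoke the triangle inequality for spherical distance, which is precisely your first paragraph. Your additional self-contained unfolding argument goes beyond what the paper supplies; note, however, that you have swapped the indices $\tau_2$ and $\tau_3$ in that argument (since $\angle(r_1,r_3)=\tau_3$ and $\angle(r_2,r_3)=\tau_2$), though by symmetry of the conclusion this does not affect correctness.
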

\begin{proof}
Imagine a unit sphere $S$ centered on $w$ and let $\{s_i \} = r_i \cap S$, and use $\r$ to indicate spherical distance.
Then the claim of the lemma is the triangle inequality for spherical distances: $\r(s_1,s_2) \le \r(s_1,s_3) + \r(s_2,s_3)$.
\end{proof}


\section{A Rigidity Result}
\seclab{Rigidity}

In this section we present a technical result for later use, which may be of independent interest.
The theorem says that two convex polyhedra that are isometric on all but the neighborhoods of one vertex, are in fact congruent.
We also show this result cannot be strengthened: two convex polyhedra can differ in the neighborhoods of just two vertices.

\begin{thm}
\thmlab{Rigid}
Assume $P,Q$ are convex polyhedra with the same number of vertices, such that 
there are vertices $p\in P$ and $q\in Q$, and respective small neighborhoods $N_p \subset P$, $N_q \subset Q$ not containing other vertices, 
and an isometry $\iota : P\setminus N_p \to Q \setminus N_q$.
Then $P$ is congruent to $Q$.
\end{thm}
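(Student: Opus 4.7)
The plan is to upgrade the partial isometry $\iota$ to an intrinsic isometry $\widetilde{\iota}:P\to Q$ of the entire surfaces, and then invoke the uniqueness clause of AGT (Theorem~\thmref{AGT}) to conclude that $P$ and $Q$ are congruent in $\R^3$. The argument splits naturally into three stages: match the curvatures at $p$ and $q$; extend $\iota$ across the excised caps; and apply AGT.

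For the curvature matching, I would first observe that any intrinsic isometry of polyhedral surfaces sends cone points to cone points of equal angle deficit, since the total surface angle at a point is a local intrinsic invariant. Thus $\iota$ induces an $\omega$-preserving bijection between $V(P)\setminus\{p\}$ and $V(Q)\setminus\{q\}$; combined with the hypothesis $|V(P)|=|V(Q)|$, the ``missing'' vertices $p$ and $q$ must correspond. Applying the Gauss--Bonnet identity $\sum_v \omega(v)=4\pi$ on each of $P$ and $Q$ and subtracting the common sum over the matched vertices yields $\omega(p)=\omega(q)$.

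For the extension step, since $N_p$ contains no vertex other than $p$, the punctured cap $N_p\setminus\{p\}$ is intrinsically flat apart from a single conical singularity at $p$ of cone angle $2\pi-\omega(p)$, and likewise $N_q\setminus\{q\}$ has the equal cone angle $2\pi-\omega(q)$. I would then extend $\iota|_{\partial N_p}$ inward by local development: at each boundary point, the flat structure lets me parallel-transport $\iota$ along any short segment entering the cap, landing in the corresponding segment of $N_q$. Globally $\widetilde{\iota}$ is defined on $N_p\setminus\{p\}$ by analytic continuation along paths, and is single-valued because the holonomy of a loop encircling $p$ is a rotation by $-\omega(p)$, matching the holonomy around $q$. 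Setting $\widetilde{\iota}(p):=q$ extends continuously over the puncture (a single point does not affect intrinsic distances on a compact polyhedral surface), producing an intrinsic isometry $P\to Q$; AGT then delivers the rigid motion realizing the congruence.

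The delicate point is the developing construction, since the hypothesis does not guarantee that $N_p$ is, for instance, a geodesic ball. The safest way to handle this is to first pass to metric disks $B_p(r)\subset N_p$ and $B_q(r)\subset N_q$ of a common small radius, use flatness of the annulus $N_p\setminus B_p(r)$ together with the matching holonomy to extend $\iota$ to a map $P\setminus B_p(r)\to Q\setminus B_q(r)$, and then identify the two standard cone caps $B_p(r)\cong B_q(r)$ of common cone angle and radius; since such caps are rotationally symmetric, the internal rotation can be chosen to make the identification agree with the extension along the boundary circle.
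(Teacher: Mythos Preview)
Your proof is correct and takes a genuinely different route from the paper's. Both arguments first use Gauss--Bonnet to obtain $\omega(p)=\omega(q)$ and both ultimately rest on the uniqueness clause of AGT, but the bridge between these steps differs. The paper picks a generic point $x\in P\setminus N_p$, forms the star-unfoldings $S_P(x)$ and $S_Q(\iota(x))$, and observes that these planar polygons agree outside the wedge at the ramification point $u$ neighbouring $p$; it then pins down $p$ (and $q$) on the bisector of that wedge by the angle condition $\angle x_1 p u=\pi-\tfrac12\omega(p)$, forcing $S_P=S_Q$. Your approach bypasses the star-unfolding entirely: you argue that a flat disk with a single interior cone point is determined, up to isometry fixing the boundary, by its boundary curve together with the cone angle (equivalently, the centre of the holonomy rotation is the cone point), so that $\iota|_{\partial N_p}$ extends over the cap. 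Your route is more intrinsic and arguably cleaner; the paper's route is more explicit and meshes with the cut-locus machinery used elsewhere in the monograph.

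One step in your writeup is correct but compressed: when you say the analytic continuation is ``single-valued because the holonomy of a loop encircling $p$ \ldots\ match[es] the holonomy around $q$'', you are implicitly using that the developed image of a loop around $p$ actually encircles $q$ (and nothing else) in $Q$. This follows since $\widetilde\iota(\partial N_p)=\partial N_q$ and the continuation is through flat points only, so the image of any loop homotopic to $\partial N_p$ in $N_p\setminus\{p\}$ remains homotopic to $\partial N_q$ in $Q\setminus V(Q)$; alternatively, once the local isometry extends to $\widetilde\iota:P\to Q$ sending $p\mapsto q$, it is a local homeomorphism between topological spheres, hence a degree-one covering, hence a global isometry. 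Either way the step is sound, but a sentence making this explicit would tighten the argument.
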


\begin{proof}
The existence of $\iota$ on all but neighborhoods of $p$ and $q$ yields, in particular, that the curvatures 
$\o_P(p)$ of $P$ at $p$ and  $\o_Q(q)$ of $Q$ at $q$ are equal, to satisfy the curvature sum of $4\pi$ (by Gauss-Bonnet). 

Take a point $x \in P$ joined to each vertex of $P$ by precisely one geodesic segment,
a \emph{generic point} $x$.
Such an $x$ 
maybe be found in a ``ridge-free" region of $P$~\cite{aaos-supa-97};
it is equivalent to the fact that no vertex of $P$ is interior to $C(x)$.
Moreover, we may choose $x$ such that $\iota(x)$ has the same property on $Q$.

Denote by $u$ the ramification point of $C(x)$ neighboring $p$ in $C(x)$, i.e., the ramification point of degree $\ge 3$ closest to $p$.
Let $v$ be the similar ramification point of $C(\iota(x))$ neighboring $q$ in $C(\iota(x))$.
Since $N_p$ and $N_q$ are small, we may assume they are disjoint from 
$u$ and $v$ and all the segments described above.

Star unfold $P$ with respect to $x$, and $Q$ with respect to $\iota(x)$, and denote by $S_P$ and  $S_Q$ the resulting planar polygons.
We'll continue to use the symbols $p$ and $q$, $u$ and $v$ to refer to the corresponding points in $S_P$ and $S_Q$ respectively.
Let $x_i$, $i=1,2$ be the images of $x$ surrounding $p$ in $S_P$, and $\iota(x_i)$ the similar images in $S_Q$.
See Fig.~\figref{Tent_star_Cx}(a,b).

By hypothesis, we have respective neighborhoods $\bar{N}_p \subset S_P$ and $\bar{N}_q \subset S_Q$ 
and an isometry $\bar{\iota}$ induced by $\iota$, with $\bar{\iota} : S_P \setminus \bar{N_p} \to S_Q \setminus \bar{N_q}$. 
Thus in Fig.~\figref{Tent_star_Cx}(b), all of $S_P$ outside of the wedge $(x_1,u,x_2)$ is identical in $S_Q$.
Therefore the triangles  ${x}_1{u}{x}_2$ and ${x}'_1 {v} {x}'_2$ are congruent.
Moreover, ${p}$ lies on the bisector of the angle $\angle {x}_1{u}{x}_2$, and ${q}$ lies on the bisector of the angle $\angle {x}'_1 {v} {x}'_2$.
Since $\angle  {x}_1 {p} {u} = \angle  {x}'_1 {q} {v} = \pi - \frac{1}{2}\o(p)= \pi - \frac{1}{2}\o(q)$, $p$ and $q$ are uniquely determined.
Consequently,  $S_P$ and $S_Q$ coincide, and refolding according to the same gluing identifications leads to congruent $P$ and $Q$.
\end{proof}

\begin{figure}
\centering
\includegraphics[width=1.0\textwidth]{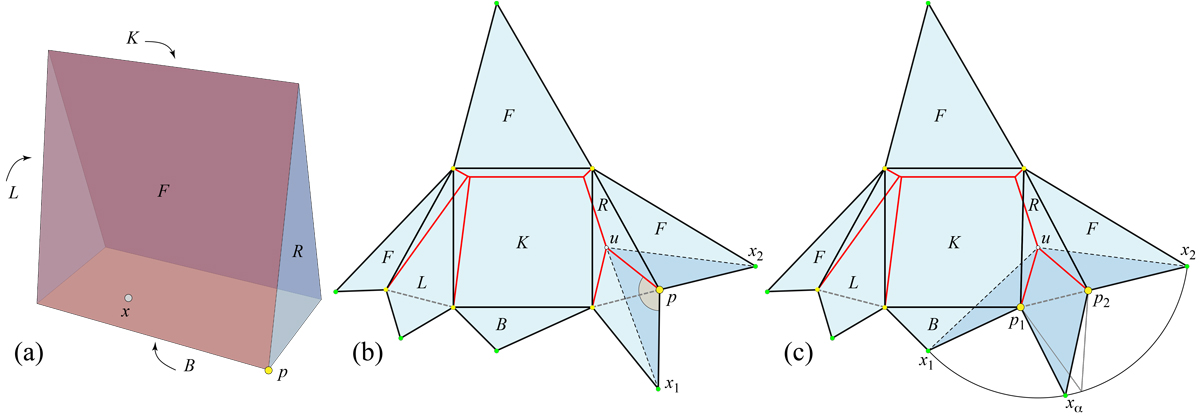}
\caption{
(a)~A $6$-vertex polyhedron $P$. The $F$ and $K$ faces are unit squares; 
$B$ is a $1 \times \frac{1}{2}$ rectangle, with $x \in B$.
(b)~Star-unfolding $S_P$ of $P$. $\angle  {x}_1 {p} {u}$ is marked.
(c)~Moving $x_\a$ on the circle arc moves the bisectors
incident to $u$, and so moves $p_1$ and $p_2$.
Refolding results in a polyhedron incongruent to~(a).}
\figlab{Tent_star_Cx}
\end{figure}

\begin{rmk}
\rmklab{RigidNo2}
It is perhaps surprising that the above result cannot be extended to 
claim that isometries excluding neighborhoods of two vertices
always imply congruence.
\end{rmk}

\begin{proof}
If the points $p_1, p_2\in P$ and $q_1, q_2 \in Q$ do not have a common neighbor in $C(x)$ and $C(\iota(x))$ respectively, the above proof establishes rigidity.

Next we focus on $P$, and try to find positions for $p_1, p_2\in P$ determined by the hypotheses.
Assume, in the following, that  $p_1, p_2\in P$  have a common degree-$3$ ramification neighbor $u$ in $C(x)$.

Star unfold $P$ with respect to some $x \in P$,  to $S_P$. 
The region of $S_P$ exterior to the wedge $(x_1, u, x_2)$ is uniquely determined and identical in $S_Q$.
See Fig.~\figref{Tent_star_Cx}(c).

Take a point $x_\a$ on the circle of center ${u}$ and radius $|x_1 u|=|x_2 u|$.
We now argue that positions of $x_\a$ on this circle allow $p_1$ and $p_2$ to vary while maintaining all outside of the $(x_1, u, x_2)$ wedge fixed.

Let $\angle {x}_\a{u}{x}_1 = 2 \a$.
On the bisector of that angle incident to $u$, one can uniquely determine a point ${p}_1$ such that $\angle  {x}_1 {{p_1}} {u} = \pi - \frac{1}{2}\o(p_1)$.
Similarly, one can uniquely determine a point ${p}_2$ on the bisector of that angle  $\angle {x}_\a{u}{x}_2$, such that $\angle  {x}_2 {{p_2}} {u} = \pi - \frac{1}{2}\o(p_2)$.

Thus we have identified a continuous $1$-parameter family of star-unfoldings, and consequently of convex polyhedra, verifying the hypotheses.
\end{proof}


\section{Vertex-Merging}
\seclab{VertexMerging}

Digon-tailoring is, in some sense, the opposite of \emph{vertex-merging}, a technique introduced by A.~D.~Alexandrov~\cite[p.~240]{a-cp-05},
and subsequently used later by others, 
see e.g.~\cite{z-ipt-07}, \cite{ov-ceccc-14}, \cite{o-vtcp-2020}.
We will employ vertex-merging
in Chapters~\chapref{TailoringFlattening} and~\chapref{Punfoldings}, 
and  focus on it in Part~II.

Consider two vertices $v_1, v_2$ of $P$ of curvatures $\o_1, \o_2$, with $\o_1 + \o_2 < 2 \pi$, and cut $P$ along a 
geodesic segment $\g$ joining $v_1$ to $v_2$. 
Construct a planar triangle $T = \bar{v}' \bar{v}_1 \bar{v}_2$ of base length $|\bar{v}_1 - \bar{v}_2|=|\g|$
and the base angles equal to $\o_1 /2$ and $\o_2 /2$ respectively. 
Glue two copies of $T$ along the corresponding lateral sides, and further glue the two bases of the copies to the two ``banks" of the cut of $P$ along $\g$. 
By Alexandrov's Gluing Theorem (AGT), the result is a convex polyhedral surface $P'$. 
On $P'$, the points (corresponding to) $v_1$ and $v_2$ are no longer vertices because exactly the angle deficit at each has been
sutured-in; they have been replaced by a new vertex $v'$ of curvature $\o' = \o_1 +\o_2$.
So vertex-merging always reduces the number of vertices of $P$ by one.
See Fig.~\figref{VertMerge}.

\begin{figure}[htbp]
\centering
\includegraphics[width=1.0\textwidth]{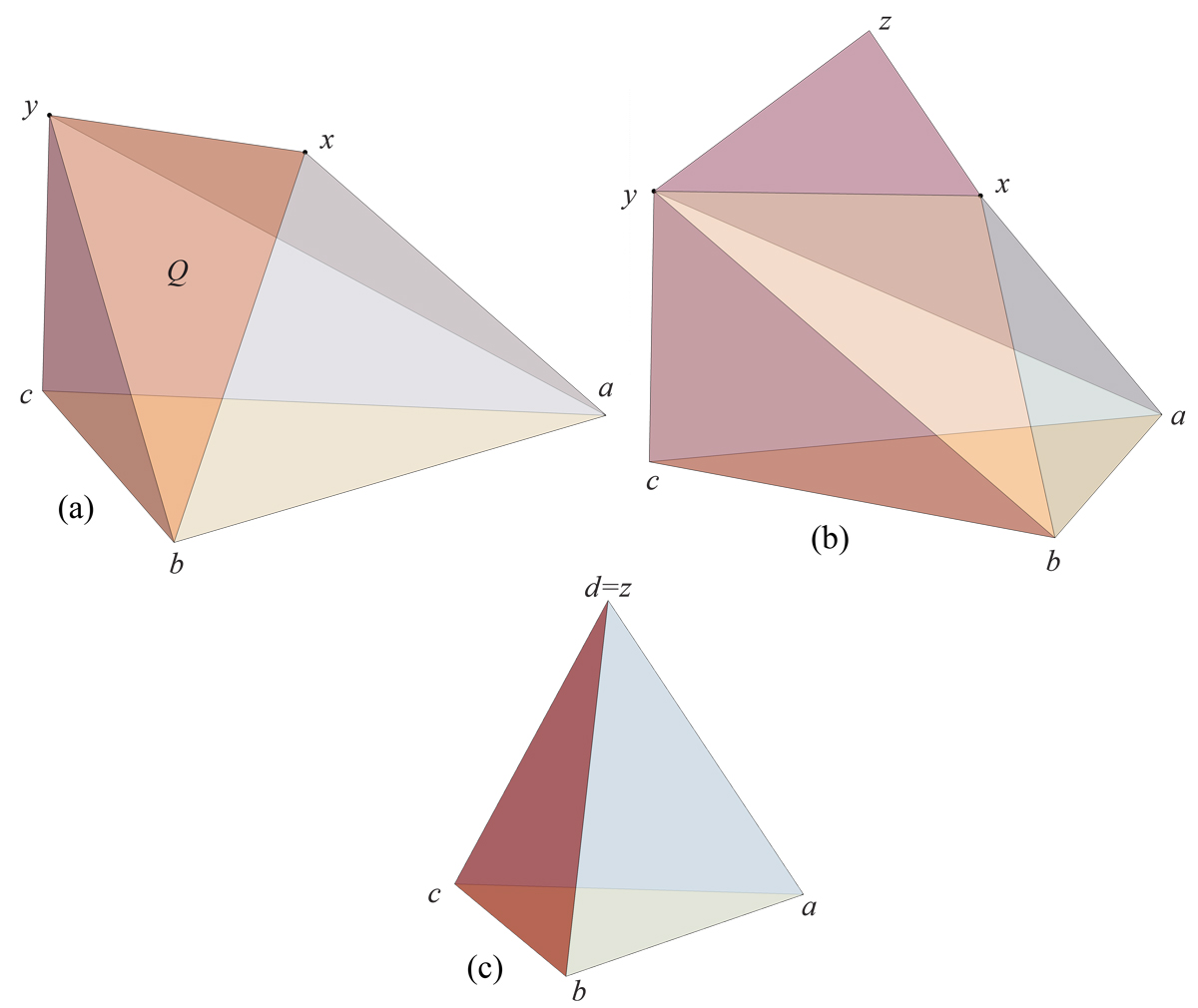}
\caption{(a)~$Q$ is a $5$-vertex, $6$-face polyhedron,
symmetric about a vertical plane through $xy$.
Its base $abc$ is an equilateral triangle.
(b)~Vertex merging $x$ and $y$ by gluing a pair of $xyz$ triangles.
(c)~The merging reduces $Q$ to a regular tetrahedron (not to same scale).
Cf.~Fig.~\protect\figref{Tetra_3D_digons}(a,b).
}
\figlab{VertMerge}
\end{figure}

In order to repeat vertex-merging, we need to know when there is 
a pair of vertices that can be merged.
This is answered in the following lemma.

\begin{lm}
\lemlab{IsoTetra}
Every convex polyhedron $Q$ has at least one pair of vertices admitting merging,
unless it is an isosceles tetrahedron or a doubly-covered triangle.
\end{lm}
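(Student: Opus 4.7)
The plan is to argue by contradiction, exploiting the global curvature identity $\sum_i \omega_i = 4\pi$ (Gauss--Bonnet). Suppose $Q$ has $n$ vertices and no pair admits merging, so $\omega_i + \omega_j \ge 2\pi$ for every pair $\{i,j\}$. Summing this inequality over the $\binom{n}{2}$ pairs, each $\omega_i$ is counted $n-1$ times, giving
\[
4\pi(n-1) \;=\; (n-1)\sum_i \omega_i \;\ge\; 2\pi\binom{n}{2} \;=\; \pi\,n(n-1),
\]
which simplifies to $n \le 4$. Since every vertex of a convex polyhedron has $0 < \omega < 2\pi$, the cases $n \le 2$ are incompatible with $\sum_i \omega_i = 4\pi$, so only $n = 3$ and $n = 4$ need to be analyzed.

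For $n=3$, a non-degenerate convex polyhedron would require $V-E+F=2$ with $2E \ge 3V = 9$ and $2E \ge 3F$; these force $E \ge 5$ but $E \le 3$, a contradiction. Hence $Q$ is degenerate, and the only convex polyhedral surface on three vertices is a doubly-covered triangle---one of the listed exceptional cases. (That this shape indeed admits no merge is a quick check: if the triangle has angles $\alpha_1,\alpha_2,\alpha_3$ summing to $\pi$, then $\omega_i+\omega_j = 2\pi + 2\alpha_k > 2\pi$.)

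For $n=4$, equality must hold throughout the summation, forcing $\omega_i + \omega_j = 2\pi$ for every pair; subtracting pairs of these equations then yields $\omega_1 = \omega_2 = \omega_3 = \omega_4 = \pi$. I would then identify $Q$ as an isosceles tetrahedron by unfolding. If $Q$ is degenerate, it is a doubly-covered quadrilateral whose vertex curvatures $\pi$ force each interior angle to equal $\pi/2$, making $Q$ a doubly-covered rectangle---a flat limit of an isosceles tetrahedron, in the convention of Example~\exref{iso_tetra}. Otherwise $Q$ has four triangular faces; fix one face $F$, say $v_2 v_3 v_4$, and unfold the three adjacent faces across the edges of $F$ into its plane. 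The curvature-$\pi$ condition at each $v_j \in F$ says the three face angles meeting at $v_j$ sum to $\pi$, so the two unfolded copies of the opposite vertex $v_1$ adjacent to $v_j$ must lie on a straight line through $v_j$, with $v_j$ as their midpoint. The three unfolded images of $v_1$ therefore form a larger planar triangle whose medial triangle is exactly $F$, showing all four faces of $Q$ are congruent to $F$---the defining property of a tetramonohedron.

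The main obstacle is this last step of the $n=4$ analysis, promoting ``all curvatures equal $\pi$'' to ``$Q$ is an isosceles tetrahedron.'' The medial-triangle unfolding closes up consistently because both the face-angle-sum $=\pi$ and vertex-angle-sum $=\pi$ conditions are used at each base vertex to produce a straight angle; the degenerate (flat) case must then be accommodated as a doubly-covered rectangle under the monograph's convention from Example~\exref{iso_tetra}.
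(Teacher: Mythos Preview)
Your proof is correct and follows essentially the same approach as the paper: use Gauss--Bonnet to force $n\le 4$, then dispatch $n=3$ and $n=4$ separately. The paper's version is terser---it bounds $n$ via the two smallest curvatures rather than summing over all pairs, handles $n=3$ by invoking Lemma~\lemref{Path} (cut locus a path $\Rightarrow$ doubly-covered polygon) rather than Euler's formula, and simply asserts that all-$\pi$ curvatures give an isosceles tetrahedron---whereas you supply the medial-triangle unfolding explicitly; but these are minor variations on the same argument.
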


\noindent
Recall from Chapter~\chapref{IntroductionPartI} that
an \emph{isosceles tetrahedron} $T$ is a tetrahedron with 
four congruent faces, and each vertex of curvature $\pi$.
See Fig.~\figref{IsoscTetra}.
\begin{proof}
If there is a pair of vertices whose sum of curvatures is strictly less than $2\pi$,
then vertex-merging is possible, as just described.
So assume that, for any two vertices of $Q$, their sum of curvatures is at least $2 \pi$.
In this case, it must be that $n \leq 4$. Indeed, since $\sum_{v \in Q} \o(v) = 4\pi$ 
(by the Gauss-Bonnet theorem),
if the sum of at least $5$ positive numbers is $4 \pi$ then the smallest two have sum $< 2\pi$.

If $n=3$, the cut locus of any vertex is a line-segment, by Lemma~\lemref{CutLocusBasic}(i), so $Q$ is a  doubly-covered triangle (see Lemma~\lemref{Path}).

If $n=4$ then necessarily all vertex curvatures of $Q$ are $\pi$. 
Indeed, if the sum of $4$ positive numbers is $4 \pi$ then the smallest two have the sum $\leq 2\pi$, with equality if and only if all are $\pi$. 
So $Q$ is an isosceles tetrahedron.
\end{proof}



\chapter{Domes and Pyramids}
\chaplab{Domes}
One of our goals in this work, achieved in Theorem~\thmref{MainTailoring}, is to show that if $Q$ can be obtained from $P$ by sculpting, then it can also be obtained from $P$ by tailoring.
A key step (Lemma~\lemref{Slice2gDomes}) repeatedly slices off shapes we call g-domes.
Each g-dome slice can itself be achieved by slicing off pyramids, i.e., by suitable vertex truncations.
Lemma~\lemref{VertexTruncation} will show that slicing off a pyramid can be achieved by tailoring, and thus leading to Theorem~\thmref{MainTailoring}.

Our main goal in this chapter is to prove that g-domes can be partitioned into ``stacked" pyramids, a crucial part of the above process.
We illustrate the slice~$\to$~g-domes and g-dome~$\to$~pyramids of the process
on a simple example, a tetrahedron $Q$ inside a cube $P$.
Later (in Chapter~\chapref{TailoringSculpting}) this example will be completed
by tailoring the pyramids.


\section{Domes}
\seclab{Domes}

As usual, a \emph{pyramid} $P$ is the convex hull of a convex polygon \emph{base} $X$,
and one vertex $v$, the \emph{apex} of $P$, that does not lie in 
the plane of $X$. The degree of $v$ is the number of vertices of $X$. 

A \emph{dome} is a convex polyhedron $G$ with a distinguished face $X$, the \emph{base}, and such that every other face of $G$ shares a (positive-length) edge with $X$.
Domes have been studied primarily for their combinatorial~\cite{eppstein2020treetopes},~\cite{eppstein2013bounds} or unfolding~\cite{do-gfalop-07} properties.
In~\cite{eppstein2020treetopes} they are called ``treetopes" because removing the base edges from the $1$-skeleton leaves a tree, which the author calls the \emph{canopy}.\footnote{
These polyhedra are not named in~\cite{eppstein2013bounds}.}
Here we need a slight generalization.

A \emph{generalized-dome}, or \emph{g-dome} $G$, has a base $X$, with every other face of $G$ sharing either an edge or a vertex with $X$.
Every dome is a g-dome, and it is easy to obtain every g-dome as the limit of domes.
An example is shown in Fig.~\figref{figg-dome}, which also shows that removing base edges from the $1$-skeleton does not necessarily leave a tree: $(v_1,x_2,v_2)$ forms a cycle.
Let us define the \emph{top-canopy} $T$ of a g-dome $G$ as the graph that results by deleting from the $1$-skeleton of $G$ all base vertices and their incident edges. 
In Fig.~\figref{figg-dome} the top-canopy is $v_1 v_2$.

\begin{figure}
\centering
 \includegraphics[width=0.5\textwidth]{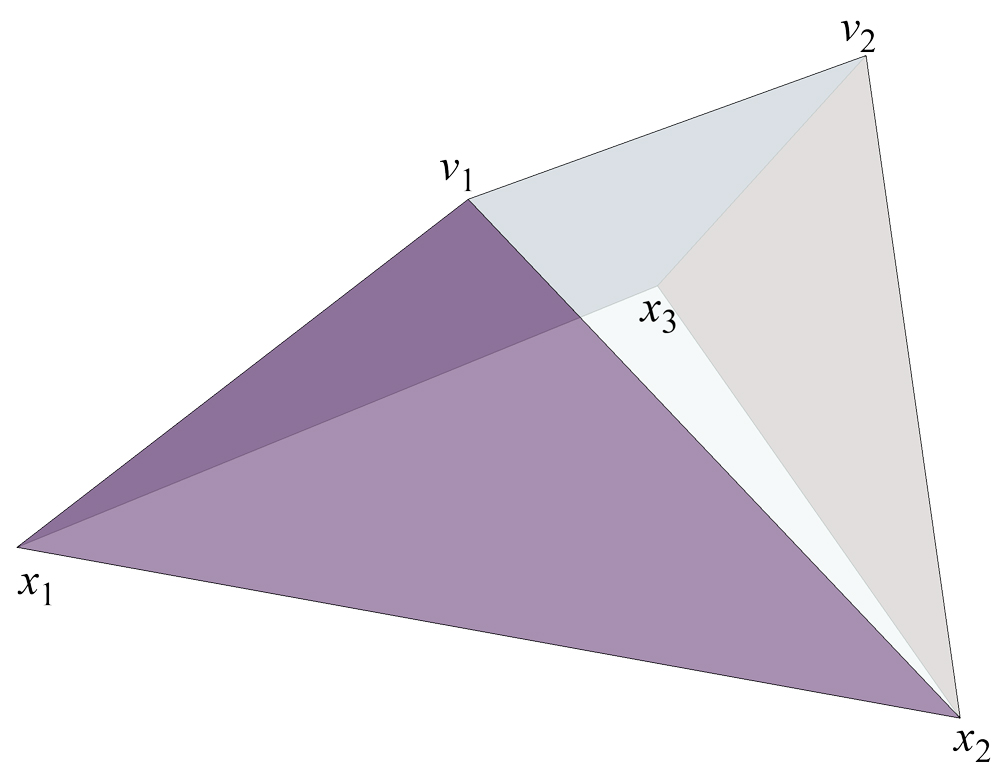}
\caption{A g-dome with base $\triangle x_1 x_2 x_3$ and top-canopy $v_1 v_2$.
}
\figlab{figg-dome}
\end{figure}

\begin{lm}
\lemlab{canopy}
The top-canopy $T$ of a g-dome $G$ is a tree.
\end{lm}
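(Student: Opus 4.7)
My plan is to combine a geometric observation about how each non-base face meets $X$ with Euler's formula and a Jordan-curve argument.

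First, I would establish the structural observation that for every non-base face $f$, the intersection $f \cap X$ is either a single vertex or a single edge of $G$. This uses the fact that the intersection of two distinct 2-faces of a convex polyhedron is a face of dimension at most $1$. If $f$ contained three base vertices, their convex hull in $f$ would yield a triangle inside $f \cap X$, violating this dimension bound. If $f$ contained exactly two base vertices $x, y$, then by convexity the segment $xy$ lies in $f \cap X$, forcing $f \cap X = xy$ to be an edge of $G$---necessarily a base edge---so $x$ and $y$ are adjacent on $\partial X$. Combined with the g-dome hypothesis (every non-base face meets $X$), this shows each non-base face $f$ has a single \emph{base arc} of $1$ or $2$ consecutive base vertices and therefore contains exactly two \emph{slant} edges (edges with one base and one non-base endpoint) on its boundary.

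Next, I apply Euler's formula. Let $k$ be the number of base vertices (equal to the number of base edges), $V'$ the number of non-base vertices, $E_s$ the number of slant edges, $E_t$ the number of top edges (i.e.~edges of $T$), and $F' = F - 1$ the number of non-base faces. Counting incidences of (slant edge, non-base face) two ways---each slant edge bounds two non-base faces, and each non-base face carries exactly two slant edges---yields $2 E_s = 2 F'$, so $E_s = F'$. Substituting $V = k + V'$, $E = k + E_s + E_t$, and $F = 1 + F'$ into $V - E + F = 2$ gives $V' - E_t = 1$.

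For acyclicity, I view the surface $D = G \setminus \mathrm{int}(X)$ as a topological disk with boundary $\partial X$. Any cycle $C$ in $T$ is a simple closed curve in $D$ disjoint from $\partial D$, so by the Jordan curve theorem it bounds a non-empty region $R$ in $D$ that does not meet $\partial D$. Since $D$ is tiled by non-base faces, $R$ contains some non-base face $f$ with $\partial f \subset R \cup C$; hence $\partial f \cap \partial X = \emptyset$, meaning $f$ has no base vertex---contradicting the g-dome property. Thus $T$ is acyclic, and combined with the identity $|V(T)| - |E(T)| = V' - E_t = 1$ this forces $T$ to have exactly one connected component, so $T$ is a tree.

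The main obstacle is the first step: using convexity to rule out a non-base face meeting $X$ in two disjoint base arcs (which \emph{would} be compatible with the literal g-dome definition were the faces not required to be convex polygons on a convex polytope). Once that combinatorial control is in place, the Euler count and the Jordan-curve argument are routine.
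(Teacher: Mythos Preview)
Your argument is correct. The structural lemma (each non-base face meets $X$ in a single vertex or a single edge, hence carries exactly two slant edges) is the right starting point, the Euler count $V'-E_t=1$ goes through as you wrote it, and the Jordan-curve step cleanly rules out cycles: any face trapped inside such a cycle would miss $\partial X$ entirely.

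The paper takes a quite different route. For a \emph{dome} it simply invokes the known fact (from Eppstein's ``treetope'' work) that deleting the base edges from the $1$-skeleton already yields a tree---the ``canopy''---and observes that in a dome every base vertex is a leaf of that canopy, so pruning those leaves still leaves a tree. For a genuine \emph{g-dome} the paper slices with a plane parallel to and just above $X$: each face that met $X$ only at a vertex now meets the new base in an edge, so the slice is a dome with the identical top-canopy, and the dome case applies.

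So the paper's proof is a two-line reduction to a cited result plus a geometric trick, while yours is a self-contained combinatorial argument that treats domes and g-domes uniformly. Your approach buys independence from the external reference and avoids the slicing construction; the paper's approach buys brevity once the Eppstein result is on the table. Both the $E_s=F'$ double-count and the Jordan-curve acyclicity argument you give are in fact the natural ingredients one would use to \emph{prove} the cited canopy-is-a-tree fact from scratch, so in a sense you have unpacked what the paper takes as known.
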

\begin{proof}
If $G$ is a dome, the claim follows, because even including the edges incident to the base $X$ results in a tree, and removing those edges leaves a smaller tree.

If $G$ is not a dome, then slice it with a plane parallel to, and at small distance above, the base. 
The result is a dome, and we can apply the previous reasoning.
\end{proof}


\section{Cube/Tetrahedron Example}
\seclab{CubeTetra12}

In this section we start to track an example, tailoring a cube
to a tetrahedron. 
$P$ is a triangulated unit cube.
$Q$ is a right tetrahedron in the corner of $P$;
see Fig.~\figref{CubeTetra_1}(a).
$Q$ can be obtained from $P$ by a single slice by plane 
$\Pi= p_1 p_3 p_8$.
The algorithm to be described in Chapter~\chapref{TailoringAlg1}
will first produce two g-domes $G_1$ and $G_2$,
and reducing those g-domes will lead to four pyramids $P_1,P_2,P_3,P_4$.
Later (in Chapter~\chapref{TailoringSculpting})
we will show how each of those pyramids is reduced by digon-tailoring,
finally yielding $Q$.


\subsection{Slice~$\to$~G-Domes for Cube/Tetrahedron}
Now we describe at an intuitive level what will be proved in
Lemma~\lemref{Slice2gDomes}:
that the slice by $\Pi$ can be effected
by partitioning
the sliced portion of $P$ into g-domes.

We start by 
rotating the plane $\Pi$ about the edge $p_1 p_3$ until it encounters a face of $P$;
call that plane $\Pi_0 = p_1 p_2 p_3$.
Now continue rotating until we reach $\Pi_1$ when
(a)~The portion of $P$ between $\Pi_0$ and $\Pi_1$ is a g-dome with
base on $\Pi_1$, and
(b)~Any further rotation would render the portion between the planes 
to a polyhedron no longer a g-dome.
$\Pi_1 = p_1 p_3 p_7 p_5$, because any further rotation would isolate
the face $p_5 p_6 p_7$ of $P$, no longer touching $\Pi_1$.
Note here the top face of the cube is triangulated with the diagonal $p_5 p_7$.
See Fig.~\figref{CubeTetra_1}(bc).
Continuing to rotate, we reach the original $\Pi$, partitioning the g-dome
illustrated in Fig.~\figref{CubeTetra_1}(d).
So $G_1$ has base $X_1= p_1 p_3 p_7 p_5$ and top-canopy $p_2 p_6$,
and $G_2$ has base $X_2= p_1 p_2 p_3$ and top-canopy $p_5 p_7$.
It is clear that removing $G_1 \cup G_2$ would reduce $P$ to $Q$.

\begin{figure}[htbp]
\centering
\includegraphics[width=1.1\linewidth]{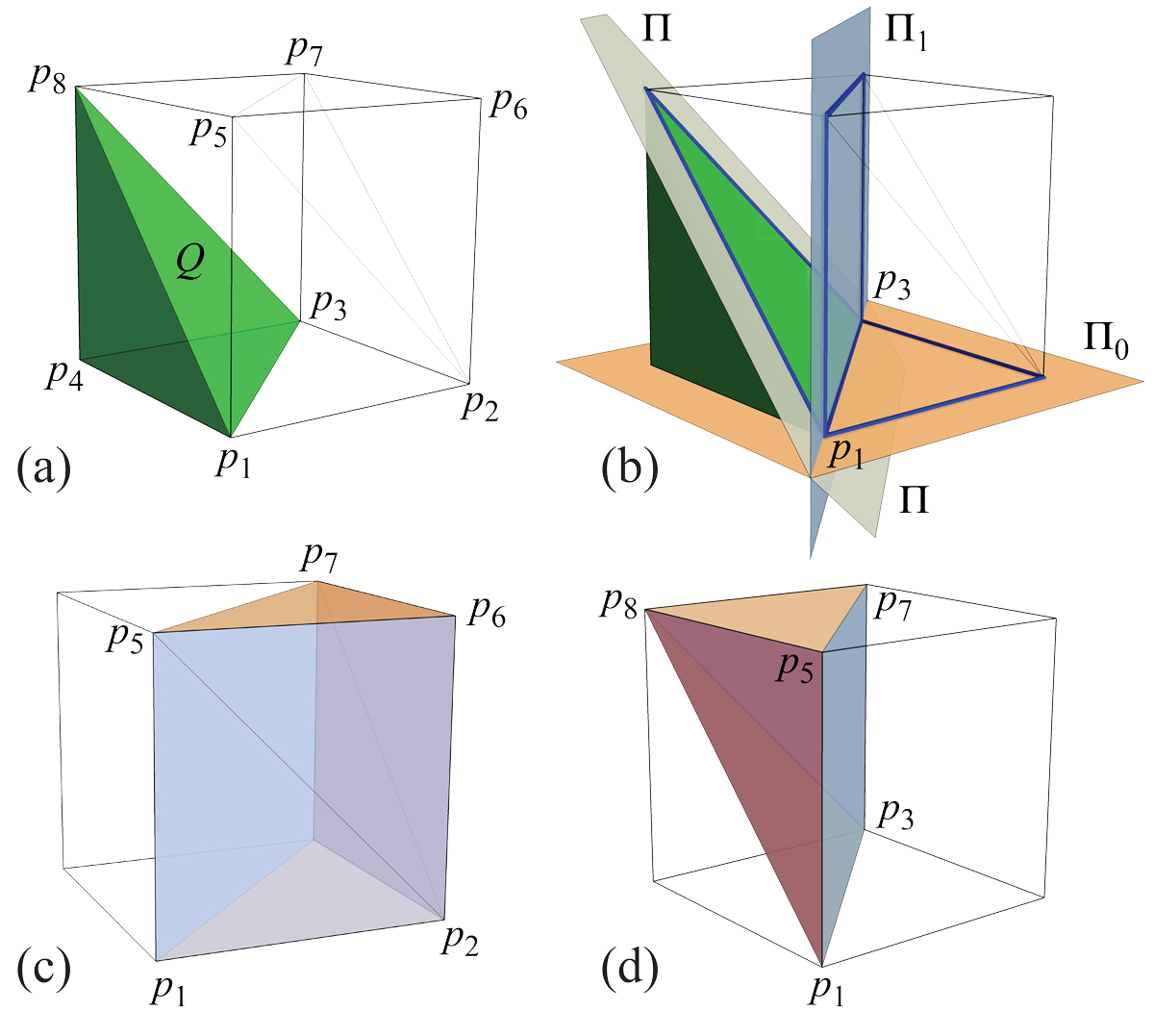}
\caption{(a)~$Q$ in corner of cube $P$.
(b)~Partitioning sliced portion of $P$ into two g-domes.
(c,d)~G-domes $G_1$ and $G_2$.}
\figlab{CubeTetra_1}
\end{figure}

\begin{figure}
\centering
 \includegraphics[width=1.0\textwidth]{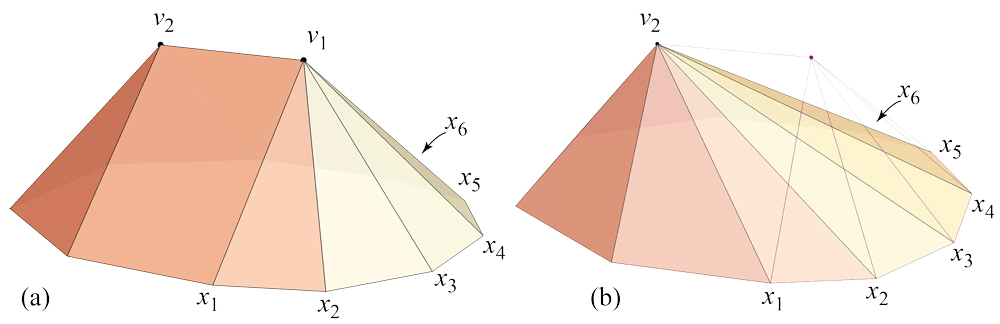}
\caption{
(a)~A dome $G$. Vertex $v_1$ of degree $k_1=6$ is adjacent to $X$, i.e.,
there are edges $v_1 x_i$. 
(b)~$G'$ after removal of $v_1$.}
\figlab{DomeSlice_begend}
\end{figure}


\section{Proof: G-dome~$\to$~Pyramids}

We now prove the reduction of a g-dome to pyramids,
using Fig.~\figref{DomeSlice_begend} to illustrate the proof details.
Afterward we will return to the cube/tetrahedron example
to apply the constructive proof to $G_1$ and $G_2$.

\begin{thm}
\thmlab{DomePyr}
Every g-dome $G$ of base $X$ can be partitioned into a finite sequence of $n$ pyramids $P_i$ with the following properties:
\begin{itemize}
\squeezelist
\item Each $P_i$ has a common edge with $X$.
\item Each $G_j = G \setminus \bigcup_{i=1}^j P_i$ is a g-dome, for all $j=1,...,n$.
\item The last pyramid $P_n$ in the sequence has the same base $X$ as $G$.
\end{itemize}
\end{thm}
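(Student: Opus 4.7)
The plan is to induct on $|T|$, the number of vertices of the top-canopy of $G$. For the base case $|T|=1$ the unique canopy vertex $v$ is joined by edges to every vertex of $X$, so $G$ is itself a pyramid with apex $v$ and base $X$; setting $n=1$ and $P_1=G$ satisfies all three required properties.

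For the inductive step $|T|\ge 2$, since $T$ is a tree (Lemma~\lemref{canopy}) it has a leaf $v_1$; let $v_2$ be its unique $T$-neighbor. Because $v_1$ is a leaf of $T$, its only non-base neighbor in $G$ is $v_2$, and its remaining neighbors are vertices $y_1,\dots,y_m$ of $X$ with $m\ge 2$ (since the degree of a vertex of a convex polyhedron is at least $3$). The aim is to carve off a pyramid $P_1$ (or a short sequence of pyramids, if a single slice does not suffice) with apex $v_1$, sharing an edge with $X$, such that $G\setminus P_1$ is again a g-dome with base $X$ but with one fewer canopy vertex; the inductive hypothesis then yields $P_2,\dots,P_n$, with the last $P_n$ having base $X$.

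To construct $P_1$: choose a face $F$ of $G$ incident to $v_1$ that shares an edge $e = x_i x_{i+1}$ with $X$. Such an $F$ exists in generic configurations---a middle face between two consecutive base-neighbors of $v_1$ has all non-$v_1$ vertices on $X$ and must contain an $X$-edge---and the non-generic case (where an intermediate non-leaf canopy vertex sits on a face of $v_1$) will be reduced by a small perturbation of $G$ or by choosing a different leaf. Rotate the plane of $F$ about the axis $e$ into the interior of $G$, and let $\Pi$ denote the position of the rotated plane at the first moment it contains a vertex $w$ of $G$ other than the endpoints of $e$. The region of $G$ cut off by $\Pi$ on the $v_1$-side is the candidate $P_1$: its only vertex interior to the open slab between $F$'s plane and $\Pi$ is $v_1$, making $P_1$ a cone with apex $v_1$, base a polygon in $\Pi$, and one base edge $e$ lying on $X$. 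If $w=v_2$, the slab contains the entire star of $v_1$ and $P_1$ removes $v_1$ from the vertex set in one step; otherwise $w$ is a further base-neighbor of $v_1$, the slice is a ``sliver," and $v_1$ survives in $G\setminus P_1$ with strictly fewer incident base-edges, so iterating the rotation construction (at most $\deg(v_1)$ times) excises $v_1$. Each intermediate $G_j$ remains a g-dome with base $X$: the new face introduced in $\Pi$ contains $e\subset X$ and so shares an edge with $X$, while every remaining face of $G_j$ is inherited (possibly trimmed) from $G$ and retains its edge- or vertex-contact with $X$.

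\textbf{Main obstacle.} The crux is verifying that the open slab between $F$'s plane and $\Pi$ contains no vertex of $G$ beyond $v_1$, so that the excised region is a genuine pyramid over a single apex. This reduces to a careful case analysis on the first-hit vertex $w$, together with a generic-position perturbation to eliminate degenerate simultaneous-hit configurations---and, more delicately, to handle g-domes in which the ``middle faces'' at $v_1$ fail to carry a full $X$-edge because a non-leaf canopy vertex appears on the face boundary. In such cases one must either refine the choice of $F$ or slice off a preliminary pyramid with apex at a different leaf first, so that the structure around $v_1$ becomes amenable to the rotation argument.
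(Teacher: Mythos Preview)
Your outer plan—induct on $|T|$, pick a leaf $v_1$ with parent $v_2$, and slice off pyramids sharing a base edge with $X$ until $v_1$ disappears—matches the paper. The gap is in the slicing mechanism and the induction bookkeeping.

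The dichotomy you set up for the first-hit vertex $w$ is incorrect. Rotating $\Pi$ about $e\subset X$ from $F$'s plane into $G$, the plane sweeps through the dihedral angle at $e$ and reaches the base plane only at the very end of the rotation; every base vertex $x_j\notin e$ lies in that plane and is therefore hit only then. So $w$ is never ``a further base-neighbor of $v_1$''; it is necessarily a canopy vertex. Under your own description, $v_1$ is then the unique vertex of $G$ strictly on the $F$-side of $\Pi$, so $v_1\in P_1$ and is removed in the very first slice—the ``$v_1$ survives with fewer base-edges'' branch never arises.

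That looks like good news, but it breaks the induction. The plane $\Pi$ meets every edge $v_1 x_j$ with $j\neq i,i{+}1$ (and the edge $v_1 v_2$ unless $w=v_2$) in interior points $a_j$. These become new vertices of $G_1=G\setminus P_1$ lying strictly above $X$, hence new vertices of the top-canopy. Thus $|T(G_1)|=|T(G)|-1+(\deg v_1-3)$ typically \emph{increases}, and your induction on $|T|$ does not close without further work. (Your ``Main obstacle'' paragraph worries about generic position, but this is the real obstacle.)

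This is exactly what the paper's proof confronts. Rather than rotating to a first-hit vertex, it slices deliberately through $v_2$ (the plane $x_1 x_2 v_2$), accepts the chain $a_3,\dots,a_k$ of new canopy vertices, and then runs an \emph{inner} induction: a sequence of tetrahedron and degree-$4$ pyramid removals, each sharing an edge with $X$, that strips the $a_j$ one by one until a single survivor $a_k$ replaces $v_1$ with degree exactly one less. Iterating this inner loop removes $v_1$ entirely while keeping every intermediate $G_j$ a g-dome. Your plan is missing this clean-up phase (or any equivalent mechanism to control canopy growth); the perturbation and ``choose another leaf'' suggestions do not supply it, and in any case a perturbation argument cannot establish the combinatorial partition for the given $G$.
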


The partition into pyramids specified in this theorem
has the special properties listed. Without these properties,
it would be easier to prove that every g-dome may be partitioned into pyramids.
But the properties are needed for the subsequent digon-tailoring
described in Chapter~\chapref{TailoringSculpting}.
In particular, the pyramids are ``stacked" in the sense that each can be
sliced-off in sequence, with each slice retaining $X$ unaltered.


The proof is a double induction, and a bit intricate.
One induction simply removes one vertex $v_1$ from the top-canopy.
The second induction, inside the first one, reduces the degree of $v_1$ to achieve removal of $v_1$, at the cost of increasing the degree of $v_2$.

\begin{figure}
\centering
 \includegraphics[width=1.0\textwidth]{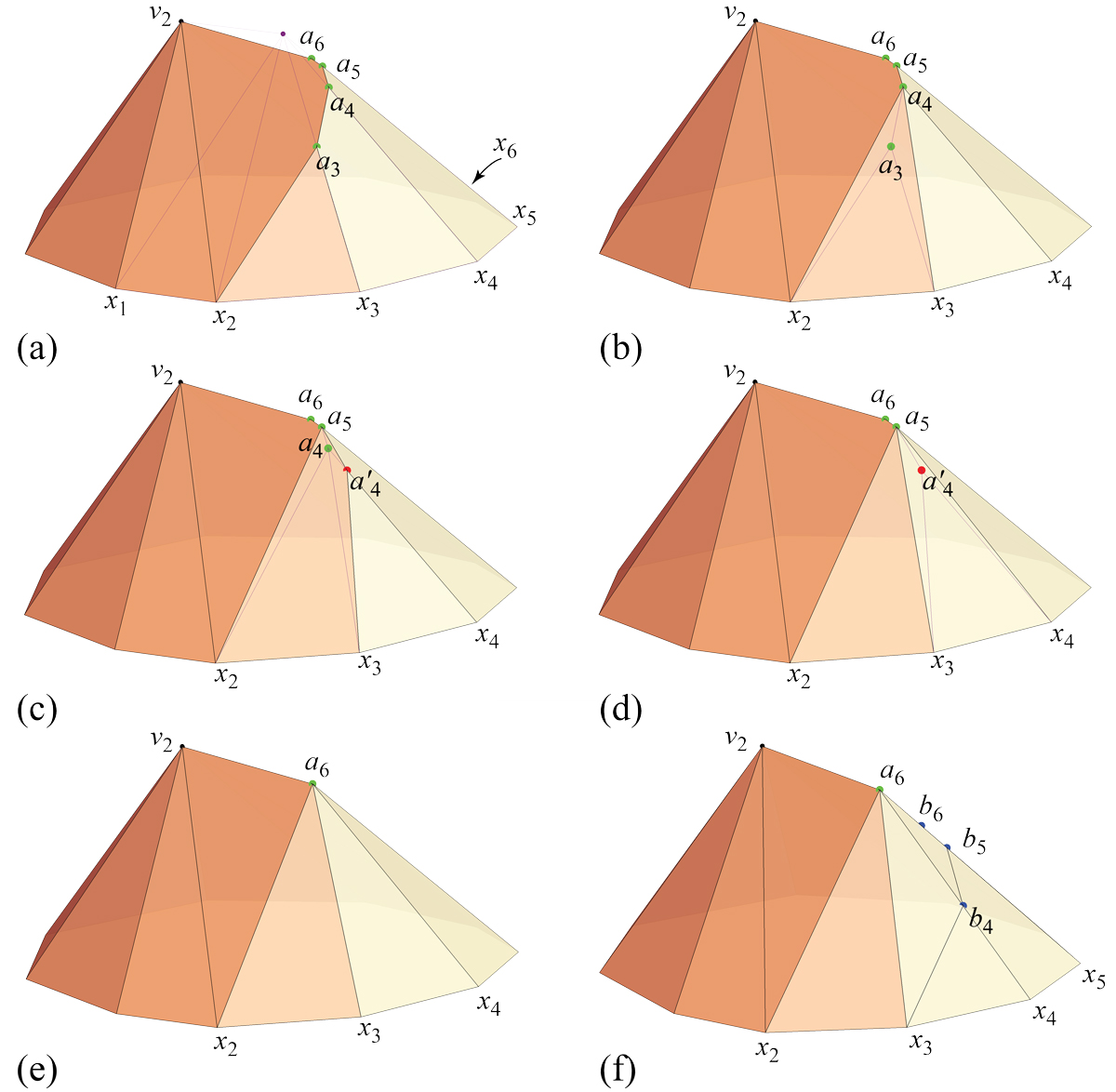}
\caption{
(a)~After slicing by the plane $\Pi_1=x_1x_2v_2$.
(b)~Removing $a_3$.
(c)~Removing $a_4$ creates $a'_4$.
(d)~Removing $a'_4$.
(e)~$a_6$ has replaced $v_1$ with one lower degree.
(f)~Intersection points with $\Pi_2=x_2 x_3v_2$.
}
\figlab{SliceAway_123456}
\end{figure}

\noindent
\begin{proof}
Let $m$ be the number of vertices in the top-canopy $T$ of $G$.
If $m=1$, $G$ is already a pyramid, and we are finished. So assume $G$'s top-canopy $T$ has at least two vertices.
Choose $v_1$ to be a leaf of $T$ given by Lemma~\lemref{canopy}, and $v_2$ its unique parent.
Let $v_1$ be adjacent to $k$ vertices of $X$.
If $G$ is a dome, $v_1$ has degree $k+1$; if $G$ is a g-dome, then possibly $v_1$ has degree $k+2$.
Since the later case changes nothing in the proof, we assume for the simplicity of exposition that $G$ is a dome.
Our goal is to remove $v_1$ through a series of pyramid subtractions.

Let the vertices of $X$ adjacent to $v_1$ be $x_1,x_2,\ldots,x_k$. 
Let $\Pi_1$ be the plane $x_1x_2v_2$.
This plane cuts into $G$ under $v_1$, and intersects the edges $v_1 x_i$, $i \ge 3$, in points $a_i$.
In Fig.~\figref{SliceAway_123456}(a), those points are $a_3,a_4,a_5,a_6$.
Remove the pyramid whose apex is $v_1$ and whose base (in our example) is $x_1,x_2,a_3,a_4,a_5,a_6,v_2$.

We now proceed to reduce the chain of new vertices $a_3,\ldots,a_k$ one-by-one until only $a_k$ remains.

First, with the plane $x_2x_3a_4$, we slice off the tetrahedron whose degree-$3$ apex is $a_3$; Fig.~\figref{SliceAway_123456}(b).
Next, with the plane $x_2x_3a_5$, we slice off the pyramid with apex $a_4$. 
Unfortunately, because $a_4$ has degree-$4$, this introduces a new vertex $a'_4$;  Fig.~\figref{SliceAway_123456}(c).
So next we slice with $x_3x_4a_5$ to remove the tetrahedron whose degree-$3$ apex is $a'_4$; Fig.~\figref{SliceAway_123456}(d).
Continuing in this manner, alternately removing a tetrahedron followed by a pyramid with a degree-$4$ apex, we reach Fig.~\figref{SliceAway_123456}(e).

Note that $v_1$ was connected to $k=6$ vertices of $X$, but $a_6$ is only connected to $5$:
the connection of $v_1$ to $x_1$ has in a sense been transferred to $v_2$. 
In general, $a_k$ has degree one less than $v_1$'s degree, 
and the degree of $v_2$ has increased by $1$.

Now we repeat the process, starting by slicing with $\Pi_2 = x_2x_3v_2$, which intersects the $a_k x_i$ edges at $b_4,\ldots,b_k$.
We remove the pyramid apexed at $a_6$ with base (in our example) of $x_2,x_3,b_4,b_5,b_6,v_2$; Fig.~\figref{SliceAway_123456}(f).
The same methodical technique will remove all but the last new vertex $b_k$, which replaces $a_k$ but has degree one smaller.

Continuing the process, slicing with $\Pi_i = x_ix_{i+1}v_2$, up to $i=k-1$, will lead to the complete removal of $v_1$, as previously illustrated 
in Fig.~\figref{DomeSlice_begend}(b), completing the inner induction.
Inverse induction on the number of vertices of the g-dome then completes the proof:
each step reduces the number of top vertices by $1$,
and the degree of $v_2$ increases by $k+1$, the degree lost at $v_1$.

With $G_0=G$, each $G_j$ is the intersection of $G_{k-1}$ with a closed half-space containing a base edge, so it is convex for all $j=1,...,n$. 
Indeed each $G_j$ is a g-dome, because all untouched faces continue to meet $X$ in either an edge or a vertex, and new faces always share an edge with $X$.
\end{proof}


The partition of a g-dome into pyramids given by Theorem~\thmref{DomePyr} is not unique. For our example in Fig.~\figref{DomeSlice_begend}(a), 
we finally get the pyramid apexed at $v_2$ in (b) of the figure,
but we could as well have ended with a pyramid apexed at $v_1$.



We will see in Chapter~\chapref{TailoringSculpting} that one g-dome of $O(n)$ vertices reduces to $O(n^2)$ pyramids of constant size,
and $O(n)$ pyramids each of size $O(n)$.


\subsection{G-domes~$\to$~Pyramids for Cube/Tetrahedron}

We now return to the cube/tetrahedron example, and show that following
the proof of Theorem~\thmref{DomePyr} partitions the two g-domes $G_1$ and $G_2$ into pyramids.
We will see that the two g-domes partition into two pyramids each.

The analysis for $G_1$ is illustrated in Fig.~\figref{CubeTetra_2}. 
Note $G_1$ has been reoriented so that its base $X_1= p_1 p_3 p_7 p_5$ 
is horizontal, and relabeled $x_1 x_2 x_3 x_4$ to match the proof.

We proceed to describe each step, making comparisons 
to Figs.~\figref{DomeSlice_begend} and~\figref{SliceAway_123456}.
\begin{enumerate}[label=(\alph*)]
\squeezelist
\item
In (a) of the figure,
the base of the g-dome has been oriented horizontally, and the correspondence between
the cube labels and the labels used in the proof of Theorem~\thmref{DomePyr} are shown.
The top-canopy of $G_1$ is $v_1 v_2$, with $v_1$ of degree-$5$
and $v_2$ of degree-$3$. Eventually $v_1$ will be removed and the degree
of $v_2$ increased by $1$, as in Fig.~\figref{DomeSlice_begend}.

\item
The first step is to slice off a pyramid apexed at $v_1$ by the plane $x_1 x_2 v_2$.
Due to the coplanar triangles, this has the effect of ``erasing" the diagonal $x_1 v_1$,
as would have occurred if that diagonal had a dihedral angle different from $\pi$.

\item
Next, the slice
$x_2 x_3 v_2$ cuts off a pyramid $P_1$ apexed at $v_1$, a pyramid we will analyze in
Section~\secref{PyramidRemovals}, leaving the shape in~(d).

\item
A new vertex $a_3$ is created by the slice in~(c), with degree-$4$.
This is the ``replacement" for $v_1$ but of smaller degree.
This corresponds to $a_3$ in Fig.~\figref{SliceAway_123456}(a),
although here there are no further vertices $a_4, \ldots$ along an $a$-chain.

\item
We again slice with $x_2 x_3 v_2$, which reduces the degree of $a_3$ to $3$.
This $a_3$ corresponds to $b_4$ in Fig.~\figref{SliceAway_123456}(f).

\item
A final slice by $x_3 x_4 v_2$ removes $a_3$, 
leaving the pyramid $P_2$ shown.
This completes 
the removal of $v_1$ in~(a),  increasing the degree of $v_2$ from $3$ to $4$.
Because what remains is a pyramid, the processing stops.

\end{enumerate}
So $G_1$ is partitioned into two pyramids, $P_1$ and $P_2$,
apexed at $p_2$ and $p_6$ respectively.

\begin{figure}[htbp]
\centering
\includegraphics[width=1.1\linewidth]{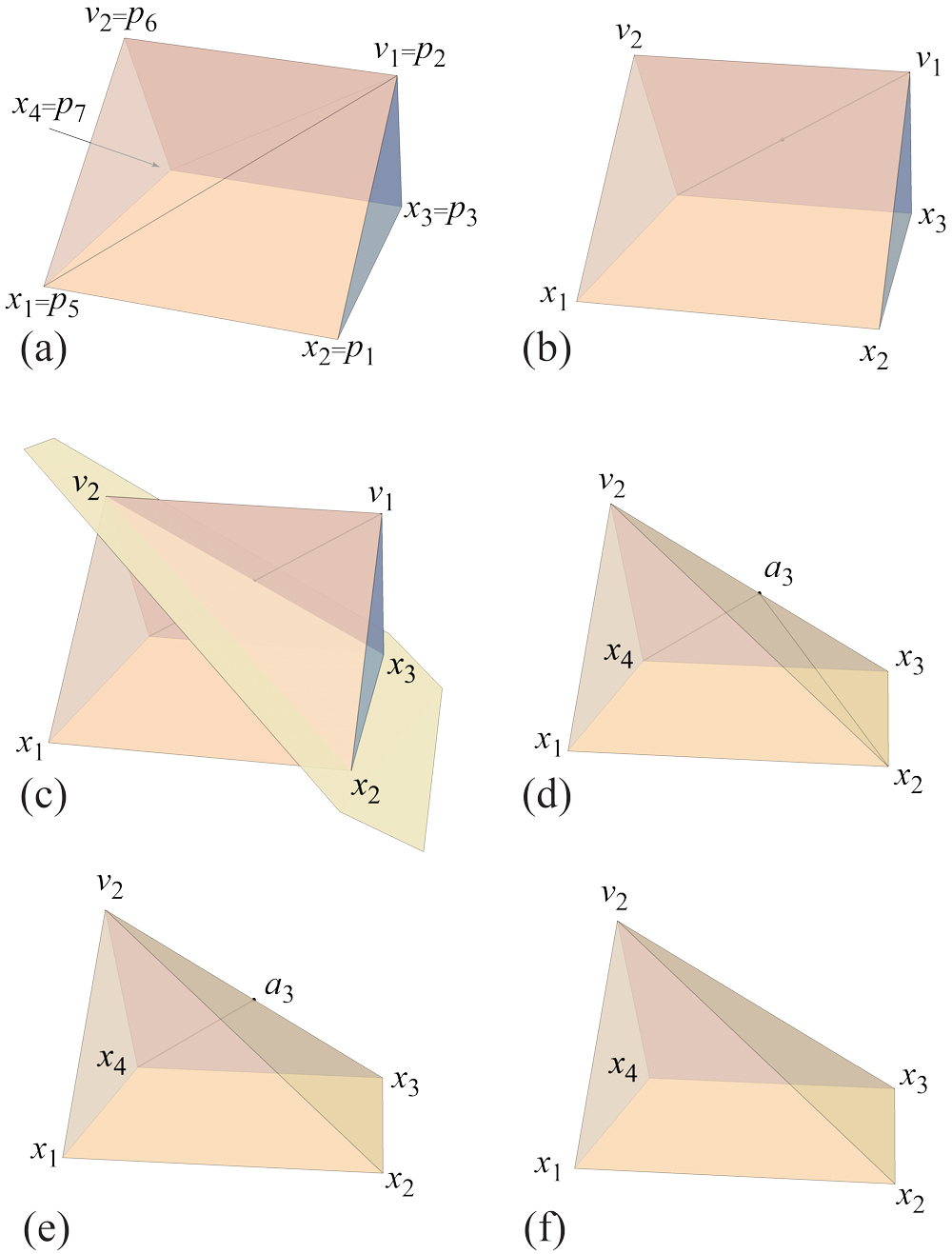}
\caption{(a)~g-dome $G_1$.
(b)~After slicing by $x_1 x_2 v_2$.
(c,d)~Slicing by $x_2 x_3 v_2$.
(e,f)~Two further slices remove $a_3$.}
\figlab{CubeTetra_2}
\end{figure}


We perform a similar analysis for the simpler $G_2$, shown in Fig.~\figref{CubeTetra_3},
again reoriented so that its base $p_8 p_1 p_3$ is horizontal and relabeled
$x_1 x_2 x_3$.
Here the top-canopy is $v_1 v_2$, and the first slice $x_1 x_2 v_2$
reduces $G_2$ to a pyramid $P_2$. So $G_2$ is also partitioned into two pyramids,
$P_3$ and $P_4$, apexed at $p_5$ and $p_7$ respectively.
Together $G_1$ and $G_2$ are partitioned into four pyramids.

\begin{figure}[htbp]
\centering
\includegraphics[width=0.9\linewidth]{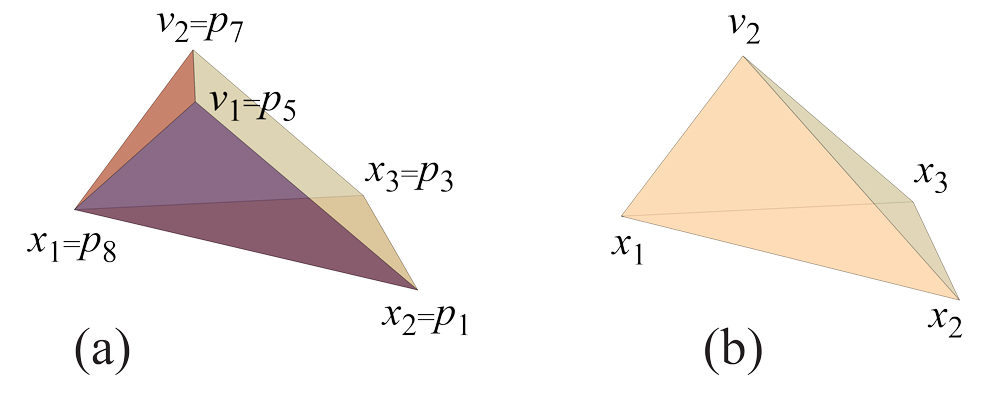}
\caption{(a)~$G_2$ with corresponding cube labels.
(b)~After slicing with $x_1 x_2 v_2$.}
\figlab{CubeTetra_3}
\end{figure}

We will complete this example by tailoring the four pyramids in the next chapter.


\chapter{Tailoring via Sculpting}
\chaplab{TailoringSculpting}

\begin{sloppypar}
In this chapter we complete the proof that one slice of $P$ by plane $\Pi$
can be tailored to the face of $Q$ lying in $\Pi$,
following the sequence
slice~$\to$~g-domes~$\to$~pyramids~$\to$~tailoring.
The previous chapter established the g-domes~$\to$~pyramids link.
Here we first prove the relatively straightforward slice~$\to$~g-domes process,
and then concentrate on the more complex pyramid~$\to$~tailoring step.
\end{sloppypar}


\section{Slice~$\to$~G-domes}

\begin{lm}
\lemlab{Slice2gDomes}
With $Q \subset P$ and $\Pi$ a plane slicing $P$ and containing a face $F$ of $Q$,
the sliced-off portion $P'$ can be partitioned into a \emph{fan} of $O(n)$ g-domes
$G_1, G_2, \ldots$, a fan in the sense that the bases of the g-domes all share
a common edge of $F$.
\end{lm}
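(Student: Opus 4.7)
I would start by fixing any edge $e$ of $F$ and letting $\ell$ denote its supporting line. Because $F\subset\Pi$ and $\ell$ lies on the boundary of the planar convex section $P\cap\Pi$, rotating a half-plane about $\ell$ starting from the half-plane of $\Pi$ containing $F$ sweeps through the interior of $P'$, arriving after a rotation of $\pi$ at the opposite half-plane of $\Pi$ (which sits outside $P$). This rotation is the mechanism for carving $P'$ into fan-slices all sharing~$e$.

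The plan is then to single out the \emph{critical} positions of this rotating half-plane, namely those that either pass through a vertex of $P$ strictly above $\Pi$, or coincide with a face of $P$ incident to $\ell$. In rotational order these yield a finite sequence $\Pi=\Sigma_0,\Sigma_1,\ldots,\Sigma_k=\Pi$ (the last equal to $\Pi$ with its half-plane on the opposite side of $\ell$), and there are at most $O(n)$ such planes since at most $n$ vertices of $P$ lie above $\Pi$ and $O(1)$ faces of $P$ are incident to $\ell$. For $i=1,\ldots,k$ I would let $W_i$ be the closed wedge bounded by $\Sigma_{i-1}$ and $\Sigma_i$, and define $G_i:=W_i\cap P$. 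These $O(n)$ convex polyhedra automatically tile $P'$ with pairwise-disjoint interiors; since each of their bases lives in a plane containing $\ell$, the common edge~$e$ is present in every base, securing the fan structure.

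The heart of the proof is showing that each $G_i$ is a g-dome with base in either $\Sigma_{i-1}$ or $\Sigma_i$. Since $\inn(W_i)$ contains no vertex of $P$ by construction, every vertex of $G_i$ lies on $\Sigma_{i-1}\cup\Sigma_i$; hence $G_i$ has two planar ``side'' faces $B_i^-\subset\Sigma_{i-1}$ and $B_i^+\subset\Sigma_i$, each a convex polygon with $e$ as a boundary edge, and every other face of $G_i$ arises as $f\cap W_i$ for some face $f$ of $P$ crossing the wedge. Any such ``top'' face has all its vertices on $\Sigma_{i-1}\cup\Sigma_i$, and so shares either a full edge with $B_i^+$ (if $f$ crosses $\Sigma_i$ transversely), a full edge with $B_i^-$ (if $f$ crosses $\Sigma_{i-1}$ transversely), or only the edge~$e$ along $\ell$ with both. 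The decisive claim is that one of $B_i^-,B_i^+$ always serves as a valid g-dome base: a top face missing $B_i^+$ must have all its vertices on $\Sigma_{i-1}$, hence shares $e$ (or a sub-edge of $\ell\cap G_i$) with $B_i^-$, so whichever side face absorbs every top face plays the role of the base.

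The step I expect to be the main obstacle is this last g-dome verification, since faces of $P$ can cross a wedge in several combinatorially distinct ways (transversely across both bounding planes, abutting $\ell$ only, or lying entirely in one of the $\Sigma_i$), and a careful case analysis is required to show that a single choice of side face shares either an edge or a vertex with every remaining face of $G_i$. The crucial enabling observation is that the vertex-freeness of $\inn(W_i)$ forces any ``problematic'' top face to degenerate onto one of the two bounding planes, where it is automatically tied to $e$. Once this is established, the fan count $O(n)$ follows immediately from the vertex count.
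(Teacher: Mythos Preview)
Your approach is essentially the paper's---rotate a plane about an edge $e$ of $F$ and collect the resulting slabs as g-domes sharing $e$---with one difference in granularity: you stop the rotation at \emph{every} vertex of $P$ above $\Pi$, producing wedges whose open interiors are vertex-free, whereas the paper advances greedily past as many vertices $v_1,\ldots,v_{j_1}$ as possible while the accumulated slab remains a g-dome based on the current stopping plane (so its g-domes carry several top-canopy vertices). Both give $O(n)$ pieces. Your finer slicing actually makes the g-dome check you flag as the main obstacle routine rather than delicate: every vertex of a top face $f\cap W_i$ lies on $\Sigma_{i-1}\cup\Sigma_i$, and a planar face with all its vertices on one bounding plane would itself lie in that plane, so each genuine top face must meet both $B_i^-$ and $B_i^+$, and either one serves as base. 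Two minor slips to clean up: $\ell$ need not lie on $\partial(P\cap\Pi)$, since $F$ is a face of $Q$ while $P\cap\Pi$ may strictly contain $F$---but the half-plane rotation through angle $\pi$ still sweeps the full upper half-space and hence all of $P'$, so your coverage is unaffected; and the extra critical positions at faces of $P$ incident to $\ell$ are unnecessary, as stopping at vertices alone already makes each wedge interior vertex-free.
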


\begin{figure}
\centering
 \includegraphics[width=0.5\textwidth]{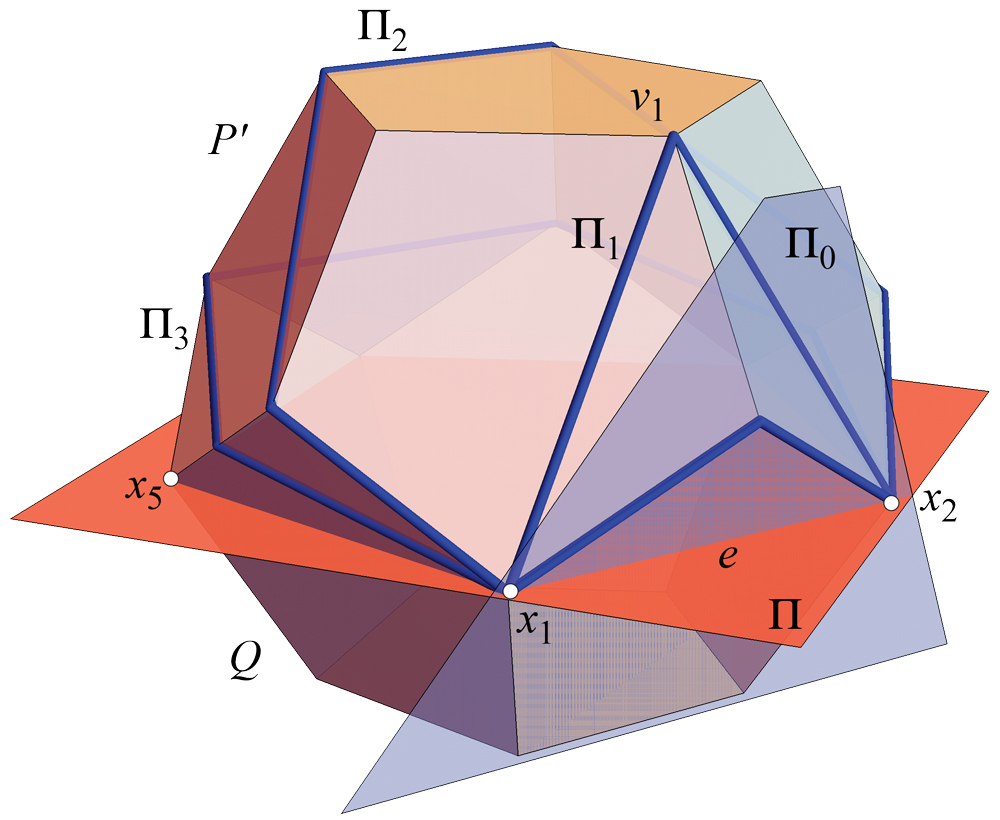}
\caption{Dodecahedron sliced by $\Pi$.
The polyhedron between each pair $\Pi_i, \Pi_{i+1}$ is a g-dome.
}
\figlab{DodecaGdome}
\end{figure}

\begin{proof}
Assume $\Pi$ is horizontal, with $Q$ the portion below $\Pi$ and $P'$ the portion above.
Denote by $x_i$ the vertices of $Q$ in $\Pi$, $i=1,\ldots,m$; call the top face of $Q$ with these vertices $F$.
Let $e$ be any edge of $F$, say $e=x_1 x_2$, and let $F'$ be the face of $Q$ sharing $e$ with $F$.
Call the plane lying on $F'$ $\Pi_0$.

Now imagine rotating $\Pi_0$ about $e$ toward $P'$, noting as it passes through each vertex $v_1,v_2,\ldots$ in that order.
For perhaps several consecutive vertices, the portion of $P'$ between the previous and the current plane is a g-dome, but rotating further
takes it beyond a g-dome.
More precisely, let $\Pi_1$ through $v_{j_1}$ be the plane such that, in the sequence
$$v_1,v_2,\ldots,v_{j_1},v_{j_1+1},\ldots,v_{j_2}, v_{j_2+1}\ldots \;,$$
the portion $G_1$ of $P'$ between $\Pi_0$ and $\Pi_1$, including the vertices $v_1,v_2,\ldots,v_{j_1}$, is a g-dome based on $\Pi_1$, but
the portion rotating further to include one more vertex, $v_{j_1+1}$, is not a g-dome.
$\Pi_2$ through $v_{j_2}$ is defined similarly: the portion $G_2$ between $\Pi_1$ and $\Pi_2$ including $v_{j_1+1},\ldots,v_{j_2}$
is a g-dome based on $\Pi_2$, but including $v_{j_2+1}$ it ceases to be a g-dome.
Here for each $\Pi_i, \Pi_{i+1}$ pair,
the base $X$ of the g-dome lies in $\Pi_{i+1}$.

Fig.~\figref{DodecaGdome} illustrates the process.
Here $\Pi_1$ is through $v_1=v_{j_1}$, and the last g-dome lies between $\Pi_3$ and $\Pi$.

So we have now partitioned $P'$ into g-domes $G_1, G_2, \ldots$.
\end{proof}

We will invoke Lemma~\lemref{VertexTruncation} (ahead) 
to reduce each g-dome to its base by tailoring, in the order $G_1, G_2, \ldots$. 
This will reduce $P'$ to just the top face $F$ of $Q$.

Having established the claim of the lemma for one slice, it immediately follows that it holds for an arbitrary number of slices. 
This was already illustrated in the cube/tetrahedron example in Chapter~\chapref{Domes},
and we will use it again in Theorem~\thmref{MainTailoring}.

We should note that it is at least conceivable that
only a single g-dome is needed above.
See Open Problem~\openref{OneXgdome}.


\section{Pyramid~$\to$~Tailoring}

The goal of this section is to prove that removal of a pyramid, i.e., a vertex truncation, can be achieved by (digon-)tailoring.
We reach this in Lemma~\lemref{VertexTruncation}: 
a degree-$k$ pyramid $P$ can be removed by $k-1$ tailoring steps, each step excising one vertex by removal of and then sealing a digon.
The first $k-2$ of these digons each have one endpoint a vertex, and so leave the
total number of vertices of $P$ at $k+1$.
The $(k-1)$-st digon has both endpoints vertices, and so its removal reduces the 
number of vertices to the $k$ base vertices.

We start with Lemma~\lemref{vertex_small} which claims the result but only under the assumption that the slice plane is close to the removed vertex.
Although this lemma is eventually superseded, it establishes the notation and the main idea.
Following that, Lemma~\lemref{pyramid} removes the ``sufficiently small" assumption of Lemma~\lemref{vertex_small}, but in the special case of $P$ a pyramid. 
Finally we reach the main claim in Lemma~\lemref{VertexTruncation},
which shows this special case encompasses the general case.

In the following, we use $\partial S$ to indicate the $1$-dimensional boundary of a $2$-dimensional surface patch $S$.

\subsection{Notation}
To help keep track of the notation throughout this critical section, we list the main
symbols below.
\begin{itemize}
\item Initially $P$ and $Q$ are polyhedra with $Q \subset P$, with $P$ sliced by
plane $\Pi$ to truncate vertex $v$ of degree $k$.
\item Later we specialize $P$ to be the pyramid sliced off.
\item $X = \Pi \cap P$ is the base of the pyramid, with vertices 
$\partial X = x_1 x_2 \ldots x_k$.
\item The lateral faces of $P$ are denoted by $L$, so $P = X \cup L$.
\item $D(x_i,y_i)$ is a digon with endpoints $x_i \in \partial X$ and $y_i$.
\item $P_j$ is the modified pyramid after $j$ digon removals.
\item $C(x_i,P_j)$ is the cut locus $C(x_i)$ on $P_j$.
\item The first ramification point of $C(x_i,P_j)$ beyond $y_i$ is $a_i$.
\end{itemize}
Notation will be repeated and supplemented within each proof.


\section{Small volume slices}

\begin{lm}
\lemlab{vertex_small}
Let $P$ be a convex polyhedron, and $Q$ the result obtained by slicing $P$ with a plane $\Pi$ at sufficiently small distance to a vertex $v$ of $P$, and removing precisely that vertex.
Then $Q$ can be obtained from $P$ by $k-1$ tailoring steps.
\end{lm}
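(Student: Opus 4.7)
The approach is to produce the $k-1$ digons inductively by a ``curvature-peeling'' procedure, moving one base vertex $x_j$ into place at each step and leaving behind a single cone point to be removed at the next step.

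I would first set up notation: label the $k$ faces of $P$ incident to $v$ cyclically as $F_1,\ldots,F_k$, with $x_i$ the point where $\Pi$ meets the edge between $F_i$ and $F_{i+1}$. For the first tailoring I work in the cut locus $C(x_1,P)$: since the slice is small and $x_1$ can be chosen generic, $v$ is a leaf of $C(x_1,P)$. Let $y_1$ be a point on the cut-locus edge incident to $v$, for instance at (or just short of) the first ramification point $a_1$. The two geodesic segments from $x_1$ to $y_1$ then bound a digon $D_1=D(x_1,y_1)$ whose interior contains only $v$, and by Lemma~\lemref{CutLocusBasic}(iv) the cut-locus edge bisects the angle at $y_1$ inside this digon. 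Excising $D_1$ and sealing produces, by AGT, a convex polyhedron $P_1$ in which $v$ is gone and $x_1,y_1$ are new vertices, with the remainder of the surface unchanged.

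For $j=2,\ldots,k-2$ I iterate: on $P_{j-1}$, treat the cone point $y_{j-1}$ as the current apex, examine $C(x_j,P_{j-1})$, and choose $y_j$ on the cut-locus edge incident to $y_{j-1}$ so that the digon $D_j=D(x_j,y_j)$ encloses only $y_{j-1}$. Smallness of $\Pi$ is crucial here: it keeps the cut loci close to the planar-sector model obtained by unfolding the link of $v$, so the needed ramification structure persists from step to step. For the final step I take $D_{k-1}=D(x_{k-1},x_k)$, using the remaining base vertex $x_k$ in place of a new $y_{k-1}$; this digon also encloses the previous cone point $y_{k-2}$, and after sealing $x_{k-1}$ and $x_k$ become vertices while $y_{k-2}$ is absorbed. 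The resulting $P_{k-1}$ has the same vertex set as $Q$, and since all the surgery is confined to a small neighborhood of the original $v$, the two surfaces agree outside that neighborhood. The uniqueness part of AGT, together with Theorem~\thmref{Rigid}, then gives $P_{k-1}\cong Q$.

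The main obstacle I anticipate is the inductive bookkeeping: at each stage I must verify that $C(x_j,P_{j-1})$ has a cut-locus edge incident to $y_{j-1}$ with a nearby ramification point, so that the next digon can be constructed and can enclose exactly $y_{j-1}$ and nothing else. Smallness of the slice is what makes this tractable, since on the scale of the cap the geometry is essentially that of a planar cone sector and the required cut-locus structure can be verified by direct planar analysis. Without the smallness hypothesis the cut loci $C(x_j,P_{j-1})$ could pick up ramifications from distant parts of $P$, and the clean isolation of one singular point per digon would no longer be automatic.
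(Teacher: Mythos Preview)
Your outline captures the right structural idea—sequentially excising digons from $x_1,\ldots,x_{k-1}$, each enclosing the previous cone point—but it omits the single ingredient that makes the procedure hit $Q$ rather than some other polyhedron: the digon angle at each $x_i$ must be chosen to equal exactly $\o_Q(x_i)-\o_P(x_i)$. Your description constrains $D_j$ only topologically (``encloses only $y_{j-1}$''), and your suggestion to place $y_1$ ``at or just short of the first ramification point'' is the wrong specification. The angle at $x_i$ in the digon is what determines the curvature at $x_i$ after sealing; unless that angle is the curvature deficit, the resulting $P_{k-1}$ will have the wrong curvatures at $x_1,\ldots,x_{k-1}$, and then Theorem~\thmref{Rigid} does not apply, since it requires agreement everywhere except in the neighborhood of a \emph{single} vertex.

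Relatedly, your final step—declaring $D_{k-1}=D(x_{k-1},x_k)$ outright—is unjustified: there is no a~priori reason $x_k$ lies on the cut-locus edge of $C(x_{k-1},P_{k-2})$ incident to $y_{k-2}$, so such a digon need not exist or enclose $y_{k-2}$. The paper's route is the reverse of yours: it prescribes the digon angle at each of $x_1,\ldots,x_{k-1}$ to match the target curvature, lets $y_{k-1}$ be whatever point results, observes by Gauss--Bonnet that $\o(y_{k-1})=\o_Q(x_k)$, and \emph{then} invokes Theorem~\thmref{Rigid} to conclude that $y_{k-1}$ must coincide with $x_k$. The smallness hypothesis is used precisely to guarantee that each required angle $\o_Q(x_i)-\o_P(x_i)$ is small enough for $y_i$ to land on the cut-locus segment before the first ramification point.
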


\begin{proof}
Let the vertex $v$ to be removed have degree $k$ in the $1$-skeleton of $P$.
Let $e_i$, $i=1,\ldots,k$, be the edges incident to $v$, and $x_i$ the intersection of the slicing plane $\Pi$ with those edges:
$\{x_i\} = \Pi \cap e_i$.

We will illustrate the argument with the right triangular prism shown in Fig.~\figref{Sculpting}, 
where $k=3$ and $\Pi = x_1 x_2 x_3$.
Note that we do not exclude the case when some (or all) of the $x_i$ are vertices of $P$.

\begin{figure}
\centering
 \includegraphics[width=0.5\textwidth]{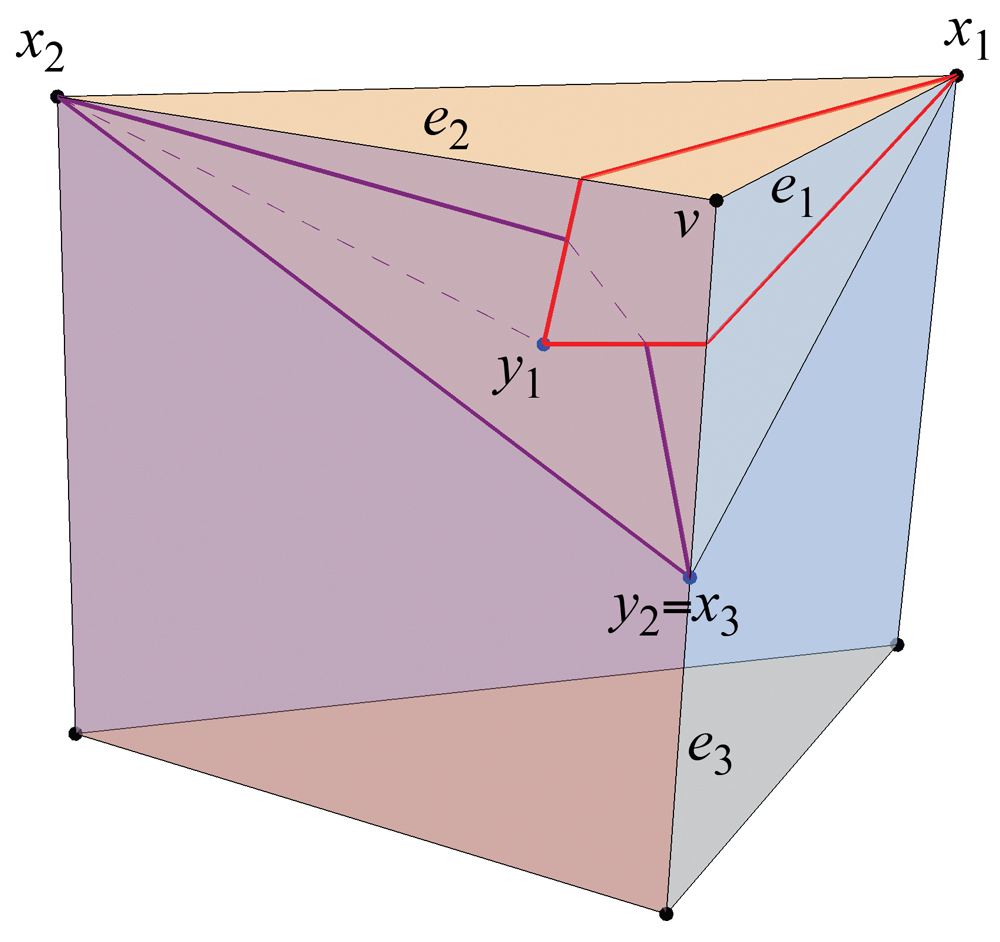}
\caption{$P$ is a prism whose top face is an isosceles right triangle.
The truncated vertex $v$ has degree $k{=}3$.
$x_3$ is the midpoint of $e_3$.
Digon $D_1 \supset \{v\}$ is shown red; $D_2 \supset \{y_1\}$ is purple.
$D_1$ is sutured closed before $D_2$ is excised.
The last replacement vertex $y_{k-1}$ must be identical to $x_k$.
}
\figlab{Sculpting}
\end{figure}

Denote by $\o_P(x_i)$ $i=1,\ldots,k$ the curvatures of $P$ at $x_i$, and by $\o_Q(x_i)$ the corresponding curvatures of $Q$.
The curvature $\o_P(v)$ will be distributed to the $x_i$.

In the figure, $\o_P(v)=90^\circ$, the curvatures of the three $x_i$ are $135^\circ,135^\circ,0^\circ$ in $P$,
and approximately $156^\circ,156^\circ,48^\circ$ in $Q$.
Indeed the increases sum to $90^\circ$: $21^\circ+21^\circ+48^\circ$.

The goal now is to excise $k-1$ digons with one end at $x_1,x_2,\ldots,x_{k-1}$,
removing precisely the surface angle needed to increase $\o_P(x_i)$ to $\o_Q(x_i)$.
After digon removals at $x_1,\ldots,x_i$, we call the resulting polyhedron $P_i$.

Let a digon with endpoints $x_i$ and $y_i$ be denoted $D_i=(x_i,y_i)$.
Cut out from $P$ the digon $D_1=(x_1,y_1)$ containing only the vertex $v$ in its interior, of angle at $x_1$ equal to $\o_Q(x_1) - \o_P(x_1)$.
By the assumption that the slice plane $\Pi$ is sufficiently close to $v$, the curvature difference is small enough so that $D_1$ includes only $v$.
Again by the sufficiently-close assumption, we may assume the digon endpoint $y_1$ lies on the edge of $C(x_1)$ incident to $v$, prior to the first ramification point of $C(x_1)$.
After suturing closed the digon geodesics, $y_1$ becomes a vertex of curvature $\o_P(v)-( \o_Q(x_1) - \o_P(x_1) )$.
In the figure, $y_1$ has curvature $90^\circ - 21^\circ \approx 69^\circ$. In a sense, $y_1$ ``replaces" $v$.

Next cut out a digon $D_2=(x_2,y_2)$ containing only the vertex $y_1$ in its interior, of angle at $x_2$ equal to $\o_Q(x_2) - \o_P(x_2)$.
The newly created vertex $y_2$  ``replaces" $y_1$.
Continue cutting out digons $D_i=(x_i,y_i)$ up to $i=k-1$, each $D_i$ surrounding $y_{i-1}$, and replacing $y_{i-1}$ with $y_i$.

Because these tailorings have sharpened the curvatures  $\o_P(x_i)$ to match the after-slice curvatures $\o_Q(x_i)$,
it must be that the curvature at the last replacement vertex $y_{i-1}$
is the same as the curvature at $x_k$: $\o_P(y_{k-1}) = \o_Q(x_k)$ (to satisfy Gauss-Bonnet).
So now the tailored $P_{k-1}$ matches $Q$ in both the positions of the vertices $x_i$, $i=1,\ldots,k-1$, and their curvatures; 
the only possible difference is the location of $y_{i-1}$ compared to $x_k$.
But the rigidity result, Theorem~\thmref{Rigid}, implies that $y_{k-1}=x_k$, and $P_{k-1}$ and $Q$ are now congruent.
\end{proof}

The ``sufficiently-small" assumption in the preceding proof allowed us to assume that the digon $D_i=(x_1,y_1)$ endpoint $y_1$ lay 
on the segment of $C(x_i)$ incident to $v$ prior to the first ramification point $a_1$ of $C(x_1)$. 
Recall that $\o(x_1)+\o(y_i)=\o(v)$, and the further along the segment $v a_1$
that $y_1$ lies, the larger the digon angle at $x_1$ is.
The procedure would be problematic if the digon angle at $x_1$ were not large enough even with $y_1$ at that ramification point $a_1$.
The next lemma removes the sufficiently-small assumption in the special case when $P$ is itself a pyramid, and the vertex truncation reduces $P$ its base, doubly-covered.
Following this, we will show that the case when $P$ is a pyramid is the ``worst case," and so the general case follows.


\subsection{Pyramid case}

\begin{lm}
\lemlab{pyramid}
Let $P$ be a pyramid over base $X$.
Then one can tailor $P$ to reduce it to $X$ doubly-covered, using $k-1$ digon removal steps.
\end{lm}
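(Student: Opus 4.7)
The plan is to follow the digon-removal template from Lemma~\lemref{vertex_small}, successively excising digons $D_i=(x_i,y_i)$ for $i=1,\ldots,k-1$ with prescribed angle $\delta_i$ at $x_i$, but now without the sufficiently-small-slice hypothesis. First I would compute the target curvatures: in doubly-covered $X$, each $x_i$ has curvature $\omega_Q(x_i)=2\pi-2\alpha_i$, where $\alpha_i$ is the interior angle of $X$ at $x_i$. Set $\delta_i:=\omega_Q(x_i)-\omega_P(x_i)\ge 0$; by Gauss--Bonnet, $\sum_{i=1}^{k}\delta_i=\omega_P(v)$. Once digons with these specified angles have been successfully excised in sequence, the same rigidity argument as in Lemma~\lemref{vertex_small}---invoking Theorem~\thmref{Rigid} on $P_{k-1}$ versus doubly-covered $X$---will finish the proof, since $P_{k-1}$ and doubly-covered $X$ will have matching vertex positions and curvatures at $x_1,\ldots,x_{k-1}$, and Gauss--Bonnet forces agreement at the surviving vertex as well.

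The existence of each $D_i$ I would establish by a continuous opening argument on the current polyhedron $P_{i-1}$. Let $w$ denote the current apex ($w=v$ for $i=1$, $w=y_{i-1}$ thereafter); for a parameter $\theta\in[0,\delta_i]$ consider the pair of equal-length shortest geodesics from $x_i$ placed symmetrically about the geodesic segment $x_i w$ and having included angle $\theta$. For $\theta$ small this is exactly the construction of Lemma~\lemref{vertex_small}, producing a valid digon enclosing only $w$ with far endpoint $y_i(\theta)$ on the cut-locus edge incident to $w$. As $\theta$ grows, $y_i(\theta)$ advances along $C(x_i,P_{i-1})$ away from $w$. Whenever $y_i(\theta)$ reaches a ramification point, Lemma~\lemref{CutLocusBasic}(iv) tells us that the incoming digon angle is bisected by an outgoing cut-locus edge, so $y_i(\theta)$ continues symmetrically along that edge; the Cut Locus Partition lemma (Lemma~\lemref{Partition}) confirms that the resulting enclosed region remains a digon containing exactly $w$. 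After the digon of angle $\delta_i$ has been excised and sutured, AGT produces a convex polyhedron $P_i$ in which $w$ is replaced by a new vertex $y_i$ of curvature $\omega_{P_{i-1}}(w)-\delta_i>0$, ready for the next iteration.

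The main obstacle is ruling out the pathology in which the growing digon would have to engulf a second vertex---i.e., $y_i(\theta)$ crosses into a region of $C(x_i,P_{i-1})$ whose corresponding enclosed side contains some $x_j$ ($j\ne i$). Here the crucial use of $P$ being a \emph{pyramid} enters: all non-apex vertices lie on the rim of the single planar face $X$, so star-unfolding at $x_i$ places them at well-controlled positions along the outer boundary of the unfolding while $y_i(\theta)$ moves through the interior. Because the total opening $\delta_i$ is by construction bounded by $\omega_{P_{i-1}}(w)$, the endpoint $y_i(\theta)$ cannot wander past the portion of the cut locus ``associated with'' $w$ in the sense of Lemma~\lemref{Partition}, so no other base vertex can be swept into the digon. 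This geometric control, together with the freedom to match each $\delta_i$ to the local combinatorial structure of $C(x_i,P_{i-1})$, is precisely what fails in a general convex polyhedron and what makes the pyramid case tractable; this reduction is what the subsequent Lemma~\lemref{VertexTruncation} will exploit to handle the general vertex-truncation case.
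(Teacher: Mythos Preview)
Your overall skeleton (compute the deficits $\delta_i$, excise in order, finish with Theorem~\thmref{Rigid}) matches the paper, but the heart of the argument---why a digon of angle exactly $\delta_i$ enclosing \emph{only} the current apex actually exists---is not established. Your continuation-past-ramification idea does not work: at a degree-$\ge 3$ ramification point $a$ of $C(x_i)$, the edge $wa$ bisects the angle between the two digon-bounding geodesics on the $w$ side, but the remaining cut-locus edges at $a$ bisect angles between \emph{other} pairs of geodesics. There is no edge along which $y_i(\theta)$ can ``continue symmetrically'' so that the enclosed region stays a single-vertex digon; pushing $y_i(\theta)$ past $a$ forces the digon to swallow an entire subtree of $C(x_i)$, hence at least one base vertex (a leaf). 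Lemma~\lemref{Partition} does not rescue this---it describes how cut loci split, not that the continued region remains a one-vertex digon. And your fallback sentence (``$\delta_i\le\omega_{P_{i-1}}(w)$, so $y_i(\theta)$ cannot wander past the portion of the cut locus associated with $w$'') is circular: the inequality $\delta_i\le\omega_{P_{i-1}}(w)$ is just Gauss--Bonnet for the digon and holds automatically; it says nothing about where $y_i(\theta)$ lands relative to the first ramification point.

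What the paper actually proves is precisely what you are missing, and it is where the pyramid hypothesis does real work. First (Claim~(1)), it shows $C(x_{i+1},P_i)\subset L_i$: using that the $L$-angle at each $x_j$ exceeds the $X$-angle (Lemma~\lemref{Angles}), Cauchy's Arm Lemma (Theorem~\thmref{CAL}) compares the planar chains in $X_i$ and $L_i$ to conclude that any geodesic from $x_{i+1}$ that dips into $X_i$ is no shorter than the straight segment in $X_i$, contradicting its continuation as a shortest path. Second (Claim~(2)), it shows the digon angle $\alpha_{i+1}$ to the \emph{first} ramification point $a_{i+1}$ already satisfies $\alpha_{i+1}\ge\theta^L_{i+1}-\theta^X_{i+1}=\delta_{i+1}$: one removes from $C(x_{i+1})$ the path $x_i\to x_{i+2}$, excises the corresponding digons, reduces (via Lemma~\lemref{Partition} and Lemma~\lemref{Path}) to a doubly-covered polygon where upper and lower angles at $x_{i+1}$ coincide, and reads off the inequality by angle bookkeeping. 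Thus $y_i$ can always be chosen on the segment $y_{i-1}a_i$ \emph{before} the ramification point, so the digon genuinely contains only the apex. Your proposal invokes the pyramid structure only rhetorically; the paper uses it quantitatively through the $\theta^L>\theta^X$ comparison driving both claims.
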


\begin{proof}
We continue to use the notation in the previous lemma, and introduce further notation needed here.
Let $L= P \setminus X$ be the lateral sides of the pyramid $P$; so $P= L \cup X$.
After each digon $D_i=(x_i,y_i)$ is removed and sutured closed, the convex polyhedron guaranteed by
Alexandrov's Gluing Theorem will be denoted by $P_i$.
We continue to view $P_i$ as $P_i= L_i \cup X_i$, even though already $P_1$, is in general no longer a pyramid. 
We will see that all the digon excisions occur on $L_i$, while $X_i$ remains isometric to the original base $X$, but no longer (in general) planar.

We will use $C(x_i, P_j)$ to mean the cut locus of $x_i$ on $P_j$. 
Regardless of which $P_j$ is under consideration, we will denote by $a_i$
the first ramification point of $C(x_i)$ immediately beyond the
vertex $y_{i-1}$ surrounded by the digon $D_i=(x_i,y_i)$.

We need to establish two claims:
\begin{description}
\item [Claim~(1):] The cut locus $C(x_{i+1},P_i)$ is wholly contained in $L_i$.
\item [Claim~(2):] The digon angle $\a_{i+1}$ at $x_{i+1}$ to $a_{i+1}$ is large
enough to reduce the $L$-angle at $x_{i+1}$ to its $X$-angle on the base.
\end{description}

Before addressing the general case of these claims, we illustrate the situation
for $x_1$,  referencing Fig.~\figref{Pent_unf}.
\begin{figure}
\centering
 \includegraphics[width=\textwidth]{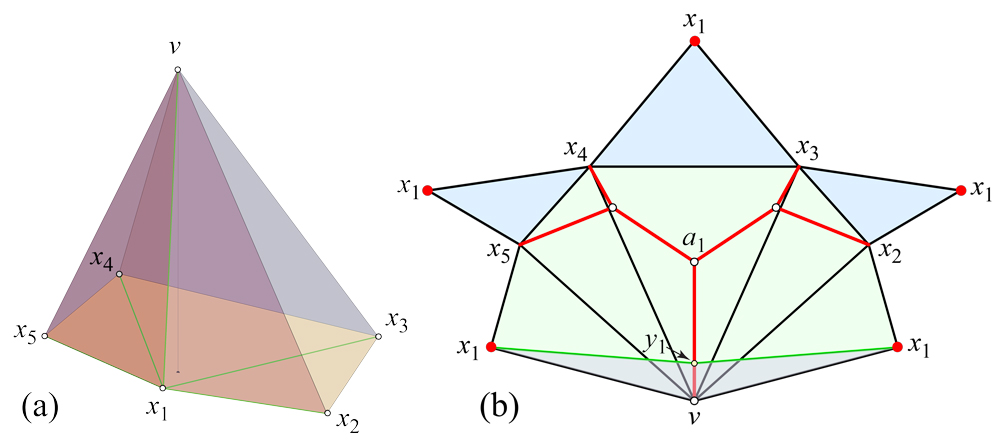}
\caption{(a)~A pyramid with pentagonal base $X$. Shortest paths from $x_1$ to all vertices
are marked green.
(b)~The star-unfolding with respect to $x_1$. 
The triangles from $X$ are blue; those from $L$ are green.
$C(x_1) \subset L$ is red.
The digon $D_1=(x_1,y_1)$ is shaded.
}
\figlab{Pent_unf}
\end{figure}
The digon $D_1(x_1,y_1)$ surrounding $v$ places $y_1$ on the $v a_1$ segment
of $C(x_1,P)=C(x_1)$. If one imagines $y_1$ sliding along $v a_1$ from $v$ to $a_1$, the digon angle at
$x_1$, call it $\d_1$, increases.
To show that $y_1$ can be placed so that $\d_1$ is large enough to reduce the angle
at $x_1$ to its angle in $X$ will require $a_1$ to lie in $L$ (rather than in $X$).

It turns out that $C(x_1) \subset L$ follows from a lemma in~\cite{aaos-supa-97}.\footnote{
Lem.~3.3: the cut locus is contained in the ``kernel" of the star-unfolding which in our case
is a subset of $L$.}
However, after removing $D_1$ and invoking Alexandrov's Gluing Theorem,
we can no longer apply this lemma.
With this background, we now proceed to the general case.

\paragraph{Claim~(1): $C(x_{i+1},P_i) \subset L$.}
Assume we have removed digons at $x_1,\ldots,x_i$, so that
$P_i = X_i \cup L_i$, and $L_i$ contains one vertex $y_i$,
the endpoint of the last digon $D_i$ removed, and $X_i$ contains no vertices.
Assume to the contrary of Claim~(1) that $C(x_{i+1},P_i)=C(x_{i+1})$ includes a point $z$
strictly interior to $X_i$. 
Because $z \in C(x_{i+1})$, there are two geodesic segments from $x_{i+1}$
to $z$, call them $\g^z_1$ and $\g^z_2$.
Because $X_i$ contains no vertices, it cannot be that both 
$\g^z_1$ and $\g^z_2$ are in $X_i$.
Say that $\g^z_1$ crosses $L_i$.
Let $p \in \partial X$ be the first point at which $\g^z_1$ enters $X_i$,
and let $\g_1 \subset \g^z_1$ be the portion from $x_{i+1}$ to $p$.
See Fig.~\figref{Claim1Abstract}.
\begin{figure}
\centering
 \includegraphics[width=0.75\textwidth]{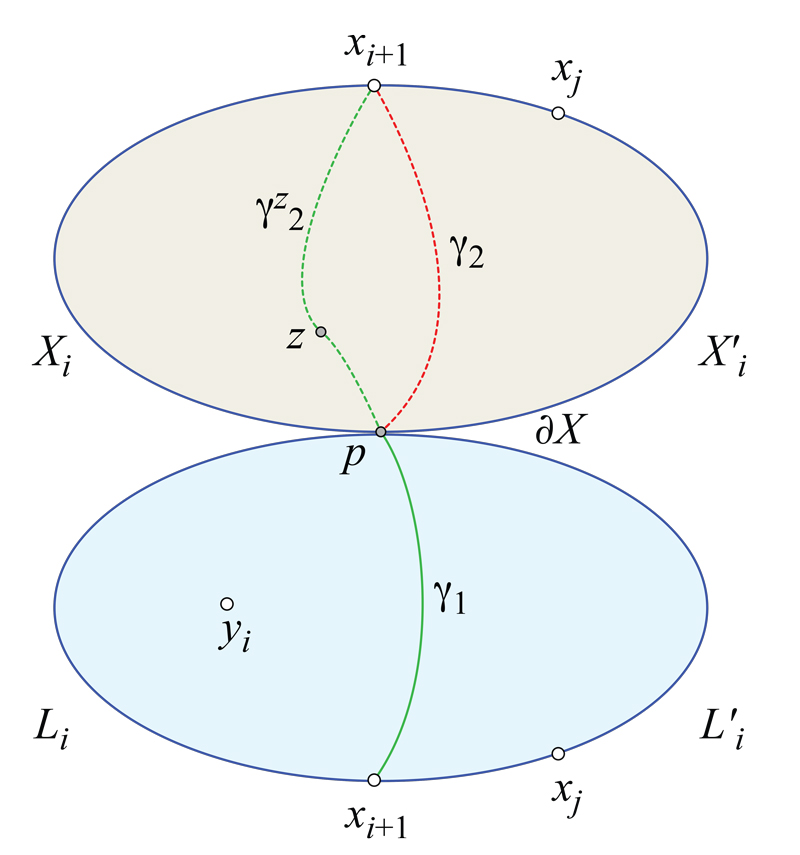}
\caption{$X_i$ is flipped above $L_i$ in this abstract illustration.
Cauchy's Arm Lemma ultimately shows that $\ell(\g_1) \ge \ell(\g_2)$.}
\figlab{Claim1Abstract}
\end{figure}

The geodesic segment $\g_1$ divides $L_i$ into two parts; let $L'_i$ be the part that does not contain the vertex $y_i$.
Join $x_{i+1}$ to $p$ with a geodesic $\g_2$ lying in $X_i$.
$\g_2$ was a shortest path to $p$ in $X$, but may no longer be shortest in $X_i$.
$\g_2$ also divides $X_i$ into two parts; let $X'_i$ be the part sharing a portion of $\partial X$ with $L'_i$.

Now we will argue that $\ell(\g_1) \ge \ell(\g_2)$, 
where $\ell(\g)$ is the length of $\g$.
This will yield a contradiction, for the following reasons.
$\g_1$ is a shortest geodesic to $p$, because it extends to $\g^z_1$, which
is a shortest geodesic to $z$. So $\g_2$ cannot be strictly shorter than $\g_1$.
Therefore we must have $\ell(\g_1) = \ell(\g_2)$, which implies that $p \in C(x_{i+1})$.
But then $\g_1$ cannot continue to $\g^z_1$ beyond $p$,
as $p$ is a cut point.

To reach $\ell(\g_1) \ge \ell(\g_2)$, we will use the extension of Cauchy's Arm Lemma
described in Chapter~\chapref{Preliminaries}.
Let $\q^X_j$ be the angle at $x_j$ in $X_i$, and $\q^L_j$ be the angle at $x_j$ in $L_i$.
For $j > i+1$, we know that $\q^L_j > \q^X_j$ because $P$ is a pyramid (see Lemma~\lemref{Angles}) and digon removal has not yet reached $x_j$.
We also know that $\q^X_j \le \pi$ because $X$ is convex.
However, $\q^L_j$ could be nearly as large as $2 \pi - \q^X_j$ if the pyramid $P$'s apex $v$ projects outside the base $X$.

Let $c_X$ be the planar convex chain in $\partial X$ that corresponds to $X'_i$; assume $c_X$ is
$x_{i+1}, x_{i+2}, \ldots, x_j, \ldots, p$ with angles $\q^X_j$. 
(The case where $c_X$ is includes the other part of $\partial X$, $x_{i+1}, x_i, \ldots, p$ can be treated analoguously.)
Then $\ell(\g_2)$ is the length of the chord between $c_X$'s endpoints.
In order to apply Cauchy's lemma, we rephrase the angles at $x_j$ as turn angles $\t_j = \pi - \q^X_j$.
The extension of Cauchy's lemma guarantees that, if the chain angles are modified so that the turn angles lie within $[-\t_j,\t_j]$,
then the endpoints chord length
cannot decrease. Roughly, opening (straightening) the angles stretches the chord.

Define $c_L$ as the planar (possibly nonconvex) chain composed of the same vertices $x_j$ that define $c_X$, but with angles $\q^L_j$.
Because $\q^L_j \le 2 \pi - \q^X_j$, the turn angles $\pi - \q^L_j$ in $c_L$ satisfy
$$
\pi - \q^L_j \ge \pi - (2 \pi - \q^X_j) = -(\pi - \q^X_j) = - \t_j \;.
$$
Also, because $\q^L_j > \q^X_j$, 
$$
\pi - \q^L_j \le \pi - \q^X_j = \t_j \;.
$$
So the $c_L$ turn angles are in $[-\t_j,\t_j]$, and we can conclude from Theorem~\thmref{CAL}
that the $c_L$ endpoints chord length $\ell(\g_1)$ is at least $\ell(\g_2)$, the $c_X$ endpoints chord length.

We have now reached $\ell(\g_1) \ge \ell(\g_2)$, whose contradiction described earlier shows that indeed $C(x_{i+1}) \subset L_i$.

\paragraph{Claim~(2): $\a_{i+1} \ge \q^L_{i+1} - \q^X_{i+1}$.}
Recall that $\a_{i+1}$ is the angle at $x_{i+1}$ of the digon from $x_{i+1}$
to $a_{i+1}$, the first ramification point of $C(x_{i+1})$ beyond the vertex $y_i$.
The claim is that $\a_{i+1}$ is large enough to reduce $ \q^L_{i+1}$ to $\q^X_{i+1}$.
We establish this by removing a path from $C(x_{i+1})$ and tracking angles,
as follows.

From Claim~(1), $C(x_{i+1}) \subset L_i$.
Let $\r_{i,i+2}$ be the path in the tree $C(x_{i+1})$ from $x_i$ to $x_{i+2}$.
See Fig.~\figref{Star_CutLocus_Digons}.
\begin{figure}
\centering
 \includegraphics[width=0.85\textwidth]{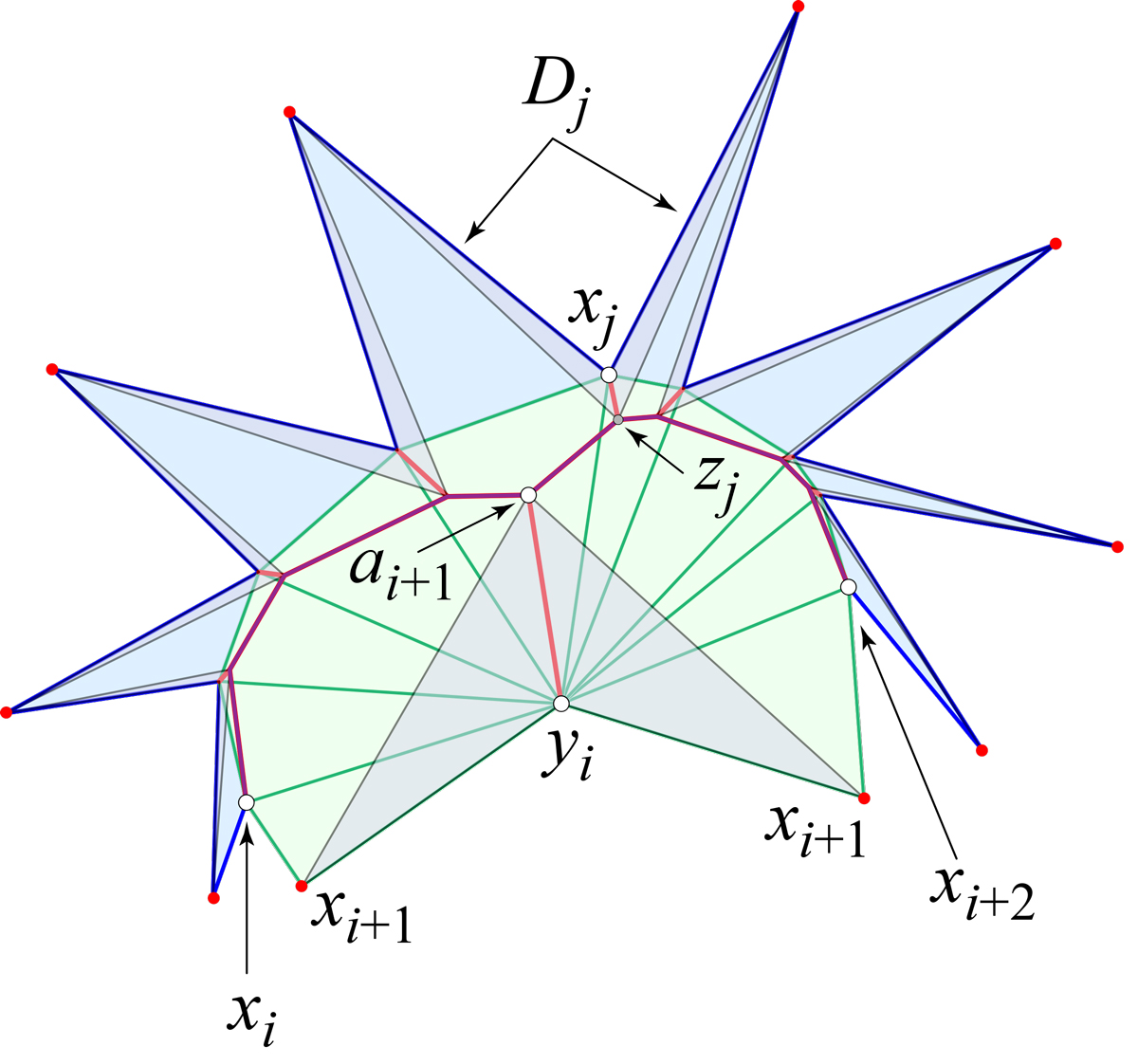}
\caption{
Star-unfolding of a pyramid with respect to $x_{i+1}$.
The triangles from $X_i$ are blue; those from $L_i$ are green.
Red points are images of $x_{i+1}$.
Digons are shaded. $\r_{i,i+2}$ is purple; remainder of $C(x_{i+1})$ is red.
}
\figlab{Star_CutLocus_Digons}
\end{figure}
Removal of $\r_{i,i+2}$ from $C(x_{i+1})$
disconnects $C(x_{i+1})$ into the edge $y_i a_{i+1}$, and a series
of subtrees $T_j$. Each $T_j$ shares a point $z_j$ with $C(x_{i+1})$.
Let $D_j$ be the digon from $x_{i+1}$ to $z_j$, and let $\d_j$ be the
angle of $D_j$ at $x_{i+1}$.
Finally, let $\D_j = \sum_j \d_j$.

Note that all the $\d_j$ angles are in $X_i$.
In contrast, the angle at $x_{i+1}$ in the digon $D_{i+1} = D(x_{i+1},a_{i+1})$ is in $L_i$, as illustrated in Fig.~\figref{Star_CutLocus_Digons}.
We defer justifying this claim to later.

Cut off all $D_j$, and also cut off $D_{i+1}$.
Suture the surface closed; call it $P^*=X^* \cup L^*$.
By Lemma~\lemref{Partition}, the cut locus $C(x_{i+1},P^*)$ is precisely the path $\r_{i,i+2}$.
Therefore, by Lemma~\lemref{Path}, $P^*$ is a doubly-covered convex polygon, so all angles at $x_j$ are equal above in $L^*$ and below in $X^*$.
In particular, $\q^{X^*}_{i+1} = \q^{L^*}_{i+1}$.
Now, because $\D$ angle was removed from $\q^{X}_{i+1}$, $\q^{X^*}_{i+1} = \q^{X}_{i+1} - \D$.
Because $\a_{i+1}$ was removed from $\q^L_{i+1}$, $\q^{L^*}_{i+1} = \q^{L}_{i+1} - \a_{i+1}$.
Therefore, 
\begin{eqnarray}
\q^{L}_{i+1} - \a_{i+1} &=& \q^{X}_{i+1} - \D \; \le \; \q^{X}_{i+1} \\
\a_{i+1} &\ge& \q^{L}_{i+1} - \q^{X}_{i+1}
\end{eqnarray}
which is Claim~(2).

It remains to show that $D_{i+1}$ is in $L_i$ rather than in $X_i$.
Suppose to the contrary that all the angle removal was in $X_i$. Then $\q^{L}_{i+1} = \q^{X}_{i+1} - \D - \a_{i+1}$.
So $\q^{L}_{i+1} < \q^{X}_{i+1}$, which is not possible for $P$ a pyramid.
This completes the proof of Claim~(2) and the lemma.
\end{proof}


\subsection{General case}

Lemma~\lemref{pyramid} is special in that $P$ sits over a base $X$.
In the general situation, $X$ is the intersection of $Q$ with the truncating
slice plane $\Pi$, but $X$ is not a face of $Q$. Rather in general, $X$ is inside
$Q$, the ``top" of the portion $Q'$ of $Q$ below $\Pi$.

\begin{lm}
\lemlab{VertexTruncation}
Let $Q$ be obtained from $P$ by truncating vertex $v$.
Then, if $v$ has degree-$k$, $Q$ may be obtained from $P$ by $k-1$ tailoring steps, each the excision of a digon surrounding one vertex.
\end{lm}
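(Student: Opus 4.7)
The plan is to reduce the general case to the pyramid case (Lemma~\lemref{pyramid}) by applying the tailoring sequence constructed there to the pyramid cap of $P$. Given $P$ with vertex $v$ of degree $k$ truncated by plane $\Pi$, let $\tilde P$ denote the pyramid cap, i.e., the portion of $\partial P$ above $\Pi$, with boundary chain $x_1, \ldots, x_k$ where $x_i = \Pi \cap e_i$. Sealing $\tilde P$ with the polygon $X = \Pi \cap P$ yields a closed pyramid $P^{*}$ over which Lemma~\lemref{pyramid} produces a sequence of $k-1$ digons $D_1^{*}, \ldots, D_{k-1}^{*}$ reducing $P^{*}$ to doubly-covered $X$; the same digons will be our tailoring prescription for $P$.

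A direct angle-sum calculation shows the per-step accounting transfers without change: although $\omega_P(x_i) = 0$ (since $x_i$ lies on an edge of $P$) while $\omega_{P^{*}}(x_i) = 2\pi - \theta^L_i - \theta^X_i$ (where $\theta^L_i$ and $\theta^X_i$ are the cap-side and base-side angles at $x_i$), the amount $\theta^L_i - \theta^X_i$ of angle to remove at each $x_i$ is the same in both settings. On $P^{*}$ this is the gap from $\omega_{P^{*}}(x_i)$ to the curvature $2\pi - 2\theta^X_i$ of doubly-covered $X$ at $x_i$; on $P$ this is the gap from $0$ to the target $\omega_Q(x_i) = \theta^L_i - \theta^X_i$, which is forced because the angle contribution $2\pi - \theta^L_i$ from $P \setminus \tilde P$ persists unaltered and must be complemented by the new-face angle $\theta^X_i$ at $x_i$.

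The substantive step is to verify that at each stage $i$, there really is a digon on the current polyhedron $P_i$ (after excising $D_1, \ldots, D_{i-1}$ on $P$) that has endpoint $x_{i+1}$, surrounds the current replacement vertex $y_i$, and carries the required angle $\theta^L_{i+1} - \theta^X_{i+1}$. Mirroring the structure of the proof of Lemma~\lemref{pyramid}, this reduces to two claims on $P$: (1) the edge of $C(x_{i+1}, P_i)$ emanating from $y_i$ extends sufficiently far inside the cap $\tilde P_i$ before reaching its first ramification point $a_{i+1}$; and (2) the digon from $x_{i+1}$ to $a_{i+1}$ has angle at least $\theta^L_{i+1} - \theta^X_{i+1}$ at $x_{i+1}$. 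Once these are in hand, Lemma~\lemref{Partition} applied along the boundary chain $x_1 \cdots x_k$ shows that the cap side and the below side decouple, and the same cut-locus partition argument as in Lemma~\lemref{pyramid} delivers (2) from (1).

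After the first $k-2$ tailorings the curvatures at $x_1, \ldots, x_{k-1}$ match those of $Q$, and the curvature of the final replacement vertex $y_{k-1}$ is forced by Gauss--Bonnet to equal $\omega_Q(x_k)$; rigidity (Theorem~\thmref{Rigid}) then identifies $y_{k-1}$ with $x_k$, yielding a polyhedron congruent to $Q$. The main obstacle I anticipate is claim~(1): unlike in Lemma~\lemref{pyramid}, where the below side of the boundary chain is the flat polygon $X$ and the extended Cauchy Arm Lemma (Theorem~\thmref{CAL}) applied to $c_X$ versus $c_L$ does the job, here the below side is the intrinsically curved surface $P_{\text{below}} = P \setminus \tilde P$, which carries its own vertices that may create ramification points of $C(x_{i+1}, P_i)$ closer to $y_i$ than those on $C(x_{i+1}, P^{*}_i)$. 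Overcoming this will require a locality argument showing that the first cut-locus edge emanating from $y_i$ remains wholly within the cap; I expect to obtain this by developing any hypothetical competing geodesic from $x_{i+1}$ through $P_{\text{below}}$ into the plane alongside the cap geodesic, and then invoking Theorem~\thmref{CAL} with the below-side angles $2\pi - \theta^L_j$ and the bound $\theta^L_j \ge \theta^X_j$ to rule out any shortcut.
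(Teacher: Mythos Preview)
Your overall structure---use the same digon sequence as for the pyramid $P^*$, verify that the angle accounting transfers (which you do correctly), and check that each digon still exists on $P_i$---matches the paper's. Where you diverge is in the plan for your claim~(1), and there your intuition runs backward. You worry that vertices in $P_{\text{below}}=Q'$ might pull the first ramification point $a_{i+1}$ \emph{closer} to $y_i$ than in the pyramid case. The paper shows the opposite: the general case is \emph{easier}, and the tool is not Cauchy's Arm Lemma but orthogonal projection onto the slice plane. Since all digon excisions occur on $L_i$, the lower piece $Q'_i$ is intrinsically unchanged and can be re-embedded in $\Rs$ with its top face $X$ planar. Any curve in $Q'$ between boundary points then projects orthogonally onto $X$ to a curve no longer than the original, so every geodesic through $Q'$ is at least as long as the corresponding path through $X$. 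This gives (a)~the shortest path from $x_{i+1}$ to $y_i$ lies in $L_i$; and for (b)--(c), the third geodesic $\gamma_3$ to $a_{i+1}$---the one that dips into $Q'$---is at least as long as on $P^*$, forcing $a_{i+1}$ at least as far from $y_i$ as in the pyramid case. Lemma~\lemref{pyramid} thus already handled the worst case, and the required digon angle (which depends only on $L_i$, not on $Q'$) is available.

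Your proposed CAL argument with ``below-side angles $2\pi-\theta^L_j$'' does not work as stated: those are boundary angles at the $x_j$, but a competing geodesic through $Q'$ does not follow $\partial X$---it runs through the curved interior of $Q'$, which carries its own vertices and is not developable as a single planar chain with those angles. There is no flat polygonal chain here to which Theorem~\thmref{CAL} applies. The projection argument sidesteps this entirely by comparing $Q'$ to the flat $X$ in one stroke.
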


\begin{proof}
Here we argue that the general case is in some sense no different than
the special case of $P$ a pyramid just established in Lemma~\lemref{pyramid}.
In fact, the exact same digon excisions suffice to tailor $P$ to $Q$.

First we establish additional notation.
Let $\Pi$ be the plane slicing off $v$ above $\Pi$, and let $X = \Pi \cap P$.
Let the ``bottom" part of $P$ be $Q'$, with the final polyhedron $Q = Q' \cup X$.
We continue to use $L$ to denote the portion of $P$ above $\Pi$, so $P = Q' \cup L$.
After removal of digons at $x_1,x_2,\ldots,x_i$, we have $P_i = Q'_i \cup L_i$.

Below it will be important to distinguish between the three-dimensional extrinsic shape of $Q'_i$ and its intrinsic structure determined by the gluings that satisfy Alexandrov's Gluing Theorem.
We will use $\bar{Q}'_i$ for the embedding in $\R^3$ and $Q'_i$ for the intrinsic surface, and we will similarly distinguish between $\bar{X}_i$ and $X_i$.
Note that we can no longer assume that $C(x_{i+1}, P_i) \subset L_i$, for the cut locus could extend into $Q'$
(whereas it could not extend into $X$ in Lemma~\lemref{pyramid}).

\medskip

It suffices to show by induction that, on $P_i$, the following statements hold:
\\ (a) The shortest path $\g_{i+1}$ joining $x_{i+1}$ to $y_i$ is included in $L_i$.
\\ (b) The ramification point $a_{i+1}$ is still on $L_i$.
\\ (c) The $x_{i+1}$ angle $\a_{i+1}$ of the digon $D_{i+1} = D(x_{i+1},a_{i+1})$ is larger than or equal to $\o_Q(x_{i+1}) - \o_P(x_{i+1})$ (and so sufficient to reduce the curvature to $\o_Q(x_{i+1})$).

\medskip

To see (a), assume, on the contrary, that $\g_{i+1}$ intersects $Q'_i$.
Assume, for the simplicity of the exposition, that $\g_{i+1}$ enters $Q'_i$ only once, at $x_{i+1}$, and exits $Q'_i$ at $p \in \partial X$.
Let $\g'_{i+1}$ denote the part of $\g_{i+1}$ between $x_{i+1}$ and $p$.

We now check~(a) for $i=0$.
$Q=Q' \cup X$ and $X$ is planar, hence, because the orthogonal projection of any rectifiable curve onto a plane shortens or leaves its length the same,
$\g'_1$ is longer than or has the same length as its projection $\g''_1$ onto $X$.
So $p$ is a cut point of $x_1$ along $\g_1$, contradicting the extension of $\g_1$ as a geodesic segment beyond $p$.

By the induction assumption, all the digon excisions occur on $L_i$; $Q'_i$ is unchanged.
Nevertheless, as part of $P_i$, neither $\bar{Q}'_i$ nor $\bar{X}_i$ is (in general) congruent to the original $\bar{Q}$ and $\bar{X}$.
However, if we consider $Q'_i$ and $X_i$  separate from $P_i$, we can reshape them so that $\bar{Q}'_i = \bar{Q}$
and $\bar{X}_i = \bar{X}$, precisely because they have not changed.
Then $\bar{X}$ is planar and the projection argument used for $i=0$ works for all $i$.

\medskip

Next we check (b) and (c) for $i=0$.
Consider, as in Lemma~\lemref{pyramid}, the digon $D_1=(x_1,y_1)$ with $y_1 \in C(x_1)$,
with again $a_1$ the first ramification point of $C(x_1)$ beyond $v$.
The direction at $v$ of the edge $v a_1$ is only determined by the geodesic segment from $x_1$ to $v$, 
and hence is not influenced at all by $Q'$, because, by $i=0$ in~(a), that segment lies in $L$.

The ramification point $a_1$ is joined to $x_1$ by three geodesic segments, two of 
them---say $\g_1$ and $\g_2$---included in $L$.
The third geodesic $\g_3$ starts from $x_1$ towards $Q'$ and finally enters $L$ to connect to $a_1$. 
See Fig.~\figref{GeodesicsAbstract}.
Because these three geodesics have the same length, the longer $\g_3$ is, the longer are $\g_1$ and $\g_2$, and therefore more distant is $a_1$ to $v$.
So $a_1$ is closest to $v$, and the segment $v a_1$ shortest, when $Q' = X$ and $P$ is a pyramid.
It is when $v a_1$ is shortest that there is the least ``room" for $y_1$ on $v a_1$ to achieve the needed 
digon angle at $x_1$, for that angle is largest when $y_1$ approaches $a_1$.
Therefore, the case when $Q' = X$ and $P$ is a pyramid is the worst case, already settled in  Lemma~\lemref{pyramid}.

\begin{figure}
\centering
 \includegraphics[width=0.6\textwidth]{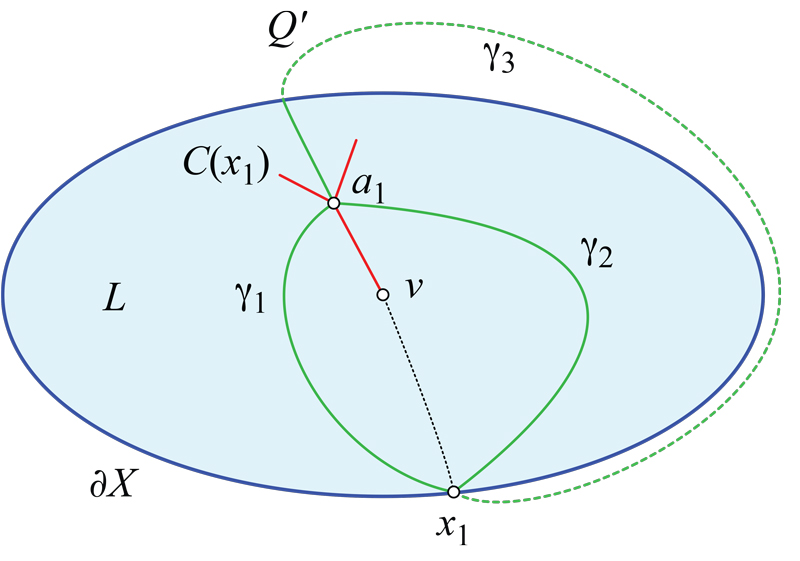}
\caption{Three geodesics to ramification point $a_1$ on $C(x_1)$. Dashed $\g_3$ partially in $Q'$.
The same situation holds under the changes:
$x_1 \to x_{i+1}$, $a_1 \to a_{i+1}$, $v \to y_i$, $Q' \to Q'_i$, $L \to L_i$.
}
\figlab{GeodesicsAbstract}
\end{figure}

Now we treat the general case for (b) and (c).
Again by the induction assumption, all changes to $P_i$ were made on its ``upper part'' $L_i$.

Because we ultimately need to reduce $L$ to $X$, the angle $\o_Q(x_{i+1}) - \o_P(x_{i+1})$ necessary to be excised at $x_{i+1}$, does not depend on $Q'_i$, only on $L_i$.
Thus the argument used for $i=0$ carries through.
The situation depicted in Fig.~\figref{GeodesicsAbstract} remains the same, with $x_1$ replaced by $x_{i+1}$, $v$ replaced by $y_i$, and $a_1$ replaced by $a_{i+1}$.
The ramification point $a_{i+1}$ is closest to $y_i$, and the segment $y_i a_{i+1}$ shortest, when $Q' = X$ and $P$ is a pyramid.
It is when $y_i a_{i+1}$ is shortest that there is the least ``room" for $y_{i+1}$ on $y_i a_{i+1}$ to achieve the needed angle excision at $x_{i+1}$, 
for that angle is largest when $y_{i+1}$ approaches $a_{i+1}$.
Therefore, the case when $Q' = X$ and $P$ is a pyramid is the worst case, already settled in  Lemma~\lemref{pyramid}.
\end{proof}

\noindent
Note that, in the end, the digon removals in Lemma~\lemref{vertex_small}, and then in  Lemma~\lemref{pyramid},
also work in the general case, Lemma~\lemref{VertexTruncation}.

\bigskip


\section{Cube/Tetrahedron: Completion}
\subsection{Pyramid Removals}
\seclab{PyramidRemovals}

We now return to the cube/tetrahedron example started in
Chapter~\chapref{Domes}.
We had reduced the sliced-off portion of
the original cube $P$ to four pyramids $P_1,P_2,P_3,P_4$.
Now each of these pyramids needs 
to be ``tailored away" to leave
the goal tetrahedron $Q$.
Fig.~\figref{CubeTetra_4} shows that when processed in order,
their removal reduces the portion of $P$
``above" $Q$, leaving the goal tetrahedron $Q$.
\begin{figure}[htbp]
\centering
\includegraphics[width=0.9\linewidth]{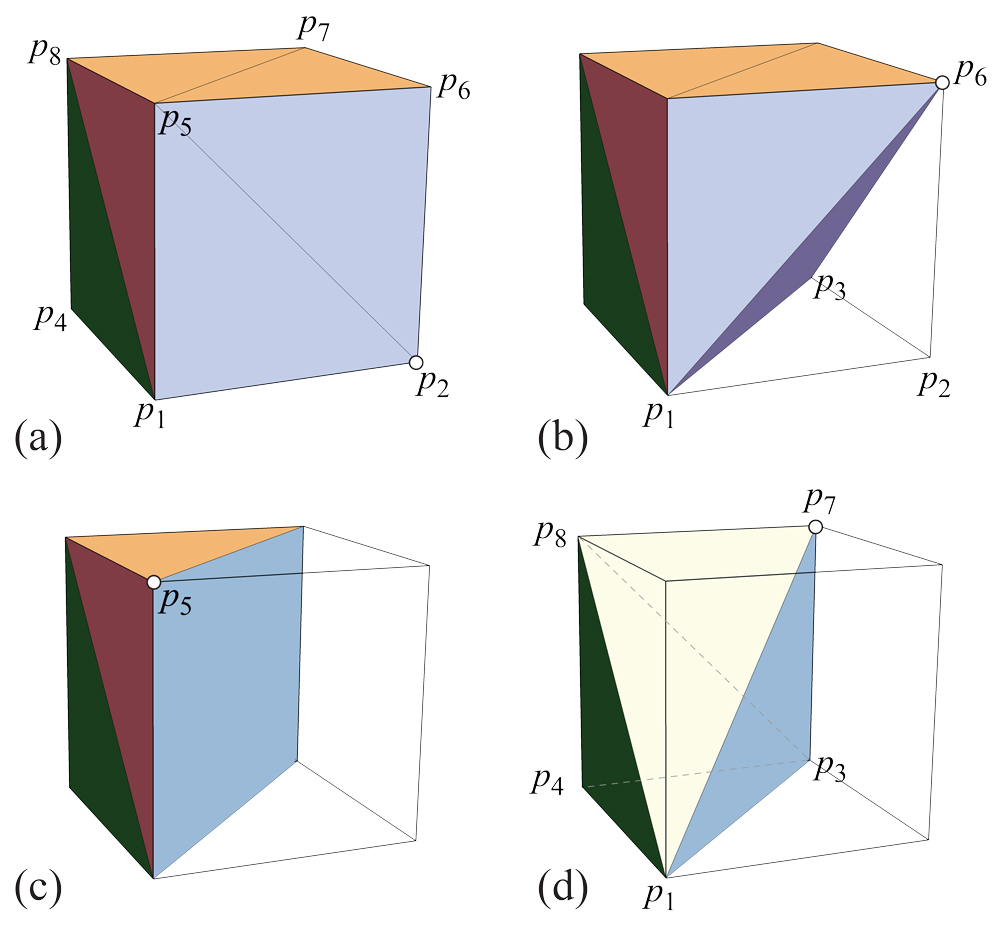}
\caption{Pyramid removals. The apex of the pyramid to be next removed is highlighted.
From (a) to (b), the pyramid $P_1$ with apex $p_2$ and base $p_1 p_3 p_6$ is removed.
Then $P_2$ apexed at $p_6$ is removed, producing~(c),
then removing $P_3$ apexed at $p_5$ leads to~(d).
The final removal of $P_4$ apexed at $p_7$, leaves the tetrahedron $Q=p_1 p_3 p_4 p_8$
previously illustrated in Fig.~\protect\figref{CubeTetra_1}(a).}
\figlab{CubeTetra_4}
\end{figure}

\subsection{Pyramid Reductions by Tailoring}
Now we follow Lemma~\lemref{VertexTruncation} 
to reduce each of the four pyramids
to their bases.
We only illustrate this for the first pyramid removed, $P_1$, with apex 
$p_2$ and base $X=p_1 p_3 p_6$. 
See Fig.~\figref{EqTriPyramid}(a).
The base is an equilateral triangle, with edge
lengths $\sqrt{2}$, with the apex is connected to the base vertices by unit-length
edges.
Because the apex $p_2$ is a cube corner, it has three incident $90^\circ$ angles,
so $\o(p_2)=90^\circ$.
The three faces incident to $p_2$ are $45^\circ{-}45^\circ{-}90^\circ$ triangles.
The base angles in $X$ are $60^\circ$. So each digon must reduce the incident $90^\circ$ 
angles by $30^\circ$ to match $60^\circ$.
Starting with $p_1$, the digon geodesics are $\pm 15^\circ$ around the $p_1 p_2$ edge.

The geometry is clearest if we unfold $P_1$'s lateral faces into the plane, as
shown Fig.~\figref{EqTriPyramid}(b).
Excising the first digon and sealing the cut results in a new polyhedron, with the
apex $p_2$ replaced by a new vertex, call it $y$, of curvature $\o(y)=60^\circ$
(because the two digon angles must sum to the $90^\circ$ curvature at $p_2$).
Unfortunately, we cannot display this new polyhedron
because of the difficulty of constructing what AGT guarantees exists.

Lemma~\lemref{VertexTruncation} says that just one more digon needs removal
(because $p_2$ has degree $k=3$),
again $\pm 15^\circ$ this time around the $p_3 y$ edge.
This removal reduces $P_1$ to its equilateral triangle base,
and, despite the nonconstructive nature of AGT, we know that the full polyhedron
is exactly what we illustrated earlier in Fig.~\figref{CubeTetra_4}(b).

\begin{figure}[htbp]
\centering
\includegraphics[width=1.0\linewidth]{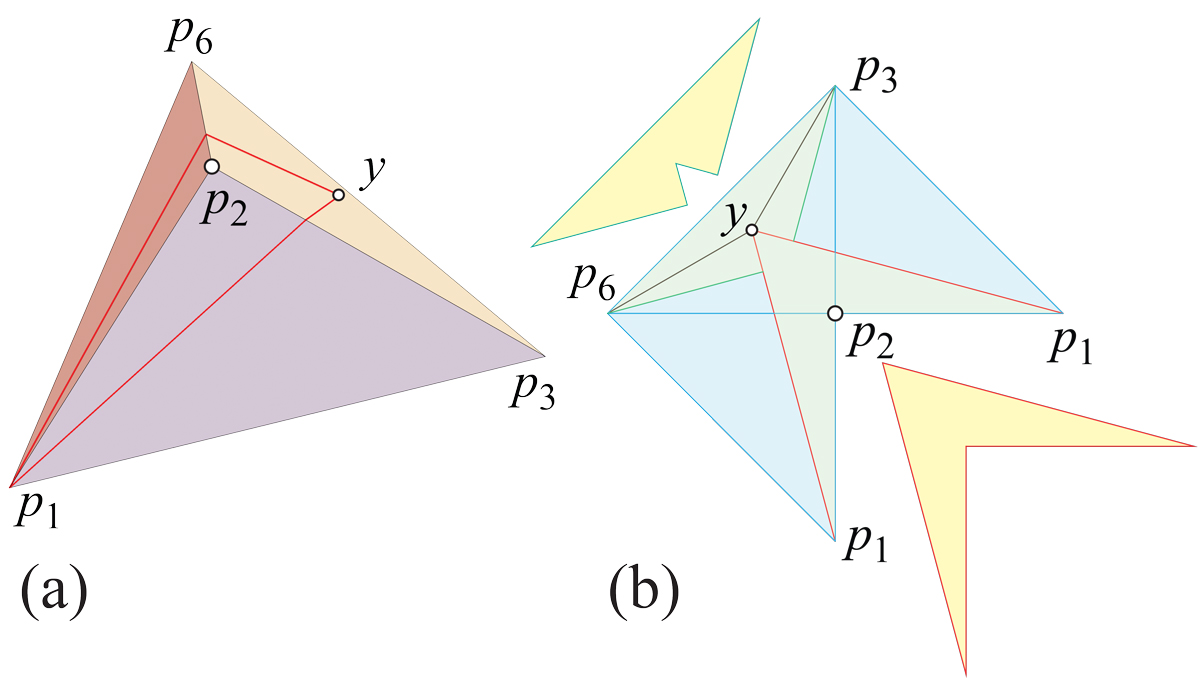}
\caption{(a)~Pyramid $P_1$: apex $p_2$, base $p_1 p_3 p_6$.
(b)~Excision of two digons. Each is bound by geodesics $\pm 15^\circ$ about the
edge to $p_2$. After removing $p_2$, a new vertex $y$ is created.
The removal of $y$ in the second excision flattens the pyramid to its base.}
\figlab{EqTriPyramid}
\end{figure}

One further remark on the shape of $P_1$ after excision of the first digon.
If we imagine $P_1$ standing alone on its base $X$ as illustrated in Fig.~\figref{EqTriPyramid}(a),
rather than as part of the full cube polyhedron, then it is not difficult to 
reconstruct the shape of $P'_1$. 
It is a flat doubly-covered quadrilateral, with the triangle $p_3 y p_6$ 
flipped over and joined to the $p_3 p_6$ edge
of the equilateral triangle base $X$.
However, with this $P'_1$ piece joined to the full cube, it seems much less straightforward to
determine the shape of the full polyhedron.

Each pyramid reduction proceeds in the same manner: $k-1$ digons are excised
if the apex has degree-$k$, and the lateral faces are reduced to the base.
So $P_2$ has apex $p_6$
and base $p_1 p_3 p_5 p_7$. After removal of three digons, the result is 
as illustrated in Fig.~\figref{CubeTetra_4}(c).
$P_3$'s apex $p_5$ has degree-$3$, so two digon removals lead to~(d).
The last pyramid removed, $P_4$, reduces to the face $p_1 p_3 p_8$ of $Q$, completing
the tailoring of the cube $P$ to tetrahedron $Q$.


\subsection{Seals}

After removal of the two digons illustrated in
Fig.~\figref{EqTriPyramid}(b), $P_1$ has been reduced to its equilateral
triangle base $X$. Sealing the first digon produces a seal $\s_1$,
which is then clipped to a segment $s_1$ by the second digon removal,
which produces $\s_2$ along the boundary of $X$.
The seal segments then are as shown in
Fig.~\figref{PyrBases}(a).
\begin{figure}[htbp]
\centering
\includegraphics[width=0.6\linewidth]{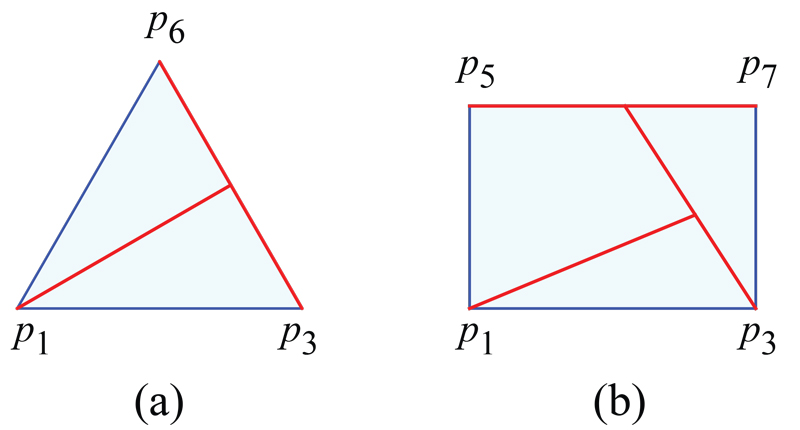}
\caption{(a)~Base of $P_1$. (b)~Base of $P_2$. Seal segments: red.}
\figlab{PyrBases}
\end{figure}
Fig.~\figref{PyrBases}(b) shows the three seal segments 
that result by reducing $P_2$ to its base.
These depictions of the seal graph $\S$ will be explored in some
detail in Chapter~\chapref{SealGraph}.

There is an aspect of the seals we are not tracking: 
As can be seen in Fig.~\figref{CubeTetra_4}(b,c),
one of the faces of $P_2$ is the equilateral triangle base of $P_1$.
Since that base is already crossed by seal segment $s_1$
when $P_2$ is undergoing digon removal,
that segment $s_1$ will be reflected as a cut in the reduced base 
of $P_2$, not depicted in Fig.~\figref{PyrBases}(b).
We have not attempted to track this complex overlaying of seal cuts
in the surface of $P$.
However, we explore seals for one pyramid tailoring in some
detail in Chapter~\chapref{SealGraph}.


\section{Hexagonal Pyramid Example}
\seclab{HexPyramid}

We now detail a more complex example 
following Lemma~\lemref{pyramid} to tailor a pyramid $P$ to its base $X$.
We continue to employ the notation used in the lemmas above.
The example is shown in Fig.~\figref{Hex3D}. $X$ is a regular hexagon, and $L$ consists of $k=6$ congruent, $70^\circ{-}70^\circ{-}40^\circ$ isosceles triangles. 
The curvature at the apex $v$ is $360^\circ - 6 \cdot 40 = 120^\circ$.
The angle at each $x_i$ in $X$ is $120^\circ$ whereas the angle in $L$ is $140^\circ$.
So each digon excision must remove $20^\circ$ from $x_i$.
As in the lemmas, we excise the digons in circular order around $\partial X$.

\begin{figure}
\centering
 \includegraphics[width=0.5\textwidth]{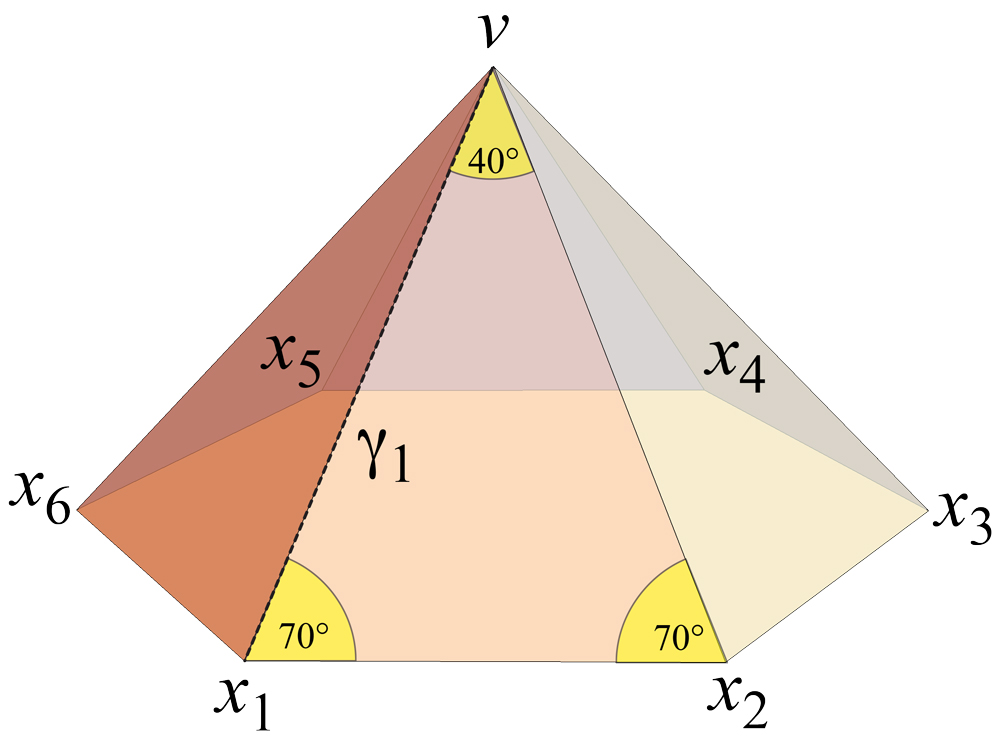}
\caption{Pyramid with regular hexagon base $X$;
all lateral faces congruent triangles.
}
\figlab{Hex3D}
\end{figure}

We display the progress of the excisions on the layout of $L$ in Fig.~\figref{DigonsHexFlatX}(a).
Let $v=y_0$ for ease of notation.
$D_1=(x_1,y_1)$ includes the geodesic $\g_1$ from $x_1$ to $y_0$,
and locates $y_1$ on the cut locus segment as described in the lemmas.
The digon boundary geodesics each remove $10^\circ$ from the
left and right neighborhood of $x_1$, and meet at $y_1$ at an angle of $100^\circ$,
which is then the curvature at the new vertex: $\o(y_1)=100^\circ$.
Notice that the digon angles $20^\circ + 100^\circ$ match the curvature
$\o(v)=120^\circ$ removed, as they must to satisfy Gauss-Bonnet.
\begin{figure}
\centering
 \includegraphics[width=1.0\textwidth]{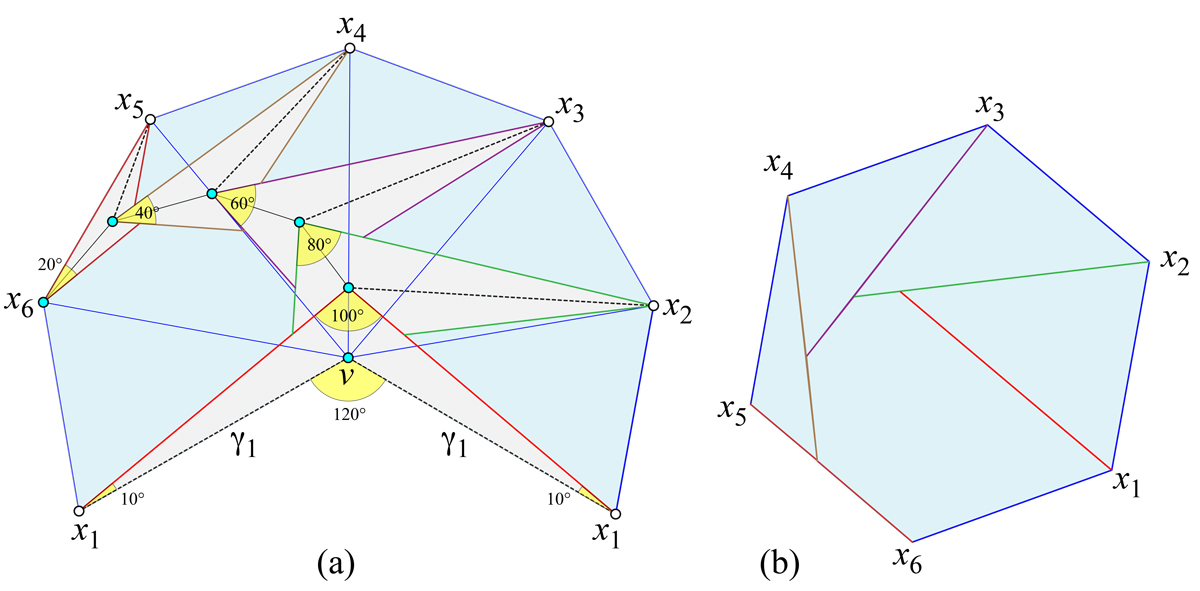}
\caption{(a)~Cone $L$ flattened; $\o(v)=120^\circ$.
Digons $D_i=(x_i,y_{i-1})$ shaded. Dashed lines are geodesics $\g_i$ from 
$x_i$ to the vertex $y_{i-1}$ (with $y_0=v$).
(b)~After excising all digons $D_1,\ldots,D_5$, $L_5$ is isometric to $X$.
Seals $\tilde{\s}_i$ are marked.
}
\figlab{DigonsHexFlatX}
\end{figure}

One should imagine that $D_1$ is sutured closed in Fig.~\figref{DigonsHexFlatX}(a), producing $L_1$, before constructing $D_2(x_2,y_2)$. 
Let $\s_i$ be the geodesic on $L_i$ that results from sealing $D_i$ closed; $\s_i$ is like a ``scar" from the excision.
Notice that one of the geodesics bounding $D_2$ crosses $\s_1$.

This pattern continues as all $k{-}1=5$ digons are removed, each time replacing vertex $y_{i-1}$ with $y_i$, flattening the curvature $\o(y_i)=\o(y_{i-1})-20^\circ$.
Finally, after $D_5(x_5,y_5)$ is removed, $y_5$ is coincident with $x_6$.
No further digon removal is needed, because $D_5$ removed $20^\circ$ from $x_6$.
So now each angle in $L_k=L_5$ at all $k{=}6$ vertices is $120^\circ$, and $L_5$ is isometric to a flat regular hexagon, i.e., to $X$.

This final hexagon is shown in Fig.~\figref{DigonsHexFlatX}(b).
The images $\tilde{\s}_i$ of the seals
are in general clipped versions of $\s_i$ on $L_i$, clipped by subsequent digon removals.
The particular circular order of digon removal followed in this example and the lemmas result
in a spiral pattern formed by $\tilde{\s}_i$.
Other excision orderings, which ultimately would result in the same flat $L_{k-1}$
(effectively proved in Lemmas~\lemref{vertex_small}--\lemref{VertexTruncation}) would create different seal patterns.
As mentioned earlier, we study seals in detail in
Chapter~\chapref{SealGraph}.

\section{Tailoring is finer than sculpting}
\seclab{TailoringFiner}

In this section we reach one of our main results, Theorem~\thmref{MainTailoring}, which says, roughly, 
that any polyhedron $Q$ that can be obtained by sculpting $P$ can be obtained by tailoring $P$.
Moreover, Lemma~\lemref{NotReachable} shows that polyhedra can be obtained by tailoring that cannot be obtained by sculpting. So, in a sense, tailoring is finer than sculpting.

\begin{lm}
\lemlab{NotReachable}
There are shapes $P$ and sequences of tailorings of $P$ that result in polyhedra not achievable by sculpting.
\end{lm}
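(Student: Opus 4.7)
The plan is to exploit a fundamental asymmetry between the two operations: sculpting $P$ by iterated half-space intersections necessarily produces a polyhedron $Q \subseteq P$ as subsets of $\R^3$, and in particular $\mathrm{vol}(Q) \le \mathrm{vol}(P)$. Consequently, it suffices to exhibit a single tailoring whose result strictly exceeds $P$ in volume, for then even scaling is not enough to place $Q$ inside $P$.

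For the concrete witness I would point directly to Example~\exref{deg-vol}. Take $P$ to be the doubly-covered kite $K = aob c_d$ of Fig.~\figref{TetraTailoring}(b), which, being flat, has three-dimensional volume $0$. Excising the digon encircling $o$ whose endpoints $x,y$ lie on the segment $oc$ but on opposite sides of $K$, and suturing closed, yields by AGT the non-degenerate pentahedron of Fig.~\figref{Pentahedron}, whose volume is strictly positive. Since no body of positive volume is contained in a flat region, this pentahedron cannot be produced from $K$ by any sequence of plane slicings.

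If a non-degenerate starting polyhedron is preferred, one may prepend the tailoring of Example~\exref{tetra-deg}: begin with the regular tetrahedron $T$, tailor to the doubly-covered kite $K$, then tailor $K$ to the pentahedron as above. The chained two-step tailoring sends $T$ to the pentahedron of Fig.~\figref{Pentahedron}, and closing this alternative requires only the routine check that the pentahedron does not embed inside $T$ up to rigid motion, e.g.\ by a direct volume or diameter comparison.

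I do not anticipate any serious obstacle. The only point requiring care is the precise meaning of ``obtainable by sculpting''; with the definition used in Chapter~\chapref{IntroductionPartI} (iterated intersection of $P$ with closed half-spaces, so that $Q \subseteq P$ in $\R^3$), the monotonicity $\mathrm{vol}(Q) \le \mathrm{vol}(P)$ is immediate, and the examples already worked out in that chapter supply the rest.
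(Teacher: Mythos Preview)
Your argument is correct. You exploit the volume invariant (sculpting is volume-nonincreasing because $Q \subseteq P$ in $\R^3$), and the tailoring from Example~\exref{deg-vol} takes the flat kite $K$ (volume $0$) to a non-degenerate pentahedron (volume $>0$), which settles the lemma immediately.

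The paper uses a different witness and a different invariant. It starts from the non-degenerate regular tetrahedron $T$, tailors once (Example~\exref{tetra-deg}) to the doubly-covered kite $K$, and then argues via \emph{extrinsic diameter}: $T$ has extrinsic diameter $1$, while $K$ has extrinsic diameter equal to the intrinsic diameter of $T$, namely $2/\sqrt{3}>1$, so $K$ cannot fit inside $T$. The paper then perturbs this to obtain a second example in which both the source and target are non-degenerate. Your volume argument is slicker for the degenerate source; the paper's diameter argument has the advantage that the starting shape $P=T$ is already full-dimensional, which is why your suggested two-step alternative (tailor $T\to K\to$ pentahedron) would still need the ``routine check'' you defer---and that check is exactly the kind of diameter comparison the paper performs.
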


\begin{proof}
We first tailor a regular tetrahedron $T$ as in Example~\exref{tetra-deg}, resulting in the kite $K$ in Fig.~\figref{TetraTailoring}(b).
We now show that $K$ cannot fit inside $T$, so it couldn't
have been sculpted from $T$.
Assume $T$ has edge-length $1$.
Then its extrinsic diameter is $1$ and its intrinsic diameter is $2/\sqrt{3}$ 
(see, e.g., Theorem~3.1 in~\cite{rouyer2003antipodes}).
Moreover, the extrinsic diameter of $K$ is precisely the intrinsic diameter of $T$, and so it cannot fit inside $T$.

Next we construct
a non-degenerate example, a modification of the previous one.
Consider a non-degenerate pentahedron $F$ close enough to $K=o a c_d b$ in Fig.~\figref{TetraTailoring}(b).
For example, it could have two vertices close to the vertex $a$ of $K$.
Insert into $F$ the removed digon from $T$; this is not affected by the new vertex, because it does not interfere with the geodesic segment from $c_d$ to $o$. 
We arrive at some surface $P$ close enough to the original tetrahedron $T$.
Therefore, the intrinsic and extrinsic diameters of $P$ and $F$ are close enough to those of $K$ and $T$, respectively,
and the above inequality between the extrinsic diameters of $P$ and $F$ still holds, because of the ``close enough" assumption.
\end{proof}

\begin{thm}
\thmlab{MainTailoring}
Let $P$ be a convex polyhedron, and $Q \subset P$ 
a convex polyhedron resulting from repeated slicing of $P$ with planes. Then $Q$ can also be obtained from $P$ by tailoring.
Consequently, for any given convex polyhedra $P$ and $Q$, one can tailor $P$ ``via sculpting'' to obtain any homothetic copy of $Q$ inside $P$.
\end{thm}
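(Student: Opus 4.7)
\bigskip
\noindent\textbf{Proof proposal.} The plan is to assemble the three tools developed in this chapter and the previous one into a single composite procedure, matching the schematic
$$\textrm{plane slice}\;\to\;\textrm{g-domes}\;\to\;\textrm{pyramids}\;\to\;\textrm{digon removals}$$
advertised in the summary of Part~I. Since $Q\subset P$ is obtained by a finite sequence of plane slices, write the sculpting as $P=P_0,P_1,\ldots,P_m=Q$, where $P_i$ is the intersection of $P_{i-1}$ with a closed half-space whose boundary plane $\Pi_i$ contains a face $F_i$ of $Q$. I will convert each individual slice $P_{i-1}\to P_i$ into a finite tailoring sequence, and then concatenate over $i=1,\ldots,m$.

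For a fixed slice, Lemma~\lemref{Slice2gDomes} partitions the sliced-off cap $P_{i-1}\setminus P_i$ into a fan of g-domes $G_1,G_2,\ldots$, all of whose bases share a common edge of $F_i$. I then apply Theorem~\thmref{DomePyr} to each $G_j$, obtaining a ``stacked'' sequence of pyramids $P^{(j)}_1,\ldots,P^{(j)}_{n_j}$, each of whose bases is an edge of the base of $G_j$ and such that removing them one at a time leaves a g-dome at every stage, with the base unaltered. Finally, Lemma~\lemref{VertexTruncation} converts each pyramid removal (a vertex truncation of degree $k$) into exactly $k-1$ digon-tailoring steps. Processing the g-domes $G_1,G_2,\ldots$ in order, and within each g-dome processing the pyramids in the order supplied by Theorem~\thmref{DomePyr}, yields a finite tailoring sequence that reshapes $P_{i-1}$ into $P_i$. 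Concatenating over $i$ gives a tailoring sequence from $P$ to $Q$.

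The main technical point to verify is that the composition is well-defined: each digon-tailoring must be applicable to the intermediate surface produced by the previous one. This is exactly what the preservation properties of the tools guarantee. Lemma~\lemref{VertexTruncation} was established precisely so that, after removing a pyramid by digon-tailoring, the intrinsic surface of the base $X$ is unchanged, so the next pyramid in the $G_j$ stack sits over an isometric copy of its expected base. Likewise, Theorem~\thmref{DomePyr} ensures that the residual after each pyramid removal is still a g-dome, so subsequent vertex truncations remain of the form handled by Lemma~\lemref{VertexTruncation}. Finally, AGT (Theorem~\thmref{AGT}) guarantees that the convex polyhedron produced after each digon excision is well-defined, so there is no obstruction to iterating. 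The anticipated difficulty is purely bookkeeping: keeping straight the correspondence between the abstract stacked-pyramid decomposition and the intrinsic geometry of the evolving polyhedron, since after the first tailoring the extrinsic embedding of $P$ is no longer the sculpted $P$.

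For the ``consequently'' clause, let $P$ and $Q$ be arbitrary. Choose a scaling factor $\lambda>0$ small enough and a rigid motion so that a homothetic copy $\lambda Q'\subset\inn(P)$, which is possible because $\inn(P)\neq\emptyset$. A homothet of $Q$ inside $P$ can always be sculpted from $P$ by slicing with the half-spaces bounded by the planes carrying the faces of $\lambda Q'$. Applying the first part of the theorem to this sculpting sequence produces the desired tailoring of $P$ into $\lambda Q'$.
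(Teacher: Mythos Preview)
Your proposal is correct and follows essentially the same route as the paper: invoke Lemma~\lemref{Slice2gDomes} (slice $\to$ g-domes), Theorem~\thmref{DomePyr} (g-dome $\to$ stacked pyramids), and Lemma~\lemref{VertexTruncation} (pyramid $\to$ digon removals), then iterate over slices and finish the homothet clause by shrinking $Q$ into $P$. Your added remarks about well-definedness of the composition and the role of AGT are fine and slightly more explicit than the paper; one small wording slip---the pyramids' bases \emph{share an edge} with the base $X$ of the g-dome rather than \emph{being} an edge of $X$---does not affect the argument.
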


\begin{proof}
Lemma~\lemref{Slice2gDomes} established that one slice leads to domes,
Theorem~\thmref{DomePyr} showed that each dome leads to pyramids,
and Lemma~\lemref{VertexTruncation} showed that each pyramid can be
reduced to its base by tailoring.
Since this holds for one slice, it immediately follows that it holds for arbitrary slicing.

Concerning the domes~$\to$~pyramids step, we note that the
property that each pyramid $P_i$ has a common edge with $X$,
established in Theorem~\thmref{DomePyr}, allows reduction of
the pyramids in the order that they are obtained in that theorem.
After each reduction, the result is still a g-dome, allowing iteration until the
original g-dome is reduced to its base.

For the homothet-copy claim of the lemma,
shrink $Q$ by a dilation until it fits inside $P$, and then apply the 
reductions.
 \end{proof}

As we mentioned in the Preface, 
an informal consequence
of this theorem is that $P$ can be ``whittled" to e.g., a sphere $S$:

\begin{co}
\lemlab{approx_tailoring}
For any convex polyhedron $P$ and any convex surface $S$, one can tailor $P$ to approximate a homothetic copy of $S$.
\end{co}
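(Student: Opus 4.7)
The plan is to reduce the corollary to Theorem~\thmref{MainTailoring} via a standard polyhedral approximation of $S$. Given the convex surface $S$ and a tolerance $\e>0$, I would first approximate $S$ in the Hausdorff metric by a convex polyhedron $Q_\e$ with $d_H(Q_\e,S)<\e$; such an approximating polyhedron exists by a classical result on convex bodies (for instance, take the convex hull of a sufficiently dense $\e$-net on $S$). Since the Hausdorff distance is invariant under translation and scales linearly under homotheties, a homothetic copy $\lambda Q_\e$ approximates the corresponding homothetic copy $\lambda S$ within $\lambda\e$.

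Next I would choose a homothety ratio $\lambda>0$ small enough that $\lambda Q_\e$ fits inside $P$ after a suitable translation; this is possible because any bounded convex set can be shrunk to fit inside the nonempty interior of $P$. Having placed $\lambda Q_\e\subset P$, Theorem~\thmref{MainTailoring} directly supplies a finite sequence of digon-tailorings transforming $P$ into a convex polyhedron isometric (as a surface) to $\lambda Q_\e$, which by construction lies within Hausdorff distance $\lambda\e$ of the homothet $\lambda S$.

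To upgrade ``for some $\e$'' to ``to any prescribed accuracy,'' I would simply let $\e\to 0$ and adjust $\lambda$ as needed, producing a sequence of tailorings whose outputs approximate homothetic copies of $S$ arbitrarily well. Note that since the statement only asks for approximation of \emph{some} homothetic copy of $S$, we are free to shrink as much as required; this removes any delicate issue of matching the size of $S$ against that of $P$.

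The main obstacle, such as it is, is purely conceptual rather than technical: one must be comfortable with the fact that the target in Theorem~\thmref{MainTailoring} is required to be a genuine convex polyhedron, not an arbitrary convex surface, so the approximation step cannot be skipped. Everything else---existence of $Q_\e$, choice of $\lambda$, and invocation of the main theorem---is routine, and no further geometric input is needed beyond what has already been established.
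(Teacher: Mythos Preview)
Your proposal is correct and follows essentially the same approach as the paper: approximate $S$ by a convex polyhedron, place a homothetic copy inside $P$, and invoke Theorem~\thmref{MainTailoring}. The paper's proof differs only cosmetically in the order of operations---it first places a homothet of $S$ inside $P$ and then obtains the approximating polyhedron $Q$ by slicing $P$ with planes tangent to that homothet---but the substance is the same.
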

\begin{proof}
Bring a homothetic copy of $S$ inside $P$.
Perform a series of slicings of $P$ with planes tangent to $S$.
Any degree of approximation desired can be achieved by
increasing the number of plane splicings. 
Call the result of these slicings $Q$.
Now apply Theorem~\thmref{MainTailoring}.
\end{proof}

Despite this corollary,
it does not seem possible to accomplish the reverse:
to start with a strictly convex surface and tailor it to a polyhedron.
However, one can of course sculpt a surface to a polyhedron.

\chapter{Pyramid Seal Graph}
\chaplab{SealGraph}



\section{Pyramid Digon Removal}

As we have seen in Theorem~\thmref{MainTailoring},
tailoring by tracking sculpting ultimately relies on digon removal reducing pyramids to their bases.
We have illustrated such reductions for a few low-degree pyramids 
in Figs.~\figref{Sculpting}, \figref{EqTriPyramid}, and~\figref{DigonsHexFlatX}.
In the latter two figures, we displayed the seals $\tilde{\s_i}$
on the base $X$ after reduction.
It however remains difficult to grasp in detail the digon-removal process for a pyramid $P$, for at least three reasons:
\begin{enumerate}
\squeezelist
\item After removing the first digon $D_1$, $P_1$ is (in general) no longer a pyramid. 
The difficulty of computationally realizing the subsequent intermediate shapes $P_i$, guaranteed by AGT, makes it  hard to envision the process.
\item The seals $\s_i$ that result from closing digon $D_i$
cross and clip one another.
\item The process depends on the order in which the digons are removed.
\end{enumerate}
We will continue to circumvent this last difficulty by only studying
the natural order of digon removal, anchored at $x_1, x_2,x_3,\ldots$ in 
counterclockwise order around $\partial X$.
In this section, we introduce a different way to view digon removal that
in some sense skirts the first two difficulties.

The process is complex enough to require somewhat
extensive notation, which we list in two parts before turning
to examples.


\subsection{Notation~I}

\begin{itemize}
\item $P$: a pyramid, $n$ vertices around base $X$.
\item Base vertices $x_1,x_2,\ldots,x_n$, in counterclockwise order
around $\partial{X}$.
\item Apex $y_0$ of degree-$n$. 
\item $y_i$: apex after removing digon $D_i$.
\item $D_i$: digon from $x_i$ to $y_i$, surrounding $y_{i-1}$.
\item $L_i$: (the remaining of the) lateral faces after removing digon $D_i$. $L_0$: initial faces before any removals.
The apex of $L_i$ is $y_i$.
\item $P_i$: The polyhedron $L_i \cup X$, guaranteed by AGT.
$P=P_0$ is the original, before any digon removal.
\end{itemize}

We should emphasize that although $P_i = L_i \cup X$, 
in general $X$ will not be planar in $P_i$ as it is in $P_0$,
and so $P_i$ is not a pyramid, as previously mentioned.


\section{Cone Viewpoint}
\seclab{Cone}

Although we do not know the structure of $P_i$, except at the beginning ($i=0$) and end ($i=n{-}1$),
when it is $P$ and doubly-covered $X$ respectively, we do know that the lateral faces $L_i$ 
contain only one vertex, $y_i$, hence they
form a subset
of a cone apexed at $y_i$.
Any cone can be cut open along a generator (a ray on the cone from the apex)
and laid flat in the plane.
Such a layout will have an angle gap of $\o(y_i)$ at the apex.
It is especially useful to cut along $x_i y_{i-1}$ before removing
digon $D_i$.
We will provide several examples, after presenting more notation.
We emphasize the indices $i{-1}$, $i$, and $i{+}1$ in the following,
in an attempt to avoid confusion.


\subsection{Notation~II}

\begin{itemize}
\item $\bar{L}_{i-1}$: Unfolding of $L_{i-1}$ cut open along $x_i y_{i-1}$. So
after removing and sealing digon $D_{i-1}$, but not yet $D_i$.
\item $\bar{L}_i$: Unfolding of $L_i$ cut open along $x_{i+1} y_i$. So
after removing and sealing digon $D_i$, but not yet $D_{i+1}$.
So $\bar{L}_0$ is $L_0$ cut open along $x_1 y_0$, and
$\bar{L}_1$ is $L_1$ cut open along $x_2 y_1$.
\item $\s_i = x_i y_i$ is the $i$-th seal after suturing closed the digon $D_i$.
We view the seals as directed from $x_i$ to $y_i$, so that they have
distinguished left and right sides. 
This direction is only used in the proofs; the seals are illustrated as undirected segments in several figures.
When the direction plays a role,
we use boldface: $\bm{\s}_i$.
\item $\S_i$: the seal graph after removing digon $D_i$. 
$\S_0 = \varnothing$, and $\S_1 = x_1 y_1$.
\item $s_j \subseteq \s_j$, $1 \le j \le i$, is the possibly truncated seal segment 
in $\S_i$, on the surface $P_j$.
%
So, after possibly other truncations, we reach $\tilde{\s_j}$, hence the informal inclusion $\tilde{\s_i} \subseteq s_j \subseteq \s_j$;
``informal'' because those geodesic segments live in different spaces.
\item $S_i$ is the subset of $L_i$ bounded by  $x_1 y_i$ and $x_i y_i$,
the \emph{sealed region} which we will later prove contains $\S_i$.
\end{itemize}


\section{Examples}

We start with $P_1$, previously displayed in Fig.~\figref{EqTriPyramid}.
$X$ is an equilateral triangle, with the apex centered above its centroid.
Fig.~\figref{P1_digons} shows the removal of $n{-}1=2$ digons $D_1, D_2$ that reduce $L_0$ to 
the equilateral triangle base $X$.
Images are repeated so that in one row the transition from $L_{i-1}$ to $L_i$ by removal of $D_i$ is evident.
\begin{figure}[htbp]
\centering
\includegraphics[width=0.70\textheight]{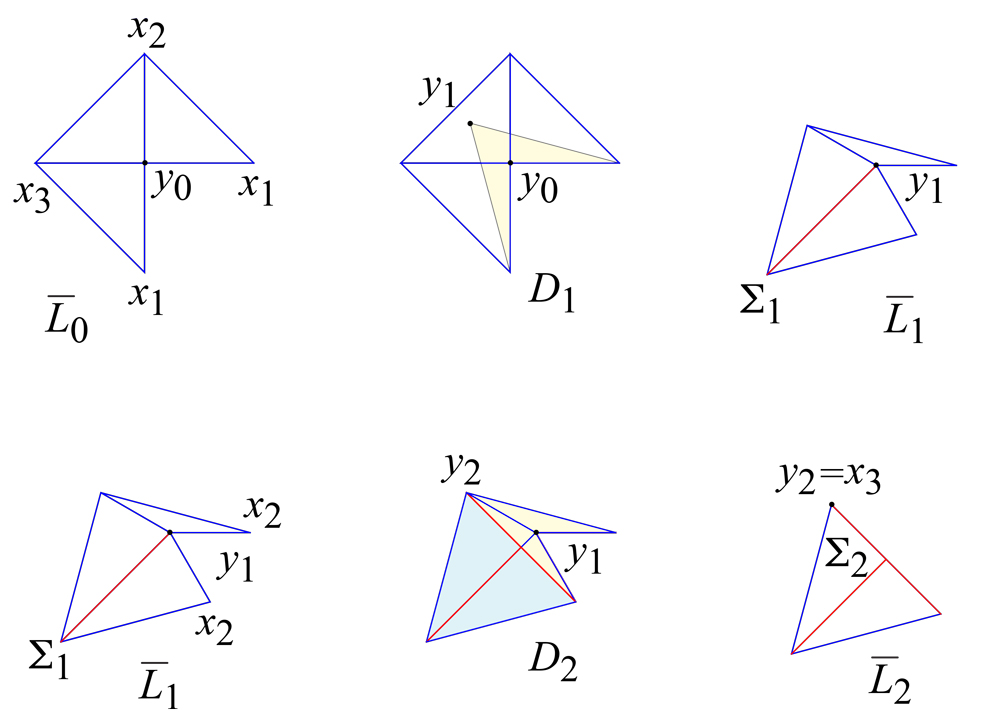}
\caption{$P_1$. $X$ is an equilateral triangle. 
Cf.~Fig.~\protect\figref{PyrBases}(a).}
\figlab{P1_digons}
\end{figure}

Next is the more complicated $P_2$ in Fig.~\figref{P2_digons}.
Here $X$ is a rectangle, and three digons are removed,
$D_1,D_2,D_3$, before reaching $X$.
\begin{figure}[htbp]
\centering
\includegraphics[width=0.75\textheight]{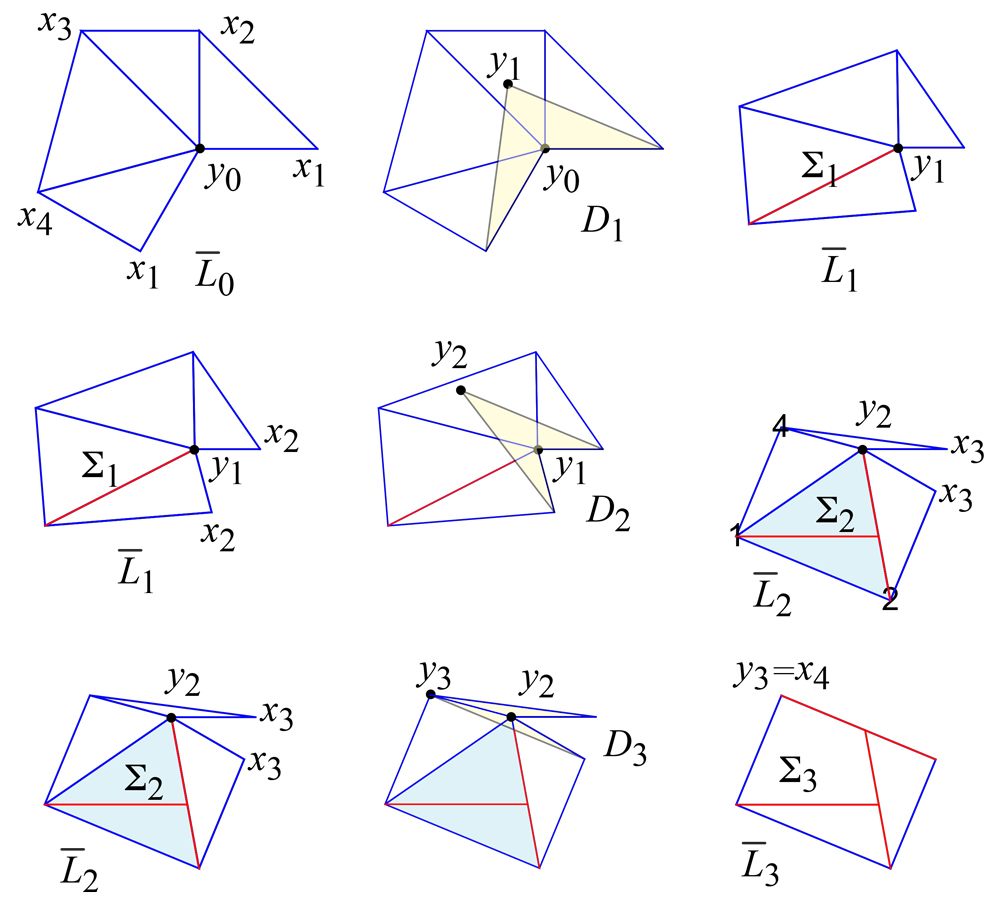}
\caption{$P_2$. $X$ is a rectangle.
Cf.~Fig.~\protect\figref{PyrBases}(b).}
\figlab{P2_digons}
\end{figure}
We should emphasize that any one of these figures could be cut out and closed
to a cone.
This cone would not be rigid, and its boundary $\partial X$ would not (in general)
be planar, as we mentioned earlier.

As a last example, we extract one row illustrating removal of $D_5$ from a
pyramid of degree-$12$ in
Fig.~\figref{Pn12_3figs}. We will refer to this figure subsequently.
\begin{figure}[htbp]
\centering
\includegraphics[width=1.1\linewidth]{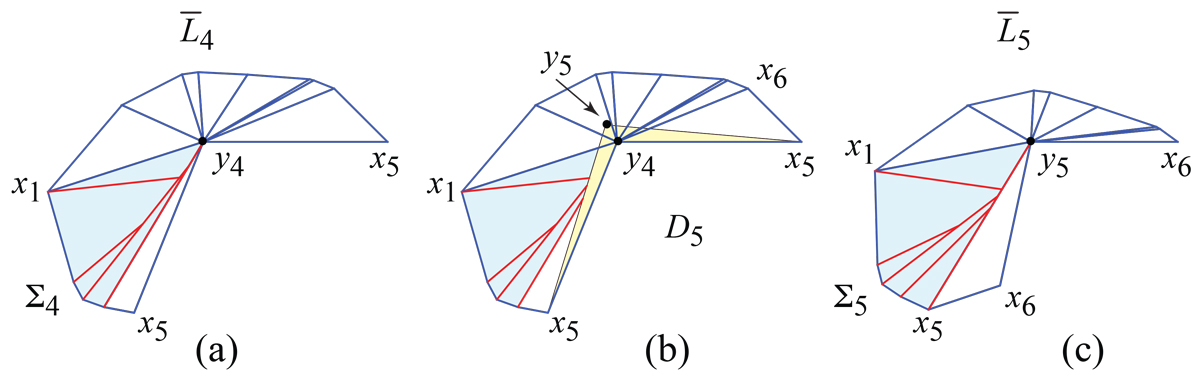}
\caption{Pyramid of $n{=}12$ base vertices $x_i$.
(a)~$\bar{L}_4$. $S_4$ (blue).
(b)~Digon $D_5$ (yellow).
(c)~$\bar{L}_5$.
$\S_i$ in red.}
\figlab{Pn12_3figs}
\end{figure}

\section{Preliminary Lemmas}
The viewpoint just described is simple enough to be implemented,
and to allow us to construct the seal graph $\S$ for any pyramid $P$.
See ahead to Fig.~\figref{Pyr_seals} for examples.
This leads to 
Theorem~\thmref{SealGraphTree}: $\S$ is a tree.
The proof of this claim is somewhat intricate, and presented in 
Section~\secref{SealGraphTree}.
That proof requires two lemmas, both involving
the structure of the cut locus, which we present first.
The reader might skip these proofs until later.


\begin{lm}
\lemlab{CutLocusPath}
After removing digons $D_1,\ldots,D_i$
and closing seals $\s_1,\ldots,\s_i$,
$C(x_{i+1}, P_i)$ includes the path $x_1,\ldots,x_i$,
with each node in that path of degree-$2$.
\end{lm}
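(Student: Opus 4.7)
The plan is to proceed by induction on $i$, with base case $i=0$ vacuous.  The key geometric observation driving the proof is that once digon $D_j$ has been removed and seal $\s_j$ closed, the $L$-side surface angle at $x_j$ has been reduced from its original value $\q^L_j$ in $P_{j-1}$ to exactly $\q^X_j$ in $P_j$ (the angle at $x_j$ in the base $X$).  Consequently, on $P_i$ every processed vertex $x_j$ with $j \le i$ carries equal angles $\q^X_j$ on its $L_i$-side and its $X$-side, so its surface curvature is $\o(x_j)=2\pi-2\q^X_j$ and its two angular sectors match across $\partial X$.  This is the symmetry I would exploit throughout.

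For the inductive step, I would exhibit, for each $j \le i$, two distinct shortest geodesic segments from $x_{i+1}$ to $x_j$ on $P_i$.  The construction begins with one geodesic and uses the angle-matching at each intermediate vertex $x_k$ ($k \le i$) to reflect it across $\partial X$ at $x_k$, producing a second candidate path of identical length that arrives at $x_j$ from the complementary side.  By Lemma~\lemref{CutLocusBasic}(ii) this forces $x_j$ to appear in $C(x_{i+1},P_i)$ with degree at least $2$.  The consecutive-path structure $x_1,\ldots,x_i$ in the cut-locus tree then follows from Lemma~\lemref{CutLocusBasic}(iv): at each $x_j$ the two incident cut-locus arcs bisect the two $\q^X_j$ sectors, and because those two sectors match isometrically across $\partial X$, the bisector on one side is the continuation of the bisector on the other, yielding a single ``backbone'' edge passing through $x_j$ rather than a branching.

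To upgrade ``degree at least $2$'' to ``exactly $2$,'' and to rule out $x_j$ being a higher-degree ramification point, I would use the sealed-region structure: the seal graph $\S_i$ sits inside the wedge $S_i\subset L_i$ bounded by the geodesics $x_1 y_i$ and $x_i y_i$, so any candidate third shortest geodesic from $x_{i+1}$ to $x_j$ must cross at least one previously-formed truncated seal $s_k$.  Unfolding $\bar{L}_i$ and applying the extended Cauchy Arm Lemma (Theorem~\thmref{CAL}) to the resulting planar chain around the image of $\S_i$, I expect to show that any such third path is strictly longer than the two already exhibited.

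The main obstacle I anticipate is making the local-reflection argument globally rigorous: the angle matching at each $x_j$ is only a pointwise isometric condition, not a global isometry of $P_i$, so each ``reflected'' candidate must be recomputed as a genuine geodesic of $P_i$ and then certified to be globally shortest.  I expect this will require carefully tracing how shortest paths from $x_{i+1}$ navigate around, or pass through, the seal graph $\S_i$, and feeding back the inductive hypothesis that the previous backbone $x_1,\ldots,x_{i-1}$ already lies in $C(x_i,P_{i-1})$ as degree-$2$ nodes, together with the Cut Locus Partition Lemma~\lemref{Partition} to transfer that backbone structure across the excision of $D_i$.
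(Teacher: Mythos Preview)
Your central observation---that after the digon removals each processed vertex $x_j$ ($j\le i$) carries equal angles $\q^X_j$ on its $L_i$-side and its $X$-side---is exactly the engine of the paper's proof as well, and your plan to invoke Cauchy's Arm Lemma for the ``no third geodesic'' step is also what the paper does. But your proposed route to ruling out degree $\ge 3$ is circular: you rely on the containment $\S_i\subset S_i$, which is property~(1) of Theorem~\thmref{SealGraphTree}, and that theorem is proved \emph{using} Lemma~\lemref{RamifOutside}, which in turn invokes the present lemma. The Cut Locus Partition Lemma~\lemref{Partition} is also not the right tool here---no half-surface digons are being zipped separately in this setting.

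The paper's argument avoids both detours by working directly with the two halves $L_i$ and $X$ rather than with the seal graph. It does not construct reflected geodesics to each $x_j$; instead it traces the cut-locus edge out of the leaf $x_i$: by the bisection property (Lemma~\lemref{CutLocusBasic}(iv)) and your angle-matching, that edge must run along $\partial X$ toward $x_{i-1}$. One then supposes for contradiction that it hits a ramification point $r$ on some segment $x_j x_{j-1}$. Since the only vertex on $L_i$ is $y_i$, the point $r$ is flat, so a degree-$3$ ramification needs three equal-length geodesics from $x_{i+1}$ to $r$. Two of them---$\g_X$ through $X$ and $\g_1$ through $L_i$ wrapping around the processed vertices $x_i,x_{i-1},\ldots$---have equal length precisely because the bounding chains are congruent (your angle-matching). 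Any third geodesic must wrap around the \emph{unprocessed} side $x_{i+2},x_{i+3},\ldots$, and there the $L_i$-angles strictly exceed their $X$-counterparts; the extended Cauchy Arm Lemma then forces that geodesic to be strictly longer than $\g_X$, a contradiction. This is exactly the comparison from Claim~(1) of Lemma~\lemref{pyramid}, and it needs no knowledge of where the seals sit.
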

\noindent
For example, in Fig.~\figref{Pn12_3figs}, $i=4$ and $C(x_5,P_4)$ includes
$x_1,x_2,x_3,x_4$.
\begin{proof}
We start with the leaf $x_i \in C(x_{i+1})$, and argue that
$(x_i,x_{i-1},\ldots,x_1)$ is a path $\r$ in $C(x_{i+1})$, i.e., that every point along $\r$ is of degree $\le 2$.
The proof uses techniques detailed in the proof of Lemma~\lemref{pyramid}.
In particular, Fig.~\figref{PathAbstract} below depicts the situation abstractly, similar to
Fig.~\figref{Claim1Abstract} in Lemma~\lemref{pyramid}.

First, $x_i$ is of degree-$1$ in $C(x_{i+1})$: The pyramid edge $x_{i+1} x_i$
is the shortest geodesic, unaffected by the digon removals up to $D_i$.
An edge $e_i$ of $C(x_{i+1})$ starts at $x_i$, and because of the
equal angles above on $L_i$ and below on $X$ at $x_i$, and because
$e_i$ is bisecting, initially it starts along the geodesic $x_i x_{i-1}$.
It then either continues to $x_{i-1}$, or reaches a ramification point.

Suppose the path continues $x_i, x_{i-1}, \ldots, x_j$, but then reaches a ramification point $r$ on $x_j x_{j-1}$.
Let $\r_j$ denote this path up to $r$.
We now analyze this situation and show it is contradictory.
Consult Fig.~\figref{PathAbstract} throughout.
\begin{figure}[htbp]
\centering
\includegraphics[width=0.75\linewidth]{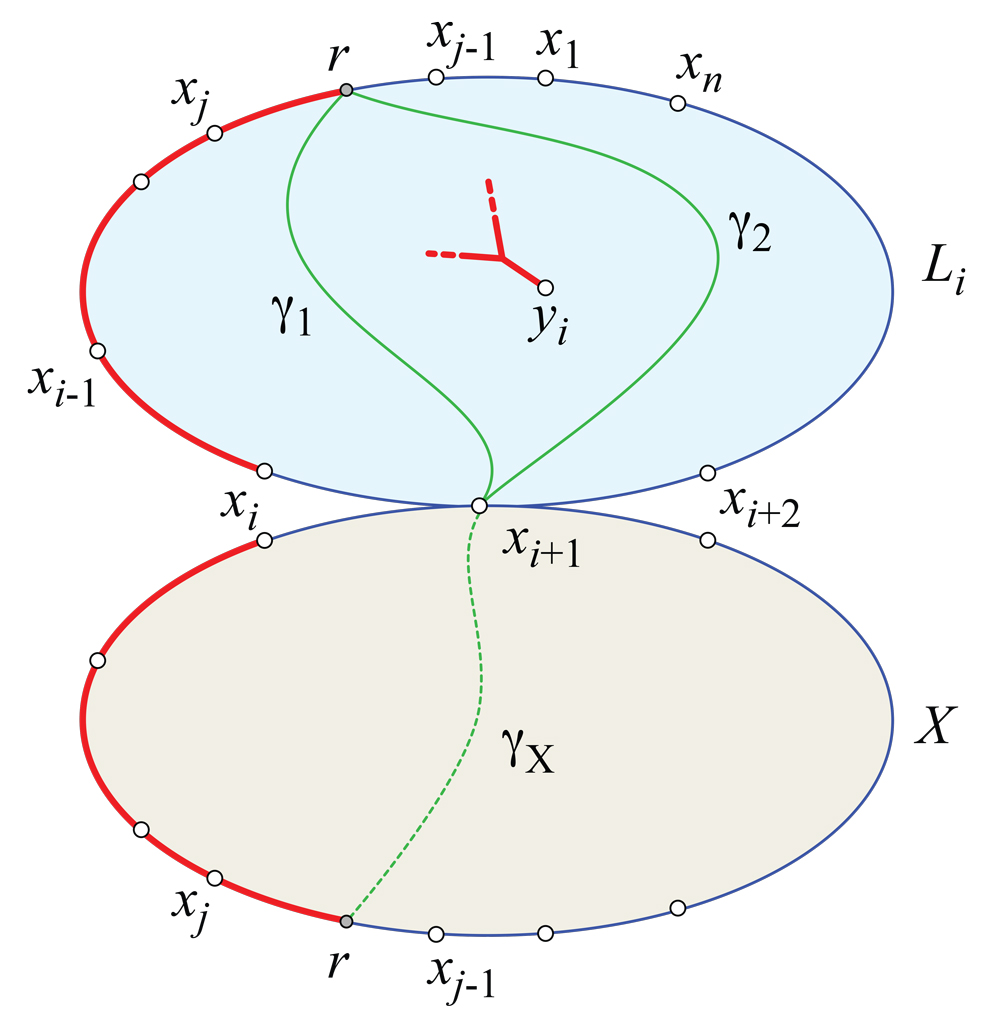}
\caption{Abstract depiction of $x_i,x_{i-1},\ldots,x_1$ path in $C(x_{i+1})$ (red).
$X$ is flipped below; $L_i$ above.}
\figlab{PathAbstract}
\end{figure}

The possible geodesics from $x_{i+1}$ to $r$ are:
$\g_X$ on $X$ below, $\g_1$ and $\g_2$ on $L_i$ above, and possibly $\g_3$ both above and below.
Note that there can only be the two $\g_1$ and $\g_2$ because there is just one vertex $y_i$ on $L_i$.

First consider $\g_1$, which, together with $\g_X$, encloses $\r_j$.
The planar convex chain along $\r_j$, $x_i,x_{i-1},\ldots,x_j, r$ is congruent above and below, because the angles above and below
are equal after digon removals. 
Thus the chords connecting the endpoints of the chains are equal, and so $|\g_1| = |\g_X|$.

Next consider $\g_2$, which, together with $\g_X$, encloses $x_{i+2},\ldots,x_n,x_1,\ldots,x_{j-1}$.
We again compare the planar convex chain with angles below on $X$ to the chain with angles above on $L_i$. 
Because some of the angles above are strictly larger than their counterparts below, we can apply Cauchy's Arm Lemma
just as we did in the proof of Lemma~\lemref{pyramid}(Claim~(1)) to conclude that $|\g_2| > |\g_X|$.
Therefore $\g_2$ cannot add to the degree of $r$ in $C(x_{i+1})$.

Finally, a geodesic $\g_3$ that lies on both $L_i$ and $X$ must have a portion completely above on $L_i$, to which we may apply the same 
arm-lemma argument to conclude that $|\g_3| > |\g_X|$.

Therefore, $r$ is in fact of degree $2$, it is not a ramification point, and $\r$ extends from $x_i$ to $x_1$ as claimed.
\end{proof}

\begin{lm}
\lemlab{RamifOutside}
Assume the digons $D_1,\ldots,D_i$ have been removed and $S_{i-1}$ is the sealed region containing
$\s_1,\ldots,\s_{i-1}$ (this inclusion will be proven later).
Let $a_i$ be the first ramification point of $C(x_i,P_{i-1})$, on the segment $y_{i-1} a_i$.
Then $a_i \not\in S_{i-1}$.
\end{lm}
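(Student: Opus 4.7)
I plan to argue by contradiction: suppose $a_i \in S_{i-1}$, so the entire cut-locus arc from $y_{i-1}$ to $a_i$ lies in the closure of $S_{i-1}$. I will derive a contradiction by showing that the arc in fact emanates from $y_{i-1}$ into the \emph{unsealed} angular sector at $y_{i-1}$ and then cannot re-enter $S_{i-1}$ before its first ramification point.

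The central step is an angular analysis at $y_{i-1}$. I would unfold $L_{i-1}$ along the geodesic $x_i y_{i-1}$ to obtain $\bar{L}_{i-1}$, in which $y_{i-1}$ appears with its full cone angle $2\pi - \o(y_{i-1})$, the two copies of the cut form the straight sides of this cone, and $S_{i-1}$ occupies the wedge adjacent to whichever copy neighbors $x_{i-1}$. Because $L_{i-1}$ is topologically a cone with apex $y_{i-1}$, the cut $x_i y_{i-1}$ is the unique geodesic from $x_i$ to $y_{i-1}$ that stays on $L_{i-1}$; any competitor geodesic must detour through $X$. Adapting the extended Cauchy's Arm Lemma argument used for Claim~(1) in the proof of Lemma~\lemref{pyramid}, I would show that any such competitor traversing the sealed side would correspond to a planar chain of base edges whose $L$-angles at $x_1,\ldots,x_{i-1}$ have already been compressed to their $X$-values by the earlier digon removals, making the detour strictly longer than the direct cut. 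Therefore no shortest geodesic from $x_i$ reaches $y_{i-1}$ from the sealed side, and by Lemma~\lemref{CutLocusBasic}(iv) the bisector arc $y_{i-1} a_i$ starts into the unsealed sector.

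To preclude re-entry, I would invoke the tree property of $C(x_i, P_{i-1})$ together with Lemma~\lemref{CutLocusPath}, which places the path $\rho = (x_1,\ldots,x_{i-1})$ inside the cut locus. Together with its continuation from $x_{i-1}$ (which must eventually reach $y_{i-1}$, since the tree is connected and $y_{i-1}$ is a vertex hence in $C(x_i,P_{i-1})$), the path $\rho$ forms a sub-arc of the cut-locus tree that lies in the closure of $S_{i-1}$ and joins $y_{i-1}$ to the far boundary of $S_{i-1}$. An arc leaving $y_{i-1}$ on the unsealed side cannot cross this sub-arc without producing a cycle in the tree; hence it stays outside $S_{i-1}$ up to its first ramification, giving $a_i \notin S_{i-1}$. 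The principal obstacle I anticipate is in the length-comparison step: adapting the Cauchy argument to competitor geodesics that cut across the interior of $X$ (rather than simply hugging $\partial X$), and verifying that the resulting planar-chain comparison still yields the required strict inequality so that the sealed-side approach to $y_{i-1}$ is genuinely excluded.
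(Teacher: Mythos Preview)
Your plan diverges from the paper's proof in a substantive way, and the divergence introduces a real gap.

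The paper does not analyse the direction of the edge $y_{i-1}a_i$ at $y_{i-1}$ at all. Instead it assumes $a_i\in S_{i-1}$ and works at the ramification point $a_i$. Since $a_i$ is not a vertex (the only vertex of $L_{i-1}$ is $y_{i-1}$), it is a flat point, so Lemma~\lemref{CutLocusAngles} forces every sector between consecutive cut-locus edges at $a_i$ to have angle strictly less than $\pi$. One edge at $a_i$ goes back to $y_{i-1}$; another must head toward the degree-$2$ path $x_1,\ldots,x_{i-1}$ (by connectedness of the tree, Lemma~\lemref{CutLocusPath}). The angle condition then forces a further branch $\lambda$ on the \emph{other} side of the line through $y_{i-1}a_i$, and this branch is trapped inside $S_{i-1}$: it cannot terminate (no vertices there), cannot attach to the degree-$2$ path, and cannot cross the other branch without creating a cycle. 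That is the whole argument.

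Your proposed route has two weak links. First, the inference ``no shortest geodesic reaches $y_{i-1}$ from the sealed side, hence the bisector arc starts into the unsealed sector'' does not follow from Lemma~\lemref{CutLocusBasic}(iv). At a leaf vertex the cut-locus edge simply bisects the full cone angle measured from the single incoming geodesic; ruling out competitor geodesics does not alter that direction. To know which sector the bisector enters you would need a direct comparison between the cone half-angle $\pi-\tfrac12\o(y_{i-1})$ and the angular position of $\partial S_{i-1}$ at $y_{i-1}$, and your Cauchy adaptation does not supply that. Second, even granting that the arc starts outside $S_{i-1}$, your re-entry argument does not close: the arc could cross the geometric boundary segment $x_1 y_{i-1}$ and re-enter $S_{i-1}$ without meeting any cut-locus edge, so no cycle in the tree is forced. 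The paper's trapped-branch argument at $a_i$ sidesteps both issues.
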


\noindent
First we illustrate the claim of the lemma with the example from Fig.~\figref{Pn12_3figs}(b), repeated as Fig.~\figref{Pn12_CutLocus}(a).
In order for the lemma to be false, the situation instead must appear as in~(b) of the figure, with $a_5 \in S_4$.

\begin{proof}
For the purposes of contradiction, consider the situation depicted abstractly in Fig.~\figref{Pn12_CutLocus}(c), with $a_i \in S_{i-1}$.
By Lemma~\lemref{CutLocusPath}, $x_1,\ldots,x_{i-1}$ is a path of degree-$2$ nodes in $C(x_i,P_{i-1})=C(x_i)$.
The edge $y_{i-1} a_i$ of $C(x_i)$ bisects the 
angle formed by the two images of $x_i$.
From $a_i$, $C(x_i)$ must contain a path $\r$ that connects to $x_i$.
Because $a_i \in S_{i-1}$, $\r$ must start with an edge to the right of the line containing $y_{i-1} a_i$.

Note that $y_{i-1}$ is the only vertex on $L_{i-1}$, so $a_i$ is not a vertex, and therefore has a total surface angle $\q_{a_i}=2\pi$.
Lemma~\lemref{CutLocusAngles} requires a strictly leftward branch at $a_i$, left of the line containing $y_{i-1} a_i$.
Call the path continuing this left branch $\lambda$.
This path $\lambda$ is ``trapped": It cannot terminate in the interior of $S_{i-1}$ because there are no vertices in that region.
It cannot connect to any one of $x_1,\ldots,x_{i-1}$ because those nodes are degree-$2$. If $\lambda$ crossed $\r$ it would form a cycle.
Therefore we have reached a contradiction, and so $a_i \not\in S_{i-1}$.
\end{proof}

\begin{figure}[htbp]
\centering
\includegraphics[width=1.0\linewidth]{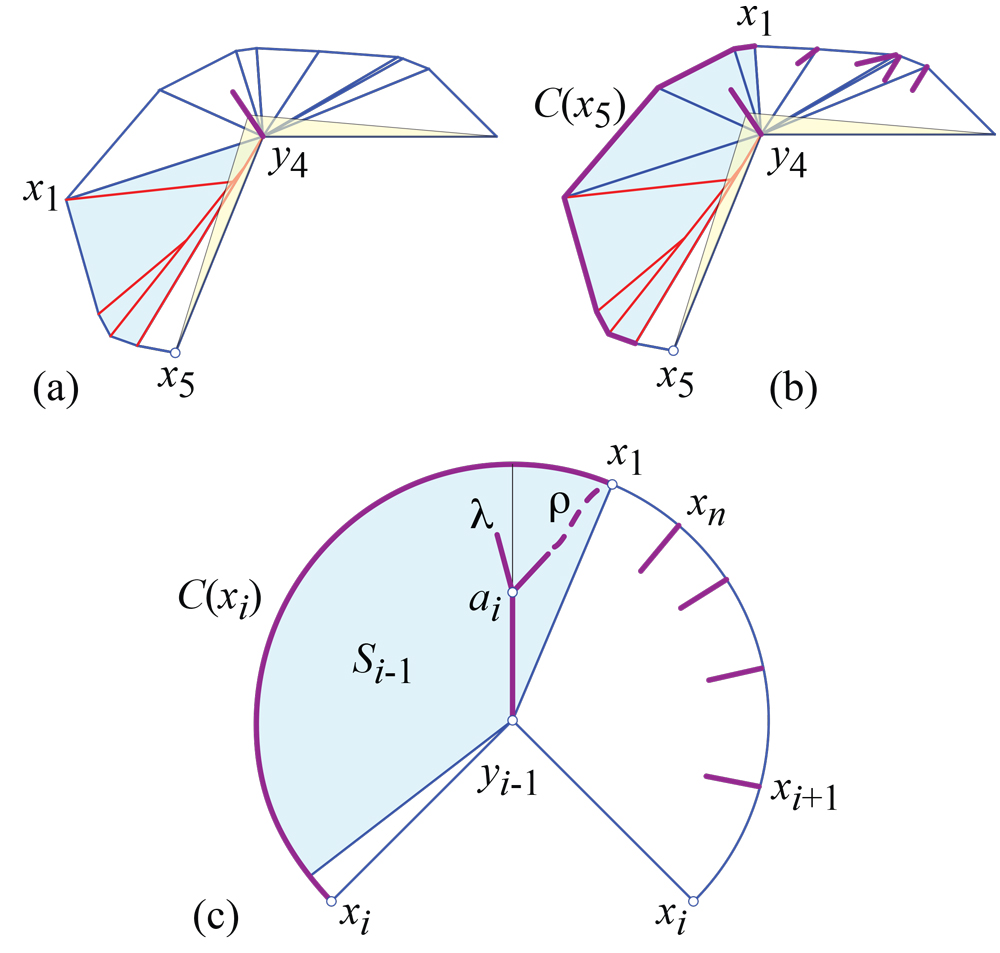}
\caption{(a)~Fig.~\protect\figref{Pn12_3figs}(b).
(b)~How a counterexample might appear.
(c)~Abstract counterexample. Portions of $C(x_i)$ purple.}
\figlab{Pn12_CutLocus}
\end{figure}


\section{Pyramid Seal Graph is a Tree}
\seclab{SealGraphTree}

Roughly, we prove in this section that the complete seal graph $\S=\S_n$, and intermediate seal graphs $\S_i$, have the structure of a spiraling tree.
Spiral trees---slit trees rather than seal trees---will play a significant role in Part~II.

We repeat some notation here for convenience.
The \emph{sealed region} $S_i$ is bounded by the segments $x_1 y_i$, $x_i y_i$, and the portion of $\partial X$ from $x_1$ to $x_i$.
Also recall that each seal $\bm{\s}_j = (x_j,y_j)$ is directed from $x_j$ to $y_j$.
The seal graph $\S_i$ is composed of segments $s_j$, each a subsegment of $\s_j$.

\begin{thm}
\thmlab{SealGraphTree}
The seal graph $\S_i$, after removal of digons $D_1,\ldots,D_i$ in counterclockwise order, has the following properties:
\begin{enumerate}[label=(\arabic*)]
\squeezelist
\item $\S_i \subset S_i$.
\item $\S_i$ is a directed tree with root $y_i$. 
%
%
\item Each segment $s_j$ of $\S_i$ is a (possibly truncated) seal
$\s_j= x_j y_j$ that remains anchored on its $x_j$ endpoint; i.e., the truncation
is on the $y_i$-end.
\item Each leaf $x_j$ is the start of a directed, convex path $\bm{\pi}_j$ to the root $y_i$.
\item The edges of $\bm{\pi}_j$ are portions of seal segments of increasing indices.
\item Along $\bm{\pi}_j$, lower-indexed edges terminate from the left on higher-indexed edges.
\item The last seal, $\bm{\s}_i= x_i y_i$, the \emph{root segment} of $\S_i$,
has no segments of $\S_i$ incident to its right side.
\end{enumerate}
The last segment of the complete seal graph $\S = \S_n$ coincides with the edge $x_{n-1} x_n$ of $\partial X$.
\end{thm}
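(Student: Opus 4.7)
The plan is to proceed by strong induction on $i$. The base case $i=1$ is immediate: $\S_1=\{\bm{\s}_1\}$ is the single directed edge from $x_1$ to $y_1$, and all seven properties hold by inspection, with $S_1$ degenerating to the segment $\bm{\s}_1$. For the inductive step, I assume (1)--(7) hold for $\S_{i-1}$ and analyze the effect of excising $D_i$ and suturing along its boundary.

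The key geometric observation is obtained in the unfolding $\bar L_{i-1}$ of $L_{i-1}$ cut along the generator $x_i y_{i-1}$. There $D_i$ appears as the region bounded by two geodesics, one from each image of $x_i$, meeting at a common image of $y_i$ that lies on the cut-locus edge $y_{i-1} a_i$. By Lemma~\lemref{RamifOutside}, $a_i\notin S_{i-1}$, so $y_{i-1}a_i$ points \emph{away} from the already-sealed region $S_{i-1}$, and $D_i$ sits on the opposite side of the generator $x_i y_{i-1}$ from $S_{i-1}$. This is the structural backbone: after suturing, the two sides of $\partial D_i$ collapse to the new seal $\bm{\s}_i$, and the right side of $\bm{\s}_i$ (the side away from $S_{i-1}$) corresponds to the excised half of $\bar L_{i-1}$. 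Hence property~(7) is immediate, since no earlier seal material is left on that side.

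From here, properties (1) and (3) are proven together. The only seal segments $s_j \in \S_{i-1}$ affected by the surgery are those whose $y_{i-1}$-end enters the digon, and each such $s_j$ is clipped precisely where it meets $\partial D_i$. The clipping preserves the $x_j$-anchor and removes only material from the $y$-end, which after identification now attaches to the \emph{left} side of $\bm{\s}_i$; thus $\S_i \subseteq S_i$ and each $s_j \subseteq \s_j$ remains anchored at $x_j$. For the tree structure (2) and the path properties (4)--(6), I use the IH that $\S_{i-1}$ is a directed tree rooted at $y_{i-1}$ whose leaves $x_j$ each reach the root along a convex, index-increasing, left-attaching path $\bm{\pi}_j$. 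The surgery replaces a neighborhood of the old root $y_{i-1}$ by the single directed edge $\bm{\s}_i$ pointing to the new root $y_i$, and each previously root-incident (possibly truncated) $s_j$ now terminates on $\bm{\s}_i$ from the left; appending $\bm{\s}_i$ to every $\bm{\pi}_j$ preserves convexity, index-monotonicity, and left-termination because $i$ is the new maximum index and the new attachment is left-sided.

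The main obstacle, I expect, is rigorously justifying that the clipping in (1)/(3) is always a suffix-clipping---that it removes material from the $y$-end of $s_j$ rather than severing $s_j$ into disconnected fragments---and that the resulting incidences on $\bm{\s}_i$ really come from the left. Both rest on the spiral configuration produced by the counterclockwise removal order, combined with the cut-locus bisector property (Lemma~\lemref{CutLocusBasic}(iv)) and Lemma~\lemref{RamifOutside}; the delicate part is the half-plane bookkeeping in $\bar L_{i-1}$ that consistently places $S_{i-1}$ and $D_i$ on opposite sides of the generator $x_i y_{i-1}$. Finally, for the concluding claim, the proof of Lemma~\lemref{pyramid} already shows that after $D_{n-1}$ is excised, $L_{n-1}$ is isometric to the flat base $X$ and the apex image $y_{n-1}$ coincides with the remaining base vertex $x_n$; hence $\bm{\s}_{n-1} = x_{n-1} y_{n-1} = x_{n-1} x_n$ is precisely the edge of $\partial X$ joining consecutive base vertices.
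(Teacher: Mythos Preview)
Your inductive framework matches the paper's, and you correctly identify Lemma~\lemref{RamifOutside} as the key input. However, your central geometric claim is false: you assert that ``$D_i$ sits on the opposite side of the generator $x_i y_{i-1}$ from $S_{i-1}$,'' but a glance at Figs.~\figref{Pn12_3figs}(b) and~\figref{Pn12_figb} shows the digon $D_i$ \emph{overlaps} $S_{i-1}$; its left boundary geodesic $\g_1$ cuts into $S_{i-1}$ and clips seal segments there. What Lemma~\lemref{RamifOutside} gives you is only $y_i\notin S_{i-1}$ (since $y_i$ lies on $y_{i-1}a_i$), not that the whole digon avoids $S_{i-1}$. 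Consequently your deduction of property~(7)---``immediate, since no earlier seal material is left on that side''---does not follow, and your later discussion of clipping is inconsistent with the opposite-side claim.

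The paper fills exactly this gap with a concrete argument you do not supply: it analyzes the triangle $\triangle=x_{i-1}x_iy_{i-1}$ and shows that the \emph{left} boundary $\g_1$ of $D_i$ starts inside $\triangle$ at angle $\tfrac12(\a_i+\b_i)$ left of $x_iy_{i-1}$, hence exits through the edge $x_{i-1}y_{i-1}=\bm{\s}_{i-1}$ and enters $S_{i-1}$. This is what forces all clipped segments to land on the \emph{left} of $\bm{\s}_i$, yielding (7); the paper's footnote explicitly notes that with a different removal order it could be $\g_2$ that clips instead, destroying the structure. Your acknowledgment that ``the delicate part is the half-plane bookkeeping'' is accurate, but you have not done that bookkeeping, and the substitute claim you offer is wrong. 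Separately, for property~(2) the paper gives an explicit cycle-contradiction argument (Fig.~\figref{TreeCycleNot}): any cycle in $\S_i$ must use a subsegment of $\bm{\s}_i$, must rest on its left side, and then extends to a cycle already present in $\S_{i-1}$; your ``replace a neighborhood of the old root by a single edge'' is too informal to constitute a proof. Finally, the identification $y_{n-1}=x_n$ is established in Lemma~\lemref{vertex_small} via the rigidity Theorem~\thmref{Rigid}, not directly in Lemma~\lemref{pyramid}.
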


\noindent
We again refer to Fig.~\figref{Pn12_3figs}(ab).
When $i=4$, $\S_4$ satisfies the properties,
and we seek to re-establish the properties for $\S_5$ in~(c) of the figure.
We enlarge~(b) of that figure in Fig.~\figref{Pn12_figb} to track in this proof.
It may also help to consult the complete seal graphs in Fig.~\figref{Pyr_seals}.
\begin{figure}[htbp]
\centering
\includegraphics[width=0.6\linewidth]{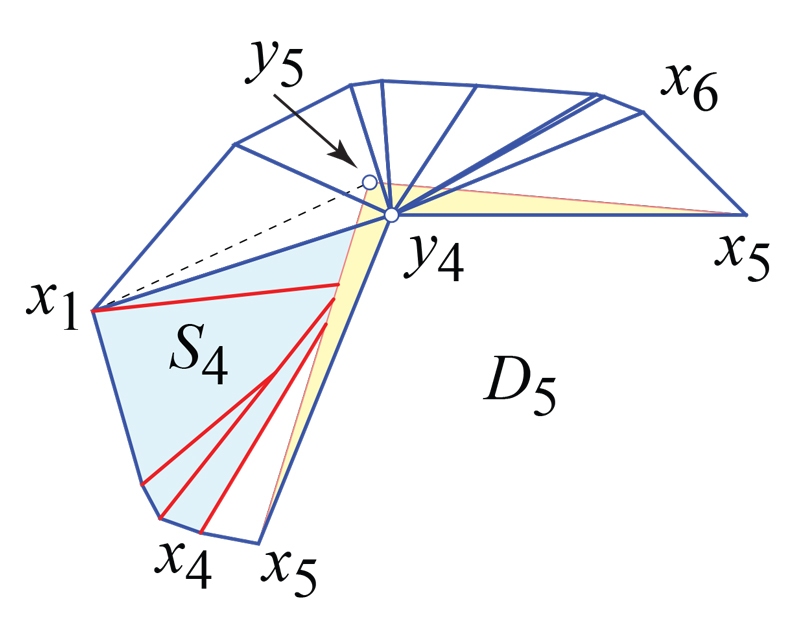}
\caption{$i=4$, before removal of digon $D_{i+1}=D_5$.
Detail from Fig.~\protect\figref{Pn12_3figs}(b).}
\figlab{Pn12_figb}
\end{figure}

\begin{proof}
\textbf{Induction Basis.}
These claims are trivially true for $i=0$ because $\S_0 = \varnothing$.
So assume $i=1$.
$L_1$ has just had the digon $D_1 = x_1 y_1$ excised around $y_0$.
$\bar{L}_1$ is then cut open along $x_2 y_1$.
$\S_1$ is the single segment $\s_1=x_1 y_1$, and all properties are easily verified.

\textbf{Induction Hypothesis.}
Assume that all properties hold for $\S_i$. The removal of
digon $D_{i+1} = x_{i+1} y_{i+1}$ leads to $\S_{i+1}$.
Now we establish the properties for $\S_{i+1}$.
\begin{enumerate}[label=(\arabic*)]
\squeezelist
\item $\S_{i+1} \subset S_{i+1}$.
$S_{i+1}$ grows on both sides:
by the triangle $x_1 y_i y_{i+1}$
(clockwise in Fig.~\figref{Pn12_figb}),
and to $x_{i+1} y_{i+1}$ (counterclockwise in the figure).
By Lemma~\lemref{RamifOutside}, we know that
$a_{i+1} \not\in S_i$, and because $y_{i+1}$ is on the $y_i a_{i+1}$ segment
of $C(x_{i+1})$, we know that $y_{i+1} \not\in S_i$.
Therefore the triangle $x_1 y_i y_{i+1}$ does in fact grow $S_i$ on the $x_1$-end.
On the $x_{i+1}$-end, the new seal
$\bm{\s}_{i+1}=x_{i+1} y_{i+1}$ is incorporated, and all seal segments right of
$\bm{\s}_{i+1}$ are clipped by the removal
of digon $D_{i+1}$. Therefore indeed $S_{i+1}$ 
expands to include all of $\S_{i+1}$.
\item $\S_{i+1}$ is a directed tree with root $y_{i+1}$.
We know that $\S_i$ is a directed tree with root $y_i$.
We first argue that the seal $\bm{\s}_{i+1}$ intersects the segments
of $\S_i$ from right-to-left, re-establishing property~(7).
Let $\g_1$ and $\g_2$ be the left and right geodesics of digon $D_{i+1} = x_{i+1} y_{i+1}$.
$\g_1$ starts within the triangle $\triangle=x_i x_{i+1} y_i$,
at an angle $(\a_{i+1}+\b_{i+1})/2$ left of the $x_{i+1} y_i$ edge of $\triangle$.
Therefore $\g_1$ cuts into $S_i$ through the $x_i y_i$ edge of $\triangle$.
The segments of $\S_i$ crossed and clipped by $\g_1$ are crossed from right-to-left.
And since $i+1$ is the highest indexed seal, the segments of $\S_i$
that meet $\bm{\s}_{i+1}$ satisfy property~(7):
lower-indexed segments terminate on the left of $\bm{\s}_{i+1}$.%
\footnote{%
We should mention that this property, that $\bm{\s}_{i+1}$ crosses segments
right-to-left, is dependent on the counterclockwise ordering of digon removal.
If after removing digon $D_i$, we next removed some $D_j$ with $j > i+1$,
it could be that it is $\g_2$ rather than $\g_1$ that clips $\S_i$.
This would result in seal graphs with a different structure.%
}
Henceforth, we use $\s_{i+1}$ as determined by $\g_1$.

By property~(8), $\bm{\s}_i = x_i y_i$ is the root segment of $\S_i$,
which we now know is crossed by $\bm{\s}_{i+1}$ right-to-left,
say crossing at point $p$.
Because $y_{i+1} \not\in S_i$ by Lemma~\lemref{RamifOutside},
$\bm{\s}_{i+1}$ has the root $y_i$ of $\S_i$ to its right, and all
the leaves $x_j$ of $\S_i$ to its left.

Now suppose $\S_{i+1}$ has an undirected cycle $\Phi$; see Fig.~\figref{TreeCycleNot}.
Because $\S_i$ has no cycle, any cycle in $\S_{i+1}$ must have one edge a subsegment of $\s_{i+1}$. 
Because all of $\S_i$ right of $\bm{\s}_{i+1}$ is removed, the cycle $\Phi$ must ``rest on" the left side of $\bm{\s}_{i+1}$.
It is clear $\Phi$ cannot rest on the $x_{i+1} p$ portion of $\bm{\s}_{i+1}$. 
So $\Phi$ must rest on the portion of $\bm{\s}_{i+1}$ left of $\bm{\s}_i$, as illustrated. 
But then, imagining removing $\bm{\s}_{i+1}$, $\Phi$ must have formed a cycle $\Phi'  \supset \Phi$ in $\S_i$, a contradiction. 
Therefore $\S_{i+1}$ is indeed a tree.

The remaining properties are now easily established.
%
%
\item Because $\bm{\s}_{i+1}$ clips segments of $\S_i$ to its right, each
segment $s_j \in \S_i$ remains anchored on $x_j$. And
the new segment $\bm{\s}_{i+1}$ is anchored on $x_{i+1}$.
\item The directed, convex path $\bm{\pi}_j$ remains, but now may by shortened
where it joins with $\bm{\s}_{i+1}$.
\item The segments along $\bm{\pi}_j$ have increasing indices, possibly now including
$i+1$.
\item We earlier established that lower-indexed segments of $\S_{i+1}$ terminate
from the left on higher-indexed segments, possibly now including $\bm{\s}_{i+1}$.
\item  The last and new seal $\bm{\s}_{i+1}=x_{i+1} y_{i+1}$ becomes the root segment
incident to the root $y_{i+1}$, and has no segments incident to its right side.
\end{enumerate}
Finally, it is a consequence of Lemma~\lemref{vertex_small} and the rigidity Theorem~\thmref{Rigid}
that $y_{n-1}=x_n$ so that the $(n{-}1)$-st seal coincides with the edge $x_{n-1} x_n$.
\end{proof}


\begin{figure}[htbp]
\centering
\includegraphics[width=0.6\linewidth]{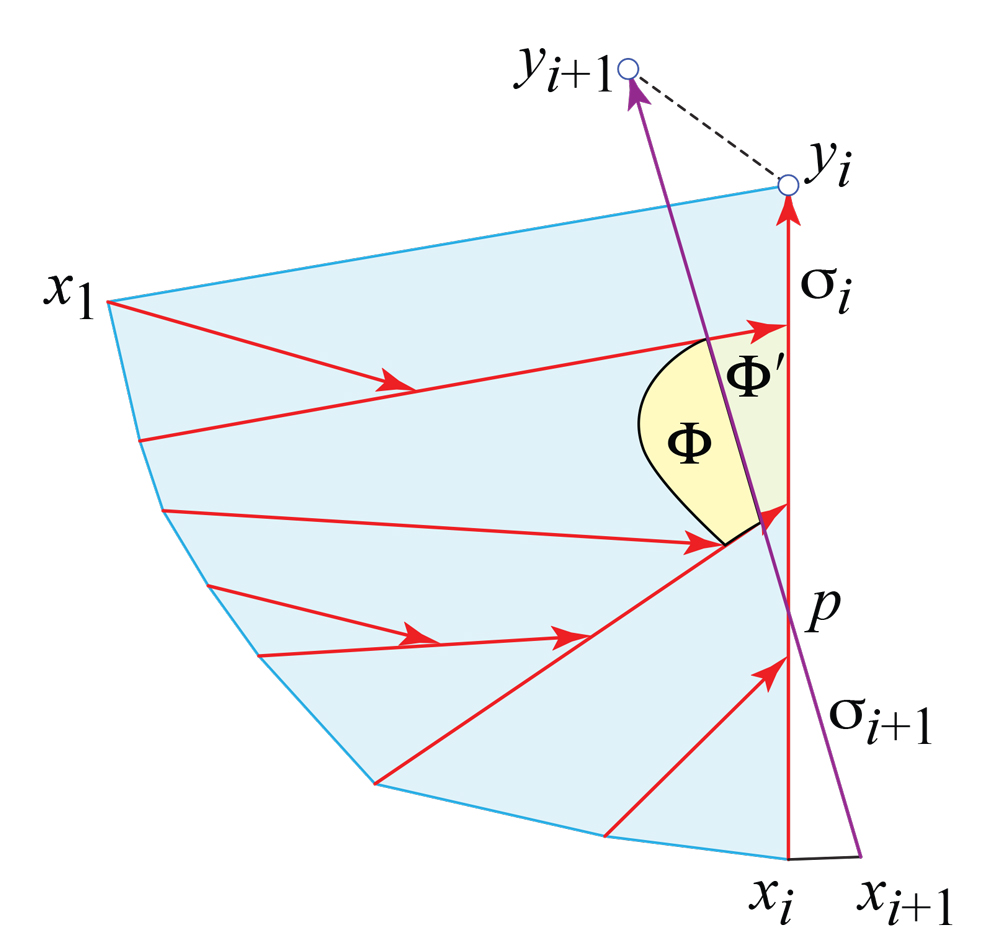}
\caption{Cycle $\Phi$ is not possible: 
removing $\s_{i+1}$ extends 
$\Phi$ to $\Phi'$.}
\figlab{TreeCycleNot}
\end{figure}

\begin{figure}[htbp]
\centering
\includegraphics[width=1.0\linewidth]{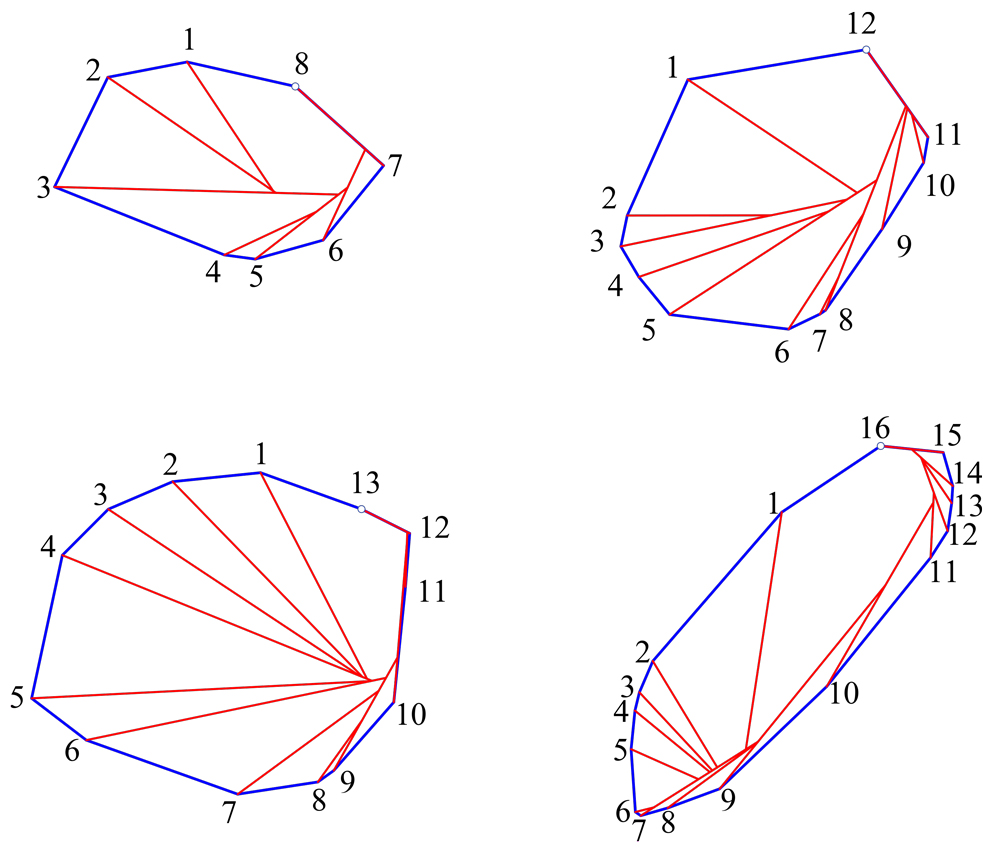}
\caption{Four complete seal graphs.}
\figlab{Pyr_seals}
\end{figure}

\subsection{Other Digon Orderings}
\seclab{OtherDigonOrderings}

The proof of Theorem~\thmref{SealGraphTree} depends
on removing the digons in the order $x_1,x_2,\ldots,x_{n-1}$ around $\partial X$.
This affects Lemma~\lemref{CutLocusPath}'s conclusion that $x_1,\ldots,x_i$
is a path in the cut locus, which then affects
Lemma~\lemref{RamifOutside}'s conclusion that the ramification point $a_{i+1}$
is outside the sealed region $S_i$.
In addition, which side of the removal of digon $D_{i+1}$ clips $\S_i$
is affected by knowing that $x_{i+1}$ is adjacent to $x_i$ on $\partial X$.
All of these consideration affect the structure of the seal graph. 
We leave the question of whether the seal graph is a tree for 
other orderings of digon removals to Open Problem~\openref{OtherDigonOrderings}.

Toward this open problem,
we only show here, with the next result, what a  degree-$4$ vertex in $\S$ 
must look like.

In the proof below, we simplify the notation of $\tilde{\s_i}$ on the base
to just $\s_i$, keeping in mind
that $\s_i$ and $\s_j$ formally live in different spaces,.

\begin{lm}
If a seal graph $\S$ for a pyramid $L$ has a degree-$4$ vertex $z$, then there exist $i<j<k$ such that $\s_i$ and $\s_j$ end at $z$, and $\s_k$ passes beyond $z$.
Moreover, the digon excision order is 
$D_k$ immediately after $D_j$ immediately after $D_i$.
\end{lm}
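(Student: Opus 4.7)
The plan is to work at $z$ by a degree count. Each seal $\bm{\s}_m$ whose trace in $\S$ contains $z$ contributes to $\deg(z)$ either one edge (if $z$ is its current endpoint) or two edges (if $z$ lies in the interior of its trace). The clipping rule of the construction---upon creation, $\bm{\s}_m$ truncates every previously existing seal through $z$ at $z$---implies that among all seals meeting $z$, only the latest-created one, call it $\bm{\s}_k$, can retain $z$ as an interior point; every earlier seal through $z$ ends at $z$. Writing $N$ for the number of seals whose trace meets $z$,
\[
\deg(z) \;=\; (N-1)+\begin{cases}2, & y_k \neq z,\\ 1, & y_k = z.\end{cases}
\]
Setting $\deg(z)=4$ leaves only the possibilities $N=3$ with $\bm{\s}_k$ passing beyond $z$, or $N=4$ with $y_k=z$.

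I would eliminate the $N=4$ alternative via the construction of $y_k$: the point $y_k$ is placed in the relative interior of the edge $y_{k-1}a_k$ of the cut locus $C(x_k,P_{k-1})$, see Lemmas~\lemref{vertex_small}--\lemref{VertexTruncation}. The coincidence $y_k=z$ with $z$ simultaneously meeting three earlier seal scars would force a ramification of $C(x_k,P_{k-1})$ at $z$ of order incompatible with $z$'s lying in the relative interior of a single edge, contradicting Lemma~\lemref{CutLocusBasic}(i). Hence $N=3$. Relabelling the three seals through $z$ by their creation order as $\bm{\s}_i,\bm{\s}_j,\bm{\s}_k$ with $i<j<k$, we conclude that $\bm{\s}_i$ and $\bm{\s}_j$ end at $z$ and $\bm{\s}_k$ passes beyond $z$, establishing the first claim.

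For the ``immediately after'' statement, I would argue by cases on a hypothetical $D_l$ excised with $i<l<j$: (a) $\bm{\s}_l$ passes through or ends at $z$, which forces $N\geq 4$ and $\deg(z)\geq 5$, a contradiction; (b) $\bm{\s}_l$ meets $\bm{\s}_i$ strictly between $x_i$ and $z$, truncating $\bm{\s}_i$ short of $z$, again a contradiction since $\bm{\s}_i$ is assumed to end at $z$; (c) $\bm{\s}_l$ avoids $z$ and the relevant portion of $\bm{\s}_i$ altogether, in which case the surface $P_l$ differs from $P_i$ in a region that nevertheless influences the cut locus $C(x_j,P_{j-1})$ seen by $x_j$, so the geodesic realizing $\bm{\s}_j$ is perturbed away from the exact point $z$ at which $\bm{\s}_k$ later also crosses, and the triple concurrence at $z$ cannot survive. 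The symmetric trichotomy, applied with $\bm{\s}_j$ and $\bm{\s}_k$ in place of $\bm{\s}_i$ and $\bm{\s}_j$, rules out any $D_m$ with $j<m<k$.

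The main obstacle is case~(c) together with the elimination of $N=4$: both require a local stability analysis of $C(x_j,P_{j-1})$ under perturbations of the prior excision history, showing that an apparently unrelated intermediate seal still destabilizes the triple concurrence at $z$. The remaining steps reduce to routine degree bookkeeping and straightforward case elimination.
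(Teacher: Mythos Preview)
Your degree-counting framework is sound, but the proof rests on two steps that do not go through as written.

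\textbf{The ``clipping rule'' is not established.} You assert that upon creation, $\bm{\s}_m$ truncates every previously existing seal through $z$ at $z$, so that only the latest-created seal can retain $z$ as an interior point. This is exactly what needs to be proved, not assumed: the lemma concerns \emph{arbitrary} digon orderings, where the spiral-tree structure of Theorem~\thmref{SealGraphTree} is unavailable. The paper's argument here is geometric and substantive. Suppose only two seals $\s_i,\s_j$ (with $i<j$) meet at $z$ and both pass through it. For $\s_i$ to remain a geodesic across the seal $\s_j$, it must meet both boundary geodesics of the digon $D_j$ orthogonally. Together with those boundaries, $\s_i$ then bounds two geodesic triangles inside $D_j$, each of positive total curvature by Gauss--Bonnet, hence each containing a vertex---contradicting that $D_j$ encloses exactly one vertex. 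This is the mechanism that forces earlier seals to end at $z$; your clipping rule is a consequence, not an axiom.

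\textbf{The elimination of $N=4$ does not work via cut-locus ramification.} Three earlier seal scars meeting at $z$ says nothing about the branching of $C(x_k,P_{k-1})$ at $z$; seals and cut-locus edges are different objects on the surface. The paper's argument is instead: if four seals all end at $z$, the last of them makes $z$ a vertex (namely $y_k$), which the very next digon then surrounds and excises, destroying the degree-$4$ configuration in the final $\S$. A parallel observation rules out $\s_k$ ending at $z$ when $N=3$.

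\textbf{The ``immediately after'' claim.} Your case~(c) perturbation argument is, as you note, the obstacle---and it is not repairable in that form: an intermediate excision far from $z$ need not move later seals off $z$. The paper sidesteps this entirely. Once you know $\s_i$ ends at $z$, the vertex $y_i$ sits at (the preimage of) $z$, and since $y_i$ is the \emph{unique} apex vertex of $L_i$, the very next digon excised must be the one surrounding it. That digon is $D_j$, because $\s_j$ is the seal meeting $\s_i$'s endpoint at $z$. The same reasoning applied to $y_j$ gives $D_k$ immediately after $D_j$. No case analysis on hypothetical intermediate $D_l$ is needed.
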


\begin{proof}
Consider a common point $z$ of $\s_i$ and $\s_j$, with $i < j$.
We may assume $j > i+1$ , since otherwise $\deg z = 3$ in $\S$.

Assume first that no other $\s_k$ passes through $z$.
Assume that $\deg z = 4$ in $\S$.
This implies that the digon $D_j$ crosses $\s_i$ and, since $\s_i$ remains a geodesic after the excision of $D_j$, 
$\s_i$ must be orthogonal to both geodesics bounding $D_j$.
Therefore, $\s_i$ creates with those two geodesics two geodesic triangles, both of positive curvature.
So each such triangle contains a vertex inside, contradicting that $D_j$ itself contains only one vertex.

Assume now that $\s_i$ ends at $z$, as does $\s_k$ for some $k \not\in \{i,j\}$.
Notice that we cannot have four $\s$s ending at $z$, because for the last one 
arriving---say $\s_k$---$\s_k$ would create a vertex at $z$, 
which will be excised by the digon $D_{k+1}$, breaking that degree-$4$ configuration at $z$.

So we may assume that $z$ belongs only to  $\s_i$, $\s_j$, and $\s_k$.
We may further assume, without loss of generality, that $i<j<k$.

Notice that both $\s_i$ and $\s_j$ end at $z$ and $\s_k$ does not, 
because otherwise $\s_k$ would create a vertex on $\s_i$ (or on $\s_j$) which would be excised by the digon $D_{k+1}$, 
breaking that degree-$4$ configuration at $z$.

The digon excision order follows: only $D_j$ could surround $y_i$, so its excision was just after $D_i$, and similarly for $D_j$.
\end{proof}

\medskip

For a particular digon-removal ordering, consider  the inverse image of $\S$ on $L$, and denote it by $\chi$. 
$\chi$ is a simple geodesic polygon surrounding $v$.
In Fig.~\figref{DigonsHexFlatX}(a), $\chi$ is the boundary of the gray region, 
effectively the union of the digons
(but recall that the digons $D_i$ live on different surfaces $P_{i-1}$; hence ``effectively").
Clearly, excising the surface bounded by $\chi$ from $L$ all at once achieves the same effect as excising the digons $D_i$ one-by-one.

The region of $L$ bound by a geodesic polygon $\chi$ is a particular instances of what we call a \emph{crest}:
a subset of $L$ enclosing $v$ whose removal and suitable suturing via AGT will reduce $L$ to $X$. 
Note that we allow the boundary of a crest to include portions of $\partial L$, e.g., $\chi$ in Fig.~\figref{DigonsHexFlatX}(a) includes the $x_i$ as well as the edge $x_5 x_6$.
In Chapter~\chapref{Crests} we will show that it is possible to construct crests directly on $L$ without deriving them from digon removals.

\chapter{Algorithm for Tailoring via Sculpting}
\chaplab{TailoringAlg1}

In this research we have also concentrated on achieving constructive proofs
of the theorems, constructive in the sense of leading to finite algorithms.
In this chapter we follow Theorem~\thmref{MainTailoring} to yield an algorithm for achieving the tailoring of $P$ to $Q$.
Throughout we measure computational complexity in terms of $n$,
where
$n=\max\{ |P|, |Q| \}$ is the number of vertices of the larger of $P$ or $Q$; so $|P|,|Q| = O(n)$.\footnote{
A finer analysis would treat the number of vertices of $Q$ and $P$ independently, say, $Q$ with $m$ vertices.
The algorithm steps would be the same, but the complexities would be apportioned differently.
}
Our goal for all the algorithms is to achieve polynomial-time complexity, $O(n^k)$,
but we have not worked hard to lower $k$, through, e.g., exploitation of efficient
data structures. 
Instead we are content to leave improvements for future work.
We will see that $k=4$ seems to suffice.

Euler's $V{-}E{+}F$ theorem implies that the number of edges and faces of a polyhedron are
linearly related to the number of vertices, so all components of $P$ and $Q$
are $O(n)$.
Thus for tailoring via slicing, the number of slices of $P$ is $O(n)$:
one slice per face of $Q$.
Each slice leads to g-domes, 
each g-dome to pyramids,
and each pyramid is reduced to its base by a series of tailoring steps: 
digon excisions and suturings.

We analyze the complexity in four parts, the $0$-th just the 
conceptual slicing of $P$ by planes on each face of $Q$:
\begin{itemize}
\squeezelist
\item Algorithm~0: Slice $P$ to $Q$: $O(n)$.
\item Algorithm~1: slice $\to$ g-domes, following Lemma~\lemref{Slice2gDomes}.
\item Algorithm~2: g-dome $\to$ pyramids, following Theorem~\thmref{DomePyr}.
\item Algorithm~3: pyramid $\to$ digons, following Lemma~\lemref{VertexTruncation}.\end{itemize}
Algorithms~$0,1,2$ operate on the extrinsic $3$-dimensional structure of the
polyhedra.
Algorithm~$3$ instead processes its calculations on the intrinsic structure of the surface.


\section{Algorithm~1: slice$~\to~$g-domes}

\begin{algorithm}[htbp]
\caption{From one slice, $O(n)$ g-domes.}
\DontPrintSemicolon

    \SetKwInOut{Input}{Input}
    \SetKwInOut{Output}{Output}

    \Input{One slice plane $\Pi$}
    \Output{$O(n)$ g-domes, a total of at most $O(n)$ vertices.}
       
     \BlankLine
     
     \tcp{Consult Fig.~\figref{DodecaGdome}.}
     
     Let $F$ be the face of $Q$ lying in $\Pi$, and $e$ be an edge of $F$.
     
     Sort vertices angularly about $e$. \tcp{$O(n \log n)$} 
     
     \For{$i=0,1,2,\ldots,k$}{
     
     Rotate $\Pi_i$ about $e$ until the portion swept is not a g-dome.
     
     \tcp{Following Lemma~\lemref{Slice2gDomes}.}
     
     Add to g-domes list.
     
     }
     
     \KwResult{List of $O(n)$ g-domes. }
     
\end{algorithm}




Algorithm~1 follows the proof of Lemma~\lemref{Slice2gDomes},
which clearly results in at most $O(n)$ g-domes, 
with the sum of the complexities of the g-domes for any one slice $O(n)$.
It can be implemented to run in $O(n \log n)$ time.
Therefore, over $O(n)$ slices, each resulting in $O(n)$ g-dome vertices,
we have at most $O(n^2 \log n)$ total g-dome complexity:
$O(n \log n)$ sorting repeated $O(n)$ times.

However, it could be that a clever choice of ordering of the slices always
results in a smaller total complexity, perhaps $O(n \log n)$.
Using $\O$ to indicate a lower bound,\footnote{
For example, a constant fraction of $n$ is $\O(n)$.}
the question is: Might each of 
the $\O(n)$ slices from Algorithm~$0$ lead to g-domes with
a total complexity of $\O(n^2)$, independent of the order of slicing?
Although resolving this question has only a small effect on the
overall time-complexity of the tailoring algorithm,
we take a detour to explore this issue via an example 
in the next subsection.

%


\subsection{Complexity of sculpting}

\begin{ex}
Consider Fig.~\figref{Qslice_PyPy_48}, two nested pyramids $Q \subset P$ sharing a common regular polygon base.
The combinatorial complexity of slicing $P$ with face planes of $Q$ is order-sensitive:
from $\O(n^2)$ to $\O(n \log n)$.
\end{ex}

\begin{figure}
\centering
 \includegraphics[width=1.0\textwidth]{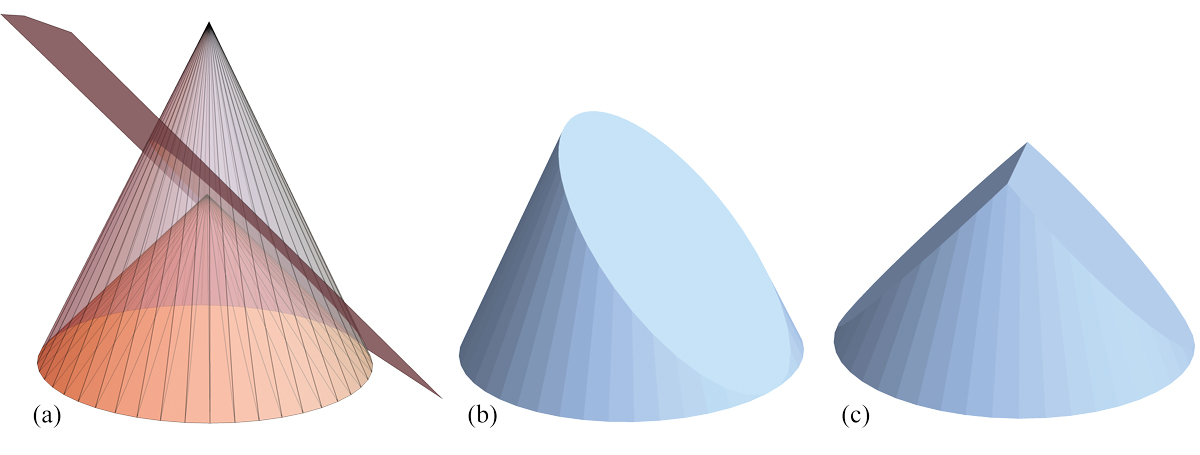}
\caption{$Q \subset P$. The shared base is a regular polygon of $n{=}48$ sides. 
(a)~One slice-plane $\Pi_0$ lying on a face of $Q$.
(b)~After slicing by $\Pi_0$.
(c)~A second, opposite slice $\Pi_{n/2}$.
}
\figlab{Qslice_PyPy_48}
\end{figure}

\begin{proof}
Let the faces of $Q$ ordered in sequence around the base be $F_0,F_1,\ldots,F_{n-1}$, each $F_i$ determining a plane $\Pi_i$.
Let $P_i$ be the polyhedron after slicing $P$ with planes $\Pi_0,\Pi_1,\ldots,\Pi_i$.
$\Pi_0$ cuts $n-2$ edges of $P$, as shown in Fig.~\figref{Qslice_PyPy_48}(a,b), and effectively removes half the edges of $P$ from later slices.
If $P$ is sliced in the order $i=1,2,\ldots,n-2$, following adjacent faces around the base, each plane cuts a diminishing number of the remaining edges.
An explicit calculation shows that plane $\Pi_i$ cuts $\lfloor \frac{n+1-i}{2} \rfloor $ edges of $P_{i-1}$.
And because
$$
\sum_{i=1}^{n-2} \left\lfloor \frac{n+1-i}{2} \right\rfloor  = \Omega( n^2 ) \;,
$$
with this plane-slice ordering, $\O(n)$ slices each have $\O(n)$ vertices, for a total quadratic complexity, $\O(n^2)$.

\medskip

However, if one instead orders the slices in a binary-search pattern, then the total complexity is $\O(n \log n)$, as we now show.
Let $n=2^m$ be a power of $2$ without loss of generality. 
The pattern is slicing with planes lying on faces $F_i$ with indices in the order
$$
s = \left( 0, \frac{n}{2}, \frac{n}{4}, \frac{3n}{4}, \frac{n}{8}, \frac{3n}{8}, \frac{5n}{8}, \frac{7n}{8}, \ldots \right) \;.
$$
We partition this sequence $s$ of indices
into subsequences, $s= (0, s_1, s_2, \ldots, s_m)$, as follows:
\begin{eqnarray*}
s_1 &=& \left( \frac{n}{2} \right)\\
s_2 &=& \left( \frac{n}{2^2}, \frac{3n}{2^2} \right)\\
s_3 &=& \left( \frac{n}{2^3}, \frac{3n}{2^3}, \frac{5n}{2^3}, \frac{7n}{2^3} \right)\\
\mbox{} &\cdots& \mbox{} \\
s_k &=& \left( \frac{j \,n}{2^k} \right), \; \textrm{where} \; j = 1,3,5,\ldots, 2^k{-}1  \; .
\end{eqnarray*}
Notice that the number of indices in sequence $s_k$, $|s_k|=2^{k-1}$.
As a check, the total number of indices in $s$ is
$$
1+ \sum_{k=1}^m |s_k| = 1+ \sum_{k=1}^m 2^{k-1} = 2^m = n \;.
$$

One can calculate that the slice at $\displaystyle{i= \frac{j \, n}{2^k}}$ only cuts $\displaystyle{\frac{n}{2^k}}$ edges of $P_{i-1}$. 
Only $\displaystyle{\frac{n}{2^{k-1}}}$ edges are ``exposed" to $\Pi_i$: for example, for $k=2$ and $i=1/4$, $n/2$ edges
are possibly available for cutting by $\Pi_i$, as can be seen in Fig.~\figref{Qslice_PyPy_48}(c).
However, because of the slant of $\Pi_i$, only half of those, $n/4$, are in fact cut by $\Pi_i$.

Now we compute the total number of edges sliced by the planes following the sequence $s$.
Because $|s_k|=2^{k-1}$, and each slice in $s_k$ cuts $\displaystyle{\frac{n}{2^k}} $edges, the total number of cuts over all $k$ is
$$
\sum_{k=1}^m 2^{k-1} \frac{n}{2^k} = n\sum_{k=1}^m \frac{1}{2} = n \, \frac{m}{2} \;.
$$
And since $m=\log n$, the total complexity is $\O(n \log n)$,
or an average of $\O(\log n)$ for each of $\O(n)$ slices.
\end{proof}

\noindent
In the absence of a resolution to this complexity question, we
will assume that Algorithm~0 and Algoirthm~1 together result in 
$O(n)$ g-domes each of $O(n)$ vertices, produced in time $O(n^2)$.


\section{Algorithm~2: g-dome$~\to~$pyramids}

\begin{algorithm}[htbp]
\caption{Partition one g-dome to $O(n)$ pyramids.}
\DontPrintSemicolon

    \SetKwInOut{Input}{Input}
    \SetKwInOut{Output}{Output}

    \Input{One g-dome $G$ of $O(n)$ vertices}
    \Output{$O(n)$ pyramids, each of size $O(n)$; and $O(n^2)$ pyramids, size $O(1)$.}

    \BlankLine

\tcp{Following Theorem~\thmref{DomePyr}.}



\For{each of $k=O(n)$ vertex-degree reductions}{

{As in Theorem~\thmref{DomePyr}, slice with plane: $O(k)$.}

{Remove pyramid $P$ of $O(k)$ vertices.}

{``Clean-up" by removing $k$ pyramids each of size $O(1)$.}

}

    \KwResult{List of pyramids. }
     
\end{algorithm}


We follow Theorem~\thmref{DomePyr} for partitioning each g-dome $G$ into pyramids.
Each vertex $v_i$ of the top-canopy of $G$ is removed,
as in Fig.~\figref{DomeSlice_begend}, until only one remains.
Removal of each $v_i$ follows the degree-removal steps
illustrated in 
Fig.~\figref{SliceAway_123456}.

Because the sum of the vertex degrees of a g-dome is $2E = O(n)$,
the asymptotic complexity of processing a g-dome with many vertices in its top-canopy
is no different than it is for just two vertices as in Fig.~\figref{DomeSlice_begend}.
Moreover, we can assume that $v_2$ has degree-$3$ while
$v_1$ has degree-$k$, with $k=O(n)$.

First a plane slice results in a pyramid of $k$ vertices with apex $v_1$, which is removed
(and reduced by Algorithm~3).
Next follows a ``clean-up" phase that removes $O(k)$ pyramids
each of $4$ or $5$ vertices, so of constant size, $O(1)$.

This is then repeated for the new apex of degree $k-1$:
removal of a pyramid of $k-1$ vertices, and cleanup of $O(k-1)$ pyramids of constant size.
After iterating through $k,k-1,k-2,\ldots$, the algorithm has sliced off
$O(k^2)$ pyramids of constant size,
and $O(k)$ pyramids of size $O(k)$.
In the worst case $k=O(n)$,
for a total complexity of $O(n^2)$.

\section{Algorithm~3: pyramid$~\to~$digons}

\begin{algorithm}[htbp]
\caption{Tailor one pyramid $P$ to its base $X$.}
\DontPrintSemicolon

    \SetKwInOut{Input}{Input}
    \SetKwInOut{Output}{Output}

    \Input{A pyramid $P$ of $O(n)$ vertices}
    \Output{$O(n)$ digons whose removal flattens $P$ to $X$. }

    \BlankLine

\tcp{Following Lemma~\lemref{VertexTruncation}.}

\tcp{Assume apex degree-$k$, with $k=O(n)$.}

\For{each of $x_i$, $i=1,2,\ldots,k$}{

{Construct digon $D_i(x_i,y_i)$: Locate $y_i$.}

{Locate $y_i$ by tracing geodesics: $O(k)$.}

}
  
  \KwResult{List of $O(n)$ digons}
     
\end{algorithm}

Lastly we concentrate on the cost of removing one pyramid $P$ of $O(n)$ vertices.
Following Lemma~\lemref{VertexTruncation}, this requires $O(n)$ digon removals.
For each digon $D_i(x_i,y_i)$, we need to calculate the location of $y_i$
on $C(x_i)$; then $y_i$ becomes a vertex for the removal of the next
digon $D_{i+1}$.
Fortunately, there is no need to compute the cut locus $C(x_i)$.

Let us focus on locating $y_i$, after the removal of $D_{i-1}(x_{i-1},y_{i-1})$
the previous iteration.
Recall that $y_i$ is the only vertex on $L_i$,
so it is immediate to find the shortest path $\g$ from $x_i$ to $y_i$.
We know the angle $\q=\o_Q(x_i) - \o_P(x_i)$ needed to be removed
by $D_i$,
so we know that geodesics $\g_1$ and $\g_2$ at angles $\q/2$ left and right
of 
$\g = x_i y_i$ will meet at $y_i$.
Tracing $\g_1$ and $\g_2$ over the surface might
cross sealed digons $D_1,\ldots,D_{i-1}$, the 
blue{seals $\s_i$}
in Fig.~\figref{DigonsHexFlatX}(b).
So the cost of computing $\g_1 \cap \g_2 = y_i$ is $O(n)$.

Thus the complexity of tailoring one pyramid of $O(n)$ vertices is $O(n^2)$.

Note that we do not need the extrinsic $3$-dimensional structure of the intermediate
polyhedra guaranteed by AGT to perform the calculations,
as is evident in the example described in
Fig.~\figref{DigonsHexFlatX}.


\section{Overall Tailoring Algorithm}

Putting the three algorithm complexities together, and 
for succinctness abbreviating 
$O(n)$ with $n$ and ``time-complexity" with ``complexity,"
we have:
\noindent
\begin{enumerate}[label={(\arabic*)}]
\item $n$ slices $\to$ $n$ g-domes, each of size $n$: complexity $n^2 \log n$.
\item $1$ g-dome of size $n$ $\to$
\begin{enumerate} 
\item $n$ pyramids, each of size $n$: complexity $n^2$.
\item $+$ $n^2$ constant-size pyramids: complexity $n^2$.
\end{enumerate}
\item $1$ pyramid of size $n$ $\to$ $n^2$  to reduce to base: complexity $n^2$.
\end{enumerate}
(1) and (2) together have complexity $n^3$, iterating over each of the $n$ domes.
(The $n \log n$ sorting need not be repeated after the g-domes are identified.)
(3) repeats $n^2$ over (potentially) $n^2$ pyramids: 
$n$ g-domes resulting in $n$ pyramids each.
Thus the total complexity is $n^4$.
We summarize in a theorem:

\begin{thm}
\thmlab{SliceAlgorithm}
Given convex polyhedra $P$ and $Q$ of at most $n$ vertices each, and $Q \subset P$,
$P$ can be tailored to $Q$, tracking a sculpting of $Q$ from $P$ as in Theorem~\thmref{MainTailoring}, in time $O(n^4)$.
\end{thm}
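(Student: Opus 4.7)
The plan is to combine the three sub-algorithms described in this chapter and account for their interaction when iterated over the $O(n)$ slices coming from Algorithm~0. The overall logic is already present in Theorem~\thmref{MainTailoring}: slicing $P$ by planes containing the faces of $Q$ yields a sequence of plane slices, each slice is realized by Lemma~\lemref{Slice2gDomes} as a partition into g-domes, each g-dome is reduced to its base through Theorem~\thmref{DomePyr} as a stack of pyramids, and each pyramid is flattened by $k-1$ digon-tailorings via Lemma~\lemref{VertexTruncation}. The theorem's content is therefore purely complexity-theoretic, and the task is to bookkeep the sizes carefully.

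First I would fix notation: since $|P|,|Q|=O(n)$, Algorithm~0 produces at most $O(n)$ plane slices (one per face of $Q$). For each slice, Algorithm~1 runs in time $O(n\log n)$ dominated by the angular sort of vertices around a base edge of $F$, and it yields $O(n)$ g-domes whose vertices sum to $O(n)$. So slicing plus Algorithm~1, summed over all $O(n)$ slices, contributes at most $O(n^2\log n)$ and outputs $O(n)$ g-domes of total size $O(n^2)$ in the worst case. (As I would note, it is conceivable that a cleverer ordering of slices reduces this bound, but the estimate is already subsumed by what follows.)

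Next I would analyze the cost of Algorithm~2 applied to a single g-dome of size $O(n)$. Following Theorem~\thmref{DomePyr}, each top-canopy leaf is removed by one large pyramid slice of size $O(n)$ followed by an $O(n)$-long ``clean-up'' chain of constant-size pyramids. Summed over the at most $O(n)$ vertex-removal rounds per g-dome, this produces $O(n)$ pyramids of size $O(n)$ and $O(n^2)$ pyramids of constant size, at extrinsic cost $O(n^2)$ per g-dome. Then I would bound Algorithm~3 on a pyramid of degree $k$: by Lemma~\lemref{VertexTruncation} we perform $k-1$ digon removals, and for each we only need to locate the next apex $y_i$ as the intersection of two geodesics $\g_1,\g_2$ emanating from $x_i$ at the prescribed half-angles. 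Tracing these geodesics across the current surface crosses at most $O(n)$ previously placed seals, so each digon costs $O(n)$ and flattening a pyramid of size $O(n)$ costs $O(n^2)$. Crucially, as emphasized around Fig.~\figref{DigonsHexFlatX}, these computations are intrinsic and do not require reconstructing the $3$-dimensional shapes guaranteed by AGT.

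Finally I would aggregate. Per g-dome the dominant cost is reducing its constituent pyramids: $O(n)$ large pyramids at $O(n^2)$ each gives $O(n^3)$, while the $O(n^2)$ constant-size pyramids contribute only $O(n^2)$. Over $O(n)$ g-domes the total is $O(n^4)$, which swallows the $O(n^2\log n)$ slicing cost and the $O(n^3)$ extrinsic dome-to-pyramid cost. Hence the whole procedure runs in $O(n^4)$, matching the theorem. The main obstacle I expect is not a conceptual one but rather the careful accounting of Algorithm~3's per-digon cost: one must justify that the geodesic-tracing of $\g_1,\g_2$ indeed takes only $O(n)$ even after many seals have been created, which relies on the fact that the cumulative number of seals in a single pyramid's history is $O(n)$ and that each seal is crossed at most a constant number of times by the next digon's bounding geodesics.
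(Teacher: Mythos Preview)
Your proposal is correct and follows essentially the same approach as the paper's own accounting: both decompose the cost as $O(n^2\log n)$ for slicing, $O(n^3)$ for g-dome-to-pyramid partitioning, and $O(n^4)$ for pyramid reductions (computed as $O(n)$ g-domes $\times$ $O(n)$ large pyramids $\times$ $O(n^2)$ per pyramid), with the last term dominating. The concern you flag about the per-digon tracing cost in Algorithm~3 is exactly the point the paper addresses---the geodesics $\g_1,\g_2$ live on the lateral surface $L_i$ of the \emph{current} pyramid and so cross only the $O(k)$ seals created during that pyramid's own reduction, giving $O(n)$ per digon---so your caution is well-placed but does not constitute a gap.
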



\chapter{Crests}
\chaplab{Crests}

In this chapter we revisit the suggestion made at the end of Chapter~\chapref{SealGraph} that
the digons to reduce one pyramid to its base could be cut out all at once.
As before, let $P = L \cup X$ be a pyramid with base $X$ and lateral sides $L$.
Recall that a \emph{crest} is a subset of $L$ enclosing $v$ whose removal and suitable suturing via AGT will reduce $L$ to $X$.

We derive a method for identifying a crest that does not rely on digon removals, but rather works directly on a pyramid.
This allows us to achieve in Theorem~\thmref{CrestTailoring} reshaping of $P$ to $Q$ by the removal of crests to flatten pyramids.
We call this process \emph{crest-tailoring}, in contrast to the \emph{digon-tailoring} explored in Chapter~\chapref{TailoringSculpting}.
It represents a tradeoff between the simplicity of digons and the number of excisions: 
for digon-tailoring, $O(n)$ per pyramid, for crests, one per pyramid.
We first illustrate the process of identifying a crest on two example pyramids before proving that it always works.


\section{Examples}
Let $P = L \cup X$ 
with $\partial X = \partial L = (x_1, x_2, \ldots, x_k)$
having vertices $x_i$.
The apex is $v$, which projects orthogonally to $\bar{v}$.
We will describe the procedure for identifying a crest first for $\bar{v} \in X$ and then for $\bar{v} \not\in X$. Although the cases initially feel different,
the proofs will show that they are nearly the same.

Fig.~\figref{PyHexV_1} illustrates the case $\bar{v} \in X$.
Let $T_i = x_i x_{i+1} v$ and $\bar{T}_i=x_i x_{i+1} \bar{v}$.
We proved in Lemma~\lemref{Angles} that the angle $\bar{\q}_i$ at $x_i$ in $X$ is strictly smaller than than the angle $\q_i$ on $L$, 
the sum of the angles in $T_{i-1}$ and $T_i$ incident to $x_i$ (as long as $|v \bar{v}| > 0$).
\begin{figure}
\centering
 \includegraphics[width=1.0\textwidth]{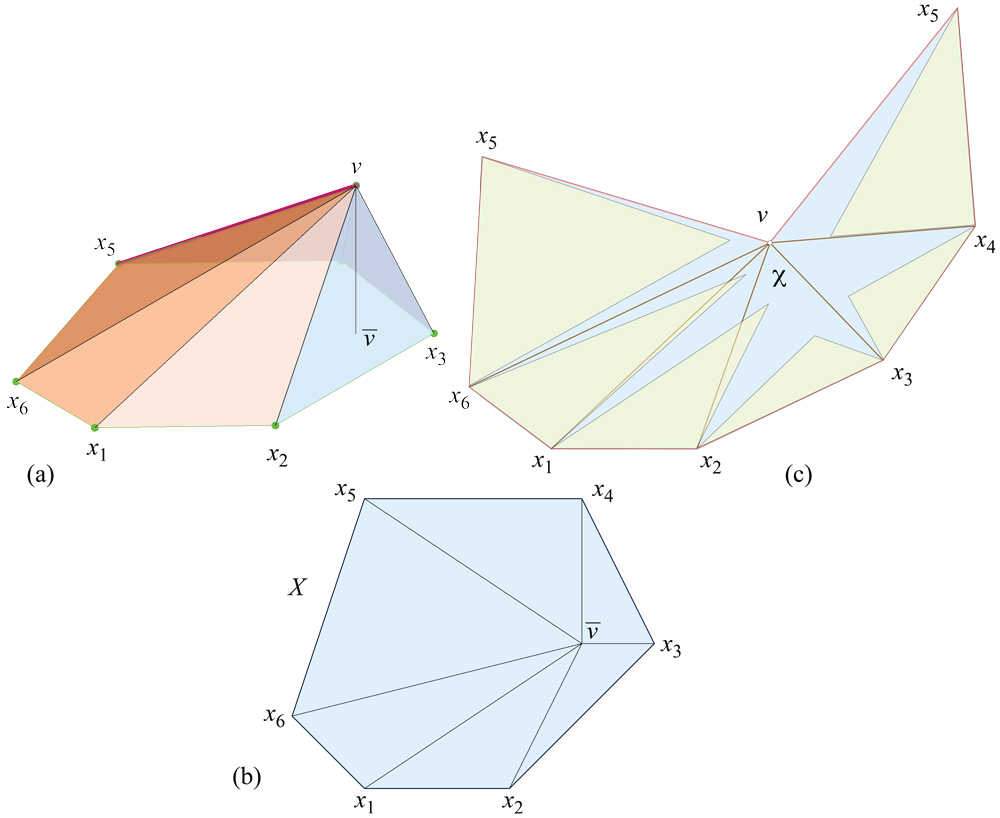}
\caption{(a)~Pyramid.
(b)~$X$ with projected triangles $\bar{T}_i=x_i x_{i+1} \bar{v}$.
(c)~Flattening of $L$ to $\bar{L}$, in this case by cutting the edge $x_5 v$.
The lifted triangles $T^L_i$ are shown yellow. The crest $\chi$ is blue.
}
\figlab{PyHexV_1}
\end{figure}

A key definition is the \emph{lift} of $\bar{T}_i$ onto $L$.
Let $\bar{\a}_i$ and $\bar{\b}_i$ be the base angles of $\bar{T}_i$, at $x_i$ and $x_{i+1}$ respectively.
On $L$, extend geodesic $\g_i$ from $x_i$ at angle $\bar{\a}_i$, and extend geodesic $\g_{i+1}$ from $x_{i+1}$ at angle $\bar{\b}_i$.
Let $v_i$ be the point on $L$ at which these geodesics meet; $v_i$ is the image of $\bar{v}$.
(We will not establish that indeed these geodesics meet on $L$ until Lemma~\lemref{Lifting}.)
Then $\operatorname{lift}(\bar{T}_i) = T^L_i$ is a geodesic triangle on $L$ isometric to $\bar{T}_i$.
Another way to view the lift of $\bar{T}_i$ is to imagine $\bar{T}_i$ rotating about $x_i x_{i+1}$ by the dihedral angle there and pasting it on the inside of $L$.

Yet another way to view the lift is as follows.
$L$ is isometric to a cone and can be flattened by cutting along a generator, i.e., a segment from $v$ to $\partial L$.
Let $\bar{L}$ be a particular flattening, with the cut generator not ``near" $x_i$ just for simplicity.
Then place a copy of $\bar{T}_i$ on $\bar{L}$ matching $x_i x_{i+1}$.
Then refold $\bar{L}$ to $L$.
We will continue to reason with a flattened $\bar{L}$ but remembering
that $\bar{L}$ is a representation of $L$, and so the cut edge is not relevant.

This last layout-viewpoint yields a method to construct a full crest, call it $\chi$.
The base $X$, partitioned into $\bar{T}_i$, can be modified by opening the angle at $x_i$ from $\bar{\q}_i$ to $\q_i$. 
After opening at all $x_i$, this figure can be superimposed on the flattened $\bar{L}$, matching the boundaries $x_1, \ldots, x_k$.
This is illustrated in Fig.~\figref{PyHexV_1}(c).
The crest is then the portions of $L$ not covered by the lifted $\bar{T}_i$.
It should be clear that cutting out $\chi$ and suturing closed the matching edges will reduce $L$ to $X$,
for the $\bar{T}_i$ remaining after removing $\chi$ exactly partition $X$.

We next illustrate the case when $\bar{v} \not\in X$;
see Fig.~\figref{PyHexV_2}.
We perform the exact same process of lifting triangles $\bar{T}_i$
to $L$, but we clip those triangles to $X$---i.e.,
form the polygon $\bar{T}_i \cap X$---as indicated in~(c) of the figure.
Notice that two triangles, $\bar{T}_3$ and $\bar{T}_4$, are removed by
the clipping intersection.
Again it should be clear that cutting out $\chi$ and suturing closed 
will reduce $L$ to $X$.
\begin{figure}
\centering
 \includegraphics[width=1.0\textwidth]{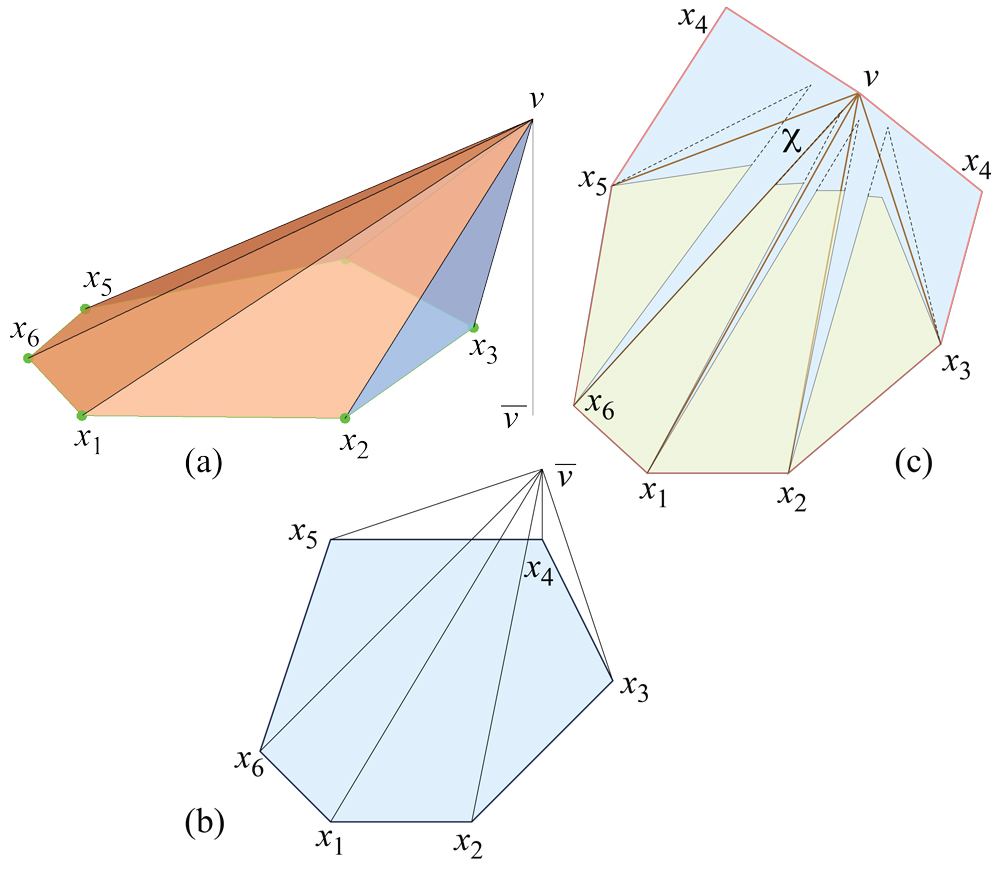}
\caption{Following the same conventions as in  Fig.~\protect\figref{PyHexV_1}:
(a)~Pyramid.
(b)~$X$; $\bar{v} \not\in X$.
(c)~$\bar{L}$, yellow clipped lifted triangles $T^L_i$, and blue crest $\chi$.}
\figlab{PyHexV_2}
\end{figure}


\section{Proofs}

We will need several geometric properties.

Let $\bar{\q}_i$ be the angle at $x_i$ in $X$, and  $\q_i$ the angle at $x_i$ on $L$, the sum of two triangle angles incident to $x_i$.
Then, by Lemma \lemref{Angles}, $\q_i > \bar{\q}_i$.

\begin{lm}
\lemlab{Altitude}
Let $T=a b c$ be a triangle in $\mathbb{R}^3$, with $a b$ on plane $\Pi$ and $c$ above that plane.
Let $\bar{c}$ be the orthogonal projection of $c$ onto $\Pi$, and $\bar{T}= a b \bar{c}$ the projected triangle.
Finally, let $T^r = a b c^r$ be the triangle $T$ flattened to $\Pi$ by rotating about $ab$.
Then $c^r$ and $\bar{c}$ lie on the altitude line perpendicular to the line containing $ab$.
\end{lm}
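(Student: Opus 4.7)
My plan is to introduce a single reference point, the foot $f$ of the altitude from $c$ to line $ab$ (in $\R^3$), and show that the line $\ell \subset \Pi$ through $f$ perpendicular to line $ab$ contains both $\bar c$ and $c^r$. Since $f$ lies on line $ab$ and $a,b\in\Pi$, we have $f\in\Pi$, so $\ell$ is well-defined in $\Pi$.

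First I would handle $c^r$. Rotation about line $ab$ fixes $ab$ pointwise (in particular fixes $f$) and preserves distances to that axis as well as perpendicularity to it. Since $(c-f)\perp(b-a)$ by definition of $f$, the rotated vector $c^r-f$ is also perpendicular to $b-a$. By construction $c^r\in\Pi$, so $c^r\in\ell$.

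Next I would handle $\bar c$. Here the key identity is
\[
(\bar c - f)\cdot(b-a) \;=\; (c-f)\cdot(b-a)\;-\;(c-\bar c)\cdot(b-a).
\]
The first term vanishes by the definition of the altitude foot, and the second vanishes because $c-\bar c$ is normal to $\Pi$ while $b-a$ lies in $\Pi$. Hence $(\bar c - f)\perp(b-a)$, and since $\bar c\in\Pi$ by definition of orthogonal projection, we conclude $\bar c\in\ell$.

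The two computations together give the claim. There is no serious obstacle here; the only care needed is to keep clear that ``altitude line'' refers to the line in $\Pi$ through $f$ perpendicular to line $ab$, and to verify that $f$ does lie on this line in $\Pi$ (which is automatic since $f\in ab\subset\Pi$). The lemma is essentially a three-line consequence of the two orthogonality relations $(c-f)\perp ab$ and $(c-\bar c)\perp\Pi$.
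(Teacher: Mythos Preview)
Your proof is correct and essentially coincides with the paper's: the paper simply invokes the Theorem of the Three Perpendiculars (plus the congruence $T^r\cong T$) without writing out details, and your vector computations for $\bar c$ and $c^r$ are exactly an unpacking of that theorem and that congruence. The approaches are the same; you have just made explicit what the paper leaves to a one-line citation.
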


\begin{proof}
See Fig.~\figref{AltitudeLemma}.
The claim follows from the Theorem of the Three Perpendiculars.
Note that $T^r$ is congruent to $T$.
\end{proof}
\begin{figure}
\centering
 \includegraphics[width=1.0\textwidth]{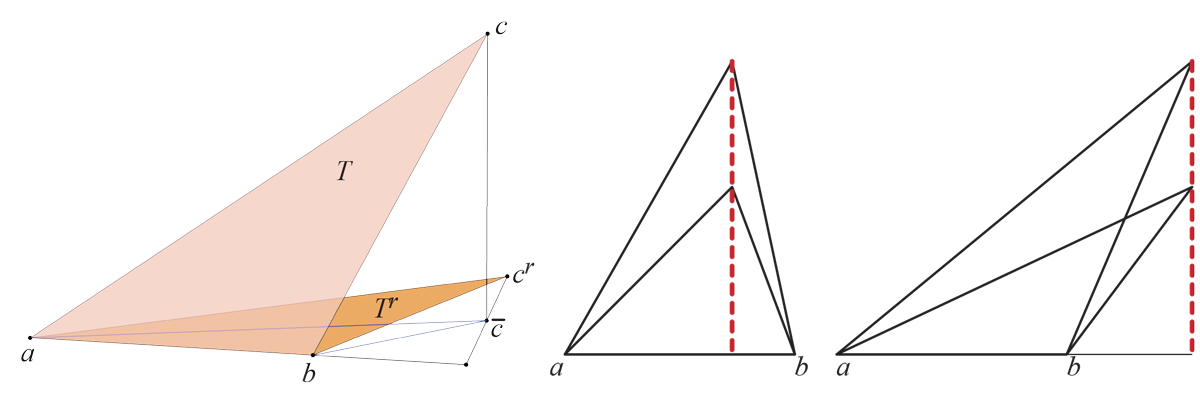}
\caption{
Rotated apex $c^r$ and projected apex $\bar{c}$ lie on same altitude.
}
\figlab{AltitudeLemma}
\end{figure}
The consequence of Lemma~\lemref{Altitude} is that, 
superimposing a lifted triangle $\bar{T}^L$ on a planar layout $\bar{L}$ of $L$,
the images of $v$ in $\bar{L}$ and $\bar{v}$ of $\bar{T}$, lie along the altitude of $T$.


The following lemma assumes that $\bar{v} \in X$.
The case when $\bar{v} \not\in X$ will be treated separately.
Let $\bar{L}$ be a planar layout of $L$, say, cut open at edge $x_1 v$.
Let $\t_i= \pi-(\b_{i-1}+\a_i)=\pi - \q_i$ be the turn angle at vertex $x_i$ in the layout.
Because $\bar{v} \in X$, $\t_i > 0$, i.e., the planar image
of $\partial L$ in $\bar{L}$ is a convex chain,
and also convex wrapping around the cut edge $x_1 v$.

Let $a_i$ be the segment altitude of triangle $T_i = x_i v x_{i+1}$ in the layout $\bar{L}$.
Let $\nu = 2\pi-\o(v)$ be the surface angle of $P$ incident to $v$.

\begin{lm}
\lemlab{AltTurns}
When  $\bar{v} \in X$ and consequentially $\bar{L}$ is a convex chain $x_1,\ldots,x_k$, the following hold (with $k+1 \equiv 1 \bmod{k}$):
\begin{enumerate}[label=(\alph*)]
\squeezelist
\item The sum of the turn angles, $\sum_i \t_i=\nu$, the surface angle incident to $v$.
\item The angle between $a_i$ and $a_{i+1}$ at $v$ on $L$ is exactly
equal to the turn angle $\t_i$ at vertex $x_i$.
\item The altitudes occur in order around $v$, in the sense that 
$a_{i+1}$ is counterclockwise of $a_i$ around $v$.
\end{enumerate}
\end{lm}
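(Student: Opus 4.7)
The plan is to handle the three claims in order, with (c) following easily from (a) and (b). All the work takes place in the planar layout $\bar{L}$, and the arguments reduce to elementary angle-bookkeeping over the $k$ planar triangles $T_i = x_i v x_{i+1}$ that partition $\bar{L}$.

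For (a), I would sum the interior angles across all $k$ triangles in the layout. The total is $k\pi$. Separating the contributions at the apex $v$ (which total $\nu$ by the definition of $\nu$) from those at the base vertices (noting that the two copies of $x_1$ created by the cut combine to contribute $\alpha_1 + \beta_k = \theta_1$ to the sum at $x_1$ under the cyclic convention $k+1 \equiv 1$), one obtains $\nu + \sum_{i=1}^{k} \theta_i = k\pi$. Subtracting then yields $\sum_i \tau_i = \sum_i (\pi - \theta_i) = \nu$.

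For (b), I would apply elementary right-triangle geometry at the feet of the altitudes $a_i$ and $a_{i+1}$, which lie on triangles sharing the edge $vx_{i+1}$. In $T_i$, the right sub-triangle cut off on the $x_{i+1}$-side by $a_i$ has angle $\beta_i$ at $x_{i+1}$, forcing the angle at $v$ between $a_i$ and $vx_{i+1}$ to equal $\pi/2 - \beta_i$. Symmetrically, in $T_{i+1}$, the angle at $v$ between $vx_{i+1}$ and $a_{i+1}$ equals $\pi/2 - \alpha_{i+1}$. Since $a_i$ and $a_{i+1}$ lie on opposite sides of the shared edge $vx_{i+1}$, their total angular separation at $v$ is $\pi - (\beta_i + \alpha_{i+1}) = \pi - \theta_{i+1} = \tau_{i+1}$, matching the claim up to the cyclic indexing convention.

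Claim (c) is then a direct consequence: from (b), the successive angular gaps between consecutive altitudes around $v$ are exactly the turn angles, each strictly positive because the hypothesis $\bar{v} \in X$ makes the planar chain at the $x_i$ strictly convex, so every $\tau_i > 0$. From (a), these gaps sum to exactly $\nu$, the full surface angle at $v$. Hence the altitudes appear in the cyclic order $a_1, a_2, \ldots, a_k$ around $v$ without overlap or overflow. The only subtlety I expect is the careful cyclic identification of the two copies of $x_1 v$ created by the cut, so that the global angle accounting in (a) is neither double-counted nor truncated; beyond that, the entire argument is angle-chasing in planar Euclidean triangles.
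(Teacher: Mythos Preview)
Your proposal is correct and follows essentially the same approach as the paper: part~(b) is the identical right-triangle angle chase (the paper packages it as ``extending $T_i$ to a right triangle'' with a figure), and part~(c) is deduced from~(b) and the positivity of the $\tau_i$. The only difference is in~(a): the paper sums the turn angles of the planar polygon $\bar{L}$ to $2\pi$ and then corrects for the two images of $x_1$ and the turn $\pi-\nu$ at $v$, whereas you sum the interior angles of the $k$ triangles directly to $k\pi$; your bookkeeping is arguably cleaner since it avoids the special-case handling of the cut vertex, but the two computations are trivially equivalent.
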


\begin{proof}
\begin{enumerate}[label=(\alph*)]
\item Viewing the entire layout $\bar{L}$ as a simple polygon, $\sum_i \t_i = 2 \pi$.
But we need to distinguish between $\t_1$ at the edge $x_1 v$ cut to flatten $L$, and $\t'_1$, the turns at the two images of $x_1$ in $\bar{L}$:
$$\t'_1 = 2\pi-(\a_1+\b_k) = \pi+ \t_1 \;.$$ 
The second anomalous turn in $\bar{L}$ is $\pi-\nu$ at $v$. So we have
\begin{eqnarray*}
\t'_1 + \sum_{i=2}^k \t_i + (\pi-\nu) &=& 2 \pi \;, \\
\sum_{i=1}^k \t_i &=& \nu \;.
\end{eqnarray*}

\item This can be seen by extending $T_i=x_i v x_{i+1}$ to a right triangle, with right angle at the foot of altitude $a_i$. 
See Fig.~\figref{AltitudeTurn}.

\item This follows directly from~(b). Note that here we rely on the turns $\t_i$ being positive, i.e., convex.
See Fig.~\figref{AltTurns}.
\end{enumerate}
\end{proof}
%
\begin{figure}
\centering
 \includegraphics[width=0.5\textwidth]{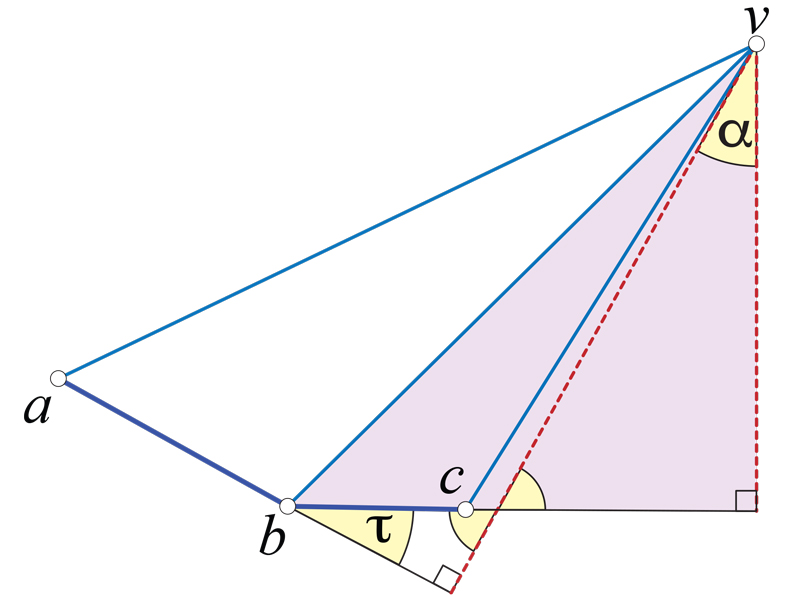}
\caption{
Triangles $avb$ and $bvc$ turn
$\t$ at $b$ of the convex chain $abc$. 
$\t$ is equal to the altitude turn $\a$;
angles of similar right triangles marked.
}
\figlab{AltitudeTurn}
\end{figure}

\begin{figure}
\centering
 \includegraphics[width=1.0\textwidth]{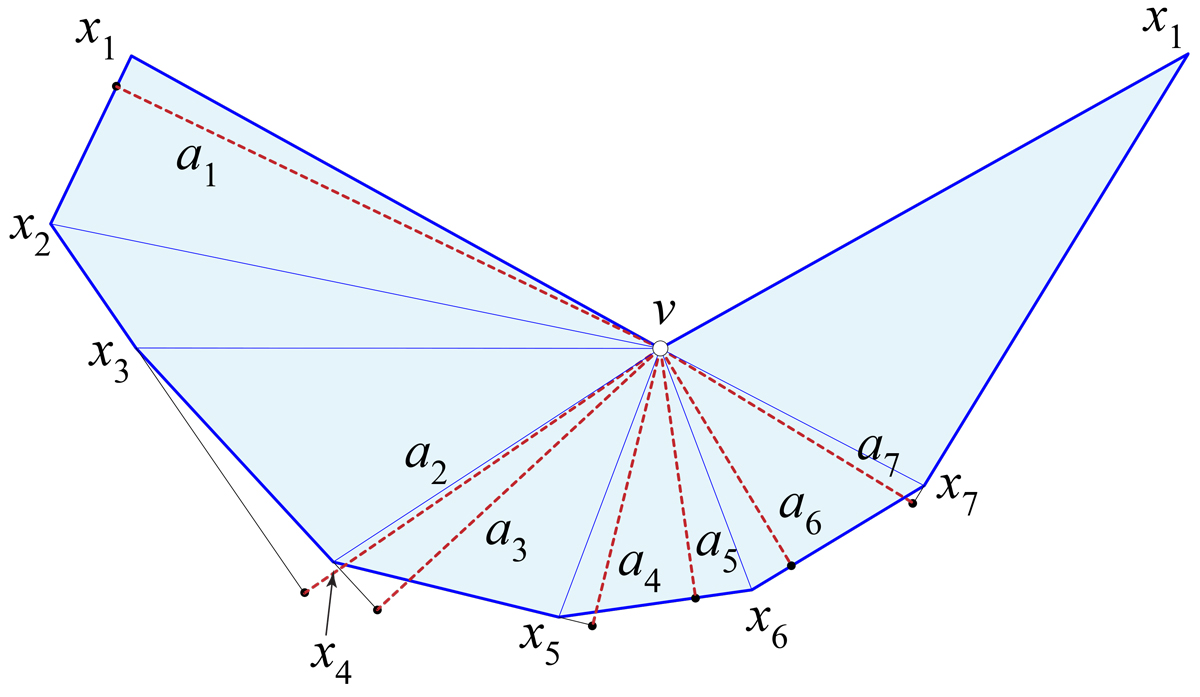}
\caption{
A convex chain $x_1,\ldots,x_7$ and the corresponding altitudes $a_1,\ldots,a_7$.
}
\figlab{AltTurns}
\end{figure}

The consequence of Lemma~\lemref{AltTurns} is that the surface angle $\nu$ around $v$
is partitioned by the altitudes $a_i$ in order, because $\sum_i \t_i = \nu$, and
the angle between $a_i$ and $a_{i+1}$ is $\t_i$.
Moreover, Lemma~\lemref{Altitude} shows that the apexes of each lifted triangle
$\bar{T}^L_i$ lie on those altitudes, at some positive distance from $v$.
Consequently, we can connect those apexes to form a simple geodesic polygon
enclosing $v$ on $L$.
Because every turn angle $\t_i$ is strictly less than $\pi$,
connecting two adjacent apexes along $a_i$ and $a_{i+1}$ 
will keep $v$ to the same (counterclockwise) side. 
Call this polygon the \emph{moat} $M$ of $P$.\footnote{
We do not know whether $M$ is always convex, but we only need it to be simple.}
Fig.~\figref{MoatV_1} illustrates the moat for the 
example in Fig.~\figref{PyHexV_1}(c).
\begin{figure}
\centering
 \includegraphics[width=0.8\textwidth]{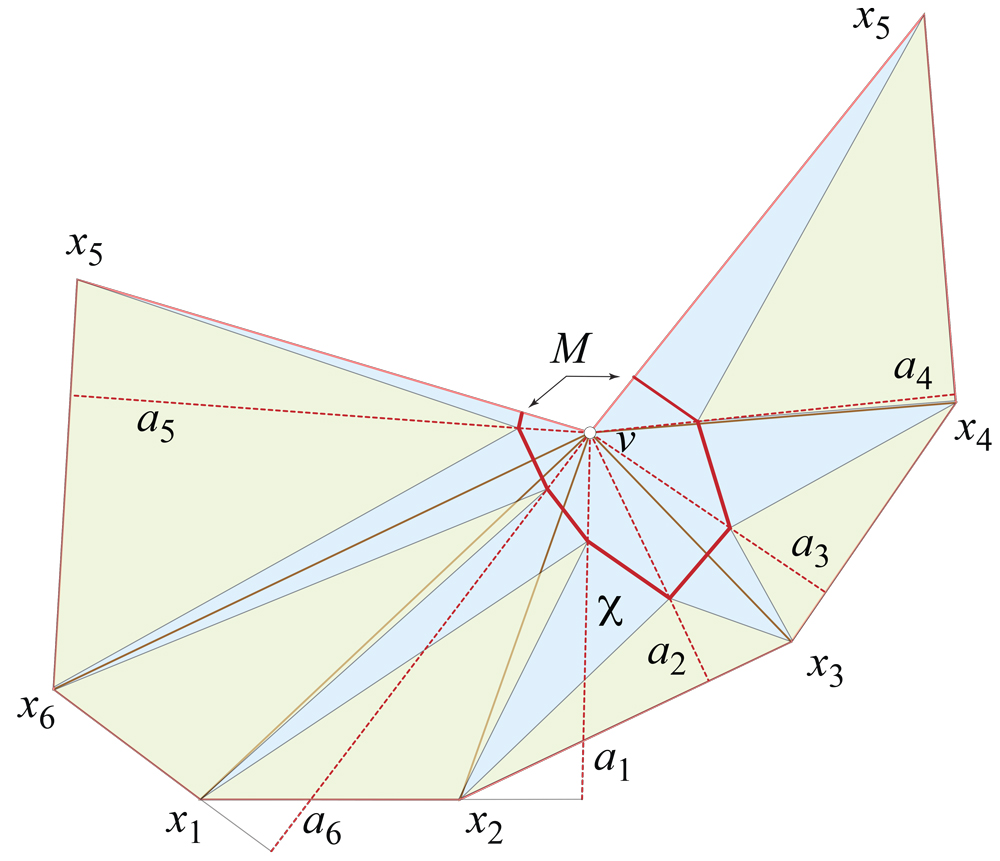}
\caption{The layout from Fig.~\protect\figref{PyHexV_1}(c)
shown with  moat $M$ and altitudes $a_i$ identified.
}
\figlab{MoatV_1}
\end{figure}

\begin{lm}
\lemlab{Lifting}
For the case $\bar{v} \in X$, the lifting of all triangles $\bar{T}_i$ to $\bar{T}^L_i$ onto $L$ has the following properties,
(where we shorten ``geodesic triangle" to ``triangle"):
\begin{enumerate}[label=(\alph*)]
\item Each lifted triangle $\bar{T}^L_i$ fits on $L$: $\bar{T}^L_i \subset L$.
\item $v$ does not lie in any triangle $\bar{T}^L_i$.
\item No lifted triangle self-overlaps, and no pair of triangles overlap.
\end{enumerate}
\end{lm}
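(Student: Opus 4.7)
The plan is to work in the planar layout $\bar{L}$ of the cone $L$ (cut open along one generator). Because $\bar{v} \in X$, Lemma~\lemref{AltTurns} gives (i) the base chain $\partial L$ is convex in $\bar{L}$ and (ii) the altitudes $a_i$ from $v$ perpendicular to the base edges $x_i x_{i+1}$ occur in strict angular order around $v$, partitioning the apex angle $\nu = 2\pi - \o(v)$. Lemma~\lemref{Altitude} then locates each lifted apex $v_i$ on altitude $a_i$ at height $h_i$ (the distance from $\bar{v}$ to the line through $x_i x_{i+1}$), strictly between the altitude foot and $v$, since $|v\bar{v}|>0$ forces $h_i<H_i$. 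Claim (b) drops out immediately from this positioning: $v$ lies strictly past $v_i$ along $a_i$, on the far side of the line through $v_i$ parallel to $x_i x_{i+1}$, hence outside the closed triangle $\bar{T}^L_i = x_i x_{i+1} v_i$.

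For (a) and (c) the plan is to argue sector by sector at $v$. In the generic case, where each face $T_i = x_i x_{i+1} v$ is acute at both base vertices, the altitude $a_i$ lies inside $T_i$, the apex $v_i$ sits in the interior of $T_i$, and the triangle $\bar{T}^L_i$ (the convex hull of $x_i, x_{i+1}, v_i$ within the planar face $T_i$) is contained in $T_i \subset \bar{L}$; this yields (a). Two lifted triangles in distinct faces can then overlap at most along shared boundary: for non-adjacent faces the only common point of $T_i$ and $T_j$ is $v$, excluded by (b); for $j=i+1$ the shared lateral edge $v x_{i+1}$ meets each of $\bar{T}^L_i$ and $\bar{T}^L_{i+1}$ only at the single point $\{x_{i+1}\}$, because the lift edges leave $x_{i+1}$ at angles $\bar{\b}_i < \b_i$ and $\bar{\a}_{i+1} < \a_{i+1}$, both strictly into the interiors of their respective faces. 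Self-overlap is trivial for a triangle, completing (c).

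The main obstacle is the case in which some face $T_i$ is obtuse at a base vertex $x_i$: a short law-of-cosines check shows that then $\bar{\a}_i > \a_i$, so the lifted geodesic from $x_i$ exits $T_i$ across the lateral edge $v x_i$ into $T_{i-1}$, and $v_i$ itself lies outside $T_i$ in the adjacent face. Here I would argue directly in $\bar{L}$ rather than face by face: use the strict angular ordering of altitudes from Lemma~\lemref{AltTurns} to confine $v_i$ to the wedge at $v$ between $a_{i-1}$ and $a_{i+1}$, and exploit the convexity of $\partial L$ in $\bar{L}$ (also from Lemma~\lemref{AltTurns}) to show that the geodesic triangle $\bar{T}^L_i$ remains inside $\bar{L}$ and that consecutive lifts still meet only at the common base vertex while non-adjacent lifts remain disjoint by (b). This bookkeeping across face boundaries is the genuinely delicate step; once handled, the three claims follow uniformly.
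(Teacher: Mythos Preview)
Your argument for (b) is correct and in fact captures the same geometry that underlies the paper's \emph{moat} construction: each lifted apex $v_i$ sits on altitude $a_i$ strictly between the foot and $v$, so $v$ lies beyond the line through $v_i$ parallel to $x_ix_{i+1}$ and hence outside $\bar T^L_i$. The paper packages this by connecting the apexes $v_1,\dots,v_k$ into a simple geodesic polygon $M$ around $v$ (simplicity following from Lemma~\lemref{AltTurns}(c)), and then reads off both (a) and (b) at once: each $\bar T^L_i$ has its base on $\partial L$ and its apex on $M$, so it lies entirely in the annular region of $L$ between $M$ and $\partial L$.

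Where your approach diverges substantively is in (a) and (c). Your face-by-face containment ($\bar T^L_i \subset T_i$) is clean when every $T_i$ is acute at both base vertices, but---as you correctly identify---fails once some $T_i$ is obtuse at $x_i$, and you leave that case as ``delicate bookkeeping'' without carrying it out. The paper sidesteps this case distinction entirely. For (a), the moat argument above never refers to which face contains $v_i$. For (c), the paper applies Cauchy's Arm Lemma (Theorem~\thmref{CAL}) globally: the layout of the $\bar T^L_i$ on $\bar L$ is obtained from the triangulation of $X$ by the $\bar T_i$ by \emph{opening} each interior angle at $x_i$ from $\bar\theta_i$ to $\theta_i > \bar\theta_i$; if two lifted triangles overlapped, two points $p_i,p_j$ that were a positive distance $d$ apart in $X$ would coincide in $\bar L$, but opening a convex chain only increases endpoint distance, contradiction. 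This single stroke handles adjacent and non-adjacent pairs, acute and obtuse faces alike, and also disposes of self-overlap.

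So your proposal is not wrong, but it is incomplete exactly where you flag it, and the missing case is not merely bookkeeping: confining $v_i$ to the wedge between $a_{i-1}$ and $a_{i+1}$ does not by itself prevent $\bar T^L_i$ from overlapping $\bar T^L_{i-1}$ when both spill across the shared lateral edge. The Cauchy Arm Lemma argument is the missing idea that closes this uniformly.
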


\begin{proof}
\begin{enumerate}[label=(\alph*)]
\item
Because the apex of the lifted $\bar{T}^L_i$ is on the moat $M$ which surrounds $v$, $\bar{T}^L_i$ remains on the portion of $L$ outside the moat.
\item Therefore no $\bar{T}^L_i$ includes $v$.
\item If we view the overlay of $\bar{L}$ with the opening of $\partial X$ by the angle $\q_i-\bar{\q}_i$ at each $x_i$ image, as in Fig.~\figref{PyHexV_1}(c),
then Cauchy's Arm Lemma shows that two lifted triangles cannot overlap.
Suppose $\bar{T}^L_i$ and $\bar{T}^L_j$ overlap, $i < j$.
Then we can identify two points $p_i \in \bar{T}^L_i$ and $p_j \in \bar{T}^L_y$ that coincide in the layout.
But in $X$, $p_i$ and $p_j$ were separated by a positive distance $d=|p_i p_j|$.
In $X$, draw a convex chain from $p_i$ to $\partial X$, around that boundary, to $p_j$.
The layout opens this chain by the positive angles $\q_i-\bar{\q}_i$, and so in the layout, $p_i$ and $p_j$ must be separated further than $d$, a contradiction.
\end{enumerate}
\end{proof}

\noindent
Lemma~\lemref{Lifting} shows that $\chi$, the region of $L$ not covered by the lifted triangles,
is indeed a crest.

\medskip

We now turn to the case $\bar{v} \not\in X$.
The difficulty here is that $\partial L$ in a layout $\bar{L}$ of $L$ may not be a convex chain,
and Lemma~\lemref{AltTurns} relies on convexity for the altitudes to connect to $v$
in the same order as the vertices around $X$.
Indeed if $v$ were closer the plane of $X$ in the example in Fig.~\figref{PyHexV_2}(a),
then the angle at $x_3$ would be reflex. In general, a contiguous portion of $\partial L$
could be reflex. Lifting triangles incident to that reflex chain could lead to overlap,
violating~(c) of Lemma~\lemref{Lifting}.

However, as described earlier in Fig.~\figref{PyHexV_2}(c), the crest is formed by clipping the
triangles $\bar{T}_i$ to $X$.
Triangles $\bar{T}_3=x_3 \bar{v} x_4$ and $\bar{T}_4=x_4 \bar{v} x_5$ in Fig.~\figref{PyHexV_2}(b)
fall entirely outside $X$, and so play no role.
The convex portion of $\partial L$ still satisfies Lemma~\lemref{AltTurns},
so the corresponding altitudes are incident to $v$ in the same order as the vertices
along the convex chain.
This allows us to define a partial moat $M$, and then close it off to a simple polygon by a geodesic  path surrounding $v$.
This is illustrated in Fig.~\figref{MoatV_2}.

\begin{figure}
\centering
 \includegraphics[width=0.6\textwidth]{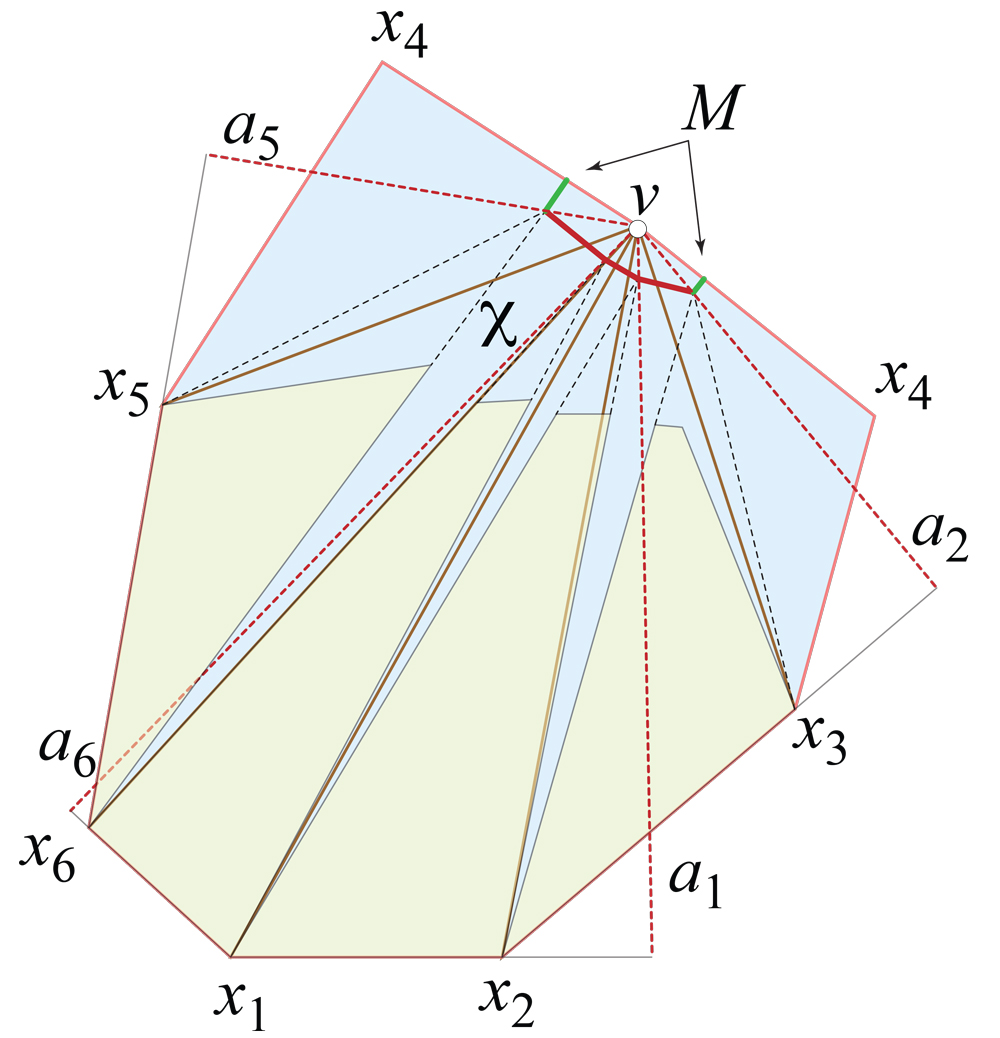}
\caption{The layout from Fig.~\protect\figref{PyHexV_2}(c)
shown with moat $M$ and altitudes $a_i$ marked.
Note $a_3$ and $a_4$ are missing because $T_3$ and $T_4$ are clipped
as outside $X$.
The green edges mark the closing of the partial moat around $v$.
}
\figlab{MoatV_2}
\end{figure}

This renders Lemma~\lemref{Lifting} true for the lifted triangles along the convex chain of $\partial L$, which are the only ones not clipped entirely away.
We summarize in a theorem:

\begin{thm}
\thmlab{thmCrest}
A crest $\chi$ can be constructed as the portion of $L$ not covered by lifted triangles in the case of $\bar{v} \in X$, 
and clipped lifted triangles in the case $\bar{v} \not\in X$, as described above.
\end{thm}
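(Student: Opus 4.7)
The plan is to verify, in both cases, that the constructed $\chi$ is indeed a crest---namely that excising $\chi$ from $L$ and sealing the resulting boundary via Alexandrov's Gluing Theorem produces a surface intrinsically isometric to $X$. Lemmas~\lemref{Altitude}, \lemref{AltTurns}, and \lemref{Lifting} already do most of the geometric work; the remaining task is to assemble them into an AGT verification and specify the boundary identification explicitly.

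\textbf{Case $\bar{v} \in X$.} Lemma~\lemref{Lifting} ensures that $\bar{T}^L_1,\dots,\bar{T}^L_k$ are pairwise disjoint (meeting only at the common $\partial L$-vertices $x_i$), each isometric to $\bar{T}_i \subset X$, and none contains $v$. Set $\chi := L \setminus \bigcup_i \bar{T}^L_i$, a region containing $v$. I would then specify the sealing identification: for each $i$, identify the edge $x_{i+1}v_i$ of $\partial \bar{T}^L_i$ with the edge $x_{i+1}v_{i+1}$ of $\partial \bar{T}^L_{i+1}$, endpoint to endpoint. These two segments have equal length because both are isometric images of the edge $x_{i+1}\bar{v}$ of $X$. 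Under the identification, all the apex images $v_i$ collapse to a single point $\bar{v}^*$. To apply AGT I would check: (i) the total surface angle at $\bar{v}^*$ equals $\sum_i \angle_{\bar{v}}(\bar{T}_i) = 2\pi$, since $\bar{v}$ lies in the interior of $X$, so $\bar{v}^*$ is a flat point; (ii) the angle at each $x_i$ equals $\bar{\q}_i \leq \pi$ by convexity of $X$; (iii) all identified edges have matching lengths by construction. AGT then produces a flat convex surface intrinsically isometric to $X$, so together with the original base, $P$ is reshaped to doubly-covered $X$.

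\textbf{Case $\bar{v} \not\in X$.} Partition $\{1,\dots,k\}$ into the contiguous set $J$ of indices for which $\bar{T}_i$ meets $X$ non-trivially, and its complement (whose lifted triangles are wholly clipped away and contribute nothing to $\chi$). The sub-chain of $\partial L$ over indices in $J$ is convex, so Lemma~\lemref{AltTurns} continues to apply: the altitudes $a_i$ for $i \in J$ emanate from $v$ in their correct cyclic order, yielding a partial moat $M'$. Close $M'$ into a simple geodesic polygon enclosing $v$ by a geodesic arc on $L$ that wraps around $v$ on the side opposite to the clipped indices; such an arc exists because the clipped portion of $\partial L$ is bounded away from $v$. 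With the clipped lifted triangles placed on $L$ and $\chi$ defined as their complement, the AGT verification proceeds exactly as in the first case, with the closing arc of $M'$ contributing an additional pair of matching edges to the identification.

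\textbf{Main obstacle.} The principal difficulty is the case $\bar{v} \not\in X$: justifying existence of a closing geodesic arc on $L$ that properly encloses $v$ without meeting any clipped lifted triangle, and confirming that the resulting AGT identification still produces exactly $X$ rather than some other flat surface caused by mismatched clipping. The first case is essentially a direct assembly of prior lemmas; the second needs a careful geometric argument about how the clipped boundary chain determines the moat closure, and a check that the angles collected at the sutured apex image still sum to $2\pi$ after the contributions from clipped triangles are excised.
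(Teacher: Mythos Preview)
Your approach is essentially the paper's: assemble Lemmas~\lemref{Altitude}, \lemref{AltTurns}, and \lemref{Lifting}, then observe that excising $\chi$ and suturing leaves exactly the (clipped) $\bar{T}^L_i$, which reassemble to $X$. The paper in fact states the theorem as a summary of the preceding discussion and does not give a separate proof block; your explicit AGT verification in the first case (matching $x_{i+1}v_i$ with $x_{i+1}v_{i+1}$, checking the $2\pi$ angle sum at the merged apex) is more detailed than what the paper writes, and is correct.

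Two small misreadings in your treatment of the case $\bar{v}\notin X$. First, the closing arc of the partial moat is \emph{auxiliary}: its role is only to certify that the lifted (clipped) triangles are separated from $v$ on $L$, so that Lemma~\lemref{Lifting} goes through for the convex sub-chain. It is not part of the crest boundary and contributes no edges to the sealing identification. Second, your stated worry---that the angles at the sutured apex image still sum to $2\pi$---does not arise here, because the clipped pieces $\bar{T}_i\cap X$ do not contain $\bar{v}$ at all. What actually happens is that for convex $X$ with $\bar{v}$ outside, the segments $x_i\bar{v}$ restricted to $X$ partition $X$ into exactly the clipped triangles (the ``near-side'' triangles $\bar{T}_i$ clip to nothing); so gluing the clipped lifted pieces along their shared $x_i\bar{v}$-edges directly reproduces $X$, with the clipping edges becoming the portion of $\partial X$ nearest $\bar{v}$. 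The AGT check is then just that the angles at each $x_i$ equal $\bar{\q}_i\le\pi$, which holds by convexity of $X$.
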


\noindent
We remark that the same procedure will work for other points $w \in X$ within some neighborhood of $\bar{v} \in X$, resulting in different crests.
However, a simple example shows that not every point $w \in X$ will produce a crest.
Consider $X=x_1 x_2 x_3$ an equilateral triangle of center $o$, and $v$ close to $o$, with $vo$ orthogonal to $X$.
Take $w \in o x_3$ close to $x_3$. Then the isosceles triangle $w x_1 x_2$ is  larger than the isosceles triangle
$v x_1 x_2$, so no congruent copy of the former can fit inside $L$ without encompassing $v$ and so self-overlapping.

We have this as a counterpart to Theorem~\thmref{MainTailoring}:

\begin{thm}
\thmlab{CrestTailoring}
For any convex polyhedra $P$ and $Q$, one can crest-tailor $P$ to any homothetic copy of $Q$ inside $P$, in time $O(n^4)$, where $n=\max\{ |P|, |Q| \}$.
\end{thm}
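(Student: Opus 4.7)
The plan is to imitate the architecture of Theorem~\thmref{MainTailoring} and its algorithmic realization Theorem~\thmref{SliceAlgorithm}, replacing only the final stage — reduction of each pyramid to its base by a cascade of $O(n)$ digon excisions (Lemma~\lemref{VertexTruncation}) — with a single crest excision supplied by Theorem~\thmref{thmCrest}. First I would shrink $Q$ by a dilation until it lies strictly inside $P$, then list the face planes of the dilated $Q$ as the slicing sequence of length $O(n)$. For each slice, Lemma~\lemref{Slice2gDomes} partitions the sliced-off portion into a fan of $O(n)$ g-domes, and Theorem~\thmref{DomePyr} refines each g-dome into an ordered sequence of pyramids each sharing an edge with its base. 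This yields the same slice $\to$ g-domes $\to$ pyramids decomposition used before, with the pyramids presented in an order that guarantees each successive excision keeps the base $X$ unaltered.

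For each such pyramid $P_i = L_i \cup X_i$, in the prescribed order, I would invoke Theorem~\thmref{thmCrest} to construct a crest $\chi_i \subset L_i$ — either via the full moat of lifted triangles when $\bar v \in X_i$, or via the clipped partial moat when $\bar v \notin X_i$ — excise $\chi_i$, and zip the matched geodesic edges shut by AGT. The lifted triangles $\bar T^L_j$ cover $L_i \setminus \chi_i$ and are pairwise isometric to the triangulation of $X_i$ by the $\bar T_j$'s, so the resulting surface is isometric to $X_i$ doubly-covered when $P_i$ is considered in isolation, and agrees intrinsically with the polyhedron that would have been produced by the plane-slice removing $P_i$ when $P_i$ is attached to the rest of the body. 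This isolated-versus-attached reconciliation parallels the pyramid-to-general-case step carried out at the end of Lemma~\lemref{VertexTruncation}: all crest operations occur on $L_i$, the base $X_i$ and everything below it are untouched, and so the congruence claim for the attached case follows by the rigidity Theorem~\thmref{Rigid} applied locally.

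For the complexity, the slice $\to$ g-domes and g-dome $\to$ pyramids stages are identical to those of Theorem~\thmref{SliceAlgorithm} and together contribute $O(n^3)$. The per-pyramid work reduces to: determine $\bar v$ and whether it lies in $X_i$; compute the $O(n)$ altitudes $a_j$ and the moat $M$; lift and, if needed, clip the $O(n)$ triangles $\bar T_j$ to obtain $\chi_i$. Each geodesic trace on a lateral face that has accumulated seal segments from earlier crest excisions costs $O(n)$, giving $O(n^2)$ per pyramid. Summed over the $O(n^2)$ pyramids produced by the whole decomposition, this is $O(n^4)$, which dominates the earlier stages and matches the digon-tailoring bound.

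The hard part, and essentially the only nonroutine step, is the isolated-versus-attached verification in the middle paragraph: one must make sure that excising the crest constructed from the standalone pyramid $P_i$ and suturing by AGT reproduces exactly the effect of slicing off $P_i$ from the ambient body. Everything else — the enumeration of slices, the g-dome/pyramid bookkeeping, and the complexity accounting — transfers verbatim from Chapters~\chapref{Domes}--\chapref{TailoringAlg1}.
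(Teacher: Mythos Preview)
Your proposal is correct and follows essentially the same architecture as the paper: reuse the slice $\to$ g-domes $\to$ pyramids decomposition from Theorem~\thmref{MainTailoring}, and replace the digon cascade by a single crest excision per pyramid via Theorem~\thmref{thmCrest}; the paper's proof is a two-line sketch of exactly this. One small overreach: you do not actually need Theorem~\thmref{Rigid} for the isolated-versus-attached step here, because the crest is constructed explicitly from the projected triangulation of $X$ so that suturing manifestly reproduces $X$---unlike the digon cascade, no uniqueness-of-last-vertex argument is required.
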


\begin{proof}
The lemmas leading to Theorem~\thmref{MainTailoring} 
established that ultimately we need to tailor single vertex truncations, i.e., tailor pyramids.
So the claim follows from Theorem~\thmref{thmCrest}.
\end{proof}


\section{Algorithm~4: pyramid $\to$ crest}

\begin{algorithm}[htbp]
\caption{Construct a crest $\chi$ that reduces a pyramid $P$.}
\DontPrintSemicolon

    \SetKwInOut{Input}{Input}
    \SetKwInOut{Output}{Output}

    \Input{A pyramid $P=L \cup X$ of $O(n)$ vertices}
    \Output{Crest $\chi$ whose removal flattens $P$ to $X$. }

    \BlankLine

\tcp{Assume apex $v$ degree-$k$, with $k=O(n)$.}

\tcp{See Figs.~\figref{PyHexV_1} and~\figref{PyHexV_2}.}

\For{each of $\bar{T}_i$, $i=1,2,\ldots,k$} {

	Compute where $\bar{T}_i$ edges (geodesics) cross $x_j v$. \tcp{$O(n)$.}

	Clip $\bar{T}_i$ to $X$.   \tcp{$O(n)$.}

}

\KwResult{Crest $\chi$.}
     
\end{algorithm}

The total cost of computing one crest $\chi$ is $O(n^2)$,
and we believe there are examples with total combinatorial complexity 
(number of geodesic/edge intersections) of $\Omega(n^2)$.
Because this
is the same complexity for reducing $P$ to $X$ via digon-tailoring described in Chapter~\chapref{TailoringAlg1}, 
the total time complexity is the same as in Theorem~\thmref{SliceAlgorithm}.

We are assuming that the combinatorial complexity of the crest $\chi$ on $L$
determines the time complexity of computing the crest.
However, a crest flattened to the plane, not overlaid on $\bar{L}$, has
combinatorial complexity $O(n)$---just $k=O(n)$ 
(possibly clipped) triangles---and can be constructed in $O(n)$ time.
Perhaps an 
implicit representation of shortest paths could suffice for subsequent calculations,
as they do in the optimal algorithm for shortest paths on a convex polyhedron~\cite{SchreiberSharir}.
Thus it may be that constructing 
an implicit representation of $\chi$ could lead to a lower time-complexity,
a question we leave for future work.
%

\chapter{Tailoring via Flattening}
\chaplab{TailoringFlattening}

In this chapter, we prove a completely different method for tailoring $P$ to $Q$.
The method mixes digon-tailoring steps with vertex-merge steps (Section~\secref{VertexMerging}).
The result is slightly weaker than either tailoring via sculpting
(Theorem~\thmref{MainTailoring})
or crest tailoring (Theorem~\thmref{CrestTailoring}),
weaker in the sense that the homothet of $Q$ obtained could be arbitrarily small.
Nevertheless, the proof and algorithm have the advantage of operating
entirely intrinsically: the 3D structure of $P$ and $Q$ is never invoked.

The proof depends on the observation that if both $P$ and $Q$ are 
doubly-covered polygons, it is easy to tailor one to the other:
Scale $Q$ to fit in $P$, and then cut the outline of $Q$ from $P$.
This can be accomplished by a series of digon-tailorings,
with each digon bounded by congruent segments on both sides,
or by truncations of several vertices at once.

At a high level, $Q$ is reduced to a flat polygon $Q_\textrm{flat}$, $P$ is reduced to a flat polygon $P_\textrm{flat}$ and tailored to match $Q_\textrm{flat}$. 
Finally, the steps used to reduce $Q$ are reversed and applied to the flat remnant of $P$.

In more detail, 
the proof (and algorithm that follows the proof) can be summarized
in these steps:
\begin{enumerate}[label={(\arabic*)}]
\item Reduce $P$ to $P_\textrm{flat}$ by a series of digon tailorings.
\item Reduce $Q$ to $Q_\textrm{flat}$ by a series of vertex-mergings.
$Q_\textrm{flat}$ is then composed of all of $Q$'s surface, plus surface-inserts
from the merges.
\item Scale $Q_\textrm{flat}$ to $Q^s_\textrm{flat}$ so that 
$Q^s_\textrm{flat}$ fits inside $P_\textrm{flat}$.
\item Trim $P_\textrm{flat}$ to match $Q^s_\textrm{flat}$ via digon-tailoring.
Call the result $P^s_\textrm{flat}$.
\item Reverse the  vertex-merging steps that reduced $Q$ to 
$Q_\textrm{flat}$, but now applied to $P^s_\textrm{flat}$.
Each reversal step removes a surface-insert via digon-tailoring.
\end{enumerate}
The end result, call it $Q^t$, is a polyhedron homothetic to $Q$, but composed
entirely of $P$-surface.

\medskip

We now prove these steps, tracking an example that
reduces a cube $P$ to the $5$-vertex polyhedron $Q$
in Fig.~\figref{VertMerge}(a), repeated in Fig.~\figref{HexaFlatten}(a).


\section{Proofs}

We follow the numbered outline above.

\subsection{Digon-tailor $P \to P_\textrm{flat}$}
Recall that Lemma~\lemref{Path} showed that,
if the cut locus $C(x)$ is a path, 
then the polyhedron is a doubly-covered convex polygon.
We use this lemma to reduce $P$ to $P_\textrm{flat}$.

We will use the cube example from Fig.~\figref{StarUnfCube} to illustrate the steps.
Assume $P$ is non-degenerate, i.e., not flat.
Let $x \in P$ be a point joined by unique geodesic segments to all vertices of $P$, 
and let $\rho$ be the unique path in $C(x)$ joining a pair of leaves of $C(x)$, i.e., joining the 
vertices $v_i$ and $v_j$ of $P$.

Then $C(x) \setminus \rho$ is a finite set of trees $T_k$.
Cut off from $P$ each $T_k$ by excising digons with one endpoint at $x$, and the other endpoint where $T_k$ joins $\rho$.
In Fig.~\figref{CutLocusPath}(a), $\rho$ connects $v_5$ and $v_7$, and separates four trees $T_i$.
After sealing each digon closed, we are left with a polyhedron $P_\textrm{flat}$ 
whose cut locus from the point corresponding to $x$ is precisely $\rho$ (by  Lemma~\lemref{Path}).

\begin{figure}[htbp]
\includegraphics[width=0.9\textwidth]{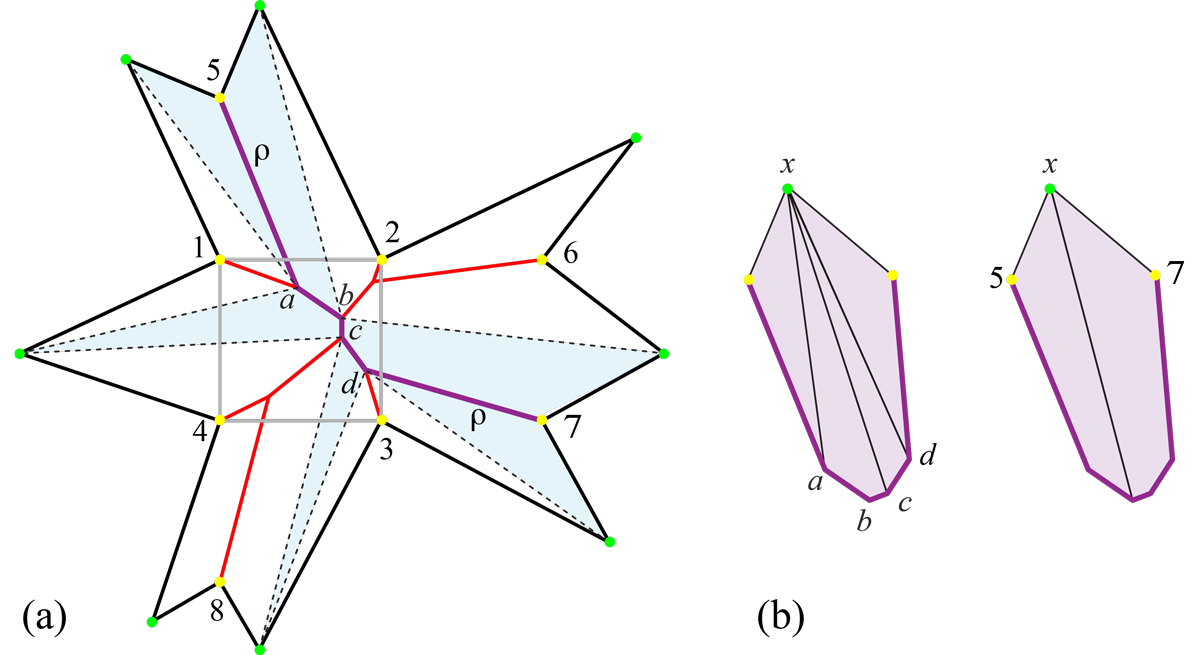}
\caption{Star unfolding of the cube in Fig.~\protect\figref{StarUnfCube}.
(a)~The path $\rho$ from $v_5$ to $v_7$ leaves four trees when removed from
$C(x)$. Excising four (white) digons leaves a surface (blue), which when
zipped closed folds to a doubly-covered $7$-gon, both sides of which
are shown in~(b).}
\figlab{CutLocusPath}
\end{figure}

If the path $\rho$ in the above proof is chosen to be as long as possible,
then $P_\textrm{flat}$ has larger surface area than if $\rho$ is short.

\subsection{Vertex-merge $Q \to Q_\textrm{flat}$}
Recall from Section~\secref{VertexMerging} that vertex-merging is
in a sense the inverse of digon-tailoring.
Two vertices are merged along a geodesic $\g$ and additional surface
in the form of two congruent triangles is sutured-in along $\g$.
Lemma~\lemref{IsoTetra} showed that 
every convex polyhedron $Q$ has at least one pair of vertices that
can be merged, unless $Q$ is an isosceles tetrahedron or a doubly-covered triangle.
So repeatedly vertex-merge $Q$ until it becomes flat, or is reduced 
to an isosceles tetrahedron.

In our example, 
Fig.~\figref{HexaFlatten}(a) shows $Q$, 
the same polyhedron in Fig.~\figref{VertMerge}(a).
The first vertex-merge leads to a regular tetrahedron, (c) in the figure.
As we discussed in 
Example~\exref{iso_tetra}
and illustrated in Fig.~\figref{IsoscTetra},
an isosceles tetrahedron can be reduced to a doubly-covered rectangle
by cutting an edge and regluing that edge differently.
Here we cut the edge $cd$ of the tetrahedron and reglue it by
creasing at the midpoints $y_1,y_2$ of the $cd$ 
slit, leading to the rectangle shown in~(d).

\begin{figure}[htbp]
\centering
\includegraphics[width=1.0\linewidth]{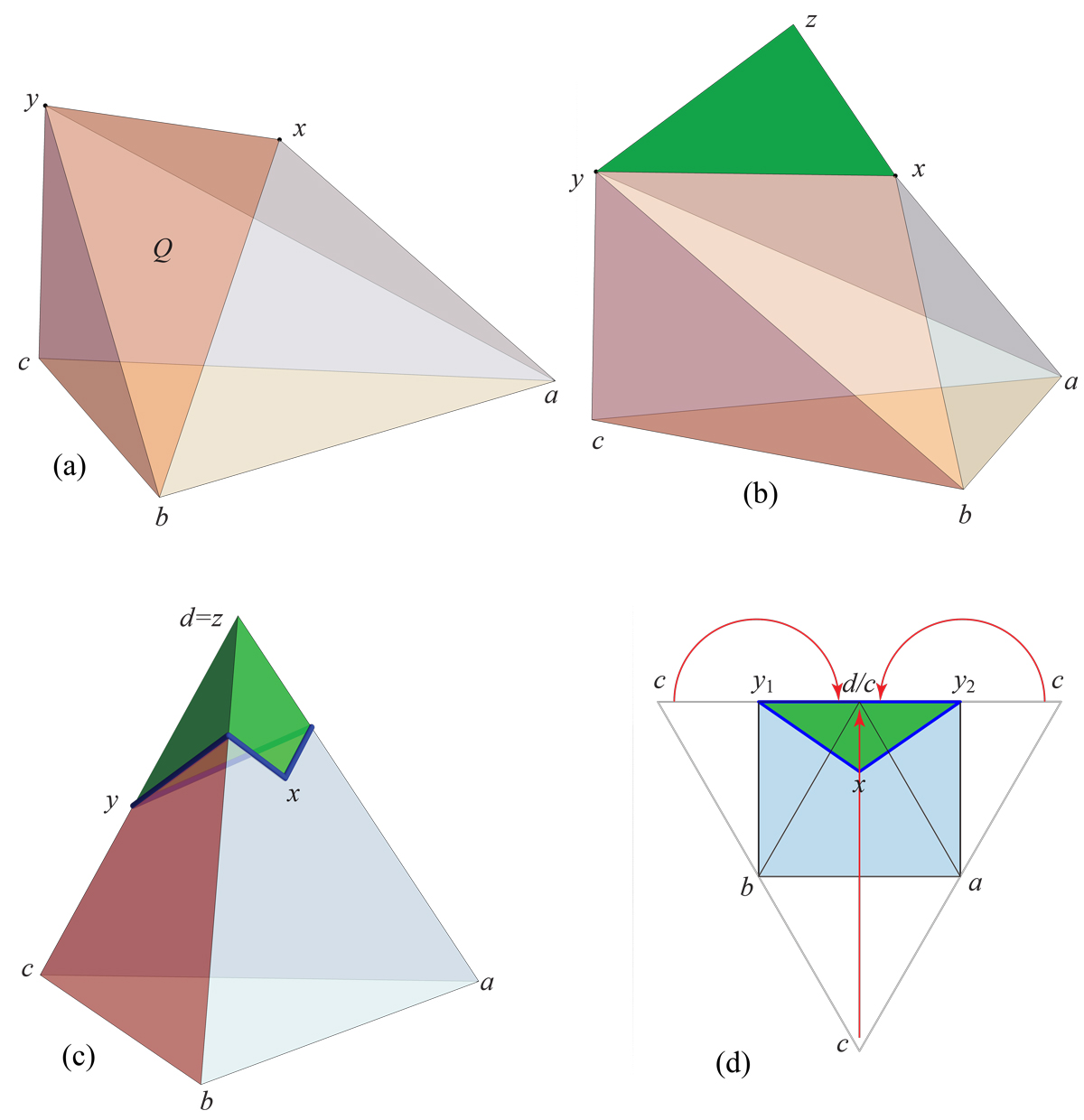}
\caption{(a,b)~Vertex-merge of $xy$.
(Repeat of Fig.~\protect\figref{VertMerge}.)
Green indicates surface-inserts.
(c)~After first vertex-merge.
(d)~After second special-case vertex-merge: doubly-covered square.
(Images not to same scale.)}
\figlab{HexaFlatten}
\end{figure}

\subsection{Scale $Q_\textrm{flat} \to Q^s_\textrm{flat}$}

The flattening of $P$ has reduced it in size in the sense that a portion of $P$'s
surface area has been excised,
while the flattening of $Q$ has augmented it by surface insertions and so has increased $Q$'s
surface area.
We next select a scale factor $s>0$ so that  $Q^s_\textrm{flat}$ can fit inside $P_\textrm{flat}$.
Clearly this is always possible.
In our example, the scaling might result in Fig.~\figref{ScaleQtoP}(a).
Note that we should view the top side of  $Q^s_\textrm{flat}$ on the top side
of $P_\textrm{flat}$, and similarly for the bottom sides.
\begin{figure}[htbp]
\centering
\includegraphics[width=1.0\linewidth]{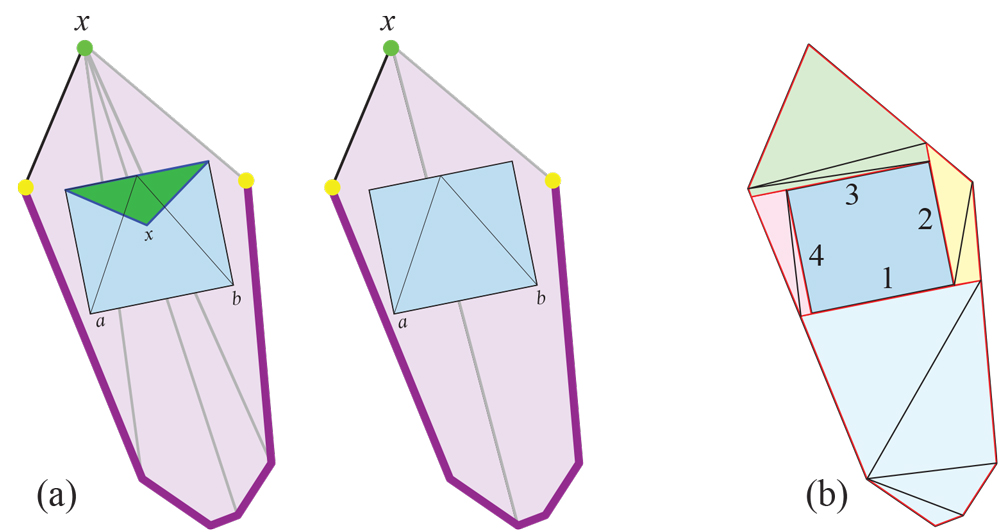}
\caption{(a)~$Q$ scaled to $Q^s_\textrm{flat}$ to fit in $P_\textrm{flat}$.
(b)~Trimming digon excisions by extending the edges of $P_\textrm{flat}$
in the order indicated.}
\figlab{ScaleQtoP}
\end{figure}

\subsection{Trim $P_\textrm{flat} \to P^s_\textrm{flat}$}
\seclab{TrimP}

As mentioned earlier, it is easy to tailor a doubly-covered convex polygon
$P_\textrm{flat}$ to match a doubly-covered polygon 
$Q^s_\textrm{flat}$ that fits inside $P_\textrm{flat}$.
This is in some sense similar to sculpting $P_\textrm{flat} \to P^s_\textrm{flat}$.
The differences are that (1)~here we only work in two dimensions, 
and (2)~this 2D sculpting is an intrinsic operation as well.
Fig.~\figref{ScaleQtoP}(b) shows one possible sequence of digon tailorings
for our example. 
We extend edge $e$ of 
$P^s_\textrm{flat}$ to a line $l_e$ which intersects $P_\textrm{flat}$, 
thus separating out a doubly-covered convex polygon, and repeat this for all edges $e$. 
Then each of those convex polygons is tailored one vertex
at a time, say tracking a triangulation.

\subsection{Reverse $P^s_\textrm{flat} \to Q^t$}

Finally we reverse the vertex-merge steps that produced $Q_\textrm{flat}$,
but applied to $P^s_\textrm{flat}$.
In our example, the last vertex-merge was the special-case step that
produced the rectangle in Fig.~\figref{HexaFlatten}(d).
The reverse step cuts the top edge $y_1 y_2$ between the two images of $y$,
and re-joins $y_1$ to $y_2$, leading to the
regular tetrahedron in (c)~of the figure.
Reversing the first vertex-merge applied to $Q$ excises the
inserted surface, green in (c), resulting in $Q^t$, 
a polyhedron homothetic to $Q$ 
(Fig.~\figref{HexaFlatten}(a) and~\figref{VertMerge})
but composed entirely of $P$-surface.

\subsection{Theorem: Tailoring via Flattening}
We have established this theorem:

\begin{thm}
\thmlab{TailoringFlattening}
For any given convex polyhedra $P$ and $Q$, one can tailor $P$ 
``via flattening'' so that it becomes homothetic to $Q$.
\end{thm}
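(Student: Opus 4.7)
The plan is to execute the five-stage pipeline outlined above, combining digon-tailoring with vertex-merging to push the problem into the planar (doubly-covered polygon) setting and then invert. Concretely: flatten $P$ by digon-tailoring to a doubly-covered convex polygon $P_\textrm{flat}$; flatten $Q$ by vertex-merging to a doubly-covered $Q_\textrm{flat}$, recording the merge history; scale down so that $Q^s_\textrm{flat}$ fits inside $P_\textrm{flat}$; trim $P_\textrm{flat}$ to $Q^s_\textrm{flat}$ by planar digon-tailorings; and finally reverse the recorded merges on the trimmed flat surface by a matched sequence of digon excisions.

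For Step~1 I would pick a generic point $x \in P$ and in $C(x)$ select a maximal path $\rho$ between two vertex-leaves. Each component of $C(x) \setminus \rho$ is a subtree $T_k$ meeting $\rho$ at a single foot $z_k$; the two geodesic segments from $x$ to $z_k$ flanking $T_k$ bound a digon whose excision removes precisely the vertices of $T_k$. Sealing all these digons leaves a surface whose cut locus from the image of $x$ equals $\rho$, which by Lemma~\lemref{Path} is a doubly-covered convex polygon. For Step~2, I would iterate Lemma~\lemref{IsoTetra}: merge a pair of vertices with curvature sum strictly below $2\pi$, record the merging geodesic $\g_j$ and the pair of congruent triangles inserted, and continue until the surface is flat or is an isosceles tetrahedron, finishing the latter by the edge-cut/re-glue construction of Fig.~\figref{IsoscTetra}. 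Steps~3 and~4 live entirely in the plane: choose $s > 0$ small enough that a congruent copy of $Q^s_\textrm{flat}$ embeds inside $P_\textrm{flat}$, then trim the excess by extending each edge of $Q^s_\textrm{flat}$ to a chord of $P_\textrm{flat}$ and excising each resulting doubly-covered corner region via planar digon-tailorings, one vertex at a time.

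The main obstacle, and the step I would dwell on, is Step~5: verifying that each recorded vertex-merge can be inverted as a digon-tailoring on $P^s_\textrm{flat}$. After Step~4, $P^s_\textrm{flat}$ is intrinsically isometric to $Q^s_\textrm{flat}$, so the scaled insert triangles of the $j$-th merge occupy a well-defined region $R_j$ on $P^s_\textrm{flat}$ bounded by two equal-length geodesic arcs meeting at the two images of the $j$-th merged vertex-pair; thus $R_j$ is a digon in the sense of Chapter~\chapref{IntroductionPartI}. Excising $R_j$ and zipping closed splits the single merged vertex back into the two original vertices, whose curvatures sum exactly to the merged value; AGT (Theorem~\thmref{AGT}) applies because the newly exposed endpoints retain strictly less than $2\pi$ incident angle, precisely since the inserted triangles were what had raised those angles to their glued totals. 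Processing the merges in LIFO order yields a sequence of convex polyhedra whose final member is intrinsically isometric to $s \cdot Q$; by the uniqueness clause of AGT it is extrinsically congruent to the homothet $s \cdot Q$, and by construction it is assembled entirely from original $P$-material, which is the desired conclusion.
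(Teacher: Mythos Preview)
Your proposal is correct and follows essentially the same five-step pipeline as the paper: flatten $P$ by digon-excisions guided by $C(x)$ and a path $\rho$ therein, flatten $Q$ by recorded vertex-merges (with the isosceles-tetrahedron endgame), scale, trim in the doubly-covered planar setting, and finally undo the merges in reverse order as digon excisions, invoking AGT uniqueness to conclude. Your treatment of Step~5 is in fact slightly more explicit than the paper's, which simply asserts that each reversal ``removes a surface-insert via digon-tailoring'' without spelling out why the inserted double-triangle region is bounded by a genuine digon on the merged surface.
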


\begin{rmk}
\rmklab{TailoringFlattening}
The result $Q^t$ obtained by Theorem~\thmref{TailoringFlattening} 
may be arbitrarily small compared to $Q$.


\begin{enumerate}
\item The example of a regular pyramid shows that the area of $P_\textrm{flat}$ may be as small as $2/n$ of the original area of $P$.

\item The ratio between the area of $Q_\textrm{flat}$ and the area of $Q$ can be arbitrarily large.

See Fig.~\figref{TrapezoidArea}.
\begin{figure}[htbp]
\centering
\includegraphics[width=0.4\linewidth]{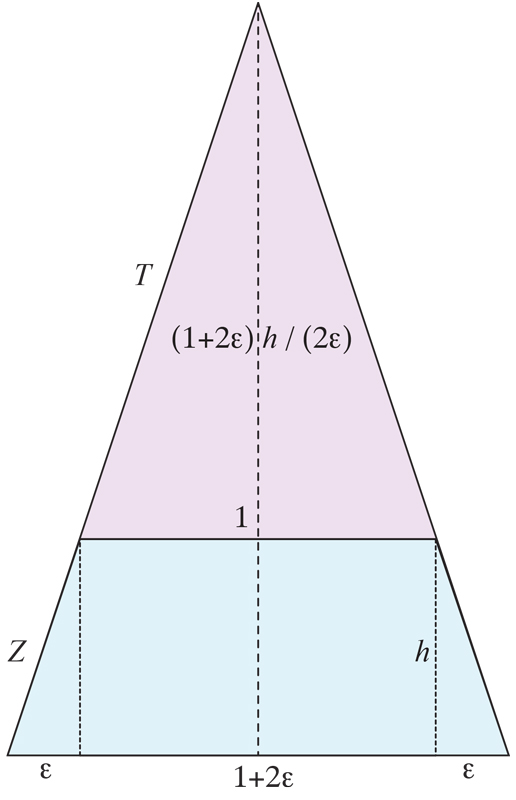}
\caption{The ratio of the areas of $T$ and $Z$ is arbitrarily large.}
\figlab{TrapezoidArea}
\end{figure}

\rm{To see this, consider an isosceles trapezoid $Z$ of base lengths $1$ and $1+ 2 \varepsilon$, and height $h$.
Its area is $(1+\varepsilon)h$.
Also consider the isosceles triangle $T$ obtained from $Z$ by extending its non-parallel sides until intersecting.
An elementary geometry argument provides the height of $T$, $(1+ 2 \varepsilon)h (2 \varepsilon)^{-1}$, and the area of $T$, $(1+ 2 \varepsilon)^2 h (4 \varepsilon)^{-1}$.

The ratio between the area of the doubles $Q_\textrm{flat}$ of $T$, and $Q$ of $Z$, is therefore 
$$
\frac{(1+ 2 \varepsilon)^2}{4  \varepsilon (1+ \varepsilon)} = 1+ \frac1{4  \varepsilon (1+ \varepsilon)}
$$ 
and can be arbitrarily large for $\varepsilon$ arbitrarily small.}
\end{enumerate}
The combination of $P_\textrm{flat}$ small and $Q_\textrm{flat}$ large leads to the arbitrarily-small claim for $Q^t$ with respect to $Q$.
\end{rmk}

\section{Algorithm for Tailoring via Flattening}
\seclab{TailoringAlg3}


\begin{algorithm}[htbp]
\caption{Tailor $P$ to $Q$ via Flattening.}
\DontPrintSemicolon

    \SetKwInOut{Input}{Input}
    \SetKwInOut{Output}{Output}

    \Input{Convex polyhedra $P$ and target $Q$}
    \Output{A tailored version of $P$ homothetic to $Q$}
    
    \BlankLine
    \tcp{(1)~Reduce $P$ to $P_\textrm{flat}$.}
    {Find generic point $x$.} \tcp{$O(n^4)$}
    {Compute cut locus $C(x)$.}
    {Select path $\rho$.}
    {Digon removals of trees until $P_\textrm{flat}$ attained.}
     \BlankLine

    \tcp{(2)~Vertex merge on $Q$ repeatedly.} 
     \While{$Q \neq T_\textrm{isos}$ and $|Q| > 3$}{
    
      {Identify two vertices $v_i$ and $v_j$ such that $\o_i+\o_j < 2 \pi$.} \tcp{$O(n^2)$}
      
      {Vertex merge $v_i$ and $v_j$, reducing $Q$ by one vertex to $Q'$.}
      
      {$Q \leftarrow Q'$.}
        
      }
      \BlankLine
      
      \If(\tcp*[h]{$Q$ isosceles tetrahedron}){$Q = T_\textrm{isos}$}{
      {Special tailor $Q$ to flat rectangle $Q_\textrm{flat}$.}
      }
     \BlankLine
     
     \tcp{(3)~Scale $Q_\textrm{flat}$ to $Q^s_\textrm{flat}$.}
     {Find largest inscribed and smallest circumscribed circles.} \tcp{$O(n)$.}
     {Scale $Q_\textrm{flat}$ by radii ratio.}
     \BlankLine
     
     \tcp{(4)~Trim $P_\textrm{flat}$ to $P^s_\textrm{flat}$.}
     {Extend edges of $P^s_\textrm{flat}$, triangulate each cut-off piece.} \tcp{ $O(n^2)$.}
     \BlankLine
     
     \tcp{(5)~Reverse steps to reduce $Q$, each applied to $P^s_\textrm{flat}$} 
          
     \ForEach{vertex-merging step applied to $Q$}
     {Reverse the step by cutting off the merge vertex.}

     \BlankLine
       
    \KwResult{A 3D polyhedron $Q^t$ homothetic to $Q$.}
     
\end{algorithm}

\noindent
In this section, we follow the proof of Theorem~\thmref{TailoringFlattening} and convert it to a polynomial-time algorithm.

As usual, let $n=\max \{ |P|, |Q| \}$ be the combinatorial size of the polyhedra.
We now establish an upper bound of $O(n^4)$ on the complexity of implementing
the algorithm.

Step~(1)
is to tailor $P$ to $P_\textrm{flat}$ using the cut locus $C(x)$ from a ``generic point" $x$, i.e., 
one with a unique shortest path to each vertex of $P$.
Although it is possible the need for uniqueness could be avoided, we
leave that future work.
We know of no way to find a generic $x$ short of computing all the ``ridge-free'' regions on $P$, which takes $O(n^4)$ time~\cite{aaos-supa-97}.
Independent of our work here, it is a interesting question if a generic $x$ can
be computed more quickly. We will see this $O(n^4)$ dominates the complexity of
the other calculations.

The star-unfolding $S_P(x)$ can be computed in $O(n \log n)$ time using the complex Schreiber-Sharir algorithm~\cite{SchreiberSharir}, 
or in $O(n^2)$ time with the Chen-Han algorithm~\cite{ChenHan1,ChenHan2}.
$S_P(x)$ only needs to be computed once.
With $S_P(x)$ computed, the cut locus $C(x)$ can be found from the Voronoi
diagram of the images of $x$.

Step~(2) is to repeatedly apply vertex-merging to $Q$ until it is
reduced to $Q_\textrm{flat}$, when $|Q_\textrm{flat}| \in \{3,4\}$.
Identifying two vertices $v_i$ and $v_j$ such that $\o_i+\o_j < 2 \pi$
can be achieved in $O(n \log n)$ time just by sorting the curvatures
$\o_i$ and selecting the two smallest. 
From the initial sorting onward, only $O(\log n)$ would be needed to update
the list, but we'll see this efficiency is not necessary.

With $v_i$ and $v_j$ selected, the shortest path $\g$ between
them needs to be computed. Although there is a complicated
optimal $O(n \log n)$ algorithm for computing shortest paths on a convex
polyhedron~\cite{SchreiberSharir}, that algorithm exploits the three-dimensional
structure of the polyhedron, which will not be available to us after the 
first vertex-merge. 
As mentioned earlier, there is no effective
procedure known to construct the polyhedron guaranteed by AGT.
However, we know the intrinsic structure of the polyhedron: its vertices, their
curvatures, a triangulation.
The algorithm of Chen and Han~\cite{ChenHan1,ChenHan2} can compute
shortest paths from this intrinsic data in $O(n^2)$ time.
Repeating this $n$ times to reach $Q_\textrm{flat}$ then
can be achieved in $O(n^3)$ time.

The scaling step~(3) can be accomplished in linear time, $O(n)$, as follows.
The largest circle inscribed in 
$P_\textrm{flat}$ is computed by the linear-time medial axis algorithm~\cite{chin1999finding};
say its radius is $r_P$.
The smallest circle circumscribing
$Q_\textrm{flat}$ is found in linear-time via Megiddo's algorithm~\cite{m-ltalp-83};
say its radius is $r_Q$.
Then scale $Q_\textrm{flat}$ by $s = r_P / r_Q$.

Trimming $P_\textrm{flat}$ to $P^s_\textrm{flat}$, step~(4), 
can be accomplished in many ways.
The method we described in Section~\secref{TrimP} can
easily be implemented in $O(n^2)$ time
by ray-shooting the edge extensions, and then triangulating
each convex polygon in linear time.
Likely the ray-shooting could be reduced to $O(n \log n)$ time.

Reversing the $Q$ vertex-merging steps, step~(5), amounts to digon tailorings cutting of the merged vertices on $P$.
This can easily be accomplished in $O(n \log n)$ time.
Keeping track of the considered vertices and employed digons gives in the end a correspondence between $Q^t$ and $Q$, and thus the 3D structure of $Q^t$.

So the whole algorithm time-complexity is dominated by the $O(n^4)$ cost of finding a guaranteed generic $x$.

\medskip

Because the ridge-free regions are determined by overlaying $n$ cut loci, the regions are delimited by a one-dimensional network of segments. 
Thus choosing a random point $x$ on $P$ is generic with probability $1$.
That still leaves the algorithm requiring $O(n^3)$ time.
We believe this time complexity could be improved, perhaps to $O(n^2)$. 
See Open Problem~\openref{CutLocusGeneric}.

We repeat Theorem~\thmref{TailoringFlattening} with the complexity bound included:
\begin{thm}
\thmlab{FlatteningAlg}
For any given convex polyhedra $P$ and $Q$, one can tailor $P$ until it becomes homothetic to $Q$ in time $O(n^4)$, where $n=\max\{ |P|, |Q| \}$.
\end{thm}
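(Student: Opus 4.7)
The plan is to invoke Theorem~\thmref{TailoringFlattening} for correctness and then focus entirely on bounding the time complexity of the five-step algorithm laid out in Section~\secref{TailoringAlg3}. Since correctness of the tailoring-via-flattening procedure is already established, the task reduces to showing that each step runs in time at most $O(n^4)$, and that at least one step attains this bound (so the bound is tight for the chosen implementation). I would analyze the steps in the order they are executed, keeping $n = \max\{|P|,|Q|\}$ throughout.

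For Step~(1), reducing $P$ to $P_\textrm{flat}$, the dominant subtask is locating a \emph{generic} point $x \in P$ admitting a unique shortest geodesic to each vertex. I would argue that this can be done by computing the overlay of ridge-free regions on $P$ following~\cite{aaos-supa-97}, which takes $O(n^4)$ time; picking $x$ inside any full-dimensional cell of this overlay suffices. With $x$ in hand, the star-unfolding $S_P(x)$ (and hence $C(x)$ as a subset of the Voronoi diagram of the images of $x$, by Lemma~\lemref{*unfVD}) can be computed in $O(n^2)$ time by Chen--Han, and the $O(n)$ digon removals that excise the trees hanging off a chosen path $\rho$ in $C(x)$ each take polynomial time trivially dominated by $O(n^4)$.

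For Step~(2), the repeated vertex-mergings reducing $Q$ to $Q_\textrm{flat}$, I would note that at each iteration a merging pair is selected by sorting curvatures (linear after an initial sort) and the needed shortest geodesic $\g$ between $v_i$ and $v_j$ is computed \emph{intrinsically}---critically, because after the first merge we have no usable $3$D embedding (no effective construction of the polyhedron guaranteed by AGT is known). Chen--Han gives $\g$ in $O(n^2)$ per iteration, with at most $O(n)$ iterations, so Step~(2) costs $O(n^3)$. Step~(3) is $O(n)$ using the linear-time medial-axis inscribed-disk algorithm~\cite{chin1999finding} together with Megiddo's smallest-enclosing-disk algorithm~\cite{m-ltalp-83} to pick a safe scale factor $s = r_P/r_Q$. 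Step~(4) is $O(n^2)$ via ray-shooting the edge extensions of $Q^s_\textrm{flat}$ across $P_\textrm{flat}$ and triangulating each clipped piece in linear time. Step~(5) reverses the $O(n)$ vertex-merges of Step~(2) by digon-tailoring away each merged vertex on the current polyhedron, costing $O(n \log n)$ in total, while simultaneously recording the correspondence that yields the $3$D realization of $Q^t$ as a homothet of $Q$.

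Summing the contributions gives $O(n^4) + O(n^3) + O(n) + O(n^2) + O(n \log n) = O(n^4)$, which proves the claimed bound. The main obstacle, and the only place where the $O(n^4)$ bound is actually attained, is Step~(1)'s deterministic location of a generic point $x$; every other step is already polynomial of lower degree. I would close by remarking (as the text already does) that this bottleneck is likely not intrinsic: the ridge-free region is a $1$-dimensional network, so a random $x$ is generic almost surely, reducing Step~(1) to $O(n^3)$ in expectation, and further improvements are plausible (Open Problem~\openref{CutLocusGeneric}). Since this remark does not tighten the worst-case deterministic claim of the theorem, I would leave it as a comment rather than part of the proof.
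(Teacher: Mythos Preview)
Your proposal is correct and mirrors the paper's own argument essentially step for step: correctness is delegated to Theorem~\thmref{TailoringFlattening}, and the $O(n^4)$ bound is obtained by analyzing the five steps exactly as you describe, with the ridge-free generic-point computation from~\cite{aaos-supa-97} as the sole $O(n^4)$ bottleneck and Chen--Han's intrinsic shortest-path algorithm handling Step~(2) in $O(n^3)$. Your closing remark about randomization dropping the bound to $O(n^3)$ is likewise present in the paper.
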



Finally, we note that
our model of reshaping in all cases excises just a single vertex via a tailoring step,
or inserts a single vertex via a vertex-merge step.
A rather different model, but related to the flattening algorithm above,
unfolds $P$ and $Q$ each to nets (non-overlapping planar polygons),
and places the unfolded $Q$ inside the unfolded $P$.
For example, Jin-ichi Itoh made the following
interesting suggestion:\footnote{
Personal communication, 2019.} 
first star-unfold $P$ to $S_P$ and $Q$ to $S_Q$, shrink $S_Q$ to fit inside $S_P$, 
then cut out $S_Q$ from $S_P$, and refold to obtain a homothet of $Q$ from $P$.

\chapter{Enlarging, $P$-Unfoldings, and Continuous Foldings}
\chaplab{Punfoldings}

In this chapter we show how to enlarge a convex polyhedron $Q$ to some $P \supset Q$, via tailoring. 
Based on this operation, we introduce the notion of a $P$-unfolding and propose a few methods to accomplish it.
Finally, we describe a method to continuously fold $P$ onto an enclosed $Q$.

\section{Enlarging and Reshaping}
Previous chapters have established three methods of tailoring $P$ to $Q$:
\begin{itemize}
\squeezelist
\item Theorem~\thmref{MainTailoring}: For $Q \subset P$, digon-tailor following a sculpting of $P$ to $Q$.
\item Theorem~\thmref{CrestTailoring}: For $Q \subset P$, crest-tailor following a sculpting of $P$ to $Q$.
\item Theorem~\thmref{TailoringFlattening}:  via flattening, digon-tailor $P$ to a (possibly small) homothet of $Q$.
\end{itemize}
If we do not have $Q \subset P$, then shrinking $Q$ until it can fit in $P$ leads
to a homothet of $Q$. 
All three approaches result in the homothet of $Q$ being composed entirely of $P$-surface.
In the tailoring-via-flattening algorithm, $Q$ is enlarged with vertex-merging inserts,
but the last steps remove all inserts.

\medskip

In this short chapter, we explore some results that can be achieved through a mix
of vertex-merge enlarging, and tailoring. Part~II will explore vertex-merging
more thoroughly.

Suppose $Q \subset P$. Then we can enlarge $Q$ to $P$
using any one of the three tailoring algorithms, as follows.
We first tailor $P$ to $Q$, tracking the cuts and digons 
removed.
For crest-tailoring, we cut the boundary of crests.
Then, starting with $Q$, we cut each sealed geodesic or crest-boundary
and insert the earlier removed corresponding digon 
or crest surface, in reverse order.
The result is that $Q$ is enlarged by the surface insertions 
(of $P$-surface)
in the reversing process
until it matches $P$.

If $Q \not\subset P$, so $P$ and $Q$ are of arbitrary relative sizes,
we can reshape $P$ to $Q$ by 
first enlarging $P$ to
some $P' \supset Q$ large enough to enclose $Q$, 
and then tailor $P'$ to $Q$.
Here $P'$ is an arbitrary polyhedron midway in the process
$P \to P' \to Q$; it only needs to be large enough.

These enlargings can be accomplished within the same $O(n^4)$ time complexity
of the tailoring algorithms.
And in analogy with Corollary~\lemref{approx_tailoring},
enlargings of $Q$ can approximate any target surface $S$.

We summarize the above discussion in a theorem:

\begin{thm}
\thmlab{Enlarge}
Let $P$ and $Q$ be convex polyhedra.
If $Q \subset P$, $Q$ may be enlarged with surface insertions to $P$.
If $Q \not\subset P$, $P$ can be reshaped to $Q$ with a combination of
surface insertions and excisions.
Either process can be accomplished in time $O(n^4)$,
and can approximate non-polyhedral convex surfaces.
\end{thm}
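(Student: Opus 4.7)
The plan is to derive the theorem directly from the three tailoring results already established (Theorems~\thmref{MainTailoring}, \thmref{CrestTailoring}, \thmref{TailoringFlattening}) by exploiting the fact that every tailoring step is a surgical operation with a well-defined inverse surgery. Concretely, when we tailor $P$ to $Q$ (in the case $Q \subset P$), each step either excises a digon $D(x,y)$ and sutures its two geodesic sides, or excises a crest $\chi$ and sutures matching boundary arcs. The inverse operation cuts the resulting seal (a geodesic segment on the current polyhedron) and pastes back the excised surface patch along the two resulting banks. By AGT (Theorem~\thmref{AGT}), applied to the angle conditions already verified during the forward excision, each inverse step produces a convex polyhedron.

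First I would treat the case $Q \subset P$. Apply any one of the three tailoring algorithms to obtain a finite sequence of surgeries $P = P_0 \to P_1 \to \cdots \to P_N = Q$, recording for each step $i$ the geodesic arcs along which $P_{i-1}$ was cut and a description of the excised patch $E_i$ (a digon, a crest, or the inserted triangle pair in a vertex-merge used within the flattening algorithm). Because the record is produced by an algorithm of complexity $O(n^4)$, we have $N = O(n^4)$ arcs and patches and the record itself takes $O(n^4)$ time and space to build. Starting from $Q=P_N$, I would then perform the reverse sequence: at step $N-i$, locate the seal $\sigma_{i+1}$ on the current surface, cut along $\sigma_{i+1}$, and glue the stored patch $E_{i+1}$ along the two banks by the length-matching identification recorded. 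Each reverse step is a legal AGT gluing and so the intermediate surface is convex; after $N$ reversals we reach $P$. Each individual reversal takes $O(n)$ time to locate the seal and paste the patch, so the total reverse-phase cost is dominated by the $O(n^4)$ forward tailoring cost.

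Next I would handle the case $Q \not\subset P$. Choose any convex polyhedron $P'$ with $P \subset P'$ and $Q \subset P'$; for instance, a sufficiently large scaling of $P$ itself will do. Apply the previous paragraph to enlarge $P$ into $P'$ by surface insertions, then apply Theorem~\thmref{MainTailoring} (or \thmref{CrestTailoring}) to tailor $P'$ down to $Q$. Both phases cost $O(n^4)$ when $n = \max\{|P|,|Q|,|P'|\}$, giving the stated complexity. The approximation claim for an arbitrary convex surface $S$ reduces to Corollary~\lemref{approx_tailoring}: polyhedrally approximate $S$ by a convex polyhedron $Q_S$ with as many facets as desired, bring $Q_S$ into a suitable size position, and then apply the enlarging--tailoring procedure just described to reshape $P$ into $Q_S$.

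The only real obstacle is the bookkeeping in the reverse phase: when a later tailoring step cuts across the seal of an earlier step (as happens in the seal-graph analysis of Chapter~\chapref{SealGraph} and in the crest construction), the patches $E_i$ are not pairwise disjoint pieces of the original $P$, and their seals on the intermediate surfaces can cross. I would handle this by insisting that the reverse schedule undoes steps in strict LIFO order, so that at the moment we uncut $\sigma_{i+1}$ the surrounding surface is exactly $P_i$, where $\sigma_{i+1}$ is an intact simple geodesic arc by construction. Provided the forward record stores each seal in the intrinsic coordinates of the polyhedron at the moment it was created, LIFO reversal sidesteps all of the crossing issues, and AGT guarantees convexity at every intermediate stage. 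This completes the proof sketch.
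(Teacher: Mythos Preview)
Your proposal is correct and follows essentially the same approach as the paper: reverse the recorded tailoring sequence to enlarge $Q$ to $P$ when $Q\subset P$, and in the general case pass through an intermediate $P'$ containing both before tailoring down to $Q$. Your explicit insistence on LIFO reversal and the AGT check at each inverse step is actually more careful than the paper's discussion, which simply asserts that reversing the cuts ``in reverse order'' works; the paper likewise does not spell out that $P'$ must contain $P$ as well as $Q$, which you correctly make explicit.
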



\section{$P$-unfoldings}
\seclab{Punfoldings}

For $Q \subset P$, we can view the enlarging of $Q$ to $P$ just described
as ``unfolding" $Q$ onto $P$. We call this a \emph{$P$-unfolding of $Q$}.
To our knowledge, this notion has 
not been considered previously.
We explore it  briefly here, and in more detail in Part~II.

\subsection{$P$-unfoldings and Reshaping}
\seclab{PunfReshaping}
Unfoldings of convex polyhedra to a plane have been studied extensively;
see, e.g., \cite{do-gfalop-07}.
Particular attention has been paid to unfolding to few pieces,
to connected unfoldings, and to non-overlapping unfoldings.
Often the goal is to achieve a \emph{net}, an unfolding to a single, simply connected, non-overlapping polygon in the plane.

Instead of unfolding a convex polyhedron $Q \subset P$ to a plane,
consider the question: 
Can one cut-up the surface $Q$ so that the pieces may be pasted onto $P$, non-overlapping,
and so form an isometric subset of $P$? 
This is a \emph{$P$-unfolding of $Q$}, or \emph{an unfolding of $Q$ onto $P$}.
It can be achieved via enlarging:
just enlarge $Q$ to $P$, and then remove all inserted digons or crests.
The result is a subset of $P$ isometric to the cut-up $Q$.
Note that all three enlarging methods retain the entire surface of $Q$---the enlarging
is accomplished by insertions of surface not part of $Q$.

This viewpoint incidentally yields a different proof of this known result:

\begin{co}
\thmlab{Areas}
Given convex polyhedra $P$ and $Q \subset P$, the area of $Q$ is smaller than the area of $P$.
\end{co}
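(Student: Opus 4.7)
The plan is to exploit the $P$-unfolding construction just built. By Theorem~\thmref{Enlarge}, since $Q \subset P$, there is a finite sequence of surface insertions (digons or crests) that enlarges $Q$ to $P$. Read in reverse, this is exactly a $P$-unfolding of $Q$: the surface of $Q$ is cut into finitely many pieces and pasted non-overlappingly and isometrically onto a subset of $P$, preserving the full surface of $Q$ (Section~\secref{PunfReshaping} observes that all three enlarging methods retain every point of $Q$). Call the resulting isometric image $Q^{*}\subset P$.

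First I would record the immediate consequence of ``non-overlapping'' and ``isometric'': since the pieces partition $Q$ and their images in $P$ are disjoint isometric copies,
\[
\operatorname{area}(Q) \;=\; \operatorname{area}(Q^{*}) \;\leq\; \operatorname{area}(P).
\]
This already gives the non-strict inequality with essentially no work beyond Theorem~\thmref{Enlarge}.

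For the strict inequality, I would argue that if $Q \subsetneq P$ as convex bodies in $\Rs$, then the sculpting of $P$ to $Q$ by plane slices (Theorem~\thmref{MainTailoring}) contains at least one slice that removes a cap with interior, and hence with a positive-area intersection with $\partial P$. By Lemma~\lemref{Slice2gDomes}, Theorem~\thmref{DomePyr} and Lemma~\lemref{VertexTruncation}, that slice is realized as a chain of digon-tailorings, each excising a digon containing a vertex of positive curvature; such a digon has strictly positive surface area. Reversing the whole sequence to enlarge $Q$ back to $P$ inserts precisely these excisions, so the total inserted area $\operatorname{area}(P)-\operatorname{area}(Q^{*})$ is strictly positive.

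The main obstacle will be making the ``positive area insertion'' quantitative without invoking a circular area comparison. I would handle this by tracking one concrete insertion: the first non-trivial tailoring step in any sequence reducing $P$ to $Q$ excises a digon bounded by two distinct geodesic arcs with a vertex of positive curvature strictly inside; since geodesics on $P$ do not bifurcate and the enclosed vertex forces the two arcs to bound a two-dimensional region, that digon has positive Hausdorff $2$-measure. Any such step contributes strictly positively to $\operatorname{area}(P)-\operatorname{area}(Q^{*})=\operatorname{area}(P)-\operatorname{area}(Q)$, giving $\operatorname{area}(Q)<\operatorname{area}(P)$, as claimed.
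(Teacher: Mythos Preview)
Your approach is correct and essentially the same as the paper's: the paper presents this corollary as an immediate by-product of the $P$-unfolding construction, i.e., enlarging $Q$ to $P$ and removing the inserted surface yields an isometric, non-overlapping embedding of (the cut-up) $Q$ into $P$, whence $\operatorname{area}(Q)\le\operatorname{area}(P)$. That is exactly your first displayed inequality.

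One remark: the paper does not separately argue strictness; it simply records the area comparison as a known fact recovered by the $P$-unfolding viewpoint. Your additional paragraph securing the strict inequality when $Q\subsetneq P$ is sound (a nontrivial slice forces at least one digon excision of positive area), but it goes beyond what the paper actually proves here.
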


%
%
%

To justify the use of the term ``unfolding," we describe an example of a simply connected $P$-unfolding embedding that does not achieve a net when unfolded to a plane.
Let $Q$ be the classical thin, nearly flat tetrahedron with an overlapping edge-unfolding.
See, e.g., \cite[Fig.~22.8, p.~314]{do-gfalop-07}.
Take $P$ to be a slightly larger homothet of $Q$.
Then the same edge-cuts that result in overlap in the plane embed $Q$ onto $P$ without overlap.
In a sense, it can be easier in some cases to unfold to $P$ than to unfold to a plane.

One focus of interest in the unfolding literature is the ``fewest nets" 
problem~\cite[Prob.~22.1, p.~308]{do-gfalop-07}: 
edge-unfold a polyhedron into the fewest number of nets.
Of course one net would be ideal, but it is a long-standing
open problem to determine if every convex polyhedron can be edge-unfolded
to a net---D{\"u}rer's problem~\cite{o-dp-13}.
This suggests it could be of interest 
to minimize the number of disconnected pieces of a $P$-unfolding
of $Q$, a goal we pursue in Part~II.
See also Open Problem~\openref{PunfConnected}.

\paragraph{Example: Hexagon inscribed in a triangle.}
Let $P$ be a doubly-covered triangle $abc$, and $Q$ a 
doubly-covered hexagon inscribed in $P$,
as in Fig.~\figref{HexInTri}.
Digon-tailoring $P$ to $Q$ is achieved by excising the vertices $a,b,c$ by
cutting digons bounded by the hexagon edges
$e_1,e_2,e_3$ respectively, and then sealing the cuts closed.
To enlarge $Q$ to $P$ by reversing the process,
each of those three edges is slit open and the pair of triangles
earlier removed is sutured back along the slit edges.
To obtain a $P$-unfolding, we simply skip inserting the pair of triangles.
The result is a ``pasting" of the two hexagons inside the two triangular faces of $P$.

Note that, to reverse a sealed digon, the resulting geodesic $\g$ is slit,
but the cut does not delete the endpoints $x_1,x_2$  of $\g$, as depicted in Fig.~\figref{DigonReverse}.
In other words, $\g \setminus \{x_1,x_2\}$ is doubled
to reproduce the digon boundary, but there remains one copy each of the $x_1,x_2$ endpoints.

Notice that each split $e_i$ is |blue{a} non-contractible closed curve inside the image of $Q$, so the embedding
of $Q$ onto $P$ in this case is not simply connected: it has three holes.

\begin{figure}[htbp]
\centering
\includegraphics[width=0.9\linewidth]{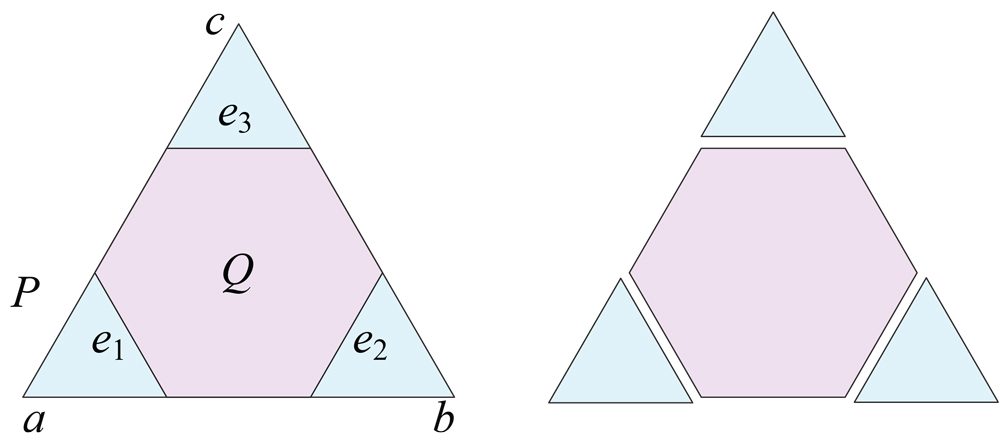}
\caption{Doubly-covered hexagon $Q$ inscribed inside / enlarged to a doubly-covered triangle $P$.}
\figlab{HexInTri}
\qquad 
\centering
\includegraphics[width=0.35\linewidth]{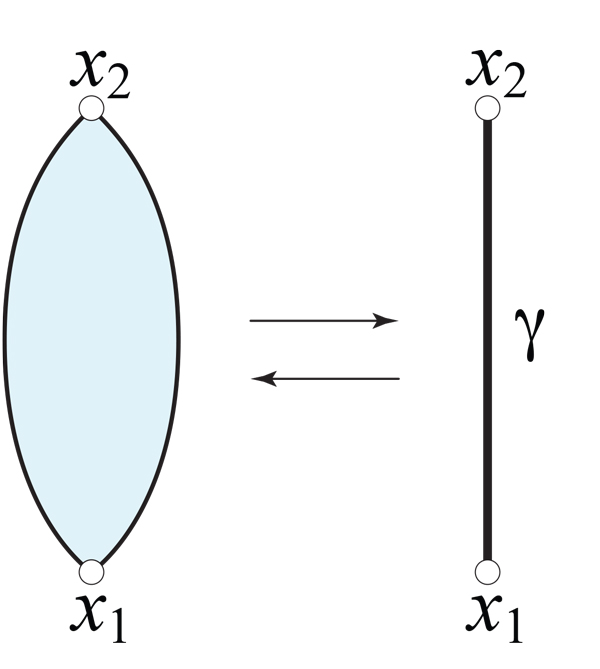}
\caption{Digon $x_1 x_2$ closes to geodesic $\g$.}
\figlab{DigonReverse}
\end{figure}

\medskip

The previous example shows
that the $P$-unfolding of $Q$ produced by enlarging is not necessarily simply connected.\footnote{
Notice that we do not require path-connectivity for the definition of simple connectivity.}
However, in general, that is indeed the case, as shown by the following result. 
We next explain the meaning of ``in general.''

\medskip

Consider the space ${\cal S}$ of all convex surfaces, endowed with the topology induced by the usual Pompeiu-Hausdorff metric.
Fix some $P \in {\cal S}$.
Consider in ${\cal S}$ the subset ${\cal P}={\cal P}_P^n$ of all polyhedra $Q\subset P$ with 
precisely $n$ vertices, with the induced topology.
Two polyhedra in ${\cal P}$ are then close to each other if and only if they have close 
corresponding vertices.
``General'' refers to polyhedra $Q$ in an open and dense subset of ${\cal P}$.

\begin{thm}
\thmlab{QPflat}
For any convex polyhedron $P$ and any $n \in \N$, there exists a subset ${\cal Q}={\cal Q}_P^n$ open and dense in ${\cal P}$, 
such that the $P$-unfolding $Q_P$ of each $Q \in {\cal Q}$ is flat (i.e., contains no internal vertices), and is simply connected.
\end{thm}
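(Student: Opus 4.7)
The plan is to construct, for each generic $Q \in {\cal P}={\cal P}_P^n$, a $P$-unfolding via the intrinsic flattening procedure of Theorem~\thmref{TailoringFlattening}, and verify both properties directly from the construction. The genericity conditions will rule out the coincidences responsible for the hexagon-in-triangle failure described in Section~\secref{PunfReshaping}.

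First I would define ${\cal Q}_P^n$ by the following open conditions on $Q$: (a)~no vertex of $Q$ coincides with a vertex of $P$ or lies on an edge of $P$, and no face of $Q$ contains a vertex or edge of $P$; (b)~the vertex curvatures of $Q$ are in general position---in particular, no two sum to exactly $2\pi$, so vertex-merging is unambiguous as in Lemma~\lemref{IsoTetra}; and (c)~there exists a basepoint $x \in P$ joined uniquely by a geodesic to every vertex of $P$ (a generic point as in Section~\secref{Rigidity}), whose cut locus $C(x,P)$ has combinatorial structure stable under small perturbations of $Q$. Each is a strict inequality in the $3n$ vertex coordinates of $Q$ and fails only on a real-algebraic subvariety of positive codimension; hence ${\cal Q}_P^n$ is a finite intersection of open dense subsets of ${\cal P}$ and is itself open and dense.

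For $Q \in {\cal Q}_P^n$, I would construct $Q_P$ by running the flattening enlarging: reduce $P$ to $P_\mathrm{flat}$ by excising the digons corresponding to the subtrees of $C(x,P)\setminus\rho$ (for a chosen maximal path $\rho$), vertex-merge $Q$ down to $Q_\mathrm{flat}$, fit $Q_\mathrm{flat}$ into $P_\mathrm{flat}$ after scaling, and reverse the process on $P$, re-inserting only the step~(1) digons but \emph{not} the vertex-merge triangles of step~(2). Flatness follows: by~(c), every vertex of $P$ is a leaf of $C(x,P)$ and lies strictly interior to one of the step~(1) digons, which are re-inserted and therefore contribute no interior vertices to $Q_P$; by~(a), no vertex of $P$ coincides with any vertex of $Q$ either, so no other source of interior vertex arises.

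For simple connectivity, the only cuts on $Q$ come from the reversed vertex-merges of step~(2): each merge contributes a single geodesic arc on $Q$. I would order the vertex-merges so that every subsequent merge-geodesic has an endpoint coinciding with an endpoint of a previously-created arc, making the union of these arcs a single connected tree on $Q$. Since $Q$ is a topological sphere and a sphere cut along a connected tree is a topological disk, the cut-up $Q$, and hence its isometric image $Q_P \subset P$, is simply connected. The main obstacle I anticipate is proving that such an anchored merge order is always available under the generic hypothesis; I expect this to follow by induction on $n$, exploiting the Gauss--Bonnet relation $\sum\o = 4\pi$ to guarantee at each stage the existence of a mergeable pair attachable to an existing arc, and possibly strengthening the generic hypothesis with one additional open dense condition to ensure the required attachment points are present throughout.
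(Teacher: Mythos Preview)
Your approach is genuinely different from the paper's, but it has a fundamental gap and several secondary ones.

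The paper does \emph{not} construct a specific $P$-unfolding. Instead it argues that the two failure modes are curvature coincidences: an internal vertex $v$ of $P$ in $Q_P$ forces $\o_Q(w)=\o_P(v)$ for some vertex $w$ of $Q$ (since a neighborhood of $v$ in $Q_P$ is isometric to a neighborhood in $Q$); a noncontractible curve $\sigma\subset Q_P$ forces, via Gauss--Bonnet, a partial sum of $Q$-curvatures to equal a partial sum of $P$-curvatures. Both conditions define closed nowhere-dense subsets of ${\cal P}$, so their complement is open and dense, and on that complement \emph{every} $P$-unfolding is automatically flat and simply connected. This is two sentences of genericity, not a construction.

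Your constructive route via tailoring-by-flattening has a structural problem: that procedure (Theorem~\thmref{TailoringFlattening}) produces a tailoring of $P$ to a \emph{homothet} $Q^t$ of $Q$, not to $Q$ itself; the scaling in your step~(3) is not optional. Reversing it therefore yields a $P$-unfolding of $Q^t$, which says nothing about the $P$-unfolding of the given $Q\in{\cal P}$. Since ${\cal P}={\cal P}_P^n$ contains $Q$ arbitrarily close to $P$, you cannot in general avoid shrinking.

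Even bracketing that, two of your claims fail as written. First, ``every vertex of $P$ \ldots\ lies strictly interior to one of the step~(1) digons'' is false: the two leaves $v_i,v_j$ of $C(x,P)$ chosen as endpoints of the path $\rho$ are \emph{not} removed in step~(1); they survive on the rim of $P_{\mathrm{flat}}$ and you would need a separate argument (e.g., that they fall in the trimmed region) to exclude them from $Q_P$. Second, the simple-connectivity step is exactly the hard part you flag but do not resolve: ordering vertex-merges so that the slit graph is a tree is the subject of the entire Part~II of the paper (spiral merging, quasigeodesics, relative convex hulls), and an unstructured induction on $n$ does not obviously produce such an ordering---indeed the paper gives explicit examples where careless merge orders create cycles.
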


\begin{proof}
Assume we have some convex polyhedron $Q \subset P$ such that $Q_P$ contains an internal vertex $v$, and so the curvatures of $P$ and $Q$ at $v$ are equal: $\o_Q(v)=\o_P(v)$.
Slightly alter the position of the vertices of $Q$, to get $\o_Q(w) \neq \o_P(u)$, for any vertices $w \in Q$ and $u \in P$.
Of course, this remains valid in a small neighborhood of the new $Q$.

Assume now we have some convex polyhedron $Q \subset P$ such that $Q_P$ is not simply connected, 
i.e., $Q_P$ contains a noncontractible curve $\s \subset P \cap Q_P$.
The Gauss-Bonnet Theorem shows that the total curvature $\Omega_Q (\s)$ of $Q_P$ inside $\s$ equals the total curvature $\Omega_P (\s)$ of $P$ inside $\s$: $\Omega_Q (\s) = \Omega_P (\s)$. We next show that every such $Q$ that violates the theorem can be approximated with polyhedra that do satisfy the theorem.

Slightly alter the position of the vertices of $Q$, to get a new polyhedron $Q'$ on which the following property (V) is verified. 
(V): any partial sum of vertex curvatures is different from any partial sum of vertex curvatures in $P$. 
This implies that,  for any simple closed curve $\t$ on $Q'$, $\Omega_{Q'} (\t)$ cannot be written as the sum of vertex curvatures of $P$.
Therefore, $Q'_P$ has no curve in common with $P$, noncontractible in $Q'_P$. And so $Q'$ does satisfy the theorem.

Since the property (V) is valid on a neighborhood $N$ of $Q'$, it follows that all polyhedra in $N$ do satisfy the theorem.
\end{proof}


\subsection{$P$-unfoldings and the WBG Theorem}

In this section we show that the $P$-unfolding question can also be answered by applying the 
powerful hinged variant of Wallace-Bolyai-Gerwien dissection theorem.
However, this will result in a ``pseudopolynomial number of pieces" and 
pseudopolynomial running time~\cite{abbott2012hinged}.\footnote{
They define pseudopolynomial as follows:
``pseudopolynomial means polynomial in the combinatorial complexity ($n$) and the dimensions of an integer grid on which the input is drawn."}

The Wallace-Bolyai-Gerwien (WBG) Theorem states that any two simple polygons with equal area can be dissected into finitely many simple congruent polygons.
It was strengthened in 
the paper, ``Hinged Dissections Exist"~\cite{abbott2012hinged},
where it is shown that the pieces can be chosen in an arcwise-connected chain, 
i.e., the dissection can be hinged at vertices along that chain.

\begin{thm}
\thmlab{WBG_Punf}
Given convex polyhedra $P$ and $Q$ of at most $n$ vertices each, and $Q \subset P$, a connected $P$-unfolding of $Q$ can be determined in pseudopolynomial number of pieces  and pseudopolynomial running time, following the WBG theorem and the hinge-dissection.
\end{thm}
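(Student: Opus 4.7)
The plan is to reduce the $P$-unfolding problem to a planar hinged dissection problem, and then to invoke the result of~\cite{abbott2012hinged} as a black box.

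First, I would unfold both polyhedra to a single simple polygon apiece. By the non-overlap property of the star-unfolding (Lemma~\lemref{*unfVD} and its references), the star-unfolding $S_Q$ of $Q$ from any generic point is a simple polygon of area equal to $\operatorname{area}(Q)$. For $P$, I do not need to unfold all of it: since $\operatorname{area}(Q) < \operatorname{area}(P)$ by Corollary~\thmref{Areas}, I star-unfold $P$ to a simple polygon $S_P$ and then carve out of $S_P$ a simple polygonal region $R$ of area exactly $\operatorname{area}(Q)$; the boundary $\partial R$ is chosen so that, when interpreted as additional cuts on $P$, $R$ corresponds to a simply connected subset $R' \subset P$. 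This gives two simple planar polygons, $S_Q$ and $R$, of equal area.

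Second, I invoke the hinged variant of the Wallace--Bolyai--Gerwien theorem from~\cite{abbott2012hinged}: the two polygons $S_Q$ and $R$ admit a common dissection into a finite collection of simple pieces, hinged along a chain so that the same hinged assembly can be folded to fill $S_Q$ in one configuration and $R$ in another. Both the number of pieces and the time to produce the dissection are pseudopolynomial in $n$ and in the dimensions of the integer grid containing the input, precisely as stated in~\cite{abbott2012hinged}.

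Third, I pull the dissection back to $Q$ and to $P$. The cuts in $S_Q$ correspond to a cut-up of the surface of $Q$ (the star-unfolding cuts plus the dissection cuts), producing a collection of isometric pieces. The same abstract pieces, in their $R$-configuration, are pasted onto $R' \subset P$, which is a legitimate non-overlapping subset of $P$. The chain of hinges in the planar dissection transfers to a chain joining consecutive pieces on $P$, so the resulting $P$-unfolding of $Q$ is connected in the sense of Section~\secref{PunfReshaping}. The total running time is dominated by the pseudopolynomial hinged-dissection step, and the star-unfolding and area-trimming steps are polynomial, so the overall cost is pseudopolynomial.

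The main obstacle I expect is the pull-back step: one must ensure that the extra cuts introduced by the dissection, when added to the star-unfolding cuts on $P$ and on $Q$, actually correspond to simple (non-self-crossing) curves on the surfaces. For $Q$ this is automatic because $S_Q$ is already embedded and the dissection lives inside it; for $P$ one must verify that $R$ can be chosen so that $\partial R$ avoids the images of vertices and does not wrap in a way that destroys simple connectedness of $R'$. A careful choice of the star-unfolding base points and a suitable sub-polygon $R$ should handle this, but it is the only place where the geometry of $P$ and $Q$ (as opposed to the combinatorics of planar dissection) enters the argument.
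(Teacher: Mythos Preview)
Your proposal is correct and follows essentially the same three-step strategy as the paper: unfold both surfaces to the plane, trim the $P$-side to match $Q$'s area, invoke the hinged Wallace--Bolyai--Gerwien dissection of~\cite{abbott2012hinged}, and pull back. The one implementation difference is that the paper uses \emph{vertex-unfolding}~\cite{deeho-vusm-03} rather than star-unfolding: a vertex-unfolding of a triangulated polyhedron is already a hinged chain of triangles, so the area-matching step becomes simply ``take a prefix of $P$'s chain and split one transitional triangle $T_h$ by a cevian.'' This choice dissolves exactly the obstacle you flagged---there is no need to worry about whether the trimmed region $R$ pulls back to a simply connected piece of $P$, because a prefix of a chain is automatically connected on the surface. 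Your star-unfolding route works too, but the trimming step needs the extra care you anticipated; the paper's vertex-unfolding route sidesteps it entirely.
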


\begin{proof}
We start with a quote from~\cite{abbott2012hinged}:
\begin{quotation}
\noindent
``One interesting consequence of [our] theorem is that any finite set of polyhedral surfaces of equal surface area have a common hinged dissection: It is known that every polyhedral surface can be triangulated and then vertex-unfolded into a hinged chain of triangles~\cite{deeho-vusm-03}. Our results (specifically Theorem~6) show how to construct a single hinged chain that can fold into any finite set of such chains, which can then be folded (and glued) into the polyhedral surfaces.''
\end{quotation}

\noindent A \emph{vertex-unfolding} of a triangulated polyhedron
is a chain of triangle connected at vertices. A vertex-unfolding
of a triangulated cube is shown in Fig.~\figref{VertexUnf}.

Let $A_Q$ and $A_P$ be the areas of $Q$ and $P$.
Triangulate and vertex-unfold both $P$ and $Q$.
The plan is to show that $A_Q$ can ``fit" exactly within
a sequence of $P$'s triangles, with at most one of $P$'s triangles
partitioned into two triangles.

Denote by $T_1,...,T_m$ the triangles in the chain corresponding to $P$, and by  $a_j$ denote the area of the $T_j$, $j=1,...,m$.
So we have $A_Q<A_P= \sum_{j=1}^m a_j$.
Then there exists $1 \leq h \leq m$ such that $a_1+...+a_{h-1} \leq A_Q < a_1+...+ a_h$.
Triangle $T_h$ is then the transitional triangle with respect to area in the sequence
of triangles.

\begin{figure}[htbp]
\centering
\includegraphics[width=1.0\linewidth]{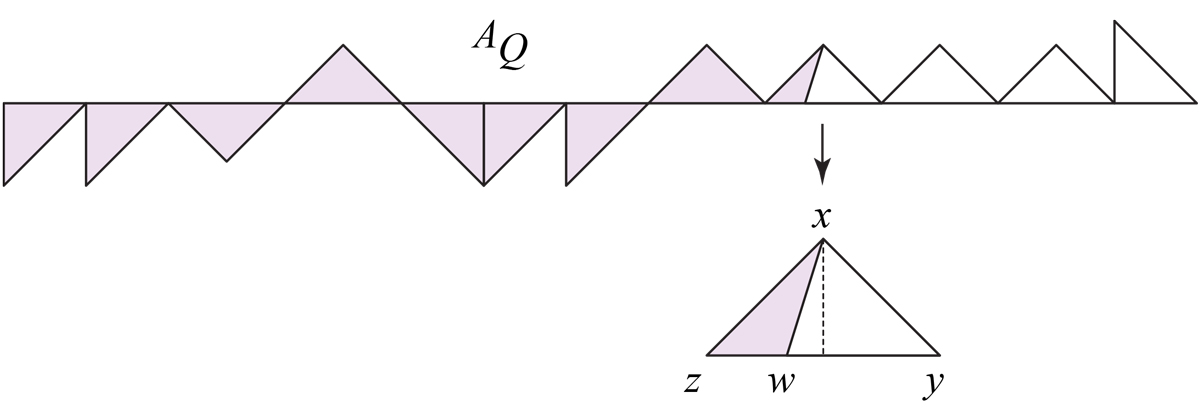}
\caption{Area $A_Q$ (shaded) in vertex-unfolding of a triangulated cube.
$T_h=xyz$.
Based on Fig.2(a) in~\protect\cite{deeho-vusm-03}.}
\figlab{VertexUnf}
\end{figure}

Now we further dissect the triangle $T_h$ into two parts, one of which has area $c=A_Q - (a_1+...+a_{h-1})$.
Specifically, let $T_h = xyz$ and choose a point $w \in yz$ such that $|w-y|/|z-y| = c/a_h$.
Then the area $A'$ of the triangle $xyw$ is equal to $c/a_h$ of the area of $T_h$ (because both triangles have the same altitude through $x$).

Therefore, in the chain of triangles obtained from $P$, we can find an arcwise-connected subchain of total area equal to 
$A_Q$, possibly after partitioning one triangle.

Then Theorem~6 in~\cite{abbott2012hinged} constructs a single
hinged dissection chain of the triangles forming $Q$
into the triangles up to $T_h$ of $P$.
Then the $Q$-chain triangles can be placed inside the $P$-chain triangles,
achieving a connected $P$-unfolding of $Q$.

The ``pseudopolynomial number of pieces" and pseudopolynomial running
time follows from~\cite{abbott2012hinged}.
\end{proof}
\noindent
Note that although the $P$-unfolding constructed in this theorem's proof
is connected, its interior is not connected.


\section{$P$ Continuously Folding onto $Q$}
\seclab{PfoldtoQ}
Cauchy's Rigidity Theorem shows that if one views the faces of a convex polyhedron
as rigid plates hinged along shared edges, then the polyhedron is rigid---it cannot flex.
For both convex and nonconvex polyhedra, a line of research has explored
\emph{continuous flattening} of a polyhedron to a plane~\cite[Chap.18]{do-gfalop-07}.
The idea is to allow ``moving" or ``rolling" creases, allowing the
faces of the polyhedron to bend, but not tear or pass through one another.
An early result was a method
to flatten each of the five Platonic solids~\cite{itoh2010continuous}.
This was followed by procedures for continuously flattening all convex 
polyhedra~\cite{inv-cfcp-12},~\cite{abel2014continuously}.
Most recently, continuous flattening of any polyhedron was achieved
by extending the model to permit infinite slicing~\cite{addklin-cf-2020}.

All of these results aim to fold the polyhedron to a ``flat folded state"---a non-self-crossing layering
of the surface on a plane.
Here we employ the tailoring/sculpting result (Theorem~\thmref{SliceAlgorithm}) to
address what appears to be a novel variation: continuously folding the surface of one 
convex polyhedron onto another.

In particular, we follow the slicing algorithm as summarized in Theorem~\thmref{SliceAlgorithm}:
Given convex polyhedra $P$ and $Q$ 
with $Q \subset P$,
$P$ can be tailored to $Q$, tracking a sculpting of $Q$ from $P$ as in Theorem~\thmref{MainTailoring}, 
via a polynomial number of tailoring steps.
This leads to the following theorem.
\begin{thm}
\thmlab{ContinuousFolding}
Given convex polyhedra $P$ and $Q$ 
with $Q \subset P$,
there exists a continuous folding of $P$ onto $Q$,
tracking a sculpting of $Q$ from $P$ as in Theorem~\thmref{MainTailoring}.
\end{thm}

Note we claim the existence of a continuous folding, but we do not know how to 
turn it into a finite algorithm to perform the folding;
see Open Problem~\openref{Continuous}.
Nevertheless, there are distinct steps.

Recall the tailoring-via-sculping algorithm (Theorem~\thmref{SliceAlgorithm})
ultimately identifies a polynomial number of tailoring steps (polynomial in the number of vertices $n$).
The folding in Theorem~\thmref{ContinuousFolding} follows the same sequence of digon
steps as the tailoring algorithm, but handling each digon differently.
From now on we concentrate on 
the tailoring of a single digon.

The high-level idea is as follows.
Instead of excising the digon and suturing closed its two sides,
we gradually bring together the two digon sides without removing the interior of the digon.
This process forms a growing doubly-covered triangle, which is folded onto the surface of $Q$.
Once the digon sides have been joined, we imagine the folded triangle as now melded into
$Q$'s surface. Subsequent digon processing may refold earlier foldings.

\begin{figure}[htbp]
\centering
\includegraphics[width=1.0\linewidth]{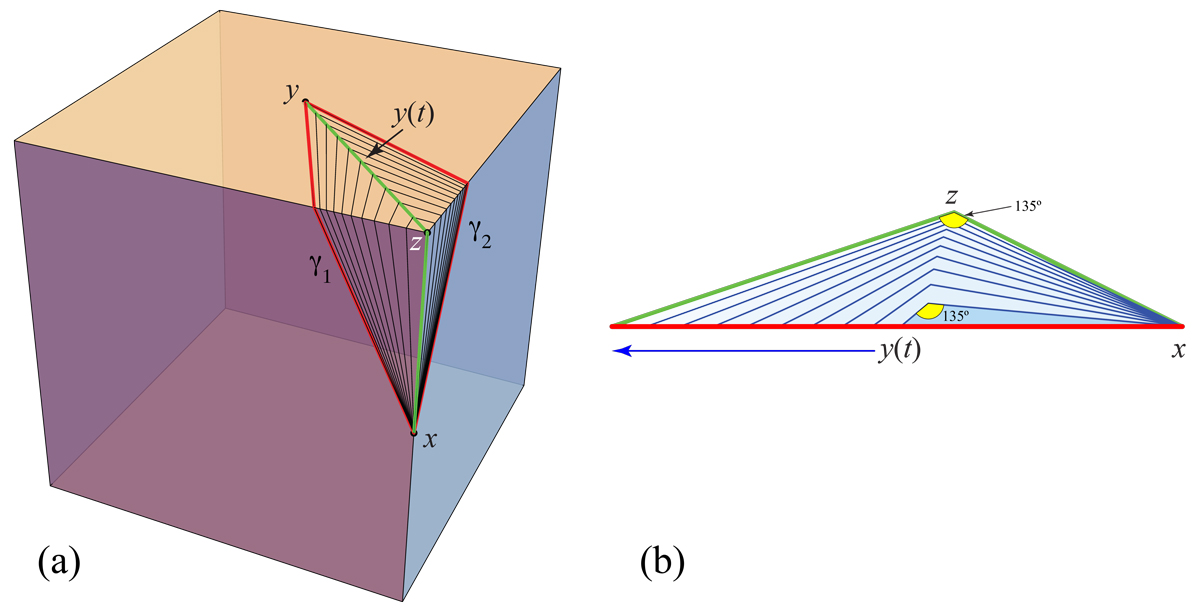}
\caption{(a)~Digon $xy$ surrounding vertex $z$. $y(t)$ moves along the geoseg
 $zy$.
(b)~Each $t$ leads to a doubly-covered triangle $T(t)$ with apex $z$ and base $x y(t)$.
The angle at $z$ is fixed to half of $3 \pi/2$ by the curvature of $\pi/2$ at vertex $z$.}
\figlab{CubeZipping}
\end{figure}

Recall from Chapter~\chapref{IntroductionPartI} that a digon $D(x,y)$ is a subset of $P$
bounded by two equal-length geodesic segments $\g_1$ and $\g_2$.
The tailoring algorithm only employs digons each surrounding one vertex.
If $z$ is the enclosed vertex, then $D(x,y)$ is intrinsically 
composed of two triangles,
one based on $\g_1$ and one of $\g_2$, both apexed at $z$.
When $\g_1$ is identified with $\g_2$, then the digon surface forms
a doubly-covered triangle $T = x z y$.

In the tailoring algorithm, $D(x,y)$ is excised, $\g_1$ is glued to $\g_2$,
and Alexandrov's Gluing Theorem~\thmref{AGT} 
is invoked to obtain a new convex polyhedron.
For continuous folding, we want to gradually close $\g_1$ and $\g_2$ until $T$ is obtained.
For this process, we fix $x$ and define a variable $y(t) \in yz$ with boundary conditions
$y(0)=z$ and $y(1)=y$.
See Fig.~\figref{CubeZipping}(a).
Define $\g_1(t)$ and $\g_2(t)$ to connect the fixed $x$ to the variable point $y(t)$.
As $t$ varies from $0$ to $1$, joining $\g_1(t)$ and $\g_2(t)$ lifts a triangle $T(t) = x z y(t)$
outside of the surface of $Q$.
We can imagine applying AGT at every infinitesimal advancement of $t \in (0,1]$.

The triangles $T(t)$ each have apex $z$ and a fixed angle at $z$ of $\frac{1}{2}(2 \pi - \o(z))$.
See Fig.~\figref{CubeZipping}(b).
The base $x y(t)$ of $T(t)$ lengthens with $t$, as does the altitude of $T(t)$.
When $t=1$, the digon is closed with $\g_1=\g_2$ and $T(1)=T$.

So far we have only observed that $T(t)$ sits outside of $Q$.
But at each $t$, we fold $t$ to one side or the other of the geodesic
segment $x y(t)$, i.e., $\g_1(t) = \g_2(t) = \g(t)$.
Because  $\g(t)$ is a geodesic segment, there is a neighborhood $N(t)$ of $\g(t)$ that is 
vertex-free,
in the sense that it contains no other vertex than $x$ and $y$.
See Fig.~\figref{AccordionFolding}(a).
If $T(t)$ fits in that neighborhood $N(t)$, then we fold $T(t)$ into $N(t)$ by creasing
along $\g(t)$. Because $N(t)$ is vertex-free,
$T(t)$ can flatten into $N(t)$, even if $\g(t)$ crosses edges of $Q$,
because $N(t)$ is isometric to a planar patch of surface, and $T(t)$ is itself planar.

If at some time $t$, $T(t)$ no longer fits into $N(t)$, then we ``accordion-fold"
the growing $T(t)$ so that it does fit.  As the altitude to $z$ grows with $t$,
the triangle might need to be folded back and forth,
as depicted in Fig.~\figref{AccordionFolding}(b).

\begin{figure}[htbp]
\centering
\includegraphics[width=0.8\linewidth]{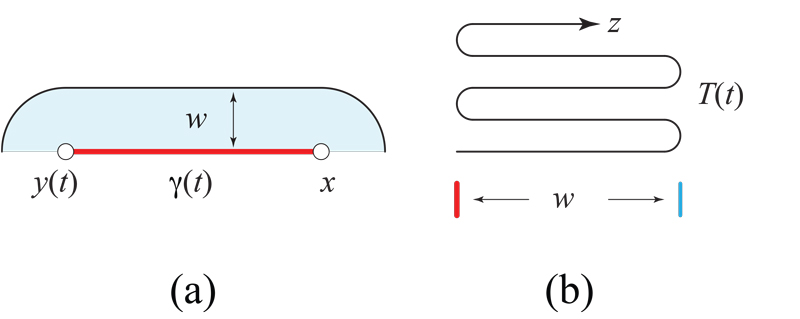}
\caption{(a)~Vertex-free region of width $w$ adjacent to geodesic $\g(t)$.
(b)~Side view of folding triangle $T(t)$ to fit inside the vertex-free region.}
\figlab{AccordionFolding}
\end{figure}

When $t$ reaches $1$, all of $T(1)=T$ is now folded onto $Q$, perhaps in several layers.
Now we imagine melding $T$ into $Q$'s surface, and we proceed with the next step of
the tailoring algorithm.
Note that a subsequent digon, say $x' y'$, might cross the now closed $xy$ seal.
Processing that digon will fold its doubly-covered triangle $T'$,
intruding underneath and into the earlier triangle $T$.
Although the subsequent foldings of $T$ caused by other digons
could be quite complicated, we continue to
view $T$ as merged with $Q$'s surface, and let the folding of each digon
proceed as described, possibly carrying along earlier doubly-covered triangle foldings.

After all digons have been processed in the order specified by the tailoring algorithm,
we have successfully continuously folded $P$ onto $Q$,
without ever any surface self-crossing.




\part{Vertex Merging and Convexity}

\chapter{Introduction to Part~II}
\chaplab{IntroII}
In the second part of our monograph we focus on
the operation of vertex-merging on a convex polyhedron $P$.
Used before as a proof technique by several authors, starting with A.D. Alexandrov, this operation 
has never before been considered, to our knowledge, as a main object of study.
The operation is defined in Chapter~\chapref{Preliminaries},
Section~\secref{VertexMerging}, 
and we employed it in Part~I, in Chapters~\chapref{TailoringFlattening} 
and~\chapref{Punfoldings}. 
We explained in Chapter~\chapref{TailoringFlattening} 
how vertex-merging is in some sense the inverse of digon-tailoring:
instead of cutting off a vertex $v$ by excising a digon, 
%
%
vertex-merging slits a geodesic
segment connecting two vertices $v_1$ and $v_2$,
and inserts a pair of triangles that flatten $v_1$ and $v_2$ and introduce a new
vertex $v$ with curvature $\o(v) = \o(v_1) + \o(v_2)$.
One might view this process as ``geometric surgery" in analogy with
topological surgery, which cuts out part of a manifold and replaces it with
part of another manifold, a technique pioneered by John Milnor
and further developed by Dennis Sullivan and others.

We called the geodesic segment that results from suturing-closed the digon boundary
a \emph{seal}. 
We will call the vertex-merge geodesic segment 
a \emph{slit}. 

A polyhedral surface $S$ 
that admits no vertex-merging is \emph{irreducible}.
By Lemma~\lemref{IsoTetra}, 
the irreducible polyhedra are isosceles tetrahedra and doubly-covered triangles.

Vertex-merging \emph{reductions}---the processes of repeatedly reducing the number of vertices via merging, 
from $P$ to an irreducible polyhedron---are
studied in Chapter~\chapref{VMSlitGraph} 
together with their induced \emph{slit graphs}. 
This can be seen as a theoretical exploration as well as
a refinement of a proof idea employed in 
Chapter~\chapref{TailoringFlattening}. 

Particularly interesting are the cases when slit graphs are forests of trees,
because then the unfolding (i.e., the image) $P_S$ of $P$ onto 
the irreducible surface $S$ has connected interior.
Even in this case, obtaining a net from $P_S$ is not always obvious, as shown in 
Section~\secref{Unf2xTri}. 
The overall goal, and the guiding thread, of this Part~II, is to identify slit graphs 
that are forests of trees.

Our basic algorithmic idea for finding such forests is a certain type of 
\emph{spiraling} slit tree.
We present this first in
the planar limit case in Chapter~\chapref{SpiralTree2D}.
The analysis is based on planar convex hulls of finitely many points,  
viewed as vertices of zero curvature.
The boundary of such a hull is equivalently obtained as the minimal length 
enclosing polygon of the given set of points.

Next, we aim to extend to convex polyhedra this idea of spiraling slit trees.
Toward this goal, we partition the surface into two \emph{half-surfaces}, 
sharing a simple closed quasigeodesic $Q$
as a common boundary.
Every convex polyhedron has at least three such 
quasigeodesics~\cite{p-qglcs-49}\cite{p-egcs-73}.

So we are led first to consider, on convex polyhedra, the notions of convexity and convex hull (in Chapter~\chapref{Convexity})
and then the notion of a 
minimal length enclosing geodesic polygon 
(in Chapter~\chapref{MinEnclPoly}).
Although identical in two dimensions,
these notions are not equivalent in our framework.
As far as we know, they have not been 
investigated in detail on convex polyhedra. 
Given their importance in the plane, we believe they
have a certain interest in their own right.

In Chapter~\chapref{SpiralTree3D} we show that the extension to half-surfaces (bounded by $Q$) of the planar spiraling algorithm works well for both
the convex hull and the minimal length 
enclosing geodesic polygon, 
of all vertices inside a simple closed quasigeodesic.

The next step, in Chapter~\chapref{Appl3DAlg}, is to join the two slit trees, previously obtained for half-surfaces.
Thus we obtain unfoldings of the starting polyhedron $P$ onto the union of two cones, or 
onto to a cylinder.
The last case appears if $Q$ contains at most two vertices, 
when rolling the cylinder on a plane leads to a net of $P$.

In the penultimate Chapter~\chapref{VertQuasigeo}, we prove the existence of a non-empty and open set ${\cal Q}_{\leq 2}$ of convex polyhedra $R$,
each polyhedron $R \in {\cal Q}_{\leq 2}$ having a simple closed quasigeodesic with at most two vertices.
We also conjecture that such a quasigeodesic exists on all $P$,
which would lead to a net of $P$ by rolling.

We end with a list of 
open problems from both Part~I and~II in Chapter~\chapref{OpenII}.

\chapter{Vertex-Merging Reductions and Slit Graphs}
\chaplab{VMSlitGraph}
In this chapter we initiate the systematic study of consecutive operations of vertex-merging,
already used in Chapter~\chapref{TailoringFlattening}. 
We introduce vertex-merging reductions and their associated slit graphs, and derive 
their basic properties for later use.
Several examples and open problems are discussed.
Our main goal, carried out over several chapters and topics, is to relate vertex-mergings to unfoldings and nets.
First steps toward this goal are reached in Theorem~\thmref{Connectivity}
and the discussion that follows in Section~\secref{VMUnf}.


\section{Slit Graphs for Vertex Mergings}
\seclab{VMSlitGraph}
Recall from Section~\secref{VertexMerging} 
and the Introduction to Part~II that merging of vertices $v_1$ and $v_2$ is only possible
(by Lemma~\lemref{IsoTetra})
when their curvatures $\o_1, \o_2$ satisfy $\o_1 + \o_2 < 2 \pi$.
The \emph{vertex-merging irreducible} surfaces (in short, vm-irreducible)
are doubly-covered triangles and isosceles tetrahedra.

A \emph{vertex-merging reduction}\footnote{The term ``reduction'' refers to the number of vertices. The surface area increases at each step.}
(in short, a vm-reduction) of a convex polyhedron $P$ is a maximal sequence of consecutive vertex-merging steps starting from $P$;
maximal, in the sense that it reduces $P$ to 
a vm-irreducible surface $S$.

Explicitly, during a reduction process we obtain a sequence of convex polyhedral surfaces $P=P_0 ,P_1,\ldots,P_k=S$, with $S$ vm-irreducible,
and piecewise isometries $\iota_j : P_j \to P_{j+1}$.
Of course, each $\iota_j$ is a polyhedral-unfolding of $P_j$ onto $P_{j+1}$, as is
$\iota=\iota_{k-1} \circ \iota_{k-2} \circ \ldots \iota_0 : P \to S$, where $\circ$ is the composition of functions. Put $P_S = \iota(P) \subset S$.

With some abuse, in the following we shall sometimes consider all isometries above as identities, thus identifying $P_j$ and $\iota_j ( P_j) \subset P_{j+1}$, for $j=0, \ldots, k-1$.

The \emph{slit graph for a vertex-merging reduction} $\iota=\iota : P \to S$ 
is the trace on $P$ of all geodesic segments 
used during the reduction, called \emph{slits}. 
Explicitly, at each reduction step we use a geodesic 
segment $\g_j$ on $P_j$ along which we merge two vertices of $P_j$.
Notice that $\bar{\iota}_j = \iota_{j-1} \circ \ldots \circ \iota_0 : P \to P_j$, 
hence the inverse $(\bar{\iota}_j)^{-1}: P_j \to P$ maps $P_j$ to $P$.
The slit graph $\L$ is the union for all $j=0,\ldots,k-1$ of the slits 
$\lambda_j = \bar{\iota}^{-1}_j  \left( \bar{\iota}_j  (P) \cap \g_j \right)$.
Note that only portions of a slit $\g_j$ on $P_j$ 
might lie on $P$ and so be part of $\L \subset P$.

The importance of studying slit graphs will become apparent 
later; see Section~\secref{VMUnf} and Chapter~\chapref{Appl3DAlg}.

Consider now the vm-reduction $\iota : P \to S$ and its inverse process, from the irreducible surface $S$ to $P$.
Explicitly, at each step of this 
reverse process, we tailor a digon (the one inserted for vertex-merging).
Then the slit graph $\L$ of $P$, for the vm-reduction $\iota$, is precisely the seal graph $\S$ for the corresponding tailoring 
of $S$ to $P$, studied in Chapter~\chapref{SealGraph}. 

\medskip

The next result shows that the slit graph is composed of as many geodesic segments as steps in the reduction process.

\begin{lm}
\lemlab{LmSlitSegment}
For any vertex-merging reduction of  any convex polyhedron, all slits are non-degenerate geodesic segments.
\end{lm}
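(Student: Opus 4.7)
The assertion has two parts: (i) each slit $\lambda_j$ is non-degenerate (has positive length), and (ii) $\lambda_j$ is a geodesic segment on $P$. Part (ii) follows almost from the definition: $\g_j$ is a geodesic segment on $P_j$ and $\bar{\iota}_j$ is a piecewise isometry, so each connected sub-arc of the trace $\lambda_j$ inherits the structure of a locally length-minimizing curve on $P$. The substantive content of the lemma is therefore (i), which is what I will concentrate on.

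First, $\g_j$ itself has positive length, since it joins two distinct vertices of $P_j$ (the two vertices being merged at step $j$, whose curvatures necessarily sum to less than $2\pi$ and so in particular are distinct points). A degenerate $\lambda_j$ would require $\g_j$ to lie entirely outside the image $\bar{\iota}_j(P)$, that is, entirely inside the union of the open interiors of the previously inserted digons $T_0, \ldots, T_{j-1}$.

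I claim this cannot happen. Each insertion $T_l$ contains in its interior at most one vertex of $P_j$, namely the apex $v'_l$ produced at step $l$---and $v'_l$ is a vertex of $P_j$ only if it has not itself been absorbed by a later merge; otherwise $\mathrm{int}(T_l)$ contains no vertex of $P_j$ at all. Because the two endpoints of $\g_j$ are distinct vertices of $P_j$, they cannot both lie in the interior of a single $T_l$. Hence either an endpoint of $\g_j$ lies already in $\bar{\iota}_j(P)$, or the two endpoints lie in the interiors of different insertions; in either situation $\g_j$ must traverse $\bar{\iota}_j(P)$ between its endpoints, so $\bar{\iota}_j(P) \cap \g_j$ has positive length and $\lambda_j$ is non-degenerate.

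The main obstacle I anticipate lies in the clean handling of (ii) when $\g_j$ crosses one or more previous insertions: a priori, $\lambda_j$ could split into several connected components, one per maximal sub-arc of $\g_j$ lying in $\bar{\iota}_j(P)$. To show that each such component is a genuine geodesic segment on $P$ (and not merely a broken piecewise geodesic), I would use the flat structure of each $T_l$ (a doubly-covered triangle with a single interior singularity at $v'_l$), together with the length-minimality of $\g_j$, to argue that $\g_j$ cannot cross the same insertion twice. The extended Cauchy's Arm Lemma, as deployed in Chapter~\chapref{TailoringSculpting}, should provide the length comparisons needed to rule out such double crossings and ensure that each component of $\lambda_j$ is itself a shortest geodesic between its endpoints on $P$.
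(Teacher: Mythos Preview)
Your non-degeneracy argument has a gap at ``in either situation $\g_j$ must traverse $\bar{\iota}_j(P)$.'' You have shown the two endpoints cannot lie in the interior of the \emph{same} insertion $T_l$, but not that $\g_j$ must leave the union $\bigcup_l T_l$. The insertions are not in general separated by $\bar{\iota}_j(P)$: if a later slit geodesic $\g_m$ crosses an earlier insertion $T_l$, then in $P_{m+1}$ the regions $T_l$ and $T_m$ are adjacent along a portion of $\g_m$, and a path can pass from one directly into the other without touching $\bar{\iota}_j(P)$. What your argument actually needs is that every connected component of $P_j \setminus \bar{\iota}_j(P)$ contains at most one vertex of $P_j$---essentially Lemma~\lemref{OneVertex}, which appears later and which you have not established.

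The paper proceeds differently, by induction down the chain $P_j \supset P_{j-1} \supset \cdots \supset P_0 = P$: it shows that $\g_j$ has a non-degenerate sub-arc on $P_{j-1}$, that this sub-arc has a non-degenerate sub-arc on $P_{j-2}$, and so on. The inductive step uses a local picture: a neighborhood of the merge, together with the bisecting geodesics at the two merged vertices, folds to a doubly-covered planar domain in which the two new triangle sides are straight-line extensions of geodesics already present on the previous surface. Hence no maximal geodesic segment can be ``new from one end to the other,'' which drives the descent.

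A smaller remark: your handling of part (ii) is inconsistent---you first say it is almost definitional, then identify it as the main obstacle. The local-isometry observation does show that each connected sub-arc of $\lambda_j$ is a geodesic on $P$, but connectedness of $\lambda_j$, and its being a shortest path rather than merely a geodesic arc, need separate arguments; your sketch via Cauchy's Arm Lemma does not complete them.
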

\begin{figure}[htbp]
\centering
\includegraphics[width=1.0\linewidth]{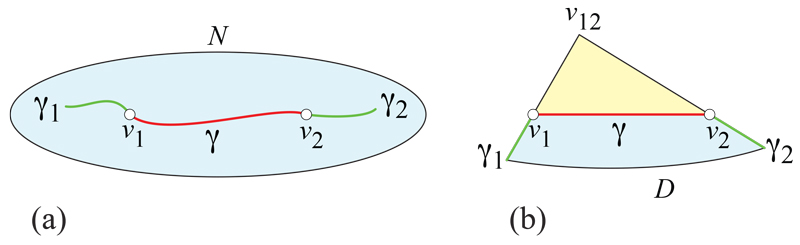}
\caption{(a)~Neighborhood $N$ of geodesic $\g$ connecting $v_1$ to $v_2$.
(b)~$N$ folded to a doubly covered domain $D$.}
\figlab{ND_geodesics}
\end{figure}
%
\begin{proof}
Vertex merging is an intrinsic process. The proof idea below is based on a convenient way of viewing it.

Assume we executed $j$ steps from a reduction process, starting from $P$ and reaching $P_j$.
Consider a geodesic segment $\g$ between vertices $v_1,v_2 \in P_j$ with $\o_1 + \o_2 < 2 \pi$.
Also consider geodesics $\g_1, \g_2$ starting at $v_1$ and $v_2$ such that, together with $\g$, they bisect the complete angles at $v_1$ and $v_2$, 
respectively.\footnote{
If $\g$ is the unique geodesic segment joining $v_1,v_2$ then $\g_1$ contains a geodesic segment included in $C(v_2)$, and similarly for $\g_2$.}
Also consider a neighborhood $N$ of the polygonal path composed by $\g_1$, $\g$ and $\g_2$, containing no vertex of $P$ excepting $v_1,v_2$.
See Fig.~\figref{ND_geodesics}(a).
Then, after merging $v_1$ and $v_2$, $N$ is isometric to a doubly covered planar domain $D$, as in (b)~of the figure. 

Retain the same notation for $D$.
Merging the vertices $v_1$ and $v_2$ can thus be represented in the plane, by the use of $D$.
There, it means extending the edges edges $\g_1, \g_2$ beyond $v_1,v_2$, until their intersection at $v_{12}$.
So each newly created edge (say $v_1v_{12}$) is the extension of
an edge of $D$, which derives from a geodesic on $P_j$ ($\g_1$ in this case). 
That is, $v_1 v_{12}$ is included in a geodesic which cannot be new from one end to the other.

Applying the previous reasoning to $\g$, which is already maximal with respect to inclusion, shows that it contains a geodesic subsegment already existing on $P_{j-1}$ and, inductively, we find a (possibly smaller) geodesic subsegment of $\g$ existing on $P$.
\end{proof}

A nearly immediate consequence of the preceding lemma is this:

\begin{co}
\lemlab{Pn2Pieces}
For a polyhedron $P$ of $n$ vertices,
the collection of slits (the slit graph) cuts $P$ into at most $O(n^2)$ pieces.
\end{co}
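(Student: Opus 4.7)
The plan is to apply Euler's formula to the arrangement that $\L$ induces on the topological sphere $P$, after bounding both the number of slits and the total number of their crossings. By Lemma~\lemref{IsoTetra}, every convex polyhedron with at least five vertices admits a vertex-merging, and each such merging reduces the vertex count by one. So any complete vm-reduction of $P$ consists of at most $n-3$ merges, producing $k=O(n)$ slits $\g_1,\ldots,\g_k$, each of which is a shortest geodesic segment on its respective $P_j$ by the definition of vertex-merging together with Lemma~\lemref{LmSlitSegment}.

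To control the combinatorics of the arrangement on $P$, I would proceed incrementally over $j=1,\ldots,k$. At step $j$, the slit $\g_j$ is a shortest geodesic on $P_j$ and so restricts to a straight segment on each face of $P_j$. The triangles inserted during the previous $j-1$ merges are flat, convex patches on $P_j$, and a straight geodesic enters and leaves each such patch in at most two points. Since there are $O(j)$ inserted triangles present at step $j$, the preimage $\lambda_j = \bar{\iota}_j^{-1}(\bar{\iota}_j(P)\cap\g_j)\subset P$ consists of at most $O(j)$ geodesic arcs, whose interior endpoints all lie on the existing slit graph $\L_{j-1}$ (since the boundaries of the inserted patches are exactly the images of previous slits). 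Thus step $j$ contributes $O(j)$ arcs and $O(j)$ new intersection points to the arrangement on $P$.

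Summing over $j$ gives $V = O(n) + \sum_{j=1}^{k} O(j) = O(n^2)$ arrangement vertices (slit endpoints plus crossings), and $E = \sum_{j=1}^{k} O(j) = O(n^2)$ arrangement edges. Euler's formula on the sphere ($V - E + F = 1 + C$, where $C$ is the number of connected components of the arrangement, itself $O(n)$) yields $F = O(n^2)$. Since the faces of the arrangement are precisely the connected components of $P\setminus\L$, the bound follows.

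The main obstacle I anticipate is the $O(j)$ crossing bound at step $j$: one must verify carefully that each patch inserted at an earlier merge is genuinely a union of flat, convex triangles on $P_j$ so that a shortest geodesic meets its boundary $O(1)$ times, and also that degeneracies---$\g_j$ sharing a sub-arc with a previous slit, or two endpoints coinciding---can at worst add a constant factor. Once this bookkeeping is settled, the $O(n^2)$ bound on faces is immediate from Euler.
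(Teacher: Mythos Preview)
Your incremental Euler-formula argument is workable in spirit, but you are not using the full strength of Lemma~\lemref{LmSlitSegment}. You invoke it only to say that $\g_j$ is a shortest geodesic on $P_j$---but that is immediate from the definition of vertex-merging. The actual content of the lemma is that the trace $\lambda_j$ on $P$ is itself a \emph{single} geodesic segment on $P$, not a union of up to $O(j)$ arcs. Once you use this, the corollary collapses to one line, and this is exactly what the paper does: the $O(n)$ slits are geodesic segments on $P$; any two geodesic segments on a convex polyhedron cross at most once (otherwise one could shortcut at a crossing, contradicting minimality); hence the slits form an arrangement of $O(n)$ pseudo-segments on the sphere $P$, and such an arrangement has $O(n^2)$ cells.

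Your route, by contrast, requires the ``$\g_j$ meets each earlier patch $O(1)$ times'' claim, and the obstacle you flag is genuine: after several merges the inserted regions on $P_j$ are no longer single flat triangles but polygonal domains (later slits may cut through earlier inserted triangles and enlarge them), so convexity of the patches is not automatic. You would end up re-proving, in disguise, that $\g_j\cap\bar\iota_j(P)$ is connected---which is precisely Lemma~\lemref{LmSlitSegment}. So your approach is not wrong, but it recreates work already done and leaves the hardest step as an acknowledged gap, whereas the paper's pseudo-segment observation dispatches the corollary immediately.
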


\begin{proof}
Because each slit is a geodesic segment, each pair of slits cross at most once.
So the slits
are pseudo-segments, subsegments of pseudo-lines.
(\emph{Pseudo-lines} are curves each pair of which intersects at most once,
where they properly cross.)
With $O(n)$ slits, the arrangement of pseudo-segments on the planar
surface of $P$ has combinatorial complexity $O(n^2)$,
and in particular, has at most $O(n^2)$ cells~\cite{agarwal2005pseudo}.
\end{proof}

Despite the possibly quadratic complexity of the complement of the slit graph, in the end 
the slit graph has only
$3$ or $4$ components, as established in the following lemma.

\begin{lm} 
\lemlab{34components}
The slit graph of any vm-reduction process $\iota:P \to S$ has at most $n_S \in \{3,4\}$ components, where $n_S$ is number of vertices of $S$.
\end{lm}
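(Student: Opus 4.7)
The plan is to prove the stronger statement that all $P$-vertices merging to the same vertex of $S$ under $\iota$ lie in a single connected component of $\Lambda$. Combined with the observation that every component of $\Lambda$ meets $V(P)$, this yields an injection from the components of $\Lambda$ into $V(S)$, and hence the bound $n_S$.

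First I would define the color map $c : V(P) \to V(S)$, sending each vertex $v$ of $P$ to the vertex of $S$ it merges into under the composition $\iota = \iota_{k-1} \circ \cdots \circ \iota_0$; there are at most $n_S$ colors. By Lemma~\lemref{LmSlitSegment}, each slit $\lambda_j$ is the trace on $P$ of the geodesic segment $\gamma_j$ on $P_j$; unfolding the construction, on $P$ the slit $\lambda_j$ breaks into subsegments whose endpoints are either original $P$-vertices (when $\gamma_j$ terminates at a vertex of $P_j$ unaltered by previous merges) or lie on an earlier slit $\lambda_i$ with $i<j$ (when $\gamma_j$ enters or exits an earlier inserted digon-region $R_i$, whose boundary in $P_j$ pulls back via $\bar{\iota}_j^{-1}$ to $\lambda_i$).

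The heart of the argument is the following inductive claim: \emph{for every vertex $v \in V(P_j)$, the $P$-descendants of $v$ lie in a single component of the partial slit graph $\Lambda_{<j} = \bigcup_{i<j}\lambda_i$} (with an as-yet-untouched $P$-vertex counted as its own singleton component). The base case $j=0$ is immediate. For the inductive step, at step $j-1$ two vertices $v_1,v_2 \in V(P_{j-1})$ are merged into $v^{(j)} \in V(P_j)$ via $\gamma_{j-1}$. One extreme endpoint of $\gamma_{j-1}$ sits at $v_1$: if $v_1$ is original to $P$, the extreme subsegment of $\lambda_{j-1}$ ends at $v_1$ itself; otherwise $v_1$ is the apex of some previously inserted region $R_{i'}$, and the subsegment terminates on $\lambda_{i'}$, which by the inductive hypothesis lies in the component containing all $P$-descendants of $v_1$. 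The analogous statement holds at $v_2$. Moreover, each intermediate attachment of $\lambda_{j-1}$ to an earlier slit $\lambda_i$ consists of two points on $\partial R_i$ corresponding to where $\gamma_{j-1}$ enters and exits $R_i$; these two points pull back to two points on $\gamma_i$, which by the inductive hypothesis lie in one component (that of the $P$-descendants of the apex of $R_i$). Hence threading the subsegments of $\lambda_{j-1}$ through these accumulated components produces a path in $\Lambda_{\le j-1}$ from a $P$-descendant of $v_1$ to one of $v_2$, unifying the two components into a single one containing all $P$-descendants of $v^{(j)}$.

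Applying the claim at $j=k$ to each $s \in V(S)$, every color class of $c$ is confined to one component of $\Lambda$. Every component of $\Lambda$ contains at least one $P$-vertex, since tracing any subsegment back through attachments to progressively older slits eventually reaches a slit whose extreme endpoints are original vertices of $P$. Consequently the number of components of $\Lambda$ is at most the number of colors used, which is $n_S$. The main obstacle will be the bookkeeping in the inductive step: one must carefully track how $\gamma_{j-1}$ can thread through multiple earlier inserted regions whose apex vertices may themselves have been further merged, verify that each entry/exit pair on a common $\partial R_i$ lies in a single component of $\Lambda_{<j}$ even when $\lambda_i$ itself is fragmented by still-older regions, and confirm that the resulting concatenated path links the required $P$-descendants.
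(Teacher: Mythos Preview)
Your approach is genuinely different from the paper's. The paper argues directly on the terminal surface $S$: every slit lies in $\partial P_S$; each component of $S\setminus P_S$ contains at least one vertex of $S$ (so there are at most $n_S$ such components); the boundary $\Gamma$ of each component is a single geodesic polygon; and the fold-back map $\iota^{-1}$ sends $\Gamma$ to a connected subgraph of $\Lambda$. Thus $\Lambda$ is covered by at most $n_S$ connected sets. Your route is a step-by-step induction via the color map $c:V(P)\to V(S)$, showing that each color class stays within a single component as the slit graph grows.

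Your plan is viable, but the inductive step as written has a gap. When $\gamma_{j-1}$ threads through an intermediate merge domain, you identify that domain with a single earlier region $R_i$ bounded by the single slit $\lambda_i$, and invoke the inductive hypothesis about the $P$-descendants of its apex. In general the relevant domain is the full component of $P_{j-1}\setminus\bar\iota_{j-1}(P)$ containing some vertex $w$, and its boundary can be built from banks of several earlier slits, not just $\lambda_i$. Your inductive hypothesis speaks only about $P$-descendants of $w$ lying in one component of $\Lambda_{<j-1}$; it does not immediately place the two entry/exit points (which may lie in the interiors of boundary arcs coming from different slits) in that same component. What you actually need is that the entire boundary of each such domain, pulled back to $P$, is connected in $\Lambda_{<j-1}$. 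This is exactly the paper's key observation (applied at an intermediate stage instead of on $S$): the boundary is a single closed curve on $P_{j-1}$, and the fold-back map identifying the two banks of each slit is continuous, so its image in $\Lambda_{<j-1}$ is connected. Once you insert this, your induction goes through; but note that you are then essentially using the paper's argument at every intermediate $P_j$, so the direct one-shot argument on $S$ is the cleaner route.
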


\begin{proof}
Every slit $\lambda \in \L$ is included in a geodesic $\g$ whose isometric image $\g_j \subset P_j$ creates a merge-vertex outside of $P$.
With some abuse, say $\lambda = \g_j \cap P$.
By Lemma~\lemref{LmSlitSegment}, $\lambda$ is a non-degenerate geodesic segment, hence it is included in $\partial P_S$.
But $S$ has at most $n_S$ vertices, and each component of $S \setminus P_S$ has at least one vertex, hence $S \setminus P_S$ has at most $n_S$ components.
The boundary of each component is a geodesic polygon $\G$ (see Figure~\figref{Hex_EqTri}(a)), 
so its inverse image $\iota^{-1}(\G)$ is a connected subgraph of $\L$ containing $\lambda$.
Therefore, $\L$ has at most $n_S$ components.
\end{proof}


\section{Example: Reductions of Flat Hexagon}
\seclab{Reductions of Flat Hexagon}

Consider a doubly-covered regular hexagon $H=x_1 x_2 \ldots x_6$.
The curvature at each vertex is $4 \pi /6=2 \pi/3$.
We describe two different merge sequences.

Assume first that we vertex-merge vertices two-by-two in increasing order of indices, so we 
flatten the six original vertices and create three new vertices
$x_{12}, x_{34}, x_{56}$, each of curvature $\o_{12}=\o_1 + \o_2 = 4 \pi /3$.
The resulting surface is a doubly covered equilateral triangle of area $1.5$ times larger than the area of $H$.
See Fig.~\figref{Hex_EqTri}(a,b).
The slit graph in this case is a forest composed of three single-edge trees, every other edge of $H$.
This shows that the slit graph for a vertex-merging reduction is not necessarily connected.

Note that for this example, digon-tailoring 
the equilateral triangle in~(b) to $H$ via
sculpting produces the same seals as the digon-tailoring via vertex-merging produces slits.

\begin{figure}[htbp]
\centering
\includegraphics[width=0.9\linewidth]{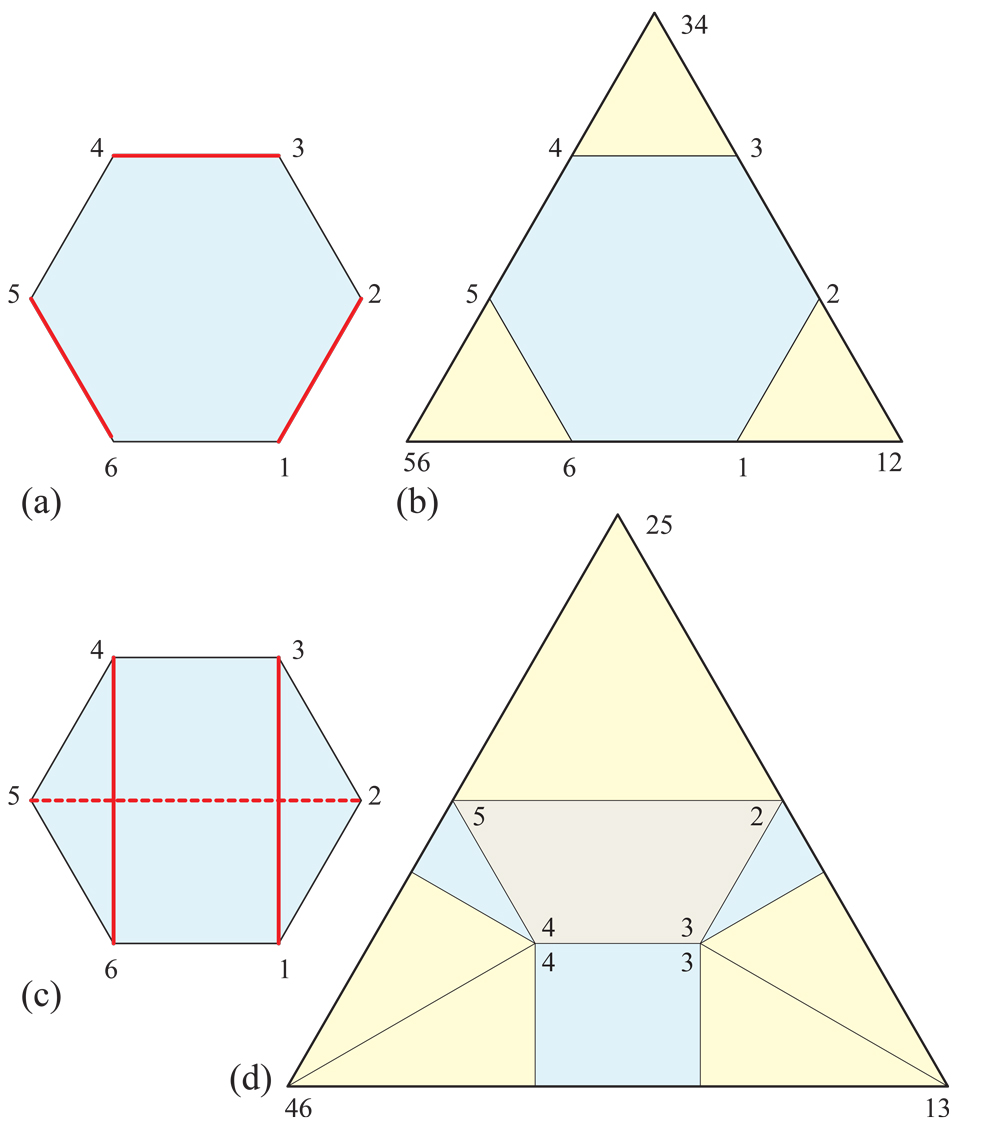}
\caption{
(a)~Doubly-covered hexagon $H$ with slits marked (red).
(b)~The result after three merges.
(c)~Same $H$ but different merge sequence.
(d)~Final equilateral triangle after vertex-merging (not to same scale as (c)).
Blue indicates front surface of the original $H$, tan back surface, and yellow insertions.
The back face is symmetric to the front face.
The images of the vertices $x_1$ and $x_6$ are on the back face, opposite to $v_3$ and $v_4$, respectively.}
\figlab{Hex_EqTri}
\end{figure}

Notice that we cannot vertex-merge $x_{12}$ with $x_j$, $3 \leq j \leq 6$, because the resulting curvature would be exactly $2 \pi$, 
violating the necessary merge condition.
Therefore, no matter in which order we vertex-merge the vertices $x_3,\ldots,x_6$ 
in pairs, the resulting slit graph will be a forest of three single-edge trees.
Similar reasoning shows that the slit graph for any vertex-merging reduction of $H$ is never connected.

On the other hand, for the example presented earlier in Fig.~\figref{VertMerge}, 
the slit graph is a single-edge tree, hence connected.
Moreover, each vm-reduction sequence
of a non-isosceles tetrahedron consists of a single vertex-merging operation, yielding a doubly covered triangle.
So for these surfaces, the slit graph for a vertex-merging reduction is always connected.

This leads us to consider 
the following question, a partial
answer of which will be given in the following sections.

\emph{For which convex polyhedra does there exist a vm-reduction whose slit graph is connected?}

\bigskip

Next we examine a different vertex-merge ordering:
merging $x_1$ and $x_3$ on the front of $H$,  $x_2$ and $x_5$ on the back, and $x_4$ and $x_6$ on the front.
This produces another doubly covered equilateral triangle, say $T$,%
\footnote{The intermediate shapes guaranteed by AGT are, however, not flat
as they were in the previous example,
but rather have positive volume. Only the final shape is flat.}
of area $2.5$ times larger than the area of $H$.
See Fig.~\figref{Hex_EqTri}(c,d).

In particular, the $P$-unfolding (in the sense of Chapter~\chapref{Punfoldings})
$H_T$ of $H$ onto $T$ is not simply connected, 
as for example, the boundary of the $v_1 v_3$ slit is
a non-contractible cycle;
see Fig.~\figref{Hex_EqTri}(d).
However, $H$ itself has not been disconnected by the vertex-merging slits.

Clearly for any given $P$, different vertex-merging reductions of $P$
lead to different surface areas of $S$.
We leave to
Open Problem~\openref{SurfaceArea}
exploring bounds on, or how to achieve, the min or max surface area.

%


\section{Example: Reductions of Cube}
\seclab{CubeExamples}

In this section we present two vm-reduction processes for the cube.

The first reduction, illustrated in Fig.~\figref{CubeVMFlattened}, reduces the cube to a doubly covered square, 
which is a degenerate isosceles tetrahedron and so vm-irreducible.
\begin{figure}[htbp]
\centering
\includegraphics[width=0.8\linewidth]{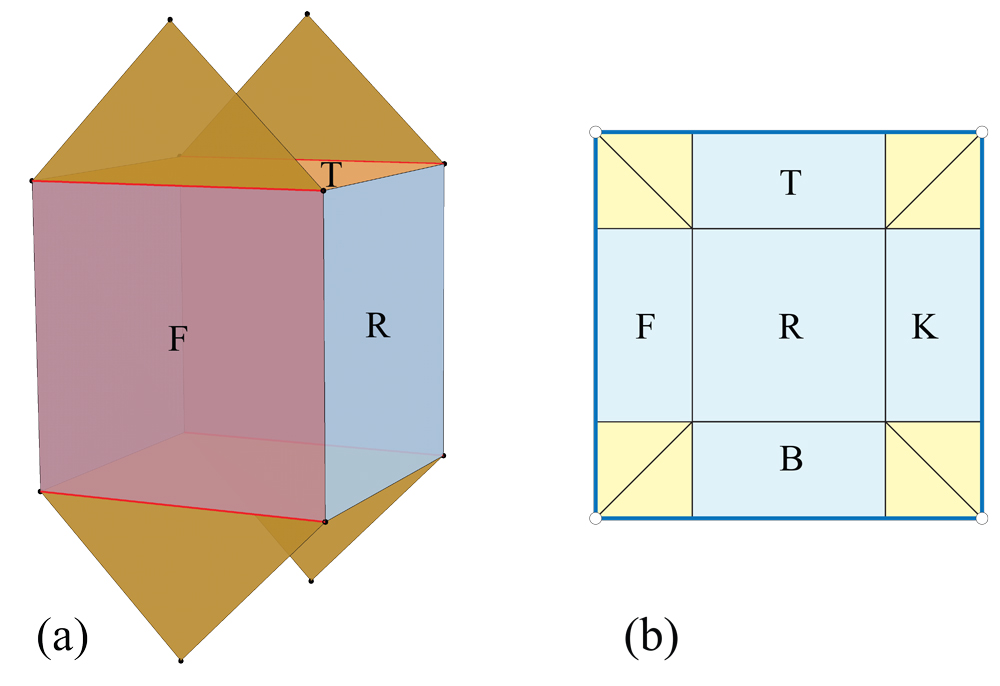}
\caption{(a) A cube and its vm-reduction. (b) The corresponding vm-irreducible surface. The pattern on the back side is similar, except with $R \to L$.
Inserted triangles in yellow.}
\figlab{CubeVMFlattened}
\end{figure}
In any order, slit the top front and back edges, and the bottom front and back edges,
and vertex-merge their endpoints.
The curvature at each cube corner is $\pi/2$, so the triangle inserts are
each isosceles right triangles, a pair of which flatten the vertices to which
they are incident.
The slit graph is a forest of four single-edge trees.

The second merge example is more intricate; it is
illustrated in Fig.~\figref{Cube_VM_triangle_3D}, 
and will be revisited in Chapter~\chapref{Appl3DAlg}.
It reduces the cube to a doubly covered triangle, (b) of the figure.
We now describe the reduction.
\begin{figure}[htbp]
\centering
\includegraphics[width=1.1\linewidth]{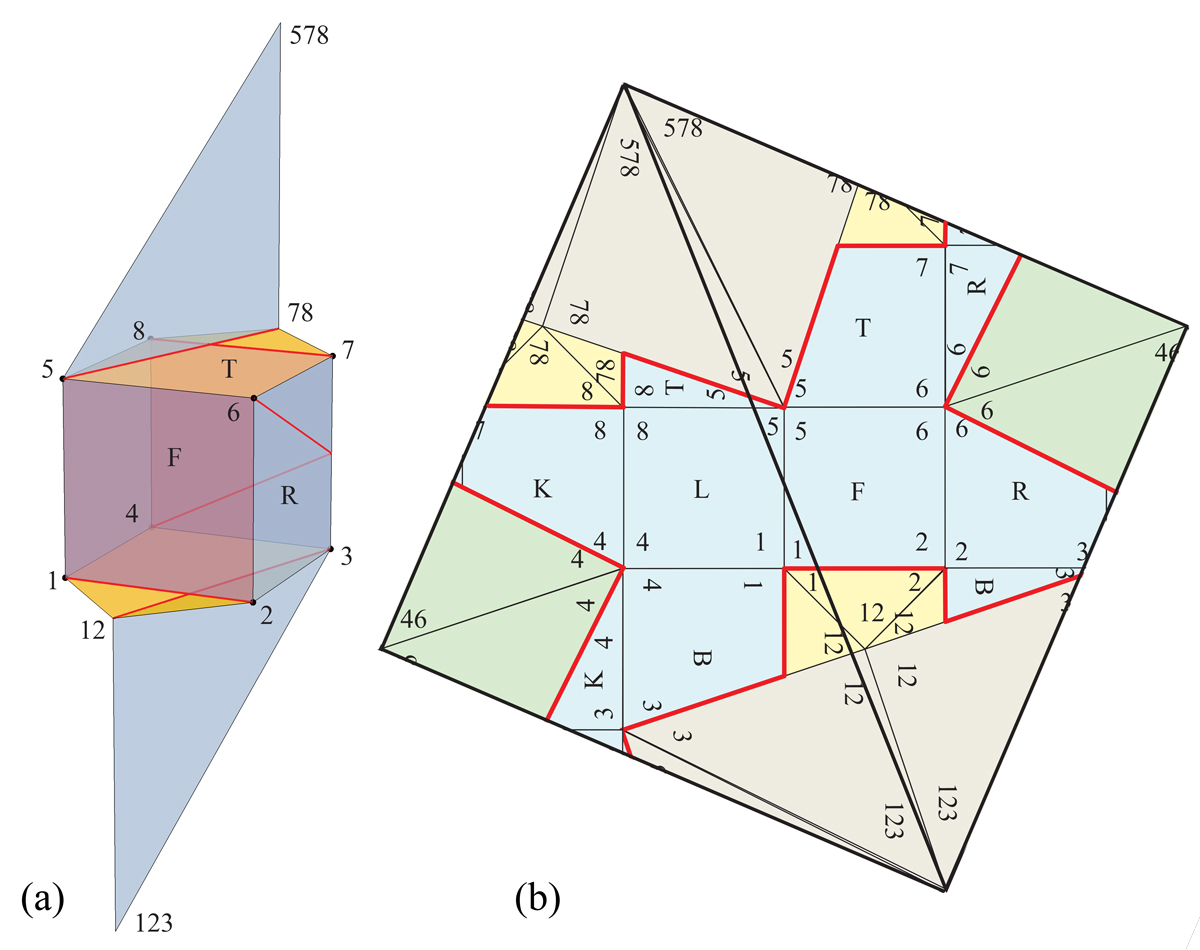}
\caption{(a)~Slits in red. Vertex merging: 
$1+2 \to 12$. $7+8 \to 78$. $5+78 \to 578$. $3+12 \to 123$.
$4+6 \to 46$.
(b)~The doubly covered triangle has vertices at
$578, 123, 46$. The triangle is a right isosceles triangle. Cube surface in blue.
Image of the slit graph in red.}
\figlab{Cube_VM_triangle_3D}
\end{figure}

First $v_7,v_8$ are merged to produce $v_{78}$, just as in the previous example
(although in (a) of the figure the triangle inserts are shown coplanar with the top $T$
of the cube).
Next $v_{78}$ is merged with $v_5$,
creating $v_{578}$. Note that the slit geodesic $v_5 v_{78}$
crosses the slit $v_7 v_8$, and so the inserted triangle pair separates 
the top $\triangle v_7 v_8 v_{78}$ (but not its mate underneath).
It is not straightforward to track the consequence of each insertion.

The same two merges are mirrored on the cube bottom face $B$:
$v_1$ and $v_2$, and $v_3$ with $v_{12}$.
Finally, $v_4$ and $v_6$ are merged.
So the five merges have reduced the eight cube vertices to just three:
a doubly-covered triangle. 
It is shown unfolded in Fig.~\figref{Cube_VM_triangle_3D}(b).
Note that the three vertices of this triangle,
$v_{578}, v_{123}, v_{46}$, are
each surrounded by a polygonal domain of triangle inserts.
This follows because all the cube vertices have been flattened.

The slit graph in this example has three components,
satisfying Lemma~\lemref{34components}.


\section{Example: Icosahedron}
\seclab{IcosahedronExample1}

We next detail using vertex-merges to reduce an icosahedron
to a doubly covered triangle, without disconnecting the icosahedron surface.
We will revisit and modify this example in Chapter~\chapref{Appl3DAlg},
where further details will be provided.

We label the $12$ vertices of $P$ as shown in Fig.~\figref{IcosahedronLabels_v1}.
We will merge in sequence five vertices, $v_1,v_2,v_3,v_4,v_6$, a merge affecting
three of the triangles incident to the top vertex $v_6$.
Symmetrically, we merge five bottom vertices, $v_{11},v_{10},v_9,v_8,v_{12}$.
Each of these sequential merges results in a merge vertex of curvature 
$\frac{5}{3} \pi$, the sum of five $\frac{1}{3} \pi$ curvatures.
Finally, we merge $v_5$ and $v_7$, cutting across the middle band of triangles,
creating a merge vertex of curvature $\frac{2}{3} \pi$.
(So the total curvature is $( \frac{5}{3} + \frac{5}{3} + \frac{2}{3} ) \pi = 4 \pi$,
satisfying the Gauss-Bonnet theorem.)
The resulting doubly covered triangle $abc$ has
angles $30^\circ, 30^\circ, 120^\circ$.

\begin{figure}[htbp]
\centering
\includegraphics[width=0.4\linewidth]{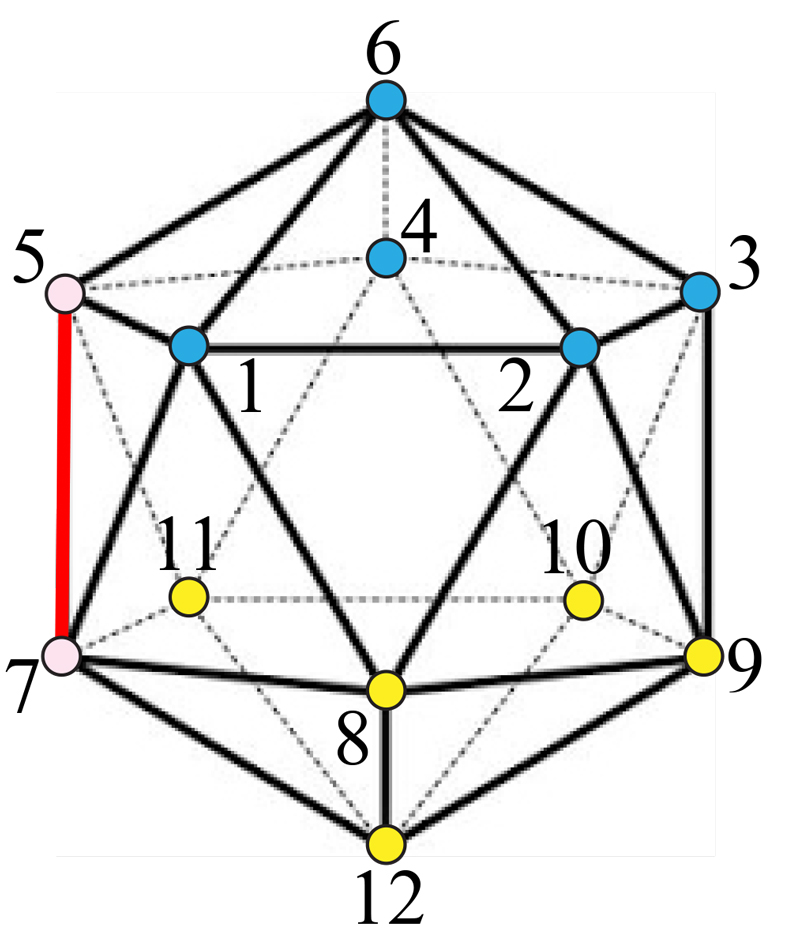}
\caption{Labels $i$ for vertices $v_i$.}
\figlab{IcosahedronLabels_v1}
\end{figure}

\begin{figure}[htbp]
\centering
\includegraphics[width=0.65\linewidth]{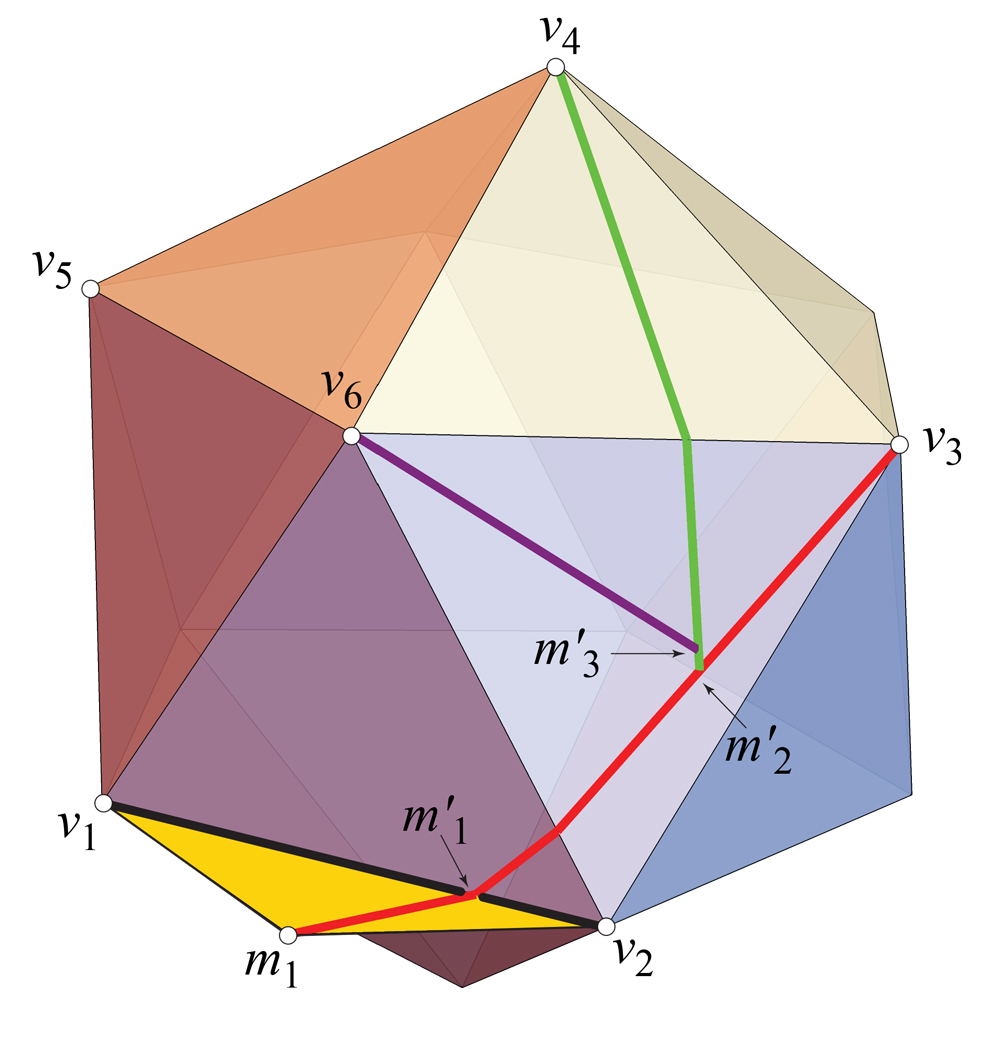}
\caption{Four geodesic slits on $P$, each entering $P$ 
at $m'_i$. $v_6$ is the top vertex of $P$.}
\figlab{Icosahedron_v1}
\end{figure}

We now detail the top sequential merge of five vertices.
Each of the four merges $i$ is accomplished by inserting two
copies of a triangle $T_i$, whose apex is the merge vertex $m_i$.
\begin{align*}
v_1 + v_2  & \to m_1 \;,\; T_1 \\
m_1 + v_3 & \to m_2  \;,\; T_2 \\
m_2 + v_4 & \to m_3  \;,\; T_3 \\
m_3 + v_6 & \to m_4  \;,\; T_4 \;.
\end{align*}
(Here we are using `$+$' and `$\to$' informally to mean `merge' and
`creating' respectively.)
The merge of $m_i$ to the next vertex $v_{i+2}$ slits a geodesic
$\g_{i+1}$ from $m_i$, down one copy of $T_i$, onto $P$ at the point we call $m'_i$.
This geodesic crosses
the previous geodesic cut $\g_i$ from $m_{i-1}$ to $v_{i+1}$ at  $m'_i$.
So $m'_i$ is a point on $P$, whereas $m_i$ is not on $P$, but rather on $P_i$,
the intermediate polyhedron after the $i$-th merge.
In Fig.~\figref{Icosahedron_v1} illustrates the four merge cuts on $P$,
with just $T_1$ shown to illustrate $\g_2$ crossing from $T_1$ onto $P$ at $m'_1$.

Returning to Fig.~\figref{IcosahedronLabels_v1}, we merge five vertices on the top
side of $P$, and symmetrically five on the bottom side, and two connecting
across the ``equitorial'' band of triangles. These merges reduce the
original $12$ vertices to $3$, so the result is a doubly covered triangle.
The slit graph is a forest of three trees.

The resulting doubly covered triangle $abc$ is shown in
Fig.~\figref{IcosaDoubleTriangle}, cut open so that both sides can be seen.
It may not be obvious, but the white regions in Fig.~\figref{IcosaDoubleTriangle}
form a non-overlapping net of the icosahedron,
when, for example, the doubly-covered triangle is cut along
edges $a c$ and $b c$ as illustrated.
\begin{figure}[htbp]
\centering
\includegraphics[width=1.0\linewidth]{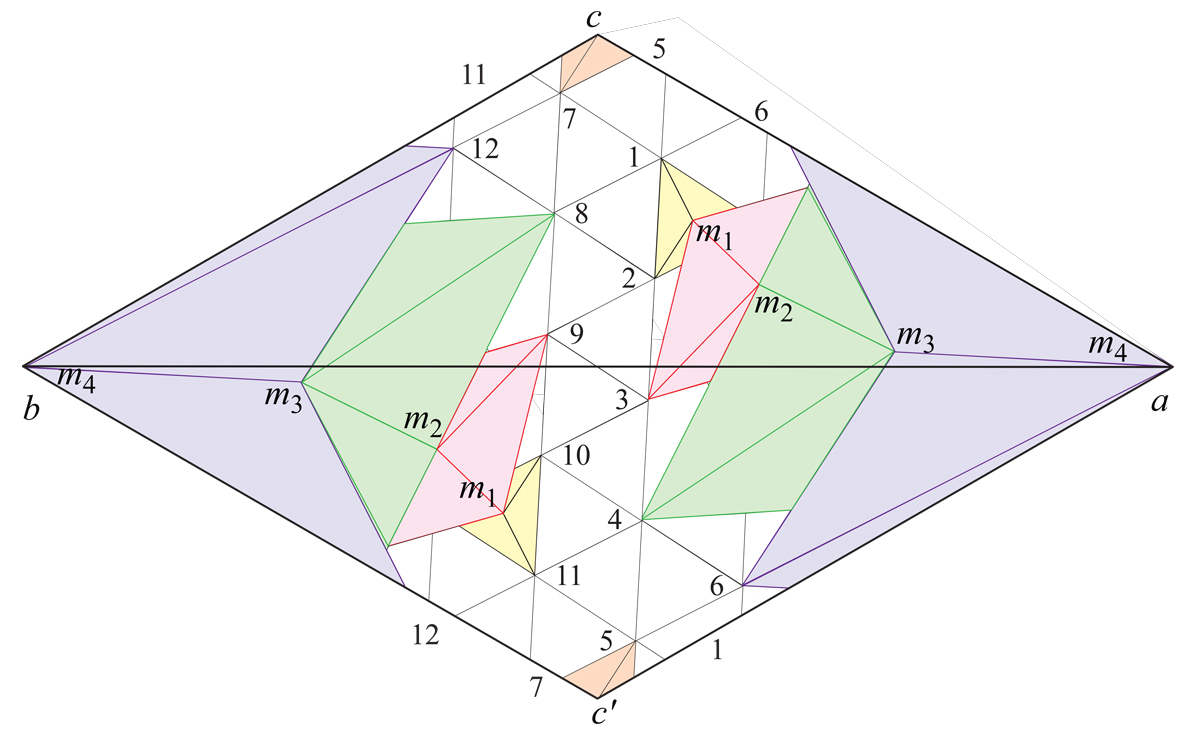}
\caption{Icosahedron surface is white, triangle inserts (merge domains) colored.
$\triangle a b c'$ is the back side of $\triangle a b c$, with $c=c'$ identified.}
\figlab{IcosaDoubleTriangle}
\end{figure}


\section{Example: Hexagonal Shape with Cycle}
\seclab{HexCycle}
The next example shows that the slit graph could have a cycle.
$P$ is the convex hull of two similar hexagons in parallel planes,
separated by a distance $h$.
Label the top hexagon with vertices $v_1,\ldots,v_6$,
and the larger base hexagon $x_1,\ldots,x_6$.
See Fig.~\figref{HexCycle}.
\begin{figure}[htbp]
\centering
\includegraphics[width=0.75\linewidth]{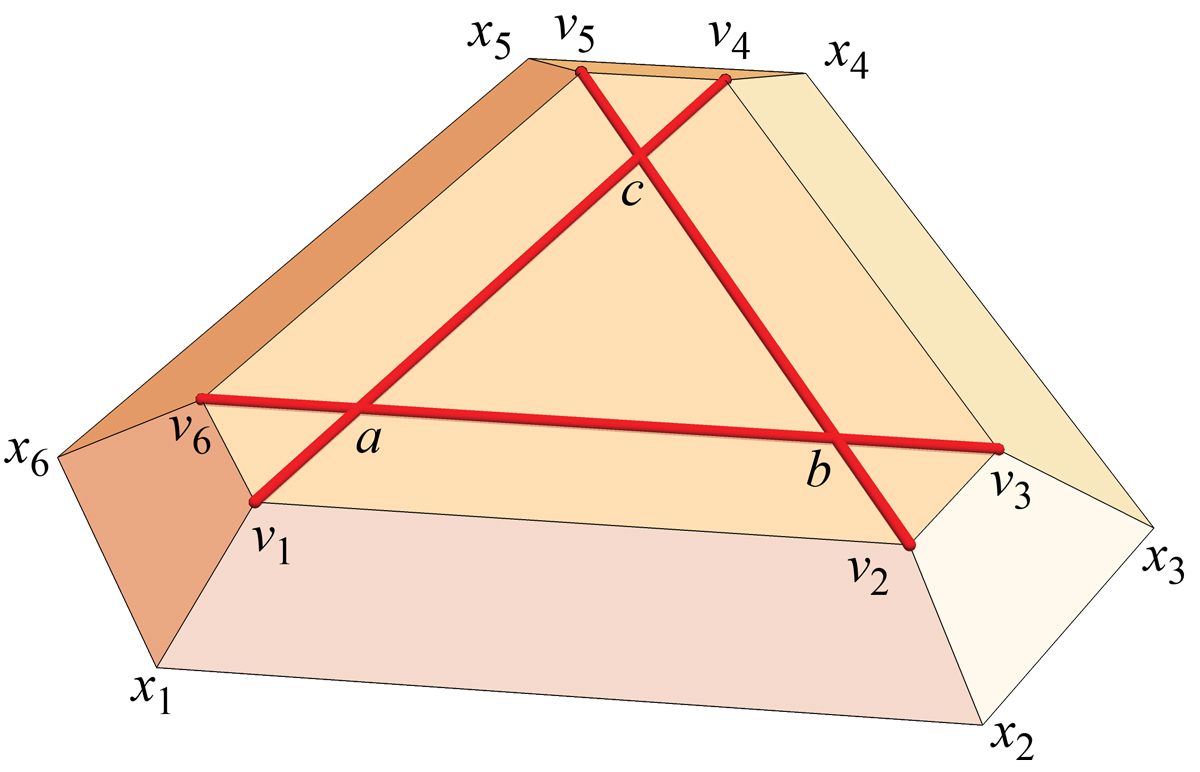}
\caption{Three vertex-merge slits (red) will form a cycle $abc$.}
\figlab{HexCycle}
\end{figure}
Note that as $h \to 0$, the curvature $\o(v_i) \to 0$.
Let $h=\e$ be a small positive height, so $\o=\o(v_i)$ is small.

Now we are going to merge $v_1 + v_4$, then $v_2 + v_5$, 
and finally $v_3 + v_6$, where again $+$ means ``merge."
The merge of $v_1$ and $v_4$ is accomplished by a pair of triangles
with angles $\o/2, \o/2, \pi-\o$. With $\o$ small, the inserted digon $D(v_1,v_4)$
is very narrow, akin to a ``fat" edge with endpoints $v_1$ and $v_4$.
Then the geodesic between $v_2$ and $v_5$ crosses near $c$ over
$D(v_1,v_4)$, but because that is narrow, the geodesic does not look too
different from the $v_2 v_5$ edge illustrated in the figure---the geodesic
``jags" slightly as it crosses $D(v_1,v_4)$.
Again the digon $D(v_2,v_5)$ is narrow, and so does not greatly deviate
the geodesic from $v_3$ to $v_4$, which now crosses both previously inserted
digons, near $b$ and $a$.

After these three vertex-merges, $P_3$ is a 9-vertex polyhedron.
It should be clear that the roughly triangular region $a b c$ in the figure
is disconnected by the three slits, which form a cycle in $\L_3$.
Further slits cannot ``repair" the disconnecting cycle,
so the final $\L$ will have at least one cycle.

\medskip

This leads us to consider the following question:
\emph{For which convex polyhedra do there exist vm-reductions whose slit graphs have no cycle?}
Although we pursue this question in subsequent chapters,
it remains unresolved. See Open Problem~\openref{NoSlitCycle}.


\section{Vertex Merging  and Unfoldings}
\seclab{VMUnf}

The goal of this section if to relate vm-reductions and their slit graphs to unfoldings.

An unfolding of a convex polyhedron $P$ 
cuts the surface along a spanning tree of the vertices, producing a 
polygon $U_P$ when developed in the plane. 
See, e.g., \cite{o-dp-13}.
When that polygon is \emph{simple}---non-self-intersecting,
i.e., non-overlapping---then $U_P$
is called a \emph{net} for $P$,
i.e., an injective embedding of $P$ into the plane.

Consider a reduction process $\iota : P \to S$ of $P$ onto the vm-irreducible surface $S$.
Then $P_S = \iota(P) \subset S$ is an unfolding of $P$ onto $S$
in the sense of Chapter~\chapref{Punfoldings}. 
Further unfolding $S$ to the plane (there are in general many ways to do this) provides an unfolding $U_P$ of $P$ in the plane.

\medskip

We recall here some definitions for subsets 
related to convex polyhedra.
\begin{itemize}
\item
A \emph{simple closed curve} is a closed curve without self-intersections (i.e., homeomorphic to a circle).
\item
A \emph{(geodesic)} polygon is a simple closed curve composed of (geodesic) segments.
\item
A \emph{domain} is a connected open set.
\item
A \emph{polygonal domain} is a the closure of a domain whose boundary is a finite union of polygons.
\item
A \emph{simple polygonal domain} is a polygonal domain with one boundary component.
\item
By a \emph{simply connected set} we understand a set each path-connected component of which is contractible; so the set itself is not required to be connected.
\end{itemize}

\begin{lm}
\lemlab{OneVertex}
Consider a partial vm-reduction $\iota_j : P \to P_j$, for some $1\leq j \leq k$, and identify, for simplicity, $P=\iota_j (P) \subset P_j$.
Every component of $P_j \setminus P$ is a simple polygonal domain containing precisely one vertex.
In particular, the conclusion holds for $S \setminus P_S$,
where $S$ is the vm-irreducible end result of the reduction process.
\end{lm}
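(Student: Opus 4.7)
The plan is to prove the lemma by induction on $j$. For the base case $j=1$, the first merge inserts a single digon $D_1$ along a geodesic segment joining two vertices $v_1,v_2$ of $P$ with $\omega(v_1)+\omega(v_2)<2\pi$. By AGT and the construction in Section~\secref{VertexMerging}, the complement $P_1\setminus P$ equals $\inn(D_1)$, which is bounded by two equal-length geodesic arcs sharing their endpoints, hence has a simple polygonal boundary. The apex $v'$ where the two inserted triangles are glued is the unique interior vertex of $P_1$ sitting in $D_1$, with curvature $\omega(v_1)+\omega(v_2)>0$, while $v_1$ and $v_2$ now lie on $\partial D_1$ as flat boundary points.

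For the inductive step, suppose $P_j\setminus P$ decomposes into simple polygonal domains $C_1,\ldots,C_m$, each containing a single interior vertex. The $(j{+}1)$-st merge joins two vertices $u_1,u_2$ of $P_j$ along a geodesic segment $\gamma\subset P_j$ and inserts a new digon $D$ whose interior carries the new merge vertex $u'$. The case analysis hinges on where $u_1$ and $u_2$ sit relative to the decomposition: each endpoint is either a surviving original vertex of $P$ (on the image of $P$) or a previous merge vertex (interior to some $C_i$, which it must be by the inductive hypothesis, since $C_i$ has a unique interior vertex of $P_j$).

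If both endpoints lie on the image of $P$, then $\partial D$ attaches to $P$ and $D$ becomes a fresh component of $P_{j+1}\setminus P$ with the single interior vertex $u'$. If instead an endpoint $u_\alpha$ is the interior vertex of some $C_{i(\alpha)}$, slitting along $\gamma$ and gluing in $D$ fuses $C_{i(\alpha)}$ with $D$ along the sub-arc $\gamma\cap C_{i(\alpha)}$; crucially, the full angle deficit at $u_\alpha$ is sutured into the apex $u'$, so $u_\alpha$ becomes flat and the fused region retains $u'$ as its only interior vertex. Components $C_i$ disjoint from $\gamma$ are unchanged. In every subcase, each component of $P_{j+1}\setminus P$ still carries exactly one interior vertex of $P_{j+1}$.

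The main obstacle will be confirming that each fused region remains a \emph{simple} polygonal domain, i.e.\ a topological disk with a single boundary polygon. For a single fusion this is clean: $C_{i(\alpha)}$ is a disk by the inductive hypothesis, $\gamma\cap C_{i(\alpha)}$ is a proper arc from an interior point to $\partial C_{i(\alpha)}$, and gluing the disk $D$ across the two banks of this slit preserves disk-likeness. The delicate subcase is when both endpoints of $\gamma$ lie in the same component $C_i$, where one must rule out an annular configuration; this is controlled by the fact that both endpoints \emph{are} the interior vertex of $C_i$, forcing the slit to be an arc from the vertex out to $\partial C_i$ rather than a pair of arcs leaving a non-contractible loop. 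The ``in particular'' assertion for $S\setminus P_S$ is the case $j=k$.
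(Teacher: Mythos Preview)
Your inductive argument mirrors the paper's proof: both induct on the number of merges and treat each endpoint of the new merge segment $\gamma$ as either an original $P$-vertex (with trivial surrounding domain) or the unique interior vertex of some existing component, concluding that the insertion fuses the domains around the two endpoints into a single disk containing only the new merge vertex $u'$. One small correction: your ``delicate subcase'' of both endpoints lying in the same $C_i$ is vacuous, since by the inductive hypothesis each $C_i$ contains exactly one interior vertex of $P_j$, while $u_1$ and $u_2$ are distinct vertices of $P_j$; they cannot both be that unique vertex.

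There is, however, a case that neither your argument nor the paper's addresses. Your sentence ``Components $C_i$ disjoint from $\gamma$ are unchanged'' tacitly assumes that any component $C_k$ not containing an endpoint of $\gamma$ is in fact disjoint from $\gamma$. But the geodesic $\gamma$ on $P_j$ can pass transversally through such a $C_k$, entering and leaving through $\partial C_k$; the paper's own construction in Section~\secref{HexCycle} has the second merge segment crossing the first inserted digon. In that situation the newly inserted digon $D$ fuses with $C_k$ as well, and the resulting component would carry both the new merge vertex $u'$ and the surviving merge vertex of $C_k$---two interior vertices. The paper's proof glosses over this possibility just as yours does, so at the level of matching the paper your argument is adequate, but the ``precisely one vertex'' claim appears to need this additional discussion to be fully justified.
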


We have seen this lemma verified for $S \setminus P_S$
in the series of examples above:
Figs.~\figref{Hex_EqTri}(b,d),
\figref{CubeVMFlattened}(b),
\figref{Cube_VM_triangle_3D}(b),
and~\figref{IcosaDoubleTriangle}.

\begin{proof}
The proof is a simple induction on the step $m$ of the vm-reduction sequence.

At step $m=1$, we merge the first two vertices of $P$, and $P_1 \setminus P$ consists of precisely the pair of inserted triangles, with one vertex, 
the shared apex of those triangles, outside $P$.

Notice that each (partial) vm-reduction $\iota_j$ yields a forest $F$ of binary rooted trees, every node of 
which is a vertex in some $P_l$, with $1\leq l \leq j$.
(These vertex trees are not to be confused with slit trees.)
After a complete reduction, the $3$ or $4$ vertices of $S$ are the roots of
the $3$ or $4$ vertex-trees that were merged to produce those root vertices.
Precisely, the leaves of $F$ are the vertices of $P$, and two nodes in $F$ have a common parent if and only if they are merged at some step $l$.
If a vertex $v \in P$ has not been merged, it is an isolated node of $F$.
If $v$ has been merged, then it has been flattened: $\o(v)=0$.

Assume now that the conclusion holds for the first $m-1$ steps.
At step $m$ we merge, say, the nodes $n_1$ and $n_2$ of $F$, along the geodesic segment $\g$ in $P_{m-1}$. 
Note $n_i$ could be a vertex of $P$, or a vertex in a domain of $P_{m-1} \setminus P$
resulting from a merge.
If $n_i$ is a vertex of $P$, then it is not surrounded by a polygonal domain; for simplicity, we assume in this case that the polygonal domain is $\{n_i\}$ itself.
Cutting along $\g$ and inserting the curvature triangles merges the simple polygonal domains around $n_1$ and $n_2$ into a larger simple polygonal domain,
containing precisely the resulting node $n_{12}$.
\end{proof}

\noindent
For example, in Fig.~\figref{Cube_VM_triangle_3D}, when $n_1=v_5$ and $n_2=v_{78}$,
the domain around $v_{78}$ (yellow in (b) of the figure) is merged with the new
curvature triangles (tan) to form one simple polygonal domain containing $n_{12}=v_{578}$.
In Fig.~\figref{IcosaDoubleTriangle}, the domains around triangle vertices
$a$ and $b$ are the result of four merges, colored in the figure.

\begin{thm}
\thmlab{Connectivity}
Consider a reduction process $\iota : P \to P_S \subset S$ of $P$ onto the vm-irreducible surface $S$, 
resulting in slit graph $\L$.
\begin{itemize}
\item If $\L$ is a forest of trees then $P_S$ is a polygonal domain in $S$.
\item If $\L$ is connected then $P_S$ is simply connected.
\item If $\L$ is a tree then $P_S$ is a simple polygonal domain in $S$.
\end{itemize}
\end{thm}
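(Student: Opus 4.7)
The plan is to reduce each of the three statements to a combinatorial identity between the connected components and first Betti number of $\L$ on one side, and the number of boundary components of $P_S$ on the other, and then to invoke Lemma~\lemref{OneVertex}.

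First I would observe that the composed isometry $\iota = \iota_{k-1} \circ \cdots \circ \iota_0$ restricts to a homeomorphism
\[
\iota|_{P \setminus \L} \;:\; P \setminus \L \;\longrightarrow\; P_S \setminus \partial P_S,
\]
with $\L$ being sent two-to-one onto $\partial P_S$ in the interior of each slit. The argument is by induction on the reduction step: each cut-and-insert $\iota_j$ is an isometry from $P_j \setminus \g_j$ onto the complement in $P_{j+1}$ of the inserted triangle-pair domain, sending the two banks of the slit to the two geodesic boundary arcs of that domain. Points of $P$ avoiding every $\lambda_j$ are thus transported untouched to the interior of $P_S$, while points of $\L$ are sent to $\partial P_S$. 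By Lemma~\lemref{OneVertex}, every component of $S \setminus P_S$ is a simple polygonal domain with a single polygon as boundary, so $\partial P_S$ is a disjoint union of exactly $k$ polygons, where $k$ denotes the number of components of $S \setminus P_S$.

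Second, I would compute Euler characteristics on both sides of this homeomorphism. Viewing $\L$ as a $1$-complex embedded in the topological sphere $P$ (vertices: slit endpoints and interior crossings of slits; edges: the resulting maximal arcs), we have $\chi(\L) = c - b_1$ where $c$ and $b_1$ are respectively the number of connected components and the first Betti number of $\L$, so
\[
\chi(P \setminus \L) \;=\; \chi(P) - \chi(\L) \;=\; 2 - (c - b_1).
\]
On the image side, $P_S$ is a compact planar subsurface of the sphere $S$ with $k$ polygonal boundary components, so $\chi(P_S) = 2 - k$, and since each boundary polygon has vanishing Euler characteristic, $\chi(P_S \setminus \partial P_S) = 2 - k$ as well. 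The homeomorphism then forces the key identity
\[
k \;=\; c - b_1.
\]

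Third, I would read off each statement. If $\L$ is a forest, then $b_1 = 0$ and $k = c$; by Lemma~\lemref{OneVertex}, $\partial P_S$ is the disjoint union of the $c$ polygons bounding the components of $S \setminus P_S$, and since $P_S = \iota(P)$ is the continuous image of a connected space, it is a connected closed subset of $S$ with polygonal boundary, i.e., a polygonal domain. If $\L$ is a tree then additionally $c = 1$, so $k = 1$ and $P_S$ is a simple polygonal domain. If $\L$ is connected and non-empty, then $c = 1$ and $k \ge 1$ (the last merge always contributes at least one component of $S \setminus P_S$ that cannot be absorbed by subsequent insertions), so $k = 1 - b_1 \ge 1$ forces $b_1 = 0$; hence $\L$ is in fact a tree, $P_S$ is a disk, and in particular contractible, which means simply connected in the sense defined in Chapter~\chapref{VMSlitGraph}.

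The main technical obstacle lies in the first step, specifically in verifying that the closures of the simple polygonal domains of $S \setminus P_S$ are pairwise disjoint, so that $\partial P_S$ really is a disjoint union of polygons rather than a wedge of polygons joined at isolated points. When slits from one merge step traverse previously inserted triangle-pair domains (as for instance in Fig.~\figref{Cube_VM_triangle_3D}) a careful inductive bookkeeping of the interaction between $\g_j$ and earlier inserts is needed. Once the homeomorphism is in place, the Euler characteristic computation and the three deductions are elementary.
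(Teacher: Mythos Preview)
Your Euler-characteristic route is genuinely different from the paper's, which is a three-sentence combinatorial argument: each slit appears twice in $\partial P_S$; each tree component of $\L$ yields exactly one boundary polygon; and if $\L$ is connected then $P_S$ cannot have several boundary components. The paper asserts the middle bijection without proof and invokes no $\chi$ at all, so your approach is more systematic and makes explicit what the paper leaves implicit.

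Two remarks on your version. First, the computation actually yields $k = c - b_1 - \chi(\partial P_S)$, since $2 = \chi(\mathring P_S)+\chi(\partial P_S)+\chi(S\setminus P_S)$; your identity $k = c - b_1$ therefore already presupposes $\chi(\partial P_S)=0$, i.e.\ exactly the disjoint-closures hypothesis you flag as the main obstacle. The paper's terse argument rests on the same unstated separation of merge domains, so your route is no less rigorous; it just makes the gap visible. Second, ``$P_S$ is the continuous image of $P$'' is unsafe, since $\iota$ is only piecewise isometric and need not be continuous across $\L$, and ``connected'' is in any case weaker than ``polygonal domain,'' which in the paper's definition requires connected \emph{interior}. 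The clean fix for the forest bullet is that removing a forest from a sphere leaves the complement connected, so $\mathring P_S \cong P\setminus\L$ is connected and $P_S=\overline{\mathring P_S}$ is a polygonal domain.

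Your side observation---that a connected $\L$ together with $k\ge 1$ forces $b_1=0$, so $\L$ is already a tree---is not stated in the paper and would collapse the second and third bullets. It depends on the same $\chi(\partial P_S)=0$ assumption, so it should be stated as conditional on resolving the obstacle.
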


\begin{proof}
Each slit in $\L$ appears twice in the boundary of $P_S$, once for each bank.
Since $P$ has no boundary, the whole boundary of $P_S$ is produced in this way, hence it is a finite union of segments.

It also follows that each tree component of $\L$ yields a boundary component of $P_S$, which is a polygon.
Therefore, if $\L$ is a tree then $P_S$ is a simple polygonal domain in $S$.
And if $\L$ is a forest of trees then $P_S$ is a polygonal domain.

If a connected component of $P_S \subset S$ is not simply connected then it has several boundary components, impossible if $\L$ is connected.
\end{proof}

Next we offer a topological viewpoint.
View $P$ as a topological sphere with conical-point vertices.
A vertex-merge of $v_1$ and $v_2$ is a topological-circle hole cut in $P$, passing through $v_1, v_2$, which is then filled with a topological disk---the
two back-to-back triangles with a new vertex $v$ inside the disk.
Two connected vm-slits on $P$ correspond topologically to merging two holes to one hole.
A tree of vm-slits is then topologically a single hole, filled with doubly-covered triangles.
Thus a forest of trees is topologically a collection of disjoint holes on $P$.
As all holes are bounded by geometric segments, the result is a polygonal domain with polygonal holes.

\bs

Assume now that $\L$ is a tree, so $P_S$ is a simple polygonal domain in $S$
by Theorem~\thmref{Connectivity}, where $S$ a doubly-covered triangle or an isosceles tetrahedron.
With an appropriate unfolding of $S$, $P_S$ remains connected and thus becomes an unfolding of $P$ in the plane.
However, the unfolding may overlap, and so not constitute a net.
This is explained in the next section.



\section{Unfolding Irreducible Surfaces}
\seclab{Unf2xTri}
Assume we have a vm-reduction $\iota : P \to P_S \subset S$ 
resulting in a
tree slit graph $\L$  so that $P_S$ is
a simple polygonal domain of $S$, by Theorem~\thmref{Connectivity}.
We explore next the question of unfolding $S$ to the plane, preserving 
connectedness of $P_S$.
We start with $S$ a doubly-covered triangle.

\subsection{$S$: Doubly-covered Triangle}
Let $u_1,u_2,u_3$ be the three vertices of $S$.
Each is surrounded by polygonal domains $M_1, M_2, M_3$
created by repeated triangle inserts.
Two examples of these domains
are the cube vm-reduction in
Fig.~\figref{Cube_VM_triangle_3D}(b)
and the icosahedron vm-reduction in
Fig.~\figref{IcosaDoubleTriangle}.
Both are displayed with their triangles cut open.

Now we argue that $S$ can be cut open via a spanning tree 
without disconnecting $P$. So this provides an unfolding of $S$
that preserves $P$ as a single piece, as in those two figures.
Although we can guarantee a single piece, we have not established this piece
is a net, avoiding overlap. See Open Problem~\openref{NotDisconnect}.

Topologically,
$S$ is a sphere with three 
conical points / vertices $u_i$, each surrounded
by a \emph{merge-domain} $M_i$, with the remainder of $S$ the original $P$:
in loose notation,
$P = S \setminus \left( \cup_i M_i \right)$.
Each $M_i$ is topologically a disk,
as depicted in Fig.~\figref{MiTopology}(a).

There are only two combinatorially distinct spanning slit trees for unfolding $S$:
a path $u_1, u_2, u_3$ (and its index permutations),
or a {\sc Y}-tree: some point $x$ in $P$ with slits from $x$ to each $u_i$.
(In both 
Figs.~\figref{Cube_VM_triangle_3D}(b)
and~\figref{IcosaDoubleTriangle},
the slit tree is a path.)
For either spanning tree, a slit path $\rho$ must enter $M_i$ to reach $u_i$, and in the
path case, the slit path must also exit $M_i$.
\begin{figure}[htbp]
\centering
\includegraphics[width=1.0\linewidth]{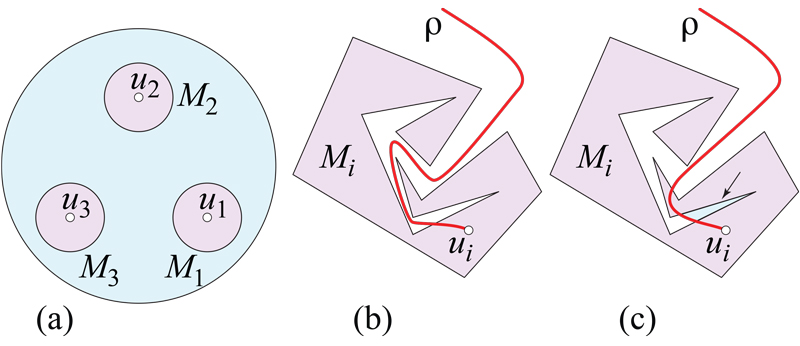}
\caption{(a)~Topology of $S$. (b,c)~Slit path $\rho$ to $u_i$.
In~(c), the marked region is disconnected from $P$.}
\figlab{MiTopology}
\end{figure}

Because each $M_i$ is a disk, there is a $\rho$ that crosses $\partial M_i$
just once on entrance, and in the path case, once again on exit.
Fig.~\figref{MiTopology}(b) illustrates such a one-cross $\rho$.
Fig.~\figref{MiTopology}(c) shows that, if $\rho$ crossed $\partial M_i$ at
more than one point, then a piece of $P$ is disconnected.
But because the spanning slit tree can be chosen to avoid the (c)~situation,
we are guaranteed that we can unfold $S$ to the plane so that $P \subset S$
remains a simply connected polygonal domain in the plane.


\subsection{Net and Overlap}

Let us temporarily ignore the structure of 
$S = P \cup \left( \cup_i M_i \right)$,
and just view $S$ as a doubly covered triangle $\triangle$.
If, for every spanning slit tree $T$, the unfolding of
$\triangle$ avoids overlap, then we immediately have that
the unfolding of $P$ avoids overlap, and so we have found a net for $P$.

Alas, this is not true for every $T$, as
the example in Fig.~\figref{Spiral_x2tri_cex} shows.
\begin{figure}[htbp]
\centering
\includegraphics[width=1.0\linewidth]{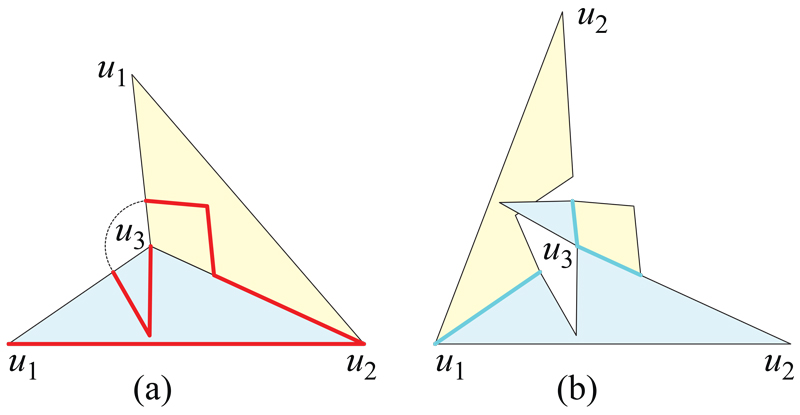}
\caption{(a)~Front: blue; back: yellow.
Slit path $T$: $(u_1, u_2, u_3)$. 
(b)~Unfolding after cutting $T$. Blue segments remain uncut.}
\figlab{Spiral_x2tri_cex}
\end{figure}
In (a) the $u_2,u_3$ slit path spirals around $u_3$ from the back to the front.
The unfolding is shown in~(b), with the uncut segments highlighted,
showing clear overlap.

Although this may seem a contrived example,
the merge regions $M_i$ might be created from spiraling slit trees,
as we will discuss in the the following chapters, 
and could be quite
complex, as the icosahedron example (Fig.~\figref{IcosaDoubleTriangle})
suggests.
A natural slit path follows the merge vertices into each $M_i$ region:
$\mu= (m_0,m_1,m_2,\ldots,m_i)$, where $m_0=u_1$.
However, we leave it as an open question 
(Open Problem~\openref{NotDisconnect})
of whether forming a 
slit tree by following $\mu$ for each $M_i$---or any other slit tree---unfolds $P$ to a net.


\subsection{$S$: Isosceles Tetrahedron}
Vertex-merging could result in an isosceles tetrahedron $S$
(rather than a doubly covered triangle).
This occurred in the first cube reduction, illustrated in
Fig.~\figref{CubeVMFlattened}(b).
This does not change the topological picture Fig.~\figref{MiTopology}(a)
significantly: just a fourth merge-domain disk $M_4$ is added, 
and there are two more possible spanning
tree structures.
Therefore, by similar reasoning, we can ensure a single-piece unfolding
of $P$ to the plane.
But again, we leave open the question of whether there is always a slit
tree that leads to a nonoverlapping single piece, a net for $P$.

\chapter{Planar Spiral Slit Tree}
\chaplab{SpiralTree2D}
The previous chapter showed that 
if the slit graph $\L$ of a vm-reduction 
is a tree, then we can unfold $P$ to the plane, and possibly to a non-overlapping net.
So we have made a goal of finding a vm-reduction ordering that results in $\L$
a tree.
Our overall plan is to partition $P$ into two ``halves" via a simple closed
quasigeodesic $Q$, and then to vm-reduce the vertices in each half separately.
(As mentioned in Chapter~\chapref{IntroII}, such quasigeodesics always exist.)
Let $V$ be the set of vertices inside $Q$.
We will eventually show (in Chapter~\chapref{SpiralTree3D}) 
how to vertex-merge all of $V$ so that the slit graph of that half is a tree.
In this chapter we illustrate the main idea by assuming $V$ lies in a plane.

\section{Sequential Spiral Merge}

Let $V =  \{v_1,v_2,v_3, \ldots, v_n \}$ be the vertices
in the above-half $P^+$ of polyhedron $P$.
Define a \emph{sequential merge} as a series of vertex-merges
where at each step, the previous merge vertex is connected to a
vertex of $V$. Thus the merge region---the union of all the triangle inserts---grows
a single connected domain enclosing, in the end, one vertex
by Lemma~\lemref{OneVertex}.

There are many possible sequential merge orders. We choose 
one that we call a \emph{spiral merge}.
See ahead to Fig.~\figref{SpiralTree_n50_s1} for why ``spiral" is 
an appropriate term.
In the planar situation, we view
vertices as having zero curvature, so that $P^+$ becomes planar.
In this context, a slit is a line segment $s$, wholly in $P$.
Each triangle insert degenerates to just $s$, and creates a merge vertex
somewhere along $s$.
We will show in Theorem~\thmref{SlitTree2D} that a spiral merge 
algorithm in two dimensions results in $\L$ a slit tree.


\section{Notation}

\begin{figure}[htbp]
\centering
\includegraphics[width=1.0\linewidth]{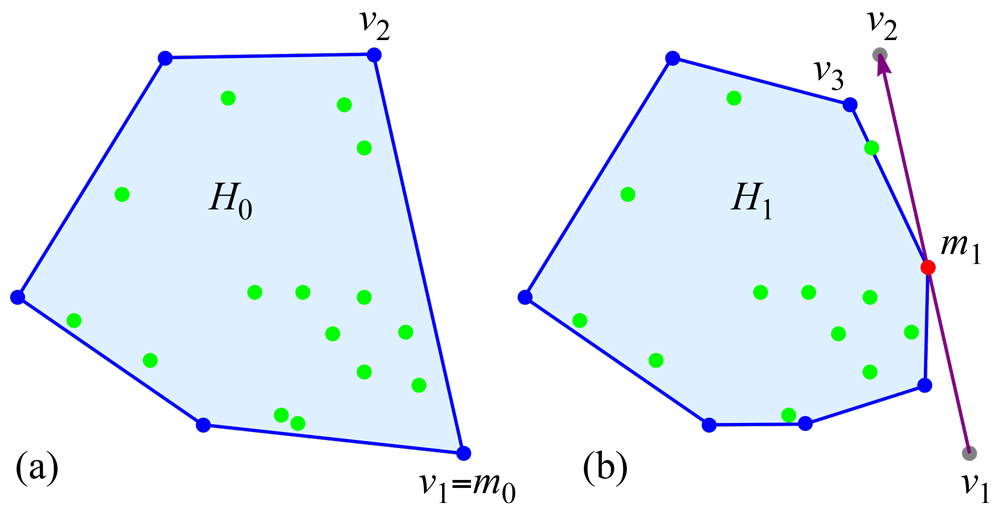}
\caption{$v_1$ is merged with $v_2$ to produce $m_1$.
$H_1 = \conv( V_1)$ where
$V_1 = V_0 \setminus \{v_1,v_2\} \cup m_1$.}
\figlab{SpSnap_12}
\end{figure}

The detailed argument needs considerable notation beyond that introduced %
earlier, which we gather below for reference.

\begin{itemize}
%
\item $V = V_0 = \{v_1,v_2,v_3, \ldots, v_n \}$: 
Vertices in the plane. 
(Later (in Chapter~\chapref{SpiralTree3D}) these will be vertices 
on the surface of the polyhedron $P$.)
\item $m_i$: merge vertex, the vertex created by merging $m_{i-1}$ with $v_{i+1}$
with $m_0=v_1$. We 
sometimes abbreviate this as $m_{i-1} + v_{i+1} \to m_i$.
\item $s_i = m_{i-1} v_{i+1}$. The $i$-th slit/merge segment. 
The new merge vertex $m_i$ lies on $s_i$.
\item So each slit segment has three labeled points: $m_{i-1}, m_i, v_{i+1}$.
\item $v_1$ is also given the label $m_0$, so $s_1 = v_1 v_2 = m_0 v_2$.
\item $m_i$ can lie anywhere along $s_i$. 
In Figs.~\figref{SpSnap_12}-\figref{SpiralTree_n50_s1},
$m_i$ was chosen at a random point on $s_i$.
\item $\L _i= \cup_i s_i$ is the slit graph after the $i$-th merge. $\L$ is the full slit graph.
\item $v_i$ is called \emph{flattened} if it has already been merged. 
(In Chapter~\chapref{SpiralTree3D}, when $v_i$ is a vertex of positive curvature,
the merge will reduce $v_i$'s curvature to zero.)
\item $V_i$ is the set of not-flattened vertices remaining after the $i$-th merge 
$s_i = m_{i-1} v_{i+1}$. $m_i \in V_i$.
$|V_i|$ is the number of vertices in $V_i$.
\item $H_i$ is the convex hull of $V_i$. 
We view $H_i$ as a closed region of the plane and $\partial H_i$ its boundary, a convex polygon.
$H_0$ is the convex hull of $V=V_0$.
\end{itemize}

\begin{figure}[htbp]
\centering
\includegraphics[width=1.0\linewidth]{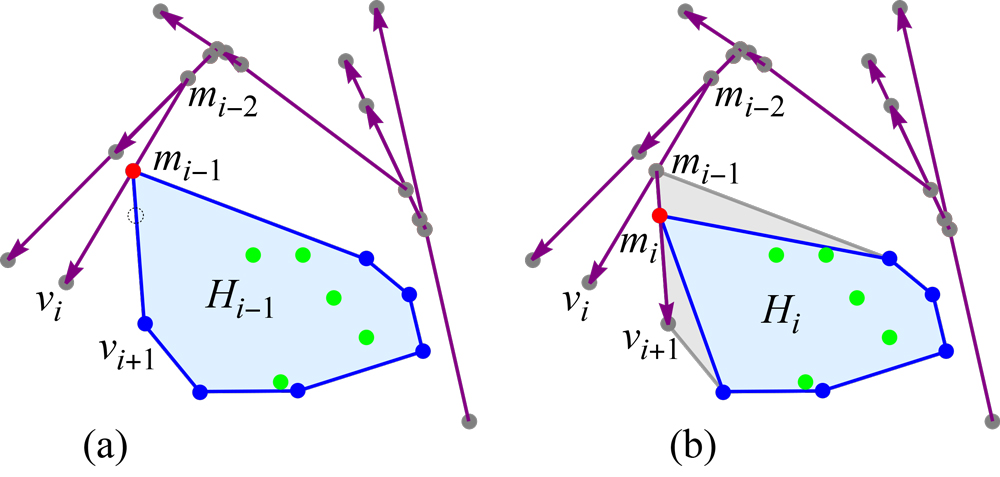}
\caption{General step: $i$-th merge.
(a)~$s_{i-1} = m_{i-2} v_i$. $s_{i-1} \cap H_{i-1} = m_{i-1}$.
(b)~$s_{i} = m_{i-1} v_{i+1}$. $s_{i} \cap H_{i} = m_{i}$.
$s_{i-1} \cap s_i = m_{i-1}$.
}
\figlab{SpSnap_mi}
\end{figure}


\section{Algorithm Description}
\seclab{SpiralAlgorithm}

\begin{description}
\item[First merge.] 
The algorithm starts by selecting any edge of the convex hull $H_0$ of $V=V_0$,
say $v_1$ and $v_2$, and merging them via the merge segment $s_1 = v_1 v_2$.
In the plane, this amounts to replacing $v_1=m_0$ and $v_2$ with 
with a new vertex 
$m_0 + v_2 \to m_1$
on the segment $s_1$.
The merge flattens $v_1$ and $v_2$,
which is why they are removed.
$V_1 = V \setminus \{v_1,v_2\} \cup m_1$,
and $H_1 = \conv(V_1)$.
See Fig.~\figref{SpSnap_12}.
\item[Second merge.] 
Next, $m_1$ is merged with the first vertex beyond $m_1$ on $H_1$.
Call this vertex $v_3$. Note that $v_3$ might not be the next vertex
after $v_2$ on $H_0$, because $V_1 \neq V_0$
and so $H_1 \neq H_0$.
The merge of $m_1$ and $v_3$ introduces a new merge vertex $m_2$ on
$s_2=m_1 v_3$, and both $m_1$ and $v_3$ are flattened and removed
from $V_1$, while $m_2$ is added, to produce $V_2$.
%
\item[General step.] 
The $i$-th merge connects $m_{i-1}$ to $v_{i+1}$,
where $v_{i+1}$ is the next vertex on $H_{i-1}$ beyond $m_{i-1}$.
See Fig.~\figref{SpSnap_mi}.
Then $m_i$ lies on $s_i = m_{i-1} v_{i+1}$,
the vertex set is updated as
$V_i = V_{i-1} \setminus \{m_{i-1}, v_{i+1} \} \cup m_i$,
and $H_i = \conv(V_i)$.
Note that $|V_i| = |V_{i-1}| - 1$, because two vertices are removed 
(flattened) and one added.
But $\partial H_i$ 
does not bear a similar relation to $\partial H_{i-1}$ 
because the new hull may wrap around vertices that were strictly interior to $H_{i-1}$
(green in the figures).
\item[Completion.] 
The process continues until all the original $v_i$ vertices are merged,
leaving just one vertex $V_{n-1} = \{ m_{n-1} \}$.
A full trace is illustrated in Fig.~\figref{SpiralSnaps_n20_s4}.
(The reason there are $n-1$ merges rather than $n$ is that the first merge
flattens two vertices of $V$, while all subsequent merges flatten two vertices but only
one is from $V$.)
\end{description}

%
\begin{figure}[htbp]
\centering
\includegraphics[height=0.9\textheight]{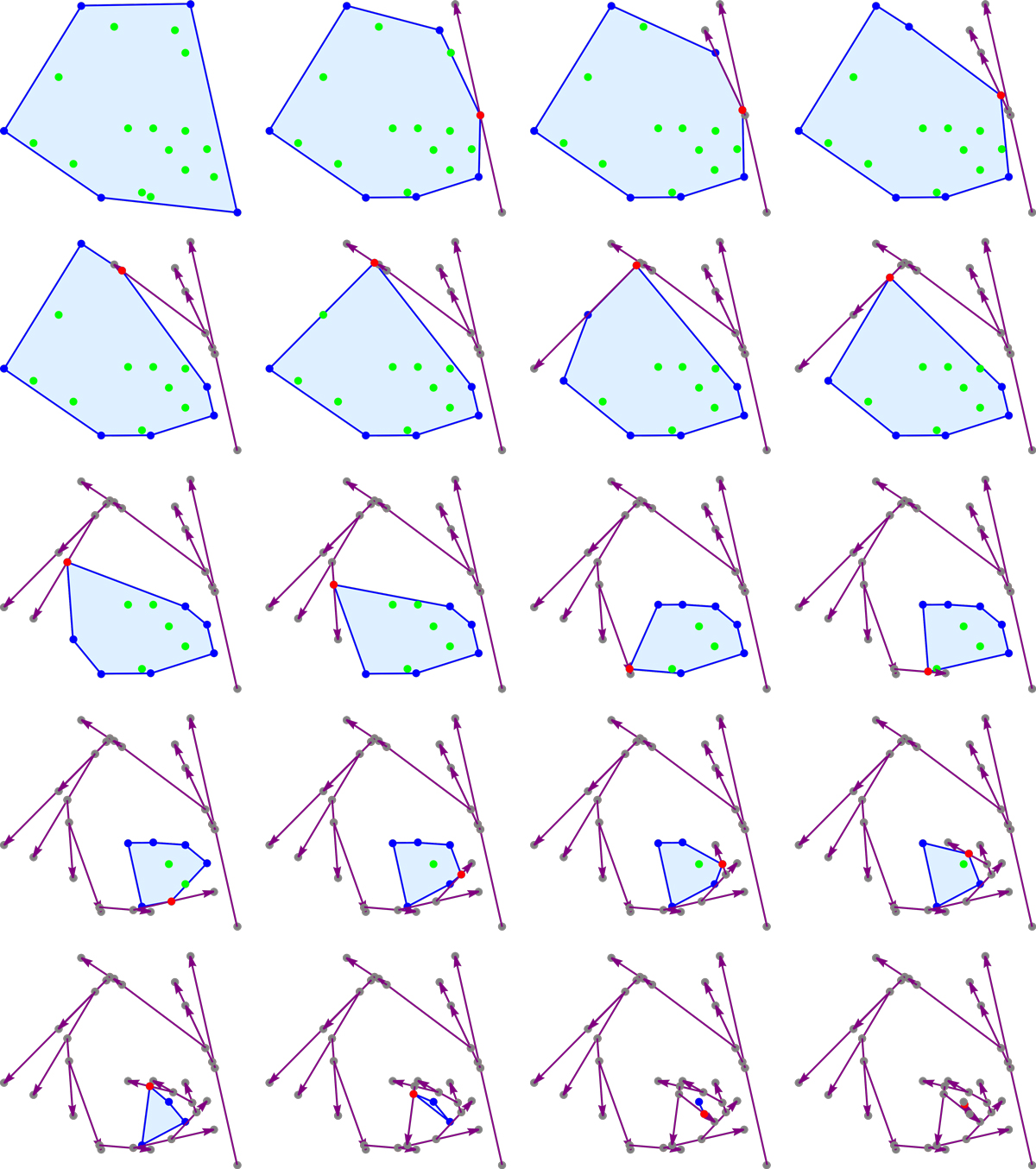}
\caption{Trace of example with $|V|=20$,
from which 
Figs.~\protect\figref{SpSnap_12} and~\protect\figref{SpSnap_mi} are 
details.}
\figlab{SpiralSnaps_n20_s4}
\end{figure}

\begin{figure}[htbp]
\centering
\includegraphics[width=0.6\linewidth]{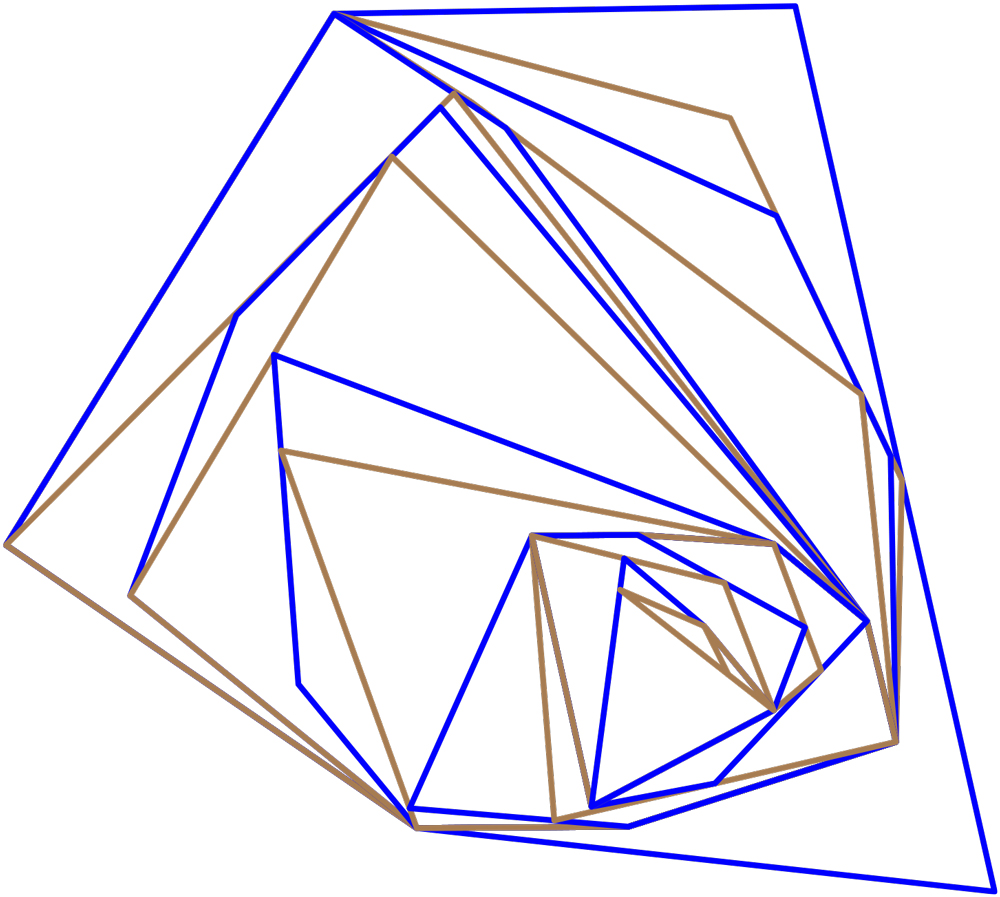}
\caption{Nested $H_i$ for all $i$, from Fig.~\protect\figref{SpiralSnaps_n20_s4},
alternately colored blue and brown.}
\figlab{SpiralNested_n20_s4}
\end{figure}


\section{Planar Proof}

We first assume that all
points---both vertices in $V$ and merge points $m_i$---are in 
\emph{general position} in the sense that no three are collinear.
Later we will see that allowing collinearities does not change the main claim
that $\L = \cup_i s_i$ is a tree.

The induction hypothesis consists of several claims:
\begin{enumerate}[label={(\arabic*)}]
\item $H_i$ is nested inside $H_{i-1}$: $H_{i-1} \supset H_i$.
See Fig.~\figref{SpiralNested_n20_s4}.

\item $s_i$ only intersects $H_i$ in one point, the merge vertex:
$s_i \cap H_i =  \{m_i\}$.
See Fig.~\figref{SpSnap_mi}(b).

\item Consecutive slit segments share just one point, an endpoint of the later segment:
$s_{i-1} \cap s_i = \{m_{i-1}\}$.
See Fig.~\figref{SpSnap_mi}.

\item The set of vertices reduces by one each iteration:
$|V_i| = |V_{i-1}|-1$, for $i > 1$.
\item $\L_i$ is a tree.
%
\end{enumerate}

\begin{figure}[htbp]
\centering
\includegraphics[width=0.6\linewidth]{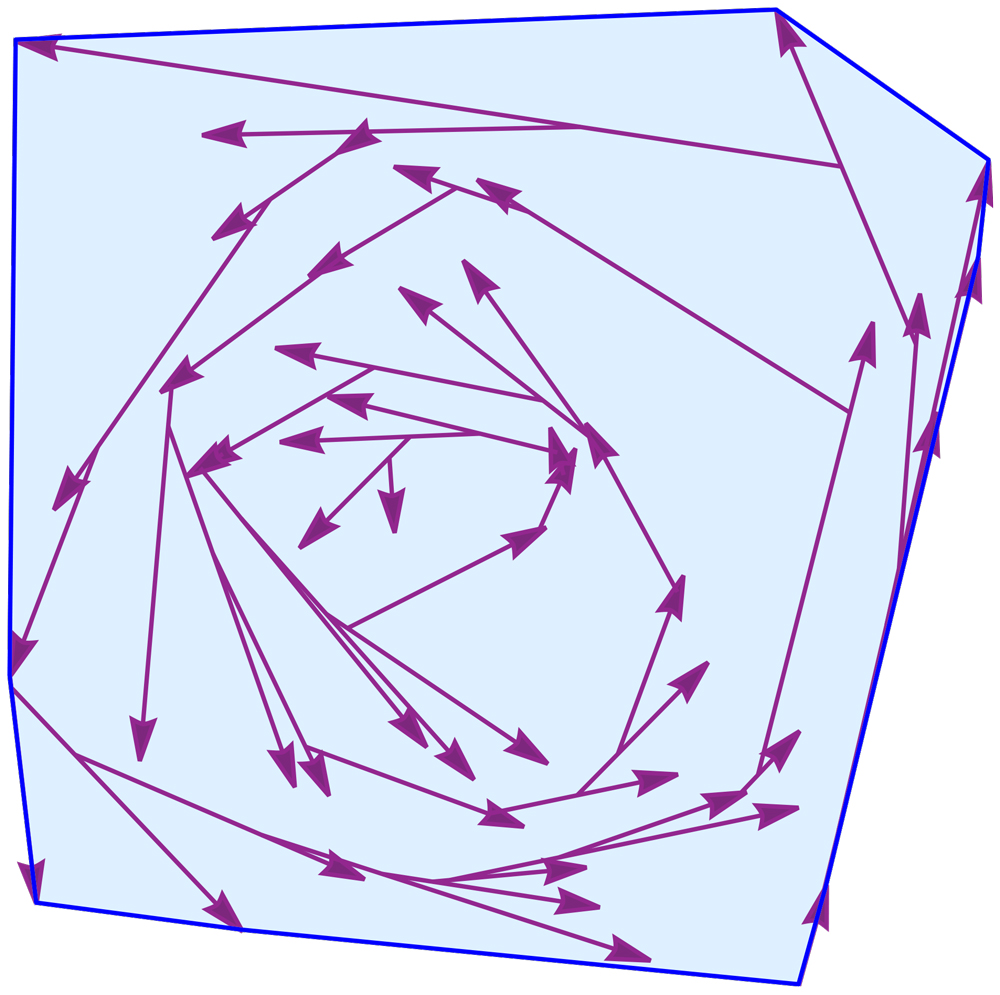}
\caption{$\L$ for a set of $|V|=50$ vertices. Only slit segments are shown.}
\figlab{SpiralTree_n50_s1}
\end{figure}

\paragraph{Basis.}
These claims are easy to see for $i=1,2$,
just by construction according to the algorithm.
Neverthless, just for completeness, we run through the basis.
Refer to Fig.~\figref{SpSnap_12}.
\begin{enumerate}[label={(\arabic*)}]
\item $H_0 \supset H_1$ 
because the edge $v_1 v_2$ is replaced by a point $m_1$ on that edge.
\item 
$s_1 \cap H_1 = \{ m_1 \}$ by construction.
\item For $i=1$, there is just one segment $s_1$.
$s_2$ starts at $m_1 \in s_1$, so indeed $s_1 \cap s_2 = \{ m_1 \}$.
\item $V_1$ has one fewer vertex than does $V=V_0$ because
two ($v_1$ and $v_2$) are removed and one ($m_1$) is added.
\item $\L_1 = s_1$ is a single-edge tree.
\end{enumerate}

\paragraph{General Step.}
Assume the induction hypotheses are satisfied up to $i-1$, and
consider the $i$-th merge.
It will help to consult Fig.~\figref{SpSnap_mi},
where (a)~is established by the induction hypothesis, and (b)~reflects
the situation to be proved.
The algorithm chooses $v_{i+1}$ as the next vertex on $H_{i-1}$,
so $s_i = m_{i-1} v_{i+1}$ is an edge $e$ of $\partial H_{i-1}$.

\begin{enumerate}[label={(\arabic*)}]
\item $H_{i-1} \supset H_i$ because the edge $e$ is replaced by the merge
vertex $m_i$.
\item $s_i \cap H_i = \{ m_i \}$ by construction.
\item 
$s_{i-1} \cap s_i = \{ m_{i-1} \}$ by construction.
\item 
$|V_i| = |V_{i-1}|-1$, again because
$V_i = V_{i-1} \setminus \{ m_{i-1},v_{i+1} \} \cup \{m_i\}$.
\item $\L_i$ is a tree.
By (3)~above, $s_i$ has an endpoint $m_{i-1}$ on $s_{i-1}$,
and otherwise does not intersect $s_{i-1}$.
It remains to prove that the interaction of $s_i$ with the earlier
segments in $\L_i$ maintains the tree property.
In fact, we show that $s_i \cap s_j = \varnothing$ for $j < i-1$.

As we've seen, $s_i$ is an edge $e$ of $\partial H_{i-1}$,
and from (1)~we know that $H_j \supset H_{i-1}$
for $j < i-1$.
From (2), each $s_j$ intersects $H_j$ in just one point $m_j$,
which is removed from $V_j$ in the next step.
So $s_j$ is disjoint from $H_{j+1} = \conv(V_j)$.
And so $s_j$ cannot intersect $s_i$, which is an edge of $\partial H_{i-1}$.

Therefore, we have shown that $s_i$ just intersects $\L_{i-1}$
at the one point $m_{i-1}$, and so maintains the tree structure for $\L_i$.
\end{enumerate}

The tree structure is illustrated in Fig.~\figref{SpiralTree_n50_s1}.
It is in a sense a geometric directed binary tree,
spiraling from the boundary of $H_0$ into the center.

\paragraph{Collinearities.}
If there are collinearities, then several consecutive segments can
collinearly overlap.
So it could be, for example, that $s_{i-1} = m_{i-2} v_i$ overlaps with $s_i=m_{i-1} v_{i+1}$
in the portion from $m_{i-1}$ to $v_i$.
Thus several slit segments could collinearly overlap with an edge of $H_i$.
If we union each group of collinearly overlapping segments to one segment,
then all the properties claimed in the induction hypothesis continue to hold,
although different notation would be required to capture the unioning of several 
collinearly overlapping segments into one.

\begin{thm}
\thmlab{SlitTree2D}
For any set $V$ of vertices in the plane, the sequential merging algorithm
detailed in Section~\secref{SpiralAlgorithm}
results in a slit graph $\L$ that is a tree
(see Fig.~\figref{SpiralTree_n50_s1}).
\end{thm}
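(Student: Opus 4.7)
My plan is to prove Theorem~\thmref{SlitTree2D} by simultaneous induction on $i$, carrying along a small package of geometric invariants that together force the ``tree'' conclusion. Concretely, I would carry the five numbered claims of the induction hypothesis listed just before the basis step: (1)~the hulls are nested, $H_{i-1} \supset H_i$; (2)~the new slit meets the new hull in exactly its merge vertex, $s_i \cap H_i = \{m_i\}$; (3)~consecutive slits share exactly the endpoint $m_{i-1}$; (4)~$|V_i| = |V_{i-1}| - 1$; and (5)~$\L_i$ is a tree. Item~(5) is the goal, but experience with this kind of spiral construction suggests that items~(1) and~(2) are what actually do the work: (2) says each old slit, once executed, lies outside every subsequent hull, while (1) confines every future slit inside the current hull. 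Together these force disjointness of $s_i$ from $s_j$ whenever $j < i - 1$, which is the crucial non-crossing property.

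The basis $i = 1, 2$ is immediate from the construction, since $s_1$ is chosen as a hull edge of $H_0$ and $m_1$ lies on that edge, so $H_0 \supset H_1$ and $s_1 \cap H_1 = \{m_1\}$ trivially. For the inductive step, I would argue as follows. The algorithm selects $v_{i+1}$ as the vertex of $\partial H_{i-1}$ adjacent to $m_{i-1}$, so $s_i = m_{i-1} v_{i+1}$ is literally an edge of $\partial H_{i-1}$. Replacing this edge by the single interior-of-edge point $m_i$ yields $V_i$, whose convex hull $H_i$ is obtained from $H_{i-1}$ by ``clipping off'' the corner at $v_{i+1}$ along a chord through $m_i$; this gives~(1) and~(2). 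Claim~(3) is built into the choice $s_i = m_{i-1} v_{i+1}$. For claim~(5), suppose $s_i$ met some earlier $s_j$ with $j < i - 1$. By the inductive form of~(2) applied at step $j$, we had $s_j \cap H_j = \{m_j\}$, and since $m_j$ was removed from $V_j$ at step $j+1$, the segment $s_j$ is in fact disjoint from $H_{j+1}$. Iterating the nesting~(1) gives $H_{j+1} \supset H_{i-1}$, so $s_j$ lies strictly outside $H_{i-1}$, whereas $s_i$ is a boundary edge of $H_{i-1}$ and therefore contained in $H_{i-1}$. Hence $s_i \cap s_j = \varnothing$, which combined with~(3) shows that adjoining $s_i$ to $\L_{i-1}$ attaches a new pendant edge at $m_{i-1}$ and preserves acyclicity and connectivity.

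The main obstacle I anticipate is the degenerate general-position assumption. When three or more points of $V_i \cup \{m_i\}$ are collinear, several ``slits'' can lie along the same line, and one slit can collinearly overlap a hull edge or a portion of an earlier slit, so the strict set-equalities in~(2) and~(3) need to be re-interpreted. The clean fix, as the text already suggests, is to replace a maximal run of collinearly overlapping $s_j$'s by their union and treat it as a single edge of $\L$; under this identification the invariants persist verbatim, and one checks that the only pairs $(s_j, s_i)$ whose intersection can be more than a point are forced to lie inside such a collinear run and so contribute only to this merging rather than to cycles.

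Finally, termination and completeness of the tree follow from~(4): each step strictly decreases $|V_i|$, so after $n - 1$ merges $V_{n-1} = \{m_{n-1}\}$ and every original $v \in V$ has been flattened. The accumulated graph $\L = \L_{n-1}$ therefore has $n - 1$ edges on $n$ original vertices (after contracting collinear runs), is acyclic by the argument above, and is connected because each new $s_i$ attaches to the previous $s_{i-1}$ at $m_{i-1}$; hence $\L$ is a spanning tree on $V$, as claimed.
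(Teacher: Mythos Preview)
Your proposal is correct and follows essentially the same approach as the paper: the same five induction invariants, the same key argument that $s_j$ is disjoint from $H_{j+1}$ (since $m_j$ is removed) combined with nesting $H_{j+1} \supset H_{i-1}$ to conclude $s_j \cap s_i = \varnothing$ for $j < i-1$, and the same handling of collinearities by unioning overlapping runs. The only addition you make is the final paragraph explicitly noting termination and that $\L$ is a spanning tree on $V$, which the paper leaves implicit.
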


As the figures suggest, we have implemented this algorithm.

\bs 

Convexity and, in particular, the convex hull, plays an essential role in this
algorithm. Before we can apply the same overall idea to a set $V$ on
a three-dimensional $P^+$ in Chapter~\chapref{SpiralTree3D}, 
we need to explore convexity 
and convex hulls on $P$. This is the topic of the next chapter.






\chapter{Convexity on Convex Polyhedra}
\chaplab{Convexity}

We've set as our goal proving that there is a vm-reduction ordering
of the vertices $V$ inside a quasigeodesic $Q$
that results in a slit tree $\L$, a goal achieved in 
Chapter~\chapref{SpiralTree3D}.
Before we reach that point, we need to develop a clear notion of what constitutes the convex hull
of $V$.
This in turn requires a clear notion of convexity on polyhedral convex surfaces,
which is our focus in this chapter.

Our investigation into convexity on convex polyhedra is, to our knowledge, the first  
in this direction, and seems to have a rich potential.
We barely touch on some classical convexity results
(such as Helly's and Radon's theorems: Examples~\exref{NoHelly} and~\exref{NoRadon}),
and do not attempt to provide their adaptations in this new theory of convex sets.
Instead we focus on developing basic facts
that ultimately lead to a characterization of the ``relative convex hull" of a set of
vertices inside a simple closed quasigeodesic $Q$ (Section~\secref{RelativeHullVerts}).

We first argue that, for our purposes, the proper notion 
of a convex set $S$ is one that includes every geodesic segment between points
in $S$, in contrast to, say, including at least one geodesic segment.
Defining the convex hull of a set of vertices $V$ as the smallest
convex set including $V$ (Section~\secref{ConvexHull}) leads to some surprising
and perhaps undesirable properties.
For example, the convex hull of a set inside $Q$
does not always remain inside $Q$ (Example~\exref{Pyramid}).
These properties lead us to develop a notion of ``relative" convexity,
and the relative convex hull in Section~\secref{RelativeHull}.
Among our main results is Theorem~\thmref{rconvV}, which characterizes
the relative convex hull of vertices inside a quasigeodesic $Q$.
This characterization is then employed in Chapter~\chapref{SpiralTree3D}.

Because of the many lemmas and details in this long chapter, we
offer a concise summary of $38$ individual results in
Section~\secref{Summary}.

Throughout, for a set $S$, we use $\partial S$ for its boundary,
and $\mathring{S}$ and $\bar{S}$ for the interior and closure of $S$ respectively.


\section{Convex Curves}
\label{CvxCurvex}
Let $C$ be a simple, closed, curve on the surface of a convex polyhedron $P$.
We will assume $C$ is polygonal, turning at \emph{corners} which in general may or
may not be vertices of $P$ (although we'll continue to use labels $v_i$).
View $C$ as directed counterclockwise from above.
At each corner $v_i$ of $C$, let $\a_i$ be the surface angle to the left, and
$\b_i$ the angle to the right.
$C$ is called a \emph{convex curve} if $\a_i \le \pi$; all such angles are
\emph{convex angles}.
Angles $\b_i$ strictly greater than $\pi$ are called \emph{reflex}.

If $C$ also satisfies $\a_i \le \b_i$,
we call it an \emph{$\a\b$-convex curve}.
If $\a_i < \b_i$ for all (non-zero curvature) corners, it is a \emph{strictly $\a\b$-convex curve}.

A simple, closed \emph{quasigeodesic} $Q$ is a curve $C$
that is convex to both sides: $\a_i \le \pi$ and $\b_i \le \pi$.
By a theorem of Pogorelov~\cite{p-qglcs-49},
every convex polyhedron has
at least three such quasigeodesics.\footnote{We point out here that a different notion of 
``quasi-geodesic" also exists, see e.g., \cite{bendokat2021efficient}.}

In general, quasigeodesics are not $\a\b$-convex to either side.
The total curvature of the vertices
to either side of $Q$ is $\leq 2\pi$, and only equal to $2\pi$ if $Q$ is a geodesic, passing through no vertices,
a fact we use later.

A \emph{geodesic polygon} is a polygon whose edges are geodesic arcs.
A \emph{geodesic-segment polygon} is a polygon whose edges are geodesic segments,
i.e., shortest paths.
The distinction between these two types of polygons plays a significant
role in this chapter. For brevity,
we will frequently abbreviate ``geodesic arc" with ``geoarc,"
and ``geodesic segment" with ``geoseg."
Although we are mainly interested in convex sets contained within polygons,
a convex curve could include smoothly curved arcs.


\section{Notions of Convexity}
\seclab{Notions}

Several notions of convexity have been employed so far on surfaces, see for example 
\cite{alexander1978local}, 
\cite{bangert1981totally}, 
\cite{gm-cgs-01}, 
\cite{mitrea2016geodesic},
\cite{zamfirescu1991baire},
and the references therein. 
We have not made a comprehensive accounting of all the 
different definitions found in the literature,
and instead mention next only three.

A subset $S$ of $P$ is said to be 
\begin{itemize}
\item \emph{geodesically convex}
if, given any two points in $S$, there is a unique geodesic segment joining them in $S$;
\cite{udriste2013convex}.
\item \emph{totally convex} 
if any geodesic which joins two points of $S$ is contained in $S$;
\cite{bangert1981totally}.
\item \emph{metrically convex}
if, given any two points in $S$, there is at least one geodesic segment joining them in $S$;
\cite{gm-cgs-01}.
\end{itemize}
We should mention that there is variation in the literature.
For example, \cite{vishnoi2018geodesic}
uses the term ``geodesically convex" to mean what we list above as ``totally convex."

Geodesic convexity is not suitable for our framework, because an open geodesically convex set could contain no vertex $v$---two geodesic segments would wrap around $v$.
Since our main concern is with geodesic segments forming a slit graph, 
total convexity, which focuses on geodesics (not necessarily shortest paths), is also inappropriate.

We will need a notion of a convex hull of a set of points $V$ on $P$.
In analogy with the Euclidean case, we would like to define the
convex hull of $V$ to be the intersection of all convex sets containing $V$.
We next indicate why the natural metric convexity, and in particular,
its ``at least one geodesic segment'' criterion, is not the correct
version in our context.

\begin{ex}
Let $\Delta$ be a doubly-covered triangle $v_1 v_2 v_3$, with $V= \{v_1, v_2, v_3 \}$.
The front triangle is a metric-convex set containing $V$, as is the back triangle.
So the metric-convex hull of $V$ consists of the three edges $v_1 v_2$, $v_2 v_3$, $v_3 v_1$.
However, one would expect to obtain $\Delta$ as the convex hull of its vertices,
as we do in Theorem~\thmref{ExtPts}.
Moreover, the situation illustrated by the doubly-covered triangle could be troublesome in some 
specific situations as well.
We address these situations in Section~\secref{ConvexHull} below.
\end{ex}



\section{Ag-convexity}
\seclab{Ag-convexity}
Define $S$ to be \emph{ag-convex}---all-geodesics convex---if
every geodesic segment between two points of $S$ is in $S$.
In the following we will focus on ag-convexity, which we henceforth abbreviate 
as, simply \emph{convexity}.

This same notion of convexity has been used in different contexts, see for 
example~\cite{alexander2019alexandrov} and~\cite{lytchak2021every},
although we have not seen it applied to convex polyhedra.
Other types of convexity will be specifically identified to differentiate
them from ag-convexity.

\bs

Clearly, every convex set is path-connected, but one can say more.

\begin{lm}[Classification]
\lemlab{int_convex}
Let $S$ be a closed convex set on $P$.
Then $S$ is either a point, or a geodesic arc, or a simple closed geodesic, or it has interior points.
\end{lm}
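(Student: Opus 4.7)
The plan is to do case analysis based on the cardinality and interior of $S$. If $S$ is empty or a single point, the claim holds trivially. Otherwise I assume $|S|\ge 2$ and that $S$ has empty interior, and I show that $S$ must be a geodesic arc or a simple closed geodesic.

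The first step will be to establish a triangle principle: if $x,y,z\in S$ lie in a common small flat disk $U\subset P$ (avoiding vertices of $P$) and are not collinear, then $S$ has interior points. Indeed, since $U$ is isometric to a Euclidean disk and small enough, the segments $[xy],[yz],[xz]$ agree with straight segments in $U$ and all lie in $S$ by ag-convexity. Applying ag-convexity again to pairs $(p,q)$ with $p\in[xy]$, $q\in[xz]$ sweeps out the full triangle $\triangle xyz$, giving $S$ interior. Contrapositively, if $S$ has empty interior, then at each non-vertex $x\in S$, the set $S\cap U$ is contained in a single straight line through $x$.

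Next, I pick distinct $x,y\in S$, choose a geodesic segment $[xy]\subset S$ (which exists by ag-convexity), and extend it to a maximal arc $\gamma\subset S$ by concatenating geodesic continuations on both sides as long as the continuation stays in $S$. Since $P$ is compact and $S$ is closed, $\gamma$ is either a geodesic arc with two endpoints, or it closes up to a simple closed geodesic. In the latter case, once I show $S=\gamma$, I am done.

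To prove $S=\gamma$, I suppose $z\in S\setminus\gamma$ and derive a contradiction. I would first choose a point $p\in\gamma$ closest to $z$ in the intrinsic metric of $P$; then $[zp]\subset S$ by ag-convexity. After shrinking to a small neighborhood, if $p$ is a non-vertex interior point of $\gamma$, both $p$, a nearby point $a\in\gamma$, and $z$ lie in a common flat disk, and by construction $z,p,a$ are non-collinear (otherwise $z$ would extend $\gamma$, contradicting maximality or $z\notin\gamma$). The triangle principle then produces interior points of $S$, a contradiction.

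The main obstacle is handling the case in which $p$ is a vertex of $P$ (e.g., an endpoint of $\gamma$ that sits at a vertex) so no flat disk is available at $p$. In that case I would unfold a small punctured cone-neighborhood of $p$ to a planar sector of angle $2\pi-\omega(p)$. Choose two nearby points $a,b\in\gamma$ on the same side of $p$; the segments $[za],[zb]\subset S$ together with the subarc of $\gamma$ from $a$ to $b$ form a geodesic triangle in the unfolded sector. Because $z\notin\gamma$, one can arrange (perturbing $a,b$ if necessary) that this triangle is non-degenerate and misses the apex $p$, so it lies in a flat disk inside the sector. The triangle principle applies there and produces interior points of $S$, contradicting the assumption. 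The only alternative, that every choice of $a,b$ makes $z$ collinear with $\gamma$ in the unfolded sector, would exhibit a geodesic extension of $\gamma$ through $p$ containing $z$, contradicting maximality of $\gamma$.
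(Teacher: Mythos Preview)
Your approach is correct and follows the same overall strategy as the paper: take a maximal geodesic $\gamma\subset S$, and show that any point $z\in S\setminus\gamma$ forces $S$ to have interior. The difference lies in execution. The paper simply connects $z$ to an \emph{arbitrary interior point} $y$ of $\gamma$ by a geodesic segment $\gamma_y$, and then observes that sliding $y$ continuously along $\gamma$ produces a continuous family of segments in $S$ that sweeps out an open set near $y$. Because interior points of a geodesic arc are never vertices of $P$, this bypasses the vertex case entirely in one stroke.

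Your detour through the \emph{closest} point $p$ buys you nothing and costs you the separate vertex-endpoint analysis, which is the sketchiest part of your write-up (the claim that the triangle in the unfolded sector ``misses the apex'' after perturbation needs more care, since the geodesic segments $[z'a],[z'b]$ on $P$ are only straight in the sector when they do not pass through the cone point). If you simply pick any interior point of $\gamma$ instead of the closest one, your triangle principle applies directly in a flat disk and the vertex case evaporates. So your proof is sound but can be shortened to match the paper's by dropping the closest-point choice.
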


\begin{proof}
Assume $S$ contains two distinct points, hence it contains a geodesic segment $\g$.
Assume $\G$ is the maximal (with respect to inclusion) geodesic in $S$ including $\g$; so it could be an arc, or a simple closed curve.
If the image set of $\G$ is $S$, we are done.
(Here the \emph{image set} of $\G$ is the set of points on $P$ comprising the curve.)

Assume there exists a point $x$ in $S$ but not on $\G$.
Join it with an interior point $y$ of $\G$, say with the geodesic segment $\g_y$.
Locally around $\g_y$ the set $S$ is flat, so moving $y$ continuously on $\G$ provides a continuous family of geodesic segments $\g_y$, all included in $S$.
Therefore $S$ has interior points.
\end{proof}

\begin{ex}
The whole surface $P$ is obviously convex. We also consider 
the empty set and the single-point sets to be convex.
\end{ex}

\begin{ex}
\exlab{Closure_not_cvx}
Notice that the closure of a convex set is not necessarily convex.
The interior of a face of a doubly-covered triangle is convex, while the whole face is not,
because two points on 
different edges of the triangle are connected by
geodesic segments on both sides of the triangle.
\end{ex}

We'll repeatedly use the following simple fact.

\begin{lm}[Local behaviour]
\lemlab{local-conv}
Let $S\subset P$ be a convex set and $x \in S$.
Then there exists a small ball $B_x$ around $x$ such that 
$S\cap B_x$ is isometric either to a planar convex set (if $x$ is not a vertex of $P$), or to a convex set on a cone of apex $x$.
\end{lm}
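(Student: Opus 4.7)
The statement has two assertions: (a) a small neighborhood of $x$ in $P$ admits a canonical local model (planar disk if $x$ is not a vertex, conical neighborhood if $x$ is a vertex), and (b) under the induced isometry, $S\cap B_x$ becomes a convex subset of that model. Part (a) is a standard consequence of the polyhedral structure, so my plan concentrates on part (b), which is where ag-convexity enters.

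My first step would be to fix the ball $B_x$ with extra care: choose a radius $r>0$ so small that $B_x$ is isometric to a planar disk $\tilde B$ of radius $r$ (if $x\notin\mathrm{Vert}(P)$) or to a piece of a cone $\tilde B$ of apex $\tilde x$ and total angle $2\pi-\omega(x)$ (if $x$ is a vertex). Concretely, in the non-vertex case, pick $r$ smaller than the distance from $x$ to the set of vertices of $P$; in the vertex case, pick $r$ smaller than the distance from $x$ to the other vertices. An easy additional tightening (shrinking $r$ further if necessary) ensures that for any two points $y,z\in B_x$ the unique shortest geodesic in $P$ between $y$ and $z$ corresponds, in the model $\tilde B$, to the Euclidean segment joining the images $\tilde y,\tilde z$; on a cone this requires only that $y,z$ lie in a sector of angle $<\pi$ seen from $\tilde x$, which is automatic for $r$ small, since in the worst case (vertex of positive curvature) the cone unrolling near any point has unique shortest paths as soon as points are close enough. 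This is the step I expect to require the most care, because it is here that one must pin down precisely how small $r$ must be in terms of the local geometry.

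Next, given $y,z\in S\cap B_x$, let $\gamma$ be the geodesic segment in $P$ joining them. By the choice of $r$, the image $\tilde\gamma$ of $\gamma$ in $\tilde B$ is the Euclidean straight segment $[\tilde y,\tilde z]$, and this segment lies inside $\tilde B$ (since planar disks are convex, and in the conical case we have confined $\tilde y,\tilde z$ to a small enough region that the straight segment stays in the unrolled sector). Pulling back through the isometry, $\gamma\subset B_x$. By ag-convexity of $S$, $\gamma\subset S$, hence $\gamma\subset S\cap B_x$. Translating back into the model, the image of $S\cap B_x$ contains the straight segment between any two of its points, so it is convex in the planar sense (respectively, on the cone, where ``convex'' means the same thing via the unrolling).

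The remaining bookkeeping is minor: in the conical case one should verify that the notion of convexity on the cone used here is consistent with the one implicit in the statement, namely that straight segments in every local unrolling stay inside the set. The conclusion then follows, and the ball $B_x$ together with the isometry and the convex image constitute the required local description. The only genuine obstacle, as noted above, is the uniform choice of $r$ guaranteeing that geodesic segments between points of $B_x$ never leave $B_x$; once this is arranged, ag-convexity does the rest in one line.
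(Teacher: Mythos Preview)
Your proposal is correct and follows the same approach as the paper's proof, which is a single line: choose $B_x$ to contain no vertices of $P$ other than possibly $x$, and observe that $S\cap B_x$ is still convex on $B_x$. You have simply unpacked the details the paper leaves implicit---in particular the verification that geodesic segments in $P$ between points of $B_x$ coincide with straight segments in the local model and remain inside $B_x$. Your remarks about needing a sector of angle $<\pi$ and uniqueness of shortest paths are slightly more than is required (on a cone of positive curvature the minimal angular separation between two points is automatically $<\pi$, and you only need that the model segment is \emph{a} geodesic segment in $P$, not the unique one), but these do not affect correctness.
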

\begin{proof}
Just choose $B_x$ around $x$ to include no vertices of $P$ (other than possibly $x$), and notice that 
$S \cap B_x$ is still convex on $B_x$.
\end{proof}


Recall from Section~\ref{CvxCurvex} that an $\a\b$-convex curve is a \emph{strictly $\a\b$-convex curve} if $\a_i < \b_i$ for all (non-zero curvature) corners.
The following characterization of $\a\b$-convexity will be frequently invoked subsequently.

\begin{lm}[$\a\b$-convexity] 
\lemlab{ag-is-ab}
\hfill \break \vspace{-3ex} 
\begin{enumerate}[label={(\roman*)}]
\item
Let $S\neq P$ be a closed convex subset of $P$, $S \neq P$, having 
at least two distinct points.
Then $\partial S$ is a convex curve
(including the case when $S$ is a geodesic).
If, moreover, $\partial S$ is a geodesic polygon, then it is strictly $\a\b$-convex.
\item
Let $S$ be an open convex subset of $P$, $S \neq P$, and $C$ a 
connected component of $\partial S$.
Then $C$ is either a vertex, or a convex curve.
If, moreover, $C$ is a geodesic polygon, then it is (not necessarily strictly) $\a\b$-convex.
\end{enumerate}
\end{lm}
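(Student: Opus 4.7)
My plan is to reduce both claims to local analysis at each point of $\partial S$ (respectively $C$) using Lemma~\lemref{local-conv}, and then read off the global angle conditions. For part (i), I would first dispose of the degenerate case via Lemma~\lemref{int_convex}: if $S$ has empty interior then $S$ is a geodesic arc or a simple closed geodesic, so $\partial S = S$ is trivially a convex curve (angle $\pi$ to both sides everywhere). Otherwise $S$ has non-empty interior, and $\partial S$ is a $1$-dimensional curve on which the analysis below applies.

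For $S$ with interior, at each $x \in \partial S$, Lemma~\lemref{local-conv} gives that $S \cap B_x$ is isometric to a closed convex subset of the plane or of a cone with apex $x$; in either case, the angle on the $S$-side of $x$ does not exceed $\pi$. Piecing these local bounds together shows $\partial S$ is a convex curve. For strict $\a\b$-convexity when $\partial S$ is a geodesic polygon, I would treat each corner $v_i$ separately. At a non-vertex corner, $\a_i + \b_i = 2\pi$ and $\a_i \le \pi$; at a genuine corner $\a_i \neq \pi$, whence $\a_i < \pi < \b_i$. At a vertex corner of curvature $\o > 0$, the neighborhood is a cone of total angle $2\pi - \o$. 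Taking two points $p,q \in \partial S \subset S$ (here using that $S$ is closed) on the two boundary edges meeting at $v_i$ at common radial distance $r$ and unrolling the cone on either side of $v_i$ produces two candidate geodesic segments of lengths $\ell_\a = 2r\sin(\a_i/2)$ and $\ell_\b = 2r\sin(\b_i/2)$, valid when the respective angle is $\le \pi$ (with the analogous degenerate value $2r$ when $\a_i = \pi$). If $\a_i \ge \b_i$ then $\ell_\a \ge \ell_\b$, so some shortest path from $p$ to $q$ lies in $S^c$, contradicting ag-convexity; hence $\a_i < \b_i$.

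For part (ii), the convex-curve conclusion for $C$ follows by the identical local argument. For the single-point case: at a non-vertex $x \in \partial S$, local flatness together with the openness and convexity of $S$ forces a non-degenerate arc of $\partial S$ through $x$ (two points on opposite sides of $x$ within $S$ would be joined by a straight segment through $x$, placing $x \in S$), so a singleton component $C = \{x\}$ can only occur at a vertex of $P$. For $C$ a geodesic polygon the same angle bookkeeping yields $\a_i \le \pi$ and hence $\a_i \le \b_i$; but the inequality need not be strict at vertex corners. The reason is precisely that in the open case the test points $p, q$ must be taken strictly inside the $S$-wedge---say at angles $\e$ and $\a_i - \e$ for small $\e > 0$---so the two candidate geodesics now subtend angles $\a_i - 2\e$ and $\b_i + 2\e$; when $\a_i = \b_i$, the $S$-side geodesic is strictly shorter for every $\e > 0$ and no contradiction with ag-convexity arises. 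The main technical obstacle throughout is the cone unrolling and the accompanying two-candidate geodesic comparison; its asymmetric resolution in the closed versus open cases is exactly what produces the strict/non-strict dichotomy between (i) and (ii).
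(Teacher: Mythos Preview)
Your proof is correct and, for part~(i), essentially matches the paper's: both invoke Lemma~\lemref{local-conv} for the convex-curve claim, and both handle the strict $\a\b$-inequality at a corner $v_i$ by comparing the two candidate shortest paths between nearby points $p,q$ on the incident boundary edges (you make the comparison explicit via the chord-length formula $2r\sin(\theta/2)$, while the paper leaves it pictorial). The case split you do---non-vertex corner via $\a_i+\b_i=2\pi$, vertex corner via the cone unrolling---is the same content as the paper's argument.

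For part~(ii) you take a genuinely different route. The paper argues by contradiction from $\a>\b$ using a \emph{limit argument}: choose $x,y\in\partial S$ flanking $v$, take interior sequences $x_n\to x$, $y_n\to y$, pass to a convergent subsequence of the connecting geosegs $\g_n\subset S$, and observe that the limit geoseg lies in $\bar S$, which is impossible unless $\a\le\b$. You instead work directly with interior test points at angular offset $\e$ inside the $\a$-wedge and compare the two candidate geodesics subtending $\a_i-2\e$ versus $\b_i+2\e$; this yields the contradiction from $\a_i>\b_i$ immediately and transparently explains why equality $\a_i=\b_i$ survives (the $S$-side path is then strictly shorter for every $\e>0$). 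Your approach is more elementary and makes the strict/non-strict dichotomy between (i) and (ii) visible in a single formula; the paper's limit argument is slightly more abstract but avoids the need to verify that the $\b$-side unrolling stays valid. You also explicitly justify why a singleton boundary component must be a vertex of $P$, a point the paper's proof leaves implicit.
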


The conclusion in Lemma~\lemref{ag-is-ab}(ii) holds with a similar proof for convex sets which are neither closed nor open, but have interior points.
A surface containing such a set is given later in Example~\exref{Illuminating}.

\begin{proof}
(i)~The first claim follows from Lemma~\lemref{local-conv}.
Indeed, each component of $\partial S$ is a locally-convex curve.
If $S$ has interior points then each component of $\partial S$ is a simple closed curve, hence convex.

For the second claim, let $v_i$ be a (non-zero curvature) corner of $\partial S$ at which $\a_i \ge \b_i$.
If $\a_i = \pi$, then $\b_i = \pi$, and $v_i$ has zero curvature. So we may
assume that $\a_i < \pi$.

Let $x,y$ be two points on $\partial S$ close to and on either side of $v_i$.
Then, as shown in Fig.~\figref{alphabeta}, the shortest path between $x$ and $y$
inside $S$ is either longer than, or equal in length to, the shortest path outside $S$.
In either case, a geodesic segment between points of $S$ is not inside $S$,
contradicting the fact that $S$ is convex.

(ii)~Assume $\partial S$ is not $\a\b$-convex at $v$, hence $\a > \b$.
Then there exist points $x,y \in \partial S$ with $v$ between them, and any geoseg $\g$ joining them is external to $S$, 
as in the above argument.
There also exist sequences of points $x_n,y_n \in S$ with $x_n \to x$ and $y_n \to y$.
Choose geosegs $\g_n$ from $x_n$ to $y_n$, hence $\g_n$ lies in $S$.
It is well known that the limit of geosegs is a geoseg.
Possibly passing to a subsequence, assume $\g_n \to \g'$, hence $\g'$ is a geoseg joining $x$ to $y$ in $\bar S$, 
which is only possible if $\a=\b$ at $v$, contradicting our assumption.
\end{proof}

\begin{ex}
\exlab{no-strict-ab-conv}
To complete Example~\exref{Closure_not_cvx}, 
notice that the interior of a face is an open convex set on a doubly covered triangle, but its boundary is not strictly $\a\b$-convex.
It remains, nevertheless,  $\a\b$-convex.
\end{ex}

\begin{figure}[htbp]
\centering
\includegraphics[width=0.75\linewidth]{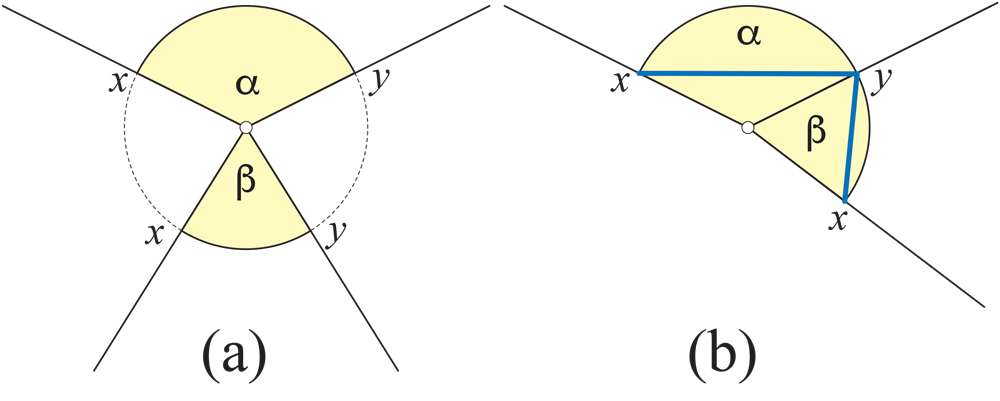}
\caption{(a)~$\a > \b$. (b)~$|xy|$ is shorter through angle $\b$ than through $\a$.}
\figlab{alphabeta}
\end{figure}

An open convex set may have several vertices as boundary components, 
as we will see in Example~\exref{P-V} below, and at most two convex curves as boundary components, as will be evident in the next result and its proof.

\begin{lm}[Simply-connected or cylinder]
\lemlab{si-con_convex sets}
Let $S\neq P$ be a closed convex subset of $P$.
Then either $S$ is simply-connected, or it is isometric to a cylinder without lids.
\end{lm}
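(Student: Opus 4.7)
The plan is to classify the possibilities via Lemma~\lemref{int_convex} and then, in the case when $S$ has interior points, bound the number of boundary components of $S$ by combining Lemma~\lemref{ag-is-ab}(i) with Gauss--Bonnet.

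First I would dispose of the low-dimensional cases. By Lemma~\lemref{int_convex}, $S$ is either a point, a geodesic arc, a simple closed geodesic, or has interior points. A point and a geodesic arc are contractible, hence simply-connected; a simple closed geodesic is homeomorphic to $S^{1}$, which I would read as the width-zero degenerate cylinder-without-lids, consistent with the dichotomy in the statement. So it remains to treat the case when $S$ has interior points. Let $C_1,\ldots,C_k$ be the connected components of $\partial S$. By Lemma~\lemref{ag-is-ab}(i) each $C_i$ is a convex (polygonal) curve. Since $S$ is a closed, connected subset of the topological sphere $P$ with $k$ boundary circles, $\chi(S)=2-k$, and Gauss--Bonnet applied to $S$ gives
\begin{equation*}
\omega(\mathring{S})\;+\;\sum_{i=1}^{k}\sum_{j}(\pi-\a_{i,j})\;=\;2\pi(2-k),
\end{equation*}
where $\omega(\mathring{S})$ is the total curvature of vertices of $P$ interior to $S$ and the $\a_{i,j}$ are the $S$-side angles at the corners of $C_i$. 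Both sums on the left are non-negative (the first because $P$ is a convex polyhedron, the second because $\a_{i,j}\leq\pi$ by convexity of each $C_i$), so $k\leq 2$. If $k=1$, then $S$ is a topological disk, hence simply-connected, and we are done.

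If instead $k=2$, the right-hand side vanishes and each non-negative summand must be zero: $\mathring{S}$ contains no vertex of $P$, and every $\a_{i,j}=\pi$, so each $C_i$ is a simple closed geodesic. Knowing now that $\partial S$ is a geodesic polygon, the strict $\a\b$-convexity asserted by Lemma~\lemref{ag-is-ab}(i) kicks in: at a hypothetical vertex $v\in\partial S$ with $\o(v)>0$, one would have $\a=\pi$ and $\b=\pi-\o(v)<\a$, contradicting $\a<\b$. Therefore $\partial S$ avoids vertices of $P$ altogether, and $S$ is an intrinsically flat compact orientable surface with $\chi(S)=0$ bounded by two simple closed geodesics. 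Developing $S$ along a geodesic transversal between the two boundary circles unrolls it into a planar rectangle whose two horizontal sides re-identify to give the boundary geodesics; this exhibits $S$ as a flat cylinder without lids, concluding the proof. The main obstacle I expect is the $k=2$ step, where one must order the deductions carefully (apply Gauss--Bonnet first to force $C_i$ to be geodesic, only then invoke strict $\a\b$-convexity to exclude vertices on $\partial S$) and verify that the final developing argument is genuinely intrinsic, in particular that the two boundary geodesics automatically have equal length and there is no hidden twist---both consequences of flatness of $S$ together with $\chi(S)=0$.
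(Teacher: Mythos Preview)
Your argument is correct and in fact somewhat cleaner than the paper's. The paper does not apply Gauss--Bonnet to $S$ itself; instead it picks two boundary components $C_1,C_2$, applies Gauss--Bonnet to the disk on the convex side of each $C_i$ to get total curvature $\le 2\pi$ there (hence $\ge 2\pi$ on the complementary cap), and then observes that two disjoint caps each carrying $\ge 2\pi$ must together exhaust the $4\pi$ total---forcing the annulus between them to be flat. Your single application of Gauss--Bonnet to $S$ with $\chi(S)=2-k$ gives both the bound $k\le 2$ and the $k=2$ rigidity at once, and your use of the strict $\a\b$-convexity clause of Lemma~\lemref{ag-is-ab}(i) to exclude vertices of $P$ on $\partial S$ is a nice touch that the paper leaves implicit.

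One small correction: Lemma~\lemref{ag-is-ab}(i) does \emph{not} say that $\partial S$ is polygonal---only that it is a convex curve, and that \emph{if} it happens to be a geodesic polygon \emph{then} it is strictly $\a\b$-convex. A closed ag-convex set may have smoothly curved boundary arcs (e.g., a round disk inside a single face). So your Gauss--Bonnet identity should carry the term $\int_{\partial S}\kappa_g\,ds$ as well. Since each $C_i$ is convex toward $S$, this term is non-negative and your inequality $k\le 2$ survives unchanged. In the $k=2$ equality case the integral must vanish too, which is precisely what makes $\partial S$ a geodesic polygon and legitimises your subsequent appeal to strict $\a\b$-convexity. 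With that term inserted, the ordering you flag at the end---first force $C_i$ geodesic via Gauss--Bonnet, only then invoke Lemma~\lemref{ag-is-ab}(i)---is exactly right.
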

Recall from Chapter~\chapref{Punfoldings} (Section~\secref{PunfReshaping}) 
that in our usage, simply-connectedness does not 
assume path-connectedness. However, all convex sets are path-connected.

\begin{proof}
If $S$ has empty interior, the conclusion follows from Lemma~\lemref{int_convex};
so we may assume in the following that $S$ has interior points.

The convexity of $S$ implies that each component of $\partial S$ is a simple closed curve, and since it is locally convex by Lemma~\lemref{ag-is-ab}, it is 
a convex curve.

Let $C_1, C_2$ be two components of $\partial S$.
There are two possibilities: without loss of generality, $C_2$ is nested inside $C_1$,
or they are not nested. This latter case violates convexity, because a geodesic
segment connecting a point inside $C_1$ to a point inside $C_2$ would necessarily
include points exterior to both. So let $C_2$ be nested inside $C_1$.

The Gauss-Bonnet Theorem implies that the total curvature inside $C_1$ is $\leq 2 \pi$, and similarly for $C_2$.
So the total curvature outside $C_1$, and also outside $C_2$, is $\geq 2 \pi$.
But the exterior of $C_2$ is strictly inside $C_1$, and the exterior of $C_1$ is strictly inside $C_2$.
Therefore, either the region between $C_1$ and $C_2$ has zero curvature, and therefore it is isometric to a cylinder without lids,
or at least one the above inequalities is strict and we get a contradiction, 
establishing the conclusion.
\end{proof}

\begin{lm}[Vertex point-hole]
\lemlab{conv-pt}
Let $S\subset P$ be a convex set and $v$ a vertex in 
$S$.
Then $S \setminus \{v\}$ is convex.
\end{lm}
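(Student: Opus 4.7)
The plan is to show that ag-convexity is preserved when deleting an isolated vertex by invoking the elementary fact that a geodesic segment (i.e., a shortest path) cannot pass through a vertex of strictly positive curvature as an \emph{interior} point of the segment.

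First I would take arbitrary points $x, y \in S \setminus \{v\}$ and let $\g$ be any geodesic segment from $x$ to $y$. Since $x, y \in S$ and $S$ is (ag-)convex, we have $\g \subset S$. To conclude $\g \subset S \setminus \{v\}$ it suffices to show $v \notin \g$. The endpoints $x,y$ of $\g$ are by hypothesis different from $v$, so the only way $\g$ could contain $v$ is as an interior point.

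The key step is then the standard observation that no geodesic segment has a vertex of positive curvature in its (relative) interior. Indeed, suppose $v$ were an interior point of $\g$, and let $\theta_v = 2\pi - \o(v) < 2\pi$ be the total surface angle at $v$. The two subarcs of $\g$ meeting at $v$ partition the angle $\theta_v$ into two sectors of sizes $\theta_1 + \theta_2 = \theta_v < 2\pi$, so at least one of them, say $\theta_1$, is strictly less than $\pi$. Replacing a small portion of $\g$ near $v$ by a straight segment through the sector of angle $\theta_1$ (as in the argument of Fig.~\figref{alphabeta}) yields a strictly shorter path between the same two nearby points of $\g$, contradicting the fact that $\g$ is length-minimizing. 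Therefore $v \notin \g$, and $\g \subset S \setminus \{v\}$.

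Since $x,y \in S \setminus \{v\}$ and $\g$ were arbitrary, this establishes that every geodesic segment between two points of $S \setminus \{v\}$ remains in $S \setminus \{v\}$, i.e., $S \setminus \{v\}$ is ag-convex. I do not expect any serious obstacle here: the only point requiring care is the cut-the-corner argument, which is routine but should be cited or given explicitly since it is exactly the place where the hypothesis ``$v$ is a vertex of $P$'' (so that $\o(v) > 0$) enters in an essential way.
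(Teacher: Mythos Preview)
Your proof is correct and follows essentially the same approach as the paper's: take $x,y\in S\setminus\{v\}$, use ag-convexity of $S$ to get $\g\subset S$, and then invoke the fact that no geodesic segment contains a vertex of positive curvature in its interior. The paper simply asserts this last fact in one clause (``because no geodesic segment will pass through a vertex''), whereas you spell out the standard corner-cutting argument; your version is, if anything, more complete.
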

\begin{proof}
The case $S =\{v\}$ is clear, because we consider the empty set to be a convex set.

Consider distinct points $x, y \in S$, and a geodesic segment $\g$ between them, hence $\g \subset S$.
If $v \not\in \{x,y\}$, $\g \subset S \setminus \{v\}$, because no geodesic segment will pass through a vertex.

Assume now that $x=v$. Then each point of $\g \setminus \{v\}$ is inside a geodesic subsegment of $\g$, hence included in $S \setminus \{v\}$.
\end{proof}


\begin{ex}
\exlab{ParaGeods}
An example illustrating the cylinder claim of the Lemma~\lemref{si-con_convex sets} is provided
by $P$ a tall rectangular block, and $S$ the region between two parallel simple closed geodesics close to the middle
of the block. Then $S$ is a closed convex set.
\end{ex}

\begin{ex}
\exlab{P-V}
Lemma~\lemref{si-con_convex sets} does not hold for arbitrary convex sets with interior points.
Indeed, the set $P \setminus V$ is convex (see Lemma~\lemref{conv-pt}), open, but not simply-connected.
\end{ex}

\begin{ex}
\exlab{S-v}
More generally,
let $S$ be a simply-connected convex set, and $v$ a vertex interior to $S$. Then $S \setminus \{v\}$ is convex, but not simply-connected.
However, the closure of $S\setminus \{v\}$ is convex and simply-connected.
\end{ex}

\begin{ex}
\exlab{NoHelly}
A particular instance of Helly's Theorem states the following:
let $S_1, ..., S_n$ be a finite collection of convex subsets of $\Rp$, with $n > 3$. 
If the intersection of every $h=3$ such sets is nonempty, then the whole collection has a nonempty intersection.

The simple example of the four faces of a tetrahedron,
each face a closed, convex set, shows that no analogous result holds in our framework.

A more elaborated example places the tetrahedron over a tall right triangular prism.
There, the union of the tetrahedron's lateral faces, minus the top vertex, is a convex set.
That set, together with the lateral faces, provides another counterexample.
\end{ex}

It remains for future work to consider the above 
Helly-like problem for $h \geq 4$: Open Problem~\openref{HellyEtc}.

\begin{lm}[$S \supseteq Q$]
\lemlab{S>Q}
If the interior of the closed convex set $S$ contains a simple closed quasigeodesic
$Q$, then either $\partial S$ determines with $Q$ a topologically closed cylinder, or $S=P$.
\end{lm}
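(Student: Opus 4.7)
The plan is to assume $S \neq P$ and combine the dichotomy of Lemma~\lemref{si-con_convex sets} with the topological fact that $Q$, being a simple closed curve on the sphere $P$, separates $P$ into two closed disks $P^+$ and $P^-$ meeting only along $Q$. First I would note that $\partial S$ is non-empty (since $S$ is a proper closed subset of the compact connected $P$) and disjoint from $Q$ (because $Q \subset \mathring{S}$), so every connected component of $\partial S$ lies entirely in one of the open half-surfaces $\mathring{P^+}$ or $\mathring{P^-}$. Then I would split according to the two cases of Lemma~\lemref{si-con_convex sets}: $S$ simply-connected, or $S$ isometric to a flat cylinder without lids.

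In the simply-connected case, $S$ is a topological closed disk and $\partial S$ is a single simple closed convex curve, say contained in $\mathring{P^-}$. The argument here is a connectedness argument: $\mathring{S} \cap \mathring{P^+}$ is both open and relatively closed in $\mathring{P^+}$ (its complement in $\mathring{P^+}$ lies in the open set $P \setminus S$, since $\partial S$ is disjoint from $P^+$), and it is non-empty because $\mathring{S}$ contains a full neighborhood of $Q$; by connectedness of $\mathring{P^+}$, this forces $P^+ \subset \mathring{S}$. Letting $T = P \setminus \mathring{S}$ be the complementary closed disk in the sphere $P$ (bounded by $\partial S$ and necessarily contained in $P^-$), one gets $S = P^+ \cup (P^- \setminus \mathring{T})$, so $S \cap P^- = P^- \setminus \mathring{T}$ is an annular region bounded by $Q$ on one side and $\partial S$ on the other---the topologically closed cylinder determined by $\partial S$ and $Q$.

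In the flat-cylinder case, $\partial S$ has two components $C_1,C_2$, and the key step is to show they lie on opposite sides of $Q$. Suppose to the contrary that both lie in $\mathring{P^-}$. The same connectedness argument as above then yields $P^+ \subset \mathring{S}$. But $\mathring{S}$ is flat and so contains no vertex of $P$; since $Q \subset \mathring{S}$ therefore has no corners, the quasigeodesic $Q$ degenerates to a smooth simple closed geodesic of the flat surface $\mathring{S}$. Because any smooth simple closed geodesic on a finite flat cylinder is non-contractible in that cylinder, this contradicts the fact that $Q = \partial P^+$ bounds the disk $P^+ \subset \mathring{S}$. Hence $C_1$ and $C_2$ lie in opposite halves, and $S$ itself is the topologically closed cylinder bounded by $\partial S$, with $Q$ a core curve splitting $S$ into two sub-cylinders.

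The main obstacle is the cylinder case: excluding the ``both components on one side'' configuration is where the geometry really enters, through the combination of flatness of $\mathring{S}$ (which strips $Q$ of all corners and makes it a smooth closed geodesic) with the topological fact that such a geodesic on a finite flat cylinder must be non-contractible. Once this is in hand, the disk case should reduce to routine topological bookkeeping on the sphere, so the write-up will chiefly need to make the ``disjoint from $Q$ forces component to lie in one half'' and the connectedness step fully rigorous.
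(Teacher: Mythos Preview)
Your plan is sound and yields a correct proof of the lemma as literally stated, but it takes a genuinely different route from the paper. The paper argues purely by curvature: assuming $S\neq P$ and that $Q$ and $\partial S$ do \emph{not} bound a cylinder, it observes that the half-surface on one side of $Q$ (together with the vertices on $Q$) already carries total curvature at least $2\pi$, while Lemma~\lemref{ag-is-ab} and Gauss--Bonnet force the total curvature strictly inside a closed convex $S$ with convex boundary to be at most $2\pi$; equality analysis then collapses to the cylinder case. Your argument instead front-loads the topology via Lemma~\lemref{si-con_convex sets} and Jordan separation, and only brings in geometry in the flat-cylinder branch, where you use that a simple closed geodesic on a flat cylinder is non-contractible (equivalently, and perhaps more simply, Gauss--Bonnet on the flat disk $P^+$ would give $2\pi$ curvature inside a vertex-free region). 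What your approach buys is an explicit picture of how $Q$ sits inside $S$; what the paper's approach buys is brevity and a stronger conclusion---it actually shows the annulus between $Q$ and $\partial S$ is \emph{flat}, so that $\partial S$ is itself a quasigeodesic, which is exactly what the remark following the lemma (``no closed convex set can strictly enclose more than $2\pi$ curvature'') uses. Your simply-connected Case~A yields only a topological annulus, so you would not recover that corollary without adding a curvature count.
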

\begin{proof}
Assume the simple closed quasigeodesic $Q$ is included in $S$.

If $Q$ parallels $\partial S$ (in the sense that they bound a cylinder) then $\partial S$ is itself a simple closed quasigeodesic.
Assume this is not the case.

Because $Q$ is interior to $S$, the sum of curvatures of vertices inside or on $Q$ is at least $2 \pi$.
If $\partial S \neq \varnothing$ then, since it is convex towards $S$, the total curvature of the interior of $S$ is $< 2 \pi$, 
by Lemmas~\lemref{si-con_convex sets} and~\lemref{ag-is-ab}, 
which violates the Gauss-Bonnet Theorem.
\end{proof}

We don't know if every closed convex set is either included in a half-surface bounded by a simple closed quasigeodesic, or is the whole surface.
This is not settled by the previous lemma because there $Q$ is interior to $S$. 
This question is Open Problem~\openref{ConvHalfSurf}.
However, the proof of the previous lemma does show that no closed convex set can strictly enclose more than $2 \pi$ curvature.
This picture will be completed by Proposition~\lemref{dense_conv} and Theorem~\thmref{ExtPts} below.


\section{Geodesic Segments and Convex Sets}
\label{segm-pts-cvx}

In this section we clarify the behaviour of geosegs between points in, or on the boundary of, or outside of, a convex set.
The first part of the next result will be particularly useful later.

\begin{lm}[Segment vs. convex set]
\lemlab{out-segm-conv}
Let $S\subset P$ be a convex, $S \neq P$.

(i) If $x,y$ are interior points of $S$ and $\g$ a geodesic segment joining them,
then $\g$ is interior to $S$ (i.e., it does not intersect $\partial S$).

In particular, adding a boundary point to an open convex set yields a convex set.

(ii)~If $x$ is an interior point of $S$ and $y \in \partial S$, then all geodesic segments from $x$ to $y$ are interior to $S$, possibly excepting at their extremity $y$.

(iii)~If $x,y \in \partial S$ then there is a geodesic segment between them included in the 
closure $\bar S = S \cup \partial S$ of $S$,
possibly excepting its extremities $x,y$. 
Other geodesic segments from $x$ to $y$ might not be included in $\bar S$.

In particular, adding two boundary points to an open convex set does not necessarily yield a convex set.

(iv) If $x \in \partial S$ is not by itself a connected component of $\partial S$, then there exists a geodesic segment starting at $x$ and 
exterior to $S$.
\end{lm}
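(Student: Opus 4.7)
The plan is to address the four parts in order, with (i) and (ii) handled by a common clopen-dichotomy template, (iii) by approximation from the interior, and (iv) by a direct angular argument at $x$. The unifying tool throughout is Lemma~\lemref{local-conv}: in a small ball $B_z$ around any point $z$, the set $S$ looks like a planar (or conical) convex set, so questions about a geoseg $\g$ meeting $\partial S$ reduce locally to the behavior of a straight line through a boundary point of a planar convex region.

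For (i), ag-convexity gives $\g \subset S$, so I only need to rule out an interior crossing of $\g$ with $\partial S$. Suppose $z \in \g \cap \partial S$ is interior to $\g$. In $B_z$, the line $\g$ passes through a boundary point of a planar convex set while remaining in the set, so either $\g$ crosses transversally at $z$ (forcing points on one side of $z$ to leave $S$, contradicting $\g \subset S$) or $\g$ locally runs along $\partial S$. This makes $A := \g \cap \partial S \cap \mathring{\g}$ \emph{open} in $\mathring{\g}$; it is also closed there, since $\partial S$ is closed in $P$. As $\mathring{\g}$ is connected, either $A = \varnothing$ (giving $\mathring{\g} \subset \mathring S$, whence $\g \subset \mathring S$ once $x,y$ are included), or $A = \mathring{\g}$, in which case closedness of $\partial S$ forces $x \in \partial S$, contradicting $x \in \mathring S$. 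The corollary about adjoining a single boundary point $p$ to an open convex set $S$ then follows immediately from (ii) applied to each geoseg connecting $p$ to a point of $S$.

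Part (ii) uses the same dichotomy. Since $x \in \mathring S \subset S$ and $y \in \partial S \subset \bar S$, I first need $\g \subset \bar S$; for closed $S$ this follows from ag-convexity, and in general one takes the geoseg with this property guaranteed by (iii) (applied in the limit) or works with $\bar S$ throughout. An interior intersection $z \in \mathring{\g} \cap \partial S$ again forces $\g$ to lie locally on $\partial S$ by the planar picture in $B_z$; the same clopen argument on $\mathring{\g} \setminus \{y\}$ kills the ``all'' case via $x \in \mathring S$ and leaves $\g \setminus \{y\} \subset \mathring S$. For part (iii), if $\mathring S = \varnothing$, Lemma~\lemref{int_convex} reduces $S$ to a point, geoarc, or simple closed geodesic and the geoseg along $S$ itself works. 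Otherwise, pick $x_n, y_n \in \mathring S$ with $x_n \to x$ and $y_n \to y$, take geosegs $\g_n$ from $x_n$ to $y_n$ (which by (i) lie in $\mathring S$), and extract a subsequence converging to a geoseg $\g$ from $x$ to $y$; the limit lies in $\overline{\mathring S} \subset \bar S$, and interior points of $\g$ lie in $\mathring S$ by applying (i) to nearby interior approximants. The ``other geosegs may leave $\bar S$'' remark is witnessed by Example~\exref{Closure_not_cvx}: on a doubly-covered triangle, two boundary points of a face admit a second geoseg crossing the other face.

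Part (iv) is geometric rather than topological. Since the component of $\partial S$ through $x$ is not a single vertex, Lemma~\lemref{ag-is-ab}(ii) identifies it as a convex curve $C$ at which $S$ lies on one side. At $x$, the space of directions (a circle of length $2\pi$ in the non-vertex case, or the link of the cone at $x$ if $x$ is a vertex of $P$) is partitioned by the two tangent directions of $C$ into an ``interior'' wedge on the $S$-side and an ``exterior'' wedge on the complement side, and the exterior wedge has strictly positive angular measure because $x \in \overline{P \setminus S}$. Any geodesic issued from $x$ with initial direction strictly inside the exterior wedge stays in $P \setminus S$ for a positive time; truncating before it meets another vertex or $\partial S$ again produces the required geoseg. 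The main obstacle across the lemma is the bookkeeping between $S$, $\bar S$, $\mathring S$, and $\partial S$---the hypothesis only asks that $S$ be convex, and Example~\exref{Closure_not_cvx} warns that $\bar S$ may fail to be ag-convex---so in (i)--(iii) one must first pin down which geoseg actually lies in $\bar S$ before applying the local planar picture; once that is secured, the clopen dichotomy and approximation arguments run cleanly.
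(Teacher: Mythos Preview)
Your clopen-dichotomy argument for (i) is valid; the paper's primary proof uses a tubular-neighborhood construction but mentions your local-convexity contradiction as an alternative. Parts (iii) and (iv) are essentially along the paper's lines.

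The gap is in (ii). Your clopen argument from (i) needs $\g \subset \bar S$ \emph{in advance}: only then does a transversal crossing at an interior point $z$ contradict anything. You acknowledge this, but both proposed fixes fail. Appealing to (iii) produces \emph{one} geoseg in $\bar S$, whereas (ii) is about \emph{all} geosegs from $x$ to $y$. And ``working with $\bar S$ throughout'' is blocked exactly by the example you cite: $\bar S$ need not be ag-convex, so you cannot conclude $\g \subset \bar S$ from $x \in \bar S$, $y \in \bar S$. An approximation argument (take $y_n \in \mathring S$, $y_n \to y$) also only recovers the limit geoseg; if $y$ lies in the cut locus of $x$, nearby points have a unique geoseg to $x$ approximating only one of the several geosegs from $x$ to $y$.

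The paper's (ii) does not presuppose $\g \subset \bar S$. Instead, assume $\g$ exits $S$; take $z \in \partial S$ on $\g$ closest to $y$, let $\g_z$ be the sub-arc from $z$ to $y$, and let $A_z$ be the arc of $\partial S$ from $z$ to $y$ so that the lune $L$ bounded by $A_z \cup \g_z$ contains no point of $S$. Since $\partial S$ is convex toward $S$, the curve $A_z$ is concave toward $L$ while $\g_z$ is geodesic; Gauss--Bonnet then forces negative curvature inside $L$, impossible on a convex polyhedron. This lune argument is the missing ingredient in your (ii).
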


\begin{proof}
(i)
Let $B_1,B_2$ be open intrinsic balls
around $v_1,v_2$ respectively, with $B_1 \cup B_2 \subset S$.

Let $N_\g \subset P$ be 
a tubular neighborhood of the image set of $\g$.
View $N_\g$ as created by translations of $\g$ sufficiently small so that 
$N_\g$ is included in $B_i$ around $v_i$, $i=1,2$, and so that
$N_\g$ is an open set included in $S$.
$N_\g$ is flat everywhere, except at $v_1,v_2$.
Because $\g$ is strictly inside $N_\g$, $\g$ does not intersect $\partial S$.

This claim can also be established by assuming that
some geodesic segment between interior points touches $\partial S$, 
and applying Lemma~\lemref{local-conv} to obtain a contradiction.
 
\medskip

(ii)
Assume there exists a geodesic segment $\g$ from $x$ to $y$ that goes outside $S$,
so there is a boundary point $z \in \partial S$ on $\g$ between $x$ and $y$.
Assume, moreover, that $z$ is closest to $y$ with these properties.

Notice that such a $z$ exists, i.e., is at positive distance to $y$.
To see this, consider a small ball $B_y$ around $y$ which contains no vertex of $P$.
Then $S \cap B_y$
is a convex set in the plane (if $y$ is not a vertex of $P$) or on a cone (if $y$ is a vertex),
by Lemma~\lemref{local-conv}.
Denote by $\g_z$ the sub-arc of $\g$ from $z$ to $y$.
The boundary of this small convex set $\partial(S \cap B_y)$ is a convex curve, and the image of $\g_z$ is a segment, with the two intersecting in at most two points
(because a segment can intersect a convex curve at most twice).
This shows $z$ exists as described.

Denote by $A_z$ the component of $\partial S$ between $z$ and $y$ such that no point of $S$ is inside the lune $L$ bounded by $A_z$ and $\g_z$.
$L$ is then bound by a 
curve $A_z$ concave towards $L$ and a straight segment $\g_z$
exterior to $A_z$. 
The Gauss-Bonnet Theorem implies this is only possible if $L$ contained negative curvature,
a contradiction.

\medskip

(iii) 
Because $x,y \in \partial S$, there exist 
a sequence of points $x_n, y_n \in S$ such that $x_n \to x$, $y_n \to y$.
Let $\g_n$ be a  geodesic segment joining $x_n$ to $y_n$, $n \in \N$.
Then there exists a subsequence of $\{\g_n\}_n$,
which converges to a geodesic segment $\g$ between $x$ and $y$.
Since all $\g_n$ are in $S$, their limit is in $\bar S$.

\medskip

(iv) 
This claim can be obtained from Lemma~\lemref{local-conv}.
An alternative proof follows.

The space of unit tangent directions $T_x$ at $x$ is a circle,
of course of arc-length $\leq 2 \pi$.

Assume, in contradiction to the claim 
that, for any $\nu \in T_x$, the maximal geodesic segment $\g^{\nu}$ starting at $x$ in direction $\nu$ intersects $S \cap B_x$,
i.e., it has an initial portion of $\g^{\nu}$ is inside $S$.
Choose three directions $\nu_i \in T_x$, $i=1,2,3$,
with the angle between adjacent pairs $\nu_i, \nu_j$ less than $\pi$.
With points $x_i \in S$ on $\g^{\nu_i}$,
the three flat triangles $x x_i x_j$ are in $S$, and their 
union forms an open set around $x$, contradicting $x \in \partial S$.
\end{proof}

\bs
\noindent
Example~\exref{Closure_not_cvx} can be easily elaborated to illustrate all the claims of the above lemma.

\begin{co}
\lemlab{int-cvx}
It follows directly from (i) in  Lemma~\lemref{out-segm-conv} that the interior of a convex set is still convex.
\end{co}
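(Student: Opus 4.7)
\textbf{Proof plan for Corollary~\lemref{int-cvx}.} The statement is essentially an immediate reformulation of Lemma~\lemref{out-segm-conv}(i), so my plan is short and consists of unpacking the definition of convexity on $\mathring{S}$ while quoting the lemma. First I would dispose of degenerate cases: if $S=P$ then $\mathring{S}=P$ is convex by assumption; if $S$ has empty interior (so $S$ is a point, a geoarc, or a simple closed geodesic, by Lemma~\lemref{int_convex}), then $\mathring{S}=\varnothing$, which we declared to be convex at the outset of Section~\secref{Ag-convexity}. In the remaining case, $S\neq P$ and $\mathring{S}\neq\varnothing$, so Lemma~\lemref{out-segm-conv} applies verbatim.

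Next, pick any two distinct points $x,y\in\mathring{S}$ and let $\g$ be an arbitrary geodesic segment from $x$ to $y$; I need to show $\g\subset\mathring{S}$. Because $x,y\in S$ and $S$ is ag-convex, $\g\subset S$. Now invoke Lemma~\lemref{out-segm-conv}(i), which asserts precisely that any geoseg between two interior points of $S$ misses $\partial S$. Hence every point of $\g$ lies in $S\setminus\partial S=\mathring{S}$, so $\g\subset\mathring{S}$. Since this holds for every choice of $x,y$ and every geoseg joining them, $\mathring{S}$ is ag-convex by definition.

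The only point that might warrant a sentence of comment is that Lemma~\lemref{out-segm-conv}(i) is stated under the hypothesis $S\neq P$, which is why I separated off that trivial case at the start; otherwise there is no obstacle, and no further geometry need be invoked. I would present the argument in three or four lines immediately following the statement of the corollary.
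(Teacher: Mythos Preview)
Your proposal is correct and is exactly the unpacking the paper intends: the paper gives no proof at all beyond the corollary's own wording (``it follows directly from (i)''), and your argument is the natural three-line elaboration of that claim, including the care you take with the $S=P$ and empty-interior cases that the lemma's hypothesis technically excludes.
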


\begin{lm}[Supporting angle] 
\lemlab{support_angle}
Let $S\neq P$ be a closed convex subset of $P$ with interior points, and $x$ a boundary point of $S$.
There exists tangent directions $\mu, \nu \in T_x$ at $x$ of angle $\theta=\theta(\mu, \nu) \leq \pi$ toward $S$,
such that:

(i)~the geodesic segments $\g^{\mu}$ and $\g^{\nu}$ in the directions $\mu, \nu$ do not intersect the interior of $S$, locally; and

(ii)~for each $\tau \in T_x$ inside $\theta$, the geodesic segment $\g^{\tau}$ in the direction $\tau$ does intersect the interior of $S$, locally.
Here, $\theta$ is regarded as a subarc of $T_x$.
\end{lm}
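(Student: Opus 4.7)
The plan is to reduce the claim to the classical supporting-cone statement in two dimensions. By Lemma~\lemref{local-conv}, the set $\Sigma := S \cap B_x$ is isometric to a planar convex set if $x$ is not a vertex of $P$, and to an ag-convex set on a cone with apex $x$ of total angle $2\pi - \o(x)$ otherwise. Define the set of \emph{inward} directions
\[
D \;=\; \{\,\nu \in T_x \;:\; \g^\nu \text{ meets } \mathring S \text{ in every neighbourhood of } x\,\}.
\]
I will establish that $D$ is a non-empty open proper sub-arc of $T_x$ of arc-length at most $\pi$; this suffices, for then the endpoints $\mu, \nu$ of the closure $\bar D$ satisfy (i) (they are not in $D$, so $\g^\mu, \g^\nu$ miss $\mathring S$ locally), (ii) (every $\tau$ strictly between them on the $S$-side arc lies in $D$, so $\g^\tau$ enters $\mathring S$), and the angle $\theta$ toward $S$ equals the arc-length of $\bar D$.

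First I would verify that $D$ is non-empty and a proper subset of $T_x$. Non-emptiness follows from any sequence $q_n \in \mathring S$ with $q_n \to x$, which yields directions in $D$ along the geodesic segments $x q_n$; that $D \ne T_x$ is essentially Lemma~\lemref{out-segm-conv}(iv), since otherwise three inward directions with pairwise angles less than $\pi$ would force $x$ into $\mathring S$. Next I would show $D$ is a single open arc of $T_x$. Given $\nu_1, \nu_2 \in D$, choose $p_i \in \g^{\nu_i} \cap \mathring S$ very close to $x$; by Lemma~\lemref{out-segm-conv}(i) the shortest geodesic segment from $p_1$ to $p_2$ lies entirely in $\mathring S$. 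In the local isometric model given by Lemma~\lemref{local-conv}, the rays from $x$ in every tangent direction strictly between $\nu_1$ and $\nu_2$ on the side where this shortest segment lies cross the segment close to $x$, and thus belong to $D$; openness follows from the stability of the membership condition under perturbation. This also forbids $D$ from splitting into two disjoint open arcs, because the shorter-side argument would force any gap between them to belong to $D$ as well.

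The main obstacle is bounding the arc-length of $D$ by $\pi$. Suppose for contradiction that this arc-length exceeds $\pi$; then I can choose $\nu_1, \nu_2 \in D$ at arc-distance exactly $\pi$ through $D$, and interior points $p_i \in \g^{\nu_i} \cap \mathring S$ at distances $r_i$ from $x$. If $x$ is not a vertex of $P$, the local model is a Euclidean plane and $p_1, x, p_2$ are collinear; Lemma~\lemref{out-segm-conv}(i) places the segment $p_1 p_2$ in $\mathring S$, hence also $x \in \mathring S$, contradicting $x \in \partial S$. If $x$ is a vertex with $\o(x) > 0$, the complementary arc at $x$ has angular size $\pi - \o(x) < \pi$; unfolding the cone on this complementary side yields a planar triangle with apex angle $\pi - \o(x)$ whose side opposite $x$ realises a geodesic segment from $p_1$ to $p_2$ of length
\[
\sqrt{r_1^2 + r_2^2 + 2 r_1 r_2 \cos \o(x)} \;<\; r_1 + r_2,
\]
strictly shorter than the broken path $p_1 x p_2$. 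This segment is therefore a shortest path, so by Lemma~\lemref{out-segm-conv}(i) it lies in $\mathring S$; its points close to $x$ have tangent directions from $x$ sweeping across the middle of the complementary arc, which must then also lie in $D$, contradicting the arc property of $D$ established above and completing the proof.
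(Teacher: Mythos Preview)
Your proof is correct and follows essentially the same approach as the paper: both reduce to the local model via Lemma~\lemref{local-conv} and analyze an arc of tangent directions at $x$, with you working with the inward arc $D$ while the paper works with its complement $A_x$ of outward directions and asserts $|A_x|\geq\pi$. Your argument is more detailed than the paper's sketch---in particular you handle explicitly the case where $x$ is itself a vertex of $P$, which the paper's alternative proof sidesteps by choosing a ball ``which contains no vertex of $P$.''
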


\begin{proof}
This can established from Lemma~\lemref{local-conv}.
An alternative way to prove the claims follows.

Let $B_x$ be a small ball around $x$ which contains no vertex of $P$.

Lemma~\lemref{out-segm-conv} shows that there exists 
a geodesic segment $\g_z$ connecting $x$ to a point $z \in P\setminus S$.
Because $P\setminus S \neq \varnothing$ is an open set, there are other
points in the neighborhood of $z$ similarly connected to $x$ by
geodesic segments that do not intersect the interior of $S$.
Denote by $A_x$ the maximal open subarc of $T_x$ determined by directions of such geodesic segments $\g_z$.
Let $\mu, \nu \in T_x$ be the extremities of $A_x$.

The convexity of $S$ implies that for all $\tau \in A_x$, the geodesic segment $\g^\tau$ does not intersect $S \cap B_x$.
Moreover, $A_x \geq \pi$. This establishes claim~(i).

The convexity also implies that $\theta=T_x \setminus A_x \leq \pi$ has the opposite property: 
for each $\tau \in T_x$ inside $\theta$, the geodesic segment $\g^{\tau}$ intersects the interior of $S \cap B_x$.
This establishes Claim~(ii). 
\end{proof}


\section{Relative Convexity}
\seclab{RelConv}
For the next 
result (Lemma~\lemref{conv-vm}),
we need to modify the notion of convexity to \emph{relative convexity}, a
variation on a notion of relative convexity we employed in~\cite{ov-ceccc-14}.

Let $S\subset P$ be a contractible 
closed convex set with interior points.
We proved in the previous section that $S$ is bounded by a closed convex curve 
$C = \partial S$.
We glue to $C$ a tall cylinder $L$ with a base but without a top.
This satisfies AGT, because a point $p \in C$ is convex to the $S$-side and
has angle $\pi$ on the cylinder rim.

Denote by $P^\#$ the resulting convex surface.
AGT implies that
$P^\#$ is a polyhedron if and only if $\partial S$ is a geodesic polygon on $P$.
Call $S$ \emph{relatively convex} if its image (also denoted by $S$) 
is convex on $P^ \#$.

Note that $C$ is a quasigeodesic on $P^\#$.

There are two considerations that lead us to introduce relative convexity in this section,
and the relative convex hull $\rconv(S)$
in Section~\secref{ConvexHull}.
First, the failure of vertex-merging to preserve convexity,
Example~\exref{Quad} below.
Second, the failure of the convex hull of points inside a simple closed quasigeodesic $Q$ to remain inside $Q$, Example~\exref{Pyramid} below.

In Section~\secref{RelativeHull} we shall use a 
construction similar to the above, but
with respect to simple closed quasigeodesics $Q$ instead of convex curves $C$.
The purpose there is to construct the relative convex hull, starting from a non-convex set inside $Q$.
So the term ``relative'' will refer to either construction, depending on the context, and without confusion.

\begin{lm}[Vertex merging]
\lemlab{conv-vm}
Let $v_1,v_2$ be two vertices interior to the convex set $S \subset P$.
Merging $v_1$ and $v_2$ produces a new polyhedron $P'$ and a set $S' \subset P'$ obtained as the union of $S$ with the two merge triangles $T$.
Then $S'$ is relatively convex on $P'$.
\end{lm}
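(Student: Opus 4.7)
My plan is to establish the three facts needed to conclude $S'$ is convex on $(P')^{\#}$ (which by definition is relative convexity on $P'$): (i) the boundary $\partial S'$ is a convex curve on $P'$ towards $S'$; (ii) the surface $(P')^{\#}$ is well-defined by gluing a tall cylinder $L$ to $\partial S'$ via AGT; and (iii) every geodesic segment in $(P')^{\#}$ between points of $S'$ lies in $S'$.

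First I would note that since $v_1,v_2$ lie in the interior of $S$ and $S$ is convex, Lemma~\lemref{out-segm-conv}(i) forces every geodesic segment $\gamma$ from $v_1$ to $v_2$ to be interior to $S$. Thus the cut-and-paste operation defining the merge (slitting along $\gamma$ and inserting the two congruent triangles forming $T$) takes place strictly in the interior of $S$, leaving a neighborhood of $\partial S$ isometrically unaltered on passage from $P$ to $P'$. Consequently, $\partial S'$ corresponds to $\partial S$ under the natural identification away from the merge region, and the $S'$-side surface angles at every point of $\partial S'$ coincide with the $S$-side angles of $\partial S$. Lemma~\lemref{ag-is-ab}(i), applied to the closed convex set $S$ with interior points, gives $\alpha_i \leq \pi$ along $\partial S$, whence $\partial S'$ is convex towards $S'$ on $P'$. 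This proves (i), and (ii) follows exactly as in the definition of $P^{\#}$: at each point $p \in \partial S'$ the total surface angle after gluing is $\alpha_p + \pi \leq 2\pi$, so AGT yields the convex surface $(P')^{\#}$.

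For (iii) I would argue by contradiction, supposing a geodesic segment $\gamma^{\#}$ in $(P')^{\#}$ between $x,y \in S'$ strictly enters the cylinder $L$, and picking entry/exit points $p_1,p_2 \in \partial S'$ of one such excursion. Since $L$ is flat, it unrolls to a rectangle $[0,W] \times [0,h]$ with vertical sides identified (where $W$ is the length of $\partial S'$ and $h$ is the cylinder height). Any straight segment in the universal cover from $(p_1,0)$ back to the line $y=0$ must be horizontal, and therefore lies on the rim $y=0$, i.e., along $\partial S' \subset S'$---contradicting ``strict excursion''. The only remaining way for $\gamma^{\#}$ to exit $S'$ and return is to descend to the base at $y=h$, traverse the base, and ascend again, contributing excursion length at least $2h$. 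Choosing the cylinder height $h$ larger than the intrinsic diameter $\mathrm{diam}(S')$ (which is finite because $S'$ is compact) makes such an excursion strictly longer than the in-$S'$ alternative $x \to p_1 \to y$ routed through any path in $S'$, contradicting the minimality of $\gamma^{\#}$. Hence $\gamma^{\#} \subset S'$ and $S'$ is convex on $(P')^{\#}$.

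The main obstacle I anticipate is the boundary case where $\gamma^{\#}$ only grazes $\partial S'$ tangentially without descending into the cylinder interior: such contact is benign since $\partial S' \subset S'$, and only a strict descent into $y>0$ triggers the height-vs-diameter comparison. A secondary care point is the qualitative word ``tall'' in the definition of $P^{\#}$, which I would make quantitative by requiring $h > \mathrm{diam}(S')$; this choice is harmless because relative convexity is invariant under increasing the cylinder height.
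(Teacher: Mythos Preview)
Your proof is correct and follows the same approach as the paper's: use Lemma~\lemref{out-segm-conv} to see that the merge occurs strictly in the interior of $S$ so that $\partial S' = \partial S$, then argue on $(P')^{\#}$ that a geodesic segment between points of $S'$ cannot dip into the attached cylinder, since unrolling the cylinder to a flat rectangle forces any rim-to-rim geodesic to lie along the rim. The paper's proof is terser---it omits your case analysis of an excursion reaching the base and your explicit quantification $h > \mathrm{diam}(S')$---but the core argument is identical.
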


\noindent Before we prove this lemma, we show that it is false for
convexity, without the ``relatively" modifier.

\begin{ex}[doubly-covered quadrilateral]
\exlab{Quad}
Figure~\figref{Merge_v1v2} shows an example $S \subset P$ where
the insertion of merge triangles causes some geodesic segments
in the new set $S'$ to leave $S'$, violating convexity.
In~(a), geodesic segments from any point $x$ on the front face
to its image $x'$ on the back face stay within $S$,
whereas after the $v_1,v_2$ merge~(b), there are points $x$ whose shortest
path to its back-face image $x'$ crosses $ab$ outside of $S'$.
\end{ex}
\begin{figure}[htbp]
\centering
\includegraphics[width=0.8\textwidth]{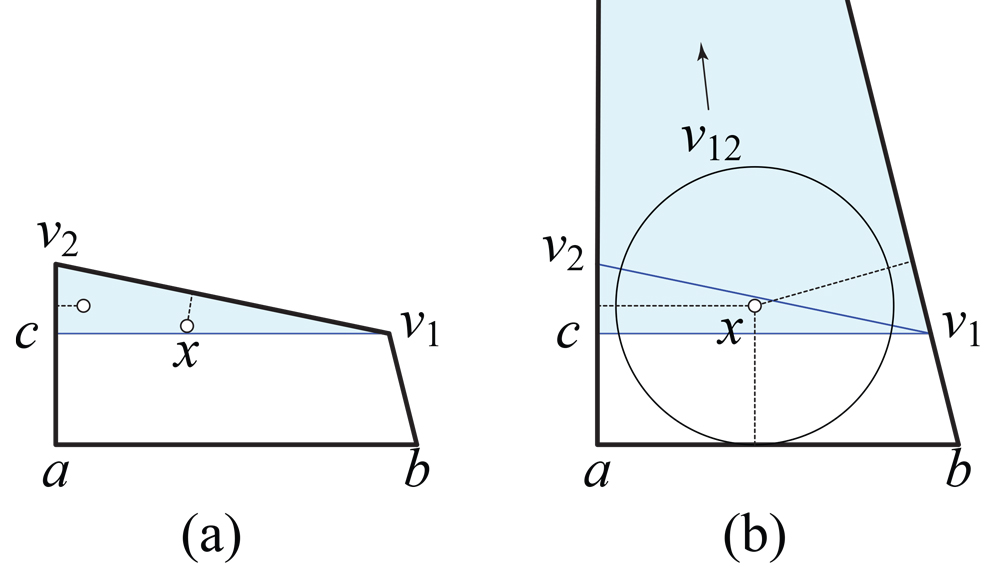}
\caption{(a)~$P$ is the doubly-covered quadrilateral $a b v_1 v_2$.
Convex set $S\subset P$ is
the two-sided triangle $c v_1v_2$, shaded.  
(b)~Merging $v_1$ and $v_2$ yields $v_{12}$ far above $P$.
The resulting set $S'$ is $c v_1 v_{12}$. 
}
\figlab{Merge_v1v2}
\end{figure}

\begin{proof}
We know by Lemma~\lemref{out-segm-conv} 
that the insertion of the triangles $T$ has not affected the boundary of $S'$:
$\partial S' = \partial S = C$, where $C$ is a convex curve.
In $P'^\#$, denote by $L$ the sides of the cylinder glued to $C$.
When $L$ is unfolded flat, it forms a rectangle $R$.

Let $x,y \in S'$ be two points in $S'$, and $\g$ a geodesic segment between them.
We argue that $\g$ cannot cross $C$.
Suppose $\g$ properly enters $R$ at $x'$ and exits at $y'$. Then the segment
$x' y'$ along the top of $R$ is shorter, a contradiction to 
the assumption that $\g$ is a geodesic segment.

Therefore, $S'$ is convex on $P'^\#$, i.e.,
$S'$ is relatively convex.
\end{proof}


\section{Convex Hull}
\seclab{ConvexHull}

We mentioned earlier (Section~\secref{Notions}) that one reason we are
not using metrical convexity is that it leads to an unsatisfactory notion of a convex hull.
Here we explore the convex hull, and conclude that we need a variation for our
purposes in the next chapter.

In analogy with the Euclidean case, define the \emph{convex hull} $\conv(S)$ of an arbitrary set $S \subseteq P$ as the intersection of
all the convex sets that enclose $S$ on $P$. So, in a familiar sense, 
it is the smallest convex set with this property.

In the plane, the convex hull of $S$ can equivalently be defined as the set enclosed by a minimal length curve enclosing $S$.
However, as we will show in Section~\secref{Lmin_rconv}, 
the two notions do not coincide in our context.
Here we focus on the intersection definition.

\begin{ex}
\exlab{Delta}
\exlab{convV}
The convex hull $\conv(V)$ of the vertices $V$ of a doubly-covered triangle 
$\Delta$ is the whole surface $\Delta$.
%
Similarly, the convex hull of all the vertices of any convex polyhedron $P$ is $P$ itself: $\conv(V)=P$.
\end{ex}

\begin{ex}
\exlab{convS}
In contrast to the Euclidean situation, for $S=P \setminus V$ 
we have $\conv(S)=S$, by Lemma~\lemref{conv-pt}, because $P$ is itself convex,
i.e., the vertex holes are not filled-in by the convex hull operation.
\end{ex}

\begin{ex}
\exlab{NoRadon} 
A particular instance of Radon's Theorem on (extrinsically) convex sets states that any set of $r=4$ points in $\Rp$ can be partitioned into two sets whose convex hulls intersect.

The simple example of a tetrahedron shows that no analoguos result holds in our framework.
Indeed, the convex hull of any two vertices is the corresponding edge, so any $2:2$ partion of the vertices provides disjoint convex hulls.
And the convex hull of any three vertices is the corresponding face, so any $3:1$ partion of the vertices also provides disjoint convex hulls.

If, as in Example~\exref{NoHelly},
we again place the tetrahedron over a tall right triangular prism,
then the convex hull of the tetrahedron base vertices is the ``roof'' minus the fourth vertex.
\end{ex}

It remains for future work to study the Radon problem for $r \geq 5$, possibly considering closures of convex hulls:

Also worth studying seems to be the existence of a  Carath\'eodory type theorem in our framework.
These questions form Open Problem~\openref{HellyEtc}.

\bs

The following two properties follow immediately from the definition of $\conv(S)$.

\begin{lm}
\lemlab{conv-elem-prop}
For every $S \subset P$, $\conv(\conv(S))=\conv(S)$.

For every two sets $S, S' \subset P$ with $S'\subset S$, $\conv(S)' \subseteq \conv(S)$.
\end{lm}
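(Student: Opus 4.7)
My plan is to establish both properties as routine consequences of the definition of $\conv(S)$ as the intersection of all ag-convex subsets of $P$ containing $S$. The essential preliminary observation, which I would state and verify first, is that the intersection of an arbitrary family of ag-convex sets is again ag-convex. Indeed, if $\{T_\alpha\}_{\alpha \in A}$ is such a family and $T = \bigcap_\alpha T_\alpha$, then any two points $x, y \in T$ lie in each $T_\alpha$, so any geodesic segment $\g$ joining them lies in each $T_\alpha$ by ag-convexity of $T_\alpha$, hence $\g \subseteq T$. Thus $\conv(S)$ is itself an ag-convex set, namely the smallest one containing $S$.

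For the first assertion $\conv(\conv(S)) = \conv(S)$, I would argue both inclusions. Since $\conv(S)$ is ag-convex and contains itself, it appears in the family whose intersection defines $\conv(\conv(S))$, giving $\conv(\conv(S)) \subseteq \conv(S)$. Conversely, every ag-convex set containing $\conv(S)$ also contains $S$ (since $S \subseteq \conv(S)$), so the family of ag-convex sets containing $\conv(S)$ is a subfamily of those containing $S$; taking intersections reverses inclusion to yield $\conv(S) \subseteq \conv(\conv(S))$.

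For the second assertion, assuming the typo $\conv(S)' \subseteq \conv(S)$ should read $\conv(S') \subseteq \conv(S)$: if $S' \subseteq S$, then every ag-convex set containing $S$ also contains $S'$, so the family defining $\conv(S)$ is a subfamily of the family defining $\conv(S')$. Intersecting a larger collection gives a smaller set, hence $\conv(S') \subseteq \conv(S)$.

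Since both properties are purely formal consequences of the intersection definition together with the closure of ag-convexity under arbitrary intersections, I anticipate no substantive obstacle; the only point worth isolating explicitly is the intersection closure property, which has already been used implicitly whenever $\conv(S)$ is invoked.
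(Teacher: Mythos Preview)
Your proposal is correct and matches the paper's approach: the paper simply states that both properties ``follow immediately from the definition of $\conv(S)$'' and gives no further proof, so your detailed unpacking of that claim---including the explicit verification that arbitrary intersections of ag-convex sets remain ag-convex---is exactly the standard argument implicit in the paper's one-line remark. You also correctly identified the typo $\conv(S)'$ for $\conv(S')$.
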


The next result is particularly useful for computing the convex hull of finite sets.

\begin{lm} 
\lemlab{conv-split}
For every pair of subsets $S,T$ of $P$ with $T \subset S$,
the following holds:
$$ \conv (S) = \conv \left(  \left(  \conv (S \setminus T ) \right) \cup T \right).$$
\end{lm}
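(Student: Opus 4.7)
The identity should follow directly from the two basic properties assembled in Lemma~\lemref{conv-elem-prop}: idempotence ($\conv(\conv(S))=\conv(S)$) and monotonicity ($S'\subseteq S \Rightarrow \conv(S')\subseteq \conv(S)$). No convexity-specific geometry on $P$ is needed beyond these formal properties, so the argument will be a short double-inclusion.

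The plan is to show each inclusion separately. For $\subseteq$, I would start from the trivial set-theoretic identity $S=(S\setminus T)\cup T$ (valid because $T\subseteq S$), and note $S\setminus T \subseteq \conv(S\setminus T)$, so $S \subseteq \conv(S\setminus T)\cup T$. Applying $\conv$ to both sides and using monotonicity yields $\conv(S)\subseteq \conv(\conv(S\setminus T)\cup T)$. For $\supseteq$, since $S\setminus T\subseteq S$, monotonicity gives $\conv(S\setminus T)\subseteq \conv(S)$; and $T\subseteq S\subseteq \conv(S)$. Taking the union, $\conv(S\setminus T)\cup T \subseteq \conv(S)$. Applying $\conv$ and invoking idempotence, $\conv(\conv(S\setminus T)\cup T)\subseteq \conv(\conv(S))=\conv(S)$.

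There is no real obstacle here: monotonicity and idempotence of the hull operator are the only ingredients, and both were already recorded. The only thing worth double-checking is that the definition of $\conv$ as the intersection of all convex supersets really does yield monotonicity and idempotence in our setting (where ``convex'' means ag-convex on $P$), but this is immediate from the definition and the fact that an arbitrary intersection of ag-convex sets is ag-convex (every geodesic segment between two points of the intersection lies in each member, hence in the intersection).
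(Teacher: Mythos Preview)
Your proof is correct and follows essentially the same approach as the paper's: a double inclusion using only monotonicity and idempotence of the hull operator (the paper phrases the $\subseteq$ direction as ``is convex and includes $S$,'' which is just the definitional unpacking of your monotonicity step). Nothing to add.
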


\noindent In words: If $S$ is partitioned into two parts, $T$ and $S \setminus T$,
then the hull of $S$ is the hull of the union of $T$ with the hull of $S \setminus T$.

\begin{proof} 
The set $\conv \left(  \left(  \conv (S \setminus T ) \right) \cup T \right)$ is convex and includes $S$, hence it also includes $\conv (S)$.

On the other hand, the convex set $\conv (S)$ includes $\conv (S \setminus T)$, by Lemma~\lemref{conv-elem-prop}, and also includes $T$.
Hence it includes
$\conv \left(  \left(  \conv (S \setminus T ) \right) \cup T \right)$ as well.
\end{proof}

\bs

The next simple result follows from Lemma~\lemref{out-segm-conv}. 

\begin{lm}
\lemlab{conv_open}
The convex hull of every open set is open.
\end{lm}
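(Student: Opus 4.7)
The plan is to show that $\conv(S)$ coincides with its own interior $\mathring{\conv(S)}$, which immediately gives openness. So the target identity is $\conv(S)=\mathring{\conv(S)}$, and everything is an easy two-way inclusion plus a minimality argument.

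First I would invoke Corollary~\lemref{int-cvx}, which says the interior of any convex set is convex. Applied to $\conv(S)$, this gives that $\mathring{\conv(S)}$ is a convex subset of $P$. Next, I would verify the key containment $S\subseteq\mathring{\conv(S)}$: since $S$ is open, each $x\in S$ admits a neighborhood $B_x\subseteq S\subseteq\conv(S)$, so $x$ is an interior point of $\conv(S)$. Thus $\mathring{\conv(S)}$ is a convex set containing $S$.

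Finally, by the minimality characterization of $\conv(S)$ as the intersection of all convex sets containing $S$, we get $\conv(S)\subseteq\mathring{\conv(S)}$. The reverse inclusion $\mathring{\conv(S)}\subseteq\conv(S)$ is automatic, so $\conv(S)=\mathring{\conv(S)}$ is open.

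I do not foresee a real obstacle here; the entire content is carried by Corollary~\lemref{int-cvx} (which itself is a consequence of Lemma~\lemref{out-segm-conv}(i)). The only thing to be careful about is that $\mathring{\cdot}$ is taken with respect to the intrinsic topology of $P$, not any restricted topology, so that ``open $S$'' and ``interior point of $\conv(S)$'' refer to the same notion of open neighborhood; this is indeed the convention used throughout the chapter.
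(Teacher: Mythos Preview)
Your proof is correct and is essentially the approach the paper has in mind: the paper simply states that the result ``follows from Lemma~\lemref{out-segm-conv}'' without spelling out the details, and your argument via Corollary~\lemref{int-cvx} (itself an immediate consequence of Lemma~\lemref{out-segm-conv}(i)) together with the minimality of $\conv(S)$ is exactly the intended route.
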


\begin{ex}
\exlab{Box}
Let $\Box$ be a doubly-covered square with vertices $a,b,c,d$, and 
let $S=\{a,b,c\} \subset \Box$.
Then $S$ is closed and, as we prove below, $\conv(S)= \Box \setminus \{d\}$, hence $\conv(S)$ is not closed.
Refer to Fig.~\figref{SquareHole}.
\begin{figure}[htbp]
\centering
\includegraphics[width=0.4\linewidth]{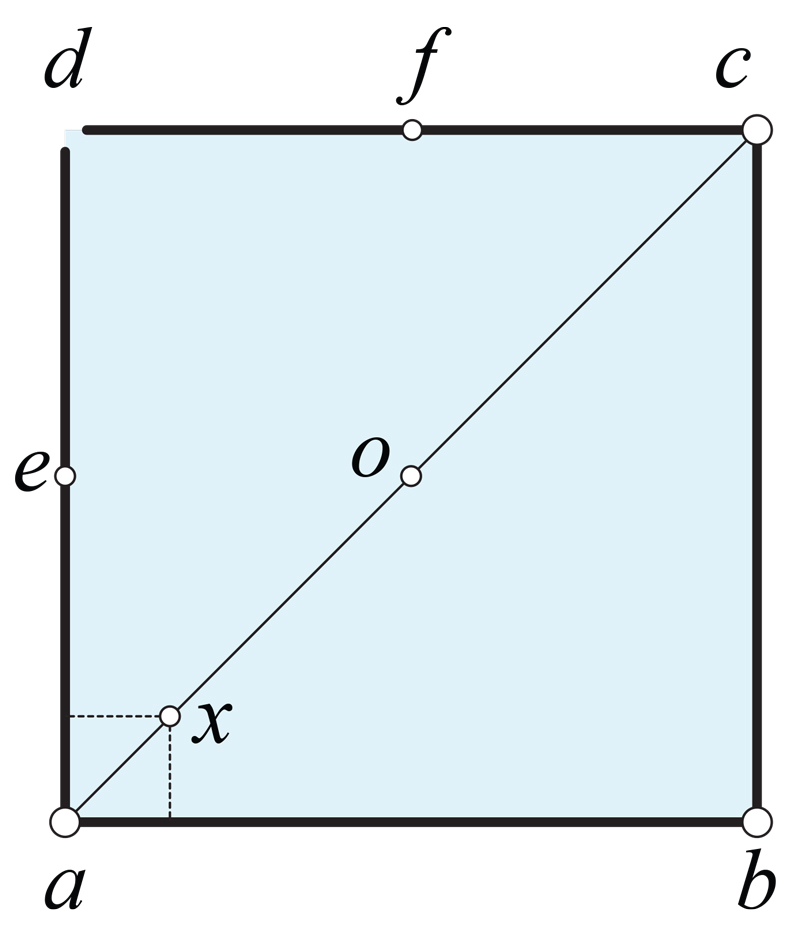}
\caption{$\conv( \{a,b,c\} )$ is $\Box \setminus \{d\}$.}
\figlab{SquareHole}
\end{figure}

Clearly, $\conv(S)$ contains the two triangles $abc$, hence also the simple closed quasigeodesic $Q$ determined by the diagonals $ac$.
Let $x \in Q$ be a point close to $a$ on the front side, and $x'$ its corresponding point on the back side.
Then there are two geodesic segments between $x$ and $x'$, one ``horizontal'' and another one ``vertical''---parallel to $ab$ and $bc$ respectively.
Therefore, there are points of $\conv(S)$ on the edge $ad$, close to $a$.
Moving $x$ continuously towards the center $o$ of $\Box$, shows that the half-edge $ae$ is included in $\conv(S)$, where $e$ is the midpoint of $ad$.
Similarly, $\conv(S)$ contains the half-edge $cf$, where $f$ is the midpoint of $cd$.
Iterating this process establishes that $\conv(S)$ contains halves of the segments $ed$ and $fd$, and so on.
Therefore $\conv(S)$ contains the whole edges $cd$ and $ad$, except
missing the corner point $d$.
\end{ex}

The next result 
will be useful for proving the last statement in Theorem~\thmref{ExtPts}.

\begin{prop}[Dense hull]
\lemlab{dense_conv}
Let $S\subset P$ be a closed convex set with interior points, enclosing strictly less than $2 \pi$ curvature.
Then the convex hull of its complement 
$S'=P \setminus S$ is dense in $P$, and the convex hull of $\partial S$ is dense in $S$.
\end{prop}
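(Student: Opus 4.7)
The plan is to prove the first statement---density of $\conv(S')$ in $P$---by contradiction via Gauss--Bonnet, and then deduce the second statement from the first.

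For the first statement, I would suppose that $U = P \setminus \overline{\conv(S')}$ is nonempty. Since $S' \subseteq \conv(S')$, the open set $U$ lies entirely in $\mathring S$. Let $T_0$ be a connected component of $U$. I would first argue that $T_0$ is a topological disk bounded by a single closed curve: the open convex set $\conv(S')$ is either simply connected or annular (an open-set analog of Lemma~\lemref{si-con_convex sets}, whose proof only uses Gauss--Bonnet together with local convexity of the boundary), so each component of $P \setminus \overline{\conv(S')}$ on the topological sphere $P$ is a disk. By Lemma~\lemref{ag-is-ab}(ii), $\partial T_0$ is a convex curve---convex toward $\conv(S')$ and hence concave toward $T_0$---so at each corner $v$ the interior angle of $T_0$ satisfies $\alpha_v \geq \pi$, and along any smooth portion the geodesic curvature satisfies $\kappa_g \leq 0$ from the $T_0$ side. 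Gauss--Bonnet applied to $\bar T_0$ yields
\[
\omega(\mathring T_0) \;=\; 2\pi \;-\; \int_{\partial T_0} \kappa_g\,ds \;-\; \sum_v (\pi - \alpha_v) \;\geq\; 2\pi.
\]
Since $\mathring T_0 \subseteq \mathring S$, this forces $\omega(S) \geq \omega(\mathring T_0) \geq 2\pi$, contradicting the hypothesis $\omega(S) < 2\pi$.

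For the second statement, the key observation is the inclusion $\conv(S') \cap S \subseteq \conv(\partial S)$. I would establish it by induction on the iterative construction $\conv(S') = \bigcup_n S^{(n)}$, where $S^{(0)} = S'$ and $S^{(n+1)}$ adds all points lying on a geodesic segment between two points of $S^{(n)}$. For a point $x \in S^{(n+1)} \cap S$ on a segment $\gamma$ from $y_1$ to $y_2$ in $S^{(n)}$, a short case analysis on whether each $y_i$ lies in $S$ or in $S'$ shows that the sub-arc of $\gamma$ lying inside $S$ has both endpoints already in $\conv(\partial S)$ (either by induction, if $y_i \in S$, or because $\gamma$ crosses $\partial S$ at its entry or exit point, which is in $\partial S \subseteq \conv(\partial S)$); since $\conv(\partial S)$ is convex, the entire sub-arc, and in particular $x$, lies in $\conv(\partial S)$. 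Combined with the first statement and the fact that $\mathring S$ is dense in $S$ (verified via Lemma~\lemref{out-segm-conv}(ii)), this yields density of $\conv(\partial S)$ in $S$.

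The main obstacle I anticipate is the topological-structure claim that each component $T_0$ is a disk with a single boundary circle. Extending Lemma~\lemref{si-con_convex sets} from closed to open convex sets requires care, since $\overline{\conv(S')}$ need not itself be convex (cf.\ Example~\exref{Closure_not_cvx}). I plan to address this by applying Lemma~\lemref{ag-is-ab}(ii) to each connected component of $\partial \conv(S')$ separately and using the fact that, on the sphere $P$, a connected convex set $\conv(S')$ cannot have a complement component with two concave boundary circles (that would force $\conv(S')$ itself to be disconnected, contradicting its convexity). If that route proves delicate, a cleaner fallback is to replace $\conv(S')$ throughout the argument by $\conv(F)$ for a finite subset $F \subset S'$ of total curvature exceeding $2\pi$---possible since Gauss--Bonnet gives $\omega(S') > 2\pi$---for which the boundary structure is an honest geodesic polygon, and then to pass to the limit.
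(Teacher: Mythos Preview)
Your approach to the first statement is essentially the paper's: show $\conv(S')$ is open, its boundary components are convex toward $\conv(S')$ by Lemma~\lemref{ag-is-ab}(ii), and any complementary region would carry $\ge 2\pi$ curvature by Gauss--Bonnet, contradicting $\omega(S)<2\pi$. The paper is terser and does not pause over the disk topology of $T_0$; your concern there is legitimate but resolves cleanly exactly as you sketch (connectedness of $\conv(S')$ forces each complement component on the sphere to be a disk), so the finite-$F$ fallback is unnecessary.

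For the second statement your route genuinely differs. The paper does \emph{not} reduce to the first claim; it simply repeats the same Gauss--Bonnet argument with $\conv(\partial S)$ in place of $\conv(S')$: since $\partial S\subset\conv(\partial S)\subset S$, any region of $S\setminus\overline{\conv(\partial S)}$ is already a full complementary component in $P$, bounded by curves concave toward it, hence of curvature $\ge 2\pi$---again impossible inside $S$. Your argument instead establishes the inclusion $\conv(S')\cap S\subseteq\conv(\partial S)$ via the iterative hull $\bigcup_n S^{(n)}$ and a clean endpoint case analysis, then invokes Part~1. Both are correct. The paper's version is shorter and avoids the (mild) verification that the iterative union really equals $\conv(S')$; your version has the virtue of making the second statement a genuine corollary of the first, and the inclusion $\conv(S')\cap S\subseteq\conv(\partial S)$ is a nice standalone fact.
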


\begin{proof} 
By Lemma~\lemref{conv_open}, $\conv(S')$ is open.
Then, by Lemma~\lemref{ag-is-ab}, the boundary of $\conv(S')$ may consist of vertices and convex curves, convex towards $\conv(S')$.
If such a boundary curve existed,
it would enclose more than $2 \pi$ curvature,
contradicting the the Gauss-Bonnet Theorem.


For the second claim, notice that $\conv (\partial S) \subset S$.
Moreover, a continuity argument proves that $\conv (\partial S)$ has interior points.
The remainder is analogous
to Lemma~\lemref{ag-is-ab}, and to the above argument.
\end{proof}

\begin{ex}
\exlab{conv-bd}
Example~\exref{Box} (Fig.~\figref{SquareHole})
can be adapted to illustrate Proposition~\lemref{dense_conv}.
See Fig.~\figref{Penta2x}.
\begin{figure}[htbp]
\centering
\includegraphics[width=0.4\linewidth]{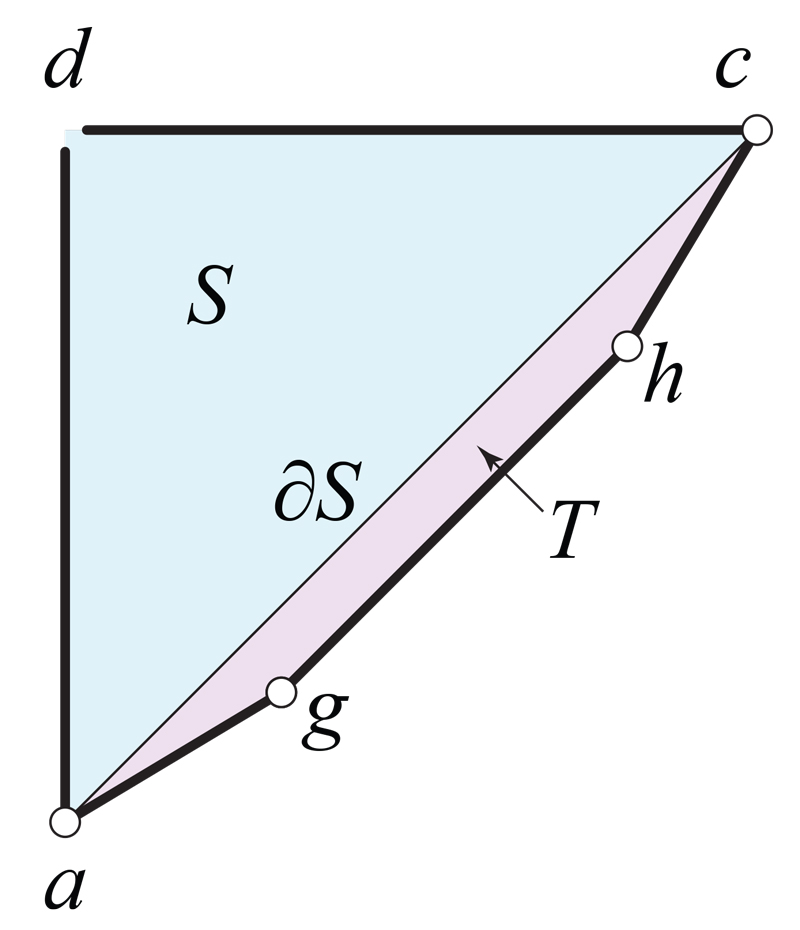}
\caption{$\conv(\partial S)$ is dense in $S$
and $\conv(P \setminus S)$ is dense in $P=S \cup T$.}
\figlab{Penta2x}
\end{figure}

Let $T=ghca$ be an isosceles trapezoid with $gh || ac$, 
$|g-h|<|a-c|$, and the base angle at $g$ almost $\pi$.
Construct, outside $T$, an isosceles right triangle $S=acd$, with $|a-d|=|c-d|$.
Let $P$ denote the doubly-covered pentagon $T \cup S$.

The (two-sided) subset $S$ of $P$
is convex, and $\partial S$ consists of the two geodesic segments $ac$.
As in the proof of Example~\exref{Box}, $\conv(\partial S)= S \setminus \{d\}$ is dense in $S$, and therefore $\conv(P \setminus S)=P \setminus \{d\}$ is dense in $P$.
\end{ex}

These examples show that the convex hull has some desirable properties,
but, for our purposes, some undesirable properties.
The doubly-covered square (Example~\exref{Box})
shows that $\conv(V)$ for $V$ 
a subset of vertices of $P$ could be a set with point holes---so the
convex hull of a closed set can be an open set.
And in the next section we will show that $\conv(S)$ for a set $S$ is not
always the convex hull of the extreme points of $S$.
These considerations lead us to use the
notion of relative convexity
as first introduced in Section~\secref{RelConv} above,
and the relative convex hull $\rconv(S)$,
which will be developed in Section~\secref{RelativeHull} below.

\begin{ex}
\exlab{B_h}
Let $B_h$ be a rectangular box of height $h$, and $Q$ a simple closed geodesic on $B$ parallel to its top and bottom faces, at height $h/2$.
If $h$ is large enough then $Q$ is itself a convex set, and $\conv(Q)=Q$; see Lemma~\lemref{reduction-hull-ag-conv} and its proof for details. 
However, for small $h$, $Q$ is not a convex set, and $\conv(Q) =B$.
This shows that the convex hull of a set $S$ is sensitive to the
surface of $P$ outside of $S$, and justifies the next section.
\end{ex}


\section{Relative Convex Hull}
\seclab{RelConvHull}
The phenomena  illustrated in the previous examples lead us to a new notion.

By ``relative convex hull" $\rconv(S)$ we mean
the intersection of all relative convex sets containing $S$,
with relative convexity as previously defined in Section~\secref{RelConv}.
We will further explore this notion in Section~\secref{RelativeHull} below.

\begin{ex}[Pyramid]
\exlab{Pyramid}
Let $P$ be a pyramid, the top of a regular octahedron.
In Fig.~\figref{PyrRelative_12}(a), both $C_1$ and $C_2$ are
$\a\b$-convex curves.
The convex hull of the corners of $C_1$, 
is the surface above $C_1$, including the apex $v_5$.
However, the convex hull of the corners of $C_2=(v_1,v_2,v_3,v_4)$ is not the surface above $C_2$,
but instead all of $P$. This is because there are geodesic segments that cross
the square base. For example, the shortest path between the midpoints of consecutive
edges of $C_2$ traverse the square base.
Fig.~\figref{PyrRelative_12}(b) shows the construction of $P^\#$ for $C_2$.
Then $\rconv(C_2)$ is the surface above $C_2$, because no shortest
paths will enter the cylinder inserted below $C_2$. 

\begin{figure}[htbp]
\centering
\includegraphics[width=0.75\linewidth]{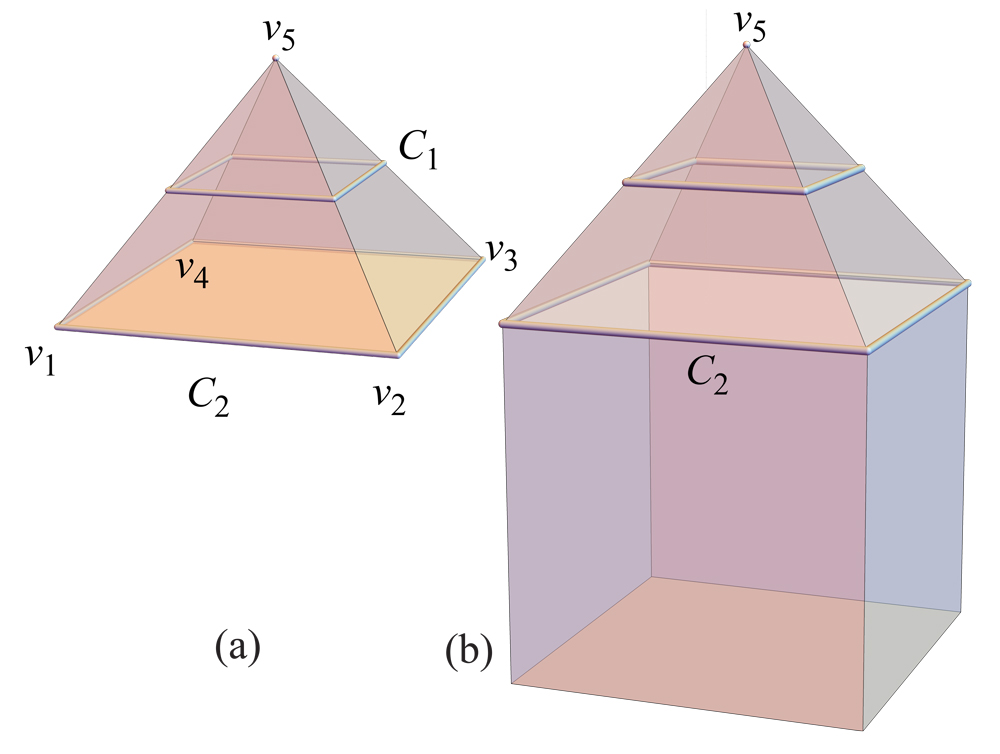}
\caption{%
(a)~$\conv(C_1)$ is the pyramid top, but $\conv(C_2) = P$.
(b)~$P^\#$ for $C_2$.}
\figlab{PyrRelative_12}
\end{figure}
\end{ex}

\begin{ex}
\exlab{Illuminating}
With the following example, we show that:
\begin{itemize}
\squeezelist
\item neither a digon nor a triangle is necessarily convex;
\item Neither $\conv(V)$ nor $\rconv(V)$  is necessarily closed,
for $V$ the set of all vertices inside a simple closed quasigeodesic;
\item the exceptional case in Theorem~\thmref{ExtPts} below may well appear for the closure of $\rconv(V)$, 
which in our case is convex but not the convex hull of its extreme points.
\end{itemize}

Start with the double of the quadrilateral $D=abcd$, where $Q=aba$ is a simple closed quasigeodesic.
See Figure~\figref{Exceptional_rconv}.
We could insert rectangles below $Q$, so that everything happens above $Q$.

\begin{figure}[htbp]
\centering
\includegraphics[width=0.40\linewidth]{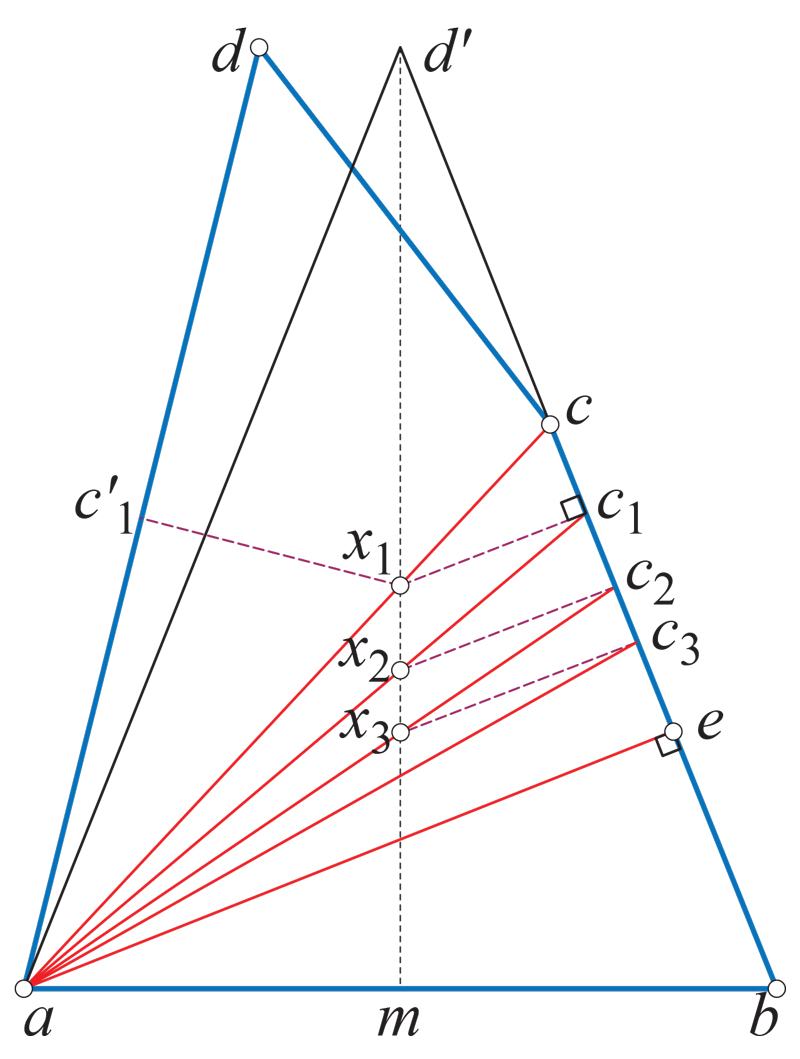}
\caption{$V=\{a,c,d\}$, $\partial \, \rconv(V) = aea$, 
but $ae \setminus \{a\} \not\in \rconv(V)$.}
\figlab{Exceptional_rconv}
\end{figure}

The construction of $D$ follows:
start with an arbitrary line-segment $ab$,
and denote by $m$ the mid-point of $ab$.
Consider an isosceles triangle $d'ab$, and a point $d$ slightly to the left of $d'$.
Take $c$ on $d'b$.

Consider, moreover, $ae$ perpendicular to $d'b$, with $e$ on $d'b$.

Put $V=\{a,c,d\}$.
Then $\rconv(V)$ is the half-surface bounded by the two segments $ae$, 
without those segments (but including $a$).
To see this, let $x_1$ be the intersection point between $ac$ and $d'm$,
and $c_1$ the foot of $x_1$ onto $d'b$.
Then the geodesic segment from $x_1$ to its ``opposite'' point goes through $c_1$, by construction.
So $c_1$ belongs to $\rconv(V)$, and therefore $ac_1$ is included in $\rconv(V)$.
(Notice that $|x_1-c_1|< |x_1 - c_1'|$.)

Now let $x_2$ be the intersection point between $ac_1$ and $d'e$,
and $c_2$ the foot of $x_2$ onto $d'b$.
Then the geodesic segment from $x_2$ to its ``opposite'' point goes through $c_2$, by construction.
So $c_2$ belongs to  $\rconv(V)$, and therefore $ac_2$ is included in $\rconv(V)$.

Iterate, and pass to the limit.

It follows (as one can prove by running the above procedure backward)
that all points above $ae$ belong to $\rconv(V)$, but not $ae$ (see Lemma~\lemref{conv-exc-case}).

Clearly, the digon $aca$ is not convex.
Also, consider that if we ``split'' the vertex $a$ into vertices $a',a''$,
then neither is the triangle $a'a''c$ convex.
\end{ex}


\section{Extreme Points}
\seclab{Extreme}

An \emph{extreme point} of a convex set 
$S \subseteq P$ is a point in $S$ that is not interior to any geodesic segment joining two points of $S$. 
For example, every vertex is an extreme point for every convex set containing it.

Roughly, the Krein–Milman Theorem states that every compact convex set is the extrinsic convex hull of its extreme points.
The closest analogy for ag-convexity we could prove is the following theorem, which will be invoked later.

\begin{thm}
\thmlab{ExtPts}
\lemlab{conv-vert}
Let $S$ be a closed convex subset of $P$ 
with $\partial S$ a closed curve, enclosing strictly less than $2\pi$ total curvature. 
Then either $S$ is the relative convex hull of its extreme points, or $\partial S$ contains a geodesic arc which is not a geodesic segment.
In the former case, the interior extreme points of $S$ are all vertices.
\end{thm}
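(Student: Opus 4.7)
The plan is to analyze the extreme points of $S$ in two separate regimes---interior versus boundary---and then to show that, provided $\partial S$ has a sufficiently ``rigid'' structure (every maximal locally-geodesic arc actually being a geodesic segment), the relative convex hull of these extreme points recovers all of $S$.

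First I would dispose of the last sentence. Let $x$ be an extreme point of $S$ lying in the interior $\mathring S$. By Lemma~\lemref{local-conv}, a small ball $B_x$ around $x$ meets $S$ in a convex set on either a flat disk (if $x$ is not a vertex of $P$) or a cone apexed at $x$ (if $x$ is a vertex). In the non-vertex case, $B_x\subset \mathring S$ is flat, so $x$ is the midpoint of many short geodesic segments inside $S$, contradicting extremality. Hence every interior extreme point of $S$ is a vertex of $P$.

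Next I would analyze $\partial S$. Since $S$ is closed, convex, with $\partial S$ a single closed curve and total enclosed curvature strictly less than $2\pi$, Lemma~\lemref{si-con_convex sets} rules out the cylinder case, so $S$ is simply connected, and Lemma~\lemref{ag-is-ab}(i) says $\partial S$ is a convex curve. Decompose $\partial S$ into maximal arcs along which the curve is locally geodesic (the ``straight pieces''); between consecutive straight pieces lies a \emph{corner} which, by Lemma~\lemref{ag-is-ab}(i), is either a vertex of $P$ of positive curvature at which $\partial S$ turns, or a point where $\partial S$ crosses a vertex with $\a<\b$. Let $E_\partial$ denote the corners together with all vertices of $P$ lying on $\partial S$, and let $E = E_\partial \cup V_{\mathrm{int}}$, where $V_{\mathrm{int}}$ is the set of vertices of $P$ interior to $S$. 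By the previous paragraph, $E$ contains every extreme point; conversely each element of $E_\partial$ is easily seen to be extreme, so $E$ is exactly the set of extreme points.

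Now split into two cases. \textbf{Case A:} every maximal straight piece of $\partial S$ is in fact a geodesic segment between its two bounding extreme points. Then each such piece is contained in $\rconv$ of its two endpoints, so $\partial S \subseteq \rconv(E_\partial) \subseteq \rconv(E)$. Because $S$ is relatively convex and contains $E$, we also have $\rconv(E) \subseteq S$, so only the inclusion $S \subseteq \rconv(E)$ remains. For this, I would argue that $\mathring S$ is ``filled'' by geodesic segments whose endpoints lie in $\rconv(E)$: given $p\in \mathring S$, choose a direction $\nu \in T_p$ avoiding the directions toward interior vertices, and extend the geodesic from $p$ in directions $\pm\nu$. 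By Lemma~\lemref{out-segm-conv}(i), the geodesic remains in $S$ until it meets $\partial S$, so $p$ lies on a geodesic segment between two points of $\partial S \subseteq \rconv(E)$; hence $p\in \rconv(E)$ by relative convexity. For points $p$ that unavoidably lie on geodesic rays hitting an interior vertex first, one uses that $V_{\mathrm{int}} \subseteq E$ directly, together with a continuity/limit argument. \textbf{Case B:} some maximal straight piece of $\partial S$ is a geodesic arc that is not a geodesic segment. Then we have the exceptional alternative in the theorem.

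The main obstacle will be the filling argument in Case A, specifically handling interior non-vertex points that happen to lie on geodesics through vertices of $V_{\mathrm{int}}$. Here I expect to exploit Lemma~\lemref{support_angle} to choose, at any $p\in \mathring S$, a whole open cone of directions $\nu$ for which the two-sided extension yields a genuine geodesic segment between boundary points, so that the exceptional directions form a measure-zero subset and $p$ is approximated by points lying on good segments; combined with the closedness of $S$ this will give $p \in \overline{\rconv(E)} \cap S = \rconv(E) \cap S$ when $\rconv(E)$ is closed, and otherwise one falls into Case~B via the illumination-style behavior described in Example~\exref{Illuminating}.
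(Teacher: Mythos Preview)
Your opening paragraph (interior extreme points are vertices) is correct and in fact cleaner than the paper's citation of Proposition~\lemref{dense_conv}.

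The real problem is the ``filling'' step in Case~A. When you extend a geodesic from $p\in\mathring S$ in directions $\pm\nu$ until it meets $\partial S$ at $a$ and $b$, what you obtain is a geodesic \emph{arc} through $p$, not necessarily a geodesic \emph{segment} (shortest path) from $a$ to $b$. Ag-convexity, and hence the definition of $\rconv(E)$, only forces \emph{segments} between points of $\rconv(E)$ to lie in $\rconv(E)$; it says nothing about longer geoarcs. So from $a,b\in\partial S\subseteq\rconv(E)$ you cannot conclude $p\in\rconv(E)$. (Your invocation of Lemma~\lemref{out-segm-conv}(i) is also slightly off: that lemma concerns geosegs between interior points, not the behaviour of an arbitrary locally-geodesic ray.) Your final sentence acknowledges something may go wrong here, but the suggested escape---that failure of this step pushes you into Case~B---is not justified: the boundary of $S$ can perfectly well be a geoseg polygon while the arc $apb$ fails to be a segment, simply because there are interior vertices between $a$ and $b$.

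The paper sidesteps this difficulty entirely by a different mechanism: induction on the number of vertices of $P$ interior to $S$, with the inductive step performed by \emph{vertex-merging}. One merges two interior vertices $v_1,v_2$ along a geoseg, producing $P'$ and a set $S'\subset P'$ with one fewer interior vertex; Lemma~\lemref{conv-vm} guarantees $S'$ is still relatively convex, and $\partial S'=\partial S$. The base case (a single interior vertex $v$) is easy because every maximal boundary geoseg $v_iv_{i+1}$ yields a flat triangle $vv_iv_{i+1}$, and these triangles tile $S$. The inductive correspondence between extreme points on $P'$ and on $P$ (replacing the merged apex $v_{12}$ by $\{v_1,v_2\}$) then gives $S=\rconv(E)$ without ever needing to certify that an ad~hoc geoarc through an interior point is globally shortest.
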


\noindent
We do not know if the above theorem statement is still true without 
the ``relative" modifier;
Open Problem~\openref{ExtPts}.

\begin{proof}
{\bf Case 1.} If $S \subset P$ has empty interior then
the conclusion follows from Lemma~\lemref{int_convex}, 
which shows that $S$ is either a point, a geodesic arc, or a simple closed geodesic.

\bs

{\bf Case 2.} Assume $S \subset P$ has non-empty interior. 
The proof proceeds by induction over the number of vertices of $P$ interior to $S$.

\medskip

For the base case, assume $S$ has exactly one interior vertex $v$.
By Lemma~\lemref{ag-is-ab}, $\partial S$ is locally isometric to a planar convex curve.
If $\partial S$ contains a geodesic arc which is not a geodesic segment, we 
have established the lemma claim.

So assume now that $\partial S$ contains a geodesic segment, maximal with respect to inclusion, between its corners $v_i$, $v_{i+1}$.
Then we get a geodesic-segment triangle $v v_i v_{i+1}$, which is  the convex hull of its extreme points.
This is valid for all such geodesic-segment arcs in $\partial S$.

If $\partial S$ contains an arc $A$ locally isometric to a planar strictly convex curve, then each point of $A$ is an extreme point, and the conclusion follows.

\medskip

For the general case, again if $\partial S$ contains a geodesic arc that is not a geodesic segment, we are finished. So now we prove that $S$ is
the relative convex hull of its extreme points.

Assume that $S$ has at least two interior vertices, say $v_1$ and $v_2$.
Because they are inside $Q$, $\o_1+\o_2 < 2\pi$.
Let $\g$ be a geodesic segment joining them in $S$.

Merge $v_1$ and $v_2$ to $v_{12}$ along $\g$ to obtain a new surface $P'$, hence 
where $P'$ is $P$ cut open along $\g$ and the union $T$ of two twin triangles is inserted.
The set $S$ with $T$ inserted yields $S'$, a relatively 
convex set of $P'$ (Lemma~\lemref{conv-vm}).

Because $\partial S' = \partial S$ by Lemma~\lemref{out-segm-conv},
$S'$ is closed, because $S$ is closed.

By the induction hypotheses, $S'$ is the relative convex hull of its extreme points $E'$ on $P'^ \#$.
Clearly, the only extreme point of $S'$ in $T$ is $v_{12}$. 
Put $E''= E' \setminus \{v_{12} \}$ and, on $P$, $E = E'' \cup \{v_1,v_2\}$.
It suffices to show that (i) $E$ is the set of extreme points of $S$, and (ii) $S = \conv (E)$.

(i) Since $E''$ is included in the part of $S'$ isometric to $S$, each point in $E''$ in also extreme for $S$, 
as are the vertices $v_1$, $v_2$.

(ii) One can easily see that each convex set on $P'$ containing $E'$ corresponds, via digon-tailoring $T$, to a convex set on $P$ containing $E$, and conversely.
So the intersection of all such sets on $P'$ corresponds to the intersection of their correspondents on $P$, and conversely.
Consequently, $S= \rconv(E)$.

\medskip

The last claim of the theorem---that the interior extreme points of $S$
are all vertices---follows from Proposition~\lemref{dense_conv} and its proof.
\end{proof}

\noindent Note that throughout the induction, the boundary of all the
sets remains fixed at $\partial S$.

\medskip

The next example illustrates the exceptional situation of Theorem~\thmref{ExtPts}.


\begin{ex}[Extreme Points]
\exlab{ExPts}
Consider a ``tall'' triangular prism with top $abc$ and base $a'b'c'$.
Now add a point $v$ on the line containing $b b'$, above $b$, and
let $P$ be the boundary of the extrinsic convex hull of $\{a',b',c',a,c,v\}$.
See Fig.~\figref{TriPrism}.
Then $Q=abc$ is a simple closed quasigeodesic and the ``roof'' above 
$Q$---all the surface of $P$ above $Q$---is our convex set.
(Here we need the prism to be sufficiently 
tall, but for clarity the figure is more squat.)
Take points $x,y \in P$ with $x$ on $ab$ close to $a$ and $y$ on $bc$ close to $c$.
Then the arc of $Q$ from $x$ to $y$ through $b$ is not a geodesic segment. 

Note that $b$ is not an extreme point of $S$, because $b$ is interior to 
geodesic segments connecting points on $ab$ and $bc$ that are close to $b$.
The only extreme points of $S$ are $a$, $c$ and $v$, 
and so $S$ is not the convex hull of its extreme points.
This is the exception in Theorem \thmref{ExtPts}.
\end{ex}

\begin{figure}[htbp]
\centering
\includegraphics[width=0.75\linewidth]{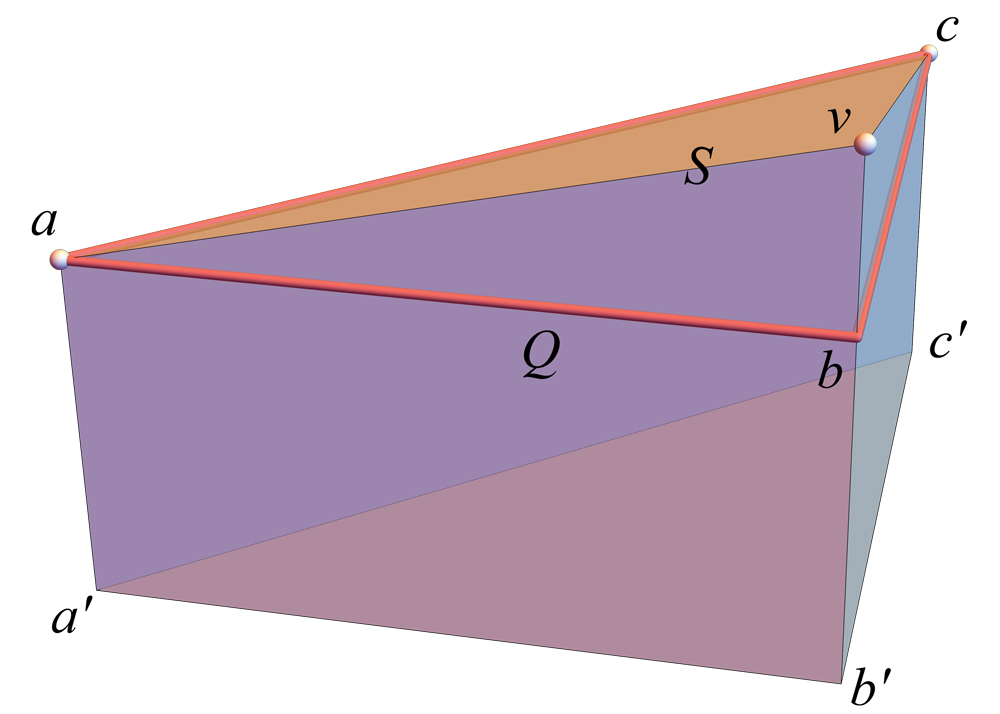}
\caption{The region above the simple closed quasigeodesic $Q$ is convex, but is not the convex hull of its extreme points $a,c,v$.}
\figlab{TriPrism}
\end{figure}

\begin{ex}[Simple closed geodesic]
\exlab{ExtPts}
For the simple closed geodesic $Q$ on $B_h$, the rectangular box of height $h$ considered in Example~\exref{B_h},
$\conv(Q)=Q$ holds, and there are no extreme points on $Q$.
\end{ex}

Denote by $\ext (S)$ the extreme points of the convex set $S$.

\begin{lm}
\lemlab{conv-ext} 
For any set $S \subset P$, $\ext(\conv(S)) \subset S$.
\end{lm}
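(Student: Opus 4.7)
\textbf{Proof proposal for Lemma~\lemref{conv-ext}.}

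The plan is a short argument by contradiction: assume some $e \in \ext(\conv(S))$ fails to lie in $S$, and then show that $\conv(S) \setminus \{e\}$ is itself a convex set containing $S$, which contradicts the minimality of $\conv(S)$.

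First I would fix $e \in \ext(\conv(S))$ and suppose, for contradiction, that $e \notin S$. Since $e \notin S$, we have $S \subseteq \conv(S) \setminus \{e\}$, so the only thing to verify is that $\conv(S) \setminus \{e\}$ is convex. Then, because $\conv(S)$ is by definition the intersection of all convex sets containing $S$, we would obtain $\conv(S) \subseteq \conv(S) \setminus \{e\}$, which is absurd.

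The key step, and essentially the only one, is checking convexity of $\conv(S) \setminus \{e\}$. Take any two points $x,y \in \conv(S) \setminus \{e\}$ and any geodesic segment $\g$ joining them. Since $\conv(S)$ is convex, $\g \subseteq \conv(S)$. If $e$ were to lie on $\g$, then either $e \in \{x,y\}$, which is excluded by hypothesis, or $e$ is interior to $\g$; but the latter contradicts $e \in \ext(\conv(S))$ directly from the definition of an extreme point, since $x,y \in \conv(S)$. Hence $e \notin \g$, so $\g \subseteq \conv(S) \setminus \{e\}$, proving convexity.

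I do not anticipate any serious obstacle here: the statement follows immediately from the definitions of $\conv(S)$ (as the intersection of convex sets containing $S$) and of an extreme point (as a non-interior point of any geodesic segment within the set). The only mild subtlety to double-check is that removing a single point can never turn a convex set into a non-convex one precisely when that point is extreme---which is exactly what the argument above confirms in our ag-convex setting.
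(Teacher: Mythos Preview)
Your proposal is correct and follows essentially the same approach as the paper: assume an extreme point $e$ of $\conv(S)$ lies outside $S$, verify that $\conv(S)\setminus\{e\}$ remains convex (exactly as you do, using the definition of extreme point), and derive a contradiction with the minimality of $\conv(S)$. The paper's version is slightly terser, invoking the proof of an earlier lemma (removing a vertex from a convex set preserves convexity) rather than spelling out the convexity check directly, but the substance is identical.
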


Notice that not all extreme points of the closure of $\conv(S)$ are in $S$, as Example~\exref{Box} shows.

\begin{proof}
Assume there exists a point $x$ in $\ext(\conv(S)) \setminus S$.
Then, since $x$ is 
not interior to any geodesic segment joining two points of $\conv(S)$
(because it is extreme),
the set $\conv(S) \setminus \{x\}$ is still convex (see the proof of Lemma~\lemref{conv-pt}).
So we have $S \subset \conv(S) \setminus \{x\}  \subset \conv(S)$ and $\conv(S) \setminus \{x\} \neq \conv(S)$, contradicting the minimality of $\conv(S)$.
\end{proof}

\begin{lm}
\lemlab{conv-exc-case}
For any $S \subset P$, if $\conv(S)$ contains a boundary
geoarc which is not a geoseg, then $S$ also contains that arc. 
\end{lm}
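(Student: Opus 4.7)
The plan is to argue by contradiction using the minimality of $\conv(S)$. Suppose the geoarc $A \subset \partial \conv(S)$ is not a geoseg but there exists a point $x \in A$ with $x \notin S$. Let $p, q$ be the endpoints of $A$ and let $\g$ be a strictly shorter geoseg from $p$ to $q$, whose existence is guaranteed by the failure of $A$ to be a geoseg. Since $\conv(S)$ is convex and contains $p, q$, Lemma~\lemref{out-segm-conv} yields $\g \subset \conv(S)$, and $A \cup \g$ bounds a closed ``lune'' $D$ on the interior side of $A$ relative to $\conv(S)$, with $D \subset \conv(S)$ by convexity.

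First I would reduce to a subarc $A^{*} \subset A$ that still fails to be a geoseg, contains $x$ in its interior, and whose associated lune is disjoint from $S$. This is obtained, for instance, as a connected component of a suitable relatively open subset of $A$ avoiding $S$ around $x$, chosen to preserve the non-geoseg property (which is possible because the failure of $A$ to be a geoseg is caused by some localized obstruction, e.g.\ a vertex of $P$ around which $A$ wraps, and this obstruction is retained by sufficiently large subarcs). Let $\g^{*}$ be the strictly shorter geoseg between the endpoints of $A^{*}$, and let $D^{*}$ be the associated lune.

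I would then define $K := \conv(S) \setminus (A^{*} \cup \mathring{D}^{*})$ and argue that $K$ is convex, contains $S$, and is strictly smaller than $\conv(S)$, contradicting the minimality of $\conv(S)$. Strict containment is immediate since $A^{*} \cup \mathring{D}^{*}$ is nonempty; $S \subset K$ follows from the disjointness arranged in the previous step. The convexity of $K$ is the central technical step: for any $y, z \in K$, a geoseg $\mu$ from $y$ to $z$ lies in $\conv(S)$, and I rule out that $\mu$ enters $\mathring{D}^{*}$ as follows. A transverse crossing of $A^{*}$ would exit $\conv(S)$; a tangential contact with $A^{*}$ would force $\mu$ to coincide with $A$ on a subarc by the flat-chart structure of Lemma~\lemref{local-conv}, and so $\mu$ would remain on $\partial D^{*}$ rather than entering its interior. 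The only remaining possibility is that $\mu$ enters and exits $\mathring{D}^{*}$ through $\g^{*}$; but then the inner portion of $\mu$ and the corresponding portion of $\g^{*}$ are two geosegs between the same pair of points on $\g^{*}$, hence of equal length, and rerouting $\mu$ along $\g^{*}$ produces an equal-length path with non-smooth corners at the entry and exit points, which can be smoothed to yield a strictly shorter curve, contradicting that $\mu$ is a geoseg.

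The main obstacle is the reduction step: simultaneously arranging that the lune $D^{*}$ avoids $S$ while $A^{*}$ still fails to be a geoseg. Shrinking $A^{*}$ narrows $D^{*}$ and helps avoid $S$, but sufficiently short subarcs of the geodesic $A$ are automatically geosegs (being shortest paths in a small enough neighborhood), so shrinking cannot be done indiscriminately without destroying the hypothesis needed to produce $\g^{*} \neq A^{*}$. Balancing these opposing constraints---narrow enough to avoid $S$ inside the lune, yet wide enough to retain a strictly shorter competing geoseg---is the delicate part of the argument and will likely be resolved by a continuity or compactness argument that pins down precisely where the obstruction to $A$ being a geoseg lies.
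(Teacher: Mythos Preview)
Your overall strategy---contradiction via the minimality of $\conv(S)$---matches the paper's, but your execution is substantially more complicated than necessary, and this is precisely what creates the obstacle you flag at the end.

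The paper removes only the one-dimensional arc, not a two-dimensional lune. Specifically: let $A$ be a maximal boundary geoarc of $\conv(S)$ that is not a geoseg and is disjoint from $S$; set $K := \conv(S) \setminus \mathring{A}$ (keeping the endpoints). Then $K$ contains $S$, is strictly smaller than $\conv(S)$, and is convex. For convexity, take $x, y \in K$ and a geoseg $\mu$ between them; $\mu \subset \conv(S)$. If $\mu$ meets an interior point of $A$, then since $A \subset \partial \conv(S)$, the local picture (Lemma~\lemref{local-conv}) forces $\mu$ to be tangent to the boundary there, hence to coincide locally with $A$. As geodesics cannot branch, $\mu$ and $A$ agree on a maximal common subarc; since $x, y \notin \mathring{A}$, this forces $A \subset \mu$. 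But then $A$ would be a subarc of a geoseg, hence itself a geoseg---contradicting the choice of $A$. This is exactly the mechanism of Lemma~\lemref{conv-pt} (removing a vertex keeps convexity), transplanted to a boundary arc.

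Removing only the arc sidesteps your reduction step entirely: there is no lune to keep disjoint from $S$, and no need to shrink anything, so the tension you describe between ``narrow enough to miss $S$'' and ``long enough to fail to be a geoseg'' never arises. Your two-dimensional excision is doing strictly more work than required, and the obstacle you identify is an artifact of that choice rather than an intrinsic difficulty of the lemma. As written, your proposal is not a complete proof, since the reduction step is left to a hoped-for continuity/compactness argument that you do not carry out.
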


\begin{proof}
Assume the contrary be true, and let $A \subset \conv(S)$ be a maximal (with respect to inclusion) such geoarc not in $S$. So
$A \subset \partial (\conv(S)) \setminus S$.
Remove $A$ from $\conv(S)$, but leave its extremities, and denote by $K$ the resulting set.
It follows, just as in the proof of Lemma~\lemref{conv-pt}, that $K$ is a convex set including $S$ and strictly included in $\conv (S)$, 
a contradiction to the inclusion-minimality of $\conv(S)$. 
\end{proof}

Example~\exref{Illuminating} also illustrates this result. Indeed, in Fig.~\figref{Exceptional_rconv}, the arc $aea$ is a boundary geoarc of $\rconv(\{a,c,d\}$), but not a geoseg. And that arc is not included in $\{a,c,d\}$, hence neither is it included in $\rconv(\{a,c,d\})$.


\section{Relative Convex Hull of Vertices}
\seclab{RelativeHull}

We return to
the notion of \emph{relative convexity} and the \emph{relative convex hull},
used in the proof of Theorem~\thmref{ExtPts}.
For our purposes 
we only consider the relative convex hull of vertices $V$
that fall to one side of (i.e., \emph{inside}, or above) a simple closed quasigeodesic $Q$.

\paragraph{Notation.} 
\noindent
\begin{enumerate}[label={(\arabic*)}]
\item $Q$ quasigeodesic. 
Orient $Q$ counterclockwise (ccw), and let
$P^+$ be the half-surface to the left of (above) $Q$.
$P^\#$ is $P^+$ union the unbounded cylinder to the right of (below) $Q$.
\item $V$: Set of (positive curvature) vertices inside or on $Q$.
\item For $C$ any simple closed curve in $P^+$,
let $R(C)$ be the region of $P^+$ to the left of $C$. So $\partial R(C) = C$.
This is well-defined because $C$ is a simple curve.
And because it is oriented ccw, $R(C) \subset P^+$.
\end{enumerate}

Given $Q$ oriented so that $V$ is to $Q$'s left, define 
a polyhedron $P^\#(Q) = P^\#$ as 
in Section~\secref{RelConv}:
Cut $P$ along $Q$ and insert below $Q$ a sufficiently 
tall cylinder; for example, a cylinder of height equal to $\textrm{diam}(P)$.
Because $Q$ is convex to both sides, AGT implies that $P^\#$
is a convex polyhedron.

\medskip

Note that $Q$ is strictly $\a\b$-convex on $P^\#$, because $\b_i=\pi$ at all 
corners\footnote{Of course, the vertices of $Q$ with $\a =\pi$ are flattened when passing to $P^\#$.}
of $Q$, and corners have positive curvature.

\medskip

Now we define the \emph{relative convex hull $\rconv(V)=H$} of $V$ to be the intersection of
all the convex sets that enclose $V$ on $P^\#$.
This is the same notion earlier explored in Section~\secref{RelConvHull},
but here we focus on vertices inside $Q$.

\begin{lm}
\lemlab{reduction-hull-ag-conv}
Let $V$ be a set of points on $P$, inside a simple closed quasigeodesic $Q$.
The relative convex hull $\rconv(V)=H$ of $V$ can be obtained 
from only employing geodesic segments, and consequently convex sets, inside $Q$ 
(instead of constructing $P^\#$).
\end{lm}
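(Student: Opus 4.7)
The strategy is to show that the auxiliary construction $P^\#$, in which a tall open-topped cylinder is glued below $Q$, does not introduce any geodesic shortcuts between points above $Q$. Precisely, I will argue that any $P^\#$-geodesic segment joining two points of $\overline{P^+} = P^+ \cup Q$ lies entirely in $\overline{P^+}$. Given this, $\overline{P^+}$ is itself a convex subset of $P^\#$ (so that $\rconv(V) \subseteq \overline{P^+}$), and the intrinsic geodesic segments of $\overline{P^+}$ coincide with the $P^\#$-geodesic segments between points of $\overline{P^+}$. The two notions of convexity then agree on subsets of $\overline{P^+}$, and $H=\rconv(V)$ can be computed entirely inside $Q$ by intersecting convex sets in $\overline{P^+}$.

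\textbf{Main step: no shortcut through the cylinder.} To prove that $P^\#$-geodesics between points of $\overline{P^+}$ stay in $\overline{P^+}$, I would develop the inserted cylinder onto a flat strip $[0,L)\times[0,H]$, with base $\{y=0\}$ identified with $Q$ and the top $\{y=H\}$ open. Passing to the universal cover $\mathbb{R}\times[0,H]$, any geodesic arc lying in the cylinder develops as a straight segment; if such an arc begins and ends on the rim $\{y=0\}$, the straight segment joining its endpoints must itself lie on $\{y=0\}$, contradicting the arc having entered the interior of the cylinder. No straight segment can return to the base via the top, since the top is absent. Hence any cylinder-portion of a $P^\#$-geodesic between $x,y\in\overline{P^+}$ is actually contained in $Q$, and the whole geodesic lies in $\overline{P^+}$. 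Choosing $H\ge\textrm{diam}(P)$ as in Section~\secref{RelativeHull} is a convenience ensuring no competing $P^\#$-path can sneak near the (missing) top.

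\textbf{Equivalence of convex sets and conclusion.} Let $S\subseteq\overline{P^+}$. If $S$ is convex in $\overline{P^+}$ using intrinsic geodesic segments, then for $x,y\in S$ every $P^\#$-geodesic between them lies in $\overline{P^+}$ by the main step, and is therefore a $\overline{P^+}$-geodesic (any shorter $\overline{P^+}$-path would be a shorter $P^\#$-path), hence contained in $S$; so $S$ is convex in $P^\#$. Conversely, any $\overline{P^+}$-geodesic is a $P^\#$-geodesic by the same argument, so convexity in $P^\#$ implies convexity in $\overline{P^+}$ for subsets of $\overline{P^+}$. Since $\rconv(V)\subseteq\overline{P^+}$, the two intersections $\bigcap\{S\subseteq P^\#:V\subseteq S,\ S\text{ convex in }P^\#\}$ and $\bigcap\{S\subseteq\overline{P^+}:V\subseteq S,\ S\text{ convex in }\overline{P^+}\}$ coincide, which is the claim of the lemma. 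The only substantive obstacle is the cylinder shortcut argument; once that is secured, the remainder is a direct equivalence of definitions.
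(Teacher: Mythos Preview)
Your proposal is correct and follows essentially the same approach as the paper: both argue that a geodesic segment on $P^\#$ between two points of $\overline{P^+}$ cannot dip into the attached cylinder, by unrolling the cylinder flat and observing that a straight segment with both endpoints on the rim stays on the rim (equivalently, the $Q$-arc between the entry and exit points is no longer than the detour). The paper's proof stops after this observation, whereas you go on to spell out explicitly why this forces the two notions of convexity---in $P^\#$ and intrinsically in $\overline{P^+}$---to coincide on subsets of $\overline{P^+}$; this extra paragraph is a welcome clarification but not a different idea.
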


\begin{proof}
Let $x,y \in H$ be two points in $H$, and $\g$ a geodesic segment between them.
We argue that $\g$ cannot cross $Q$.

When the cylinder attached below $P^\#$ is unfolded flat, it forms a rectangle $R$.
Suppose $\g$ properly enters $R$ at $x'$ and exits at $y'$. Then the segment
$x' y'$ along the top of $R$ is shorter, a contradiction to our assumption.
(Note $x' y'$ is a portion of $Q$, and it is possible that $H$ shares that
portion of $Q$.)
\end{proof}

A \emph{node} $n$ of a geodesic polygon $N$ 
is a point of $N$ interior to no geodesic subarc of $N$.
Call it a \emph{g-node} if
at least one side of $N$ incident to $n$ is not a geoseg
(instead a geoarc), and a \emph{gs-node} otherwise.

Recall that Lemma~\lemref{ag-is-ab} showed that, if $S$ is closed and convex,
then $\partial S$ is strictly $\a\b$-convex;
and if $S$ has interior points
then $\partial S$ is $\a\b$-convex but not necessarily strictly.
We now explore to what extent there is a converse to this lemma,
a result we need to compute $\rconv(S)$.

\begin{lm}[$\a\b$ converse]
\lemlab{Converse_ab}
Let $S$ be a set satisfying these conditions:
(1)~$S \subset R(Q)$ contains $V$, (2)~$\partial S$ is a geodesic polygon,
and (3)~any geoarcs of $\partial S$ which are not geosegs, are not included in $S$.

Then $S$ is relatively convex if and only if $\partial S$ is 
strictly $\a\b$-convex at each gs-node, and $\a\b$-convex at each g-node.
\end{lm}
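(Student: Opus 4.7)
My plan is to establish the two directions of the equivalence separately.

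\textbf{Forward direction} ($\Rightarrow$). Assume $S$ is relatively convex. At each corner $n$ of $\partial S$, I would adapt the proof of Lemma~\lemref{ag-is-ab}, branching on the node type. At a gs-node, both adjacent edges of $\partial S$ are geosegs; being shortest paths between their endpoints, they (and in particular points on them close to $n$) lie in $\bar S$, so I may pick $x,y\in S$ on these two geosegs arbitrarily close to $n$ on opposite sides. If $\a_n\ge\b_n$ with $\o(n)>0$, then the shortest path from $x$ to $y$ on $P^\#$ traverses the $\b$-side exterior to $S$, contradicting ag-convexity; hence $\a_n<\b_n$ strictly. At a g-node, at least one adjacent edge is a geoarc which, by condition~(3), is excluded from $S$, so the gs-argument does not apply. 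Instead I would use interior sequences $x_k,y_k\in\mathring{S}$ approaching points near $n$ on opposite sides, exactly as in Lemma~\lemref{ag-is-ab}(ii), and conclude that $\a_n>\b_n$ would force the limiting geoseg outside $S$---forcing $\a_n\le\b_n$.

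\textbf{Backward direction} ($\Leftarrow$). Assume the angle conditions and suppose, for contradiction, that $S$ is not relatively convex. Then there exist $x,y\in S$ and a geoseg $\g$ on $P^\#$ from $x$ to $y$ with $\g\not\subset S$. By Lemma~\lemref{reduction-hull-ag-conv}, $\g\subset R(Q)$. Let $\g'=\g|_{[z_1,z_2]}$ be a maximal open subarc of $\g$ lying in $R(Q)\setminus\bar S$, with endpoints $z_1,z_2\in\partial S$. Choose a subarc $C'$ of $\partial S$ joining $z_1$ to $z_2$ so that $\g'\cup C'$ bounds a simply connected lune $L\subset P^\#\setminus\mathring{S}$. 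Applying the Gauss--Bonnet theorem to $L$ gives
$$K(L)+(\pi-\q_{z_1})+(\pi-\q_{z_2})+\sum_{n_j\in C'}(\pi-\b_{n_j})=2\pi,$$
where $K(L)\ge0$. Because $\g$ is a shortest geoseg on $P^\#$, the crossings are transversal and so $\q_{z_i}<\pi$; the hypothesis $\b_{n_j}\ge\a_{n_j}$ bounds the remaining exterior contributions, and together these allow $\g'$ to be replaced by a path along $C'$ (on the $S$-side) of strictly smaller length, contradicting $\g$ being a geoseg.

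\textbf{Main obstacle.} The backward direction is the harder half. The principal difficulty is that g-nodes arise precisely because $\partial S$ may contain geoarcs which are not geosegs---such arcs by definition wrap around external vertices of positive curvature, which then contribute to $K(L)$ and complicate the Gauss--Bonnet bookkeeping. Ensuring that the bounding subarc $C'$, and hence the lune $L$, is simply connected and lies in a single component of $R(Q)\setminus\bar S$ in the presence of such wrap-around arcs is delicate; so too is the degenerate case in which $\g$ meets $\partial S$ at a node (where the local picture splits the angle-wedge $\b$ into two pieces) or passes tangentially through an edge. Careful accounting must ensure that the strict-inequality slack at gs-nodes combined with the milder $\a=\b$ slack allowed at g-nodes still sums to something incompatible with $K(L)\ge 0$ and the $\q_{z_i}<\pi$ bounds.
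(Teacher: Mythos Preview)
Your forward direction is essentially the paper's: both defer to Lemma~\lemref{ag-is-ab}, with the gs/g-node split handled just as you describe.

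Your backward direction, however, has a genuine gap. The Gauss--Bonnet identity you write down is correct, but it yields only angle information; it does not give the length inequality you claim. In fact the inequality you assert goes the \emph{wrong way}. Because $V\subset S$ by hypothesis~(1), the lune $L$ you construct contains no interior vertices, so $K(L)=0$ and $L$ develops isometrically to a planar polygon. In that development $\g'$ is a straight segment joining $z_1$ and $z_2$, hence it is the \emph{shortest} path between them in $L$: necessarily $\ell(\g')\le\ell(C')$. So ``replacing $\g'$ by a path along $C'$ of strictly smaller length'' is impossible, and no contradiction arises this way. (This also shows that your identified ``main obstacle''---external curvature in $L$---is a non-issue; the hypothesis $V\subset S$ kills it outright.)

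The paper's route is quite different and is where the $\a\b$-hypothesis actually does work. Rather than comparing $\g'$ to $C'$, the paper produces a competitor path $g$ \emph{inside} $\bar S$: it doubles $\bar S$ along its convex boundary to a closed surface $S^\#$, takes a geoseg $g$ there from $x$ to $y$ (which lies in one copy of $\bar S$), then excises from $S^\#$ the digons determined by $C(x)\cap\partial\bar S$ to obtain a flat doubly-covered polygon. This yields two \emph{planar} polygons sharing the same boundary chain $\partial^*S$: one (namely $F$, your lune) with chord $\g$ and angles $\b^*$ along $\partial^*S$, the other (namely $S^*$) with chord $g$ and angles $\a^*\le\b^*$. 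Now the extension of Cauchy's Arm Lemma (Theorem~\thmref{CAL}) applies---opening the $\a^*$-chain to the $\b^*$-chain can only lengthen the chord---giving $\ell(g)\le\ell(\g)$. Strictness at gs-nodes then forces $\ell(g)<\ell(\g)$ when the relevant endpoints lie in $S$, contradicting $\g$ being a geoseg; the g-node case and the various endpoint cases ($x,y$ interior vs.\ boundary) are then handled separately.

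The missing ingredient in your plan is thus twofold: you need a competitor path \emph{through} $S$ (not along $\partial S$), and you need a length-comparison tool that converts the angle inequality $\a\le\b$ into a chord inequality. Gauss--Bonnet alone does neither; Cauchy's Arm Lemma, applied after the flattening construction, does both.
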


\begin{proof} 
If $S$ is relatively convex, the statement is covered by 
the $\a\b$-convexity Lemma~\lemref{ag-is-ab}.
The remainder of the proof establishes the converse:
That if $\partial S$ is $\a\b$-convex 
at the gs- and g-nodes as above, then $S$ is relatively convex.

\medskip

Consider first two boundary points $x,y \in \partial S$ 
(the roles $x$ and $y$ will play, will be decided later)
and let $\g'$ be a geoseg joining them,
with $\g'$ not included in $\bar S = S \cup \partial S$.
We now show that there is a subarc $\g$ of $\g'$ (possibly $\g=\g')$
that lies completely
external to the interior $\mathring S$ of $S$.

Notice that $\g'$ intersects $\partial S$ finitely many times.
Otherwise, since $\partial S$ is formed by geoarcs, hence locally by geosegs,
we could find arcs $A$ of $\partial S$ arbitrarily small, having common extremities with subarcs of $\g'$.
But those arcs would be ramifying geosegs, impossible.

Therefore, possibly replacing $\g'$ with a subarc 
$\g$, we may assume that $\g$ lies completely
external to the interior $\mathring S$ of $S$.

Let $F$ be the region of $P$ bounded by $\g$ and the arc $\partial^*S$ of $\partial S$ from $x$ to $y$, so that $F$ doesn't include $S$.
Then $\mathring F$ contains no vertex, by hypothesis.

Now we are going to identify a geoarc $g$ from $x$ to $y$ inside $\bar S$.
The boundary of $S$ is, in particular, a convex curve, hence we may construct with AGT the double $S^{\#}$ of $\bar S$.
Let $g$ be a geoseg on $S^{\#}$ from $x$ to $y$. It is included in a half-surface of $S^{\#}$, because geosegs do not branch.
So we may assume that $g$ is in $\bar S$.

\smallskip

We prove next that $g$ is shorter than or equal to $\g$: $\ell(g) \le \ell(\g)$.
Notice that the cut locus $C(x)$ of $x$ on $S^{\#}$ doesn't intersect $g$, which is a geoseg on $S^{\#}$.
On $S^{\#}$, put $\{c_1, \ldots,c_k \} = C(x) \cap \partial \bar S$.
Each point $c_j$ is the extremity of two digons of $S^{\#}$ at $x$ (one on each copy of $\bar S$).
Remove all those digons from $\bar S \subset S^{\#}$ and zip close the result by AGT.
We get a doubly covered flat surface, by Lemma~\lemref{Path}. 
Denote by $S^*$ the part of it bounded by $\partial^*S$ and $g$ which doesn't include $\partial S$.

We compare now $F$ and $S^*$, which are both flat (hence planar) polygons.
Clearly, the angles $\a^*$ of $\partial^* S$ towards $S^*$ are at most $\pi$, and they are smaller or equal to those towards $F$, denoted by $\b^*$.
(For convenience, we suppress the indices of $\a^*$ and $\b^*$.)
By the extension of Cauchy's Arm Lemma 
(Theorem~\thmref{CAL}). 
we obtain that $g$ is shorter than or equal to $\g$.


Next we will consider separately the cases of $x$ or $y$ (or both) belongs to $S$.
That case-distinction and the non-strict inequality will imply convexity, via strictness or non-strictness of $\a\b$-convexity.

\emph{Case~1}. $x$ and $y$ in $S$.
The equality case appears if $\a^*=\b^*$ everywhere, meaning that $\partial^* S$ is outside $S$.
So if $x,y \in \partial S \cap S$ 
the inequality 
$\ell( g ) < \ell (\g )$ 
is strict because of the strictness of $\a\b$-convexity, hence all geosegs between them are in $S$.

\medskip

\emph{Case~2}.
Consider now two points $x',y' \in \mathring S$.
From these two points we will obtain $x$ and $y$.
Assume $\g'$ exits $S$.
Because $\partial S$ is a convex curve towards $S$, $\g'$ must intersect it at least two times.
Choosing two such points $x,y$, the above argument implies the ramification of $\g'$ assuming 
$| g | = | \g |$,
hence not strict $\a\b$-convexity, impossible.
So $\g'$ lies inside $\mathring S$.

\medskip

\emph{Case~3}.
The case $x \in \partial S$ and $y' \in \mathring S$ can be treated analogously,
completing the proof.
\end{proof}

\begin{ex} 
\exlab{No_converse_ab}
The statement of Lemma~\lemref{Converse_ab} is not necessarily true if 
$S$ does not include all of $V$: $V \setminus S \neq \varnothing$.

\begin{enumerate}[label={(\alph*)}]
\item 
To see this, take the 
quadrilateral $H= abcd$ formed by three equilateral triangles 
$obc, ocd, oda$, and let $P'$ be the double of $H$.
Let $P$ be obtained from $P'$ by extending it below by long rectangles.
See Figure~\figref{relconv_NotConverse}(a).
$Q=aobo'a$ is a simple closed quasigeodesic on $P$  (with $o'$ ``opposite'' to $o$).
Let $S=R(oco'do)$ be the closed set constituted by $\triangle ocd$ and $\triangle o'cd$.
Observe that the boundary of $S$ is not strictly $\a\b$-convex:
both the angles $\a$ and $\b$ at $c$ are $120^\circ$ (and similarly at $d$).
Then by Lemma~\lemref{ag-is-ab}(i), which established that
closed $S$ are strictly $\a\b$-convex, $S$ is not convex.
Indeed, there is a geoseg outside $S$ from $x$ to $x'$ on the back, for $x$ on $oc$ 
(and similarly for points on $od$).
And for the same reason, 
there are geosegs from $o$ to $o'$ outside $S$, namely $oeo'$ and $ofo'$.
%
\begin{figure}[htbp]
\centering
\includegraphics[width=1.0\linewidth]{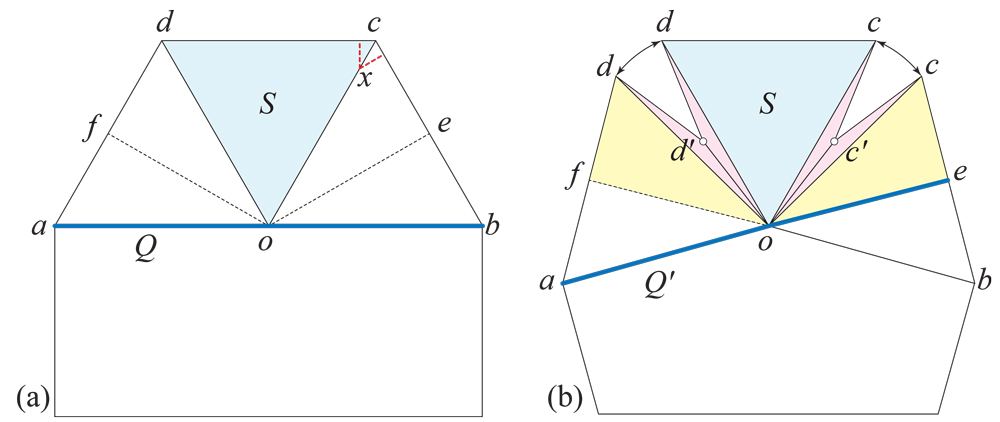}
\caption{$S$ is not convex.
(a)~Lemma~\lemref{Converse_ab} doesn't apply because $\partial S$ is not 
strictly $\a\b$-convex at $c$ and $d$. 
(b)~After insertion of digons, 
$\partial S$ is strictly $\a\b$-convex, but Lemma~\lemref{Converse_ab} 
does not hold in this case because  $S$ doesn't include $c'$ and $d'$.
}
\figlab{relconv_NotConverse}
\end{figure}
%
\item
Now we show that even altering the example so that every gs-node 
on $\partial S$ is strictly $\a\b$-convex, does not always permit
the converse conclusion that $S$ is convex.
Modify $P$ by inserting
two digon doubled triangles  along $oc$ and $od$; see Figure~\figref{relconv_NotConverse}(b).
Each triangle has base angles $7.5^\circ$, and so adds $15^\circ$ to 
$c$, and $d$, 
and adds a total of $30^\circ$ of angle at $o$. 
Cut out $30^\circ$ at $o$ below $o$, so that $o$ is again flat.
Now the $Q$ used above is no longer a quasigeodesic, but
$Q'=aoeo'a$ is a simple closed quasigeodesic.
The effect of the digons is two-fold. 
First, $c$ and $d$ now are both strictly $\a\b$-convex, and
$\partial S$ remains strictly $\a\b$-convex at $o$ and at $o'$.
So $S$ is closed, $\partial S$ is a geoseg polygon,
strictly $\a\b$-convex at every gs-node (and it has no g-nodes).
So $S$ satisfies all but one condition of Lemma~\lemref{Converse_ab},
which would allow concluding that $S$ is convex.
The missing condition is that $V \setminus S \neq \varnothing$:
$S$ does not include the two digon vertices---call them
$c'$ and $d'$---which are now part of $V$.
And again the geosegs $oeo'$ and $ofo'$ lie outside of $S$,
showing that indeed $S$ is not convex.
\item
Next, alter $S$ to include the digon vertices $c'$ and $d'$:
$S' = R( o c' c o' d d' o )$. So now $V \setminus S = \varnothing$.
But $\partial S$ is not $\a\b$-convex at $c'$ and $d'$: they
both have $\a=\b=165^\circ$. So again the preconditions for 
Lemma~\lemref{Converse_ab} are not satisfied, and $S'$ is not convex.
\item 
If $S = R( o e o' f o )$, then $o$ is a g-node of $\partial S$,
which is strictly $\a\b$-convex
at $o$: $\a=150^\circ, \b=210^\circ$.
However, the geoarcs $oeo'$ and $o' f o$ must not be included in $S$
for Lemma~\lemref{Converse_ab} to apply.
However, $S$ is nevertheless relatively convex.
\item
If $S=Q=aoeo'a$ in  Figure~\figref{relconv_NotConverse}(b), then $\partial S$ is not strictly $\a\b$-convex at $o$
($\a=\b=180^\circ$),
but again the lemma does not apply because $\partial S$ contains the geoarc $oeo'$.
Again, $S$ is relatively convex.
\end{enumerate}
\end{ex}

The next result gathers the proven facts about $\rconv(V)$.

\begin{thm}[$\rconv(V)$]
\thmlab{rconvV}
Let $V$ be the set of all (at least three) vertices of $P$, inside a simple closed quasigeodesic $Q$.
$V$ might also include some, but not all, vertices on $Q$, such that $\sum_{v \in V} \o_v < 2 \pi$.

Then $H=\rconv(V)$ is simply-connected and its boundary $\partial H$ is a geodesic 
polygon, strictly $\a\b$-convex at gs-nodes and $\a\b$-convex at g-nodes, on $P^\#$.
Moreover, $\ext(H) = V$.
\end{thm}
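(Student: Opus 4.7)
I work throughout on the auxiliary convex polyhedron $P^\#$, on which $Q$ is strictly $\a\b$-convex. Hence $P^+$ is itself a closed relatively convex set on $P^\#$ containing $V$, so $H = \rconv(V) \subseteq P^+$ is closed and relatively convex as an intersection of such sets. I treat the three conclusions (extreme points, simple connectivity, boundary structure) separately.

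\emph{Extreme points.} Every vertex is extreme in any convex set containing it, since no geodesic segment passes through a positive-curvature vertex; hence $V \subseteq \ext(H)$. The reverse inclusion $\ext(H) \subseteq V$ is given by Lemma~\lemref{conv-ext} applied to $H = \rconv(V)$.

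\emph{Simple connectivity.} By Lemma~\lemref{si-con_convex sets}, $H$ is either simply-connected or isometric to a cylinder without lids. I rule out the cylinder alternative via Gauss--Bonnet. Suppose $H$ were a cylinder with boundary components $C_1, C_2$; then, since $H$ is flat in its interior, has Euler characteristic zero, and is bounded by curves convex toward $H$, the Gauss--Bonnet equality $\int \kappa_g + \sum (\pi - \a_v^H) = 0$ forces the smooth parts of $\partial H$ to be geodesics and every corner to satisfy $\a_v^H = \pi$. Let $D \subset P^+$ be the disk component of $P^\# \setminus H$ that does not contain the cylindrical portion of $P^\#$; since $\mathring D$ lies strictly inside $Q$, any vertex of $P$ in $\mathring D$ would belong to $V \subseteq H$, which is excluded. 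Thus $\mathring D$ is flat, and Gauss--Bonnet on the topological disk $D$ yields $\sum_{v \in \partial D} \o_v = 2\pi$. Since every vertex on $\partial D \subset P^+$ lies inside $Q$ and hence belongs to $V$, we obtain $\sum_{v \in V} \o_v \geq 2\pi$, contradicting the hypothesis. The careful angle-bookkeeping establishing $\a_v^H = \pi$ at every corner, and the identification of the complementary disk entirely inside $P^+$, is what I expect to require the most attention.

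\emph{Boundary structure.} Applying Theorem~\thmref{ExtPts} to the closed convex set $H$ on $P^\#$ (with $\ext(H) = V$ finite and enclosed curvature strictly below $2\pi$) yields the dichotomy: either $H = \rconv(\ext(H))$, in which case the inductive proof of that theorem presents $\partial H$ as a polygon whose nodes are the extreme points $V$ joined by geodesic segments (the gs-node case); or $\partial H$ contains a geodesic arc that is not a geodesic segment, producing g-nodes at those vertices of $V$ through which such an arc passes, as illustrated by Example~\exref{ExPts}. In either event $\partial H$ is a geodesic polygon whose node set equals $V$. The $\a\b$-convexity statements then follow from Lemma~\lemref{ag-is-ab}(i): strict at each gs-node, since otherwise a shortcut across the corner would contradict the ag-convexity of $H$; and $\a\b$-convex but possibly non-strict at each g-node, where the incident geoarc edge can saturate the interior angle yet still leaves $\a \leq \b$.
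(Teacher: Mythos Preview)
Your extreme-points paragraph matches the paper's argument exactly. The remaining two parts, however, rest on the assertion that $H=\rconv(V)$ is closed, and this is false: Example~\exref{Illuminating} exhibits a set $V$ of all vertices inside a simple closed quasigeodesic for which $\rconv(V)$ is the half-surface bounded by a geoarc loop \emph{minus} that loop (retaining only its endpoint). The intersection defining $\rconv$ ranges over all relatively convex supersets, not just closed ones, so closedness does not follow. With $H$ not closed, you cannot invoke Lemma~\lemref{si-con_convex sets} (stated for closed $S$) to get the simply-connected/cylinder dichotomy, nor Theorem~\thmref{ExtPts} (also stated for closed $S$) for the boundary structure. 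Your Gauss--Bonnet argument ruling out the cylinder is attractive, but it would have to be run on $\bar H$, and the paper does not establish that $\bar H$ is convex in this setting (Example~\exref{Closure_not_cvx} shows closures of convex sets need not be convex).

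The paper proceeds quite differently. For the boundary, it argues directly from $\ext(H)=V$: every point of $\partial H\setminus V$ is non-extreme and hence interior to a geoarc of $\partial H$, so $\partial H$ is a geodesic polygon with node set contained in $V$. The $\a\b$-convexity is then read off from Lemma~\lemref{Converse_ab}, whose hypothesis~(3) --- that non-geoseg boundary geoarcs are excluded from $S$ --- is exactly designed to accommodate the non-closed case, and is verified here via Lemma~\lemref{conv-exc-case} (since the finite set $V$ contains no arcs). Finally, simple-connectedness is not obtained abstractly at all: the paper defers it to the $W$-algorithm of Section~\secref{WAlgorithm}, which constructs $\partial\,\rconv(V)$ by short-cutting an initial enclosing geodesic polygon; the region stays a topological disk throughout that process, and Lemma~\lemref{Converse_ab} certifies that the terminal region is $\rconv(V)$.
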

It will be established in
Theorem~\thmref{Walgorithm} 
that $H=\rconv(V)$ can be constructed in time $O(n^5 \log n)$ where $n = | V |$.

\begin{proof}
By Lemma~\lemref{conv-ext}, $\ext(\rconv V) \subset V$, and clearly $V \subset \rconv V$, hence $\ext(H) = V$.
Therefore, each point in $\partial \, \rconv(V) \setminus V$ is interior to a geoarc, so $\partial rconv(V)$ is a geodesic polygon.
The $\a\b$-convexity follows now from Lemma~\lemref{Converse_ab}.

The simply-connectedness follows from the algorithmic construction in Section~\secref{WAlgorithm}, justified by  Lemma~\lemref{Converse_ab}.
\end{proof}

\begin{ex}
\exlab{ext(H)neqV}
Notice that the claim that $\ext(H) = V$
may not hold when $V$ does not include
all vertices of $P$ inside $Q$.
This can be seen by adapting
Example~\exref{Box}, Fig.~\figref{SquareHole},
as follows.

Imagine there is another vertex $a'$ close to $a$, above $a$, and almost flat.
(This vertex $a'$ is added so that $| V | \ge 3$.)
Then, in the upper-left half-surface $S$ bounded by the simple closed quasigeodesic $Q=aca$, take 
$V=\{a,a',c\}$ (i.e., skip one vertex $d$ inside $Q$).
Then $H=\conv(V)=S \setminus \{d\}$, by Proposition~\lemref{dense_conv}.
Therefore, $H$ is not simply-connected and $\partial H$ is not a geodesic-segment polygon.
\end{ex}

%



\section{Summary of Properties}
\seclab{Summary}
We attempt to succinctly summarize below the facts about convexity
established in this chapter,  
either in lemmas and theorems, or via examples.
The goal is to help the reader interested in applications of this theory.
Please note that, unless otherwise specified, ``convexity" means ``ag-convexity."

\subsubsection{Ag-Convexity}  
\begin{enumerate}[label={(\arabic*)}]
\item $S$ closed convex $\implies$ Either point, arc, geodesic, or has interior points.
Lemma~\lemref{int_convex}.
\item Closure $\bar{S}$ of convex $S$ $\centernot\implies$ $\bar{S}$ convex. Example~\exref{Closure_not_cvx}.

\item $S$ closed convex $\implies$ $\partial S$ a convex curve, and if a geodesic polygon, then $\a\b$-convex.
Lemma~\lemref{ag-is-ab} 

\item $S$ closed convex $\implies$ $S$ encloses $\le 2 \pi$ curvature.
\item $S$ closed convex $\implies$ simply-connected, or cylinder.
Lemma~\lemref{si-con_convex sets}.

\item $S$ convex and $v \in S$ a vertex $\implies$ $S \setminus \{v\}$ convex.
Lemma~\lemref{conv-pt}.

\item $S$ convex and 
not closed $\centernot\implies$ simply-connected.
Example~\exref{S-v}.

\item No Helly type theorem for $h=3$. Example~\exref{NoHelly}.
\item $S$ closed convex and includes $Q$ $\implies$ cylinder, or $S=P$.
Lemma~\lemref{S>Q}.
\end{enumerate}

\subsubsection{Geodesic Segments and Convex Sets}

\begin{enumerate}[resume,label={(\arabic*)}]
\item $S$ convex $\implies$ $\g$ connecting interior points $x_1,x_2$
does not meet $\partial S$. Lemma~\lemref{out-segm-conv}.
\item $S$ convex,  $x$ interior to $S$ and $y \in \partial S$ $\implies$ all geodesic segments from $x$ to $y$ are interior to $S$, possibly excepting their extremity $y$.
Lemma~\lemref{out-segm-conv}.
\item $S$ convex, $x,y \in \partial S$ then there is a geodesic segment between them included in the closure $\bar S$ of $S$. Lemma~\lemref{out-segm-conv}.
\item $S$ convex; $x \in \partial S$ is not itself a component of $\partial S$ then there exists a geodesic segment starting at $x$ and 
exterior to $S$. Lemma~\lemref{out-segm-conv}.
\item  $S$ convex $\implies$ the interior of $S$ is convex. Corollary~\lemref{int-cvx}.
\item  $S \neq P$ convex with interior points, $x \in \partial S$ $\implies$ supporting angle at $x$. Lemma~\lemref{support_angle}.
\end{enumerate}

\subsubsection{Relative Convexity}
\begin{enumerate}[resume,label={(\arabic*)}]
\item $S$ convex and $v_1,v_2$ merge  $\implies$ $S'$ relatively convex.
Lemma~\lemref{conv-vm}.
\item $S$ convex and $v_1,v_2$ merge  $\centernot\implies$ $S'$ convex.
Example~\exref{Quad}.
\end{enumerate}


\subsubsection{Convex Hull Properties: $\conv(S)$}

\begin{enumerate}[resume,label={(\arabic*)}]
\item $V$ vertices of $P$
$\implies$ $\conv(V)=P$. Example~\exref{convV}.
\item  $\conv(P \setminus V)=P \setminus V$.  Example~\exref{convS}.
\item No Radon type theorem for $r=4$. Example~\exref{NoRadon}.
\item For $S'\subset S$, $\conv(S') \subseteq \conv(S)$. Lemma~\lemref{conv-elem-prop}.
\item For $T \subset S$, $ \conv (S) = \conv \left(  \left(  \conv (S \setminus T ) \right) \cup T \right)$. 
 Lemma~\lemref{conv-split}.
\item $S$ open $\implies$ $\conv(S)$ open.
Lemma~\lemref{conv_open}.
\item $S$ closed $\centernot\implies$ $\conv(S)$ closed.
Example~\exref{Box}.
\item $S$ closed, convex, with interior points, enclosing $< 2 \pi$ curvature
$\implies$ $\conv (P \setminus S)$ dense in $P$, $\conv(\partial S)$ dense in $S$.
Proposition~\lemref{dense_conv}.
\item For box $B_h$ and $Q$: $\conv(Q) = Q$ or $\conv(Q) = B$, depending on $h$.
Example~\exref{B_h}.
\end{enumerate}


\subsubsection{Relative Convex Hull: $\rconv(S)$}
\begin{enumerate}[resume,label={(\arabic*)}]
\item Convexity and relative convexity can differ. Example~\exref{Pyramid}.
\item 
Neither a digon nor a triangle is necessarily convex; $\conv(V)$ is not necessarily closed;
the closure of $\rconv(V)$ is not the convex hull of its extreme points. Example~\exref{Illuminating}.
\end{enumerate}


\subsubsection{Extreme Points: $\ext(S)$}
\begin{enumerate}[resume,label={(\arabic*)}]

\item $S$ closed convex with $\partial S$ enclosing $< 2 \pi$ curvature.
Either $S = \rconv(\ext(S))$, or $S$ contains a non-segment geoarc.
Theorem~\thmref{ExtPts}. Exception: Examples~\exref{Illuminating},~\exref{ExPts}.
\item $S$ closed, convex and $\conv(S) \neq \conv(\ext(S))$.
Example~\exref{ExPts}.

\item $S$ closed convex with $\ext(S) = \varnothing$. Example~\exref{ExtPts}.

\item $\ext(\conv(S)) \subset S$. Lemma~\lemref{conv-ext}.

Not all  extreme points of the closure of $\conv(S)$ are in $S$. Example~\exref{Box}.
\item $\partial \conv(S)$ contains a 
non-segment geoarc only if $S$ does so. Lemma~\lemref{conv-exc-case}.
\end{enumerate}

\subsubsection{Relative Convex Hull of Vertices}
\seclab{RelativeHullVerts}

\begin{enumerate}[resume,label={(\arabic*)}]
%
%
\item $S \subset R(Q)$, points $V$ in $S$: $\rconv(S)$ can be constructed without $P^\#$.
Lemma~\lemref{reduction-hull-ag-conv}.

\item  $S \subset R(Q)$, $V\subset S$,  $\partial S$  $\a\b$-convex  $\implies$ $S$ relatively convex.
Lemma~\lemref{Converse_ab}.

\item Lemma~\lemref{Converse_ab} is not true if $V \setminus S \neq \varnothing$. Example~\exref{No_converse_ab}.

%
\item 
$V$ all vertices inside $Q$, + some on $Q$  $\implies$
$H=\rconv(V)$ simply-connected, $\partial H$ a geodesic polygon, $\a\b$-convex on $P^\#$.
$\ext(H)=V$. Theorem~\thmref{rconvV}.
\item
$\ext(H) \neq V$ if $V$ does not include
all vertices inside $Q$.  Example~\exref{ext(H)neqV}.
\end{enumerate}

\chapter{Minimal-length Enclosing Polygon}
\chaplab{MinEnclPoly}
In this chapter we explore the minimial-perimeter polygon $Z$ enclosing
a set of vertices $V$ in or on a quasigeodesic $Q$.
We derive its key properties, and provide a polynomial-time algorithm
for constructing it (Section~\secref{ShortAlg}).
We then show that this polygon is not always the same
as $\partial \, \rconv(V)$ 
(Example~\exref{minell_neq_rconv}).
The algorithm for $Z$ also works to construct $\partial \, \rconv(V)$ with minor modifications
(Section~\secref{WAlgorithm}).

In the next chapter we will show that either of these notions
can support an algorithm that ensures that sequential vertex-merging cuts
do not disconnect $P^+$. 


\section{Properties of the Minimal Enclosing Polygon}
\seclab{ZProperties}

\paragraph{Notation.}
\noindent
We repeat some previous notation and introduce new notation to be used subsequently.
We again use the abbreviation \emph{geoseg} to mean 
``geodesic segment" and \emph{geoarc} to mean a simple geodesic
that may or may not be a geoseg.

\begin{enumerate}[label={(\arabic*)}]
\item $Q$ quasigeodesic. 
Orient $Q$ counterclockwise (ccw), and let
$P^+$ be the closed half-surface to the left of (above) $Q$.
$P^\#$ is $P^+$ union the unbounded cylinder to the right of (below) $Q$.
\item $V$: Set of (positive curvature) vertices inside or on $Q$.
\item For $C$ any simple closed curve in $P^+$,
let $R(C)$ be the closed region of $P^+$ to the left of $C$
(``R" for region).
So $\partial R(C) = C$.
This is well-defined because $C$ is a simple curve.
And because it is oriented ccw, $R(C) \subset P^+$.
\item For $C$ any curve on $P$, $\ell (P)$ denotes its length. 
\end{enumerate}

\paragraph{Definition of $\min \ell$.} 
For a finite set $V$ of points in the plane, $\conv (V)$ is precisely the set bounded by the minimal length curve enclosing $V$.
We adapt this property for vertices $V \subset P^+$.
Toward that end, we define $Z=Z(V)=\min \ell[V]$ to be the 
minimal-length geodesic polygon enclosing $V$.
Here \emph{enclosing} is to the left when $Z$ is oriented ccw.
We must also stipulate that 
``enclosing" means that the interior of the region between $Z$ and $Q$ is
empty of vertices.
Otherwise a clockwise traversal of the boundary of any triangle $\triangle \subset P^+$
would enclose $V$ to its left, and in fact would enclose all of $P \setminus \triangle$.
Also, for a simple closed geodesic enclosing $V$, its position is not
fixed by the above requirements, in the sense that it could be ``slid" parallel to itself while remaining 
a geodesic of the same length. In such a case, we define $Z$ to be
``lifted" to a position that touches a vertex.

Each edge of the geodesic polygon $Z$ is a geoarc but not necessarily a geoseg.
See, e.g., Fig.~\figref{ZZnested}(b).
We view polygons, and so $Z$, as the boundary, not the region enclosed,
which is denoted by $R(Z)$.

Here we concentrate on properties of $Z$ and
algorithms to construct $Z$.
We stress that all the following considerations take place on $P^\#$ and not on $P$.
For simplicity, we will omit in the rest of this section the modifier ``relative,'' which would refer precisely to $P^\#$.

We first establish three basic properties of $Z=Z(V)=\min \ell[V]$.

\begin{lm}[$\a\b$-convex]
\lemlab{Zstrictly}
$Z$ is strictly $\a\b$-convex.
\end{lm}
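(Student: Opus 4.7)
The argument is a first-variation (length) argument. Suppose, for contradiction, that $Z$ has a corner at some point $v$ of positive curvature with $\alpha(v)\ge\beta(v)$. First, I observe that $Z$ must be locally convex at every corner: if $\alpha(v)>\pi$, then an inward straightening in a small planar unfolding around a non-vertex point, or an inward ``cut'' of the concave corner at a vertex, would strictly shorten $Z$ without disturbing enclosure. Hence $\alpha(v)\le\pi$, and therefore $\beta(v)\le\pi$ as well. I then split into two cases according to whether $v\in V$, and in each case produce a shorter enclosing polygon, contradicting the minimality that defines $Z$.

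\emph{Case 1: $v\notin V$.} Choose $x,y$ on the two edges of $Z$ incident to $v$, inside a ball $B_v$ around $v$ that contains no other vertex of $P$ and meets no other edge of $Z$. Unfold the cone at $v$ on the $\alpha$ side to a planar sector of angle $\alpha\le\pi$. The straight segment from $x$ to $y$ in this sector realizes a geoarc on $P^\#$ of length strictly less than $|xv|+|vy|$ (strict triangle inequality, since $\alpha<\pi$). Replacing the subarc $x\to v\to y$ of $Z$ by this segment produces a new enclosing polygon $Z'$ with $\ell(Z')<\ell(Z)$. The modification happens inside $B_v$ and removes $v$ from the enclosed region, but since $v\notin V$ and $B_v\cap V=\varnothing$, we still have $V\subset R(Z')$, contradicting minimality.

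\emph{Case 2: $v\in V$.} Here the analogous inward shortcut would eject $v$ from $R(Z)$, which is forbidden, so I shortcut on the other side. Choose $x,y$ as before, and unfold the cone at $v$ on the $\beta$ side to a sector of angle $\beta\le\pi$. The straight segment from $x$ to $y$ in that sector has length $\le |xv|+|vy|$, strictly less when $\alpha>\beta$ (because then $\beta<\pi-\omega(v)/2<\pi$). The replacement curve $Z'$ now passes on the \emph{outside} of $v$, so $v$ becomes a strict interior point of $R(Z')$; hence $V\subset R(Z')$ is preserved, and we obtain $\ell(Z')<\ell(Z)$, contradicting minimality.

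The borderline case $\alpha(v)=\beta(v)$ is the main obstacle, since the direct $\beta$-side shortcut only gives $\ell(Z')=\ell(Z)$. The idea to close this case is that the shortcut has already removed $v$ from $Z'$ and placed it strictly inside $R(Z')$; any residual bending of $Z'$ near $v$ now sits in a flat (vertex-free) neighborhood, so a further planar triangle-inequality perturbation strictly shortens $Z'$, again contradicting minimality of $\ell(Z)$. Taking $B_v$ small enough that it is disjoint from every other vertex of $P$ and from every other edge of $Z$ is what prevents these local modifications from interfering with the rest of $Z$, making all three arguments genuinely local.
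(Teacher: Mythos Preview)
Your proof is essentially correct in spirit---you use the same length-minimality shortening argument as the paper---but it is more involved than it needs to be, and in one place you create a difficulty that is not actually there.

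First, your Case~1 is vacuous in this setting: by the notation set up just before the lemma, $V$ consists of \emph{all} positive-curvature vertices in $R(Q)$, so any positive-curvature corner of $Z$ automatically lies in $V$.

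Second, your ``borderline case'' $\alpha=\beta$ is a phantom. Under your standing assumptions $\alpha\ge\beta$ and $\omega(v)>0$ we have $2\beta\le\alpha+\beta=2\pi-\omega(v)<2\pi$, so $\beta<\pi$ strictly. Hence the $\beta$-side shortcut in a planar sector of angle $\beta<\pi$ is \emph{always} strictly shorter than $|xv|+|vy|$, and the contradiction is immediate---no two-step perturbation is needed. Your secondary argument about ``residual bending'' at $x,y$ is not wrong, but it is unnecessary.

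The paper's proof is shorter because it argues the other direction: it shows directly that $\beta\ge\pi$ at every corner. For $v\in Q$ this is automatic on $P^\#$ (the attached cylinder gives $\beta=\pi$); for $v\notin Q$ it is exactly your outward-shortcut argument (if $\beta<\pi$, shorten). Then $\alpha+\beta<2\pi$ at a positive-curvature corner gives $\alpha<\pi\le\beta$ in one line, with no case split and no borderline to worry about. Your route reaches the same destination, just with extra steps.
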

\begin{proof}
At vertices  $v \in Z \cap Q$, $\b = \pi$ on $P^\#$.
On the other hand, at vertices $v \in Z \setminus Q$ with $\b < \pi$ one could further shorten $Z$, 
a contradiction to minimality.

Since $\b \geq \pi$ and $\a+\b < 2 \pi$ at vertices, $\a < \pi \leq \b$.
\end{proof}

\begin{lm}[$Z$ simple]
\lemlab{Zsimple}
$Z$ is simple, i.e., non-self-crossing.
\end{lm}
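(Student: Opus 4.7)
The plan is to argue by contradiction: assume $Z$ has a self-crossing at some point $p$, and exhibit a strictly shorter geodesic polygon $Z'$ enclosing the same set $V$, contradicting the minimality of $Z$. The key mechanism will be a local ``uncrossing'' at $p$, combined with the triangle inequality in a flat neighborhood.

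First I would check that any self-crossing $p$ is necessarily a flat regular point of $P$. Indeed, $p$ lies in the relative interior of two geodesic arcs (either two distinct edges of $Z$, or a single edge closing back on itself), and geodesic arcs on a convex polyhedron never pass through vertices of $P$, which all have strictly positive curvature on $P^\#$. So I can choose a small open ball $B$ centered at $p$ that is isometric to a Euclidean disk, disjoint from $V$, from all vertices of $P$, from $Q$, and from every part of $Z$ other than the two crossing arcs. Inside $B$, those two arcs appear as straight segments meeting transversally at $p$, cutting $B$ into four sectors.

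Next I would perform the uncrossing. Label the four rays from $p$ inside $B$ as $\a_1,\a_2,\a_3,\a_4$ in cyclic order, with $\a_1\cup\a_3$ one crossing branch and $\a_2\cup\a_4$ the other. Let $a,c$ be the nearest corners of $Z$ reached along $\a_1,\a_3$, and $b,d$ those reached along $\a_2,\a_4$, so the crossing edges are $e_i:a\!\to\! p\!\to\! c$ and $e_j:b\!\to\! p\!\to\! d$. There are exactly two ways to re-pair the four rays avoiding a crossing; precisely one preserves $Z$ as a single closed curve, namely the one that reverses the sub-arc of $Z$ running from $c$ around to $b$. Selecting this uncrossing and replacing each new connection by the geodesic segment between its endpoints in $P^\#$ yields a closed geodesic polygon $Z'$ that coincides with $Z$ outside $B$ and has $e_i,e_j$ replaced by geodesic arcs $a\!\to\! b$ and $c\!\to\! d$.

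Then I would verify that $\ell(Z')<\ell(Z)$ and that $Z'$ still encloses $V$. The length decrease is the planar triangle inequality inside $B$: the rays $\a_1,\a_2$ subtend at $p$ an angle strictly less than $\pi$ (and likewise $\a_3,\a_4$), so
\[
|a-b|<|a-p|+|p-b|,\qquad |c-d|<|c-p|+|p-d|,
\]
and summing gives $|a-b|+|c-d|<\ell(e_i)+\ell(e_j)$; taking global geodesics of $P^\#$ in place of these local shortcuts can only shorten $Z'$ further. For the enclosure, the modification is confined to $B$, which contains no vertex of $V$. The winding number (computed on $P^\#$) of the curve around any $v\in V$ is unchanged, because any sub-arc of $Z$ or $Z'$ lying inside $B$ contributes to $\oint d\q_v$ an amount whose absolute value is at most the angular diameter of $B$ as seen from $v$, which is strictly less than $2\pi$; hence the contributions of the modified and the original portions agree modulo $2\pi$, and therefore agree. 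Every $v\in V$ remains enclosed by $Z'$.

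The main obstacle is the bookkeeping in the uncrossing step: one must confirm that the connectivity-preserving re-pairing is always the available choice (the alternative splits $Z$ into two disjoint loops, not a polygon), and that the new edges $a\!\to\! b$, $c\!\to\! d$ assemble into a bona fide geodesic polygon; both points reduce to the local planar picture in $B$ together with the existence of geodesic segments between arbitrary points of $P^\#$. The case of a single edge of $Z$ self-crossing is subsumed by the same argument with $e_i=e_j$.
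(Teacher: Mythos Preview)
Your local uncrossing approach is a reasonable instinct, but the enclosure step has a real gap. You correctly identify that the connectivity-preserving uncrossing must \emph{reverse} the sub-arc of $Z$ running from $c$ to $b$. After that reversal, $Z'$ agrees with $Z$ outside $B$ only as a \emph{point set}, not as an oriented curve; the contribution of the reversed sub-arc to $\oint d\theta_v$ is negated. This can change the winding number by an even integer: for a planar figure-eight with one vertex in each lobe, the winding number about the vertex in the reversed lobe jumps from $-1$ to $+1$. So the argument ``the change inside $B$ is less than $2\pi$, hence the winding numbers agree'' fails---the large change happens \emph{outside} $B$, on the reversed sub-arc. (There is also the separate issue that $\oint d\theta_v$ is not quantized in units of $2\pi$ around a cone point of $P^\#$, but rather in units of $2\pi-\o(v)$; that could be patched, but the orientation problem cannot be patched within your argument.)

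The paper's proof is much shorter and sidesteps all of this. Rather than uncross one crossing, it takes the \emph{outer boundary}: at each self-crossing, discard the arcs on the inner side, keeping only those bounding the face of the arrangement of $Z$ that meets $Q$. The result is automatically a simple geodesic polygon; it still encloses $V$ because every vertex was already separated from $Q$ by these outermost arcs; and it is strictly shorter because the discarded inner arcs have positive total length. No orientation bookkeeping, no triangle inequality, no winding numbers.
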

\begin{proof}
Suppose $Z$ is not simple. At each self-crossing, clip off the portions of
the geodesics toward the inside. The resulting outermost geodesic polygon
still encloses all of $V$, and is shorter.
See Fig.~\figref{SimpleGeoPoly}.
\begin{figure}[htbp]
\centering
\includegraphics[width=0.75\linewidth]{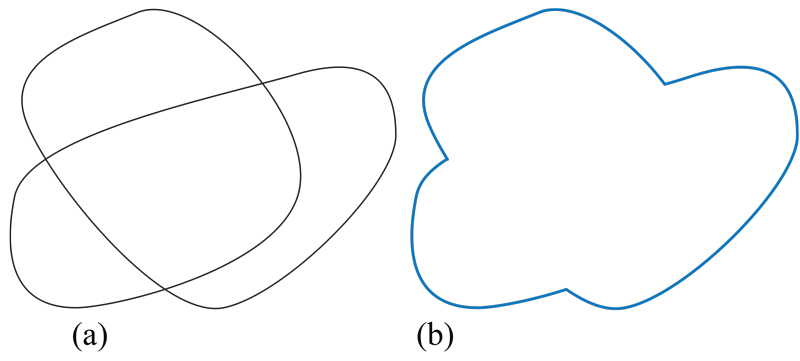}
\caption{(a) Non-simple polygon. (b)~Outermost arcs.}
\figlab{SimpleGeoPoly}
\end{figure}
\end{proof}

\begin{lm}[$Z$ Unique]
\lemlab{Zunique}
Let $Z$ be a shortest geodesic polygon enclosing $V$.
$Z$ is unique, up to isometry.
\end{lm}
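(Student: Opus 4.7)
The plan is to suppose for contradiction that there are two distinct minimum-length geodesic polygons $Z_1, Z_2$, both of length $L$, enclosing $V$ in $P^\#$, which are not related by an isometry. Since $V \subset R(Z_1) \cap R(Z_2)$, the symmetric difference $D = R(Z_1) \bigtriangleup R(Z_2)$ contains no vertex of $V$. The cylinder portion of $P^\#$ is intrinsically flat, and every positive-curvature vertex of $P^\#$ lying in $P^+$ belongs to $V$ (vertices of $Q$ with top-side angle $\pi$ are flattened on $P^\#$). Hence the open set $D$ is intrinsically flat.

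First suppose $Z_1 \cap Z_2 \ne \varnothing$. After restricting attention to maximal sub-arcs of disagreement, I can isolate two consecutive common points $x, y$ whose corresponding arcs $\gamma_1 \subset Z_1$ and $\gamma_2 \subset Z_2$ bound a topological disk $B \subset D$. Applying Gauss--Bonnet to $B$ (disk, $\chi(B)=1$, $\int_B K\, dA = 0$) gives
\[
(\pi - \theta_x) + (\pi - \theta_y) \;=\; 2\pi,
\]
so $\theta_x = \theta_y = 0$. Then $\gamma_1$ and $\gamma_2$ are tangent at both $x$ and $y$; since geodesics on the flat interior of $B$ are determined by a point and a direction and do not branch, necessarily $\gamma_1 = \gamma_2$, contradicting that $B$ has nonempty interior.

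Otherwise $Z_1 \cap Z_2 = \varnothing$, and since both enclose $V$ they must be nested: say $R(Z_1) \subsetneq R(Z_2)$, bounding a flat annulus $A$. By Lemma~\lemref{Zstrictly} each $Z_i$ is strictly $\alpha\beta$-convex, so the interior angle of $A$ at each corner of $Z_1$ is $\beta_i \ge \pi$ while the interior angle at each corner of $Z_2$ is $\alpha_i \le \pi$. Gauss--Bonnet on $A$ ($\chi(A) = 0$) yields
\[
\sum_{\text{corners of } Z_1}(\pi - \beta_i) \;+\; \sum_{\text{corners of } Z_2}(\pi - \alpha_i) \;=\; 0.
\]
The first sum is non-positive and the second non-negative, so every summand must vanish. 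Hence $Z_1$ and $Z_2$ are corner-free simple closed geodesics bounding a flat cylinder, hence parallel and of equal length, and therefore isometric as curves---which is precisely the ``up to isometry'' clause.

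The main obstacle I anticipate is the bookkeeping at non-generic meetings of $Z_1$ and $Z_2$: shared sub-arcs, tangential contacts, and common endpoints that happen to be vertices of $V$ or corners of some $Z_i$. These I plan to dispatch uniformly by passing to maximal arcs of disagreement, noting that any vertex lying inside a putative bigon $B$ would violate the vertex-freeness of $D$, and that any tangential meeting at a smooth point of $P^\#$ forces the two geodesic arcs to agree on an open sub-arc (and hence throughout) by non-branching of geodesics.
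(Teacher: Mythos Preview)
Your approach via Gauss--Bonnet is correct and genuinely different from the paper's. The paper argues by direct length comparison: in the nested-disjoint case it observes that the outer curve must be a closed geodesic and projects the inner curve onto it across the flat annulus; in the nested-sharing and crossing cases it unfolds each flat lune to a planar polygon and compares the straight geodesic side to the opposite polygonal chain, explicitly exhibiting a strictly shorter enclosing curve. Your argument instead reads off the contradiction from the global curvature constraint on the flat bigon or annulus, without ever producing a competitor curve.

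Two small points to tighten. In the bigon case, your Gauss--Bonnet equation omits possible corners of the arc $\gamma_2$ that lies \emph{inside} the other region: such an arc can pass through vertices of $V$. However, at each such corner the interior angle of $B$ equals $\beta_2(v)\ge\pi$ by Lemma~\lemref{Zstrictly}, so the extra exterior-angle terms are non-positive and your conclusion $\theta_x=\theta_y=0$ only becomes stronger. (The other arc $\gamma_1$, lying outside the other region, is indeed a single geoarc: any corner of $Z_1$ on it would be a vertex of $V\subset R(Z_2)$, forcing it to be one of the endpoints $x,y$.) In the nested case, Gauss--Bonnet forces $\alpha_2=\pi$ at every corner of the outer curve $Z_2$, which together with the strict inequality $\alpha_2<\pi$ from Lemma~\lemref{Zstrictly} shows $Z_2$ is corner-free; but for the inner curve it only yields $\beta_1=\pi$, not that $Z_1$ is corner-free. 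That still suffices: both boundaries are then geodesic \emph{from the annulus side}, so $A$ is a flat right cylinder, $\ell(Z_1)=\ell(Z_2)$, and after the paper's lifting convention the closed geodesic $Z_2$ slides up to coincide with $Z_1$.

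The paper's route is more elementary and constructive; yours is cleaner once the corner bookkeeping is made explicit.
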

\begin{proof}
Assume there is another geodesic polygon $Z'$ also enclosing all of $V$ and
of the same length: $\ell ( Z ) = \ell ( Z' )$.
Then either one is enclosed in the other, or the two polygons cross at two or
more points.

\paragraph{Case~1.} Assume $Z'$ is nested inside $Z$.

\emph{Case~1a}: First assume $Z$ and $Z'$ are disjoint.
Because all vertices are on or interior to $Z'$, $Z$ must be a simple closed
geodesic. Let $A$ be the annulus between $Z$ and $Z'$; $A$ contains no vertices.
Intrinsically, $A$ is a subset of a cylinder with lower circular rim $Z$,
and upper boundary $Z'$.
Thus we can project any point $p \in Z'$ orthogonally onto $Z$.
See Fig.~\figref{ZZnested}(a).
Thus $\ell ( Z )  \le \ell ( Z' )$, with equality when $Z$ and $Z'$ are parallel.
In either case, we can ``lift" $Z$ parallel to itself until it touches a vertex in $V$.
As mentioned earlier,
we choose this isometric version of $Z$ as $\min \ell [V]$.
\begin{figure}[htbp]
\centering
\includegraphics[width=1.0\linewidth]{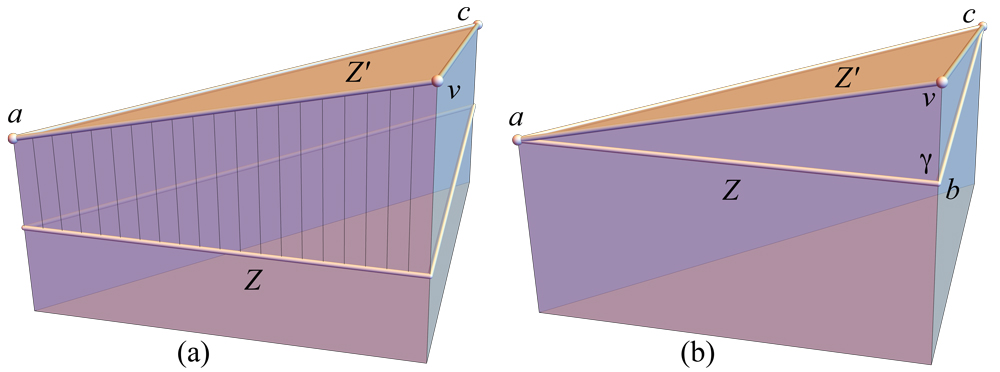}
\caption{$Z'=avc$. 
(a)~Edge $av$ projected onto $Z$.
(b)~$\g$ with $ab,bc$ forms a planar polygon.}
\figlab{ZZnested}
\end{figure}

\emph{Case~1b}: Assume $Z$ and $Z'$ share some portion of their boundaries.
Again because all vertices are on or inside $Z'$, this means that there must
be a geodesic arc $\g$ of $Z$, outside of $Z'$,
between two shared vertices $v_1$ and $v_2$.
See Fig.~\figref{ZZnested}(b).
Let $B$ be the region of $P$ bounded by $\g$ and the edges $E$ of $Z'$ between $v_1$ and $v_2$.
Then (a direct induction shows that) $B$ is isometric to a plane polygon, with one straight edge $\g$,
and $E$ a polygonal chain.
Therefore $E$ is strictly longer than $\g$, and so $\ell ( Z ) < \ell ( Z' )$.

\paragraph{Case~2.} Neither of $Z$ or $Z'$ is inside the other.
So they must cross at least twice, with always the outer arc a geodesic $\g$,
and the inner sequence of edges possibly containing vertices. 
Call such a region a lune $L$.
Now the argument in Case~1b applies: $L$ is isometric to a planar polygon
with straight edge $\g$, and so the outer arc of $L$ is strictly shorter than the
inner arc.
Now form $Z''$ by following all outer arcs,
and otherwise the common portions of the two polygons. $Z''$ still encloses all of $V$,
but it is strictly shorter than $Z$ and $Z'$.
This contradiction to the assumption that both $Z$ and $Z'$ are shortest
establishes the lemma.
\end{proof}

This next lemma is a counterpart to
the $\a\b$-converse lemma,
Lemma~\lemref{Converse_ab} in Chapter~\chapref{Convexity}.
In the following, a \emph{pinned node} $v$ of a geoarc polygon $G$ is a flat point which behaves like a vertex, in the sense that
$G$ is not allowed to be shorten at $v$ towards its interior.

\begin{lm} 
\lemlab{Converse_Lmin}
Assume that all nodes of a geodesic polygon $G \subset R(Q)$ are either vertices or pinned, and that $V =V(P^+) \subset R(G)$.
Then $G=Z(V)$ if and only if $\b \geq \pi$ at each node of $G$.
\end{lm}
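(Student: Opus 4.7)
The proof is by two implications, both of which follow the templates of Lemmas~\lemref{Zstrictly} and \lemref{Zunique}.

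For the forward implication, suppose $G=Z(V)$ and let $v$ be a node with $\beta(v)<\pi$. I imitate the shortening argument in Lemma~\lemref{Zstrictly}: pick points $x,y$ on the two edges of $G$ incident to $v$, close enough that the wedge neighborhood at $v$ contains no other node and no other vertex of $P$. In the flat development of this wedge (or on the cone with apex $v$ if $v$ is a vertex of $P$), the exterior sector has angle $\beta<\pi$, so the straight geoseg $\gamma$ from $x$ to $y$ through that sector satisfies $\ell(\gamma)<|xv|+|vy|$ by the triangle inequality. Replacing $x{\to}v{\to}y$ in $G$ by $\gamma$ yields a geoarc polygon $\tilde G$ of strictly smaller length with $v$ interior to $R(\tilde G)$, hence $V\subset R(\tilde G)$. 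This contradicts the minimality of $Z(V)$. Crucially, this shortcut pushes the boundary \emph{outward}, so it is permitted at pinned nodes as well, whose pinning only forbids shortenings directed \emph{toward the interior}. Thus $\beta(v)\geq\pi$ at every node.

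For the converse implication, assume $\beta(v)\geq\pi$ at every node of $G$ and set $G^*=Z(V)$. By Lemma~\lemref{Zunique} it suffices to prove $\ell(G)=\ell(G^*)$, since the minimiser is unique up to isometry. A key preliminary observation is that $V=V(P^+)$ contains \emph{every} vertex of $P$ lying in $P^+$, so any subregion of $P^+$ disjoint from $V$ is vertex-free and hence intrinsically flat. I will compare $G$ and $G^*$ by the lune-decomposition technique from the proof of Lemma~\lemref{Zunique}. If $G$ and $G^*$ are disjoint, one is nested in the other and the annulus between them is vertex-free; the orthogonal-projection argument of Case~1 of Lemma~\lemref{Zunique} yields one inequality between $\ell(G)$ and $\ell(G^*)$, and minimality of $G^*$ yields the other, forcing equality. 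If $G$ and $G^*$ cross, they do so at finitely many points (geoarcs are locally straight, so transverse intersections cannot accumulate), and $R(G)\,\triangle\,R(G^*)$ decomposes into lunes. Each lune is vertex-free by the above remark, hence isometric to a flat planar polygon. In a lune bounded by $g\subset G$ and $g^*\subset G^*$, I apply the extension of Cauchy's Arm Lemma (Theorem~\thmref{CAL}) in the flat development: because \emph{both} $G$ and $G^*$ are $\a\b$-convex at their nodes (with $\b\geq\pi$), the signed turn angles along $g$ and $g^*$ constrain one another so that a strict length inequality in either direction would contradict the Cauchy bound applied in the reverse direction, giving $\ell(g)=\ell(g^*)$. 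Summing over all lunes yields $\ell(G)=\ell(G^*)$, and Lemma~\lemref{Zunique} then gives $G=G^*$.

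I expect the main obstacle to be making the two-sided Cauchy's-arm comparison fully precise inside each lune, especially when a lune boundary carries pinned nodes (flat, contributing turn angle $0$) interspersed with vertex-nodes (contributing a strict turn). The bookkeeping of turn angles and orientations on the two arcs of a lune parallels the argument in Lemma~\lemref{Converse_ab}, but must here treat both arcs as $\a\b$-convex rather than just one, and track the equality cases of Cauchy's Arm Lemma carefully to conclude $\ell(g)=\ell(g^*)$ rather than merely one of the two inequalities.
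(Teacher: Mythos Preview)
Your forward implication is fine and matches the paper's (which simply cites Lemma~\lemref{Zstrictly}); your remark that the outward shortcut is allowed at pinned nodes is correct, since pinning only blocks inward shortening and the outward replacement still encloses $V$.

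The converse, however, has a genuine gap. Your plan is to prove $\ell(G)=\ell(Z)$ lune-by-lune via a ``two-sided'' Cauchy argument, but Cauchy's Arm Lemma cannot be applied in both directions on a single lune. In a flat lune $L$ bounded by $g\subset G$ and $z\subset Z$, the roles of the two arcs are asymmetric: the arc whose \emph{interior} side faces $L$ is convex toward $L$ (angles $\le\pi$), while the arc whose \emph{exterior} side faces $L$ is reflex or straight toward $L$ (angles $\ge\pi$). The comparison therefore yields only one inequality, e.g.\ $\ell(g)\ge\ell(z)$ when $L\subset R(G)\setminus R(Z)$, and the opposite inequality on lunes of the other type. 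There is no mechanism to reverse the inequality within the same lune, so your conclusion $\ell(g)=\ell(g^*)$ per lune is unjustified---and your final paragraph shows you already sensed this.

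The paper proceeds differently. Rather than aiming for global length equality, it runs the case analysis of Lemma~\lemref{Zunique} and extracts a contradiction in each case: if $Z$ lies inside $G$ one gets $\ell(G)\le\ell(Z)$ by projection, forcing equality; if $G$ lies inside $Z$ then $Z$ must be a closed geodesic and any discrepancy produces a shortcut of $G$, contradicting $\b\ge\pi$ everywhere; if they cross, the one-sided lune inequalities let you build an enclosing polygon strictly shorter than $Z$, contradicting minimality. The key move you are missing is to combine the \emph{one-sided} lune inequalities into a single shorter competitor (namely $\partial(R(G)\cap R(Z))$) rather than to chase equality in each lune.
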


\begin{proof}
That $\b \geq \pi$ for $Z(V)$ was established 
in Lemma~\lemref{Zstrictly}.

In the following we show that $G=Z(V)$,
using reasoning similar to that used to prove uniqueness in Lemma~\lemref{Zunique}.

In Case~1 of that lemma's proof, one of $G$ or $Z$ is nested inside the other.
Case~1a: Assume the two polygons are disjoint and $Z$ is inside $G$.
Then the lemma argument leads to $\ell ( G ) \le \ell ( Z )$, but because $Z$ is
shortest, we must have $\ell ( G ) = \ell ( Z )$; and so $G = Z$.
If instead $G$ is inside $Z$, then recall that $Z$ must be a simple closed quasigeodesic.
If $Z$ and $G$ are not identical, then the arc of $Z$ between touching points of $G$
is a short-cut of $G$, contradicting the assumption that $G$ has no further short-cuts.

In Case~1b, $G$ and $Z$ share some portion of their boundaries.
If $Z$ is inside $G$, the lemma argument shows that $\ell ( G ) < \ell ( Z )$,
contradicting the minimality of $Z$.
If $G$ is inside $Z$, then consider the region $B$ in the proof
of Lemma~\lemref{Zunique},
bound by $\g$ and the edges $E$ of $G$ between $v_1$ and $v_2$.
$B$ is isometric to a plane polygon, and so some vertex along $E$ must
be convex. We have thus identified a vertex on $G$ with a right angle
$\b < \pi$. So $G$ can be short-cut there, contracting the assumption
that no more $G$-short-cuts were possible.

In Case~2, neither is nested in the other, so they must cross.
This leads to a polygon shorter than either $G$ or $Z$, a contradiction
to the minimality of $Z$.
\end{proof}

\begin{ex}
\exlab{ConverseOptimal}
The assumption of pinned nodes in Lemma~\lemref{Converse_Lmin} is clearly necessary.
Here we show that the second assumption, that $V \subset R(G)$, is also necessary.

Consider an equilateral triangle $abd'$ inscribed in the rectangle $abce'$, and choose $d \in ce'$ between $d'$ and $e'$, and $e \in ae'$ such that $ed \perp bd$.
See Figure~\figref{V_RG}.
Extend the sides $e'a$ and $cb$ sufficiently beyond $a$ and $b$ respectively, and let $L$ be the double of the extended $abcde$.
Let $V=\{b,c,d\}$. Note that $e \not\in V$.

On $L$, $Q=aba$ is a simple closed quasigeodesic, and the geodesic polygon $G=bdb$ has 
each of its angles $\b$ towards $Q$ at least $\pi$.
However, $e \notin R(G)$ and $Z( V ) = Q$, 
because $|a-b|=|b-d'|<|b-d|$.

\begin{figure}[htbp]
\centering
\includegraphics[width=0.5\linewidth]{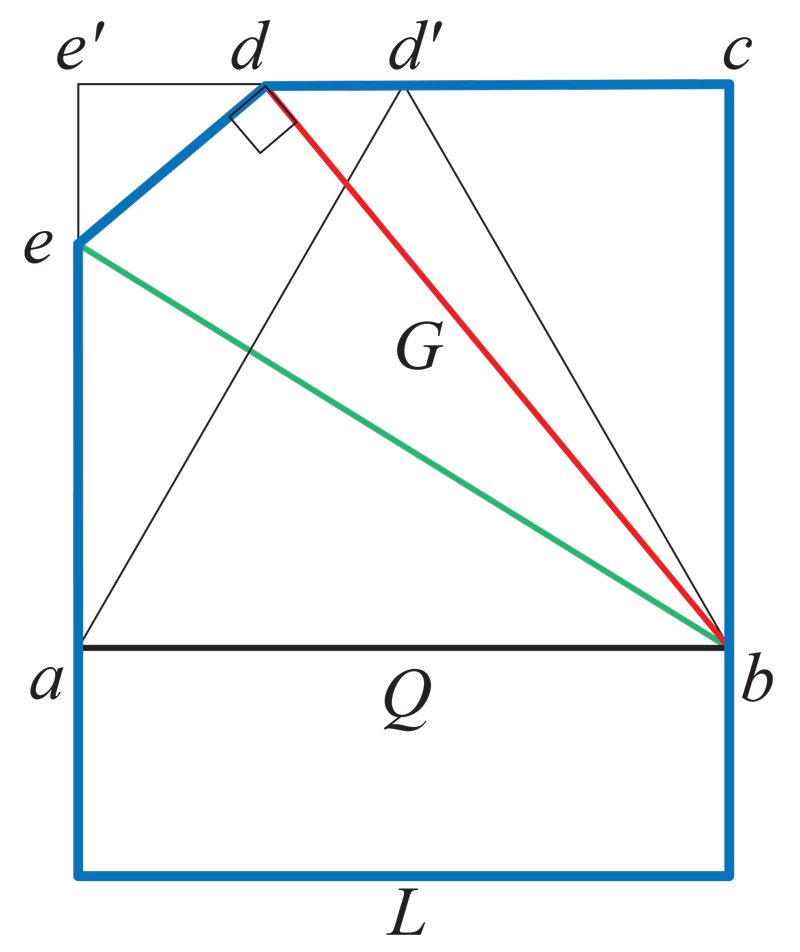}
\caption{Lemma~\lemref{Converse_Lmin} is optimal. 
$V=\{b,c,d\}$, $G=bdb \neq Z =Q$.}
\figlab{V_RG}
\end{figure}
\end{ex}

We prove here the following result,
which will be employed in the next chapter.

\begin{lm}
\lemlab{V+1_subset_V}
For any point $p \notin V$, $R (Z(V)) \, \subseteq \, R (Z(V \cup \{p\}))$.%
\footnote{We believe 
that also $\ell (Z(V)) \leq \ell (Z(V \cup \{p\}))$, but this inequality is not important in the following.}
\end{lm}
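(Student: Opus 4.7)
Write $Z := Z(V)$ and $Z' := Z(V \cup \{p\})$. I argue by case analysis on the topological relation between $Z$ and $Z'$, using the uniqueness result Lemma~\lemref{Zunique} as the main tool.

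If $p \in R(Z)$, then $Z$ already encloses $V \cup \{p\}$, so minimality of $Z'$ yields $\ell(Z') \le \ell(Z)$, while minimality of $Z$ (together with the fact that $Z'$ encloses $V$) yields $\ell(Z) \le \ell(Z')$. Equality combined with Lemma~\lemref{Zunique} forces $Z = Z'$, and the claimed inclusion is immediate. If $p \notin R(Z)$ but $Z \cap Z' = \varnothing$, then $R(Z)$ and $R(Z')$ must be nested (both contain the nonempty common set $V$); the nesting $R(Z') \subsetneq R(Z)$ would contradict $p \notin R(Z)$, so $R(Z) \subseteq R(Z')$.

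The core case is when $Z$ and $Z'$ cross transversally in a (necessarily even) nonempty family of points $x_1, \ldots, x_{2k}$. The plan is to form both natural envelopes induced by the crossings: the inner envelope $\widetilde Z := \partial(R(Z) \cap R(Z'))$ and the outer envelope $Z'' := \partial(R(Z) \cup R(Z'))$, each a closed geodesic polygon (possibly with several components) whose arcs alternate between subarcs of $Z$ and subarcs of $Z'$, switching exactly at the $x_i$. A direct arc-by-arc bookkeeping -- each maximal subarc of $Z$ between two consecutive crossings lies either outside $R(Z')$, contributing to $Z''$, or inside $R(Z')$, contributing to $\widetilde Z$; symmetrically for subarcs of $Z'$ -- yields the crucial additive identity
\[
\ell(\widetilde Z) + \ell(Z'') \;=\; \ell(Z) + \ell(Z').
\]

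Now $V \subset R(Z) \cap R(Z') = R(\widetilde Z)$, so $\widetilde Z$ encloses $V$; and $\widetilde Z \neq Z$ because $R(\widetilde Z) \subsetneq R(Z)$ in the crossing case. Hence by the minimality and uniqueness of $Z$ for $V$ (Lemma~\lemref{Zunique}) we obtain the strict inequality $\ell(\widetilde Z) > \ell(Z)$. Symmetrically, $Z''$ encloses $V \cup \{p\}$ with $R(Z'') \supsetneq R(Z')$, so $\ell(Z'') > \ell(Z')$. Adding these two strict inequalities contradicts the additive identity displayed above, ruling out the crossing case and completing the proof. I expect the main delicate point to be the verification that the envelopes qualify as geodesic polygons of the type to which Lemma~\lemref{Zunique} applies -- in particular, handling disconnected envelopes when $R(Z) \cap R(Z')$ has several components (apply the uniqueness argument component-by-component, using that $V$ is distributed among those components), and dealing with any tangential (nontransversal) meetings of $Z$ and $Z'$ via a perturbation or limit argument. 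Since $Z$ and $Z'$ each consist of finitely many geodesic arcs and two distinct maximal geodesic arcs meet in finitely many points, the crossing set is finite and the envelope construction is well-defined.
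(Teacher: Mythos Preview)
Your approach is correct and takes a genuinely different route from the paper's. The paper argues locally with a single lune: it picks a point $q \in R(Z(V)) \setminus R(Z(V'))$, identifies the arc $A \subset Z(V)$ through $q$ with endpoints on $Z(V')$ and the matching arc $A' \subset Z(V')$, and uses minimality from both sides to conclude $\ell(A) = \ell(A')$. The contradiction then comes from the geometry of the flat lune $L$ bounded by $A \cup A'$: if $A'$ has interior corners, the planar polygon $L$ forces $\ell(A) < \ell(A')$; if $A'$ is a single geoarc, $L$ is a flat digon, violating Gauss--Bonnet. Your global envelope argument with the additive identity $\ell(\widetilde Z) + \ell(Z'') = \ell(Z) + \ell(Z')$ is cleaner and more symmetric, and replaces the geometric endgame by a direct appeal to uniqueness (Lemma~\lemref{Zunique}). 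That is a nice trade: the paper avoids invoking uniqueness but needs the flat-lune case analysis; you black-box uniqueness and get a two-line contradiction.

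One point to tighten: your proposed fix for disconnected envelopes (``apply the uniqueness argument component-by-component, using that $V$ is distributed among those components'') does not obviously work---if $V$ splits as $V_1 \cup \cdots \cup V_m$ across components, the inequalities $\ell(\partial C_j) \ge \ell(Z(V_j))$ do not combine to give $\sum_j \ell(\partial C_j) \ge \ell(Z(V))$. Fortunately the issue does not arise: by Lemma~\lemref{Zstrictly} both $Z(V)$ and $Z(V')$ are strictly $\a\b$-convex, so by Lemma~\lemref{Converse_ab} both $R(Z(V))$ and $R(Z(V'))$ are relatively convex; their intersection is then convex, hence connected, and both envelopes are single simple closed geodesic polygons. (This is essentially Lemma~\lemref{Z_ag_convex}, which appears later in the paper but depends only on results already available.) Alternatively, you can sidestep connectivity entirely by localizing to one lune, swapping the single pair $A \leftrightarrow A'$, and invoking uniqueness there---which is the paper's framing with your endgame.
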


\begin{proof}
The argument is similar to the one proving the uniqueness of $Z$.

Set $V'=V \cup \{p\}$.
Clearly, $R (Z(V)) = R (Z(V'))$ if $p \in R(Z(V))$.
So assume that $p$ is strictly exterior to $R(Z(V))$.

Put $I=R (Z(V)) \cap R (Z(V'))$ and assume there exists $q \in R (Z(V)) \setminus R (Z(V'))$.
If the lemma were to hold, then $I=R(Z(V))$ and $R (Z(V)) \setminus R (Z(V'))=\varnothing$,
so there could be no such $q$. We now prove that the existence of $q$ leads
to a contradiction.

Since $V \subset I$ and $q$ is outside $I$,
$q$ lies on an geoarc $A \subset Z(V)$  with extremities on $Z(V')$. 
Those extremities determine an arc $A'  \subset Z(V')$ 
such that $A \cup A'$ bounds a flat polygonal domain $L$.

Because of the length minimality of both $Z(V')$ and $Z(V)$, $\ell (A')= \ell(A)$. 

On the other hand, if $A'$ is not a geoarc, the flat polygon bounded by $A \cup A'$ has the side $A$ necessarily shorter than $A'$, a contradiction.
And if $A'$ is a geoarc, $L$ is a flat digon, contradicting the Gauss-Bonnet Theorem.
\end{proof}


\section{Shortening Algorithm}
\seclab{ShortAlg}

Our goal in this section is to provide a polynomial-time algorithm to construct
$\min\ell[V]$.

\subsection{Curve-Shortening Flow}
\seclab{Flow}
Here we digress to highlight the intuition behind curve-shortening.
The famous Gage-Hamilton-Grayson (GHG) theorem says that,
\begin{quotation}
\noindent
If a smooth simple closed curve undergoes the curve-shortening flow, it remains smoothly embedded without self-intersections. 
It will eventually become convex, and once it does so it will remain convex. ... The curve will then converge ... to a ``round point,"
[a vanishingly small circle].
\cite{Wikipedia} 
\end{quotation}
First assume that all of $V$ is strictly inside $Q$: $Q$ and $V$ are disjoint.
At every vertex $u$ of $Q$ strictly convex to its left,
replace $u$ with a small arc of a circle, smoothly joined to the incident
edges of $Q$. This is now a smooth simple closed convex curve $C$,
a smooth approximation of $Q$,
to which the GHG theorem applies.
Shortening $C$ will either result in a simple closed geodesic, which can no longer be
shortened, or it will hit one or more vertices in $V$.

Consider these vertices \emph{pinned}, partitioning $C$ into sections between pinned vertices.
A recent result~\cite{allen2012dirichlet} established that
``open curves with fixed endpoints evolving ...
do not develop singularities, and evolve to geodesics.''
Thus the arcs of $C$ between pinned vertices shorten to geodesics.
The result is our shortest enclosing geodesic polygon $Z$.

Although there is recent work on discrete curve-shortening,
e.g., \cite{avvakumov2019homotopic} and \cite{eppstein2020grid},
it seems it could be difficult to turn this smooth curve-shortening viewpoint into 
a formal proof or a finite algorithm.
So we take a different approach.


\subsection{Algorithm Overview}

We now turn to a discrete, finite algorithm,
which we will see is in some sense the reverse of the curve-shortening flow.
Imagining $Q$ as roughly equitorial, 
the flow as described above shortens $Q$ ``upward" to $Z$,
whereas the algorithm to be described
starts with $G$ ``above" $Z$ and shortens it ``downward" to $Z=\min\ell[V]$.

The algorithm consists of two steps. 
The first step constructs a geodesic polygon $G$ enclosing $V$.
The second step shortens $G$ to $Z$.


\subsection{Finding an Enclosing Geodesic Polygon}
\seclab{GeosegPolygon}

Our goal here is to construct a geodesic polygon $G$ that encloses all vertices in $V$. 

It is tempting to build $G$ from the subgraph of the $1$-skeleton
of $P$ induced by the vertices in $V$.
However, this subgraph is not always connected, 
as illustrated in Fig.~\figref{InducedGraph}.
As it seems difficult
to base an algorithm on this subgraph, we pursue a different approach.
\begin{figure}[htbp]
\centering
\includegraphics[width=0.7\linewidth]{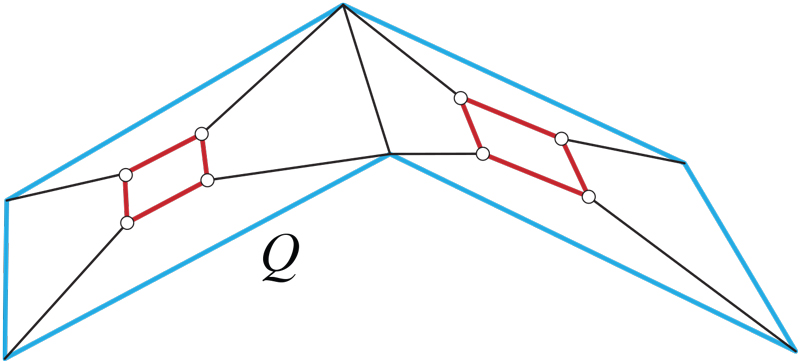}
\caption{$Q$ (blue) encloses $V$, the $8$ marked vertices.
The subgraph $V$ induces is the two disconnected red quadrilaterals.
}
\figlab{InducedGraph}
\end{figure}

We start by computing all geosegs between pairs of vertices in $V$,
and then identify those geoseg subsets (``edgelets") on the outer boundary
of the resulting planar subdivision.

A \emph{planar subdivision} is a data structure $\mathcal{R}$ that maintains
a subset of the plane partitioned into regions.
Often called a Planar Straight-Line Graph (PSLG) when its edges are straight-line segments,
$\mathcal{R}$ represents an embedding of a planar graph,
and maintains the incidence relations between its regions and edges.
Such data structures support updates (for us, the addition of new geosegs)
in $O(\log n)$ time per intersection.

The algorithm described below has time complexity at worst $O(n^5 \log n)$,
although a careful analysis might lead to $O(n^4 \log n)$.
Our focus is more to show it is polynomial-time, as opposed to
optimizing efficiency.

We first describe the three stages of the algorithm at a high-level.
\begin{enumerate}[label={(\arabic*)}]
\item Calculate all geosegs between pairs of vertices in $V$.
Let $\G$ be the collection of these geosegs.
\item Intersect all geosegs in $\G$, incrementally building a planar subdivision
data structure $\mathcal{R}$ on $P^+ =R(Q)$.
As each $g \in \G$ crosses the others already processed,
the surface is partitioned into convex regions,
call them \emph{g-faces} bounded by \emph{edgelets}.
Each g-face is 
flat and isometric to a planar convex polygon;
an edgelet is a (generally short) subsegment of a geoseg $g$,
between two points where other geosegs cross $g$.
G-faces do not necessarily lie in one face of $P$,
but they are always empty of strictly interior vertices, or internal edgelets.
Fig.~\figref{gfaces} illustrates a simple such a partition;
Fig.~\figref{EdgeletsCex} below is a bit more complicated.
As the intersections are calculated, the
data structure $\mathcal{R}$ is updated
to record ccw orientation of each newly created g-face.
The process parallels the incremental algorithm for constructing
an arrangement of lines; see Fig.~\figref{IncrementalAlg}.
\item Each interior edgelet is shared by exactly two g-faces,
recorded in $\mathcal{R}$.
A edgelet $e$ shared with just one g-face must be on the boundary of the
outer face of the planar subdivision.
Connecting all the outer-boundary edgelets
yields a geodesic polygon $G$ that encloses all vertices in $V$.
\end{enumerate}
\begin{figure}[htbp]
\centering
\includegraphics[width=0.5\linewidth]{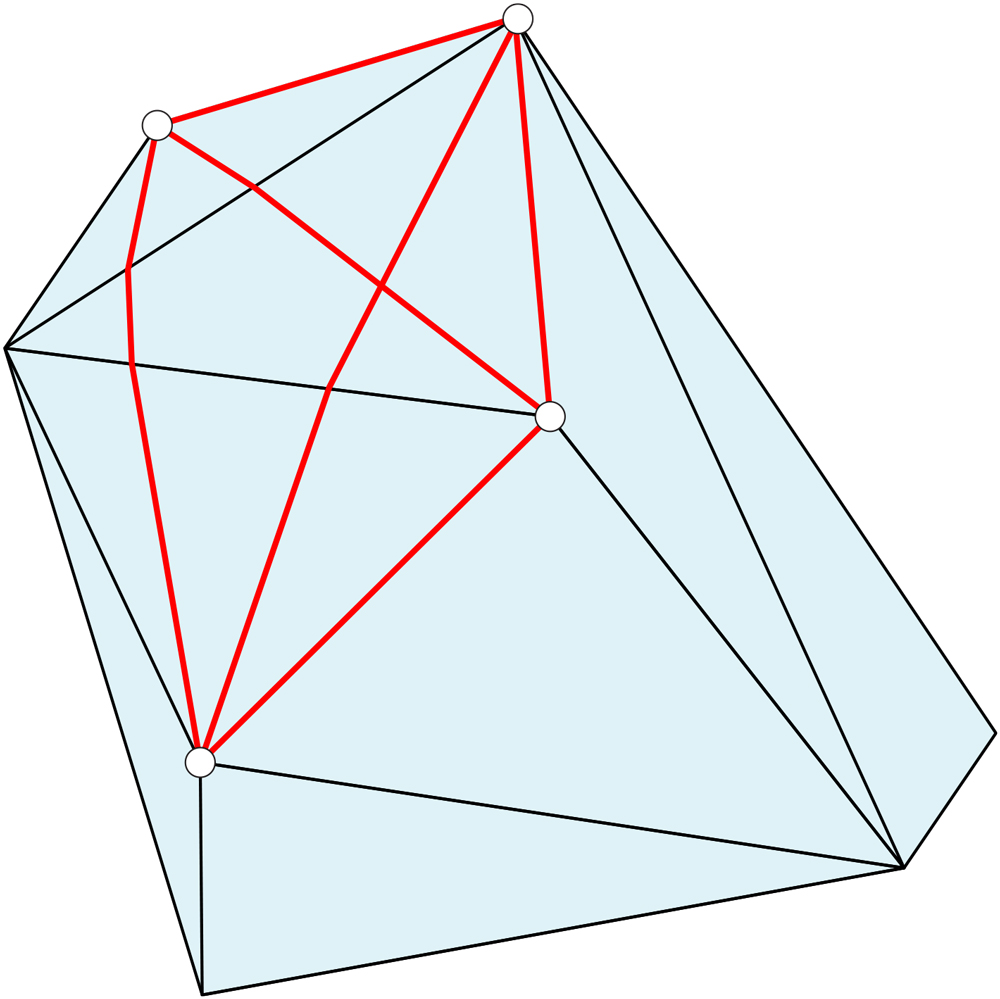}
\caption{$V$ is the four marked vertices. Geosegs (red) form four g-faces.}
\figlab{gfaces}
\end{figure}
\begin{figure}[htbp]
\centering
\includegraphics[width=0.5\linewidth]{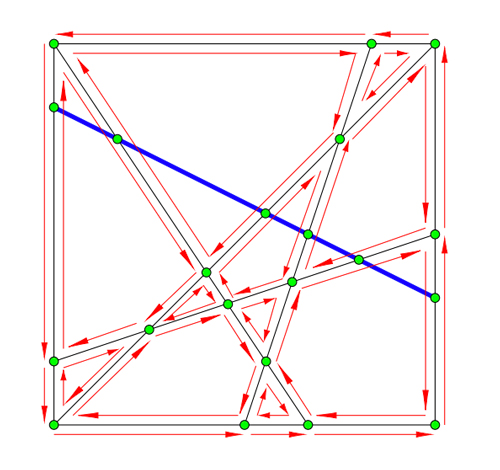}
\caption{Inserting a line into a line-arrangement data structure.
(Image from Jan Verschelde Lecture Notes:
\protect\url{http://homepages.math.uic.edu/~jan/mcs481/arrangements.pdf})}
\figlab{IncrementalAlg}
\end{figure}

Next we discuss some computational details for each of the three stages.
\begin{enumerate}[label={(\arabic*)}]
\item The total number of geosegs between vertices of $P$ is $O(n^2)$,
each of which can be found in $O(n \log n)$ by the
algorithm in~\cite{SchreiberSharir}.
Although two vertices may have more than one
geoseg connecting them, 
the number of geosegs from one vertex $v$ and the $n-1$ other vertices
is still $O(n)$.
This claim can be proved by induction using
the cut locus partition (Lemma~\lemref{Partition} in Chapter~\chapref{Preliminaries}. 
For another proof idea, notice that each extra pair of geosegs between two
vertices must have a vertex separating them. And there are only $n$ vertices.
\item
The total combinatorial size of the data structure $\mathcal{R}$ is
$O(n^4)$, the same size as an arrangement of $\binom{n}{2}$ lines.
That each of the $O(n^4)$ edgelets may cross several edges of $P$ is
accounted for in the implicit data structure of~\cite{SchreiberSharir}.
\item We note that
the sides of $G$ are not necessarily full geosegs between vertices.
$G$ in the example shown in Fig.~\figref{EdgeletsCex}
includes two edgelets $v_1 x$ and $v_3 x$ whose containing geosegs
are not wholly on the outer boundary.
This example also shows that $G$ is not necessarily convex:
it is reflex at $x$.\\
The complexity of any face, including the outer face, of an arrangement of segments
is just slightly more than linear in the number of segments.
Therefore, since
$G$ is built from $O(n^2)$ geosegs,
the total number of edgelets on $G$ is
$O( n^2 \a(n) )$,
where $\a$ is the inverse Ackermann function~\cite{chazelle1993computing}.

\end{enumerate}

\begin{figure}[htbp]
\centering
\includegraphics[width=0.8\linewidth]{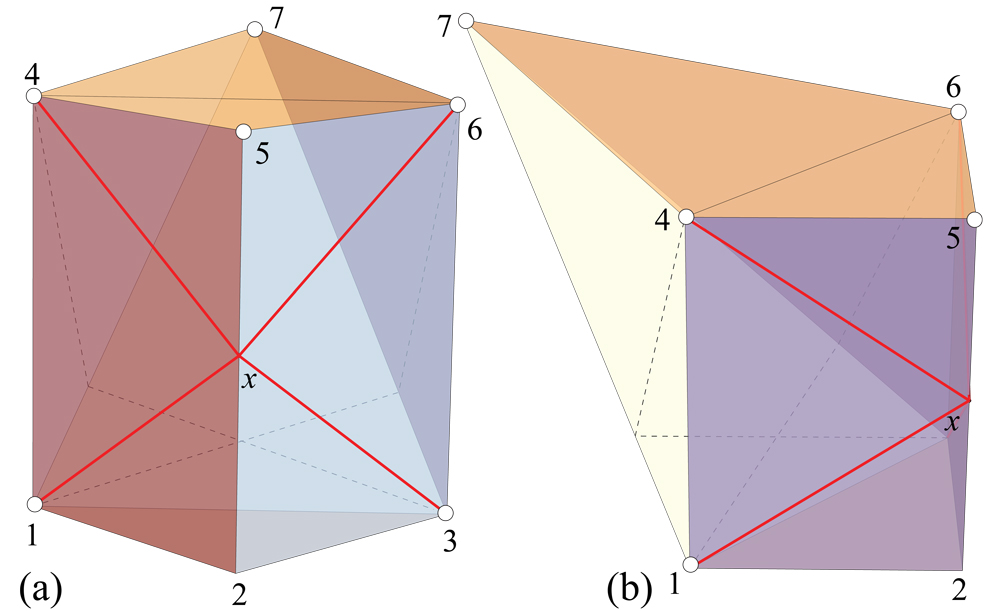}
\caption{$V$ is all the vertices except excluding $v_2$.
$Q=v_1 v_2 v_3$.
$G= v_1 x v_3 v_1$.  $g_{16}$ and $g_{35}$ (red) on the front are geosegs.
The dashed paths around the back are longer and so not geosegs.}
\figlab{EdgeletsCex}
\end{figure}

\medskip

A more formal presentation is displayed below as Algorithm~\ref{G0Alg}.

There are two special cases. When $V$ is a single vertex $v$, we
allow $G=\{v\}$.
And when $|V|=2$, say, $V=\{ a, b \}$, $G$ is the ccw loop $aba$.

\begin{algorithm}[htbp]
\caption{Algorithm to find an enclosing geodesic polygon}
\label{G0Alg}
\DontPrintSemicolon 
    \SetKwInOut{Input}{Input}
    \SetKwInOut{Output}{Output}

    \Input{$n$ vertices $V$ inside or on quasigeodesic $Q$, on $P^\#$.}
    \Output{Geodesic polygon $G$ enclosing $V$.}
    
    \BlankLine
    \tcp{Calculate all geosegs between pairs of vertices: $O(n^3 \log n)$.}
    
     \ForEach{$\binom{n}{2}$ pairs of vertices in $V$}{
     
       Compute geosegs $v_i$ to $v_j$, each in $O(n \log n)$ time~\cite{SchreiberSharir}.

     }
    
      Let $\G$ be the set of these $O(n^2)$ geosegs.
       \BlankLine
       \tcp{Compute g-faces and edgelets: $O(n^5 \log n)$.}
       
       \ForEach{geoseg $g \in \G$}{
       Intersect $g$ with previously built data structure $\mathcal{R}$.
       
       Create g-face regions, orient each ccw, bounded by edgelets.
       
       $O(n \log n)$ per geoseg insertion.
       
        }
        
        \BlankLine
        \tcp{Identify outer boundary: $O(n^4 \log n)$.}
        
        \ForEach{edgelet $e$}{
        
        Check in data structure $\mathcal{R}$ if $e$ is shared between two g-faces.
        
        If not, $e$ is on the boundary of the outer face of the subdivision:
        
        Then add $e$ to $G$.
              
        }
     
      \KwRet $G$.
      
\end{algorithm}

We have established this lemma:

\begin{lm}
\lemlab{G0}
Let $P$ have $n$ vertices, and $V$ a set of vertices in or on 
quasigeodesic $Q$ (possibly including all vertices of $Q$).
Then a geodesic polygon $G$ enclosing all of $V$ can
be constructed in $O(n^5 \log n)$ time.
\end{lm}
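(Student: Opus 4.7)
The plan is to follow the three-stage algorithm sketched in Section~\secref{GeosegPolygon}, and check both correctness (the output is a geodesic polygon enclosing $V$) and the claimed $O(n^5\log n)$ running time. Special-case the trivial situations $|V|\le 2$ up front (a single vertex, or the degenerate loop $aba$), so that in the main argument we may assume $|V|\ge 3$ and that $G$ will have at least three sides.

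First I would establish, as a preliminary counting lemma, that the total number of geosegs between pairs of vertices of $V$ is $O(n^2)$. Here I would invoke the cut locus partition (Lemma~\lemref{Partition}): fixing $v\in V$ and applying Lemma~\lemref{Partition} along any geoseg joining $v$ to a second vertex $w$ splits $P$ into two pieces, each with strictly fewer vertices, so an induction on $n$ gives $O(n)$ geosegs from a fixed $v$ to the other vertices, and hence $O(n^2)$ geosegs in total. Computing each geoseg takes $O(n\log n)$ time via~\cite{SchreiberSharir}, which gives the $O(n^3\log n)$ bound for Stage~(1); this is subsumed by later stages.

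Next, for Stage~(2), I would insert the $O(n^2)$ geosegs one at a time into a dynamic planar-subdivision data structure $\mathcal R$ maintained on $P^\#$, exactly as in the incremental arrangement algorithm. Since any two geosegs cross at most once (they are pseudo-segments, as already used in the proof of Corollary~\lemref{Pn2Pieces}), the whole arrangement behaves combinatorially like an arrangement of $O(n^2)$ pseudo-lines; thus it has $O(n^4)$ vertices, edgelets, and g-faces, and each insertion touches $O(n^2)$ existing edgelets, updates at $O(\log n)$ cost each, for a total of $O(n^4\log n)$ per insertion and $O(n^6\log n)$ overall. To tighten this to the claimed $O(n^5\log n)$, I would use the zone theorem (the complexity of the cells cut by a single new curve in an arrangement of $m$ pseudo-segments is $O(m\,\alpha(m))$), so that inserting the $k$-th geoseg costs $O(k\,\alpha(k)\log n)$; summing for $k=1,\dots,O(n^2)$ gives $O(n^4\,\alpha(n)\log n)=O(n^5\log n)$ (with generous slack). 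Each g-face produced is flat and vertex-free in its interior because its interior lies in a cell of the arrangement of all geosegs between vertices of $V$, and any interior vertex of $V$ would be an endpoint of one of those geosegs.

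Finally, for Stage~(3), I would walk the outer face of $\mathcal R$: the edgelets belonging to exactly one g-face form the boundary of the outer (unbounded on $P^\#$) face, and concatenating them in ccw order yields the polygon $G$. Here I would argue correctness in two parts. (a)~$G$ encloses all of $V$: every vertex $v\in V$ is incident to at least one geoseg of $\mathcal G$ (take any other vertex of $V$ and a shortest path to it), so $v$ cannot lie in the outer face. (b)~$G$ is a geodesic polygon in the sense of Chapter~\chapref{Convexity}: each of its sides is an edgelet, i.e., a sub-arc of a geoseg, hence a geoarc, and its nodes are either vertices of $P$ (endpoints of geosegs) or transverse crossings of two geosegs on the outer boundary (as in Fig.~\figref{EdgeletsCex} at the point $x$). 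The complexity bound $O(n^2\alpha(n))$ on the number of sides of $G$ follows from the same zone-theorem bound on the outer face of an arrangement of pseudo-segments~\cite{chazelle1993computing}, and the walk itself costs $O(n^4\log n)$ in $\mathcal R$, again dominated.

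The main technical obstacle is not correctness---which reduces cleanly to properties of arrangements of pseudo-segments and to the $O(n)$ bound on geosegs from a fixed vertex via the cut-locus partition---but rather controlling the implicit representation of geosegs on the polyhedral surface. Each geoseg may cross $\Theta(n)$ faces of $P$, and one must be careful that ``edgelet" is a combinatorial object in $\mathcal R$ rather than a list of face-crossings, so that intersections between two geosegs can still be tested in $O(\log n)$ amortized time using the Schreiber--Sharir implicit representation. Provided this bookkeeping is handled as in~\cite{SchreiberSharir}, the three stages compose and the total running time is $O(n^5\log n)$, proving the lemma.
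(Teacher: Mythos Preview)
Your proposal is correct and follows essentially the same three-stage algorithm as the paper: compute all $O(n^2)$ vertex-to-vertex geosegs, incrementally build the arrangement $\mathcal{R}$, and extract the outer boundary. Your treatment is in fact more careful than the paper's in two respects: you explicitly invoke the zone theorem to justify the $O(n^5\log n)$ bound for Stage~(2) (the paper's algorithm box lists ``$O(n\log n)$ per geoseg insertion,'' which does not by itself yield the stated total), and you spell out the correctness argument that every $v\in V$ is incident to some geoseg and hence cannot lie in the outer face, which the paper leaves implicit.
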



\subsection{Algorithm for Curve Shortening}
\seclab{ShorteningAlgorithm}

We now describe an algorithm for shortening $G=G_0$ to $Z=\min\ell [V]$.
We will repeatedly shorten $G_0$ to $G_1,G_2,\ldots,G_k = G$, until $G_k=Z$
for some $k \le n = | V |$.
We next describe one shortening step.

\bs

\emph{Case~1}. $|V \cap G_j| \ge 3$.
Let $v_{i-1}, v_i, v_{i+1}$ be three consecutive nodes 
of $G_j$ (for some $j$),
and let $\a_i$ and $\b_i$ be the angles at $v_i$ left and right of $G_j$.
If $v_i \in Q$, or if
$\b_i \ge \pi$, we take no action---$G_j$ should not or can not be shortened at $v_i$.
If however $v_i \not\in Q$ and $\b_i < \pi$, then we can locally shorten $G_j$ in a neighborhood of $v_i$.
But rather than locally shorten, we shorten by replacing the sequence
$(v_{i-1}, v_i, v_{i+1})$ by $(v_{i-1}, v_{i+1})$.
This is a shortening because the triangle inequality holds 
for geodesic triangles on a convex polyhedron.
Let $\g= v_{i-1} v_{i+1}$.

Before proceeding with the description, we re-examine the example
in Fig.~\figref{short-cut_v}.
\begin{figure}[htbp]
\centering
\includegraphics[width=0.75\linewidth]{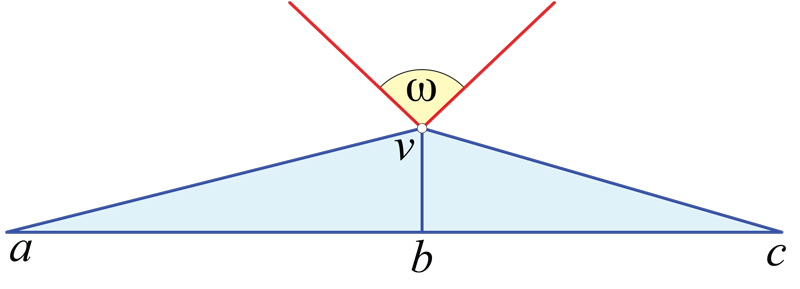}
\caption{
Flattening the region between $G$ and $Q$ in Fig.~\protect\figref{ZZnested}(b).
$\a \approx 94^\circ$, $\b \approx 150^\circ$, $\o \approx 116^\circ$.}
\figlab{short-cut_v}
\end{figure}
In the example previously shown in Fig.~\figref{ZZnested}, we have
$G_0 = (a,v,c)$, and $Q=Z$ as illustrated.
Here $\b < \pi$, and flattening the region below $v$ leads to the
planar triangle, short-cut by geoarc $\g = abc$.

Returning to the shortening step, the region of $P^+$ between $G_j$ and $Q$ is
empty of vertices. So the path $\g$ does not cross any portion of $G_j$,
nor can it cross $Q$, because the $P^\#$ construction would require
$\g$ to cross below and return above $Q$, violating the fact
that geodesics do not branch.
So the triangle $v_{i-1} v_i v_{i+1}$ is between $G_j$ and $Q$, and empty of vertices.
By the triangle inequality, $\g$ is shorter than 
$|v_{i-1}- v_i| + |v_i - v_{i+1}|$.

Another way to view this construction
is to imagine a point $p$ initially at $v_i$, sliding down
the edge $v_{i-1} v_i$ until it reaches $v_{i-1}$. Then the line segment $p v_{i+1}$
never encounters a vertex during this motion,
and leads to $p v_{i+1} = v_{i-1} v_{i+1} = \g$.


Applying one of these shortening steps moves a node
that was originally on $G$---either a vertex or an intersection of edgelets---to
the interior of $G_j$.
Because the combinatorial size of $G$ is bounded by
$O( n^2 \a(n) )$,
there can be no more than that nearly quadratic number of steps.

When no further shortening steps are possible, $G_k$ is a geodesic polygon
with $\b_i \ge \pi$ at every vertex $v_i$.
By Lemma~\lemref{Converse_Lmin} then $G_k = Z = \min\ell[V]$.

\bs
Before passing to the other cases, we notice that the total curvature $\o$ of $V \subset R(Q)$ is at most $2 \pi$.

\bs

\emph{Case~2}. $|V \cap G_j|=2$ and $G_j$ is a digon
with endpoints $v_1$ and $v_2$.
Let $\b_i$ be the exterior angles at $v_i$, and let $\tau_i=\b_i-\pi$ be the turn angles at $v_i$.
Gauss-Bonnet requires that $\o + \tau = 2 \pi$. 
If both $\b_i < \pi$, then $\tau_1+\tau_2 = \tau < 0$, and $\o + \tau = 2 \pi$ cannot be satisfied.
If both $\b_i \ge \pi$, the algorithm halts.
So let $\b_2 < \pi$.

Because $Q$ is a convex curve, 
we can merge inside $R(Q)$ all vertices it encloses to obtain one apex $a$ of a cone $\Upsilon$.
Clearly, the vertex-free region between $Q$ and $G_j$ is 
isometric to a subset of $\Upsilon$.
Unfold $\Upsilon$ by cutting it open along the cone generator $a v_1$.
Then there exists a geodesic loop at $v_1$ enclosing $R(G_j)$ and strictly shorter than $G_j$.

A special case is when $|V|=|V \cap G_j|= 2$ and $\a_i=0$.
For example, in Fig.~\figref{GeosegsNonconvex},
with $V=\{c,d\}$, $G=cdc$, and short-cutting at $d$ leads to $Z= cfc$.
\begin{figure}[htbp]
\centering
\includegraphics[width=0.4\linewidth]{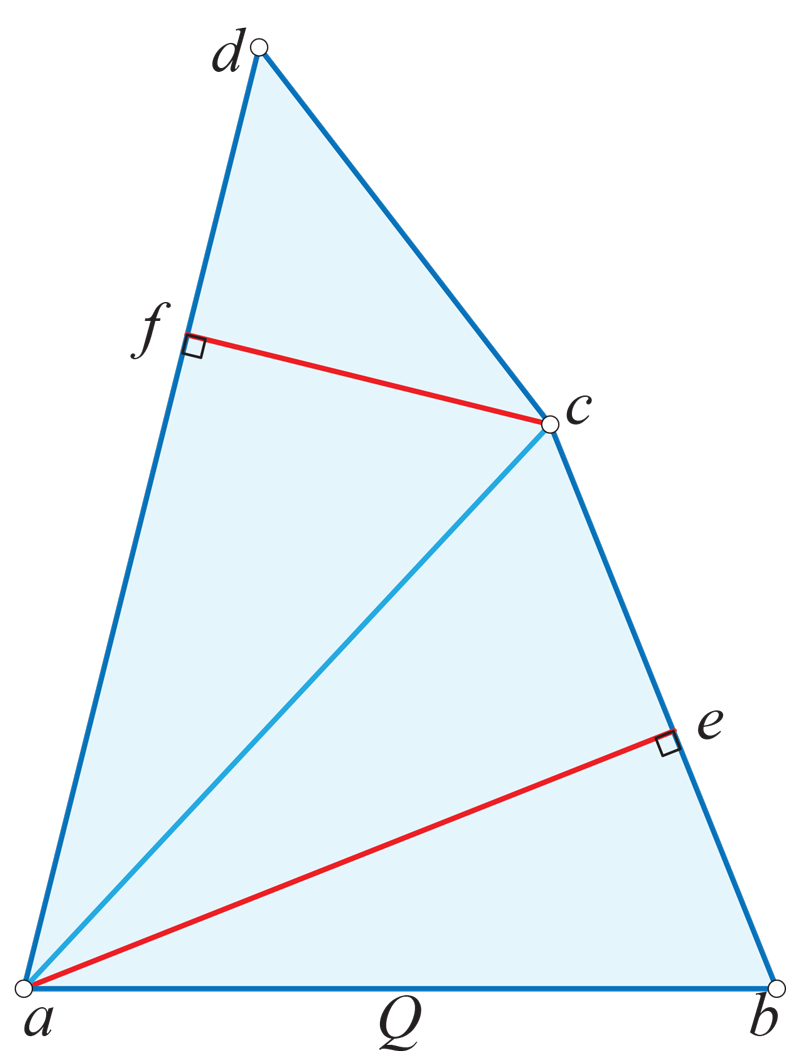}
\caption{Doubly-covered quadrilateral. 
If $V=\{a,c,d\}$, $G= aca$, $Z=aea$. 
If $V=\{c,d\}$, $G=cdc$, $Z= cfc$.
Vertex $c$ is nonconvex in the doubled $\triangle acd$.
}
\figlab{GeosegsNonconvex}
\end{figure}

\bs

\emph{Case~3}.
$|V \cap G_j|=1$. Then
$G_j$ is a geodesic loop at $v$, where 
Gauss-Bonnet implies that
necessarily $\b \geq \pi$.
Therefore the algorithm halts.

This completes the description of the shortening algorithm.
\bs

A few remarks on the shortening algorithm:
\begin{itemize}
\item The short-cut edge $\g= v_{i-1} v_{i+1}$ is a geoarc, not a geoseg, because if $\g$
were a geoseg, it would have been incorporated into the construction of $G=G_0$.
So every shortening step inserts a geoarc.
Later (in Chapter~\chapref{SpiralTree3D}),
algorithms will vertex-merge along such a geoarc (beyond a geoseg).
\item The algorithm relies on the property that the region of $P^+$ between $G$ and $Q$ is
empty of vertices. 
All the shortenings insert arcs in this vertex-free region.
For that reason, $V$ must include all vertices inside $Q$,
i.e., the algorithm may not work if $V$ is a proper subset of the vertices inside $Q$.
(However, the construction of $G$ still works, but the shortening may not.)
So in this sense, it is not a general ``convex hull" algorithm,
but rather a construction tailored to our needs.
\item The algorithm also works
for all vertices inside a convex curve $C$, because doubling $R(C)$ via AGT transforms $C$ into a quasigeodesic.
\item Even when $V$ includes all vertices on $Q$, it may be that $G \neq Q$,
if $Q$ contains geoarcs. But then the shortening algorithm will result in $Z=Q$.
\item Viewing $P^+$ as the upper half-surface, one can view $Q$ as 
the rim of
the base of $P^+$, $Z$ a closed convex curve ``above"/inside $Q$, 
and $G$ a closed curve ``above"/inside $Z$. The shortening
algorithm works by, in some sense ``area growing" $G$ down to $Z$, even though each growing step is in fact length-shortening.
In contrast, the curve-shortening flow described in Section~\secref{Flow}
``raises" $Q$ until it matches $Z$.
\end{itemize}


We have now established this theorem:

\begin{thm}[$Z$-Algorithm]
\thmlab{Zalgorithm}
The minimum-length geodesic polygon enclosing $V$,
$Z=Z(V) = \min\ell [V]$, can be constructed in polynomial-time,
specifically in time $O(n^5 \log n)$ where $n = | V |$.
\end{thm}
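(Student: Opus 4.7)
The plan is to combine the two ingredients that have already been set up, namely the construction of an initial enclosing geodesic polygon $G_0$ (Lemma~\lemref{G0}) and the shortening procedure described in Section~\secref{ShorteningAlgorithm}, and then verify that the fixed point of the shortening procedure satisfies the hypothesis of the converse lemma (Lemma~\lemref{Converse_Lmin}), so that it must equal $Z(V)$.

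First I would invoke Lemma~\lemref{G0} to produce, in time $O(n^5 \log n)$, a geodesic polygon $G_0 \subset R(Q)$ enclosing $V$, whose nodes are either vertices of $V \cup \partial Q$ or transverse intersections of geosegs between vertices of $V$ (the latter treated as pinned in the sense of Lemma~\lemref{Converse_Lmin}). I would then run the shortening algorithm: at each stage, scan the nodes of the current polygon $G_j$ for a node $v_i \notin Q$ with right-side angle $\b_i < \pi$, and (in Case~1) replace the subchain $(v_{i-1},v_i,v_{i+1})$ by the geoarc $v_{i-1}v_{i+1}$, invoking the triangle-inequality argument together with the fact that the strip between $G_j$ and $Q$ is vertex-free to guarantee the short-cut lies in $R(Q)$ and is strictly shorter. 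The two degenerate configurations (digon and geodesic loop) are handled in Cases~2 and~3 exactly as already described. Termination is declared once every node satisfies $\b_i \ge \pi$ or lies on $Q$.

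To prove correctness of the output $G_k$, I would apply Lemma~\lemref{Converse_Lmin}: by construction, every node of $G_k$ is either a vertex of $V$, a pinned intersection inherited from $G_0$, or a point of $Q$ (hence pinned with $\b = \pi$ in $P^\#$); $V \subseteq R(G_k)$ holds throughout since we only ever replace a subchain by a short-cut in the vertex-free strip on its $Q$-side; and the halting condition is precisely $\b \ge \pi$ at every node. Therefore $G_k = Z(V)$, and uniqueness is guaranteed by Lemma~\lemref{Zunique}.

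For the complexity bound, I would argue as follows. The combinatorial size of $G_0$ is $O(n^2 \alpha(n))$ by the Davenport--Schinzel bound on the complexity of a single face in an arrangement of $O(n^2)$ geosegs, as already noted. Each short-cut strictly decreases the number of edgelets on the current polygon by at least one (the excised node moves off the boundary and cannot reappear there, since any future short-cut only inserts new geoarcs lying strictly between the current polygon and $Q$, hence strictly inside $R(G_j)$). Thus at most $O(n^2 \alpha(n))$ shortening steps occur, and each requires computing one geoarc between two nodes and updating the planar-subdivision data structure, which is dominated by the geoseg/geoarc computation cost of $O(n \log n)$ per step~\cite{SchreiberSharir}. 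This yields a shortening phase of cost $O(n^3 \alpha(n) \log n)$, which is dominated by the $O(n^5 \log n)$ cost of building $G_0$. The main obstacle I expect is the bookkeeping to justify that short-cut geoarcs, which are not geosegs between vertices of $V$ and therefore not already represented in the arrangement $\mathcal{R}$, nevertheless do not interact with earlier geosegs in a way that could revive a previously-excised node; the fact that each $G_{j+1}$ lies strictly in $R(G_j)$ and that $R(G_j)$ contains no vertices other than those of $V$ on $\partial G_j$ is what makes this monotonicity argument go through.
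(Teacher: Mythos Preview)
Your proposal is correct and follows exactly the paper's approach: build $G_0$ via Lemma~\lemref{G0}, run the short-cutting procedure of Section~\secref{ShorteningAlgorithm} until every node has $\b\ge\pi$, and invoke Lemma~\lemref{Converse_Lmin} for correctness, with the $O(n^5\log n)$ cost of constructing $G_0$ dominating the $O(n^2\alpha(n))$ short-cut steps. One small slip in your last paragraph: the nesting direction is reversed---each short-cut lies in the vertex-free strip between $G_j$ and $Q$, so $R(G_j)\subset R(G_{j+1})$ (the enclosed region grows outward toward $Q$), which is exactly why an excised node lands in the interior of $R(G_{j+1})$ and cannot reappear on a later boundary.
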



\section{AG-convexity and $Z$}
\seclab{Lmin_rconv}

To parallel the notation $Z=Z(V)=\min\ell [V]$,
define $W=W(V) = \partial \, \rconv(V)$.

\begin{lm}
\lemlab{Z_ag_convex}
The set $R(Z)$ enclosed by $Z=Z(V)$ is a relatively ag-convex set containing $V$, but it is not necessarily equal to $\rconv (V)$.
\end{lm}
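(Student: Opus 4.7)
My plan is to split the statement into two essentially independent parts: (a) the positive claim that $R(Z)$ is a relatively ag-convex set containing $V$, and (b) the negative claim that the inclusion $\rconv(V) \subseteq R(Z)$ can be strict.

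For (a), the containment $V \subseteq R(Z)$ is immediate from the definition of $Z = Z(V)$ as a polygon enclosing $V$. For relative ag-convexity on $P^\#$, I would argue directly from the length-minimality of $Z$. Fix $x,y \in R(Z)$ and a geoseg $\g$ on $P^\#$ joining them; suppose for contradiction that $\g \not\subset R(Z)$. Then one can extract a sub-arc $\g_{pq}\subseteq \g$ whose endpoints $p,q$ lie on $Z$ and whose interior lies in $P^\# \setminus R(Z)$. The complement $P^\#\setminus R(Z)$ is vertex-free: it sits either on the cylinder attached below $Q$ (intrinsically flat by construction) or in the strip between $Z$ and $Q$, which is also flat since by definition $V\subseteq R(Z)$ already absorbs every vertex inside or on $Q$. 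Consequently, the lune $D$ bounded by $\g_{pq}$ and the matching sub-arc $Z_{pq}\subset Z$ develops isometrically onto a flat planar polygon with $\g_{pq}$ as a straight side. The planar triangle inequality then gives $\ell(\g_{pq})\le \ell(Z_{pq})$, with equality only when $D$ is a degenerate digon (so $\g_{pq}=Z_{pq}\subset Z \subset R(Z)$, a contradiction). Strict inequality allows us to replace $Z_{pq}$ by $\g_{pq}$ inside $Z$ and produce a strictly shorter geodesic polygon still enclosing $V$, contradicting the minimality established in Lemma~\lemref{Zunique}. A cleaner alternative is to invoke Lemma~\lemref{Converse_Lmin}, which characterizes $Z(V)$ by the pointwise condition $\b\ge\pi$, and then apply the $\a\b$-converse Lemma~\lemref{Converse_ab}; this requires some bookkeeping to handle hypothesis~(3) of that lemma (that non-geoseg boundary arcs not be included in $S$), essentially by proving the statement on the interior of $R(Z)$ and passing to the closure.

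For (b), the inclusion $\rconv(V)\subseteq R(Z)$ is automatic, because part~(a) exhibits $R(Z)$ as a relatively convex set containing $V$, while $\rconv(V)$ is by definition the intersection of all such sets. To see the inclusion can be strict, I will defer to the forthcoming Example~\exref{minell_neq_rconv}. The underlying reason the two can differ is a mismatch in local rigidity: by Lemma~\lemref{Zstrictly}, $Z$ is strictly $\a\b$-convex at every node, while by Theorem~\thmref{rconvV} the boundary $W(V)=\partial\,\rconv(V)$ is only required to be $\a\b$-convex, non-strictly allowed at g-nodes. This extra freedom lets $W(V)$ hug $V$ more tightly than $Z$ can, at the price of longer perimeter, and hence enclose strictly less area.

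The main obstacle I foresee is producing and verifying a clean explicit example for~(b). One wants a polyhedron carrying a simple closed quasigeodesic $Q$ and a vertex set $V$ for which some boundary geoarc of $\rconv(V)$ ``shortcuts through'' a node where $Z$ is forced to make a longer, bulging detour. A natural candidate is to exploit a g-node configuration on $Q$ where $\a=\b=\pi$, so that $W(V)$ may include a non-shortcuttable geoarc while $Z$ must route around it. The delicate step will be certifying that the candidate boundary really bounds the relative convex hull --- i.e., checking the $\a\b$-convexity criteria of Lemma~\lemref{Converse_ab} at every node and confirming no smaller relatively convex set containing $V$ exists --- rather than merely drawing a suggestive picture.
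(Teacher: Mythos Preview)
Your proposal is correct, and your ``cleaner alternative'' is exactly the paper's proof: the paper simply cites the strict $\a\b$-convexity of $Z$ (Lemma~\lemref{Zstrictly}) together with Lemma~\lemref{Converse_ab} for part~(a), and defers to Example~\exref{minell_neq_rconv} for part~(b). So on that route you and the paper coincide.

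Your primary approach---the direct lune argument from length-minimality---is a genuine alternative. It is essentially a specialization of the reverse direction of Lemma~\lemref{Converse_ab} to this case, but with a different engine: instead of comparing $\g_{pq}$ to an internal geoarc via Cauchy's Arm Lemma (as that lemma's proof does), you compare $\g_{pq}$ directly to $Z_{pq}$ and invoke the global minimality of $Z$ to derive the contradiction. This buys you a self-contained argument that does not lean on the somewhat intricate Lemma~\lemref{Converse_ab}, at the cost of re-doing some flatness bookkeeping for the lune. One point to tidy in your write-up: the lune $D$ is simply connected (hence developable to a planar polygon) because the exterior $P^\#\setminus R(Z)$ is isometric to a flat half-infinite cylinder with upper rim $Z$, and a geoseg between two rim points in such a cylinder cannot wind around.

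Your flag on hypothesis~(3) of Lemma~\lemref{Converse_ab} is sharp---the paper does not mention it. It is harmless here, though, because the only place that hypothesis is invoked in the proof of Lemma~\lemref{Converse_ab} is to handle the non-strict equality case $\a^*=\b^*$; since $Z$ is \emph{strictly} $\a\b$-convex at every node, that case never arises, and the argument goes through for the closed set $R(Z)$ without modification. So the ``bookkeeping'' you anticipate reduces to this one-sentence observation, rather than a pass to the interior and back (which, as you may recall from Example~\exref{Closure_not_cvx}, would be delicate since closures of convex sets need not be convex).
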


\begin{proof}
The relative convexity of $R(Z)$ follows directly from the strict $\a\b$-convexity of $Z$ and Lemma~\lemref{Converse_ab}.
The second claim is established by the next example.
\end{proof}

\begin{ex}
\exlab{minell_neq_rconv}
It could be that $Z(V) \neq W(V)$.

Let $P$ be the octahedron illustrated in Fig.~\figref{minl_rconv}.
The shape is symmetric front-to-back and top-to-bottom.
Triangles $acd$ and $ac'd'$ are congruent $45^\circ-45^\circ-90^\circ$ triangles,
and triangles $acc'$ and $add'$ are congruent equilateral triangles.
$Q$ (red) is
$a b a' e a$, and $V=\{a, c, d, a' \}$.
Let $G=acda'$ as illustrated. Then 
$\a_i < \b_i$ at each of the four vertices / gs-nodes.
By Lemma~\lemref{Converse_ab}, 
$\rconv(V)$ is the union of the two triangles $acd$ and $a'cd$.
However, because $\b$ at both $c$ and $d$ is strictly less than $\pi$,
the algorithm would short-cut there, leading
to $Z=a b a' e a = Q$.
So $Z(V) \neq W(V)$.

%
%
%
%
%
%

\begin{figure}[htbp]
\centering
\includegraphics[width=0.6\linewidth]{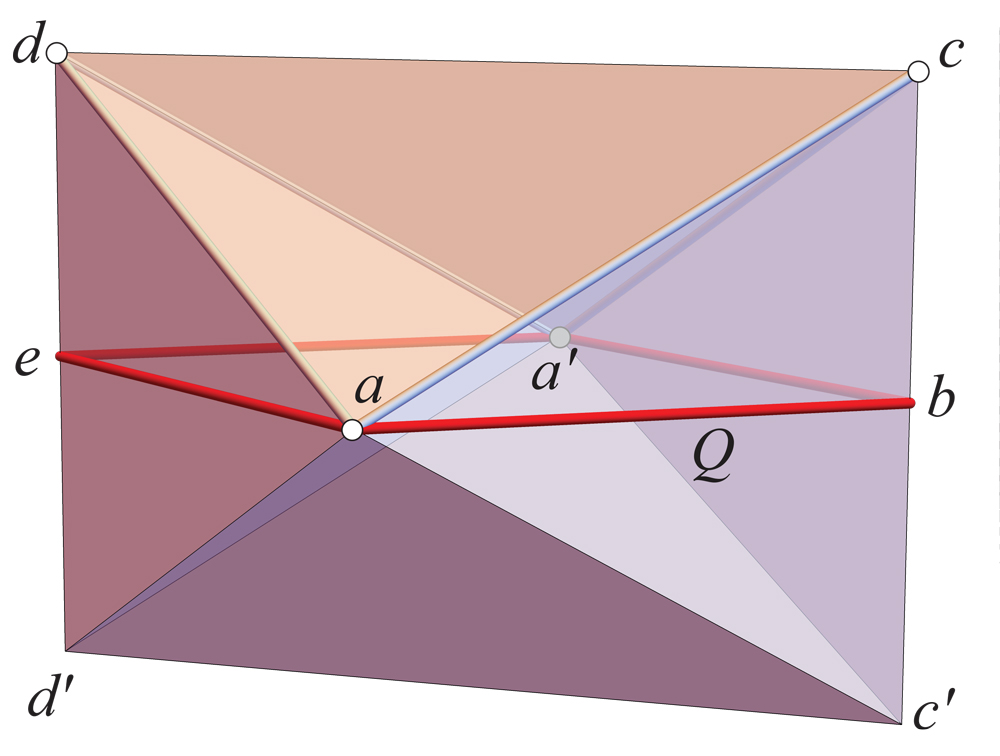}
\caption{$Q=a b a' e a$, $V=\{a, c, d, a' \}$.
$Z=a b a' e a = Q$.
$W(V) = a c a' d a$.
$Z(V) \neq W(V)$.}
\figlab{minl_rconv}
\end{figure}
\end{ex}


It follows, directly from Lemma~\lemref{Z_ag_convex} and the definition of $\rconv (V)$, that
$R(W)  \subseteq R(Z)$, with $Z=Z(V)$ and $W=W(Z)$.
So of the two generalizations of the planar convex hull,
$\rconv(V)$ is ``tighter" than $\min\ell[V]$.


\section{Algorithm for $\rconv(V)= R(W)$}
\seclab{WAlgorithm}

An algorithm to construct $W=W(V)$ can be obtained by minor modification
of the algorithm for $Z=Z(V)$.
The first part, calculation of a geodesic polygon $G$ enclosing $V$, is
identical.
The second part, short-cutting $G$, can be followed with just different criteria
of when to short-cut.
We will continue to call it a ``short-cut" even though it is no longer aimed at
length-shortening.

Just as the construction of $Z$ relies on Lemma~\lemref{Converse_Lmin},
the construction of $W$ relies on the $\a\b$-converse Lemma~\lemref{Converse_ab}.
Assuming the preconditions of these lemmas are satisfied, then the short-cutting
decisions of the two algorithm are as follows.
\begin{itemize} 
\item[$Z$:] If $G_j$ is such that, for every $v$ on $G_j$, 
$\b \ge \pi$, then $G_j=Z$.
So we short-cut whenever $\b < \pi$.
\item[$W$:] If $G_j$ is such that, for every $v$ on $G_j$, 
\begin{enumerate}[label={(\alph*)}]
\item if $v$ is a gs-node, then $v$ is strictly $\a\b$-convex.
So we short-cut whenever $\a \ge \b$.
For positive curvature $v$, this implies $\b < \pi$.
\item if $v$ is a g-node, then $v$ is $\a\b$-convex.
So we short-cut whenever $\a > \b$.
And this implies $\b < \pi$.
\end{enumerate}
then $G_j=W$.
\end{itemize}

\noindent
We next explain in some detail how the short-cutting conditions
for $W$ imply $\b < \pi$, as claimed in (a) and (b) above.
Because $\a+\b=2\pi$, we have:
$\a \ge \b$ implies that $\b \le \pi$, and
$\a > \b$ implies that $\b < \pi$.
Below we abbreviate ``the $Z$-algorithm" and ``the $W$-algorithm"
with just $Z$ and $W$, for readability.

\paragraph{Positive Curvature $v$.}
If $v$ has positive curvature, then $\a \ge \b$ implies that $\b < \pi$
(because if $\b=\pi$, then $\a=\b=\pi$ and $v$ is flat).
So, for positive curvature $v$, whenever $W$ short-cuts, $\b < \pi$ and also $Z$
short-cuts.
And for a gs-node, if $\a < \b$, then $W$ does not short-cut independent of $\b$, but
$Z$ will short-cut if $\b < \pi$.
And for a g-node, if $\a \le \b$, then $W$ does not short-cut independent of $\b$, but
$Z$ will short-cut if $\b < \pi$.

\paragraph{Flat $v$.}
For flat $v$, if $\a=\b=\pi$, then short-cutting effectively leaves a geodesic through $v$,
and so neither algorithm takes action.
For flat $v$ and $\a < \b$, neither algorithm short-cuts:
$Z$ because $\b > \pi$; $W$ because strictly $\a\b$-convex.
For flat $v$ and $\a > \b$, both algorithms short-cut:
$Z$ because $\b < \pi$; $W$ because not $\a\b$-convex.
\bs
We have now established this result:
\begin{lm}
\lemlab{short-cuts}
The $W$-algorithm's short-cuts are a subset (or equal) to 
the $Z$-algorithm's short-cuts. 
\end{lm}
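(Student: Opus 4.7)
The plan is to show node-by-node, on the common running polygon $G_j$ of both algorithms, the implication ``$W$ short-cuts at $v$ $\Rightarrow$ $Z$ short-cuts at $v$.'' Since both algorithms start from the same enclosing geodesic polygon $G_0$ produced by Algorithm~\ref{G0Alg} and iterate through the same outer loop, differing only in the local criterion invoked at each node, a pointwise comparison of the two criteria suffices. The only tool needed is the angle identity $\alpha + \beta = 2\pi - \omega(v)$, where $\omega(v) \geq 0$ is the surface curvature at the node $v$.

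First I would dispose of g-nodes. The $W$-criterion (from Lemma~\lemref{Converse_ab}) demands $\alpha\beta$-convexity at a g-node, so $W$ short-cuts exactly when $\alpha > \beta$. Combined with $\alpha + \beta \leq 2\pi$, this yields $2\beta < 2\pi$, i.e.\ $\beta < \pi$, which is precisely the $Z$-short-cut condition coming from Lemma~\lemref{Converse_Lmin}. The g-node implication is therefore immediate.

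Next I would treat gs-nodes, where $W$ demands the strict form of $\alpha\beta$-convexity, so $W$ short-cuts whenever $\alpha \geq \beta$. The same arithmetic gives $2\beta \leq 2\pi - \omega(v)$. If $\omega(v) > 0$, the bound becomes strict and $\beta < \pi$, so $Z$ acts as required. If $\omega(v) = 0$, then $\beta \leq \pi$, with equality forcing $\alpha = \beta = \pi$; in that degenerate case $v$ lies on a genuine geodesic of $G_j$, and neither algorithm performs an actual modification at $v$.

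The main step requiring care is this last degenerate configuration $\alpha = \beta = \pi$ at a flat gs-node. I would verify explicitly that the implementation of both algorithms treats such a $v$ as ``no action'' rather than silently performing a trivial short-cut, so that the $W$-algorithm does not acquire a phantom short-cut absent from the $Z$-algorithm. Once this bookkeeping check is handled, the three cases above exhaust all possibilities and the subset inclusion claimed in the lemma follows at once.
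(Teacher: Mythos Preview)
Your proposal is correct and follows essentially the same approach as the paper: a node-by-node case analysis (gs-node versus g-node, positive curvature versus flat) using the identity $\alpha+\beta+\omega(v)=2\pi$ to deduce that the $W$-criterion ($\alpha\ge\beta$ or $\alpha>\beta$) forces $\beta<\pi$, which is the $Z$-criterion. Your handling of the degenerate flat case $\alpha=\beta=\pi$ also matches the paper's treatment, where neither algorithm takes action.
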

\noindent
This accords with our conclusion from Lemma~\lemref{Z_ag_convex}
that $R(W)  \subseteq R(Z)$.

The arguments that the $W$-algorithm's short-cuts are possible,
and that the algorithm halts after at most $n$ steps, are identical
to those for the $Z$-algorithm,
because every $W$-algorithm short-cut is a $Z$-algorithm short-cut,
by Lemma~\lemref{short-cuts}.
So we achieve a result parallel to Theorem~\thmref{Zalgorithm}:

\begin{thm}[$W$-Algorithm]
\thmlab{Walgorithm}
The boundary $W(V)$ of $\rconv( V )$
can be constructed in polynomial-time,
specifically in time $O(n^5 \log n)$ where $n = | V |$.
\end{thm}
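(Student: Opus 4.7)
The plan is to reuse, essentially verbatim, the two-stage structure of the $Z$-algorithm already proven in Theorem~\thmref{Zalgorithm}, changing only the local criterion that triggers a short-cut. First I would invoke Lemma~\lemref{G0} to obtain an enclosing geodesic polygon $G=G_0$ around $V$ in time $O(n^5 \log n)$; this stage is identical for the two algorithms, so its cost is inherited unchanged. Then I would run the short-cutting loop described just before the theorem statement, but using the $W$-criterion: short-cut at a node $v$ of the current $G_j$ precisely when $v$ fails the $\a\b$-convexity requirement appropriate to its type (strict $\a\b$-convexity at gs-nodes, $\a\b$-convexity at g-nodes).

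For termination and complexity, the key observation is Lemma~\lemref{short-cuts}: every short-cut the $W$-algorithm performs is also a short-cut that the $Z$-algorithm would perform. Since the $Z$-algorithm was shown to terminate after at most $O(n^2\alpha(n))$ short-cuts (bounded by the combinatorial size of $G_0$), the same bound applies here. Each short-cut requires inserting one new geoarc into the planar subdivision $\mathcal{R}$, which by the same argument as in the $Z$-case costs $O(n \log n)$ and fits inside the $O(n^5 \log n)$ envelope already spent on constructing $G_0$. Thus the total time complexity stays $O(n^5 \log n)$.

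For correctness, I would argue two things. First, when the algorithm halts with some polygon $G_k$, no further short-cut is possible, so every gs-node of $G_k$ is strictly $\a\b$-convex and every g-node is $\a\b$-convex on $P^\#$, while the region enclosed between $G_k$ and $Q$ is vertex-free by construction (each short-cut passes through a flat, vertex-free region, exactly as argued for the $Z$-algorithm). Moreover, $V \subset R(G_k)$ throughout, because each short-cut only reduces $R(G_j)$ by a vertex-free pocket. These are precisely the hypotheses of the $\a\b$-converse Lemma~\lemref{Converse_ab}, which then yields that $R(G_k)$ is relatively convex and contains $V$, so $R(W) \subseteq R(G_k)$. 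Conversely, since $W = \partial\,\rconv(V)$ is itself $\a\b$-convex in the sense required (Theorem~\thmref{rconvV}), the shortening process can never cut past $W$: any hypothetical short-cut edge at a boundary point of $\rconv(V)$ would violate the $\a\b$-convexity guaranteed at gs- and g-nodes of $W$. This inclusion in both directions gives $G_k = W(V)$.

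The main obstacle I anticipate is the correctness direction $R(G_k) \subseteq R(W)$, i.e., verifying that the $W$-algorithm does not stop too early. The subtlety is that the short-cut rule refers to the \emph{local} node type (gs-node versus g-node), but the node type of a point on the evolving polygon can change as new geoarcs are inserted; in particular, a point that was an interior flat point on an edgelet of $G_j$ may become a g-node of $G_{j+1}$ after a short-cut. Careful bookkeeping is needed to show that whenever a new node is created by a short-cut, it is correctly classified and the $\a\b$-convexity test is applied to it at the next iteration, so that when the algorithm halts every node of $G_k$ satisfies the hypothesis of Lemma~\lemref{Converse_ab}. Once this invariant is maintained, the rest of the argument is a routine adaptation of the $Z$-algorithm analysis.
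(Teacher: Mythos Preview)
Your proposal is correct and takes essentially the same approach as the paper: reuse the $Z$-algorithm's two stages unchanged except for the short-cutting criterion, invoke Lemma~\lemref{short-cuts} to inherit termination and complexity from the $Z$-analysis, and appeal to Lemma~\lemref{Converse_ab} for correctness. The paper's own justification is even terser than yours---it simply states that the arguments are identical because every $W$-short-cut is a $Z$-short-cut---so your additional discussion of the two-way inclusion $R(G_k)=R(W)$ and the node-type bookkeeping only fills in detail the paper leaves implicit.
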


%
%
%
%

\chapter{Spiral Tree on Polyhedron}
\chaplab{SpiralTree3D}
In the previous chapters we have established the planar model for our spiral tree (based on convex hulls), 
and the extensions to convex polyhedra of the planar notions of convex hull/minimal length enclosing polygon.
In this chapter we continue our program and 
prove that the spiraling idea works as well for vertex-merging in polyhedral half-surfaces bounded by simple closed quasigeodesics,
with respect to either convex hull, or enclosing geodesic polygon.

Although in some sense the 3D algorithm follows the 2D algorithm
in Chapter~\chapref{SpiralTree2D} closely,
there are several significant differences.
One is that, in 2D, no triangles are inserted along each slit, whereas in 3D
these insertions change $P$ to a different (larger) polyhedron $P_i$ at each step.
Another difference is that the 2D slit segments were all geosegs, whereas
in 3D some of the slits are geoarcs. 
These and other differences make the proofs in this chapter somewhat intricate.


\section{Notation}
\seclab{Notation}
The notation is a bit complex, so we list the central symbols here for 
later reference.
It may help to refer ahead to Fig.~\figref{IcosaCuts3D} to illustrate the definitions below.

\begin{itemize}
\item $V = V_0 = \{v_1,v_2,v_3, \ldots, v_n \}$: Vertices on the surface $P$.

\item $P_i$: Polyhedron after the $i$-th vertex-merge.

\item $m_i$: merge vertex, the vertex created by merging $m_{i-1}$ with $v_{i+1}$,
with $m_0=v_1$. 

\item $g_i = m_{i-1} v_{i+1}$. The $i$-th slit/merge geoarc. 

\item $v_1$ is also given the label $m_0$, so $g_1 = v_1 v_2 = m_0 v_2$.

\item $v_i$ is called \emph{flattened} if it has already been merged;
the merge will reduce $v_i$'s curvature to zero.

\item $V_i$ is the set of not-flattened vertices on $P_i$ remaining after the $i$-th merge 
along $g_i = m_{i-1} v_{i+1}$. $m_i \in V_i$.
$|V_i|$ is the number of vertices in $V_i$.

\item $T^2_i$ represents the pair of triangles inserted along $g_i$
in a vertex-merge, and $T_i$ refers to the one of the pair crossed by $g_{i+1}$.

\item $H_i = \rconv(V_i)$ is the relative ag-convex hull of $V_i$,
defined in Section~\secref{RelConvHull}.
We view $H_i$ as a region of $P_i$ and let $\partial H_i$ denote its boundary.
$H_0$ is the convex hull of $V=V_0$.

\item With some abuse of notation, we identify objects on $P_i$ with their image on $P$ and vice-versa. 

For an object $X_i$,
we put $\tilde{X}_i \subset P$ for the image on $P$ of $X_i \subset P_i$.
So, $\tilde{H}_i=H_i  \cap P$, and $\tilde{g}_i=g_i \cap P$, for all $i$.

\item Occasionally, we may also use notation from Chapter~\chapref{VMSlitGraph}.
For example, as in that chapter,
we also identify objects on $P_{i-1}$ with their image on $P_i$ and vice-versa.

\item $\tilde{\L} _i= \cup_i \, \tilde{g}_i$ is the slit graph after the $i$-th merge. $\tilde{\L}$ is the full slit graph.
\end{itemize}


\section{Icosahedron Example}
\seclab{IcosahedronExample2}
Before we describe and prove the spiral algorithm in Section~\secref{rconvAlg},
we illustrate its application to an icosahedron $P$.
This repeats our discussion in
Section~\secref{IcosahedronExample1}
but following a spiral sequence of vm-reductions.
In Chapter~\chapref{VMSlitGraph} the endpoint of the reductions was
the doubly-covered triangle shown in
Fig.~\figref{IcosaDoubleTriangle}.
Here the endpoint of the reductions will be a half-cylinder,
described in the next chapter (Fig.~\figref{IcosaCylPhotos}).

We label the $12$ vertices of $P$ as shown in Fig.~\figref{IcosahedronLabels_v2}.
$Q$ is a simple closed geodesic
around the ``equator" of the icosahedron,
and $V$ the six vertices above $Q$,
$v_1,v_2,v_3,v_4,v_5,v_6$.
The vertices are merged in that order, as indicated in Fig.~\figref{IcosahedronLabels_v2}.
(This differs from the order in Section~\secref{IcosahedronExample1},
which merged $v_1,v_2,v_3,v_4,v_6$, leaving $v_5$ unmerged.)

\begin{figure}[htbp]
\centering
\includegraphics[width=0.4\linewidth]{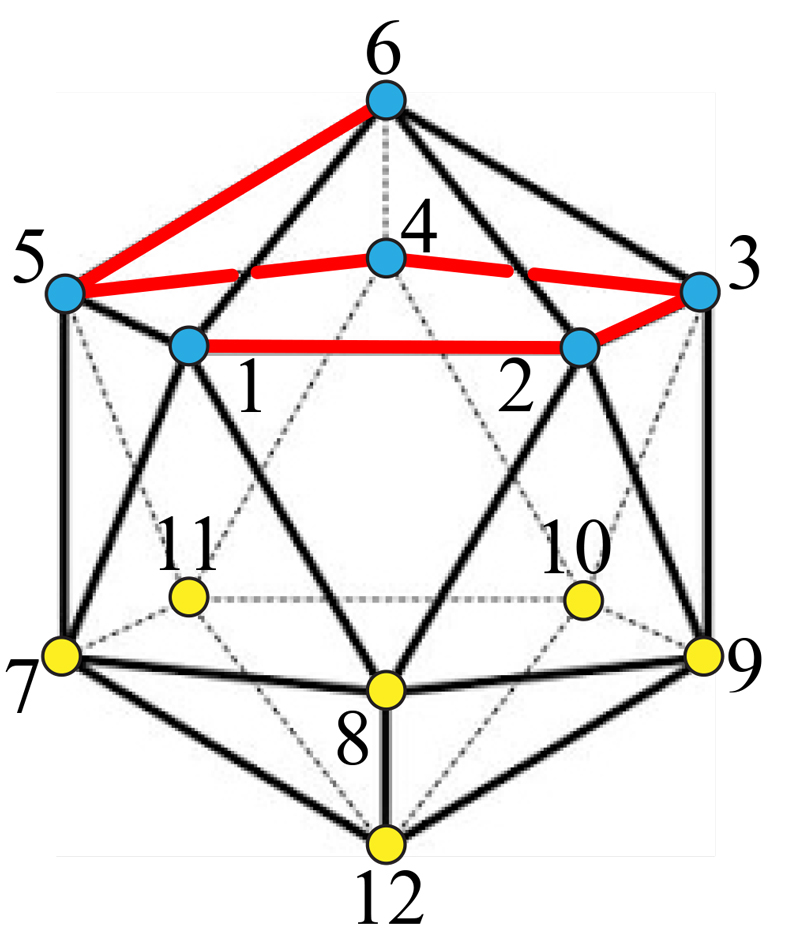}
\caption{Labels $i$ for vertices $v_i$. Sequential merge path in red.
Cf.~Fig.~\protect\figref{IcosahedronLabels_v1}.}
\figlab{IcosahedronLabels_v2}
\end{figure}

Each of the five merges $i$ is accomplished by inserting two
copies of a triangle $T_i$, whose apex is the merge vertex $m_i$.
By convention $m_0=v_1$.
For $i=1,\ldots,5$,
$$
m_{i-1} \; + \; v_{i+1} \; \to \; m_i  \;,\; T_i \; \textrm{along} \;
g_i \;.
$$
Here $m_5$ is special for the icosahedron
in that $T_5$ is actually an infinite parallelogram
sending $m_5$ off to $z=+\infty$,
because the sum of the curvatures of $v_1,\ldots,v_6$ is exactly $2\pi$.

Now we describe the steps of the algorithm, 
referring to Fig.~\figref{IcosaCuts3D} throughout.
\begin{figure}[htbp]
\centering
\includegraphics[height=0.8\textheight]{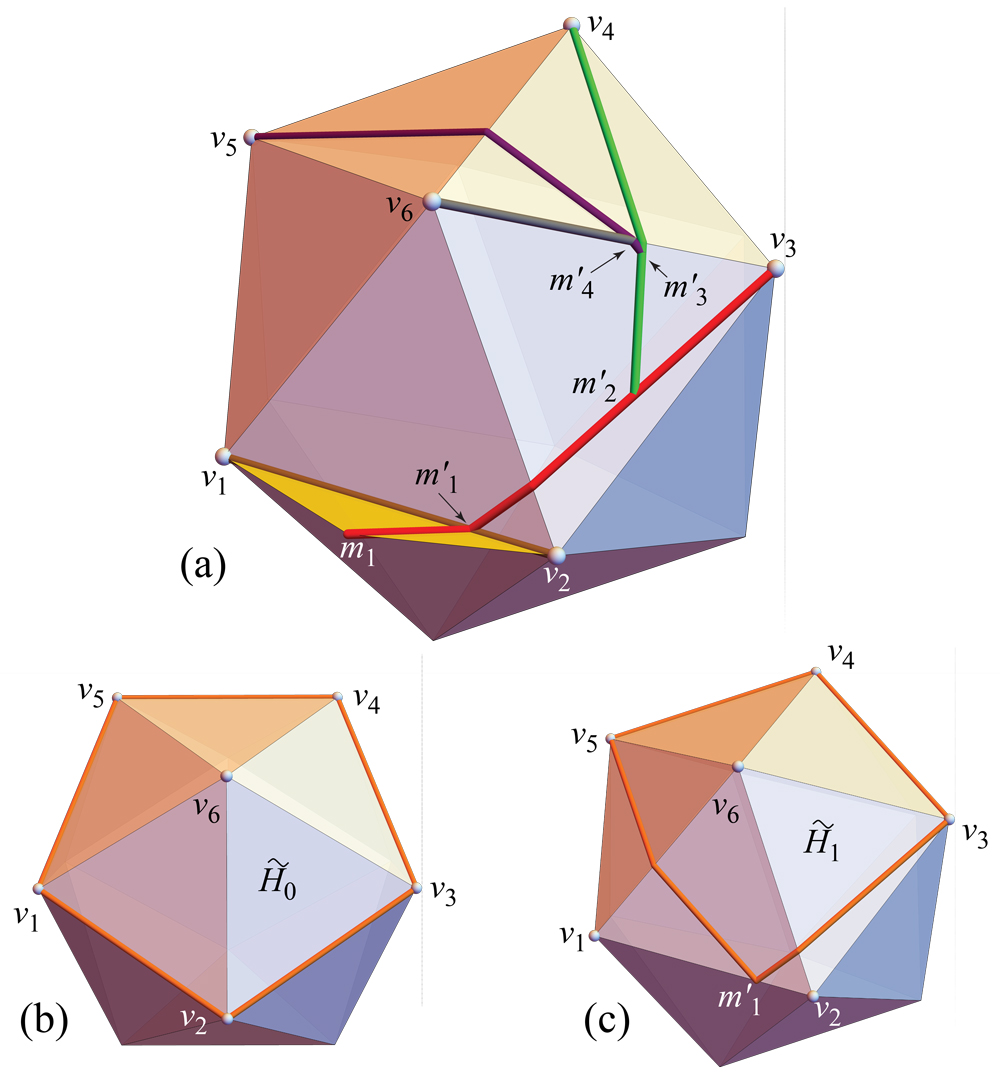}
\caption{(a)~Five geodesic slits $\g_i$ on $P$,
each entering $P$ from $T_i$ at $m'_i$.
(b,c)~$\tilde H_0$ and $\tilde H_1$.
Cf.~Fig.~\protect\figref{Icosahedron_v1}.}
\figlab{IcosaCuts3D}
\end{figure}

With $V=V_0$ on $P=P_0$,
the initial convex hull containing $V_0$ is
$\rconv (V_0) = H_0 = \tilde{H}_0$,
with $\partial H_0 = v_1 v_2 v_3 v_4 v_5$.
See Fig.~\figref{IcosaCuts3D}(b).

The first step, $i{=}1$, merges two consecutive vertices of $\partial H_0$,
say $v_1$ and $v_2$ (as illustrated).
So the slit segment is $g_1 = v_1 v_2 = m_0 v_2$,
where $v_1=m_0$.
Doubled triangles $T^2_1$ are inserted along $g_1$, which flatten
$v_1$ and $v_2$, and introduces a new vertex $m_1$ at the apex of the
triangles. The result is a new polyhedron $P_1$.
We cannot easily illustrate $P_1$
(because of the nonconstructive nature of AGT),
but it can be imagined from 
the (yellow) triangle $T_1$ shown attached to $v_1 v_2$ in Fig.~\figref{IcosaCuts3D}(a).

On $P_1$, $V_1=\{m_1,v_3,v_4,v_5,v_6\}$.
$H_1 = \rconv(V_1)$,  with $\partial H_1 = m_1 v_3 v_4 v_5$,
with $m_1 v_3$ a geodesic from the apex $m_1$ of $T_1$,
to $m'_1$ where it enters $P$, and then on $P$ to $v_3$.
$\tilde{H}_1$ has boundary $\partial \tilde{H}_1 =  m'_1 v_3 v_4 v_6$,
as shown in Fig.~\figref{IcosaCuts3D}(c).
Notice that $\tilde{H}_1 \subset \tilde{H}_0$ on $P$,
but $H_1 \not\subset H_0$ on $P_1$: $m_1 \not\in H_0$.

$P_2$ is obtained by inserting $T^2_2$ triangles
along the slit $g_2$ connecting $m_1$ to $v_3$,
red in Fig.~\figref{IcosaCuts3D}(a).
These triangles flatten $m_1$ and $v_3$,
leaving $V_2=\{ m_2, v_4, v_5, v_6 \}$,
and $H_2 = \rconv(V_2)$,  with $\partial H_2 = m_2 v_4 v_5 $
and $\partial \tilde{H}_2 =  m'_2 v_4 v_5$.

The process continues, with the final set of slits as 
depicted in Fig.~\figref{IcosaCuts3D}(a).
Note that, on $P$, each slit $\tilde{g}_i$
starts from a point $m'_{i-1}$ on $\tilde{g}_{i-1}$ and ends at $v_{i+1}$.


\section{Spiraling Algorithm for $\rconv$}
\seclab{rconvAlg}

\paragraph{First Step.}
As described above in the icosahedron example, 
$H_0 = \rconv(V)$ is computed via the algorithm
described in Section~\secref{WAlgorithm}.
Two vertex endpoints of an edge of $\partial H_0$, $v_1 = m_0$ and $v_2$,
are merged along $g_1=m_0 v_2$.
Triangles $T^2_1$ are inserted along $g_1$, flattening $v_1$ and $v_2$
and introducing a merge vertex $m_1$.
Now $V_1 = ( V \setminus \{v_1, v_2\} ) \cup \{m_1\}$
and $H_1 = \rconv(V_1)$.

\paragraph{General Step.}
We now describe the general step, referring to Fig.~\figref{mi_on_hull}.
Suppose step $i-1$ has been completed.
$H_{i-1}=\rconv( V_{i-1} )$ is on $P_{i-1}$, with
$V_{i-1} = \{ m_{i-1}, v_{i+1}, \ldots \}$.
The merge vertex $m_{i-1}$ is a vertex of $\partial H_{i-1}$,
and $v_{i+1}$ is the next vertex of that hull boundary,
counterclockwise.

\begin{figure}[htbp]
\centering
\includegraphics[width=0.6\linewidth]{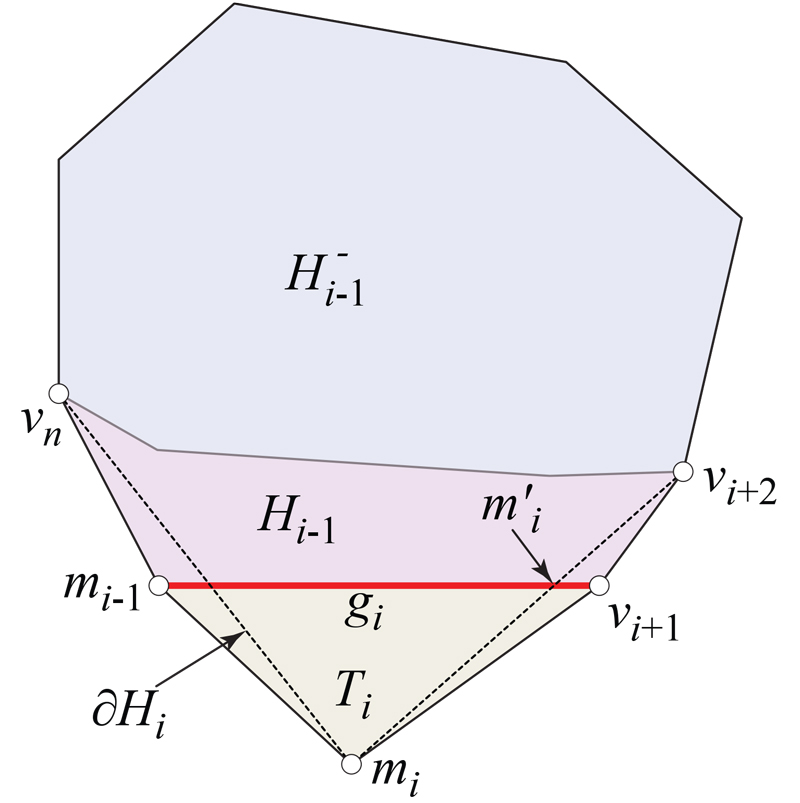}
\caption{$H_{i-1}$ and $H_i$ on $P_i$.}
\figlab{mi_on_hull}
\end{figure}

For the $i$-th step, we consider two cases:

\emph{Case~1:} $m_{i-1} \neq v_{i+1}$.
Then $g_i = m_{i-1} v_{i+1}$ is an edge of $\partial H_{i-1}$.
Triangles $T^2_i$ are inserted along $g_i$, 
flattening $m_{i-1}$ and $v_{i+1}$, and forming $P_i$.
$V_i$ loses these two vertices and gains the new merge vertex $m_i$.
$H_i = \rconv( V_i )$.

\begin{figure}[htbp]
\centering
\includegraphics[width=0.35\linewidth]{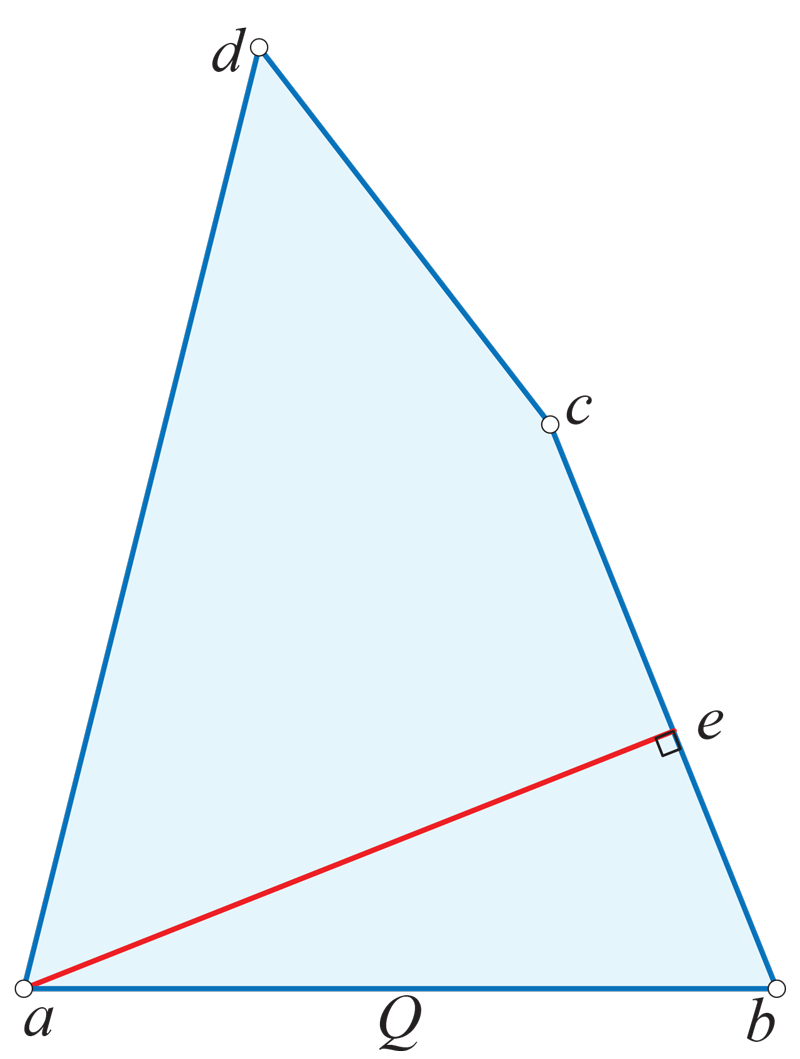}
\caption{Double-sided quadrilateral, $Q=(a,b,a)$. The path $(a,e,a)$ is a geodesic loop.
For $V=\{a,c,d\}$, $\partial \, \rconv(V) = (a,e,a)$.}
\figlab{GeosegsNonconvex_v2}
\end{figure}

\emph{Case~2:} $m_{i-1} = v_{i+1}$, 
hence $\partial H_{i-1}$ is a geodesic loop.
We have seen that this case can occur. 
For example, in Fig.~\figref{GeosegsNonconvex_v2}, $(a,e,a)$ is a geodesic loop. (See Example~\exref{Illuminating} for the proof.)
In such a case, $\partial H_{i-1}$ consists only of the vertex $m_{i-1}$, 
and a geodesic loop at $m_{i-1}$.
Clearly we cannot execute vertex-merging along such a loop,
so we need another strategy for this exceptional case.

Let $V'_{i-1}=V_{i-1} \setminus \{ m_{i-1} \}$. 
Construct $H'_{i-1}=\rconv V'_{i-1}$, 
and let $p$ be the point on $\partial H'_{i-1}$ closest to $m_{i-1}$,
say $p \in \g$, where $\g$ is a geoarc of  $\partial H'_{i-1}$.
Notice that either $p$ is a vertex of $V'_{i-1}$, or there is a unique geoseg from $m_{i-1}$ to $p$, which in particular is orthogonal to $\g$ at $p$.

If $p$ is a vertex of $V'_{i-1}$, take $g_i = m_{i-1} p$ and notice that $g_i$ intersects neither $\partial H_{i-1}$ nor $\partial H'_{i-1}$, other than at $ m_{i-1}$ and at $p$, respectively, by construction.

Assume now that $p$ is a flat point of $H'_{i-1}$, hence interior to $\g$.
Then take $v_{i+1}$ to be either endpoint-vertex of $\g$
(or the only vertex on $\g$ if $\g$ is a geodesic loop).
Next we show there exists a geoarc $g_i = m_{i-1} v_{i+1}$
intersecting neither $\partial H_{i-1}$, nor $\partial H'_{i-1}$, other than at $ m_{i-1}$ and at $v_{i+1}$, respectively.

\begin{figure}[htbp]
\centering
\includegraphics[width=0.4\linewidth]{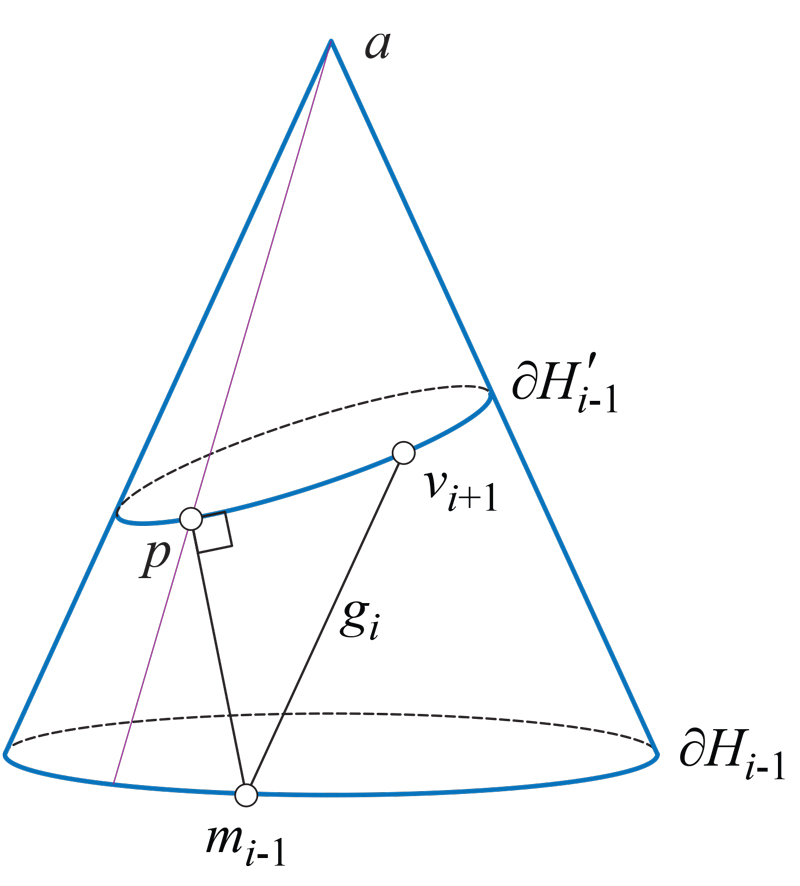}
\caption{On the cone $\Upsilon$, $g_i = m_{i-1} v_{i+1}$
crosses neither $\partial H_{i-1}$ nor $\partial H'_{i-1}$.
}
\figlab{ConeGeoLoop}
\end{figure}

Notice first that $V'_{i-1}$ is strictly interior to $H_{i-1}$
(because $\partial H_{i-1}$ only contains the one vertex $m_{i-1}$),
hence so is $H'_{i-1}=\rconv (V'_{i-1})$ 
(by Lemma~\lemref{out-segm-conv},
the interior of a convex set is convex).
Therefore, $\partial H_{i-1} \cap \partial H'_{i-1} = \varnothing$.

Because both $\partial H_{i-1}$ and $\partial H'_{i-1}$ are convex curves, we can merge all vertices of  $V'_{i-1}$ to obtain one apex $a$ of a cone $\Upsilon$.
It follows that both $\partial H_{i-1}$ and $\partial H'_{i-1}$ ``live on $\Upsilon$,''
in the sense that the closed region $A$ of $P_{i-i}$ they bound is isometric to a subset of 
$\Upsilon$ containing $a$. 
(For details see~\cite{ov-ceccc-14}.)
Refer to Fig.~\figref{ConeGeoLoop}.

Unfold $A$ by cutting it open along the cone generator 
through $ap$. Also denote by $A$ the result.
Then (the images of) $\partial H_{i-1}$ and $\partial H'_{i-1}$ are convex planar curves in $A$, and $m_{i-1} p \perp \g \subset \partial H'_{i-1}$. 
Then we see that $g_i = m_{i-1} v_{i+1}$
again intersects neither $\partial H_{i-1}$ nor $\partial H'_{i-1}$ except at its endpoints.

\begin{lm}
\lemlab{mi_on_hull}
The new merge vertex $m_i$ created at step $i$ is
on $\partial H_i$ (as opposed to strictly interior to $\partial H_i$).
\end{lm}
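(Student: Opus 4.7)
The plan is to establish $m_i \in \partial H_i$ by showing that every intrinsic neighborhood of $m_i$ on $P_i$ contains points outside $H_i$. Since $m_i \in V_i \subseteq H_i$ is immediate, this will suffice. I focus on Case~1, where $g_i$ is an edge of $\partial H_{i-1}$; the exceptional Case~2 requires a modified witness set, addressed at the end.

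The key construction is an auxiliary relatively convex set that witnesses the restriction of $H_i$ to one half of the inserted tent. The doubled triangle $T^2_i$ decomposes into two congruent halves $T^A_i$ and $T^B_i$, which share the apex $m_i$ and the two lateral seams. In Case~1, $T^A_i$ is glued to the bank of $g_i$ on the hull side (where the image of $H_{i-1}$ on $P_i$ lies), while $T^B_i$ is glued to the opposite, non-hull bank. I would form the set $S_A$ defined as the union of the image of $H_{i-1}$ on $P_i$ together with $T^A_i$, and verify two facts: (a) $V_i \subseteq S_A$, which follows because $V_i \setminus \{m_i\} \subseteq H_{i-1}$ and $m_i$ is the apex of $T^A_i$; and (b) $S_A$ is relatively convex on $P_i$. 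Minimality of the hull then forces $H_i \subseteq S_A$.

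The crux is verifying convexity of $S_A$. For two points $x,y$ in the image of $H_{i-1}$, the shortest path from $x$ to $y$ on $P_{i-1}$ was contained in $H_{i-1}$ and did not cross from the hull side to the non-hull side because $g_i \subset \partial H_{i-1}$. On $P_i$, any detour through the inserted tent $T^2_i$ is strictly longer: a detour via $T^A_i$ that enters and exits on the hull bank is strictly longer than its chord by the triangle inequality in the planar triangle $T^A_i$, whose apex angle $\pi - \o(m_i)/2$ is strictly less than $\pi$; a detour that additionally crosses the lateral seams into $T^B_i$ and exits on the non-hull bank must then return to $y$ through the tent again, making it even longer. Hence the shortest path on $P_i$ coincides with the one on $P_{i-1}$ and stays in $S_A$. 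The remaining cases, where $x$ or $y$ lies in $T^A_i$, are handled by the same reasoning since $T^A_i$ is itself isometric to a planar triangle attached along its base to $S_A$.

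With $H_i \subseteq S_A$, the hull is disjoint from the open interior of $T^B_i$. Now every intrinsic $\e$-ball around $m_i$ in $P_i$ contains an open wedge lying in the interior of $T^B_i$ of positive angle $\pi - \o(m_i)/2 > 0$, because $m_i$ is the shared apex of the tent. This wedge is disjoint from $H_i$, so no neighborhood of $m_i$ is contained in $H_i$; combined with $m_i \in H_i$, this yields $m_i \in \partial H_i$. The main obstacle I anticipate is Case~2, where $g_i$ is interior to $H_{i-1}$ rather than on its boundary, so both halves of the tent abut the image of $H_{i-1}$ and the straightforward $S_A$ need not be convex. There I would exploit the explicit construction of $g_i$ in the annulus $H_{i-1} \setminus H'_{i-1}$, orthogonal to $\partial H'_{i-1}$ at its endpoint $v_{i+1}$ and avoiding the interior of $H'_{i-1}$, to build an analogous one-sided witness set that still excludes an open wedge at $m_i$ from $H_i$.
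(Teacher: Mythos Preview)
Your overall strategy---exhibit a relatively convex witness set $S_A = H_{i-1}\cup T^A_i$ containing $V_i$ but missing the open interior of $T^B_i$---is sound, and in fact the paper proves exactly that $H_{i-1}\cup T_i$ is relatively convex on $P_i$ in the Nesting Lemma (Lemma~\lemref{Hi_Nesting}) that follows. So your route is correct but anticipates later work.

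There is, however, a genuine gap in your direct convexity argument for $S_A$. You rule out detours that enter the tent and exit on the hull bank, and detours that cross into $T^B_i$ and exit on the non-hull bank \emph{provided they must re-enter through the tent}. But a path that exits onto the non-hull bank can continue through the vertex-free region between $\partial H_{i-1}$ and $Q$ and re-enter $H_{i-1}$ across some other arc of $\partial H_{i-1}\setminus g_i$, never revisiting the tent. Your length comparison does not cover this case. The clean fix is not to chase geodesics at all but to verify $\a\b$-convexity of $\partial S_A$ at its nodes---at $m_i$ the interior angle is $\pi-\o(m_i)/2<\pi$, and at the flattened $m_{i-1},v_{i+1}$ the interior angle is $\a+\o/2\le\pi$ by the Visibility computation---and then invoke Lemma~\lemref{Converse_ab}. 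This is precisely how the paper argues in the Nesting Lemma.

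For the present lemma the paper takes a shorter path that avoids proving convexity of $S_A$ altogether. It works from below rather than above: set $H^-_{i-1}=\rconv(V_{i-1}\setminus\{m_{i-1},v_{i+1}\})$. Since $g_i\subset\partial H_{i-1}$ and the tent is attached along $g_i$, the apex $m_i$ is strictly exterior to $H^-_{i-1}$. Then $H_i=\rconv(H^-_{i-1}\cup\{m_i\})$, and a point added from outside a convex set must land on the boundary of the resulting hull (equivalently, $m_i$ is a vertex of $P_i$, hence extreme in $H_i$, hence on $\partial H_i$). Your approach yields more information---essentially the nesting $\tilde H_i\subset\tilde H_{i-1}$---at the cost of needing the harder convexity verification; the paper separates the two and handles this lemma with the cheaper argument.
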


\begin{proof}
The algorithm constructs $H_i$ by removing
the two flattened vertices $m_{i-1}$ and $v_{i+1}$ and adding the new merge vertex $m_i$.
Formally,
$$
H_i = \rconv\left( \, (V_{i-1}\setminus \{m_{i-1},v_{i+1}\}) \cup \{m_i\}  \right) \;.
$$
Let 
$H^-_{i-1} = \rconv( V_{i-1} \setminus \{m_{i-1},v_{i+1}\} )$,
i.e., 
the relative convex hull of $V_{i-1}$ with the flattened vertices removed, but the new merge
vertex $m_i$ not yet added.
See Fig.~\figref{mi_on_hull}.
The geoarc $g_i = m_{i-1} v_{i+1}$ is strictly exterior to $H^-_{i-1}$,
because it is included in $\partial H_{i-1}$.
Now attach the doubled $T^2_i$ triangle to $g_i$, with triangle apex $m_i$.
It is clear that $m_i$ is also strictly exterior to  $H^-_{i-1}$:
some points on a geoseg
from $m_i$ to a point in $H^-_{i-1}$ must
be exterior to $H^-_{i-1}$. So $m_i \not\in H^-_{i-1}$.
Therefore, with $H_i = \rconv( H^-_{i-1} \cup m_i )$, it must be that
$m_i$ is a vertex of $\partial H_i$.

For the exceptional geodesic-loop case of the algorithm, 
the conclusion follows by construction.
\end{proof}

Notice that $H_{i-1}$ is not necessarily closed, which in some circumstances leads to $\partial H_{i-1}$ being not strictly $\a\b$-convex,
but it remains nevertheless $\a\b$-convex. 
See Lemma~\lemref{ag-is-ab} and Example~\exref{no-strict-ab-conv}
concerning $\a\b$-convexity.

\begin{lm}[Visibility]
\lemlab{mi_visib}
Let $\g$ be the geoarc
connecting the new merge vertex $m_i$ created at step $i$
to the next vertex $v_{i+2}$ to be merged.
Then $\g$ crosses the geoarc $g_i$ at a point $\{m'_i\} = g_i \cap \g$.
In a sense, $v_{i+2}$ is ``visible" to $m_i$ through $g_i$.
\end{lm}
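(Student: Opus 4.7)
The plan is to exploit the local topology of the vertex-merge operation. When $P_i$ is formed from $P_{i-1}$ by cutting along $g_i$ and inserting the pair of triangles $T^2_i$, the merge vertex $m_i$ sits as the common apex of the two triangles, strictly interior to $T^2_i$, while the complement $P_i\setminus T^2_i$ is isometric to $P_{i-1}$ with the cut along $g_i$ removed. Topologically, $T^2_i$ is a disk whose boundary is identified in $P_i$ with the geoarc $g_i$, so any continuous path on $P_i$ from $m_i$ to a point of $P_i\setminus T^2_i$ must cross $g_i$.

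First I would verify that $v_{i+2}\in P_i\setminus T^2_i$. By the algorithm, $v_{i+2}$ is the ccw neighbor of $m_i$ along $\partial H_i$, so $v_{i+2}\in V_i\setminus\{m_i\}=V_{i-1}\setminus\{m_{i-1},v_{i+1}\}$. In Case~1, the three excluded points $m_{i-1}$, $v_{i+1}$, $m_i$ are precisely the vertices of each triangle in $T^2_i$; since $v_{i+2}$ is a vertex of $P_i$ distinct from these three and $g_i$ carries no interior vertices, $v_{i+2}\notin T^2_i$. (In Case~2 the construction of $v_{i+2}$ via the auxiliary hull $H'_{i-1}$ again places $v_{i+2}$ outside $T^2_i$.) Applying the topological observation yields $\g\cap g_i\neq\varnothing$.

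Next I would show that $\g\cap g_i$ is a single point $m'_i$. The portion of $\g$ initially inside $T^2_i$ lies in one of the two flat triangles $T_i$, where $\g$ is a straight segment emanating from the apex $m_i$; such a segment meets the opposite side (the base, which is a copy of $g_i$) transversely at exactly one point, which I name $m'_i$. After exiting at $m'_i$, $\g$ continues as a geodesic in $P_i\setminus T^2_i$ toward $v_{i+2}$.

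The main obstacle will be ruling out a second crossing, or the degenerate possibility that $\g$ shares a positive-length subarc with $g_i$. A second crossing would force $\g$ to re-enter $T^2_i$ through $g_i$ and then either reach the apex $m_i$ (contradicting $\g$ terminating at $v_{i+2}\neq m_i$, since geodesics do not branch through vertices) or traverse $T^2_i$ and exit again; but $\g$ is a single edge of the simple geodesic polygon $\partial H_i$ whose other vertices all lie in $P_i\setminus T^2_i$, and a careful application of the $\a\b$-convexity of $\partial H_i$ from Theorem~\thmref{rconvV} excludes such excursions. The subarc scenario is excluded because $\g$ starts at $m_i\notin g_i$, so the two geoarcs disagree in a neighborhood of $m_i$ and hence cannot coincide on any positive-length interval.
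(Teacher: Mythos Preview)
Your topological argument for the existence of the crossing is correct and is a genuine alternative to the paper's approach. You observe that $T^2_i$ is a topological disk on $P_i$ with $m_i$ in its interior (as a point of the surface) and $v_{i+2}$ in its complement, so any path between them must cross $\partial T^2_i$. One small imprecision: on $P_i$ the boundary of $T^2_i$ is not ``the geoarc $g_i$'' but the closed curve formed by the \emph{two banks} of $g_i$, joined at the now-flat points $m_{i-1}$ and $v_{i+1}$. This does not affect your argument, since either bank projects to $\tilde g_i$ on $P$.

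The paper proceeds quite differently. It performs an angle computation at the (now flat) endpoint $v_{i+1}$ of $g_i$: using the $\a\b$-convexity of $\partial H_{i-1}$ ($\a_{i+1}\le\b_{i+1}$) together with $\o_{i+1}=2\pi-(\a_{i+1}+\b_{i+1})$, it shows
\[
\angle m_i\,v_{i+1}\,v_{i+2} \;=\; \a_{i+1}+\tfrac12\o_{i+1} \;\le\; \pi,
\]
and analogously at $m_{i-1}$. This convexity of the angle between the triangle side $m_iv_{i+1}$ and the next hull edge $v_{i+1}v_{i+2}$ is what forces $\g$ to pass through the $H_{i-1}$-side triangle $T_i$ and hence through $g_i$. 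The payoff of the paper's route is that this very angle inequality is later cited in the proof of the Nesting Lemma (Lemma~\lemref{Hi_Nesting}), to show that $H_{i-1}\cup T_i$ has an $\a\b$-convex boundary. Your topological argument, while cleaner for the bare statement, does not deliver that inequality, so you would need to supply it separately when you reach the Nesting Lemma.

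On uniqueness: your final paragraph is hand-wavy, but note that the paper does not actually prove uniqueness inside this lemma either---it is established separately in Lemma~\lemref{consec_cuts} (applied with the index shifted by one), via a Gauss--Bonnet digon argument. So your extra effort there is not needed for the present lemma.
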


\begin{proof}
We use the $\a\b$-convexity property of $\partial H_{i-1}$:
the angles $\a_{i+1}, \b_{i+1}$  at the $v_{i+1}$ endpoint of the 
geoarc $g_i = m_{i-1} v_{i+1}$ satisfy $\a_{i+1} \le \b_{i+1}$.

We aim to show that, after insertion of the double triangle along $g_i$,
the new angle at $v_{i+1}$, $\angle m_i v_{i+1} v_{i+2}$, is convex, which proves
the claim of the lemma.

So we aim to prove that $\a_{i+1} + \frac{1}{2} \o_{i+1} \le \pi$;
the factor $\frac{1}{2}$ appears
because that is the angle of $T_i$'s corner at $v_{i+1}$.
To simplify the derivation, we suppress the index $i+1$;
so the goal is $\a + \frac{1}{2} \o  \le \pi$.
Because the triangle insertion flattens $v_{i+1}$, we know that
$\o = 2 \pi - (\a + \b)$.
So we get equivalent inequalities:
\begin{align*}
\a + \tfrac{1}{2} \o & \le \pi \\
\a + \pi - \tfrac{1}{2} \a  - \tfrac{1}{2} \b & \le \pi \\
\tfrac{1}{2} \a  & \le \tfrac{1}{2} \b
\end{align*}
which holds at $v_{i+1}$ by $\a\b$-convexity.

The same argument applies to the other endpoint $m_{i-1}$ of $g_i$.
\end{proof}

\noindent
This lemma verifies that the drawing in Fig.~\figref{mi_on_hull} is a correct depiction.
If this lemma did not hold, then $\g = m_i v_{i+2}$ would not necessarily cross $g_i$.


\section{Proof: Slit Graph is a Tree}
\seclab{ProofSlitTree}

The next lemma is the counterpart to the nesting
property of the 2D spiral algorithm, previously illustrated in Fig.~\figref{SpiralNested_n20_s4}.

\begin{lm}[Nesting]
\lemlab{Hi_Nesting}
$\tilde{H}_i \subset \tilde{H}_{i-1}$.
\end{lm}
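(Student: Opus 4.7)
The plan is to exhibit a relatively convex \emph{enveloping} set $K_i \subset P_i^\#$ containing $V_i$ whose restriction to $P$ is (essentially) $\tilde{H}_{i-1}$; the minimality of $H_i = \rconv(V_i)$ will then force $H_i \subset K_i$ and hence $\tilde{H}_i \subset \tilde{H}_{i-1}$. Concretely, I would set $K_i = H_{i-1} \cup T^+$, where $T^+$ is the one of the two triangles constituting the double insert $T^2_i$ whose base is glued to the bank of $g_i$ lying on the $H_{i-1}$-side. In the standard Case~1 of the algorithm, $g_i$ is an edge of $\partial H_{i-1}$, so ``the $H_{i-1}$-side'' is unambiguous; in the exceptional geodesic-loop Case~2, the cone construction used to produce $g_i$ in Section~\secref{rconvAlg} still singles out a definite side on which $H'_{i-1}$ sits, so the same definition applies.

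The routine parts of the argument would then be: (1)~$V_i \subset K_i$, because $V_i \setminus \{m_i\} = V_{i-1} \setminus \{m_{i-1}, v_{i+1}\} \subset V_{i-1} \subset H_{i-1}$ while $m_i$ is the apex of $T^+ \subset K_i$; (2)~once $K_i$ is shown to be relatively convex on $P_i^\#$, the inclusion $H_i \subset K_i$ is immediate from the definition of $\rconv$ as an intersection of relatively convex sets; (3)~the passage to $P$ gives $\tilde{H}_i \subset K_i \cap P = \tilde{H}_{i-1} \cup (T^+ \cap P)$, and since the interior of $T^+$ is inserted surface disjoint from $P$ while $T^+ \cap P$ sits on $\tilde{g}_i \subset \partial\tilde{H}_{i-1}$, the nesting follows (up to boundary points, which can be absorbed using Lemma~\lemref{out-segm-conv}(i) or by taking closures).

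The main obstacle is step~(2): verifying the relative convexity of $K_i$. I would prove this directly by case analysis on a geoseg $\gamma$ joining two points $a, b \in K_i$ on $P_i^\#$, bypassing Lemma~\lemref{Converse_ab} --- which in fact is not directly applicable here because at the new node $m_i$ of $\partial K_i$ one has $\a_{m_i} = \b_{m_i} = \pi - \o_{m_i}/2$, so $\a\b$-convexity at $m_i$ is non-strict. When $a, b \in H_{i-1}$, the digon insertion $T^2_i$ can only lengthen paths that cross it and cannot create shortcuts, so the geoseg on $P_i^\#$ coincides with a geoseg on $P_{i-1}^\#$, which stays in $H_{i-1}$ by relative convexity of the latter. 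When $a, b \in T^+$, $\gamma$ is a straight segment in the flat triangle $T^+$ and stays in $T^+$ by planar convexity. The mixed case $a \in H_{i-1}$, $b \in T^+$ forces $\gamma$ to cross $\partial T^+$; the crossing must occur at the base rather than a lateral side, because a lateral excursion into $T^-$ strictly lengthens the path, so $\gamma$ decomposes into two subpaths lying respectively in $H_{i-1}$ and $T^+$, both inside $K_i$. I expect the exceptional loop case to require only careful bookkeeping with the cone unfolding and the fact that $g_i$ was chosen to meet $\partial H_{i-1}$ only at its endpoints, not a new idea.
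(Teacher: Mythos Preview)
Your plan coincides with the paper's: it too introduces the auxiliary set $H_{i-1}\cup T_i$ (your $K_i$), argues it is relatively convex and contains $V_i$ (via the intermediate hull $H^+_i=\rconv(V^+_i)$), and then reads off $\tilde H_i\subset\tilde H^+_i=\tilde H_{i-1}$. The paper obtains convexity of $H_{i-1}\cup T_i$ by checking the angles at the base corners $m_{i-1},v_{i+1}$ (via the Visibility Lemma) and then citing Lemma~\lemref{Converse_ab}; it does not discuss the apex $m_i$ at all. So your observation that $\a_{m_i}=\b_{m_i}=\pi-\o_{m_i}/2$ and hence that the strict-$\a\b$ hypothesis of Lemma~\lemref{Converse_ab} is not met at this gs-node is a genuine refinement of the paper's argument.

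However, your proposed direct verification inherits the same difficulty. In the case $a,b\in T^+$, take $a$ on the lateral side $m_im_{i-1}$ and $b$ on $m_iv_{i+1}$: the cone at $m_i$ has total angle $2\pi-\o_{m_i}$ split equally between $T^+$ and $T^-$, so the reflected segment through $T^-$ has exactly the same length as the one through $T^+$. Thus the closed $K_i$ is \emph{not} ag-convex, and ``planar convexity of $T^+$'' does not settle this case. (Your first case also needs the statement ``the insert creates no shortcuts between points of $H_{i-1}$'' to be proved, not just asserted; this is the real content, since Example~\exref{Quad} shows a merge can destroy convexity of the original set.) The repair is along the lines you already gesture at: for points in the \emph{interior} of $K_i$ the angular span inside $T^+$ is strictly smaller than through $T^-$, so $\mathring K_i$ is convex; then use Lemma~\lemref{out-segm-conv}(i) to adjoin $m_i$, and conclude by noting that the only part of $\overline{K_i}\setminus K_i$ meeting $P$ lies on $\tilde g_i\subset\partial\tilde H_{i-1}$, which is harmless for the nesting statement on $P$.
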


It may be useful to keep in mind the situation on $P_i$ in Fig.~\figref{mi_on_hull},
and the $\tilde{H}_i$ examples on $P$ in Fig.~\figref{IcosaCuts3D}(bc),
which shows $\tilde{H}_1 \subset \tilde{H}_0$.
With some abuse of notation in the proof, 
we will identify objects on $P_{i-1}$ with their image on $P_i$ and vice-versa.

\begin{figure}[htbp]
\centering
\includegraphics[width=0.6\linewidth]{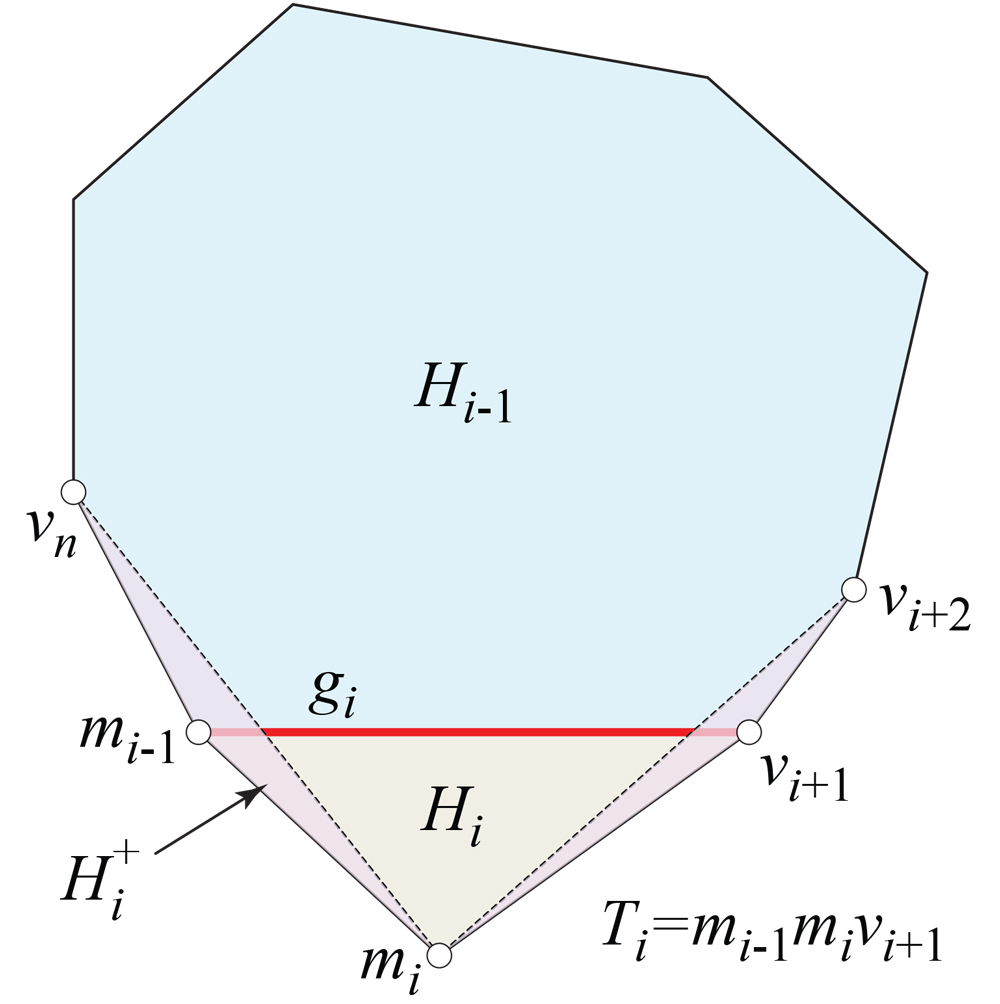}
\caption{$H_i \subset H^+_i$.}
\figlab{HHH}
\end{figure}

\begin{proof}
The proof is in two parts.
We show first that $\tilde{H}_i  \subset \tilde{H}_{i-1}$.
Set $V^-_{i-1}= V_{i-1} \setminus \{m_{i-1},v_{i+1}\}$, i.e.,
$V_{i-1}$ without the two vertices that will form the next vertex-merge insertion.
To keep track of the notation, we
refer to Fig.~\figref{HHH} (a variation of Fig.~\figref{mi_on_hull})
and this display, where we use ``$-$" to 
indicate ``missing" vertices 
among the three $m_{i-1}, m_i, v_{i+1}$:
\begin{align*}
\mbox \;&\;\; V^-_{i-1} = \{ v_{i+2}, \ldots, v_n \} \\
H_{i-1} = \rconv( V_{i-1} ) \;,&\;\; V_{i-1} = \{ m_{i-1}, -, v_{i+1} \} \cup V^-_{i-1} \\
H_{i} = \rconv( V_{i} ) \;,&\;\;  V_{i} = \{ -, m_i, -, \} \cup V^-_{i-1} \\
H^+_i = \rconv( V^+_i ) \;,&\;\;  V_{i} = \{ m_{i-1}, m_i, v_{i+1} \} \cup V^-_{i-1}
\end{align*}
Set $H^+_i$ as above.
Then as Fig.~\figref{HHH} shows, $H_i \subset H^+_i$ and thus $\tilde H_i \subset \tilde{H}^+_i$.

\medskip
Denote by $T_i= \triangle m_{i-1} m_i v_{i+1}$ one of the two inserted triangles along $g_i$, 
the one sharing $g_i$ in common with $H_{i-1}$.

For the second part of the proof, we next show
that, on $P_i$, $H^+_i = H_{i-1} \cup T_i$.
First, notice that $H_{i-1} \cup T_i \subset H^+_i$ 
(by Lemma~\lemref{conv-split} on the hull of a set partition),
so $\rconv (H_{i-1} \cup T_i) \subset H^+_i$.
Second, the $\a\b$-convexity of $\partial H_{i-1}$ at $m_{i-1}$ and $v_{i+1}$ on $P_{i-1}$ shows that, on $P_i$, 
the two sides of $T_i$ incident to $m_i$ either extend two boundary arcs of  $H_{i-1}$ 
(if the $\a\b$-convexity is not strict),
or they make with the respective boundary arcs angles $<\pi$ towards $H_{i-1} \cup T_i$ 
(if the $\a\b$-convexity is strict);
see Lemma~\lemref{mi_visib} (Visiblity).
Therefore, $H_{i-1} \cup T_i$ is a convex set
(by Lemma~\lemref{Converse_ab})
containing $V^-_{i-1} \cup \{m_{i-1}, m_i, v_{i+1}\}$,
hence  $H_{i-1} \cup T_i \supset  \rconv ( V^-_{i-1} \cup \{m_{i-1}, m_i, v_{i+1}\} )= H^+_i$.

Finally, since $H^+_i = H_{i-1} \cup T_i$ and the 
image on $P$ of $T_i$ is a subarc of $\tilde g_i$, 
we have $\tilde{H}^+_i = \tilde {H}_{i-1}$.

In conclusion, $\tilde H_i \subset \tilde{H}^+_i=\tilde {H}_{i-1}$.
\end{proof}



\noindent
Notice that, unlike the planar case where $g_i$ only intersects $H_i$ in one point, $g_i \cap H_i = \{m_i\}$ (Fig.~\figref{SpSnap_mi}(b)),
on polyhedra $g_i$ intersects $H_i$ on a sub-segment (Fig.~\figref{mi_on_hull}).

\bs

Recall that, in our notation, $g_{i-1}$ is a curve (a geoarc) 
on $P_{i-2}$ and $g_i$ is a curve on $P_{i-1}$.

\begin{lm}[$g_i \cap g_{i-1}$]
\lemlab{consec_cuts}
On $P_{i-1}$, $g_i$ intersects the two images
(``banks") of $g_{i-1}$ at precisely one point.
Consequently, $\tilde{g}_i$ has an endpoint on $\tilde{g}_{i-1}$ and no other common point.
\end{lm}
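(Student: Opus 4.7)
On $P_{i-1}$, the doubled triangle $T_{i-1}^2$ inserted along $g_{i-1}$ forms a flat ``pocket'' bounded by the two banks of $g_{i-1}$; these two banks meet at the endpoints of $g_{i-1}$, namely $m_{i-2}$ and $v_i$, which are flat on $P_{i-1}$ (their original curvatures having been absorbed into $m_{i-1}$). The apex $m_{i-1}$ is the unique vertex of $P_{i-1}$ interior to this pocket. The plan is to prove that $g_i = m_{i-1} v_{i+1}$ meets $\partial(\text{pocket})$ in precisely one point; the statement about $\tilde g_i \cap \tilde g_{i-1}$ then follows immediately by identifying the two banks under the projection $P_{i-1}\to P$, since that single intersection becomes an endpoint of $\tilde g_i$ on $\tilde g_{i-1}$ and the remainder of $g_i$ lies in $P\setminus \tilde g_{i-1}$.

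\textbf{Existence of a crossing.} Because $g_i$ starts at $m_{i-1}$ (inside the pocket) and ends at $v_{i+1}$ (outside the pocket, in the $P$-part of $P_{i-1}$), the geoarc $g_i$ must cross the boundary at least once. Since $m_{i-1}$ is a vertex of $P_{i-1}$, $g_i$ emanates in a single direction from $m_{i-1}$, entering the interior of one specific triangle, say $T_{i-1}$. Within the flat triangle $T_{i-1}$, $g_i$ is a straight Euclidean segment issuing from the apex, and it exits $T_{i-1}$ through one of its three sides (the base, or one of the two lateral sides).

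\textbf{Uniqueness.} I will argue that $g_i$ exits through the base of $T_{i-1}$ at a single interior point $m'_{i-1}$, and then never returns to the pocket. For the exit: by Lemma~\lemref{mi_on_hull}, $g_i$ is an edge of the convex polygon $\partial H_{i-1}$ emanating from $m_{i-1}$, and its direction is fixed by $v_{i+1}$ being the next vertex of $\partial H_{i-1}$ in the counterclockwise order. An exit through a lateral side would pass smoothly into $T'_{i-1}$ and route $g_i$ around the apex, which (by the structure of $\partial H_{i-1}$ near $m_{i-1}$ and the $\a\b$-convexity guaranteed by Theorem~\thmref{rconvV}) would force $v_{i+1}$ to lie on the opposite side of the pocket from where it actually is. For no re-entry: by the Nesting Lemma~\lemref{Hi_Nesting}, the pocket lies within the region bounded by $\partial H_{i-1}$, so any second crossing of a bank would have to be followed by a third crossing back out; the resulting two corners of $g_i$ at the re-entry/re-exit points would violate the $\a\b$-convexity of $\partial H_{i-1}$ at the interior of an edge of $\partial H_{i-1}$.

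\textbf{Main obstacle.} The delicate step is ruling out oscillations between $T_{i-1}$ and $T'_{i-1}$ and re-entries from the $P$-side. I expect the cleanest closure is a direct angle-chase analogous to the Visibility Lemma~\lemref{mi_visib}, applied now at the endpoints $m_{i-2}$ and $v_i$: because these points are flat on $P_{i-1}$ and sit at the joins of the two banks, the $\a\b$-convexity of $\partial H_{i-2}$ at $m_{i-2}$ and $v_i$ (before insertion) translates into a convex-angle condition at the bank endpoints after insertion, forcing the straight-line portion of $g_i$ inside the pocket to cross one bank transversely at a single interior point, with the complementary portion of $g_i$ confined to the $P$-side.
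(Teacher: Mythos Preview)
Your setup and existence argument match the paper's, but your uniqueness argument has a genuine gap and misses the key idea the paper uses.

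The paper's proof is short and clean: assume two distinct crossings $x',x''$. A parity count (start inside the pocket, end outside) reduces to the case where both crossings lie on the \emph{same} bank, say $g'_{i-1}$. Then the subarc of $g_i$ between $x'$ and $x''$, together with the subarc of $g'_{i-1}$ between the same two points, bound a geodesic digon on $P_{i-1}$. By Gauss--Bonnet this digon must enclose positive curvature, hence a vertex of $P_{i-1}$. But there is no vertex in the relevant region: there is no vertex outside $H_{i-1}$ and inside $H_{i-2}$, and the only vertex outside $H_{i-2}$ and inside $H_{i-1}$ is $m_{i-1}$, which is an endpoint of $g_i$ and so not interior to the digon. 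Contradiction.

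Your argument instead tries to use $\a\b$-convexity of $\partial H_{i-1}$ and the Nesting Lemma, but the crucial step is not there. You write of ``two corners of $g_i$ at the re-entry/re-exit points''---yet $g_i$ is a geodesic arc and has no corners, so nothing about $\a\b$-convexity of $\partial H_{i-1}$ is violated by a re-entry; the crossing points are interior points of two distinct geodesics meeting transversally, not nodes of any convex polygon. Your ``exit through the base'' claim is asserted rather than proved, and your own ``Main obstacle'' paragraph concedes that the oscillation case is left to a hoped-for angle-chase. The Nesting Lemma constrains where $\tilde H_i$ sits on $P$, but does not by itself prevent a geodesic on $P_{i-1}$ from crossing a bank twice; for that you need a curvature obstruction, and the digon/Gauss--Bonnet argument is exactly that obstruction.
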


\begin{proof}
On $P_{i-1}$, $g_i$ joins the point $m_{i-1}$ to a vertex $v_{i+1}$.
But $m_{i-1}$ is inside the inserted region (a pair of triangles)
$T_{i-1}$, while $v_{i+1}$ is outside $T_{i-1}$.
Remember that the boundary of $T_{i-1}$ consists of two copies of  $g_{i-1}$, say $g'_{i-1}$ and $g''_{i-1}$.
So $g_i$ intersects those two copies, $g'_{i-1} \cup g''_{i-1}$, at least once.\footnote{
With yet another abuse of notation, we use the same symbol $g$ for an arc $g:I \to P$ 
(with $I \subset \mathbb{R}$ an interval) and its geometrical image on $P$.}

Assume, for the sake of contradiction, that there are two distinct points 
$$
x',x'' \in g_i \cap (g'_{i-1} \cup g''_{i-1}) \;.
$$

Assume first that $\{x' \} = g_i \cap g'_{i-1}$ and $\{x'' \}= g_i \cap g''_{i-1}$.
It follows that $g_i$ exits $T_{i-1}$ and enters it again, and so to reach $v_{i+1}$ it has to exit  $T_{i-1}$ again, crossing $g'_{i-1} \cup g''_{i-1}$ once more.
Therefore, we may assume next that both $x'$ and $x''$ 
arise from $g_i$ intersecting one $g_{i-1}$ image, say $g'_{i-1}$.
Hence $g_i$ and $g'_{i-1}$ determine a geodesic digon, which necessarily contains a vertex $v \in V$ (by the Gauss-Bonnet Theorem).
Since there is no vertex both outside $H_{i-1}$ and inside $H_{i-2}$, 
and there is no vertex both outside $H_{i-2}$ and inside $H_{i-1}$ other than $m_{i-1}$, a contradiction is obtained.

Therefore, $g_i$ intersects the two images of $g_{i-1}$ at precisely one point,
and thus we get that $| \tilde{g}_{i-1} \cap \tilde{g}_i |= 1$.
\end{proof}


\begin{lm}
The set of vertices reduces by one each iteration:
$|V_i| = |V_{i-1}|-1$, for $i > 1$.
\end{lm}

\begin{proof}
Clearly $|V_i| = |V_{i-1}|-1$, because
$V_i = V_{i-1} \setminus \{ m_{i-1},v_{i+1} \} \cup \{m_i\}$.
\end{proof}

\begin{lm}
$\tilde{\L}_i$ is a tree.
\end{lm}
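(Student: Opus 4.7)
The plan is to proceed by induction on $i$, closely paralleling the planar argument of Theorem~\thmref{SlitTree2D} but replacing its nesting property~(1) by the polyhedral Nesting Lemma~\lemref{Hi_Nesting}. The base case $i=1$ is immediate: $\tilde{\L}_1=\tilde{g}_1$ is a single geoarc, hence a tree. For the inductive step, assume $\tilde{\L}_{i-1}$ is a tree; it then suffices to show that $\tilde{g}_i$ meets $\tilde{\L}_{i-1}$ in exactly one point, and that this point is an endpoint of $\tilde{g}_i$.

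The first intersection is handled directly by Lemma~\lemref{consec_cuts}, which gives $\tilde{g}_i\cap\tilde{g}_{i-1}=\{m'_{i-1}\}$, an endpoint of $\tilde{g}_i$ lying on $\tilde{g}_{i-1}$. This attaches $\tilde{g}_i$ to the tree $\tilde{\L}_{i-1}$ at a single point and yields connectedness. The remaining task, the analogue of the planar property $s_j\cap H_j=\{m_j\}$ in the proof of Theorem~\thmref{SlitTree2D}, is to verify $\tilde{g}_i\cap\tilde{g}_j=\varnothing$ for every $j\le i-2$. The plan for this is to first establish the intermediate claim $\tilde{g}_j\cap\tilde{H}_j\subseteq\{m'_j\}$, then combine it with iterated application of Nesting ($\tilde{H}_{i-1}\subset\tilde{H}_{j+1}\subset\tilde{H}_j$) together with the inclusion $\tilde{g}_i\subset\partial\tilde{H}_{i-1}\subset\tilde{H}_{i-1}$, which would force $\tilde{g}_i\cap\tilde{g}_j\subseteq\{m'_j\}$. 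Finally, the point $m'_j$ lies in the interior of $\tilde{g}_{j+1}$, whereas the only point of $\tilde{g}_i$ already belonging to $\tilde{\L}_{i-1}$ is $m'_{i-1}\in\tilde{g}_{i-1}$, which is distinct from $m'_j$; so $m'_j\not\in\tilde{g}_i$ and hence $\tilde{g}_i\cap\tilde{g}_j=\varnothing$, as required.

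The main obstacle is rigorously establishing the intermediate claim $\tilde{g}_j\cap\tilde{H}_j\subseteq\{m'_j\}$. In the planar setting this was almost tautological, because $s_j$ is a straight edge of $\partial H_{j-1}$ and the hull is a planar polygon. In the polyhedral setting, the insertion of the doubled triangle $T^2_j$ changes the underlying polyhedron to $P_j$, and $H_j=\rconv(V_j)$ is recomputed intrinsically on this new polyhedron, so the claim requires delicate tracking of how $g_j$ sits relative to the new hull $\tilde{H}_j$ after passing back to $P$. I expect the rigorous argument to combine three ingredients: (i)~$g_j$ is an edge of $\partial H_{j-1}$ with $T_j$ glued along it and $m_j$ on the apex side; (ii)~Lemma~\lemref{mi_on_hull}, which places $m_j$ on $\partial H_j$; and (iii)~the $\a\b$-convexity of $\partial H_j$ at $m_j$ from Theorem~\thmref{rconvV}, together with the Visibility Lemma~\lemref{mi_visib}, which force $\partial H_j$ to meet the two banks of $g_j$ only at their common image $m'_j$ on $P$. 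Once this intermediate claim is in hand, the tree property of $\tilde{\L}_i$ follows exactly as in the planar case, completing the induction.
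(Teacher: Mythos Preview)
Your overall inductive scaffolding matches the paper exactly: base case $\tilde{\L}_1=\tilde{g}_1$, then Lemma~\lemref{consec_cuts} for the single attachment point on $\tilde{g}_{i-1}$, then disjointness $\tilde{g}_i\cap\tilde{g}_j=\varnothing$ for $j<i-1$. The divergence is entirely in how you propose to get that last disjointness.

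Your intermediate claim $\tilde{g}_j\cap\tilde{H}_j\subseteq\{m'_j\}$ is the direct analogue of the planar property $s_j\cap H_j=\{m_j\}$, and the paper explicitly warns (in the remark just after Lemma~\lemref{Hi_Nesting}) that this analogue \emph{fails} on polyhedra: ``unlike the planar case where $g_i$ only intersects $H_i$ in one point, $\ldots$ on polyhedra $g_i$ intersects $H_i$ on a sub-segment.'' So the single-point control you are hoping to prove, and which you correctly flag as the main obstacle, is simply not available; the ingredients you list (Lemmas~\lemref{mi_on_hull}, \lemref{mi_visib}, $\a\b$-convexity) do not force it.

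The paper's route avoids this entirely. Instead of bounding $\tilde{g}_j\cap\tilde{H}_j$, it bounds $\tilde{g}_j\cap\tilde{H}_{i-1}$: by Nesting, $\tilde{g}_j$ lies in $\partial\tilde{H}_{j-1}\subset\tilde{H}_{j-1}$, and its intersection with the smaller hull $\tilde{H}_{i-1}$ is either empty or a full edge of the simple polygon $\partial\tilde{H}_{i-1}$ (simplicity coming from Theorem~\thmref{rconvV}). Then a \emph{circular-order} argument on $\partial\tilde{H}_{i-1}$ finishes: the edge $\tilde{g}_{i-1}\cap\tilde{H}_{i-1}$ sits between $\tilde{g}_j\cap\tilde{H}_{i-1}$ and $\tilde{g}_i$ in the cyclic order of sides, so $\tilde{g}_j$ and $\tilde{g}_i$ cannot meet. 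This ``edge-of-the-boundary plus cyclic separation'' idea is the missing ingredient in your plan.
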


\begin{proof}
By Lemma \lemref{consec_cuts} above, $\tilde{g}_i$ has an endpoint on $\tilde{g}_{i-1}$,
and otherwise does not intersect $\tilde{g}_{i-1}$.
It remains to prove that the interaction of $\tilde{g}_i$ with the earlier
segments in $\tilde{\L}_i$ maintains the tree property.
In fact, we show that $\tilde{g}_i \cap \tilde{g}_j = \varnothing$ for $j < i-1$.

By our sequential-merge choice, $g_i$ is an edge $e=m_{i-1} v_{i+1}$ of $\partial H_{i-1}$,
and so $\tilde{g}_i$ is an edge $\tilde{e}$ of $\partial \tilde{H}_{i-1}$.
For example, in Fig.~\figref{IcosaCuts3D}(bc), 
$\tilde{g}_1=v_1 v_2$ is an edge of $\tilde{H}_0$,
and $\tilde{g}_2=m'_1 v_3$ is an edge of $\tilde{H}_1$.

From Lemma~\lemref{Hi_Nesting} (Nesting) we know that 
$\tilde{H}_j \supset \tilde{H}_{i-1}$ 
for $j < i-1$.

Therefore, $\tilde{g}_j \cap \tilde{H}_{i-1}$ is either the empty set, or an edge of $\tilde{H}_{i-1}$,
so it cannot intersect other edges of $\tilde{H}_{i-1}$ excepting those adjacent to it 
(by Theorem~\thmref{rconvV},
which established the simplicity of $\rconv$ boundaries).

Since the edge $\tilde{g}_{i-1} \cap \tilde{H}_{i-1}$ is between 
$\tilde{g}_j \cap \tilde{H}_{i-1}$ (not empty by Lemma~\lemref{LmSlitSegment})
and $\tilde{g}_i$
(``between'' with respect to the circular order of sides in $\partial \tilde{H}_{i-1}$),
$\tilde{g}_j$ cannot intersect $\tilde{g}_i$ for $j < i-1$.

For example, returning to Fig.~\figref{IcosaCuts3D},
when $i=3$ and $j=1$,
we have $\tilde{H}_{i-1}=\tilde{H}_2$,
$\tilde{g}_i=\tilde{g}_3=m'_2 v_4$,
$\tilde{g}_j=\tilde{g}_1=v_1 v_2$,
and indeed $\tilde{g}_1$ does not intersect $\tilde{g}_3$.

Therefore, we have shown that $\tilde{g}_i$ just intersects $\tilde{\L}_{i-1}$
at the one point, and so maintains the tree structure for $\tilde{\L}_i$.
\end{proof}

\bs

Putting together the above lemmas 
yields the following theorem, one of our primary goals.

\begin{thm}
\thmlab{SpiralAlg}
Let $Q$ be a simple closed quasigeodesic on the convex polyhedron $P$, and $V$ the set of vertices of $P$ enclosed by $Q$, to either side of $Q$.
Then the sequential 
vertex-merging algorithm detailed in Section~\secref{SpiralAlgorithm} results in a slit graph $\tilde{\L}$ that is a tree.
\end{thm}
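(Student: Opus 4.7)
The plan is to prove the theorem by induction on the merge index $i$, assembling the battery of lemmas already proven in this chapter. For the base case, $\tilde{\L}_0 = \varnothing$ is trivially a tree; for the inductive step, I assume $\tilde{\L}_{i-1}$ is a tree in $P$ with the nesting chain $\tilde{H}_0 \supset \tilde{H}_1 \supset \cdots \supset \tilde{H}_{i-1}$, and I must show that attaching the new slit $\tilde{g}_i$ yields a larger tree $\tilde{\L}_i$.

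First I would verify that the algorithm can proceed at step $i$: Theorem~\thmref{rconvV} guarantees that $\partial H_{i-1}$ is an $\a\b$-convex geodesic polygon, so either an edge $g_i = m_{i-1} v_{i+1}$ exists (Case~1), or the fallback geodesic-loop construction of Case~2 produces a suitable geoarc $g_i$. Lemma~\lemref{mi_on_hull} then places the new merge vertex $m_i$ on $\partial H_i$, so the induction can continue to the next step, and Lemma~\lemref{mi_visib} (Visibility) ensures that the subsequent arc $g_{i+1}$ will genuinely cross $g_i$, producing a well-defined attachment point $m'_i \in \tilde{g}_i$. Lemma~\lemref{Hi_Nesting} supplies the inductive nesting $\tilde{H}_i \subset \tilde{H}_{i-1}$ on $P$.

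Next I would verify the tree property for $\tilde{\L}_i = \tilde{\L}_{i-1} \cup \tilde{g}_i$. Lemma~\lemref{consec_cuts} gives $|\tilde{g}_i \cap \tilde{g}_{i-1}| = 1$, so $\tilde{g}_i$ attaches to its immediate predecessor at a single endpoint. To rule out intersections with earlier slits $\tilde{g}_j$ for $j < i-1$, observe that $\tilde{g}_i$ is an edge of $\partial \tilde{H}_{i-1}$ by construction, while $\tilde{g}_j$ is exterior to $\tilde{H}_j \supset \tilde{H}_{i-1}$ by Nesting, except possibly at one edge of $\partial \tilde{H}_{i-1}$. Because $\tilde{g}_{i-1}$ separates $\tilde{g}_i$ from $\tilde{g}_j$ in the circular boundary order of the simple polygon $\partial \tilde{H}_{i-1}$ (simplicity again from Theorem~\thmref{rconvV}), $\tilde{g}_j$ cannot reach $\tilde{g}_i$. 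Hence $\tilde{\L}_i$ acquires exactly one new edge attached at one new endpoint, preserving the tree structure.

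The main obstacle, and indeed where most of the real work already sits, is the nesting lemma (Lemma~\lemref{Hi_Nesting}) together with the degenerate geodesic-loop case of the algorithm. Nesting requires comparing hulls living on different ambient polyhedra $P_{i-1}$ and $P_i$ (which differ by the triangle insertion $T_i^2$), and the argument hinges on the auxiliary identity $H_{i-1} \cup T_i = H_i^+$, whose proof needs the $\a\b$-convexity of $\partial H_{i-1}$ at precisely the merged corners $m_{i-1}$ and $v_{i+1}$. In the Case~2 fallback, one must also check that the cone-unfolding geoarc produced from an orthogonal drop onto $\partial H'_{i-1}$ inherits enough of the properties of a true hull edge for the Nesting, Visibility, and single-point-intersection lemmas to apply uniformly. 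Once these are confirmed across both cases, the induction closes and $\tilde{\L}$ is the desired tree.
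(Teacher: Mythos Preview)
Your proposal is correct and follows essentially the same approach as the paper: the paper's proof of this theorem is literally just the sentence ``Putting together the above lemmas yields the following theorem,'' and you have correctly identified the relevant lemmas (Lemmas~\lemref{mi_on_hull}, \lemref{mi_visib}, \lemref{Hi_Nesting}, \lemref{consec_cuts}, and the unnamed lemma that $\tilde{\L}_i$ is a tree) and the inductive structure that assembles them. Your circular-order argument for ruling out intersections with earlier slits $\tilde{g}_j$ ($j<i-1$) matches the paper's proof of the ``$\tilde{\L}_i$ is a tree'' lemma almost verbatim.
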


The consequence of this theorem is that the slits do not disconnect 
a half -surface of $P$, i.e., it remains simply connected. This will be expanded upon
in the following chapter.


\section{Spiraling Algorithm for $Z(V)=\min\ell[V]$}
\seclab{Zalg}
The spiraling approach, developed in Section~\secref{rconvAlg} into Algorithm~1
based on the sets $H_i=\rconv(V_i)$, can be also based on the sets $M_i=R(Z(V_i))$.
And, because it can be that $W(V) \neq Z(V)$
(Example~\exref{minell_neq_rconv}), in 
these cases we get a different but analogous 
Algorithm~2.

The general description of Algorithm~2 is precisely the same as for Algorithm~1, as it only uses the geodesic polygon $\partial H_i$ which can be replaced by $Z$.
So we will retain the same notation, but always remember to replace $H_i$ by $M_i$.

For the icosahedron example in Fig.~\figref{IcosaCuts3D}, Section~\secref{IcosahedronExample1},
$\tilde H_i = \tilde M_i$ for all $i$, and the resulting slit tree is the same.

%
%

Recall that Fig.~\figref{minl_rconv} is an example where $Z \neq W$.
Using the $Z$-algorithm, the first merge would be along the $a b a'$ geoarc,
whereas the $W$-algorithm would start with, say, the $ac$ geoseg.
So the two vm-reductions would not be identical, and would result in different
slit trees. 

\begin{lm}
\lemlab{mi_on_Z}
The new merge vertex $m_i$ created at step $i$ is
on $Z_i$ (as opposed to strictly interior to $Z_i$).
\end{lm}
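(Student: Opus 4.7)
The plan is to emulate the proof of Lemma~\lemref{mi_on_hull}, replacing the relative convex hull $H$ by the region $M=R(Z)$ and the $\alpha\beta$-convexity characterization (Lemma~\lemref{Converse_ab}) by the ``$\beta\ge\pi$'' characterization of Lemma~\lemref{Converse_Lmin}. I would set $V'=V_{i-1}\setminus\{m_{i-1},v_{i+1}\}$ and let $M^-=R(Z(V'))$ on $P_{i-1}$ be the analog of $H^-_{i-1}$ in the $\rconv$-based argument.

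First I would verify that the geoarc $g_i=m_{i-1}v_{i+1}$ lies on $\partial M^-$ or strictly exterior to $M^-$. By Lemma~\lemref{V+1_subset_V}, removing vertices from $V_{i-1}$ to obtain $V'$ can only shrink the enclosed region, so $R(Z(V'))\subseteq R(Z(V_{i-1}))=M_{i-1}$; and because $g_i$ is an edge of $\partial M_{i-1}$, it cannot lie strictly interior to $M^-$. Next, after inserting the double triangle $T_i^2$ along $g_i$ to pass to $P_i$, the new apex $m_i$ lies strictly outside $M^-$ on $P_i$: the lateral sides of $T_i$ depart from $g_i$ into the pillow region, opposite to $M^-$ across $g_i$. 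Consequently $m_i\notin R(Z(V'))$ on $P_i$.

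The principal obstacle---and the heart of the argument---is to conclude that $m_i$ lies on $Z_i=Z(V_i)=Z(V'\cup\{m_i\})$ rather than strictly interior to $M_i$. I plan to argue by contradiction with a local shortening at $m_i$: if $m_i$ were strictly interior to $M_i$, then $Z_i$ would have to detour around $m_i$ inside the pillow region at positive distance. Since $m_i$ is a positive-curvature vertex, its cone neighborhood on $P_i$ has cone angle $2\pi-\omega_{m_i}<2\pi$; by unfolding this cone and applying the triangle inequality on the resulting flat sector, any simple polygonal arc enclosing $m_i$ at positive distance in the pillow can be strictly shortened by rerouting it through $m_i$. Combined with the uniqueness of the minimal-length enclosing polygon (Lemma~\lemref{Zunique}) and the characterization of Lemma~\lemref{Converse_Lmin} (all nodes have $\beta\ge\pi$), this contradicts the minimality of $\ell(Z_i)$ and forces $m_i$ to appear as a node of $Z_i$. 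The delicate point here is that the routing through $m_i$ must dominate any ``far bank'' route provided by the two copies $g_i'$ and $g_i''$ of $g_i$ created by the cut; establishing this quantitatively via the cone-unfolding is the step I expect to require the most care.

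Finally, the exceptional geodesic-loop case (when $m_{i-1}=v_{i+1}$) would be handled by the auxiliary construction inherited from Algorithm~1 via $H'_{i-1}$: the geoarc $g_i$ is chosen so that it remains an edge of the boundary of the current enclosure, and the same local argument at the new apex $m_i$ applies, yielding $m_i\in Z_i$.
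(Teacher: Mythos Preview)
Your overall plan matches the paper's almost exactly: set $V'=V_{i-1}\setminus\{m_{i-1},v_{i+1}\}$ and $M^-_{i-1}=R(Z(V'))$, observe that $g_i\subset\partial M_{i-1}$ lies outside $M^-_{i-1}$, insert the doubled triangle, and conclude that the new apex $m_i$ is strictly exterior to $M^-_{i-1}$. The paper then finishes in one sentence, exactly as in Lemma~\lemref{mi_on_hull}: since $m_i\notin M^-_{i-1}$ and $M_i=R\bigl(Z(M^-_{i-1}\cup\{m_i\})\bigr)$, the apex $m_i$ must be a node of $Z_i$. No cone-unfolding, no local shortening, no quantitative comparison of the two banks---the implication is treated as immediate, in direct parallel with the $\rconv$ case.

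Your proposed final step therefore works much harder than necessary, and it also carries a real gap. You assert that if $m_i$ were strictly interior to $M_i$ then ``$Z_i$ would have to detour around $m_i$ inside the pillow region at positive distance,'' and plan to shorten that detour locally at the cone point. But you have not excluded the possibility that $Z_i$ avoids the pillow entirely: a simple closed geodesic polygon on $P_i$ could in principle pass through the vertex-free strip between the outer bank $g_i''$ and $Q$, thereby enclosing the whole pillow (and hence $m_i$) without ever coming near $m_i$. In that configuration there is no local arc near $m_i$ for your triangle-inequality argument to act on, so the step you yourself flag as ``requiring the most care'' is not merely delicate but incomplete as stated. The paper's route avoids this altogether by arguing directly from $m_i\notin M^-_{i-1}$, mirroring Lemma~\lemref{mi_on_hull}.
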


\begin{proof} Similar to that for Lemma~\lemref{mi_on_hull}.
The algorithm constructs $M_i$ by removing
the two flattened vertices $m_{i-1}$ and $v_{i+1}$ and adding the new merge vertex $m_i$.
Formally,
$$
M_i = R\left( Z \left( \, (V_{i-1}\setminus \{m_{i-1},v_{i+1}\}) \cup \{m_i\}  \right) \right) \;.
$$
Let 
$M^-_{i-1} =R(Z ( V_{i-1} \setminus \{m_{i-1},v_{i+1}\} ))$;
see Fig.~\figref{mi_on_hull} with $N \to M$.
The geoarc $g_i = m_{i-1} v_{i+1}$ is strictly exterior to $M^-_{i-1}$,
because it is included in $Z_{i-1}$.
Now attach the doubled $T^2_i$ triangle to $g_i$, with triangle apex $m_i$.
It is clear that $m_i$ is also strictly exterior to  $M^-_{i-1}$.
Therefore, with $M_i = R(Z( M^-_{i-1} \cup m_i ))$, it must be that
$m_i$ is a vertex of $Z_i$.

For the exceptional geodesic-loop case of the algorithm, 
the conclusion follows by construction.
\end{proof}



\begin{lm}[Visibility]
\lemlab{mi_visib2}
Let $\g$ be the geoarc
connecting the new merge vertex $m_i$ created at step $i$
to the next vertex $v_{i+2}$ to be merged.
Then $\g$ crosses the geoarc $g_i$ at a point $\{m'_i\} = g_i \cap \g$.
In a sense, $v_{i+2}$ is ``visible" to $m_i$ through $g_i$.
\end{lm}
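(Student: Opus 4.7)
The plan is to follow essentially verbatim the proof of Lemma~\lemref{mi_visib}, replacing the boundary of the relative convex hull $\partial H_{i-1}$ with the minimum-length enclosing polygon $Z_{i-1}$ and invoking the strict $\a\b$-convexity established for $Z$ in Lemma~\lemref{Zstrictly} in place of the $\a\b$-convexity of $\partial H_{i-1}$ used in the Algorithm~1 proof.

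First I would fix notation. The geoarc $g_i = m_{i-1} v_{i+1}$ is an edge of $Z_{i-1}$ by construction of Algorithm~2, so at its endpoint $v_{i+1}$ there are two well-defined angles $\a_{i+1},\b_{i+1}$ (to the $M_{i-1}$-side and outside, respectively). By Lemma~\lemref{Zstrictly}, $Z_{i-1}$ is strictly $\a\b$-convex, so $\a_{i+1} < \b_{i+1}$ at every positive-curvature node, and in particular $\a_{i+1} < \pi \le \b_{i+1}$ at $v_{i+1}$ (and symmetrically at $m_{i-1}$, when $m_{i-1}$ has positive curvature; if $m_{i-1}$ lies on $Q$ then $\b=\pi$ already). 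To prove the lemma, it suffices to show that after inserting the doubled triangle $T_i^{2}$ along $g_i$, the angle at $v_{i+1}$ on the $M_i$-side, namely $\angle m_i v_{i+1} v_{i+2}$, remains strictly less than $\pi$; then the geoarc $\g$ from $m_i$ to $v_{i+2}$ is forced to enter the region on the $g_i$-side of $v_{i+1}$ and thus to cross $g_i$, producing the claimed intersection point $m'_i$.

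The key computation is identical to the one in Lemma~\lemref{mi_visib}. The vertex-merging flattens $v_{i+1}$, so the $T_i$-corner at $v_{i+1}$ contributes exactly $\tfrac{1}{2}\o_{i+1}$ of surface angle on the $M_i$-side. Hence we need
\[
\a_{i+1} + \tfrac{1}{2}\o_{i+1} \;\le\; \pi.
\]
Using $\o_{i+1} = 2\pi-(\a_{i+1}+\b_{i+1})$, this reduces as in Lemma~\lemref{mi_visib} to the inequality $\a_{i+1} \le \b_{i+1}$, which holds strictly by Lemma~\lemref{Zstrictly}. The same reasoning at the $m_{i-1}$ endpoint yields strict convexity on that side, confirming that the geoarc $\g = m_i v_{i+2}$ is locally on the correct side to cross $g_i$, and the global crossing then follows from Lemma~\lemref{consec_cuts} (which, transcribed to Algorithm~2 via the replacement $H\leftrightarrow M$, still holds because the Gauss--Bonnet argument there used no property of $H_{i-1}$ beyond that no vertex of $V$ lies in the annular region between consecutive hulls; this transfers directly to $M_i$).

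The main obstacle I anticipate is the exceptional geodesic-loop case handled separately in the algorithm, where $m_{i-1} = v_{i+1}$ and $\partial M_{i-1}$ degenerates to a geodesic loop through $m_{i-1}$. There, Lemma~\lemref{Zstrictly}'s strict $\a\b$-convexity still holds at the loop base (the two angles across the loop sum to $2\pi$ minus the positive curvature at $m_{i-1}$, so both are strictly less than $\pi$ on the $M_{i-1}$-side), but $g_i$ is constructed by the cone-unfolding trick rather than as an edge of $Z_{i-1}$. For this case I would repeat the cone argument used for Algorithm~1 (unfold along a generator through the foot of the perpendicular from $m_{i-1}$ to the inner hull $Z(V'_{i-1})$), and then verify the angle inequality on the unfolded planar cone using the same $\a + \tfrac{1}{2}\o \le \pi$ computation. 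Apart from this bookkeeping, no new ideas beyond those in Lemma~\lemref{mi_visib} are needed.
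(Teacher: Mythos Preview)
Your proposal is correct and matches the paper's approach exactly: the paper's entire proof of this lemma is one sentence stating that one repeats the proof of Lemma~\lemref{mi_visib} using the strict $\a\b$-convexity of $Z_{i-1}$, which is precisely your plan. Your appeal to Lemma~\lemref{consec_cuts} for a ``global crossing'' and your treatment of the geodesic-loop case are additions the paper does not make here (and the former is unnecessary, since the angle inequality $\a+\tfrac{1}{2}\o\le\pi$ at both endpoints already forces $\g$ to exit the inserted $T_i$ through $g_i$), but they do no harm.
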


\begin{proof} As in the proof of Lemma~\lemref{mi_visib},
we only make use of the $\a\b$-convexity property of $Z_{i-1}$, which in this case is strict.
\end{proof}

\begin{lm}[Nesting]
\lemlab{Ni_Nesting}
$\tilde{M}_i  \subset \tilde{M}_{i-1}$.
\end{lm}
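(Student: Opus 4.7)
The plan is to follow the two-stage proof of Lemma~\lemref{Hi_Nesting}, with $H$ replaced by $M$ and with the $Z$-specific tools from Chapter~\chapref{MinEnclPoly} playing the role previously played by the $\rconv$-tools.

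First, set $V^-_{i-1}=V_{i-1}\setminus\{m_{i-1},v_{i+1}\}=\{v_{i+2},\ldots,v_n\}$ and introduce the auxiliary vertex set $V^+_i=V^-_{i-1}\cup\{m_{i-1},m_i,v_{i+1}\}$, together with the region $M^+_i=R(Z(V^+_i))$ on $P_i$. Since $V_i=V^-_{i-1}\cup\{m_i\}$, we have $V^+_i=V_i\cup\{m_{i-1},v_{i+1}\}$, so two applications of Lemma~\lemref{V+1_subset_V} (adding $m_{i-1}$, then $v_{i+1}$) yield $M_i\subseteq M^+_i$, and hence $\tilde{M}_i\subseteq\tilde{M}^+_i$ after projecting back to $P$. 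This handles the ``upper bound'' half of the argument automatically, with no geometric work.

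The heart of the proof will be to show that, on $P_i$, $M^+_i = M_{i-1}\cup T_i$, where $T_i$ is the copy of the inserted double-triangle that shares $g_i$ with $M_{i-1}$. I plan to verify the hypotheses of Lemma~\lemref{Converse_Lmin} for $G:=\partial(M_{i-1}\cup T_i)$: namely, that all nodes are vertices (or pinned), that $V^+_i\subset R(G)$, and that the exterior angle $\beta$ satisfies $\beta\geq\pi$ at every node. Along the unchanged portion of $Z_{i-1}$ the inequality is inherited from the strict $\alpha\beta$-convexity of $Z_{i-1}$ established in Lemma~\lemref{Zstrictly}. At the new apex $m_i$ the interior angle equals the apex angle of $T_i$, i.e.\ $\pi-\tfrac12\omega_{m_{i-1}}-\tfrac12\omega_{v_{i+1}}<\pi$, so $\beta>\pi$ there. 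At the (now flattened) corners $m_{i-1}$ and $v_{i+1}$ the interior angle of $M_{i-1}\cup T_i$ is the old interior angle $\alpha_{i-1}$ augmented by the half-curvature $\omega_{i-1}/2$ contributed by $T_i$; using $\alpha_{i-1}+\beta_{i-1}+\omega_{i-1}=2\pi$ on $P_{i-1}$ and the fact that the total surface angle at these corners becomes $2\pi$ on $P_i$, the new exterior angle equals $\beta_{i-1}+\omega_{i-1}/2>\pi$, and analogously at $v_{i+1}$. Once Lemma~\lemref{Converse_Lmin} applies, $\partial(M_{i-1}\cup T_i)=Z(V^+_i)$, i.e.\ $M^+_i=M_{i-1}\cup T_i$.

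To finish, the image of $T_i$ on $P$ lies entirely inside $\tilde{g}_i\subset\partial\tilde{M}_{i-1}$, so $\tilde{M}^+_i=\tilde{M}_{i-1}\cup\tilde{T}_i=\tilde{M}_{i-1}$, and combining with the first stage yields $\tilde{M}_i\subseteq\tilde{M}^+_i=\tilde{M}_{i-1}$, as required. The main obstacle I anticipate is the angle bookkeeping at the flattened corners $m_{i-1}$ and $v_{i+1}$: one must correctly distribute the curvature absorbed by the digon between its two triangles and then check that no further short-cut of the new boundary is possible, so that Lemma~\lemref{Converse_Lmin} genuinely identifies $\partial(M_{i-1}\cup T_i)$ with $Z(V^+_i)$ rather than with some shorter polygon. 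The exceptional ``geodesic loop'' case $m_{i-1}=v_{i+1}$ of the algorithm must be treated separately, along the lines already used in the proof of Lemma~\lemref{mi_on_Z}.
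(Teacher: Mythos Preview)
Your plan matches the paper's proof almost verbatim: the same auxiliary set $V^+_i$, the same two applications of Lemma~\lemref{V+1_subset_V} for $M_i\subset M^+_i$, the same target identity $M^+_i=M_{i-1}\cup T_i$ via Lemma~\lemref{Converse_Lmin}, and the same finish $\tilde M^+_i=\tilde M_{i-1}$ because $T_i$ projects onto the arc $\tilde g_i\subset\partial\tilde M_{i-1}$.

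There is one concrete slip in your angle check at the apex $m_i$. You write that the interior angle equals the apex angle of $T_i$, namely $\pi-\tfrac12\omega_{m_{i-1}}-\tfrac12\omega_{v_{i+1}}<\pi$, ``so $\beta>\pi$ there.'' That inference is wrong: $m_i$ is a genuine vertex of $P_i$ with curvature $\omega_{m_{i-1}}+\omega_{v_{i+1}}$, so the total surface angle at $m_i$ is $2\pi-\omega_{m_{i-1}}-\omega_{v_{i+1}}$, and the exterior angle is the apex of the \emph{other} inserted triangle, giving $\beta=\pi-\tfrac12\omega_{m_{i-1}}-\tfrac12\omega_{v_{i+1}}=\alpha<\pi$. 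Thus the hypothesis $\beta\ge\pi$ of Lemma~\lemref{Converse_Lmin} fails at $m_i$, and $\partial(M_{i-1}\cup T_i)$ is \emph{not} $Z(V^+_i)$. (The paper's proof simply does not mention $m_i$, so the same gap is present there.) The easy repair is to take both inserted triangles: the boundary of $M_{i-1}\cup T^2_i$ replaces the two lateral sides of $T_i$ by the other bank $g'_i$ of the slit, has $m_i$ strictly interior, and at the (now flat) nodes $m_{i-1},v_{i+1}$ the exterior angle is the old $\beta\ge\pi$ unchanged. So Lemma~\lemref{Converse_Lmin} applies to give $M^+_i=M_{i-1}\cup T^2_i$; since $T^2_i$ also projects onto $\tilde g_i$, the conclusion $\tilde M^+_i=\tilde M_{i-1}$ and hence $\tilde M_i\subset\tilde M_{i-1}$ is unaffected.
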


With some abuse of notation in the proof, 
we will identify objects on $P_{i-1}$ with their image on $P_i$ and vice-versa.


\begin{proof}Similar to that for Lemma~\lemref{Hi_Nesting}.

The proof is in two parts.
We show first that $\tilde{M}_i  \subset \tilde{M}_{i-1}$.
Set $V^-_{i-1}= V_{i-1} \setminus \{m_{i-1},v_{i+1}\}$, i.e.,
$V_{i-1}$ without the two vertices that will form the next vertex-merge insertion.
To keep track of the notation, we
refer to Fig.~\figref{HHH} (a variation of Fig.~\figref{mi_on_hull}) with $N \to M$,
and this display, where again we use ``$-$" to 
indicate ``missing" vertices 
among the three $m_{i-1}, m_i, v_{i+1}$:
\begin{align*}
\mbox \;&\;\; V^-_{i-1} = \{ v_{i+2}, \ldots, v_n \} \\
M_{i-1} = R( Z_{i-1} ) \;,&\;\; V_{i-1} = \{ m_{i-1}, -, v_{i+1} \} \cup V^-_{i-1} \\
M_{i} = R(Z_{i} ) \;,&\;\;  V_{i} = \{ -, m_i, -, \} \cup V^-_{i-1} \\
M^+_i = R(Z( V^+_i) ) \;,&\;\;  V^+_{i} = \{ m_{i-1}, m_i, v_{i+1} \} \cup V^-_{i-1}
\end{align*}
Set $M^+_i$ as above.
Then as Fig.~\figref{HHH} shows, $M_i \subset M^+_i$ 
(by Lemma~\lemref{V+1_subset_V} applied twice) 
and thus $\tilde M_i \subset \tilde{M}^+_i$.

\medskip
Denote by $T_i= \triangle m_{i-1} m_i v_{i+1}$ one of the two inserted triangles along $g_i$, 
the one sharing $g_i$ in common with $M_{i-1}$.

For the second part of the proof, we next show that, on $P_i$, $M^+_i = M_{i-1} \cup T_i$.

First, notice that $M_{i-1} \cup T_i \subset M^+_i$.
Second, the strict $\a\b$-convexity of $Z( V_{i-1})$ at $m_{i-1}$ and $v_{i+1}$ on $P_{i-1}$ shows that, on $P_i$, 
the two sides of $T_i$ incident to $m_i$ 
make with the respective boundary arcs angles $<\pi$ towards $M_{i-1} \cup T_i$, hence $> \pi$ on the other side;
see Lemma~\lemref{mi_visib2} (Visiblity).

Therefore, the boundary of $M_{i-1} \cup T_i$ is precisely the minimal length enclosing polygon of $V^+_i$ (by Lemma~\lemref{Converse_Lmin}).

Finally, since $M^+_i = M_{i-1} \cup T_i$ and the image on $P$ of $T_i$ is a subarc of $\tilde g_i$, 
we have $\tilde{M}^+_i = \tilde{M}_{i-1}$.
In conclusion, $\tilde M_i \subset \tilde{M}^+_i=\tilde{M}_{i-1}$.
\end{proof}

The proofs for the next four results are identical to the proofs or their counterparts in Section~\secref{ProofSlitTree} 
(Lemma~\lemref{consec_cuts} and the following results).

\begin{lm}[$g_i \cap g_{i-1}$]
\lemlab{consec_cuts_minl}
On $P_{i-1}$, $g_i$ intersects the two images
(``banks") of $g_{i-1}$ at precisely one point.
Consequently, $\tilde{g}_i$ has an endpoint on $\tilde{g}_{i-1}$ and no other common point.
\end{lm}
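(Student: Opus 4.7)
My plan is to mirror the proof of Lemma~\lemref{consec_cuts} almost verbatim, with the sets $M_i = R(Z(V_i))$ playing the role of $H_i = \rconv(V_i)$ throughout, and with Lemma~\lemref{Ni_Nesting} replacing Lemma~\lemref{Hi_Nesting}. The structural input is the same in both settings: by Lemma~\lemref{mi_on_Z}, $m_{i-1}$ lies on $Z_{i-1}$ and, after the $(i{-}1)$-st merge, sits strictly inside the doubled triangle region $T^2_{i-1}$ inserted along $g_{i-1}$, while $v_{i+1} \in V_{i-1}$ lies outside $T^2_{i-1}$. Hence the geoarc $g_i = m_{i-1}v_{i+1}$ must cross the boundary $g'_{i-1}\cup g''_{i-1}$ of $T^2_{i-1}$ at least once; the task is to show that exactly one crossing occurs.

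First I would suppose, for contradiction, that $g_i$ meets $g'_{i-1}\cup g''_{i-1}$ in two distinct points $x',x''$ and split into the same two subcases as in Lemma~\lemref{consec_cuts}. If $x'$ lies on one bank and $x''$ on the other, then $g_i$ must exit and re-enter $T^2_{i-1}$, forcing a third intersection with $g'_{i-1}\cup g''_{i-1}$ before reaching the outside vertex $v_{i+1}$, so we may assume both $x',x''$ lie on a single bank, say $g'_{i-1}$. Then the arcs of $g_i$ and $g'_{i-1}$ between $x'$ and $x''$ bound a geodesic digon $D$ on $P_{i-1}$. By the Gauss--Bonnet theorem, $D$ must enclose a vertex of positive curvature.

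The main obstacle, and the only place where the argument genuinely uses the particular hull-type, is ruling out a vertex inside $D$. I would argue: any vertex of $V_{i-1}$ interior to $D$ must lie either (a) outside $M_{i-1}$ and inside $M_{i-2}$, or (b) outside $M_{i-2}$ and inside $M_{i-1}$. Alternative (a) is impossible because Lemma~\lemref{Ni_Nesting} gives $\tilde{M}_{i-1}\subset \tilde{M}_{i-2}$, so every vertex of $V_{i-1}\subset M_{i-1}$ is already contained in $M_{i-2}$; and the only vertex that can lie in $M_{i-2}\setminus M_{i-1}$ is $m_{i-1}$ itself, which is an endpoint of $g_i$ and hence not interior to $D$. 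Thus alternative (b) also fails, yielding the desired contradiction.

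This establishes $|g_i\cap (g'_{i-1}\cup g''_{i-1})| = 1$. For the ``consequently'' clause, the projection map $P_{i-1}\to P$ identifies the two banks $g'_{i-1}$ and $g''_{i-1}$ into the single curve $\tilde{g}_{i-1}$, so the unique crossing point on $P_{i-1}$ projects to a single common point of $\tilde{g}_i$ and $\tilde{g}_{i-1}$ on $P$; since this point is the $m_{i-1}$-end of $\tilde{g}_i$ (the other end being $v_{i+1}$, which is disjoint from $\tilde{g}_{i-1}$), it is an endpoint of $\tilde{g}_i$, completing the argument.
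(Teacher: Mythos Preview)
Your proposal is correct and follows exactly the approach the paper intends: the paper explicitly states that the proof of this lemma is identical to that of Lemma~\lemref{consec_cuts}, with $M_i$ replacing $H_i$ and Lemma~\lemref{Ni_Nesting} replacing Lemma~\lemref{Hi_Nesting}, which is precisely what you do.
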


\begin{lm}
The set of vertices reduces by one each iteration:
$|V_i| = |V_{i-1}|-1$, for $i > 1$.
\end{lm}

\begin{lm}
\lemlab{Ztree}
$\tilde{\L} _i$ is a tree.
\end{lm}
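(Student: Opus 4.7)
The plan is to mimic the argument used for the $\rconv$-based Algorithm~1 in Section~\secref{ProofSlitTree}, substituting $M_i = R(Z(V_i))$ for $H_i = \rconv(V_i)$ and invoking the $Z$-analogues of the earlier lemmas. The proof will induct on $i$, with the base case $\tilde{\L}_1 = \tilde{g}_1$ trivially a (one-edge) tree. For the inductive step, I would establish two things: (a) $\tilde{g}_i$ meets $\tilde{g}_{i-1}$ in exactly one point, which is already Lemma~\lemref{consec_cuts_minl}; and (b) $\tilde{g}_i \cap \tilde{g}_j = \varnothing$ for all $j < i-1$. Together with the inductive hypothesis that $\tilde{\L}_{i-1}$ is a tree, claims (a) and (b) show that attaching $\tilde{g}_i$ to $\tilde{\L}_{i-1}$ adds one new edge anchored at a single point of the existing tree, preserving acyclicity and connectedness.

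The substantive step is (b). By the sequential-merge selection rule, $g_i = m_{i-1} v_{i+1}$ is an edge of $Z_{i-1} = \partial M_{i-1}$, so its trace $\tilde{g}_i$ on $P$ is an edge of $\partial \tilde{M}_{i-1}$. The Nesting Lemma~\lemref{Ni_Nesting} gives $\tilde{M}_j \supset \tilde{M}_{i-1}$ for every $j < i-1$. Lemma~\lemref{LmSlitSegment} guarantees that $\tilde{g}_j$ is a nondegenerate geoarc (not merely a point), and Lemma~\lemref{Zsimple} ensures that $\partial \tilde{M}_{i-1}$ is simple, so $\tilde{g}_j \cap \tilde{M}_{i-1}$ is either empty or a single arc lying along one edge of $\partial \tilde{M}_{i-1}$. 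In the nonempty case, that arc cannot cross any other edge of $\partial \tilde{M}_{i-1}$ except possibly at its endpoints. Now $\tilde{g}_{i-1}$ itself traces one edge of $\partial \tilde{M}_{i-1}$ (by the previous iteration), and this edge separates, in the cyclic order of sides of $\partial \tilde{M}_{i-1}$, the edge where $\tilde{g}_j$ enters $\tilde{M}_{i-1}$ from the edge $\tilde{g}_i$. Hence $\tilde{g}_j$ cannot reach $\tilde{g}_i$ without crossing $\tilde{g}_{i-1}$, contradicting that $\tilde{g}_j$ and $\tilde{g}_{i-1}$ already share at most the one endpoint guaranteed by induction. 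This establishes $\tilde{g}_i \cap \tilde{g}_j = \varnothing$ for $j < i-1$.

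Combining (a) and (b), $\tilde{g}_i$ attaches to $\tilde{\L}_{i-1}$ exactly at one point of $\tilde{g}_{i-1}$ and is otherwise disjoint from $\tilde{\L}_{i-1}$. Adding a pendant arc to a tree yields a tree, so $\tilde{\L}_i$ is a tree, completing the induction.

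The main obstacle I anticipate is verifying the cyclic-separation argument in step~(b) rigorously on $P$ rather than on $P_i$. Because $Z(V_i)$ can contain geoarcs that are not geosegs (the short-cut step in the $Z$-algorithm inserts such arcs), the edges of $\partial \tilde{M}_{i-1}$ are not uniformly geosegs between vertices, and the mapping from $\partial M_{i-1}$ on $P_{i-1}$ back to $\partial \tilde{M}_{i-1}$ on $P$ must be tracked through the accumulated triangle insertions $T_1,\ldots,T_{i-1}$. The key geometric fact that saves the argument is that each pair $(\tilde{g}_{i-1}, \tilde{g}_i)$ shares an endpoint coming from $\tilde{g}_{i-1}$ (by Lemma~\lemref{consec_cuts_minl}), so the two edges are consecutive around $\partial \tilde{M}_{i-1}$; this is what forces any earlier $\tilde{g}_j$ to lie on the far side of $\tilde{g}_{i-1}$ relative to $\tilde{g}_i$, and I expect most of the care in writing the full proof to concentrate on making this cyclic order explicit using the strict $\alpha\beta$-convexity of $Z$ at merge vertices provided by Lemma~\lemref{Zstrictly}.
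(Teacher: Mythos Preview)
Your proposal is correct and follows essentially the same approach as the paper. In fact, the paper's proof of this lemma is even more terse: it simply states that the proof is identical to that of the corresponding $\rconv$-based result in Section~\secref{ProofSlitTree}, with $M_i$ in place of $H_i$, Lemma~\lemref{consec_cuts_minl} in place of Lemma~\lemref{consec_cuts}, and Lemma~\lemref{Ni_Nesting} in place of Lemma~\lemref{Hi_Nesting}---precisely the substitutions you make, including the cyclic-order separation argument along $\partial \tilde{M}_{i-1}$ and the invocation of Lemma~\lemref{LmSlitSegment}.
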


Putting together the above lemmas, we obtain a counterpart to Theorem~\thmref{SpiralAlg}.

\begin{thm}
\thmlab{Ztree}
Let $Q$ be a simple closed quasigeodesic on the convex polyhedron $P$, and $V$ the set of vertices of $P$ enclosed by $Q$, to either side of $Q$.
Then the sequential vertex-merging algorithm,
based on $Z=\min\ell[V]$ and
detailed in Section~\secref{SpiralAlgorithm},
results in a slit graph $\tilde{\L}$ that is a tree.
\end{thm}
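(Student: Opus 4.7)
The plan is to mirror the proof of Theorem~\thmref{SpiralAlg} step for step, replacing the relative convex hull $H_i=\rconv(V_i)$ by the region $M_i=R(Z(V_i))$ enclosed by the minimum-length geodesic polygon. The four preparatory lemmas---Lemma~\lemref{mi_on_Z} (each new merge vertex $m_i$ lies on $Z_i$), Lemma~\lemref{mi_visib2} (the visibility of $v_{i+2}$ through $g_i$), Lemma~\lemref{Ni_Nesting} (the nesting $\tilde M_i\subset\tilde M_{i-1}$), and Lemma~\lemref{consec_cuts_minl} (that $\tilde g_i$ meets $\tilde g_{i-1}$ at exactly one point)---are already in place, so only the assembly remains.

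First I will handle the interaction of $\tilde g_i$ with the immediately preceding slit $\tilde g_{i-1}$. By Lemma~\lemref{consec_cuts_minl}, on $P_{i-1}$ the geoarc $g_i$ crosses the two banks of $g_{i-1}$ in a single point, which corresponds on $P$ to $\tilde g_i$ sharing one endpoint with $\tilde g_{i-1}$ and no other point. Hence adding $\tilde g_i$ attaches it to $\tilde\L_{i-1}$ exactly at that one point, introducing no local cycle.

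Next I will show $\tilde g_i\cap\tilde g_j=\varnothing$ for all $j<i-1$, which is the key tree-preservation step. By construction $g_i$ is an edge of $Z_{i-1}$, so $\tilde g_i$ is an edge of $\partial\tilde M_{i-1}$. Lemma~\lemref{Ni_Nesting} gives $\tilde M_j\supset\tilde M_{i-1}$ for $j<i-1$, and Theorem~\thmref{Zalgorithm} ensures the boundary of $\tilde M_{i-1}$ is a simple geodesic polygon. Therefore $\tilde g_j\cap\tilde M_{i-1}$ is either empty or a full edge of $\partial\tilde M_{i-1}$ (otherwise two distinct boundary edges of a simple polygon would cross). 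Since $\tilde g_{i-1}\cap\tilde M_{i-1}$ is the edge circularly separating $\tilde g_j$ (for $j<i-1$) from $\tilde g_i$ along $\partial\tilde M_{i-1}$, $\tilde g_j$ cannot reach $\tilde g_i$ without crossing an intermediate slit, which it does not. Combining with the previous paragraph, each newly inserted $\tilde g_i$ attaches to $\tilde\L_{i-1}$ at a single point and adds no cycle, so by induction $\tilde\L=\bigcup_i\tilde g_i$ is a tree.

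The main obstacle I anticipate is the last displayed argument: the ``circular separation'' of old slits from $\tilde g_i$ along $\partial\tilde M_{i-1}$ is intuitively clear from the spiraling picture but must be argued carefully from the simplicity of $Z(V_{i-1})$ and the nesting. The rest of the proof is a routine transcription of the $\rconv$ version, because strict $\a\b$-convexity of $Z$ (Lemma~\lemref{Zstrictly}) makes the visibility and nesting arguments, if anything, slightly cleaner than in the $\rconv$ case, where non-strict boundary angles at g-nodes have to be accommodated.
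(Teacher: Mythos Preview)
Your proposal is correct and is essentially the same as the paper's own argument. The paper proves Theorem~\thmref{Ztree} precisely by ``putting together the above lemmas'' and explicitly states that the proofs of Lemmas~\lemref{consec_cuts_minl} and~\lemref{Ztree} (the lemma version) are identical to their $\rconv$ counterparts in Section~\secref{ProofSlitTree}; your transcription of the nesting plus circular-separation argument matches that verbatim. One small citation slip: the simplicity of $Z_{i-1}$ you invoke comes from Lemma~\lemref{Zsimple}, not Theorem~\thmref{Zalgorithm}.
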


Application of these slit trees to unfoldings of $P$ is discussed in the next chapter.

%

\chapter{Unfoldings via Slit Trees}
\chaplab{Appl3DAlg}
In this chapter we gather together results from Chapter~\chapref{VMSlitGraph}
concerning the connectivity structure of slit graphs,
and the spiral slit trees just obtained in Chapter~\chapref{SpiralTree3D},
to draw conclusions about unfolding convex polyhedra $P$ 
via vertex-merging.

Depending on circumstances we will detail, vertex-mergings result in $P$
embedded in a doubly-covered triangle (or isosceles tetrahedron), 
in a pair of joined cones, or in a cylinder.
The slits may leave $P$ in one piece, or cut into several pieces.
In some situations, we can develop $P$ to the plane as a net.

\section{Notation}
We first recall previous notation to be used in this
chapter, and introduce some new notation.

\begin{itemize}
\item $Q$ is a simple closed quasigeodesic on $P$.
\item $P^+$ and $P^-$ are the closed half-surfaces bounded by $Q$.
Because closed, both of these half-surfaces include $Q$.
\item $V(Q)=\{q_1, \ldots, q_k\}$ is set of vertices on $Q$.
\item $\e$ is either of $+, -$.
\item We refer to the $Z$-algorithm (minimum enclosing polygon)
and the $W$-algorithm ($\partial \, \rconv$),
summarized in
Theorems~\thmref{Zalgorithm}
and~\thmref{Walgorithm} respectively.
%
\item $\tilde{\L}^\e$ is the full slit tree on $P^\e$, obtained from a set $V$ of vertices containing all vertices interior to $P^\e$, 
and possibly some vertices in $V(Q)$, via either the $Z$- or $W$-spiral algorithm.
\item 
$\a_i$ and $\b_i$ are the angles incident to $q_i$
in $P^+$ and in $P^-$ respectively, i.e., above and below $Q$.
\item The curvature of $P$ at $q_i$ is
$\o_i = 2 \pi - (\a_i + \b_i)$.
\item 
The \emph{partial curvatures} at $q_i$ toward $P^+$ and $P^-$
are $\o_i^+=\pi - \a_i$ and $\o_i^-=\pi-\b_i$, respectively.
Hence $\o_i=\o_i^+ + \o_i^- < 2\pi$.
\item $\O^\e$ is the total curvature of all interior vertices of $P^\e$.
\end{itemize}

The Gauss-Bonnet Theorem gives, for $\e \in \{+,-\}$, 
\begin{align}
\O^\e + \sum_{i=1}^k \o_i^\e &= 2 \pi \;, \eqnlab{GBepsilon}\\
\O^+ + \O^- + \sum_{i=1}^k \o_i &= 4 \pi \;.
\end{align}


\section{Unfoldings via spiraling algorithms}

Assume in the following that $V$ contains \emph{all} vertices of $P$ enclosed by $Q$, to either side, including those on $Q$: $V \supset V(Q)$.

Next we see that the spiraling algorithm works fine in this case, too.

The total curvature of $V$ may be $> 2 \pi$ on $P$, but on $P^\#$ it is precisely $2 \pi$, because all angles $\b_i$ are $\pi$ on $P^\#$.

\bs

Choose two consecutive vertices on $Q$, $q_1, q_2$. 
If there are no such vertices, then 
$Q$ is either a closed geodesic or a geodesic loop, handled later.

Merge $q_1$ and $q_2$ along the geoarc $g_{12}$ of $Q$ joining them (either arc, if there are two), to produce $m_1 \in V_1 \subset P_1$.
This merging inserts a double triangle of base $g_{12}$ and base angles $\o^+_i /2$ at $v_i$.
Call this a \emph{partial merging}.

By Lemmas~\lemref{mi_on_hull} and~\lemref{mi_on_Z},
$m_1$ is interior to $P_1$. 
Further merging $m_1$ to another vertex $q_3 \in Q$ would force $\tilde{\L}^+$ to have a leaf on $Q$.
Therefore, if $k = |V(Q)| > 2$ then $\tilde{\L}^+$ intersects $Q$ at $g_{12}$ and at each $q_j$ with $\a_j< \pi$, for $j=3, \ldots, k$.

For example, return to Fig.~\figref{IcosaCuts3D}(a),
repeated below as Fig.~\figref{IcosaCutsOnly3D}.
Suppose that $Q=v_1 v_2 v_3 v_4 v_5$ is the pentagon of edges surrounding $v_6$.
Then $\tilde{\L}^+$ has leaves at $v_3,v_4,v_5$.

Similar considerations lead to the the conclusion that  $\tilde{\L}^-$ also intersects $Q$ at $g_{12}$ 
(if it starts by merging $q_1$ and $q_2$)
and at each $q_j$ with $\b_j< \pi$, for $j=3, \ldots, k$.


Continuing with the example in Fig.~\figref{IcosaCutsOnly3D},
if $\tilde{\L}^-$ also had leaves at, say, $v_3$ and $v_4$,
then a piece of $P$, bounded on $P^+$ by $v_3 m'_2 v_4$,
and similarly bounded on $P^-$,
would be disconnected from the remainder of $P$.

The above discussion of 
the way the trees $\tilde{\L}^\e$ connect to 
one another along $Q$ leads to several main results.


We see two main options: merge the vertices strictly inside $P^\e$,
or, in addition, include all vertices on $Q$ via partial merging.


\subsection{Two Cones}
\seclab{TwoCones}
We start with merging all vertices inside $Q$.

\begin{lm}
\lemlab{Unf2Cones}
Let $P$ be a convex polyhedron and $Q$ a simple closed quasigeodesic on $P$.
Merging all vertices strictly inside $Q$ to either side, along the slit trees $\tilde{\L}^\e$ ($\e = +,-$), unfolds $P$ onto the union $\cal{U}$ of two cones, each of base $Q$, glued 
together along $Q$.
The unfolding of $P$ onto $\cal{U}$ 
can be decomposed into two simple geodesic polygons, one to each side of $Q$, sharing $Q$.
\end{lm}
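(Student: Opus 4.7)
The plan is to apply the spiral vertex-merging algorithm of Chapter~\chapref{SpiralTree3D} separately to each half-surface $P^\e$, for $\e \in \{+,-\}$, with initial vertex set $V^\e$ consisting of the vertices of $P$ strictly interior to $Q$ on that side. Theorem~\thmref{SpiralAlg} (equivalently Theorem~\thmref{Ztree}) then guarantees that the resulting slit graph $\tilde{\L}^\e$ is a tree on $P^\e$, and Lemma~\lemref{reduction-hull-ag-conv} applied to the auxiliary polyhedron $P^\#(Q)$ ensures that every geoarc produced by the algorithm lies in the closed half-surface, so that $\tilde{\L}^+$ and $\tilde{\L}^-$ are disjoint and neither crosses $Q$.

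I next argue that the transformed half-surface, obtained from $P^\e$ by performing the $|V^\e|-1$ mergings, is a cone $C^\e$ with base $Q$ and apex equal to the final merge vertex $m^\e$. Because every original vertex of $V^\e$ has been flattened and the mergings strictly inside $Q$ leave the boundary angles $\a_i$ (respectively $\b_i$) at $q_i \in Q$ unchanged, the transformed surface has boundary $Q$ and a unique interior vertex $m^\e$ whose curvature equals $\O^\e$ by additivity. Drawing geodesic segments from $m^\e$ to each corner of $Q$ partitions the transformed surface into flat triangles apexed at $m^\e$, exhibiting the cone structure explicitly; gluing $C^+$ and $C^-$ along their common base $Q$ produces the union $\mathcal{U}$.

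For the decomposition claim I invoke the reasoning of Theorem~\thmref{Connectivity} on each side: since $\tilde{\L}^\e$ is a tree, the union of the two banks of its slits forms a single simple closed polygonal curve in $C^\e$, namely the boundary of the merge region surrounding $m^\e$. Hence the image of $P^\e$ inside $C^\e$ is the polygonal region of $C^\e$ bounded by $Q$ on the outside and by this inner polygon on the inside. These two polygonal regions, one on each side of $Q$, are the two geodesic polygons of the lemma statement, and they share precisely $Q$ once the cones are glued.

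The main obstacle is guaranteeing that the spiral algorithm of Chapter~\chapref{SpiralTree3D}, developed under the hypothesis that $Q$ is a fixed quasigeodesic on $P$, remains applicable after each intermediate merging $P_{i-1} \to P_i$. One must verify that $Q$ is still a simple closed quasigeodesic on every intermediate $P_i$, so that the $P_i^\#(Q)$ construction remains a convex polyhedron on which the step-$i$ relative convex hull (or minimal enclosing polygon) is well-defined, and so that the visibility Lemmas~\lemref{mi_visib} and~\lemref{mi_visib2} continue to apply. This follows because mergings strictly inside $Q$ never alter the angles $\a_i$ and $\b_i$ at the vertices of $Q$: neither the geosegs used for merging nor the inserted triangles reach $\partial P^\e = Q$, so $Q$'s quasigeodesic status is preserved throughout and the inductive step of the spiral algorithm applies verbatim.
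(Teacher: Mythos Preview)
Your proof is correct and follows essentially the same approach as the paper: invoke the spiral algorithm to guarantee each $\tilde{\L}^\e$ is a tree, observe that the mergings leave $Q$ untouched so the resulting surface on each side is a cone apexed at the last merge vertex, and apply Theorem~\thmref{Connectivity} to conclude connectedness of each half. The paper's own proof is extremely terse (three sentences), whereas you supply additional justification---in particular, your explicit verification that $Q$ remains a quasigeodesic on every intermediate $P_i$ (since the mergings occur strictly inside and never alter $\a_i,\b_i$) is a point the paper simply takes for granted, and your description of the image of $P^\e$ as the annular region between $Q$ and the inner merge-domain boundary is in fact more precise than the paper's phrase ``simple geodesic polygon.''
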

\begin{proof}
Merging all vertices inside $P^\e$ reduces all those vertices to one merge vertex,
the apex of a cone. Because $Q$ is unaffected by the merges,
it remains shared by the cones.
Theorem~\thmref{Connectivity} 
(that $\tilde{\L}^\e$ a tree implies that $P^\e$ is a simple polygonal domain)
guarantees the $P$ unfolding is a simple geodesic polygon to each side of $Q$.
\end{proof}

Notice that the image of $Q$ on $\mathcal{U}$ above is not necessarily planar.

A special case of this lemma is that, if there is just one vertex $q_1$ on $Q$,
so $Q$ is a geodesic loop,
then $\cal{U}$ has three vertices---two cone apexes and the unmerged 
$q_1 \in Q$---and so is a doubly-covered triangle.

Another special case of this lemma is if $Q$ is a simple closed geodesic;
i.e., there is no vertex on $Q$.
We defer this case to Theorem~\thmref{Geodesic_VM_net} below.


\paragraph{Example: Spiral $Q$ on Box.}
\seclab{SpiralQBox}
We illustrate Lemma~\lemref{Unf2Cones}
with an example based on Fig.~2 in~\cite{demaine2020finding},
reproduced in Fig.~\figref{ErikSpiralGeo}.
Its interesting feature is that the illustrated 
simple closed quasigeodesic spirals around
the box, arbitrarily many times as $L$ grows large.
\begin{figure}[htbp]
\centering
\includegraphics[width=0.3\textheight]{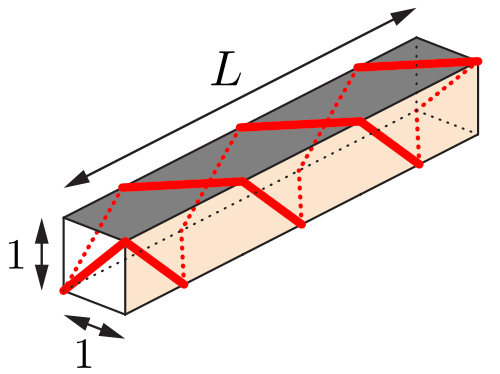}
\caption{Detail from Fig.~2 in~\protect\cite{demaine2020finding}.}
\figlab{ErikSpiralGeo}
\end{figure}
For simplicity, in our version,
Fig.~\figref{QgeoSpiral_abc}(a),
the quasigeodesic $Q$ spirals just one turn, but
the example could be extended to many turns.
\begin{figure}[htbp]
\centering
\includegraphics[width=0.8\textheight]{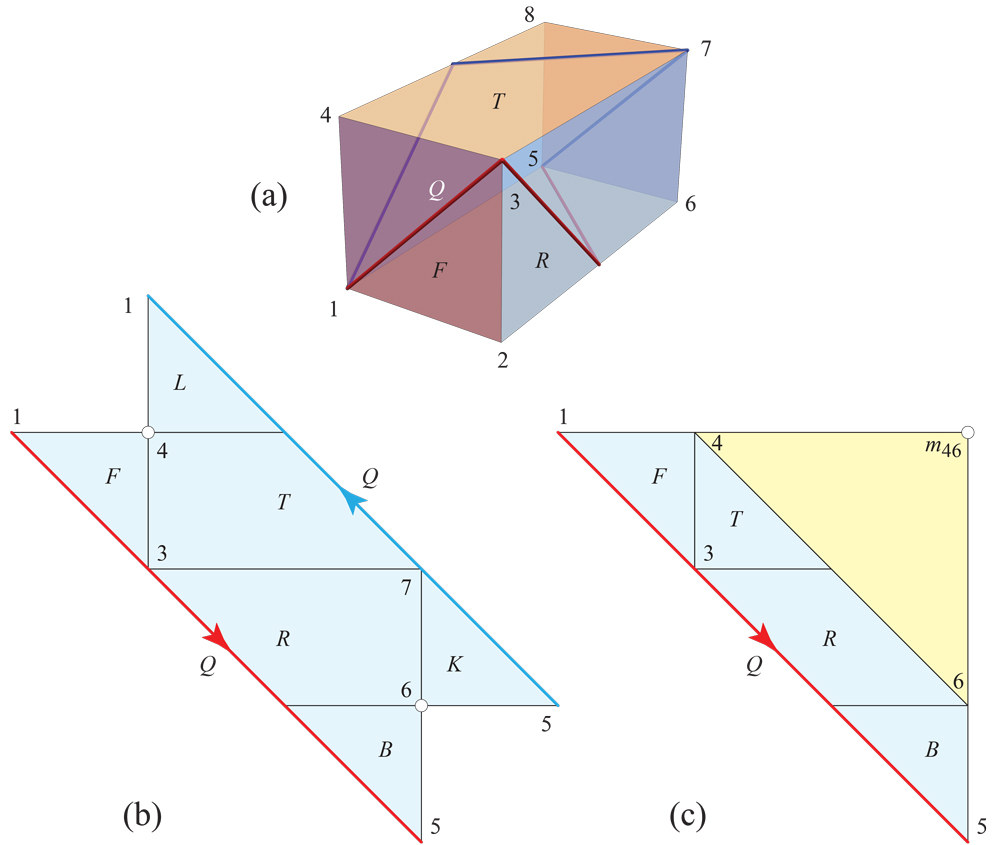}
\caption{(a)~A $1 \times 1 \times 2$ box.
(b) The surface $P^+$ to the left of $Q$. Note the two images of $v_1$
are identified, as are the images of $v_5$.
$\o(v_4) = \o(v_6) = \pi/2$.
(c)~After merging $v_4$ and $v_6$, a cone, here flattened to a doubly-covered triangle.}
\figlab{QgeoSpiral_abc}
\end{figure}

Our quasigeodesic $Q= v_1 v_3 v_5 v_7$ passes through four vertices,
and encloses two vertices, $v_4$ and $v_6$, strictly to its left.
Fig.~\figref{QgeoSpiral_abc}(b) shows an
unfolding of $P^+$, the half-surface left of $Q$.
$P^-$ (not shown) is symmetrical to the right of $Q$.

Merging the two vertices $v_4$ and $v_6$ produces a cone
apexed at the merge vertex $m_{46}$.
For display purposes, it is flattened to the plane as a doubly-covered triangle
in (c)~of the figure.
One can see that the portion of $P$ to the left of $Q$ is indeed a simple geodesic
polygon, as claimed by Lemma~\lemref{Unf2Cones}.

Merging $v_2$ and $v_8$ on $P^-$ produces a second cone apexed at $m_{28}$.
The two cones are glued together along $Q$, which still contains four vertices.
So ${\cal U}$ is a polyhedron of $6$ vertices, an octahedron.

\bs

It is tempting to develop each cone to the plane to achieve nets of 
$P^\e$.
This works in the example just presented, but not always, as we now detail.

Let ${\cal U}^+$ be the upper cone containing $P^+$.
$P^+$ is topologically an annulus, with $\partial P^+ = Q \cup Q'$.
Parametrize points $p(t)$ on the upper boundary $Q'$, $t \in [0,1]$,
with $p(0)=p(1)$.
Define $\phi(t)$ to be the angular turn of the segment  from $a p(0)$
to $a p(t)$ about the apex $a$ on the surface of ${\cal U}^+$ . 
Then with $\phi(0)=0$, we have that $\phi(1) = \a$, for the segment
makes one full turn to return to $p(0)$.
However, it could be that if $Q'$ spirals around the cone, intermediate positions
represent turns greater than $\a$.
See Fig.~\figref{ConeSpiral}.

\begin{figure}[htbp]
\centering
\includegraphics[width=0.5\textheight]{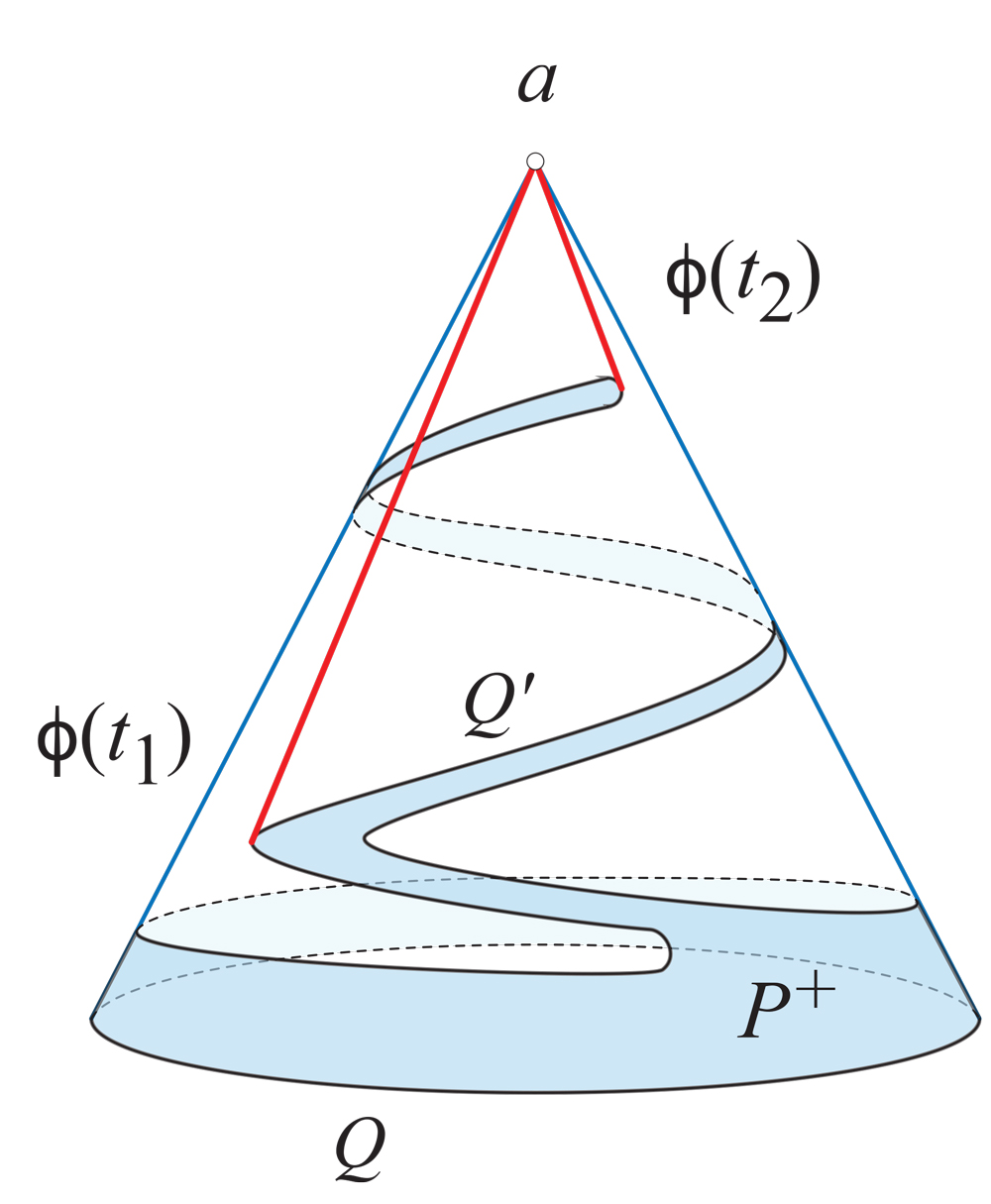}
\caption{$\phi(t_2)-\phi(t_1) > \a$.}
\figlab{ConeSpiral}
\end{figure}

Let $\phi_{\max} = \max_{t_1 < t_2} | \phi(t_2)-\phi(t_1) |$.
Then $k = \lceil \phi_{\max} / \a \rceil$ represents the number of full turns
around the cone that might be needed to fully develop $P^+$ to the plane.
No overlap in the development can occur unless $k \a > 2 \pi$,
so that the imprint of the cut-open cone cycles around $a$ more than once.
Therefore we have this result.

\begin{prop}
\lemlab{ConeSpiral}
If $k \a \le 2 \pi$, where $k = \lceil \phi_{\max} / \a \rceil$,
then $P^+$ develops to the plane without overlap.
\end{prop}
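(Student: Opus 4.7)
The plan is to unroll the cone $\mathcal{U}^+$ around its apex $a$ and wrap copies of the resulting sector together to form one planar sector of total angle $k\alpha$; the hypothesis $k\alpha \le 2\pi$ will then say exactly that this combined sector embeds in the plane without self-overlap, and hence so does the development of $P^+$ inside it.

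First, I would fix a generator $\gamma_0$ of $\mathcal{U}^+$ emanating from $a$ that is disjoint from (a chosen lift of) $Q'$; such a generator exists since $Q'$ is a compact curve on the cone and has at most finitely many intersections with any generator. Cutting $\mathcal{U}^+$ along $\gamma_0$ and developing it to the plane produces a planar sector $S$ of angle $\alpha$ with apex at the origin $o$ (the image of $a$). Since the intrinsic metric on $\mathcal{U}^+ \setminus \gamma_0$ is flat, this development is an isometry onto $S$.

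Next, using the parametrization $p(t)$, I would track the angular coordinate $\phi(t)$ of the image of the ray $ap(t)$ as $t$ ranges over $[0,1]$. By the definition of $\phi_{\max}$, the full angular span of the lift of $Q'$ is at most $\phi_{\max}$, and since the flat region of $\mathcal{U}^+$ between $Q$ and $Q'$ is monotone in the radial direction from $a$, every radius from $a$ to an interior point of $P^+$ sits between two radii of the form $ap(t)$; hence the angular coordinates of lifts of interior points of $P^+$ also lie in an interval of length at most $\phi_{\max}$. Concatenating $k = \lceil \phi_{\max}/\alpha \rceil$ isometric copies $S_1,\ldots,S_k$ of $S$ along their bounding generators therefore yields a planar sector $U$ of angle $k\alpha$ that contains a full isometric development $\widetilde{P^+}$ of $P^+$.

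Finally, the hypothesis $k\alpha \le 2\pi$ means that $U$ is a sector of a disk (possibly the entire disk), hence an embedded subset of the plane with no self-overlap. Since $\widetilde{P^+} \subseteq U$ is this embedded isometric development, $P^+$ itself develops to the plane without overlap, as claimed. The main obstacle I anticipate is the second step: one must verify carefully that the lifting of $P^+$ into the multi-copy sector $U$ is well-defined and injective, which amounts to checking that the radial segments from $a$ foliate $P^+$ without looping back—this uses that $Q'$ is a simple curve on $\mathcal{U}^+$ and that $P^+$ is the annular region caught between $Q$ and $Q'$.
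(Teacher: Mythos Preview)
Your approach is essentially the same as the paper's: the paper offers only the one-sentence justification that ``no overlap in the development can occur unless $k\alpha > 2\pi$, so that the imprint of the cut-open cone cycles around $a$ more than once,'' and you have spelled this out by laying $k$ copies of the fundamental sector side by side and observing that the development of $P^+$ fits inside the resulting sector of angle $k\alpha \le 2\pi$. One minor wording issue: you cannot choose a generator $\gamma_0$ on the cone disjoint from $Q'$, since $Q'$ encircles the apex and every generator meets it; but this does not matter for the argument, because the concatenated $k$-copy sector $U$ absorbs any cuts the generator makes through $P^+$.
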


In the absence of the limit detailed in this 
proposition, it is indeed possible for
the development of $P^+$ to overlap.
This was established with a nontrivial example in~\cite[Fig.~14]{ov-ceccc-14}
by a curve that spirals around $a$ four times.
Building this curve into $P^+$ would establish an overlapping development of $P^+$.

Another very special case occurs when $\a$ evenly divides $2 \pi$. For then, even if the development
spirals around $a$ more than $2 \pi$, the imprint of the cut-open cone lays exactly
on top of its earlier-rolled image. So any overlap in the development would have been mirrored
by overlap on the cone. 
But $P^+$ does not overlap on ${\cal U}^+$.

\begin{prop}
\lemlab{Divides2pi}
If $\a$ evenly divides $2 \pi$, then $P^+$ develops without overlap.
\end{prop}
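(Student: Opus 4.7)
The plan is to prove injectivity of the development by lifting to the universal cover of the punctured cone and exploiting the fact that $k\alpha = 2\pi$ makes the deck-transformation group act by rotations of finite order in the plane. Concretely, let ${\cal U}^+$ be the cone with apex $a$ carrying $P^+$, let $\hat{\cal U}^+$ be the universal cover of ${\cal U}^+ \setminus \{a\}$, and let $\pi \colon \hat{\cal U}^+ \to {\cal U}^+ \setminus \{a\}$ be the covering projection, whose deck group is infinite cyclic, generated by the rotation by $\alpha$ about the lift of $a$. There is a developing map $D \colon \hat{\cal U}^+ \to \mathbb{R}^2 \setminus \{0\}$, defined in the usual way by local isometries, under which this deck transformation becomes planar rotation about the origin by angle $\alpha$.

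Here is the forward plan. First I would verify that $D$ is a local isometry and that rotation by $\alpha$ about the apex lift is indeed what it becomes in the plane; this is a routine unraveling of the definition of a cone. Second, under the hypothesis $k\alpha = 2\pi$, the image of the deck group in $\mathrm{SO}(2)$ is the finite cyclic group of order $k$, so $D$ descends to a well-defined local isometry $\bar D \colon {\cal U}^+ \setminus \{a\} \to \mathbb{R}^2 \setminus \{0\}$. Two preimages $x, y \in \hat{\cal U}^+$ with $D(x) = D(y)$ must then satisfy $\pi(x) = \pi(y)$, because their cone-angular coordinates differ by a multiple of $\alpha$ and their cone-radii are equal. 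Third, to develop $P^+$ in the plane in the sense of Section~\secref{TwoCones}, pick a generator $\gamma$ emanating from $a$, cut ${\cal U}^+$ along $\gamma$, and lay out each successive sector of angle $\alpha$ next to the previous one in the plane; this is exactly $\bar D$ restricted to ${\cal U}^+ \setminus \gamma$ (composed with the appropriate sheeted lift).

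To conclude non-overlap, suppose two points $p, q \in P^+$ develop to the same point of $\mathbb{R}^2$. Then $\bar D(p) = \bar D(q)$, so $p = q$ in ${\cal U}^+$ by the injectivity established above; since $P^+ \hookrightarrow {\cal U}^+$ is an embedding (Lemma~\lemref{Unf2Cones}), we get $p = q$ in $P^+$, which is what ``develops without overlap'' means. The argument for the lower half $P^-$ is identical on ${\cal U}^-$.

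The main obstacle, and the part that requires the most care, is what happens along the cut generator $\gamma$ and at the apex. The apex itself is a single point and maps to the origin, so it cannot produce overlap on its own. The two banks of $\gamma$, however, develop to two distinct rays in the plane emanating from the origin and separated by angle $k\alpha = 2\pi$, i.e., they coincide as rays; the proof therefore has to check that points on these two banks which land on the same planar ray actually coincide on the cone. This reduces once more to the injectivity of $\bar D$ once one remembers that the two banks of $\gamma$ on the cut-open cone represent the same generator of ${\cal U}^+$, so they project under $\pi$ to the same arc, and embeddedness of $P^+$ forces the two corresponding banks inside $P^+$ to agree wherever they coincide in the plane. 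No other overlap can occur because $\bar D$ is a local isometry and $P^+$ is simply-connected in a neighbourhood of each interior point by Theorem~\thmref{Connectivity}.
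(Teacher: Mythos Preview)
Your approach is essentially the same as the paper's, just dressed in covering-space language. The paper's entire proof is the one-line observation that when $\a$ evenly divides $2\pi$, successive unrollings of the cut-open cone lay exactly on top of earlier copies, so any coincidence in the planar development is already a coincidence on the cone---impossible since $P^+$ sits embedded there. Your key step, that $D(x)=D(y)$ forces $\pi(x)=\pi(y)$ because the angular coordinates then differ by a multiple of $2\pi=k\a$ and hence of $\a$, is precisely this observation.

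That said, your formalization has a genuine misstatement. The claim that ``$D$ descends to a well-defined local isometry $\bar D\colon {\cal U}^+\setminus\{a\}\to\mathbb R^2\setminus\{0\}$'' is false: for $D$ to descend through $\pi$ you would need $\pi(x)=\pi(y)\Rightarrow D(x)=D(y)$, i.e.\ invariance of $D$ under the full deck group, but the generator acts on the target as rotation by $\a\neq 2\pi$. What you actually prove in the next sentence is the \emph{reverse} containment of fibers, $D(x)=D(y)\Rightarrow\pi(x)=\pi(y)$; equivalently, $\pi$ factors through $D$ via the $k$-fold cover $\mathbb R^2\setminus\{0\}\to{\cal U}^+\setminus\{a\}$. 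That reverse implication is exactly what you need: it makes $D$ injective on any fundamental domain for the deck action, and the development of the cut-open $P^+$ is $D$ restricted to such a fundamental domain. So the argument survives once you drop $\bar D$ and argue directly with a chosen lift of $P^+_\gamma$ in $\hat{\cal U}^+$. (Your boundary paragraph also has the two banks of $\gamma$ separated by $k\a$ rather than by one deck step $\a$; this is the same confusion about which map is factoring through which.)
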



\subsection{Reduction to Cylinder}
\seclab{Cylinder}

We return now to the second option:
including all vertices on $Q$ via partial merging.

\begin{lm}
\lemlab{Unf2Cylinder}
With $P$ and $Q$ as before, let there be $k=|V(Q)|$ vertices on $Q$.
Merging all vertices of $P$,
along the trees $\tilde{\L}^\e$,
including the vertices $V$ via partial merges as described above,
unfolds $P$ onto a cylinder $\cal{C}$.
The unfolding of $P$ onto $\cal{C}$ is the union of at most $k-2$ simple geodesic polygons, joined circularly at $k-2$ vertices of $Q$.
\end{lm}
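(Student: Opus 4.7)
The plan is to verify the lemma in three steps: establish that the fully-merged surface is a cylinder $\cal{C}$, recognize the resulting pieces of the image of $P$ as simple geodesic polygons, and bound their number by $k-2$.

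First, I would apply Gauss-Bonnet to each half-surface $P^\e$ to obtain \eqnref{GBepsilon}, so that driving the spiral algorithm of Theorem~\thmref{SpiralAlg} (or~\thmref{Ztree}) to completion on $P^\e$---including the initial partial merging along $Q$ and all subsequent merges that absorb vertices of $V(Q)$---accumulates exactly $2\pi$ of curvature at the final merge apex. As in the icosahedron discussion of Section~\secref{IcosahedronExample2}, the last insertion then degenerates to an infinite parallelogram whose apex is pushed to infinity, so $P^\e$ together with its inserted double-triangle regions develops isometrically to a half-cylinder. Since each partial merging flattens its two $Q$-endpoints on the $\e$ side to angle $\pi$, once both sides have been processed all $\a_i=\b_i=\pi$, and the two half-cylinders glue isometrically across the image of $Q$ to form a single cylinder $\cal{C}$.

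Next, Theorems~\thmref{SpiralAlg} and~\thmref{Ztree} guarantee that each slit graph $\tilde{\L}^\e$ is a tree on $P^\e$, and Theorem~\thmref{Connectivity} then yields that cutting $P^\e$ along $\tilde{\L}^\e$ leaves a simple polygonal domain; hence the image of $P$ on $\cal{C}$, cut along $\tilde{\L}^+\cup\tilde{\L}^-$, decomposes into a disjoint union of simple geodesic polygons. To count them, recall from the paragraphs preceding the lemma that $\tilde{\L}^+$ meets $Q$ along the common starting arc $g_{12}$ and at each $q_j$, $3\le j\le k$, with $\a_j<\pi$; symmetrically $\tilde{\L}^-$ meets $Q$ along $g_{12}$ and at each $q_j$ with $\b_j<\pi$. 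The shared arc $g_{12}$ is a common subtree of $\tilde{\L}^+$ and $\tilde{\L}^-$ and does not by itself cause a disconnection. Each additional vertex $q_j$ ($3\le j\le k$) with \emph{both} $\a_j<\pi$ and $\b_j<\pi$ is a double-sided pinch where the two trees meet at a single point of $Q$ and locally separate $P$; there are at most $k-2$ such pinches, and they partition the image of $P$ on $\cal{C}$ into at most $k-2$ simple geodesic polygons arranged circularly around $\cal{C}$ and joined at those pinch vertices.

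The main obstacle will be the precise bookkeeping in the last step. One must carefully disentangle the role of $Q$ (which is not itself part of the slit graph but nevertheless glues $P^+$ to $P^-$ away from the pinches), of the shared arc $g_{12}$ (a common subtree of both $\tilde{\L}^\e$ that contributes no cycle to their union), and of each double-sided pinch $q_j$ (which does contribute a genuine cycle and thus a circumferential cut of the image of $P$ on $\cal{C}$). I would handle this by viewing the image of $P$ as a cylindrical strip whose upper and lower ``rims'' develop from $\tilde{\L}^+$ and $\tilde{\L}^-$ respectively, and arguing that only the double-sided pinches sever this strip circumferentially; single-sided attachments, where $\a_j=\pi$ or $\b_j=\pi$, fail to pinch $P$ and merge two adjacent pieces into one, which is what accounts for the ``at most $k-2$'' in the statement.
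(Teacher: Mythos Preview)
Your proposal is correct and follows essentially the same approach as the paper's proof, which is quite terse. The paper argues the result is a cylinder because (a)~all vertices are merged and (b)~the curvature to each side is exactly $2\pi$ (Eq.~\eqnref{GBepsilon}); then invokes Theorem~\thmref{Connectivity} to get a simple geodesic polygon on each side of $Q$; and finally observes that ``the coinciding leaves of $\tilde{\L}^\e$ partition $P$ into at most $k-2$ pieces,'' referring back to the pre-lemma discussion of where $\tilde{\L}^\e$ meets $Q$. Your three steps track these exactly, and your extra care in step three---separating the role of the shared arc $g_{12}$ from the genuine double-sided pinches at $q_j$ ($3\le j\le k$)---is a reasonable elaboration of bookkeeping the paper leaves implicit in ``as we've seen.''
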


\begin{proof}
The reason the final surface is a cylinder is that 
(a)~all vertices are merged; none remain,
and (b)~the curvature to each side is exactly $2\pi$: Eq.~\eqnref{GBepsilon}.
Although Theorem~\thmref{Connectivity} 
again guarantees the $P$ unfolding to each side of $Q$ is
a simple geodesic polygon, as we've seen, the coinciding leaves of
$\tilde{\L}^\e$ partitions $P$ into at most $k-2$ pieces.
\end{proof}

In contrast to the two-cones case, rolling the cylinder $\cal{C}$ on the plane cannot
cause overlap. 

%


\bs

We've already mentioned that $k=|V(Q)|= 1$ is a special case
of Lemma~\lemref{Unf2Cones},
and $k \le 2$ plays a special role in Lemma~\lemref{Unf2Cylinder}.
We highlight the latter in the following theorem.

\begin{thm}
\thmlab{RollingNet}
Let $P$ be a convex polyhedron and $Q$ be a simple closed quasigeodesic on $P$ containing $k=|V(Q)|$ vertices.
If either
\begin{enumerate}[label={(\arabic*)}]
\item $k \le 2$, or
\item
if the total number of non-$\pi$ angles $\a_i$ and $\b_i$ (i.e.,  $< \pi$ angle to both sides) 
is at most two,
\end{enumerate}
then the unfolding of $P$ onto $\cal{C}$ 
(Lemma~\lemref{Unf2Cylinder})
is a simple geodesic polygon.
In this case, rolling $\cal{C}$ onto a plane develops $P$ to a net.
\end{thm}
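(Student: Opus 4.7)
The plan is to combine Lemma~\lemref{Unf2Cylinder} with the observation that a ``pinch'' in the unfolding of $P$ onto $\mathcal{C}$ at a corner $q_j \in V(Q)$ occurs precisely when both $\tilde{\L}^+$ and $\tilde{\L}^-$ have a leaf at $q_j$, which by the discussion preceding the lemma requires $j \geq 3$ together with $\a_j < \pi$ and $\b_j < \pi$. Thus the set of pinch points is exactly
\[
\{\, q_j : j \geq 3,\ \a_j<\pi,\ \b_j<\pi\,\}\,,
\]
and each hypothesis of the theorem is designed to force this set to be empty, so that the ``at most $k-2$ pieces joined at $k-2$ vertices of $Q$'' clause of Lemma~\lemref{Unf2Cylinder} collapses to a single simple geodesic polygon on~$\mathcal{C}$.

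First I would verify the no-pinch claim in each case. Under hypothesis~(1), $k \leq 2$ gives no indices $j \geq 3$, so the pinch set is vacuous; the degenerate subcases $k=0$ (simple closed geodesic, no partial merge required) and $k=1$ (geodesic loop, with $q_1$ absorbed via an initial merge to an interior vertex of $P^+$ and of $P^-$) fall under the introductory discussion of Chapter~\chapref{SpiralTree3D}. Under hypothesis~(2), since each pinch candidate consumes two non-$\pi$ angles (one~$\a$ and one~$\b$) at the same vertex of $Q$, the bound of at most two non-$\pi$ angles allows at most one pinch candidate; I would relabel $Q$ counterclockwise so that this candidate (if any) is $q_1$, choose any consecutive neighbour as $q_2$, and absorb the candidate into the initial partial merge.

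Next I would argue that rolling $\mathcal{C}$ onto the plane develops $P$ to a net. By Eq.~\eqnref{GBepsilon} each half-surface of $\mathcal{C}$ has total partial curvature exactly~$2\pi$, so after merging all vertices on each side the resulting cone is flat---a Euclidean half-cylinder---and the two half-cylinders glued along $Q$ form a Euclidean cylinder. Rolling $\mathcal{C}$ on a plane is a local isometry that is globally injective on one full turn; since the image of $P$ on $\mathcal{C}$ together with the triangle insertions of the partial merges tiles $\mathcal{C}$ exactly once (by area accounting), that image traverses the circumference of $\mathcal{C}$ exactly once. Cutting $\mathcal{C}$ along a generator that crosses the $P$-image (and avoids the merge-insertion region) therefore carries the single simple geodesic polygon from step one isometrically to a single simple polygon in the plane, i.e., a net of $P$.

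The main obstacle will be handling the degenerate initial-merge situations. In case~(2) with exactly one pinch candidate, the chosen partner $q_2$ has $\o_2^\e = 0$ on at least one side, so the corresponding partial-merge triangle collapses and the initial ``merge'' on that side must instead merge $q_1$ with an interior vertex; in case~(1) with $k=1$ there is no $q_2$ at all. One must check that in these configurations the spiral algorithm of Chapter~\chapref{SpiralTree3D} still produces slit trees $\tilde{\L}^\e$ whose only leaves on $Q$ lie within the initial pair, so that the Lemma~\lemref{Unf2Cylinder} decomposition genuinely collapses to one piece and the final rolling step is justified.
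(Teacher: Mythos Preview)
Your proposal is correct and follows essentially the same approach as the paper's proof, which is quite brief: the paper simply invokes Lemma~\lemref{Unf2Cylinder} for case~(1), and for case~(2) observes that the two slit trees share a leaf at $q_i$ only when both $\a_i<\pi$ and $\b_i<\pi$, so the bound on non-$\pi$ angles reduces the piece count. Your version is considerably more explicit---you spell out the relabeling maneuver that absorbs the unique pinch candidate into the initial merge pair $q_1,q_2$, and you attempt to justify the rolling step via an area/circumference argument, whereas the paper relies on the earlier remark that ``rolling the cylinder $\mathcal{C}$ on the plane cannot cause overlap.'' The degenerate cases you flag at the end (no $q_2$ when $k=1$, or $\o_2^\e=0$ forcing a merge with an interior vertex) are legitimate concerns that the paper does not address explicitly either; they fall under the informal discussion preceding Lemma~\lemref{Unf2Cylinder} rather than being formally verified.
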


\begin{proof}
Lemma~\lemref{Unf2Cylinder} established the specialness of $k \le 2$: then
the unfolding of $P$ onto $\cal{C}$ is a single, simple geodesic polygon.

When either of the angles $\a_i$ or $\b_i$ incident to $q_i$ is equal to $\pi$,
there is no need to partially merge to $q_i$ from one side or the other.
The two slit trees can have a common leaf only at a vertex $q_i$ 
having to both sides angles  strictly less than $\pi$.
This reduces the number of pieces from $k-2$.
\end{proof}

Returning to Fig.~\figref{IcosaCutsOnly3D}
with $Q=v_1 v_2 v_3 v_4 v_5$, $\a_i=\frac{2}{3}\pi$ and $\b_i=\pi$,
so there are no angles $< \pi$ to both sides, and Theorem~\thmref{RollingNet} applies. 
We will indeed see below (Fig.~\figref{IcosaCylPhotos}) this leads to a net
for the icosahedron.

We believe that every convex poyhedron has a a simple closed quasigeodesic containing at most one vertex, see Open Problem~\openref{QVert2}.
If this would be true, the above result would provide vertex-merging nets for all convex polyhedra.
We discuss this question of 
the number of vertices on a simple closed quasigeodesic in Chapter~\chapref{VertQuasigeo}.

\bs

The case $k=0$  (i.e., $Q$ is a simple closed geodesic) is considered next.
In this case, $P$ contains a region $R$ isometric to a cylinder; assume $R$ is maximal with respect to inclusion.
Then the two boundary components of $R$ are simple closed quasigeodesics, denoted by $Q^\e$.
We then denote by $P^\e$ the caps of $P$ bounded by $Q^\e$ outside $R$, and apply the previous considerations.
Precisely, we unfold both $P^\e$ onto the same cylinder, which in this case contains $R$, and further roll the cylinder to 
obtain a net of $P$.

The case $k=0$ is also special in that we can obtain an unfolding of $P$ onto an isosceles tetrahedron 
(as opposed to the doubly covered triangles obtained in Lemma~\lemref{Unf2Cones} for $k=1$).
In this case, $Q$ is a simple closed geodesic.
Then the merging processes end with two vertices of total curvature $2 \pi$, to each side of $Q$. 
If both of those vertices (to the same side of $Q$) have curvature $\pi$, we 
have reached half of an isosceles tetrahedron.
Otherwise, suitably choosing a partial merge between the two vertices results in one of them having
curvature $\pi$, and a new merge vertex also of curvature $\pi$. Globally, we obtain $\cal{U}$ an isosceles tetrahedron,
the special case of vm-irreducible surfaces.
This discussion is consistent with the above argument: once we have an isosceles tetrahedron, 
we can obtain the cylinder by cutting open a pair of opposite edges.

\begin{thm}
\thmlab{Geodesic_VM_net}
If $P$ contains a simple closed geodesic then, unfolding it as described above provides a net.
\end{thm}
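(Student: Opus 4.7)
The plan is to reduce the theorem to Theorem~\thmref{RollingNet} applied to each of the two caps determined by the maximal cylinder $R$ containing $Q$. First I would extract the data set up in the paragraph preceding the theorem: the maximal cylindrical region $R \subset P$ containing the simple closed geodesic $Q$, its two boundary components $Q^+$ and $Q^-$ (both simple closed quasigeodesics), and the caps $P^\e$ bounded by $Q^\e$ on the side away from $R$. The key structural observation is that, because $R$ is isometric to a cylinder, every vertex on $Q^\e$ has surface angle exactly $\pi$ on the cylinder side; moreover, by maximality of $R$, each $Q^\e$ must contain at least one vertex, for otherwise we could continue $R$ past a smooth closed geodesic and contradict maximality.

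Next I would apply the spiral vertex-merging algorithm (Theorem~\thmref{SpiralAlg} or Theorem~\thmref{Ztree}) independently to each cap $P^\e$, taking $V$ to consist of all vertices of $P$ enclosed by $Q^\e$, including those on $Q^\e$ itself via the partial merges from Lemma~\lemref{Unf2Cylinder}. Because every vertex on $Q^\e$ has $\pi$-angle on the cylinder side, none of them is of the "non-$\pi$ angle to both sides" type in the sense of Theorem~\thmref{RollingNet}(2); that condition is thus satisfied vacuously (with zero such vertices), so the unfolding of each cap $P^\e$ onto the cylinder is a single simple geodesic polygon whose boundary contains $Q^\e$ as a subarc, developed isometrically onto the rim of the cylinder $\cal C$ of which $R$ is a subset. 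Since every slit of $\tilde{\L}^\e$ lies strictly inside its cap (no leaf of $\tilde{\L}^\e$ is a vertex on $Q^\e$ needed by $\tilde{\L}^{-\e}$ as well, precisely because of the $\pi$-angle condition), the two cap-unfoldings combine with $R$ along $Q^\e$ into a single simple polygonal unfolding of all of $P$ onto $\cal C$.

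Finally I would roll $\cal C$ onto the plane. As noted in the discussion immediately preceding Theorem~\thmref{RollingNet}, rolling a cylinder onto the plane develops it isometrically to an infinite planar strip and so cannot introduce overlap, hence the image of $P$ inside $\cal C$ develops to a simple polygon in the plane, which is by definition a net of $P$. The principal obstacle I expect lies in the previous paragraph: rigorously certifying that the two cap-unfoldings truly match along the cylindrical region $R$ without any slit from $P^+$ crossing into $R$ and interfering with the $P^-$ side. The $\pi$-angle condition on every vertex of $Q^\e$ is what makes this bookkeeping succeed, because it prevents the spiral algorithm from ever continuing a merge across $Q^\e$; once this is in hand, the rest of the argument is an immediate invocation of Theorem~\thmref{RollingNet} and the rolling-cylinder observation.
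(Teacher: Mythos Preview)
Your proposal is correct and follows the same approach as the paper, which in fact gives no formal proof beyond the descriptive paragraph preceding the theorem statement. You have supplied more detail than the paper does: identifying the maximal cylinder $R$, its boundary quasigeodesics $Q^\e$, the $\pi$-angle condition on the cylinder side, and separate spiral merges in each cap.

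One small point of precision: your appeal to Theorem~\thmref{RollingNet}(2) is slightly informal, since that theorem is phrased for a single quasigeodesic $Q$ on a closed polyhedron, with slit trees on \emph{both} sides of $Q$ potentially sharing leaves. In your situation the two slit trees $\tilde\L^+$ and $\tilde\L^-$ live in disjoint caps separated by the vertex-free region $R$, so they cannot share leaves regardless of angles; the $\pi$-angle condition is what guarantees (via the proof of Theorem~\thmref{RollingNet}) that each cap separately unfolds to a single simple polygon on the cylinder. This is exactly the mechanism the paper has in mind with ``apply the previous considerations,'' and your identification of it is correct.
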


We next provide two examples illustrating 
this theorem.


\subsection{Cube Example}

To illustrate Theorem~\thmref{Geodesic_VM_net},
we revisit the cube example in Section~\secref{CubeExamples}
(Fig.~\figref{Cube_VM_triangle_3D}).
There we sequentially-merged three vertices on the top face,
$v_7,v_8$ and then $v_5$ with $v_{78}$, and symmetrically three on
the bottom face. 

Instead now we 
merge all four vertices on the top face, and then
on the bottom face. So, for a quasigeodesic $Q$ around the middle of the cube, we
are merging all vertices inside $P^+$, and all vertices inside $P^-$,
leading to a cylinder.

We perform the same two initial merges on the top vertices, resulting again in
$v_{578}$. 
Let $a = m'_1$ and $b=m'_2$, the two points on the top face where the
geodesic segments from the merge vertices $m_1=v_{12}$
and $m_2=v_{578}$ enter the top face of the cube.

Note that the angle incident to the merge vertex $v_{578}$
is $90^\circ$, 
and the angle incident to the as-yet unmerged top vertex $v_6$
is $270^\circ$. 
So a merge of $v_{578}$ with $v_6$ would not produce a triangle pair,
because the sum of their curvatures is $2 \pi$.
However, we can imagine a merge resulting in a pair of unbounded parallelograms.
If we cut the surface along the geodesic segment $v_6 v_{578}$
(of length $2 \sqrt{2}$ for a unit cube)
and insert twin parallelograms, the result is a cylinder unbounded above.
See Fig.~\figref{CubeCyl_3D}.
Note the two $45^\circ$ angles inserted at $v_6$ flattens that vertex,
and the insertion of the two $135^\circ$ angles flattens $v_{578}$.
Symmetric merges on the bottom-face vertices leads to an unbounded
cylinder in both directions.

\begin{figure}[htbp]
\centering
\includegraphics[width=0.85\textheight]{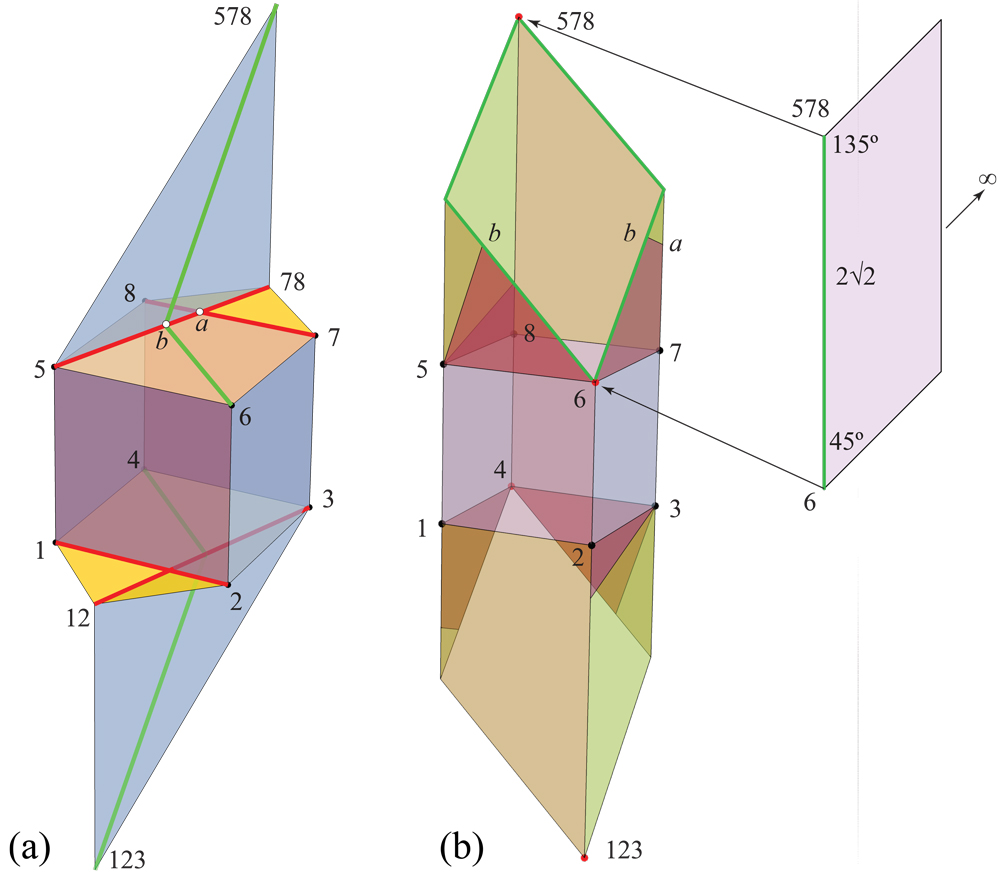}
\caption{(a)~Cutting along the $v_6 v_{578}$ geodesic segment (green),
and
inserting double parallelograms~(b) 
leads to an unbounded cylinder above,
and similarly below.
In~(b) the yellow regions are inserted merge triangles; pink regions
pieces of the top and bottom cube faces.}
\figlab{CubeCyl_3D}
\end{figure}

Unrolling the cylinder unfolds the cube to a non-overlapping net:
Fig.~\figref{CubeCylUnf}.
\begin{figure}[htbp]
\centering
\includegraphics[width=0.75\linewidth]{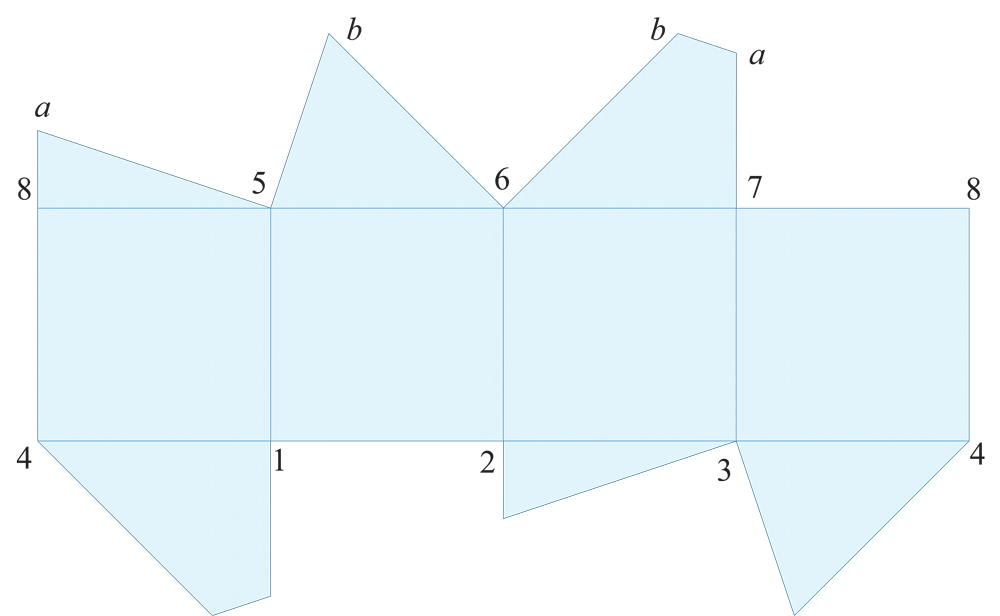}
\caption{Unfolding of cube to a net by rolling the cylinder on the plane.}
\figlab{CubeCylUnf}
\end{figure}

\newpage
\subsection{Icosahedron Example}
For a second example of Theorem~\thmref{Geodesic_VM_net},
we revisit and continue the example of the icosahedron from 
Sections~\secref{IcosahedronExample1} and~\secref{IcosahedronExample2}.
Fig.~\figref{IcosaCutsOnly3D} below repeats
for easy reference
Fig.~\figref{IcosaCuts3D}(a) 
showing the cuts $\tilde{g}_i$.
\begin{figure}[htbp]
\centering
\includegraphics[width=0.5\linewidth]{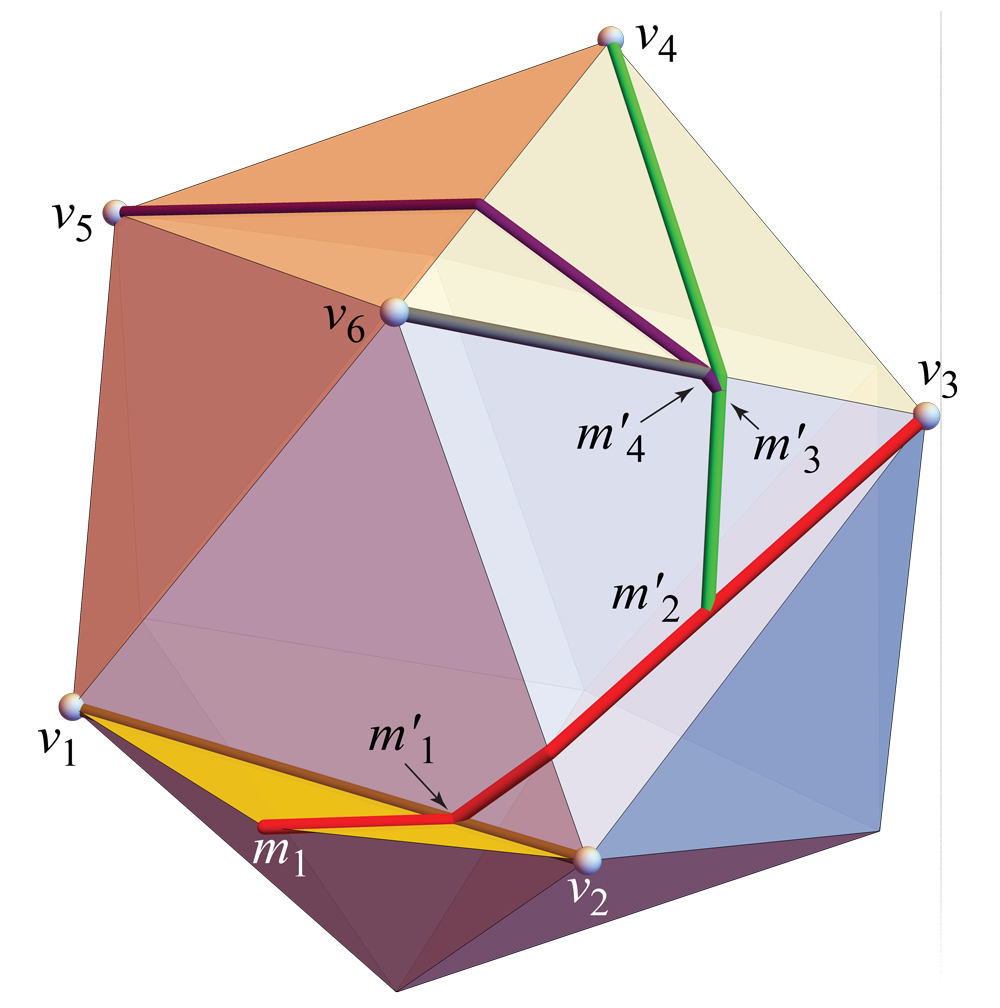}
\caption{The vertex-merge cuts on $P$.
Repeat of Fig.~\protect\figref{IcosaCuts3D}(a).}
\figlab{IcosaCutsOnly3D}
\end{figure}

The triangles inserts $T_1,T_2,T_3,T_4$ are shown in
Fig.~\figref{IcosahedronTrianglesOnly}.
Note the half angles at the merge vertex $m_i$ apexes of the triangles 
are $120^\circ,90^\circ,60^\circ,30^\circ$ respectively.
The fifth merge vertex $m_5$ would have zero angle, and represents the
infinite parallelogram that sends $m_5$ to infinity.
\begin{figure}[htbp]
\centering
\includegraphics[width=0.75\linewidth]{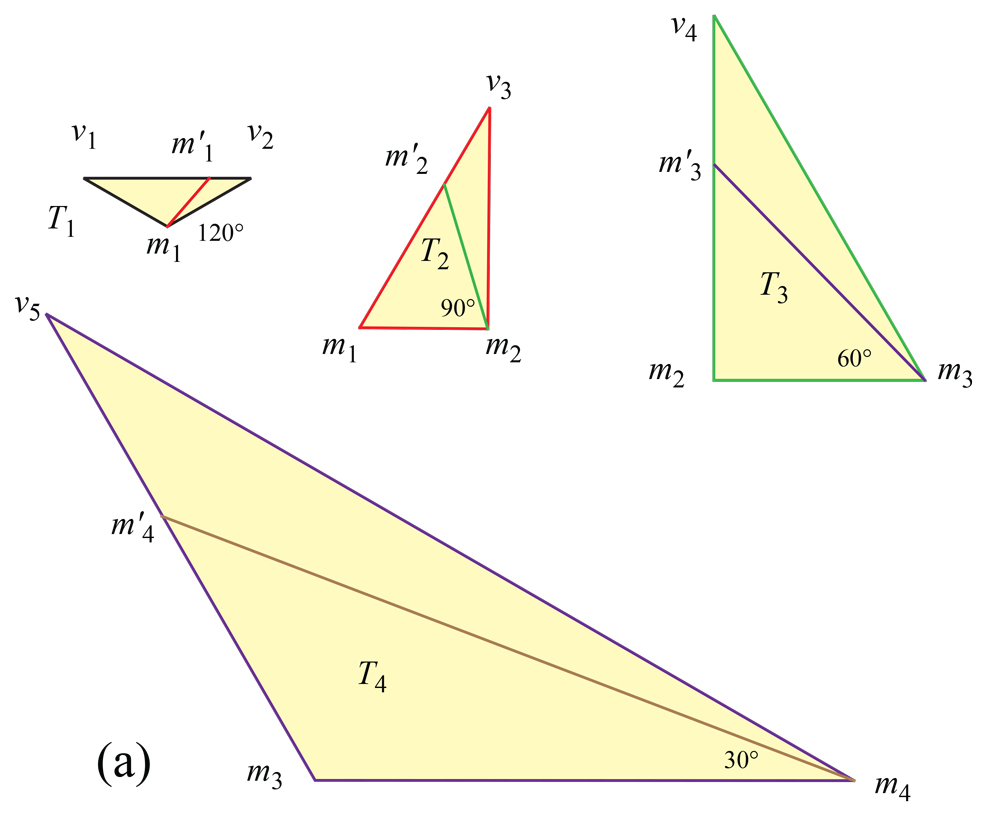}
\caption{The four triangles $T_i$. Each is doubled to $T^2_i$ and inserted
along $g_i$.}
\figlab{IcosahedronTrianglesOnly}
\end{figure}

Inserting the doubled triangles $T^2_i$ along the cuts $g_i$ 
produces the layout shown in Fig.~\figref{IcosaAssembly}.
Here we only show the top half of the icosahedron.
\begin{figure}[htbp]
\centering
\includegraphics[width=1.0\linewidth]{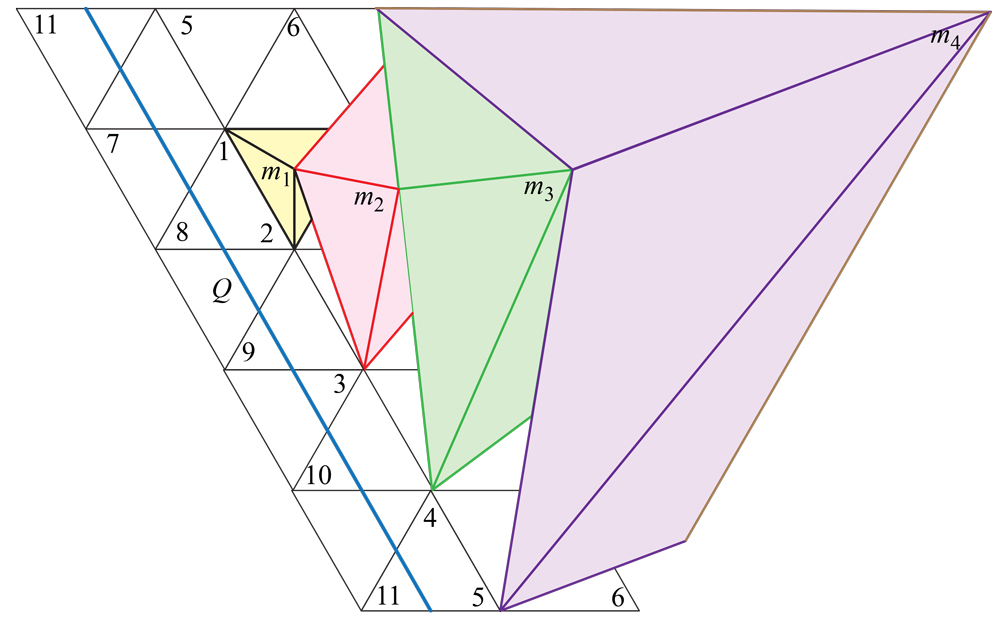}
\caption{Unfolding of the top half of the icoshedron with $T^2_i$ 
inserted (shaded polygonal domains), $i=1,2,3,4$.
Icosahedron faces are white; $Q$ is blue.}
\figlab{IcosaAssembly}
\end{figure}

The assembly closes to a half-cylinder shown in Fig.~\figref{IcosaCylPhotos}.
The infinite parallelogram $T_5$ with angles $60^\circ$ and $120^\circ$
(not shown)
would attach at $v_6$ and $m_4$ respectively.
\begin{figure}[htbp]
\centering
\includegraphics[width=0.75\linewidth]{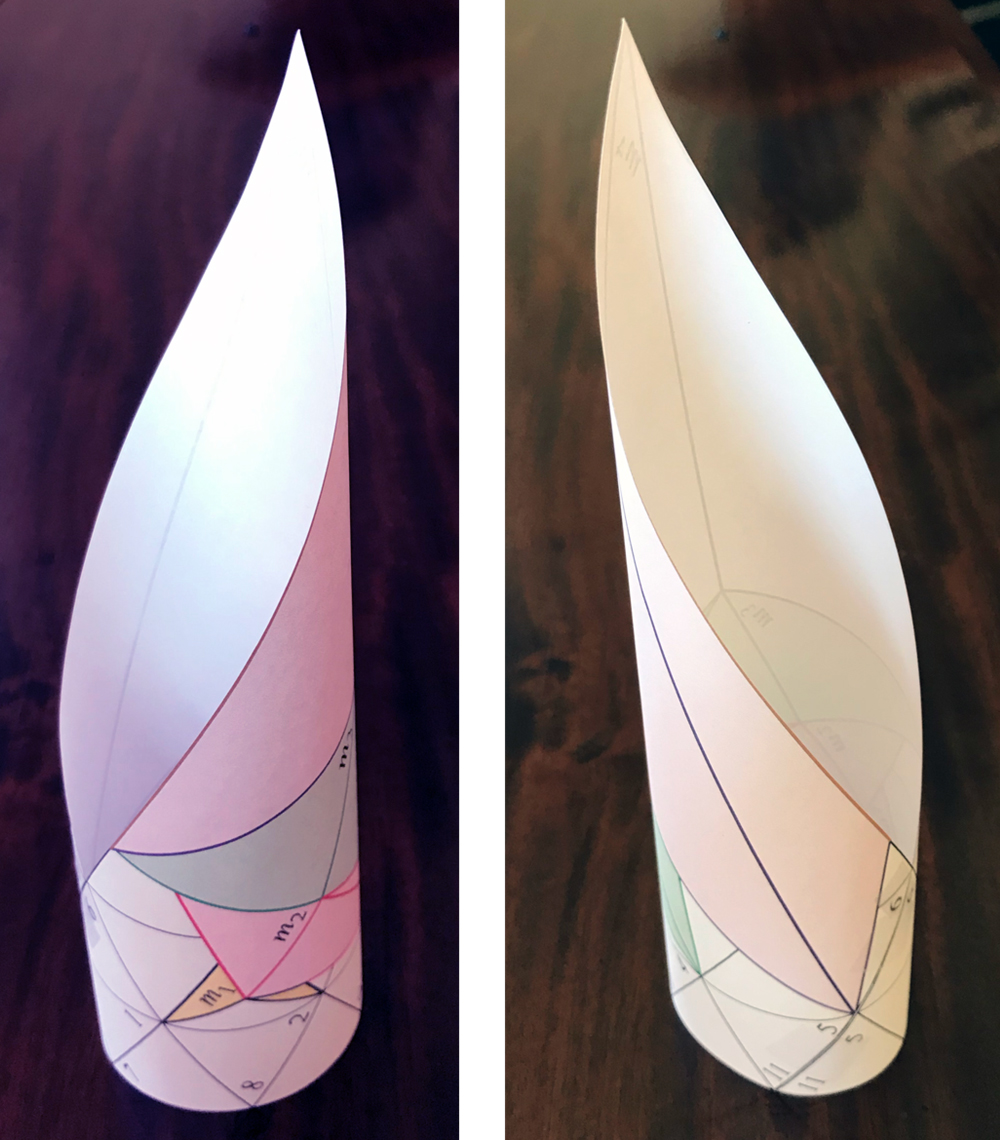}
\caption{The half-cylinder obtained from Fig.~\protect\figref{IcosaAssembly}
by joining the two images of $v_{11} v_5 v_6$.}
\figlab{IcosaCylPhotos}
\end{figure}
Joining the two symmetric half-cylinders together and rolling on the plane produces
a net for the icosahedron, the white faces in Fig.~\figref{IcosaAssembly}.

Earlier we discussed the quasigeodesic $v_1 v_2 v_3 v_4 v_5$, call it $Q'$.
Note that $Q'$ can be viewed as the $Q$ in Fig.~\figref{IcosaAssembly}
slid upward parallel to itself
until it touches those vertices.
So both lead to the same embedding on the cylinder $\cal{C}$.


%
%
%


\chapter{Vertices on Quasigeodesics}
\chaplab{VertQuasigeo}
Theorem~\thmref{RollingNet} demonstrated the importance 
in our context of the number of vertices on a quasigeodesic.
If, as we conjecture in Open Problem~\openref{QVert2},
every convex polyhedron $P$ has a quasigeodesic $Q$ containing at most
one vertex,
then the vertex-merging described in that theorem leads to an unfolding
of $P$ to a cylinder $\cal{C}$ and then to a net for $P$.
In this chapter, we prove that the space ${\cal P}_n$ of convex polyhedra of $n$ vertices
contains infinitely many polyhedra with quasigeodesics through one or two vertices,
or in fact through any number $k$ of vertices, $1 \le k \le n$.
``Infinitely many" is expressed as a set 
in ${\cal P}_n$ with non-empty interior.
Precise definitions are given below.


\section{Notation}
We continue to use $P$ for a convex polyhedron and
$Q$ for a quasigeodesic.
As in the previous chapter,
$V(Q)=\{q_1, \ldots, q_k\}$ is the set of vertices on $Q$; hence $k=|V(Q)|$.
The spaces of polyhedra are defined as follows:

\begin{itemize}
\item For any $n \in \N$, let  ${\cal P}_n$ be the space of all convex polyhedra in $\Rs$ with $n$ vertices,
with the topology induced by the usual Pompeiu-Hausdorff metric.
Two polyhedra in ${\cal P}$ are then close to each other if and only if they have close respective vertices.
(See Chapter~\chapref{Punfoldings}.) 
\item For all $0 \leq k \leq n$, ${\cal Q}_k ={\cal Q}_k(n)$ denotes  the subset of ${\cal P}_n$ such that
every $P \in {\cal Q}_k$ has a simple closed quasigeodesic $Q$ through exactly $k$ vertices.
\end{itemize}

\section{${\cal Q}_k$ Theorem}
The goal of this chapter is to prove the following result.

\begin{thm}
\thmlab{Q_012}
$\mbox{}$ \\ 
\vspace{-12pt}
\begin{enumerate}[label={(\arabic*)}]
\item ${\cal Q}_0$ has empty interior in ${\cal P}_n$.
\item ${\cal Q}_k$ has non-empty interior in ${\cal P}_n$, for all $1 \leq k \leq n$.
\end{enumerate}
\end{thm}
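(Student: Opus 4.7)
For part (1), the plan is to show that the mere existence of a simple closed geodesic imposes a rigid arithmetic constraint on the curvatures $\omega_1,\ldots,\omega_n$ of $P$. Indeed, a simple closed geodesic $G$ (passing through no vertices, with no turning anywhere) partitions $P$ into two topological disks, and applying the Gauss--Bonnet Theorem to each disk yields that the total curvature on each side of $G$ equals exactly $2\pi$. Hence, if $P \in {\cal Q}_0$, the multiset $\{\omega_1,\ldots,\omega_n\}$ admits at least one partition into two subsets each summing to $2\pi$. There are at most $2^n$ such partitions, and each defines a closed codimension-one condition on the curvature tuple. Since the curvatures vary continuously with vertex positions, ${\cal Q}_0$ is contained in a finite union of nowhere-dense closed subsets of ${\cal P}_n$, so it has empty interior.

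For part (2), the core construction handles the generic range $3\le k\le n-2$ via symmetric bipyramids. Take a regular $k$-gon $X=x_1\ldots x_k$ in a plane $\Pi$, and place apices $v^+,v^-$ on the axis of $X$, symmetric across $\Pi$, at sufficiently large height. For such a bipyramid, the surface angles $\alpha_i$ and $\beta_i$ at each $x_i$ (above and below $\Pi$) approach $\pi$ from below as height increases, so both remain strictly less than $\pi$. Hence $\partial X$, traversed along the equatorial edges, is a simple closed quasigeodesic passing through exactly the $k$ vertices $x_i$. For $n>k+2$, add the remaining $n-k-2$ vertices as small convex bumps inserted inside one upper lateral face $v^+x_jx_{j+1}$; these neither touch $\partial X$ nor disturb its angle inequalities. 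The key observation is that the strict inequalities $\alpha_i,\beta_i<\pi$, together with the combinatorial 1-skeleton containing the cycle $x_1\ldots x_k$, are preserved under sufficiently small perturbations of the vertex positions, since surface angles depend continuously on them. Therefore this bipyramid lies in the interior of ${\cal Q}_k(n)$.

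For the edge cases $k\in\{1,2,n-1,n\}$ the same strategy applies with different base shapes. For $k=n-1$, a low pyramid over a convex $(n-1)$-gon $X$ makes $\partial X$ a quasigeodesic with strict inequalities, while the apex is the unique vertex off $\partial X$. For $k=n$, the doubly-covered convex polygon is the limiting case, but one obtains non-degenerate interior points by using near-flat polyhedra: for $n=4$ a generic isosceles tetrahedron has the Hamiltonian edge-cycle $v_1v_2v_3v_4$ as a quasigeodesic (the two side-angles at each vertex sum to $\pi$, so both are strictly less than $\pi$); for larger $n$ one similarly inscribes a Hamiltonian polygonal quasigeodesic on a ``pillow-like'' convex polyhedron with all vertices close to a common plane. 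For $k=1,2$, one uses pyramid-over-polygon or bipyramid constructions in which a meridian curve visits exactly one, respectively two, vertices and meets both side-angle bounds strictly.

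The expected main obstacle is verifying perturbation stability in the extreme cases $k=1$ and $k=n$, since the natural explicit examples live on symmetric loci of positive codimension in ${\cal P}_n$, and the quasigeodesic is not merely an edge cycle of the $1$-skeleton but a curve whose geodesic segments between consecutive vertices must be tracked under deformation. The plan is to upgrade the symmetric seed examples to open families by showing that the quasigeodesic angle conditions can be made strict at every traversed vertex, and that the geodesic subarcs between consecutive traversed vertices depend continuously (in fact smoothly off the cut locus) on the polyhedron's vertex coordinates, via an implicit-function-style argument combined with the local classification of shortest paths on convex polyhedra. Once strict inequalities are secured for a seed $P_0$, a standard compactness and continuity argument on a small neighborhood of $P_0$ in ${\cal P}_n$ produces a continuously varying quasigeodesic through exactly $k$ vertices, establishing the open-interior claim.
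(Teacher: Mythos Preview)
Your perturbation lemma---that strict inequalities $\alpha_i,\beta_i<\pi$ at each vertex on $Q$ persist under small deformations of $P$, with the geodesic arcs of $Q$ between consecutive vertices varying continuously---is exactly the paper's Lemma~\lemref{Q_on-neighborhood}, and it is the heart of both approaches. Part~(1) is likewise the same: the paper cites it as known, and your Gauss--Bonnet partition sketch is the standard argument.

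Where you diverge is in the seed examples. You build three-dimensional examples case by case (bipyramids for $3\le k\le n-2$, a low pyramid for $k=n-1$, near-flat polyhedra for $k=n$, and an unspecified ``meridian'' construction for $k=1,2$). The paper instead uses doubly-covered convex $n$-gons as seeds throughout, which is simpler and covers all $1\le k\le n$ with essentially one idea. For $k=1$, the quasigeodesic is the width segment of the polygon (a vertex to its perpendicular foot on an opposite edge); for $k=2$, it is the extrinsic diameter (vertex to vertex); for $3\le k\le n$, the polygon is a regular $k$-gon with the remaining $n-k$ vertices sprinkled on a short circular arc replacing one edge, and the quasigeodesic is the original $k$-gon boundary. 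In every case the strict angle inequalities are immediate from plane geometry, and the perturbation lemma then fattens the degenerate seed to an open set of positive-volume polyhedra in ${\cal P}_n$.

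The practical gap in your proposal is exactly where you flag it: for $k=1,2$ your meridian construction is not made concrete, and getting a closed curve through exactly one vertex with strict angle bounds on both sides on a pyramid or bipyramid is not automatic (for instance, a bipyramid meridian through both apices already hits two vertices). The paper's width and diameter constructions on doubly-covered polygons dispatch these cases in a few lines, so you would save effort by adopting that approach at least for small $k$.
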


\noindent
If our conjecture in Open Problem~\openref{QVert2}
is true, then ${\cal Q}_0 \cup {\cal Q}_1 = {\cal P}_n$.

It is not surprising that ${\cal Q}_0$ is rare, as it requires a partition of
vertex curvatures into two halves of exactly $2\pi$ each.
Indeed, (1)~is a known result~\cite{g-tcscc-91}, \cite{gal2003convex}.
As far as we are aware, (2)~is new.

The proof of the second part of Theorem~\thmref{Q_012} will follow directly from 
several lemmas.
Our approach is to identify a polyhedron $P$
in ${\cal Q}_k$ for each $k$, and then ``fatten" $P$ via the next lemma.

\begin{lm}
\lemlab{Q_on-neighborhood}
Assume the convex polyhedron $P$ has a simple closed quasigeodesic $Q$ with 
$|V(Q)|=k \geq 1$ and 
\begin{equation}
\label{ab-strict}
\a_i < \pi, \: \: \b_i < \pi, \: \: \forall \, 1\leq i \leq k. \tag{$\ast$}
\end{equation}
Then, all polyhedra $P'$ sufficiently close to $P$ in ${\cal P}_n$ have such a quasigeodesic.
\end{lm}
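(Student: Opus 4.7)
The plan is to transfer $Q$ from $P$ to every nearby $P'$ by preserving the combinatorial (face-crossing) structure of $Q$, and to use the strict inequalities $(\ast)$ as the open condition guaranteeing the quasigeodesic property survives perturbation. First I would decompose $Q$ into its $k$ geodesic arcs $\g_1,\ldots,\g_k$, with $\g_i$ joining $q_i$ to $q_{i+1}$ (indices mod $k$), each $\g_i$ traversing a definite sequence of faces of $P$ and meeting no vertex of $P$ in its relative interior. For $P'$ sufficiently close to $P$ in the Pompeiu-Hausdorff metric, combinatorial structure is preserved: the vertices admit a natural labeling $q'_i$ close to $q_i$, and the face incidences of $P'$ match those of $P$.

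Next I would construct the analogue $\g'_i$ of each $\g_i$ on $P'$. Unfolding the ordered sequence of faces crossed by $\g_i$ into $\R^2$ sends $\g_i$ to a straight segment between the planar images of $q_i$ and $q_{i+1}$; doing the corresponding unfolding on $P'$ yields a nearby planar region (the face images depend continuously on $P'$), and the straight segment between the images of $q'_i$ and $q'_{i+1}$ refolds to a geodesic arc $\g'_i$ on $P'$. Because $\g_i$ crossed each successive edge of $P$ strictly in its interior and avoided all vertices other than $q_i, q_{i+1}$, continuity guarantees that, for $P'$ close enough, $\g'_i$ also crosses the corresponding edges of $P'$ in their interior and avoids all vertices in between. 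Concatenating gives a closed curve $Q'=\g'_1\cup\cdots\cup\g'_k$ on $P'$ meeting exactly the $k$ vertices $q'_i$.

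Then I would verify the quasigeodesic and simplicity conditions. The left/right surface angles of $Q'$ at $q'_i$ depend continuously on the incoming/outgoing tangent directions of $\g'_{i-1},\g'_i$ and on the face structure around $q'_i$, so $\a'_i\to\a_i$ and $\b'_i\to\b_i$ as $P'\to P$. Strictness of $(\ast)$ then yields $\a'_i<\pi$ and $\b'_i<\pi$ for $P'$ sufficiently close, which is exactly the quasigeodesic condition at each vertex of $Q'$ (while $Q'$ is geodesic elsewhere by construction). Simplicity of $Q'$ follows because $Q$ was simple and the arc $\g'_i$ stays uniformly close to $\g_i$; any new self-intersection would require two arcs to become tangent, contradicting transversality inherited from $Q$. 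Thus $P'$ lies in ${\cal Q}_k$ and satisfies $(\ast)$.

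The main obstacle will be making the unfolding-based continuity argument rigorous, in particular ensuring that as $P'$ varies the planar unfolding of the prescribed face sequence remains convex enough that the straight segment between the images of $q'_i$ and $q'_{i+1}$ actually lies inside it and exits each successive edge image in its interior. This reduces to the observation that crossing an edge strictly interiorly, and avoiding vertices strictly, are open conditions on $({\cal P}_n,$ Pompeiu-Hausdorff$)$; once this is established for each of the finitely many crossings of the finitely many arcs $\g_i$, a single common neighborhood of $P$ in ${\cal P}_n$ works.
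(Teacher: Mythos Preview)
Your proposal is correct and follows essentially the same continuity strategy as the paper's proof. The paper's argument is considerably terser: it simply asserts that vertex angles and the geoarcs between consecutive $q_i$ depend continuously on the vertex positions (since each such arc stays a positive distance from all other vertices), and hence the strict inequalities $(\ast)$ persist under small perturbation. Your unfolding-based construction of $\g'_i$ and your explicit check of edge-interior crossings and simplicity are exactly the details the paper leaves implicit in that continuity claim.
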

\begin{proof}
The complete angles at the vertices of $P$ depend continuously on the vertex positions in $\Rs$.
Also, the geoarcs between two vertices remain separated from other vertices by positive distances,
so they also depend continuously on the vertex positions,
for small perturbations of the vertices of $P$.
Therefore, the strict inequalities $\a_i < \pi$, $\b_i < \pi$ everywhere
also remain strict for small perturbations of the vertices of $P$.

In other words, there exists a neighborhood of $P$, 
such that each polyhedron $P'$ in this neighborhood
still has a quasigeodesic $Q'$ with the same number of vertices as $Q$:
$|V(Q)|=|V(Q')|$.
And this is true for all $Q$ on $P$.
\end{proof}

In view of Lemma~\lemref{Q_on-neighborhood}, in order to prove the second part of Theorem~\thmref{Q_012}, it suffices to find examples of polyhedra
of $n$ vertices with $Q$ satisfying $|V(Q)|=k$ and Eq.~\eqref{ab-strict}, for 
each $1\leq k \leq n$.


\subsection{$|V(Q)|=1$}
Following the plan just articulated, we will identify doubly-covered convex 
polygons in ${\cal Q}_1$, and then apply Lemma~\lemref{Q_on-neighborhood}.

Recall that the \emph{width} of a convex polygon $P$ is the shortest distance between parallel supporting lines.
The characterization~(a) in the lemma below has long been known.
We also need~(b), which we could not find in the literature.

\begin{lm}
\lemlab{Width}
The width of $P$ is always achieved by (a)~one of the supporting lines flush with an edge $e$
of $P$, and (b)~the other line including a vertex that projects onto $e$.
\end{lm}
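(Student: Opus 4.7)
The plan is to parametrize width as a function of direction and exploit the piecewise-sinusoidal structure of the support function of a convex polygon. Let $\phi \in [0, 2\pi)$ denote the angle of an outward unit normal $\vec n(\phi) = (\cos\phi, \sin\phi)$, and set
\[
w(\phi) \;=\; \max_{p \in P} p \cdot \vec n(\phi) \;+\; \max_{p \in P} p \cdot \bigl(-\vec n(\phi)\bigr),
\]
the distance between the two supporting lines perpendicular to $\vec n(\phi)$. The width of $P$ equals $\min_\phi w(\phi)$, so we need to characterize where this minimum is attained.

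Part (a) will follow from a standard concavity argument. Call $\phi$ \emph{regular} if both supporting lines touch $P$ at a single vertex each, say $v_1(\phi)$ on the $+\vec n$ side and $v_2(\phi)$ on the $-\vec n$ side, and \emph{critical} otherwise (i.e.\ when a supporting line is flush with an edge). On a maximal open interval of regular directions, the vertices $v_1, v_2$ are constant, and $w(\phi) = (v_1 - v_2)\cdot \vec n(\phi)$, so $w''(\phi) = -w(\phi) < 0$. Thus $w$ is strictly concave on each regular interval and cannot attain a local minimum in its interior; every local (hence global) minimum of $w$ occurs at a critical $\phi$, i.e.\ with one supporting line flush with some edge $e$ of $P$.

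For part (b), I will analyze the one-sided derivatives of $w$ at a critical direction $\phi_0$ where the $-\vec n$-side line contains edge $e$ with endpoints $u_1, u_2$ (oriented so that $u_2 - u_1$ is a positive multiple of the in-line tangent direction $\vec n^\perp = (-\sin\phi_0, \cos\phi_0)$), while the $+\vec n$-side line contains a single vertex $v^\ast$. Just before $\phi_0$ the opposite vertex on the $-\vec n$ side is $u_1$, and just after it is $u_2$, so the one-sided derivatives are
\[
w'_-(\phi_0) = -(v^\ast - u_1)\cdot \vec n^\perp, \qquad w'_+(\phi_0) = -(v^\ast - u_2)\cdot \vec n^\perp.
\]
For $\phi_0$ to be a local minimum we need $w'_-(\phi_0) \le 0 \le w'_+(\phi_0)$, which translates to $(v^\ast - u_1)\cdot \vec n^\perp \ge 0$ and $(v^\ast - u_2)\cdot \vec n^\perp \le 0$. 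Together these say that the projection of $v^\ast$ onto the line through $e$ lies between $u_1$ and $u_2$, i.e.\ inside the segment $e$, which is exactly claim (b).

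The only subtle point I anticipate is the degenerate case where both supporting lines at $\phi_0$ are flush with edges simultaneously, or where $v^\ast$ projects to an endpoint of $e$; these boundary situations are handled by taking $v^\ast$ to be either endpoint of the opposite edge and noting that the same inequality argument applies, giving projection onto the closed segment $e$. No deep obstacle is expected; the real content is just recognizing that critical-event sign conditions on $w'$ force the orthogonal-projection property in (b).
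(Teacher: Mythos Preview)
Your approach is correct and genuinely different from the paper's. The paper proves (a) by citing \cite{houle1988computing} and proves (b) via a geometric ``preferred direction of rotation'' (PDR) argument: assuming the opposite vertex $c$ projects outside the edge $ab$, one rotates the two parallel lines about $a$ and $c$ and shows, via an explicit Pythagorean relation, that the separation strictly decreases, contradicting minimality. Your argument instead parametrizes the directional width $w(\phi)$, observes that it is a piecewise sinusoid with $w''=-w<0$ on regular arcs (forcing minima to occur at edge-flush directions, giving (a)), and then reads off (b) from the sign conditions $w'_-\le 0\le w'_+$ at a critical direction. This analytic route is arguably cleaner and handles (a) and (b) in one unified framework; the paper's PDR argument is more elementary and visual but treats the two parts separately.

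One computational caution for part (b): with your conventions ($\vec n^\perp=(-\sin\phi_0,\cos\phi_0)$ and $u_2-u_1$ a positive multiple of $\vec n^\perp$), the vertex on the $-\vec n$ side just \emph{before} $\phi_0$ is actually $u_2$, not $u_1$, and the derivative of $(v^\ast-u)\cdot\vec n(\phi)$ is $(v^\ast-u)\cdot\vec n^\perp$ with no minus sign. You have swapped the before/after labels and inserted an extra minus sign; these two slips exactly cancel, so your final inequalities $(v^\ast-u_1)\cdot\vec n^\perp\ge 0$ and $(v^\ast-u_2)\cdot\vec n^\perp\le 0$ are the correct ones and the conclusion stands. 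Just clean up the intermediate formulas before writing this up.
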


\begin{proof}
Claim~(a) is Theorem~2.1 in~\cite{houle1988computing}, 
and known earlier. Their proof 
uses a lemma (their Lemma~2.1) that says the following.
Let $u$ and $v$ be two points in the plane,
and parallel lines $L_u$ and $L_v$ through each respectively.
Then there exists a ``preferred direction of rotation" of the lines,
maintaining parallelism and maintaining contact with $u$ and $v$, 
that diminishes the separation between the lines. 
They call this the PDR lemma.
In the special case when the lines are
orthogonal to $vu$, then both directions of rotation reduce separation.
%
%

Claim~(b) relies on this PDR lemma.
But as they skip proving their lemma, we include a proof here.
Let $a b = e$ be the edge of $P$ with supporting line
$L_{ab}$ including $e$. 
Let the parallel line be $L_c$, including only vertex $c$,
with the separation between the lines width $w$.
Assume, in contradiction to claim~(b), that $c$ does not project onto $ab$.
The situation is then as depicted in Fig.~\figref{WidthPoly},
with $ac$ playing the role of $uv$ in the PDR lemma.

Rotate $L_{ab}$ and $L_c$ about $a$, clockwise in the figure---this is
the preferred direction.
Then $L_c$ pivots around point $p$, and is no longer supporting $P$.
Now move it (down in the figure) until it again contacts $c$. Call the new lines $L'_{ab}$ (red)
and $L'_c$ (green), and let $w'$ be their separation.
Note that $h^2 = w^2 + x^2$,
and $h^2 = (w')^2 + (x')^2$.
The rotation ensures that $x' > x$, and therefore $w' < w$, contradicting
the assumption that the width is $w$.

If $c$ instead does project into $ab$, as per claim~(b), 
say to point $q \in ab$,
then the line $L_{ab}$
is blocked from the preferred direction of rotation because it would penetrate $P$ at $q$.

If $c$ is the endpoint of an edge $cd$ parallel to $ab$, then
at least one of the four vertices $\{a,b,c,d\}$ must project onto the opposite edge,
so claim~(b) holds.
\end{proof}

\begin{figure}[htbp]
\centering
\includegraphics[width=1.0\linewidth]{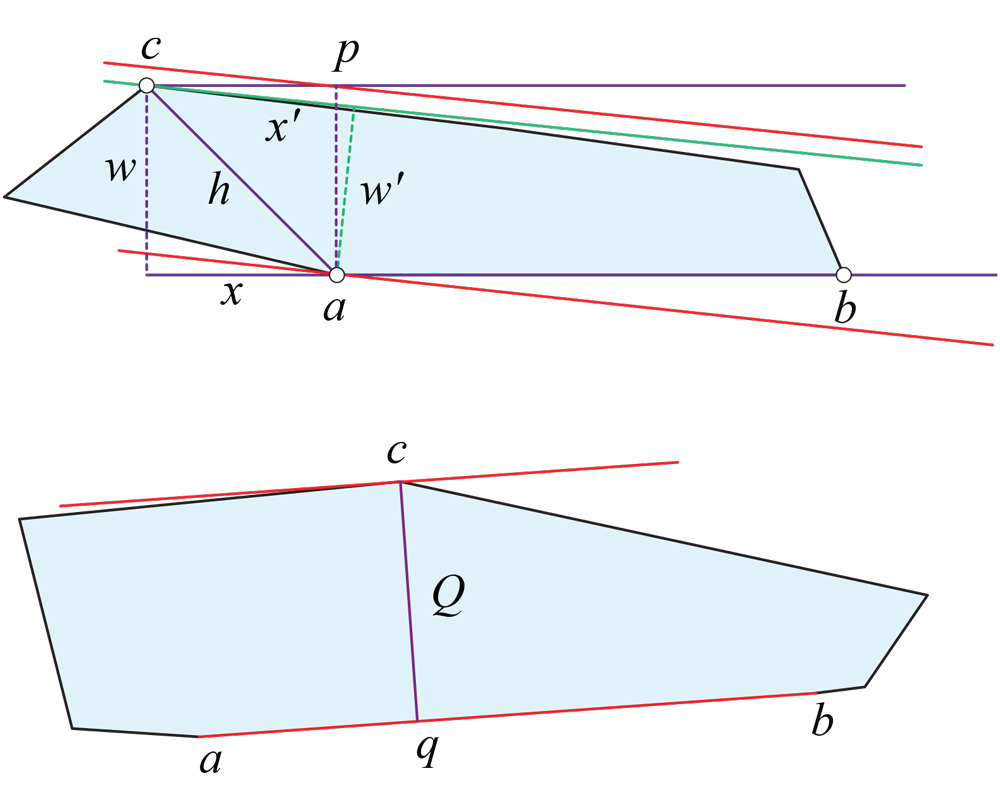}
\caption{(a)~Vertex $c$ projects outside $ab$.
(b)~$cq$ realizes the width.}
\figlab{WidthPoly}
\end{figure}

\begin{lm}
\lemlab{QuasiWidthNoParallel}
Every doubly-covered convex polygon $P$ with no parallel edges
has a quasigeodesic through one vertex.
\end{lm}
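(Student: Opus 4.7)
The plan is to construct an explicit quasigeodesic through exactly one vertex, using the width-realization of Lemma~\lemref{Width}. By that lemma, the width $w$ of $P$ is realized by an edge $e=ab$ together with a vertex $c$ whose orthogonal foot $q$ on the line through $ab$ lies in $\overline{ab}$. Orient coordinates so that $ab$ sits on the $x$-axis, $q$ lies at the origin, and $c=(0,w)$. Under the no-parallel-edges hypothesis, $c$ is the unique vertex of $P$ on the supporting line $y=w$, and every other vertex has $y$-coordinate strictly less than $w$. My candidate quasigeodesic $Q$ is the union of the perpendicular segment $cq$ on the front face of the doubly-covered $P$ with the perpendicular segment $cq$ on the back face, glued at $c$ and at $q$.

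Assuming $q$ lies in the interior of $ab$, the verification proceeds in two parts. At $q$, which is a non-vertex interior edge-point of the doubly-covered polygon, the two perpendicular segments meet $ab$ at right angles from opposite sides; unfolding across $ab$, they form a straight line, so $Q$ is locally geodesic at $q$. At $c$, let $\gamma_1,\gamma_2$ be the angles that $cq$ makes with the two polygon edges incident to $c$, so that $\gamma_1+\gamma_2=\alpha_c$, the interior polygon angle at $c$; the two surface angles of $Q$ at $c$ on the doubled polygon are then $\alpha=2\gamma_1$ and $\beta=2\gamma_2$ (each counted once from the front face and once from the back). Because every vertex of $P$ has $y$-coordinate at most $w=c_y$, each polygon edge at $c$ leaves $c$ in a direction with non-positive $y$-component, giving $\gamma_i\le\pi/2$; the no-parallel-edges hypothesis upgrades this to strict inequality, since $\gamma_i=\pi/2$ would force that edge to be horizontal and hence parallel to $ab$. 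Thus $\alpha,\beta<\pi$, and $Q$ is a quasigeodesic meeting exactly the vertex $c$ of $P$ and satisfying the strict condition needed later for stability (cf.\ Lemma~\lemref{Q_on-neighborhood}).

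The main obstacle is the degenerate case $q\in\{a,b\}$, in which $Q$ would also pass through one of the endpoint-vertices of $ab$ and spoil $|V(Q)|=1$. I expect to resolve this by observing that the angle estimate above needs only that $c$ lie on the supporting line parallel to some edge $e$ and that every vertex of $P$ stay on or below that line; it does not require $(e,c)$ to be the globally width-realizing pair. Under no-parallel-edges one should always be able to find some edge-vertex pair with these properties together with the perpendicular foot strictly interior: if the width-realizer has $q=a$, I can switch to an edge incident to $a$ or $c$ and repeat the analysis with its own unique farthest vertex, or argue by a small rotation of the supporting direction that slides the foot into the interior of a nearby edge. Making this reduction airtight---via a finite case analysis, or by a continuous rotational argument tracking the pair (edge of $P$ normal to the current direction, farthest vertex in the opposite direction) as the direction varies---is the step I expect to require the most care.
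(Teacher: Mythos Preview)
Your main construction and angle analysis at $c$ and $q$ are correct and essentially identical to the paper's proof. The only place you diverge is the degenerate case $q\in\{a,b\}$, which you flag as the main obstacle and propose to handle by switching edges or by a continuous rotational argument.

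The paper's resolution here is much simpler than you anticipate: it shows directly that this degenerate case cannot occur for a width-realizing pair. If $c$ projects to the endpoint $a$, then $ca$ is orthogonal to both parallel supporting lines $L_{ab}$ and $L_c$. The PDR lemma (in its special orthogonal case, stated in the proof of Lemma~\lemref{Width}) says that \emph{both} rotation directions then diminish the separation. One direction is blocked because $L_{ab}$ would penetrate $P$ along the edge $ab$; but the other direction rotates $L_{ab}$ away from $ab$ while still supporting $P$ at $a$, and (by no-parallel-edges) $L_c$ still supports $P$ at $c$. This strictly decreases the separation, contradicting the assumption that $(ab,c)$ realizes the width. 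Hence $q$ must lie in the open segment $ab$, and no edge-switching or continuous tracking argument is needed.
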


\begin{proof}
Let $Q$ be the segment from a vertex $c$ orthogonal to an edge $ab$ at a point $q$,
as guaranteed by Lemma~\lemref{Width} and illustrated in Fig.~\figref{WidthPoly}(b).
The angles on the doubled polygon
at $q$ are both $\pi$, and the left and right angles at $c$
are both $< \pi$
(strictly less than because of the no-parallel-edges assumption).
Therefore $Q=cq$ is a quasigeodesic.

If $q=a$, i.e., if $c$ projects to an endpoint of $ab$, then
the PDR lemma shows that $c$ and $ab$ could not have realized the width;
see Fig.~\figref{Parallelogram}(a).
Both directions of rotation diminish the separation, with one direction blocked
by the flush edge.
Therefore, $q$ must project into the interior of $ab$, and $Q$ includes just the one
vertex $c$.
\end{proof}

Although not needed for Theorem~\thmref{Q_012}, for completeness
we also handle parallel edges.

\begin{lm}
\lemlab{QuasiWidthParallel}
If the width of a convex polygon $P$ is achieved on parallel edges,
then the doubly covered $P$
has a simple closed geodesic.
\end{lm}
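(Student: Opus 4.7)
The plan is to exhibit a closed curve $\gamma$ on the double of $P$ that crosses the boundary exactly twice, at interior points of the two parallel width-realizing edges, and to verify it is geodesic. Denote these edges by $e_1=ab$ and $e_2=cd$, lying on parallel supporting lines separated by distance $w$ equal to the width of $P$.

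The first step is to locate interior points $q_1\in\mathring{e}_1$ and $q_2\in\mathring{e}_2$ with $q_1q_2\perp e_1$. Since $e_1$ and $e_2$ both lie on supporting lines of $P$ in their common perpendicular direction, their orthogonal projections onto one another's lines overlap in a closed subinterval. I would argue that this overlap has positive length: if the projection of $\mathring{e}_2$ onto the line of $e_1$ met $e_1$ only at an endpoint, this endpoint would be a shared vertex of both edges, and the PDR-type rotation argument used in the proof of Lemma~\lemref{Width}(b) would produce a strictly smaller separation in a nearby direction, contradicting the fact that $w$ is the width. Hence interior $q_1,q_2$ with $q_1q_2\perp e_1\perp e_2$ exist.

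The second step is to form $\gamma$: starting at $q_1$ on the front copy, traverse the straight segment $q_1q_2$ across the interior of $P$, cross through the edge $e_2$ into the back copy at $q_2$, traverse the corresponding perpendicular segment $q_2q_1$ on the back copy, and close up by crossing $e_1$ at $q_1$. Since $q_1,q_2$ are the only boundary-crossing points and the two straight segments lie in disjoint (front/back) copies, $\gamma$ is simple.

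The third step is to verify $\gamma$ is geodesic. Along the interior of each straight segment, $\gamma$ is locally a Euclidean line in a flat face, hence geodesic. At $q_1$ (and symmetrically at $q_2$), the point is interior to edge $e_1$, so the total surface angle around $q_1$ on the double is $\pi+\pi=2\pi$ and a neighbourhood of $q_1$ is isometric to a flat disk. Under this local flat identification (unfolding the back copy across $e_1$), the perpendicularity of the back segment to $e_1$ means its unfolded extension is collinear with the front segment's extension beyond $q_1$. Thus $\gamma$ passes through $q_1$ as a straight line in the local flat metric, so it is geodesic at $q_1$; the same argument applies at $q_2$. Therefore $\gamma$ is a simple closed geodesic on the doubled $P$.

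The only real obstacle is the first step, i.e., ensuring one can choose $q_1,q_2$ strictly interior to the respective edges; once that is secured, everything else follows from elementary planar geometry and the flatness of the double along interior edge points. This completes Lemma~\lemref{QuasiWidthParallel}, and together with Lemma~\lemref{QuasiWidthNoParallel} handles all doubly-covered convex polygons, which then feed into Lemma~\lemref{Q_on-neighborhood} to establish the non-emptiness of the interior of $\mathcal{Q}_1$ in Theorem~\thmref{Q_012}(2).
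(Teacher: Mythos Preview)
Your proof is correct and follows essentially the same route as the paper's: both locate a perpendicular segment between the two parallel width-realizing edges with endpoints strictly interior to each edge, use a PDR rotation argument to rule out the degenerate vertex-to-vertex case, and take the resulting doubled segment as the simple closed geodesic. One small wording slip: in your first step, when the projections overlap only at an endpoint, the relevant configuration is that a vertex of $e_2$ projects to a vertex of $e_1$ (as in the paper's Fig.~\figref{Parallelogram}(b)), not that the two edges share a vertex---they cannot, being separated by distance $w>0$. Your PDR argument still applies to that configuration, so the proof goes through.
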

\begin{proof}
Let the width of $P$ be realized by edges $ab$ and $cd$.
If the projection of $cd$ onto $ab$ is a positive-width interval,
then a closed geodesic can be achieved by a pair of points,
one on each edge, strictly interior to this interval.

If instead the projection interval is a point, then that point corresponds
to a vertex-to-vertex projection, say $c$ to $b$ in
Fig.~\figref{Parallelogram}(b).
But in this circumstance, again the PDR lemma
shows that $|cb|$ cannot have been the width,
as rotation of the supporting lines toward the $< \pi/2$ side of $b$ and $c$
(counterclockwise in the figure) reduces the separation between the lines.
\end{proof}
\begin{figure}[htbp]
\centering
\includegraphics[width=\linewidth]{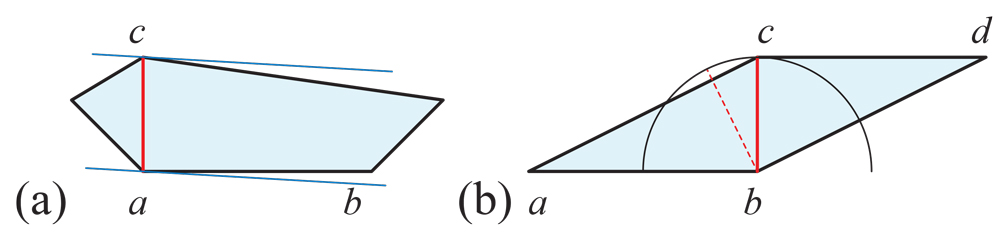}
\caption{Neither $ca$ (a) nor $bc$ (b) realizes the width.}
\figlab{Parallelogram}
\end{figure}

Note that the previous two lemmas together ensure that every doubly-covered
convex polygon has a quasigeodesic through at most one vertex.

The following lemma is not needed for our main goal, but 
might possibly help in resolving Open Problem~\openref{QVert2}.

\begin{lm}
\lemlab{ShortestQ}
The width-quasigeodesic identified in 
Lemmas~\lemref{QuasiWidthNoParallel}--\lemref{QuasiWidthParallel} 
is the shortest quasigeodesic for $P$.
\end{lm}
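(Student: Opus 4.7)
The plan is to prove that every simple closed quasigeodesic $Q'$ on the doubled polygon $P$ has length $\ell(Q') \geq 2w$, which combined with the length $2w$ of the width-quasigeodesic established in Lemmas~\lemref{QuasiWidthNoParallel}--\lemref{QuasiWidthParallel} gives its optimality. The strategy uses a 1-Lipschitz projection onto a suitable direction in the plane of $P$, combined with Gauss--Bonnet applied to the two half-surfaces bounded by $Q'$.

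For any unit direction $\vec{d}$ in the plane of $P$, the coordinate $u_{\vec{d}}(x) = \vec{d} \cdot x$ lifts to a 1-Lipschitz function on the doubled polygon. Parameterizing $Q'$ by arc length, the composition $u_{\vec{d}} \circ Q'$ is closed and 1-Lipschitz, so its total variation is at least $2\,\mathrm{extent}_{\vec{d}}(Q')$ and at most $\ell(Q')$; hence $\ell(Q') \geq 2\,\mathrm{extent}_{\vec{d}}(Q')$ for every choice of $\vec{d}$. It therefore suffices to exhibit a direction in which the extent of $Q'$ is at least $w$. The curve $Q'$ separates doubled $P$ into two half-surfaces $H^+, H^-$, both of strictly positive total Gauss curvature by Gauss--Bonnet, so each contains at least one vertex of $P$; pick $v^+ \in H^+$ and $v^- \in H^-$. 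The plan is to show that the extent of $Q'$ in the direction $\vec{d}$ perpendicular to $\overline{v^+ v^-}$ is at least $w$: the straight segment $\overline{v^+ v^-}$ inside $P$, taken on either sheet of the doubled polygon, joins the two half-surfaces and must cross $Q'$, and by choosing $v^+, v^-$ to maximize the extent of $P$ perpendicular to their connecting line one reduces to the Lemma~\lemref{Width} configuration, forcing $\mathrm{extent}_{\vec{d}}(Q')$ to equal this full extent, which is at least $w$ by the definition of width as the minimum directional extent.

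The main obstacle is to ensure that such a maximizing pair $(v^+, v^-)$ yields an extent of at least $w$ even when $Q'$ separates the vertex set of $P$ in a pathological way (for instance both halves containing closely-spaced vertex clusters so the perpendicular extent between the clusters is naively short). I plan to resolve this by invoking the strict $\alpha\beta$-convexity of $Q'$ established in Lemma~\lemref{ag-is-ab}: at each corner of $Q'$ --- necessarily a vertex of $P$, since at a flat point on $\partial P$ the doubled total angle is $2\pi$ and the quasigeodesic condition forces a straight traversal --- the condition that the angles on both sides are at most $\pi$ forces $Q'$ to extend across $P$ rather than to loop tightly, so the extent of $Q'$ in the chosen direction indeed realizes the full Lemma~\lemref{Width} distance. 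Combining this span bound with the coarea inequality $\ell(Q') \geq 2\,\mathrm{extent}_{\vec{d}}(Q')$ yields $\ell(Q') \geq 2w$, with equality achieved by the width-quasigeodesic $cq$ itself, completing the proof.
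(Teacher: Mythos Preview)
Your projection inequality $\ell(Q')\ge 2\,\mathrm{extent}_{\vec d}(Q')$ is correct, but the argument collapses at the choice of $\vec d$. Taking $\vec d$ \emph{perpendicular} to $\overline{v^+v^-}$ and then asserting that $\mathrm{extent}_{\vec d}(Q')$ equals the full extent of $P$ is the unproved step, and it is in general false. Knowing that $Q'$ separates $v^+$ from $v^-$ only tells you that $Q'$ meets the segment $\overline{v^+v^-}$ on each sheet; it gives no lower bound on the extent of $Q'$ perpendicular to that segment. Concretely, if $Q'$ is a doubled diagonal of $P$, its planar image is a single segment with zero extent perpendicular to itself, and your $\vec d$ need not be aligned with that diagonal. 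The appeal to ``the Lemma~\lemref{Width} configuration'' and to strict $\alpha\beta$-convexity ``forcing $Q'$ to extend across $P$'' does not supply the missing link between $\mathrm{extent}_{\vec d}(Q')$ and $\mathrm{extent}_{\vec d}(P)$. (There is also a secondary gap: the claim that both half-surfaces have strictly positive curvature fails whenever $Q'$ passes through vertices, since the turning along $Q'$ then absorbs curvature and a half-surface may contain no interior vertex.)

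The paper's proof takes a much more direct route. It exploits the specific structure of quasigeodesics on a doubled convex polygon: any such $Q'$ is a doubled chord $AB$, and the quasigeodesic condition at an endpoint (both incident angles $\le\pi$) forces the line through that endpoint perpendicular to $AB$ to support $P$. Hence $A$ and $B$ lie on two parallel supporting lines, so $|AB|\ge w$ and $\ell(Q')=2|AB|\ge 2w$. The paper simply runs through the three ways the endpoints can sit on $\partial P$ (neither, one, or both at a vertex). If you want to rescue your projection idea, the correct direction is $\vec d$ \emph{along} the chord $AB$: then $\mathrm{extent}_{\vec d}(Q')=|AB|$, and the supporting-line observation just described yields $|AB|\ge w$ directly.
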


\begin{proof}
Call the width-quasigeodesic $Q_w$, and the width $w$.
So $\ell( Q_w ) = 2 w$.
First we show there is no shorter $1$-vertex quasigeodesic.
Suppose $Q$ were such a quasigeodesic through $v$. It must be orthogonal
to an edge $e$ of $P$, and then the line parallel to $e$ through $v$ is
a supporting line for $P$. Therefore, if $Q$ were shorter than $Q_w$, the
width of $P$ would not be $w$.

Suppose now there is a shorter diagonal-quasigeodesic, connecting
vertices $a$ and $b$. Then lines orthogonal to $ab$ through $a$ and $b$
must be supporting lines, to satisfy the angle constraints needed for
the quasigeodesic to be convex to both sides at $a$ and $b$.
This again would contradict $Q_w$ realizing the width.

A $0$-vertex quasigeodesic $Q$ must cross parallel edges of $P$.
It could be slid left or right until it touches a vertex, in which case it is now
a $1$-vertex quasigeodesic, and the argument above applies.
\end{proof}


\subsection{$|V(Q)|=2$}

\begin{lm}
Every doubly covered polygon has a simple closed quasigeodesic $Q$ with $|V(Q)|=2$
\end{lm}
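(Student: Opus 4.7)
The plan is to use the classical notion of the \emph{diameter} of a convex polygon. Let $P$ be a convex polygon and $D=P\cup P'$ its doubling. I would choose a pair of vertices $v_i,v_j$ of $P$ realizing the diameter $\max_{k,l}|v_k-v_l|$, and show that the closed curve $Q=\gamma\cup\gamma'$, where $\gamma$ is the straight segment $v_iv_j$ on the front sheet and $\gamma'$ is the corresponding segment on the back sheet, is a simple closed quasigeodesic with $|V(Q)|=2$.

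The first step is a standard perpendicular-supporting property: the line through $v_i$ perpendicular to $v_iv_j$ is a supporting line of $P$, because otherwise some point of $P$ would lie further from $v_j$ than $v_i$, contradicting the diameter assumption; symmetrically at $v_j$. Writing $\beta_1,\beta_2$ for the two angles at $v_i$ between the segment $v_iv_j$ and the two incident edges of $P$ (so $\beta_1+\beta_2=\alpha_i$, the interior angle of $P$ at $v_i$), this yields $\beta_1,\beta_2\le \pi/2$, and analogously at $v_j$.

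Next I would verify the quasigeodesic condition at $v_i$ by unfolding its neighborhood in $D$ to a planar cone of total angle $2\alpha_i$: the front sector occupies angles $0$ to $\alpha_i$ and the back sector $\alpha_i$ to $2\alpha_i$. In this unfolding the front branch $\gamma$ leaves $v_i$ at angular position $\beta_1$ and the back branch $\gamma'$ at position $\alpha_i+\beta_2$, so the two side-angles of $Q$ at $v_i$ compute to $\alpha_i+\beta_2-\beta_1=2\beta_2$ and $\alpha_i+\beta_1-\beta_2=2\beta_1$. By the previous step each is $\le \pi$, and strictly less than $\pi$ when $\beta_1,\beta_2<\pi/2$ (which is the generic situation needed later for Lemma~\lemref{Q_on-neighborhood}). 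The identical analysis applies at $v_j$. Simplicity of $Q$ then follows because $\gamma$ and $\gamma'$ lie on different sheets and meet only at their endpoints, and $|V(Q)|=2$ because no third polygon vertex can lie on the straight segment $v_iv_j$ (three collinear ``vertices'' of a convex polygon would force the middle one to be flat, hence not a vertex of the doubled polyhedron).

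The main obstacle is the degenerate case in which every diameter-realizing pair consists of adjacent vertices, so that the segment $v_iv_j$ is an edge of $P$ and both copies $\gamma,\gamma'$ collapse onto the same boundary fold of $D$, so $Q$ is not even a simple curve. To handle this I would replace the diameter by the longest segment among \emph{non-adjacent} vertex pairs and reapply the perpendicular-supporting argument; for polygons where even this segment fails to satisfy $\beta_1,\beta_2\le\pi/2$ at both endpoints, a rotating-calipers style argument should locate an antipodal vertex pair with the required property. Making this case analysis complete and uniform across all $n$-vertex polygons (with possibly a separate treatment for the few very thin low-vertex cases) is the technical heart of the argument.
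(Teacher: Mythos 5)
Your approach is the same as the paper's: its proof is a three-line appeal to the fact that the extrinsic diameter of a convex body is realized between two vertices, with the quasigeodesic property attributed to ``length maximality.'' Your main-case argument correctly fills in what that phrase hides --- the supporting lines perpendicular to $v_iv_j$ at its endpoints, and the computation showing that the two side-angles of $Q$ at $v_i$ are $2\beta_1$ and $2\beta_2$ with $\beta_1+\beta_2=\alpha_i$ and each $\beta_m\le\pi/2$. That part is sound and more complete than what is printed.

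The place where your write-up is genuinely incomplete is the degenerate case you flag, and the escape route you sketch cannot be made to work. The diameter can indeed be realized only by an adjacent pair --- this happens for every triangle, and for $n\ge 4$ as well (e.g.\ the thin trapezoid $(0,0),(10,0),(6,0.2),(4,0.2)$, whose only diameter pair is the long edge). But in such cases there is in general \emph{no} non-degenerate two-vertex simple closed quasigeodesic to be found by rotating calipers or any other selection: a simple closed quasigeodesic through exactly two vertices bounds two disks, and by Gauss--Bonnet a disk containing no interior vertex has exterior angles summing to $2\pi$ at its two corners, which forces both interior angles of that disk to be $0$, i.e.\ forces the two geodesic arcs to coincide. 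A doubly-covered triangle has only one vertex left off $Q$, so one side is always empty and \emph{every} two-vertex quasigeodesic on it is degenerate. The correct resolution is therefore not to avoid the degenerate case but to accept the doubly-traversed diameter edge as a (degenerate) simple closed quasigeodesic --- a ``rolling belt'' in the terminology of Chapter~\chapref{IntroductionPartI}: your own perpendicular-support argument shows the interior angle $\alpha_i$ at each endpoint is at most $\pi/2$, hence the cone angle $2\alpha_i$ on the double is at most $\pi$ and the two side-angles ($0$ and $2\alpha_i$) both satisfy the quasigeodesic condition. The paper's proof is silent on this point as well; for the downstream use in Theorem~\thmref{Q_012} it is harmless, since there one may simply exhibit polygons (e.g.\ squares) whose diameter is a diagonal and then apply Lemma~\lemref{Q_on-neighborhood}.
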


\begin{proof}
Each extrinsic diameter of a convex polyhedron $P$, 
$$\mathrm{diam}\left( P \right): = \max_{x,y \in P}|x-y|,$$
is realized between two vertices of $P$,
see for example Proposition~1 in~\cite{IRV-tetrahedra}.
Because of its length maximality, such a diameter provides, for doubly covered polygons, the desired quasigeodesics through $2$ vertices.
\end{proof}


\subsection{$|V(Q)|=k$, with $3\leq k \leq n$}

For these cases, we construct particular examples
of polyhedra $P_n$ of $n$ vertices each of which has at least one
quasigeodesic through $k$ vertices.

\begin{lm}
For every $3\leq k \leq n$ there exist doubly covered $n$-gons having simple closed quasigeodesics $Q_k$ with $|V(Q_k)|=k$ and satisfying Eq.~\eqref{ab-strict}.
\end{lm}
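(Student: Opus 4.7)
The plan is to prove the lemma by explicit construction: for every $3 \le k \le n$ I will exhibit a doubly covered convex $n$-gon $P$ carrying a simple closed quasigeodesic $Q_k$ that meets exactly $k$ of its vertices and has $\alpha_i,\beta_i < \pi$ at each of them. The case $k = n$ is immediate: take any convex $n$-gon with interior angles $\theta_1,\ldots,\theta_n$ all strictly less than $\pi$ (for instance a regular $n$-gon) and let $Q_n$ be the fold line $\partial P$ of the doubled polygon traversed once. Then $\alpha_i = \beta_i = \theta_i < \pi$ at every vertex, so Eq.~\eqref{ab-strict} is automatic.

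For $3 \le k < n$, I begin with a regular $k$-gon with vertices $u_1,\ldots,u_k$ in cyclic order and modify it by inserting $n-k$ additional vertices $w_1,\ldots,w_{n-k}$ along a small convex chain that bulges outward from the side $u_1 u_2$ (i.e., on the side of the line $u_1 u_2$ opposite the center). The resulting polygon $P$ is convex and has exactly $n$ vertices, with $u_2,u_3,\ldots,u_k,u_1$ still consecutive on $\partial P$. Define $Q_k$ on the doubled $P$ as the closed path $u_1 \to u_2 \to u_3 \to \cdots \to u_k \to u_1$ in which the first edge $u_1 u_2$ is the straight chord across the front face, and the remaining $k-1$ edges run along $\partial P$ (which is possible precisely because of the way the $w_j$ were inserted).

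The verification splits into three parts. First, $Q_k$ is a simple closed curve which is locally a geodesic: the chord lies in the flat front face, and each $\partial P$-segment lies along the fold line, where every non-vertex point has total surface angle $2\pi$. Second, $V(Q_k) = \{u_1,\ldots,u_k\}$: the edges $u_j u_{j+1}$ for $j \ge 2$ are edges of $P$, while the chord $u_1 u_2$ lies strictly in the interior of $P$ because the $w_j$ bulge outward from the segment $u_1 u_2$, so no $w_j$ lies on $Q_k$. Third, at each $u_j$ for $j = 3,\ldots,k$ both edges of $Q_k$ lie on $\partial P$, giving
\[
\alpha_j = \beta_j = \theta_{u_j} = \tfrac{k-2}{k}\pi < \pi;
\]
and at $u_1$ (symmetrically at $u_2$) a direct count yields $\alpha_{u_1} = \tfrac{k-2}{k}\pi$ and $\beta_{u_1} = 2\theta_{u_1} - \tfrac{k-2}{k}\pi$, where $\theta_{u_1} = \tfrac{k-2}{k}\pi + \varepsilon$ for a small $\varepsilon > 0$ determined by the amplitude of the bulge.

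The only delicate point is the angle bookkeeping at the two ``interface'' vertices $u_1$ and $u_2$, where a chord of $P$ meets a boundary arc. I need to check both (i) that inserting the $w_j$ on the outside of $u_1 u_2$ increases (rather than decreases) the interior angle $\theta_{u_1}$, and (ii) that it increases it by less than the critical amount $\pi/k$, so that $\beta_{u_1} = 2\theta_{u_1} - \tfrac{k-2}{k}\pi < \pi$. Both can be secured by taking the bulge sufficiently small; these are the only numerical estimates required, and the rest of the argument is topological. Combined with Lemma~\lemref{Q_on-neighborhood}, this yields the desired open neighborhood inside ${\cal Q}_k$, completing part~(2) of Theorem~\thmref{Q_012}.
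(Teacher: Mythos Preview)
Your proof is correct and follows essentially the same approach as the paper's. Both constructions start from a regular $k$-gon, replace one edge by a slight outward convex bulge carrying the extra $n-k$ vertices, and take $Q_k$ to be the boundary of the original $k$-gon, using the replaced edge as a chord across one face. The paper places the extra vertices on a short circular arc and invokes the inscribed-angle theorem to bound $\angle v_k v_1 v_n$ explicitly by $m(C_s)/2 < \pi/k$, whereas you argue by continuity that a sufficiently small bulge gives $\varepsilon < \pi/k$; the resulting angle computation $\beta = \tfrac{k-2}{k}\pi + 2\varepsilon < \pi$ is identical in both.
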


\begin{proof}
Consider a regular $k$-gon $R_k=v_1 \ldots v_k$, where $3\leq k \leq n$.
Figure~\figref{HexagonQk} illustrates the case $k=6$.
\begin{figure}[htbp]
\centering
\includegraphics[width=0.7\linewidth]{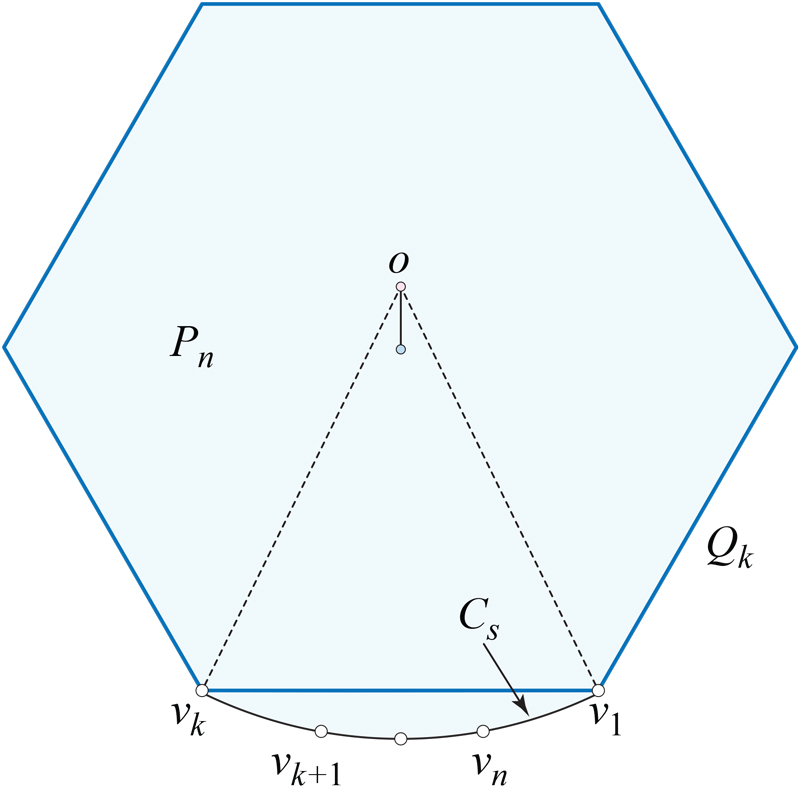}
\caption{$Q_6$ is the blue hexagon.
$C_s$ is centered at $o$ above the hexagon center.}
\figlab{HexagonQk}
\end{figure}

Also consider the circle $C$ through $v_1$ and $v_k$, of centre $o$ on the same side of $v_1 v_k$ as $R_k$.
On the short arc $C_s$ of $C$ determined by $v_1$ and $v_k$, choose points $v_{k+1}, \ldots, v_n$, to increase the polygon to $n$ vertices, 
and denote by $P_n$ the doubly covered polygon $v_1 \ldots v_n, v_1$.

When the center $o$ is beyond the center of $R_k$ as illustrated,
the measure (length) of $m(C_s)$ satisfies $m(C_s)<2\pi/k$.
It follows that $P_n$ has two simple closed quasigeodesics $Q_k$ with $k$ vertices, both corresponding to the polygon $v_1 \ldots v_k, v_1$,
one including edge $v_1 v_k$ on the front, and one on the back.

Indeed, for $Q_k$
\begin{itemize}
\item the angles $\a_i$ are each the vertex angles of $R_k$, 
equal to $\frac{(k-2)}k \pi < \pi$, for $1 \le i \le k$.
%
\item the angles $\b_i=\a_i$ for $1<i<k$.
\item For $i=1$, $\b_1$ is $\a_1$ plus twice $\angle v_k v_1 v_n$, and $\angle v_k v_1 v_n < m(C_s)/2$. Similarly for $i=k$.
Therefore, for $i \in \{1,k\}$,
$$\b_i < \a_i + m(C_s)<\frac{k-2}k \pi + \frac 2k \pi < \pi \;.$$
\end{itemize}
Thus $Q_k$ is a quasigeodesic satisfying Eq.~\eqref{ab-strict} 
of Lemma~\lemref{Q_on-neighborhood}.
\end{proof}

Returning to Theorem~\thmref{Q_012},
we have identified polygons with 
$|V(Q)|=k$, for $1 \leq k \leq n$,
and applying Lemma~\lemref{Q_on-neighborhood}
creates ${\cal Q}_k$ with nonempty interiors.
This establishes Theorem~\thmref{Q_012}.



%
%
%
%
%


\chapter{Conclusions\\ and Open Problems}
\chaplab{OpenII}
%

Our work leaves open several questions of various natures, most of which have been mentioned in the text in some form.
Here we list open problems and indicate possible directions for future research.
The exposition roughly follows the order we treated the subjects, with the comments on both parts joined in this 
final chapter.

\section{Part~I}

In the first part of this work, we mainly studied properties of the tailoring operation on convex polyhedra.

\begin{itemize}[leftmargin=*] 
\item 
We have presented three methods for tailoring, of very different flavors.
On one hand, the methods of tailoring with sculpting given in 
Chapters~\chapref{TailoringSculpting} 
and~\chapref{Crests}---digon-tailoring or crest-tailoring---seem 
appropriate for local tailoring, and can produce any $Q$
inside $P$.

On the other hand, the method of tailoring via flattening
presented in Chapter~\chapref{TailoringFlattening} is purely intrinsic, in that it doesn't need the spatial structure of $P$ and $Q$ to work. 
The surfaces can be given, in this case, as a collection of polygons glued together as in AGT.
But it has the disadvantage of being ``non-economical,'' in the sense that it discards a lot of 
$P$'s surface area. 

Even with ``surface-removal optimal" tailoring, we could be forced to lose almost all the
surface area of $P$ to reshape to $Q$, for example 
if approximating by tailoring a sphere inscribed in a very long convex surface.

All three methods can be reversed to enlarge surfaces, where the results are the same, but not-requiring the spatial structure might be a clear advantage.

\item 
We made little attempt to optimize our algorithm complexities,
resting content with polynomial-time upper bounds.
Likely several algorithms could be improved, or lower bounds established.
Especially notable is this problem, which dominates the complexity of 
the tailoring-via-flattening algorithm, Theorem~\thmref{FlatteningAlg}.

\begin{open}
Given a polyhedron $P$ of $n$ vertices, find a
cut-locus generic point $x$ in less than $O(n^4)$ time.
\openlab{CutLocusGeneric}
\end{open}


Such a generic point has a unique geodesic segment (shortest path) to each vertex of $P$.

%


\item Recall that Lemma~\lemref{Slice2gDomes}
established that, with a slice along the plane of a face $X$ of $Q \subset P$,
the portion of $P$ sliced-off can be partitioned into a fan of g-domes.
Then Theorem~\thmref{DomePyr} showed how to reduce a g-dome to
its base by removing pyramids.

Rather than repeating this for each face $X$ of $Q$, it is conceivable
that a single base $X$ and a single g-dome suffice.

\begin{open}
Let $X$ be a ``base" face of $P$, and $D$ a fixed g-dome over $X$, interior to $P$.
Is it possible to partition $P$ into pyramids and $D$, with planes through the edges of $X$?
After each sectioning we remove the sliced pyramid.
\openlab{OneXgdome}
\end{open}

If g-dome $D$ is replaced with an arbitrary interior polyhedron, then the answer
is easily seen to be {\sc no}.
Without the restriction to planes through base edges,
the answer is {\sc yes} as shown by 
Lemma~\lemref{Slice2gDomes}.

\item 
As discussed in
Section~\secref{OtherDigonOrderings},
the proof of Theorem~\thmref{SealGraphTree}---that the seal graph $\S$
for a pyramid is a tree---depends on the ordering of digon removal.
\begin{open}
Is the seal graph for a pyramid a tree for other orderings of digon removal?
\openlab{OtherDigonOrderings}
\end{open}

\item 
We have shown in Chapter~\chapref{Punfoldings} 
that different unfoldings of $Q \subset P$ onto $P$ exist. 
Clearly, the $P$-unfolding obtained via tailoring
depends on the order of tailoring operations.
The $P$-unfolding of $Q$ does not necessarily have connected interior,
which suggests this problem:
\begin{open}
Is there some method and/or orderings of tailoring operations that would render 
interior-connected a $P$-unfolding of arbitrary $Q \subset P$? 
A less ambitious goal would be to minimize the number of interior-connected pieces.
\openlab{PunfConnected}
\end{open}

Notice that Theorem~\thmref{QPflat} established simply connectedness in general.
But in our usage, simply connected does not imply connected, e.g., the union of several disjoint disks is a disconnected set but simply connected.
In Part~II we pursue this question for particular $P$, for example, a doubly-covered triangle.

\item
Theorem~\thmref{ContinuousFolding} established the \emph{existence} of a continuous
folding of $P$ onto $Q$ when $Q \subset P$, but we do not know of an algorithm:
\begin{open}
Is there a finite algorithm that determines a continuous unfolding of $P$ onto $Q$?
\openlab{Continuous}
\end{open}

\item 
It seems that at least a part of the present work could apply to \emph{$1$-polyhedra}.
These are polyhedra whose faces are (congruent to) geodesic polygons on the unit sphere. 
They can approximate convex surfaces with curvature bounded below by $1$
(in the sense of A.~D.~Alexandrov), 
just as convex polyhedra can approximate ordinary 
convex surfaces~\cite{az-igs-67},~\cite{itoh2015moderate}.

\begin{open}
How much of the presented results can be carried over 
and applied to $1$-polyhedra?
\openlab{1Polyhedra}
\end{open}

\item 
One could also define tailoring for general (i.e., not necessarily convex) polyhedra, of arbitrary topology.
Of course, the methods we developed here apply locally to convex caps. Globally, 
a necessary condition for $Q$ to be tailored from a homothetic copy of $P$ is to have the same topology as $P$. 
Our Theorem~\thmref{MainTailoring} might suggest this is also sufficient, but that is not true.
By Alexandrov's Gluing Theorem (AGT), tailoring a convex polyhedron always produces a convex polyhedron, 
never a nonconvex polyhedron homeomorphic to the sphere but having negative curvature at some vertex.
There is as yet no counterpart to AGT for nonconvex polyhedra.
Therefore, in the general framework, tailoring could be a much subtler topic.

\begin{open}
Is there a type of tailoring operation that permits global reshaping of
non-convex polyhedra?
\openlab{NonConvex}
\end{open}
\end{itemize}



\section{Part~II}
In the second part of this work, we mainly studied the vertex-merging (as opposed to tailoring) processes on convex polyhedra,
with the aim of producing ``nice'' polyhedral and planar unfoldings. 
Toward that aim, we needed to develop a theory of convex sets on convex polyhedra.

\begin{itemize}[leftmargin=*] 
\item 
For any convex polyhedron $P$, there clearly exist vertex-merging reductions of $P$
that increase the surface area by the least, or the most amount.
\begin{open}
\openlab{SurfaceArea}
Find upper and lower bounds on the added surface area for vertex-merging reductions of $P$.
Find the reduction yielding the minimal, and respectively maximal area. 
\end{open}
\item
General relationships between the properties of the slit graph and those of the corresponding unfolding of $P$ are presented in Theorem~\thmref{Connectivity}.
Finding particular vertex-mergings with ``good'' behaviour seems to be a difficult task, only partly fulfilled in this study.

\begin{open}
\openlab{NoSlitCycle}
Does there exist for every convex polyhedron $P$ a vertex-merging reduction 
onto a vm-irreducible surface
whose slit graph has no cycle? If not, for which $P$ are there such vm-reductions?
\end{open}
\item
A positive answer to the above question raises another one, related to our discussion in Section~\secref{Unf2xTri}.
\begin{open} 
\openlab{NotDisconnect}
For any simple unfolding $P_S$ of $P$ onto a vm-irreducible surface $S$, 
does there exists an unfolding of $S$ in the plane which results in a net for $P$?
\end{open}
\item
Our attempt to answer Open Problem~\openref{NoSlitCycle} is based on a spiral-merging idea.
In order to formalize our spiraling algorithms, we needed to develop a theory of convex sets on convex polyhedra.
We only proved some basic results in this theory, and much remains for future study.
In particular, the next question is related to Lemma~\lemref{S>Q} for the case when $S \supseteq Q$.\footnote{
Recall that in Part~II, we use $Q$ to denote a simple closed quasigeodesic,
in contrast to Part~I's use of $Q$ to represent the target polyhedron.}
\begin{open}
\openlab{ConvHalfSurf}
Is it the case that every closed convex subset $S$ of a convex polyhedron
is either included in a half-surface bounded by a simple closed quasigeodesic $Q$, or is the whole surface?
\end{open}
\item
Of the classical results in the combinatorial theory of convex sets bearing a name, we only adapted to our framework
the Krein–Milman Theorem
(that a convex set is the convex hull of its extreme points);
see Theorem~\thmref{ExtPts}.
Perhaps the statement of that result, even though suitable to our purpose, could be improved.
\begin{open}
\openlab{ExtPts}
Is Theorem~\thmref{ExtPts} concerning extreme points and the relative convex hull,
still true without the ``relative" modifier?
\end{open}
\item

Examples~\exref{NoHelly} and~\exref{NoRadon} show that the precise planar versions
of Helly's and Radon's theorems are false on the surface of a convex polyhedron.
It seems worth studying if there are versions of these theorems, and of
Carath{\'e}odory's theorem, in our context.

\begin{open}
\openlab{HellyEtc}
\begin{sloppypar}
Are there versions of Helly's,
Radon's, and Carath\'eodory's theorems
that hold on the surface of a convex polyhedron,
using ag-convexity?
\end{sloppypar}
\end{open}
The Radon point of any four points in the plane is their geometric median, the point that minimizes the sum of distances to the other points~\cite{clarkson1996approximating}.
Therefore, it could also be of some interest to study such points in our framework.
Note, for example, on a doubly covered square there are two such points, the centers of the two faces.

\item
Theorem~\thmref{RollingNet} established that if a polyhedron $P$ has
a quasigeodesic including at most two vertices, vm-reduction can lead
to a net for $P$.
One might view the reduction process as moving $P$ closer and closer to
planarity, as $P_i$ has fewer and fewer vertices. Finally $P$ is on a cylinder
which rolls to a net.

Little seems known about the structure of quasigeodesics.
Pogorelov's original proof is nonconstructive~\cite{p-qglcs-49},~\cite{p-egcs-73},
and it has long been an open
problem to design an algorithm to find a simple closed quasigeodesic
(Open Problem~24.2~\cite{do-gfalop-07}).
It is not difficult to find a plane $\Pi$ through one vertex whose slice
curve $C=\Pi \cap P$ halves the curvature as needed.
But $C$ is not necessarily convex~\cite{o-dipp-03}, and so
in general is not a quasigeodesic.

\begin{sloppypar}
However, there has been recent progress on quasigeodesics.
First, a pseuopolynomial-time algorithm was designed that
finds a possibly self-crossing quasigeodesic~\cite{kane2009pseudopolynomial}.
Second, an exponential algorithm for finding all the
simple closed quasigeodesics was described in~\cite{ChartierArnaud}.
Despite this progress, there remains no practical algorithm for finding
simple closed quasigeodesics.
\end{sloppypar}

\begin{open}
\openlab{QVert2}
We conjecture that every convex polyhedron either has a simple closed geodesic,
or a simple closed quasigeodesic through just one vertex.
\end{open}

Note that this conjecture is verified for doubly-covered convex polygons,
by Lemmas~\lemref{QuasiWidthNoParallel}
and~\lemref{QuasiWidthParallel}.
We recently proved the conjecture for tetrahedra~\cite{QonT}.
\end{itemize}

\bibliographystyle{alpha}
\addcontentsline{toc}{chapter}{Bibliography}
\label{Bibliography}
\bibliography{refs}

\end{document}